                  \def\version{15 April 2026}		

\documentclass[reqno,11pt]{amsart} 
\usepackage[pagebackref, colorlinks=true,linkcolor=blue,citecolor=blue]{hyperref}
\usepackage[utf8]{inputenc}
\usepackage{amsmath} 
\usepackage[mathscr]{eucal}
\usepackage{mathtools}
\usepackage{amssymb}
\usepackage{srcltx} 
\usepackage{dsfont}
\usepackage{hyperref}
\usepackage{color}
\usepackage{xcolor}
\usepackage{fancyvrb}
\usepackage{cancel} 
\numberwithin{equation}{section}
 
\def\d{{\rm d}} 
\def\eps{\varepsilon}

\def\L{\Lambda}

\newfam\Bbbfam 
\font\tenBbb=msbm10 
\font\sevenBbb=msbm7 
\font\fiveBbb=msbm5 
\textfont\Bbbfam=\tenBbb 
\scriptfont\Bbbfam=\sevenBbb 
\scriptscriptfont\Bbbfam=\fiveBbb

\newcommand{\Bcal}  {{\mathcal B}}
\newcommand{\Ccal}   {{\mathcal C }} 
\newcommand{\Dcal}   {{\mathcal D }} 
 
\newcommand{\Fcal}   {{\mathcal F }} 
\newcommand{\Gcal}   {{\mathcal G }} 
\newcommand{\Hcal}   {{\mathcal H }}

\newcommand{\Lcal}   {{\mathcal L }} 
\newcommand{\Mcal}   {{\mathcal M }} 
 
\newcommand{\Ocal}   {{\mathcal O }} 
\newcommand{\Pcal}   {{\mathcal P }} 
 
\newcommand{\Rcal}   {{\mathcal R }} 
\newcommand{\Scal}   {{\mathcal S }} 
\newcommand{\Tcal}   {{\mathcal T }} 
\newcommand{\Ucal}   {{\mathcal U }}

\newcommand{\Xcal}   {{\mathcal X }} 
\newcommand{\Ycal}   {{\mathcal Y }}

\newcommand{\Poi}{{\rm Poi}}

\newcommand{\e}   {{\operatorname e }}
\newcommand{\h}{{\rm h}}

\newcommand{\R}     {\mathbb{R}} 
\newcommand{\Z}     {\mathbb{Z}} 
\newcommand{\N}     {\mathbb{N}} 
\renewcommand{\P}   {\mathbb{P}} 
 
\newcommand{\E}     {\mathbb{E}}

\def\L{\Lambda} 
\newcommand{\smfrac}[2]{\textstyle{\frac {#1}{#2}}}

\newcommand*{\LPP}{{{\tt Q}}}
\newcommand*{\SPP}{{{\tt R}}}

\newcommand{\Loops}{\Lcal}

\newcommand{\Shreds}{\Scal}

\newtheorem{theorem}{Theorem}[section] 
\newtheorem{lemma}[theorem]{Lemma} 
\newtheorem{prop}[theorem] {Proposition} 
\newtheorem{cor}[theorem]  {Corollary} 
\newtheorem{defn}[theorem] {Definition}

\theoremstyle{definition}

\newtheoremstyle{rem}{1.5ex}{1.5ex}{\rmfamily}{} {\bfseries\rmfamily}{} {1.5ex}{}
\theoremstyle{rem}
\newtheorem{remark}[theorem]{{\bfseries Remark}}

\newcommand{\defeq}{\vcentcolon=}

\newcommand{\abs}[1]{\lvert#1\rvert}

\def\emptyset{\varnothing} 
\def\1{{\mathchoice {1\mskip-4mu\mathrm l}      
{1\mskip-4mu\mathrm l} 
{1\mskip-4.5mu\mathrm l} {1\mskip-5mu\mathrm l}}} 
\newcommand{\ssup}[1] {{{\scriptscriptstyle{({#1}})}}} 
\def\comment#1{} 
\newtheoremstyle{thm}{2ex}{2ex}{\itshape\rmfamily}{} 
{\bfseries\rmfamily}{}{1.7ex}{}

\renewcommand{\d}{{\rm d}} 
 
\newcommand{\Leb}{{\rm Leb}}

\newcommand{\supp}{{\operatorname {supp}}} 
\newcommand{\Ssup}[1]{^{\ssup{#1}}}
 
\newcommand{\dist}{{\operatorname {dist}}}

\newcommand{\Ccalcirc}{\Ccal^{\ssup\circlearrowleft}}

\definecolor{Red}{rgb}{1,0,0}

\definecolor{amethyst}{rgb}{0.6, 0.4, 0.8} 
\newcommand{\AZ}[1]{\textcolor{amethyst}{(\small AZ: #1)}}
\newcommand{\WK}[1]{{\textcolor{green!70!black}{(\small WK: #1)}}}

\begin{document} 
 
\title[The free energy of the interacting Bose gas, loops and interlacements]{\large The free energy of the interacting Bose gas:\\\medskip  a variational description \\\medskip with loops and interlacements}

\author[Wolfgang  K{\"o}nig]{Wolfgang  K{\"o}nig$^{1,2}$}

\maketitle 
 
\thispagestyle{empty}

\vspace{0.2cm} 

\centerline{\small(\version)} 
\vspace{.5cm}

\centerline{\textit{$^1$Weierstra\ss-Institut Berlin (WIAS),}}
\centerline{\textit{Anton-Wilhelm-Amo-Straße 39 in 10117 Berlin, Germany}, }
\centerline{{\tt koenig@wias-berlin.de}}\medskip
\centerline{\textit{$^2$Institut f\"ur Mathematik, Technische Universit\"at Berlin, Berlin, Germany}}

\bigskip\bigskip

\begin{quote} 
{\small {\bf Abstract:}} We consider the interacting Bose gas in the thermodynamic limit in a large box in $\R^d$ at positive temperature $1/\beta\in(0,\infty)$ with particle density $\sim\rho\in(0,\infty)$. We follow a path-integral approach and adopt from \cite {ACK10} a description of the free energy in terms of the {\it Brownian loop soup}, a Poisson point process consisting of Brownian bridges, also called loops or cycles. It is the objective of this paper to derive, for any values of $\beta$ and $\rho$, a formula for the limiting free energy with explicit control on the  particle numbers in the short and in the long loops. The latter are presumed to play the role of the condensate, according to Feynman's  \cite{F53} famous, vague suggestion, and they turn into {\it random interlacements} (bi-infinite, locally finite random processes in $\R^d$) in our formula. In \cite{ACK10} there was no concept that could describe the long loops; only small $\rho$ could be handled successfully.

In the present paper we represent the limiting free energy in terms of a variational formula, ranging over the set of all stationary point processes with loops and with interlacements, having each a given particle density, and minimizing the sum of the interaction energy and a characteristic entropy term. The latter is a new kind of a {\it specific relative entropy density} with respect to the reference process of loops (the Brownian loop soup), together with an independent Markov kernel describing collections of path shreds in large boxes. In $d\geq 3$, the latter can be seen as a projection of the {\em Brownian interlacement Poisson point process with $\beta$-spacing}. Our proof tool box  comes from large-deviation theory, both for the derivation of the formula for  the free energy and for the proof of the existence of the specific relative entropy.

The limiting variational formula has good compactness properties and may serve as a starting point to future deeper investigations, as it describes the interacting Bose gas in much smoother and more well-behaved terms. We give various approximations of that formula, in particular we show that it is equal to a version  without interlacements. A brief discussion about the conjectured relation to the famous {\it Bose--Einstein condensation} phenomenon is given as well.
\end{quote}


\bigskip\noindent 
{\it MSC 2020.} 60F10; 60J65; 82B10; 81S40

\medskip\noindent
{\it Keywords and phrases.} Interacting Bose gas, free energy, path-integral analysis, Brownian loop soup,  thermodynamic limit, Brownian bridges, random marked spatial point processes, long loops and short loops, Brownian interlacements, specific relative entropy density, large deviations, empirical stationary measure, variational formula.

\tableofcontents

\section{Introduction}\label{Intro}

\subsection{Background, ansatz and purpose}\label{sec-Background}
\noindent In this paper, we study the interacting Bose gas in the thermodynamic limit, a well-known quantum model from statistical physics that is of particularly high interest since decades because of the conjectured phase transition called {\em Bose--Einstein condensation (BEC)}. Its introduction is in terms of the symmetrised trace of the exponential of an interacting Hamilton operator, and its crucial properties are formulated in terms of spectral properties of the convolution operator formed out of the corresponding one-particle density matrix. Notably, the usual notion of a phase transition as the existence of a non-analyticity of the free energy as a function of temperature (or of the particle density) is not the core of the definition of BEC, but a much finer property called {\em off-diagonal long-range order (ODLRO)} of the mentioned symmetrised operator. The validity or non-validity of ODLRO is a widely open problem  of mathematical physics, and one is currently far from an understanding, in spite of many phenomenological ansatzes. The main conjecture, which is also in good coincidence with the famous experiments from 1995, is that ODLRO (and therefore BEC) is valid in dimensions $d\geq 3$ at sufficiently small temperature but not at other temperatures and not in other dimensions. In this paper, we are not going to study ODLRO and refer the reader for this question to the vast literature (e.g., a concise literature survey in Section 1.7 of \cite{KVZ23}). Observe that the free (i.e., non-interacting) case is much simpler and has been thoroughly understood, since all spatial properties easily wash out, and the model drastically simplifies. Furthermore several rescaled regimes have been successfully worked out as well \cite{FKSS20}. However, the interacting case in the thermodynamic limit is still widely open, and this is that we concentrate on here. 

Even though the phenomenon of ODLRO cannot be decided by looking at the limiting free energy alone, a comprehensive understanding of the free energy is obviously likely to contribute a lot to the understanding of BEC. Therefore we concentrate in the present paper on understanding the limiting free energy in a comprehensive and detailed way. We are doing this from the view point of a {\em random loop ensemble} using a {\em path-integral analysis}. Random-loop presentations of the interacting Bose gas are known at least since the 1950s and were first considered a mathematical tool only, but Feynman \cite{F53}  raised the, by now famous, vague advice to think about the loop length distribution as an order parameter that might have some physical relevance. This idea was phenomenologically taken up occasionally by mathematical physicists with strong probability background, and it inspired several investigations of related models without any physical relevance, but triggered also phenomenological discussions about possible relations between BEC and the loop length statistics. See Section~\ref{sec-literature} for a thorough survey on the literature on path-integral approaches to the Bose gas. In short words, the rough idea was coined that BEC should have much to do (if not even being equivalent to) with the occurrence of a macroscopic part of the particles in long loops. In several simplified models this conjecture was confirmed, but there were also counter-examples found. For the most important case of an interacting Bose gas in the thermodynamic limit, the question of the relation between BEC and macroscopic occupation of long loops has not been settled at all, even more, there is currently no mathematical tool box that might be able to attack this question.

The present paper changes this situation. We introduce a mathematical path-integral framework for describing all loops, distinguishing long and short ones,  in the interacting  random loop ensemble in the thermodynamic limit. The starting point is, as in \cite{ACK10}, a representation in terms of a Poisson point process (PPP) on loops, a version of the {\em Brownian loop soup}. In this paper, we carry out  a mesoscopic decomposition procedure of the macrobox into smaller subboxes, a large-deviation ansatz, and arrive at a variational description of the limiting free energy, which is our main result. The state spaces that arise along this way are spaces of {\em point processes with marks}, which comprehensively describe the entire configuration of loops. However, in order to describe also long loops in the limit, we need to enlarge the point process space. One innovative aspect (a novelty with respect to \cite{ACK10}) is that the long loops are described in term of point processes of  {\em interlacements}, which are bi-infinite and locally finite functions $\R\to\R^d$. Observe that such processes cannot be seen as marked point processes in $\R^d$. This extended probability space is difficult to handle, since it consists of very non-local quantities, which induce far-reaching correlations.  The variational description is in terms of the minimization of energy (pair interaction between all the legs) plus entropy (negative exponential rate of probabilities) for point processes of loops and interlacements with fixed given particle density in loops and in interlacements, respectively. The interlacement-part of the entropy, which also a novelty of the present paper, is relative with respect to an explicit reference point process kernel. Interestingly, this kernel is, in $d\geq 3$, reminiscent of a local version of  Sznitman's {\em Brownian interlacement PPP}, see the Appendix.

The arising variational description has good compactness properties, and the minimizing point process of loops and interlacements has the interpretation of the typical, the \lq optimal\rq, spatial distribution of all the particles of the Bose gas, organised in local and in global structures. Decisive is then the question for what values of the particle density the minimizer has a non-trivial interlacement-part. This is obviously a mathematically sound formulation of the distinction between the occurrence and non-occurrence of long Feynman-cycles. We conjecture that the interlacement particles directly relate (if they are not even equal) to the particles in the condensate. Conjecturally, the occurrence of interlacements will in future work turn out to be equivalent to the occurrence of BEC (i.e., ODLRO) and, in dimension $d\geq 3$, it will turn out occur for all large particle densities only. In the present paper, we leave both questions open, but we believe that the mathematical framework that we develop offers good chances to answer them in future work.

We introduce the interacting Bose gas in Section~\ref{sec-Bosegas}, present the starting point of our investigations (a rewrite in terms of the Brownian loop soup) in Section~\ref{sec-PPPdescription} and explain our ansatz in Section~\ref{sec-Purpose}. A quick survey on the remainder of the paper is at the end of Section~\ref{sec-Purpose}.

\subsection{The interacting Bose gas}\label{sec-Bosegas}
We consider an interacting bosonic many-body system in a large box in $\R^d$ at positive temperature $1/\beta\in(0,\infty)$ with fixed particle density $\rho\in(0,\infty)$ in the thermodynamic limit. To start with, denote by
\begin{equation}\label{Hamiltonian}
	\Hcal_N^{\ssup{\L ,{\rm bc }}}=-\sum_{i=1}^N \Delta_i+\sum_{1\leq i<j\leq N}v(x_i-x_j),\qquad x_1,\dots,x_N\in\L,
\end{equation}
the $N$-particle operator in a box $\L\subset \R^d$ with some boundary condition \lq bc\rq\ (later we will consider periodic and Dirichlet zero boundary conditions) with kinetic energy and pair interaction given by a measurable interaction functional $v\colon \R^d\to[0,\infty]$; see Assumption (V) in Section~\ref{sec-mainresult} for more specific assumptions.

As we are interested in {\em bosons}, we project the operator $\Hcal_N^{\ssup{\L ,{\rm bc }}}$ on the set of symmetric, i.e., permutation invariant, wave functions. Furthermore, we consider the particle system at positive temperature $1/\beta\in(0,\infty)$. The main object of the study of this paper is the \emph{partition function} of the system, that is, the trace of the operator $\e^{-\beta\Hcal_N^{\ssup{\L ,{\rm bc }}}}$ in $\L$ with symmetrisation:
\begin{equation}\label{partitionfunction}
	Z_N^{\ssup{\rm bc}}(\beta,\L)={\rm Tr}_+^{\ssup{\L ,{\rm bc }}}(\e^{-\beta\Hcal_N^{\ssup{\L ,{\rm bc }}}}),
\end{equation}
where the index $+$ denotes this symmetrisation, i.e., the projection on the set of permutation invariant wave functions. We will introduce the {\em particle density} $\rho\in(0,\infty)$ (the number of particles per unit volume), fix a centred box $\L_N$ of volume $\sim N/\rho$, and study the {\em limiting free energy per unit volume} in the {\em thermodynamic limit},
\begin{equation}\label{freeenergy}
	{\rm f}(\beta,\rho)=-\frac 1\beta\lim_{N\to\infty}\frac 1{|\L_N|}\log Z_N^{\ssup{\rm bc}}(\beta,\L_N).
\end{equation}
Under suitable conditions on the interaction potential, the existence of this limit is well-known (see, e.g., \cite[Thm.~3.58]{Rue69}) for many decades and also the fact that it does not depend on the boundary condition (at least for periodic and Dirichlet zero conditions). However,  an explicit or even interpretable formula is still lacking (with the exception of \cite[Cor.~1.3]{ACK10} for small values of $\rho$; see Section~\ref{sec-literature}). It is the main purpose of this paper to provide some. It will be of the form (see Corollary~\ref{cor-freeenergyVP})
$$
{\rm f}(\beta,\rho)=\frac 1\beta\inf\{F_U(P)+\h^{\ssup{\Lcal,\Scal}}(P)\colon P\in \Mcal_1^{\ssup{\rm s}}(\Lcal\times\Scal), \mathfrak N^{\ssup\ell}_U(P)=\rho\},
$$
where $\Mcal_1^{\ssup{\rm s}}(\Lcal\times\Scal)$ is the set of stationary point processes of loops and interlacements in $\R^d$, $F_U$ is the interaction energy in the unit box $U=[-\frac 12, \frac 12]^d$,  $\h^{\ssup{\Lcal,\Scal}}$ is an entropy functional introduced in Theorem~\ref{thm-specrelent}, and $\mathfrak N^{\ssup\ell}_U$ registers the particle density. Our actual main result, Theorem~\ref{thm-freeenergy}, in combination with Corollary~\ref{cor-LDPlongshortloops}, even makes the distinction between short and long loops rigorously visible; actually we will show that the short loops lead to the $\Lcal$-part and the long loops give rise to the $\Scal$-part.
This gives a mathematically rigorous meaning to what is often called the {\it Feynman cycles}.  Our proof is flexible enough to express several other characteristic quantities of the interacting Bose gas, like the density of loops having a given length or with various other properties.

It is the goal of this line of research to employ that formula as a platform for deeper analysis of the interacting Bose gas in view of the famous Bose--Einstein phase transition. A natural next step would be to derive a proof for the conjecture that, in dimension $d\geq 3$, for every large particle density, the minimising point process cannot contain loops only but must have a non-trivial interlacement part. We are not going to do that in this paper and devote future work to that. See Section~\ref{sec-VP} for more about that.

We will keep the inverse temperature $\beta\in(0,\infty)$ fixed for the entire paper and therefore omit it in most of the notation.

\subsection{Rewrite in terms of the Brownian loop soup}\label{sec-PPPdescription}

It is known since decades that the partition function $Z_N^{\ssup{\rm bc}}(\beta,\L)$ can be written in terms of the famous {\it Feynman--Kac formula} in terms of an ensemble of a random number of Brownian bridges (cycles) in $\L$ with total sum of lengths equal to $N$ and a pair-interaction between any two legs of the cycles. Such presentation is the starting point of our analysis, and we are going to introduce a version of it that was found in \cite{ACK10}, based on the classical text \cite{G70}. This formula is in terms of a marked interacting Poisson point process of Brownian cycles, sometimes called the Brownian loop soup. We need to introduce some notation, which will be crucial for the entire paper.

For $k\in\N$, let $\Ccal_k$ denote the space of  continuous functions $f\colon [0,k\beta]\to\R^d$, and $\Ccal\defeq \bigcup_{k\in\N} \Ccal_k$ the union of all these spaces. Let us write $\Ccal_k^{\ssup\circlearrowleft}=\{f\in\Ccal_k\colon f(k\beta)=f(0)\}$ for the set of loops in $\Ccal_k$, and $\Ccal^{\ssup\circlearrowleft}=\bigcup_{k\in\N}\Ccal_k^{\ssup \circlearrowleft}$ for their union. The \emph{length} of $f\in\Ccal_k$ is $\ell(f)\defeq k$. We call all the $k$ sites $f(0),f(\beta), f(2\beta),\dots, f((\ell(f)-1) \beta)$ the {\em particles} of $f$, hence $\ell(f)$ is the number of particles of $f$.  On $\R^d\times\Ccal^{\ssup\circlearrowleft}$ we are going to introduce our crucial reference measure, a Poisson point process (PPP) with a certain intensity measure. This process and variants thereof are often called the {\em Brownian loop soup}. The canonical {\em Brownian bridge measure} from $x$ to $y$ in time $ k \beta$ in the box $\L$ with boundary condition \lq bc\rq\ is defined by 
\begin{equation}\label{nnBBM}
	\mu^{\ssup{k,\L,{\rm bc}}}_{x,y}(A)=\frac{\P_x^{\ssup{k,\L,{\rm bc}}}(B\in A;B_{k\beta}\in\d y)}{\d y},\qquad A\subset\Ccal^{\ssup\circlearrowleft}_k\mbox{ measurable}.
\end{equation}
Here, $B=(B_t)_{t\in[0,k \beta]}$ is a Brownian motion in $\R^d$ with generator $\Delta^{\ssup{\L ,{\rm bc }}}$, starting from $x$ under $\P_x^{\ssup{k,\L,{\rm bc}}}$; the latter is the Brownian motion in $\L$  on the time interval $[0, k\beta]$ with this boundary condition. We will be considering periodic boundary condition or Dirichlet zero boundary condition and will consider boxes $ \L$ only.  

We now introduce a variant of Dirichlet zero boundary conditions for the bridge measure that will later make the analysis less cumbersome, but does not come from any generator with any boundary condition. We require that every particle is in $\L$, i.e., we restrict to the event $\{B_{l\beta}\in \L\ \forall l\in \{0,1,2,\dots,k\}\}$. This can be seen as Dirichlet zero boundary condition for the Gaussian random walk  $(B(0),B(\beta), B(2\beta),\dots,B(k\beta))$ plus interpolation with a free Brownian bridge between the times $0,\beta,2\beta,\dots,k\beta$. It will turn out to be a useful condition, since it perfectly harmonises with the crucial operator $\Pi_\L^{\ssup \Lcal}$ that we will introduce in Section~\ref{sec-PPPs}. We refer to this condition as {\em particle-boundary condition}, and we will write \lq {\rm per}\rq, \lq $0$\rq\, and \lq{\rm par}\rq, respectively, for each of these three boundary conditions. For each of these conditions, $ \mu^{\ssup{k,\L,{\rm bc}}}_{x,y}$ is a regular Borel measure on $\Ccal^{\ssup\circlearrowleft}_k$ with total mass equal to
\begin{equation}\label{Gaussian}
\mu_{x,y}^{\ssup{k,\L,{\rm bc}}}(\Ccal^{\ssup\circlearrowleft}_k)=g_{k\beta }^\ssup{\L,{\rm bc}}(x,y)=\frac {\P^{\ssup{k,\L,{\rm bc}}}_x(B_{k\beta}\in\d y)}{\d y}.
\end{equation}
If we do not have any boundary condition (and no set $\L$), then we write $\mu_{x,y}^{\ssup k}$ for the canonical  Brownian bridge measure from $x$ to $y$ in the time interval $[0, k\beta]$.

We denote by $\Mcal_{\N_0}(\Xcal)$ the set of simple point measures on a Polish space $\Xcal$. By $\Loops=\Mcal_{\N_0}(\R^d\times \Ccal^{\ssup\circlearrowleft})$ we denote the set of simple point measures $\omega=\sum_{x\in \zeta}\delta_{(x,f_x)}$ with $f_x(0)=x$ on $\R^d\times \Ccal^{\ssup\circlearrowleft}$, which we also see as the set of all marked point processes in $\R^d$ with marks in $\Ccal^{\ssup\circlearrowleft}$. We introduce a reference probability measure on $\Loops$, which is a \emph{Poisson point process} in $\R^d\times\Ccal^{\ssup\circlearrowleft}$:

\begin{defn}[The reference loop Poisson point process]\label{def-PPP}
For any  $\L\Subset \R^d$ and any boundary condition ${\rm bc}\in\{ {\rm per}, 0, {\rm par}\}$, we write $\LPP^{\ssup{\L,\rm bc}}$ for the distribution of the Poisson point process (PPP) $\omega_{\rm P}$ on $\L\times\Ccal^{\ssup\circlearrowleft}$  with intensity measure 
\begin{equation}\label{nudef}
	\nu_\L^{\ssup{\rm bc}}(\d x,\d f)\defeq \sum_{k\in\N}\frac{1}{k}\,\d x\otimes \mu_{x,x}^{\ssup{k,\L,{\rm bc}}}(\d f)\,\qquad x\in \L, f\in \Ccal^{\ssup\circlearrowleft}.
\end{equation}
For $\L$ replaced by $\R^d$, we write $\LPP$ for the PPP on $\R^d\times \Ccal^{\ssup\circlearrowleft}$ with intensity measure $\nu(\d x,\d f)=\sum_{k\in\N}\frac 1k \d x\otimes\mu_{x,x}^{\ssup k}(\d f)$.
\end{defn}

The process in Definition~\ref{def-PPP} was introduced to the study of the Bose gas in \cite{ACK10}. A close cousin of it  was already used for the study of other phenomena in statistical mechanics (e.g., conformal invariance in dimension two) in \cite{LW04} under the name {\em Brownian loop soup}. Here we rely on the recent adaptation in \cite{KVZ23} and refer proofs to Appendix A there. The projection on the loop part, i.e., $\{f_x\colon  x\in\zeta\}$, is what is called sometimes the Brownian loop soup in the inner sense.

Denote by
\begin{equation}\label{partnumbloop}
{\mathfrak N}_\L(\omega)=|\zeta\cap\L|\qquad\mbox{and}\qquad 	{\mathfrak N}_\Lambda^{\ssup \ell}(\omega)= \sum_{x\in\zeta\cap \L}  \ell(f_{x}),\qquad  \L\subset\R^d,
\end{equation}
the number of points of $\omega$ in $\L$ (i.e., the number of loops that start in $\L$), and the total number of particles of $\omega$ attached to points in $\L$, respectively.

We introduce a functional on $\Lcal$ that expresses the interaction between any two particles of loops that start within $ \L$.  For $\omega\in\Loops$ and $\L,\widetilde \L\Subset\R^d$, we define  $ \Phi_{\L,\widetilde \L}\colon\Lcal\to [0,\infty] $ by
\begin{equation}\label{interaction}
	\Phi_{\L,\widetilde \L}(\omega)=\sum_{x\in\zeta\cap\L, y\in\zeta\cap\widetilde\L}\mathfrak V(f_{x},f_{y}), \qquad \omega\in\Lcal, 
\end{equation}
where
\begin{equation}\label{Tdef}
	\mathfrak V(f_x,f_y)=\frac{1}{2}\sum_{i=1}^{\ell(f_x)}\sum_{j=1}^{\ell(f_y)} \1_{\{(x,i)\not=(y,j)\}}V(f_{x,i},f_{y,j}),
\end{equation}
and
\begin{equation}\label{Vdef}
V(f,g)=\int_0^\beta v(f(s)-g(s))\,\d s,
\end{equation}
and $f_i(\cdot)=f((i-1)\beta+\cdot)|_{[0,\beta]}$
is the $i$-th leg of a function $f\in\Ccal$.

We write $\mu(f)=\int f( \omega)\,\mu(\d\omega)=\langle \mu,f\rangle=\langle f,\mu\rangle$ for the integral of a function $f$ with respect to a measure $\mu$ whenever this makes sense. The following is Proposition 1.1 in \cite{ACK10}. 

\begin{prop}[The Bose gas in terms of the marked Poisson point process]\label{lem-rewrite} Fix $\beta\in(0,\infty)$. Let $ v\colon[0,\infty)\to (-\infty,\infty] $ be measurable and bounded from below, and let $ \L\subset\R^d$ be measurable with finite volume (assumed to be a torus for periodic boundary condition). Then, for any $N\in\N$, and $ {\rm bc}\in\{{\rm per},0\} $,
\begin{equation}\label{rewrite}
	Z_N^{\ssup{\rm bc}}(\L)
	=\e^{\nu_{\L}^{\ssup{\rm bc}}(\L\times \Ccal^{\ssup\circlearrowleft})}
	\LPP^{\ssup{\L,\rm bc}}\big[{\rm e}^{-\Phi_ {\L,\L}}\1\{{\mathfrak N}^{\ssup\ell}_\L=N\}\big].
\end{equation}
\end{prop}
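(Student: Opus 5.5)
The plan is to start from the Feynman--Kac formula for the symmetrised trace, organise the sum over permutations by cycle structure, and then recognise the result as the Poisson expansion of $\LPP^{\ssup{\L,\rm bc}}$ restricted to $\{\mathfrak N^{\ssup\ell}_\L=N\}$.

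\textbf{Step 1: Feynman--Kac.} Since ${\rm Tr}_+$ is the trace after composing with the symmetrisation projector $\frac1{N!}\sum_{\sigma\in\mathfrak S_N}U_\sigma$, the kernel of $\e^{-\beta\Hcal_N^{\ssup{\L,{\rm bc}}}}$ gives
\begin{equation*}
Z_N^{\ssup{\rm bc}}(\L)=\frac1{N!}\sum_{\sigma\in\mathfrak S_N}\int_{\L^N}\d x\,\bigotimes_{i=1}^N\mu^{\ssup{1,\L,{\rm bc}}}_{x_i,x_{\sigma(i)}}\Big[\e^{-\sum_{i<j}V(B^{\ssup i},B^{\ssup j})}\Big],
\end{equation*}
with $V$ as in \eqref{Vdef}. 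This requires $v$ bounded below and measurable so that the Trotter/Feynman--Kac representation holds; I will take this classical step from \cite{G70}.

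\textbf{Step 2: concatenate along cycles.} Decompose $\sigma$ into cycles. Along a cycle $(i_1,\dots,i_k)$, use the Chapman--Kolmogorov property of bridge measures and integrate over the intermediate points $x_{i_2},\dots,x_{i_k}$. This concatenates the $k$ bridges of time $\beta$ into a single loop $f\in\Ccal^{\ssup\circlearrowleft}_k$ under $\mu^{\ssup{k,\L,{\rm bc}}}_{x,x}$ with $x=x_{i_1}$. The interaction $\sum_{i<j}V$ then becomes exactly $\sum_{x,y}\mathfrak V(f_x,f_y)$. The factor $\frac12$ together with the exclusion $(x,i)\neq(y,j)$ counts each unordered pair of legs once, both within a single loop and between different loops, so the interaction equals $\Phi_{\L,\L}$ of the resulting configuration.

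\textbf{Step 3: combinatorics of permutations.} This is the step I expect to be the main obstacle. The integrand now depends only on the cycle structure $(\lambda_k)_k$ with $\sum_k k\lambda_k=N$, where $\lambda_k$ is the number of $k$-cycles. The number of permutations with that structure is $N!/\prod_k(k^{\lambda_k}\lambda_k!)$. After dividing by $N!$ we obtain
\begin{equation*}
Z_N=\sum_{(\lambda_k):\sum k\lambda_k=N}\ \prod_k\frac1{\lambda_k!}\int\prod_{k}\prod_{j=1}^{\lambda_k}\frac1k\,\d x_{k,j}\,\mu^{\ssup{k,\L,{\rm bc}}}_{x_{k,j},x_{k,j}}(\d f_{k,j})\;\e^{-\Phi_{\L,\L}}.
\end{equation*}
The care needed here is to choose the starting point of each cycle consistently, so that the $1/k$ exactly compensates the $k$ rotations of the loop's starting particle.

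\textbf{Step 4: Poisson identification.} Compare this with the Poisson point process of intensity $\nu_\L^{\ssup{\rm bc}}$ from \eqref{nudef}. Decompose it into independent PPPs according to loop length. For any symmetric functional $G$, the PPP gives
\begin{equation*}
\LPP^{\ssup{\L,\rm bc}}\big[G\,\1\{\#\text{ of }k\text{-loops}=\lambda_k\ \forall k\}\big]=\e^{-\nu_\L^{\ssup{\rm bc}}(\L\times\Ccalcirc)}\int G\prod_k\frac{(\nu_k)^{\otimes\lambda_k}}{\lambda_k!}.
\end{equation*}
Taking $G=\e^{-\Phi_{\L,\L}}$ and summing over all $(\lambda_k)$ with $\sum_k k\lambda_k=N$, which is the event $\{\mathfrak N^{\ssup\ell}_\L=N\}$, yields \eqref{rewrite}. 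The prefactor $\e^{\nu_\L(\L\times\Ccalcirc)}$ is finite because $\sum_k\frac1k\int_\L g^{\ssup{\L,{\rm bc}}}_{k\beta}(x,x)\,\d x<\infty$ for bounded $\L$: for Dirichlet and periodic conditions the heat-kernel traces decay exponentially in $k$, apart from the zero mode in the periodic case, which contributes $\sum_k\frac1k\cdot 1$ and diverges. In the periodic case I will therefore restrict to finitely many $k$ (note $k\le N$ on the event) or interpret the identity accordingly. Simplicity of the point measures holds almost surely, since $\nu_\L$ has no atoms.
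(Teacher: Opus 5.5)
Your proposal is correct and follows the standard argument: Feynman--Kac for the symmetrised trace, concatenation of the time-$\beta$ bridges along the cycles of $\sigma$, the count $N!/\prod_k k^{\lambda_k}\lambda_k!$ of permutations with cycle type $(\lambda_k)_k$, and the Poisson expansion by loop length; this is the proof behind \cite[Prop.~1.1]{ACK10}, which the paper cites instead of giving its own. Your caveat for ${\rm bc}={\rm per}$ is also right: the zero mode gives $\int_\L g^{\ssup{\L,{\rm per}}}_{k\beta}(x,x)\,\d x\geq 1$, hence $\nu_\L^{\ssup{\rm per}}(\L\times\Ccalcirc)=\infty$ and the right-hand side is not meaningful as written. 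The identity therefore has to be read with $\nu_\L^{\ssup{\rm per}}$ restricted to loop lengths $k\leq K$ for some $K\geq N$, and your computation then shows that the right-hand side does not depend on $K$ and equals $Z_N^{\ssup{\rm per}}(\L)$.
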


That is, up the non-random term $\nu_{\L}^{\ssup{\rm bc}}(\L\times \Ccal^{\ssup\circlearrowleft})$, the partition function  is equal to the expectation over the Boltzmann factor $ {\rm e}^{-\Phi} $ of a marked Poisson point process in $\L$, restricted to a fixed total number of particles. See Figure \ref{Pix_BoseGas} for an illustration.
\begin{figure}[!htpb]
	\centering
\includegraphics[scale=0.4]{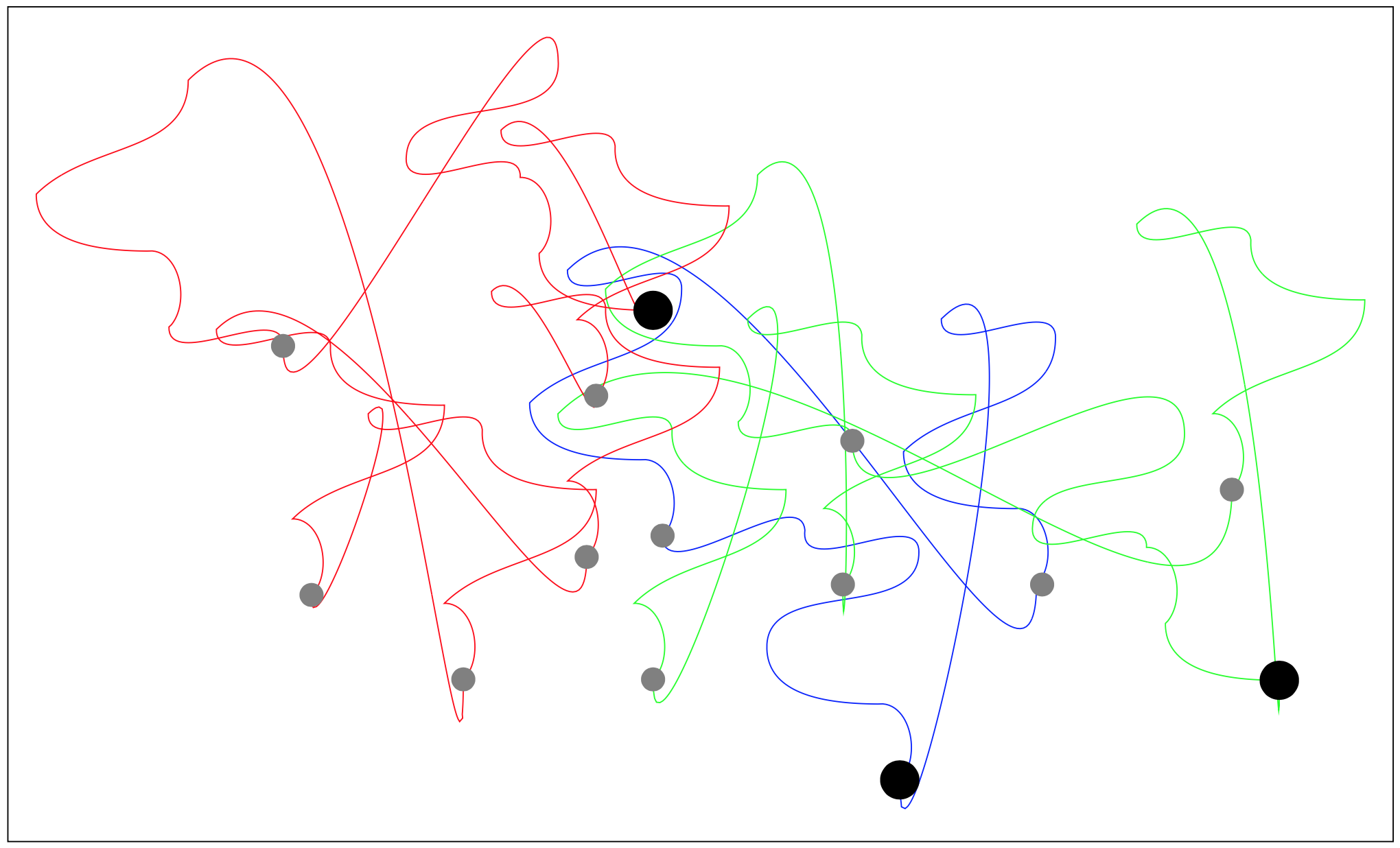}
	\caption{Illustration (taken from \cite{CJK23}) of a realisation of the Bose gas with 14 particles (grey and black bullets), organised in three Brownian bridges, attached to three Poisson points (black bullets). The red cycle has six particles, the blue and green ones each four.}
	\label{Pix_BoseGas}
\end{figure}

From \cite[Lemma 3.12]{ACK10} we already know that the first term on the right-hand side of \eqref{rewrite} satisfies, for any $\rho\in(0,\infty)$,
\begin{equation}\label{limnorming}
\lim_{N\to\infty}\frac1{|\L_N|}\nu_{\L_N}^{\ssup{\rm {bc}}}(\L_N\times \Ccal^{\ssup\circlearrowleft})=(4\pi\beta)^{-d/2}\sum_{k\in\N} k^{-1-d/2}=:\overline q,
\end{equation}
where $\L_N$ is the centred box with volume $\sim N/\rho$ (it was shown there for periodic and Dirichlet boundary condition only, but it is easy to see that it holds also for particle boundary condition). Hence, it appears natural to formulate all our main assertions about the  interacting Bose gas in terms of the  transformed probability measure 
\begin{equation}\label{transformedmeasure}
\d\widehat \LPP^{\ssup{\L,{\rm bc}}}=\frac{\e^{-\Phi_{\L,\L}}}{\widehat Z^{\ssup{{\rm bc}}}(\L)}\,\d \LPP^{\ssup{\L,{\rm bc}}},\qquad\mbox{where }\widehat Z^{\ssup{{\rm bc}}}(\L)=\LPP^{\ssup{\L,{\rm bc}}}[\e^{-\Phi_{\L,\L}}],
\end{equation}
and we do that for all three boundary conditions (periodic, zero, particle). Note that $\widehat \LPP^{\ssup{\L,{\rm bc}}}$ is a probability measure on the set of configurations with an arbitrary number of particles; there is no condition on $\mathfrak N_\L{\ssup\ell}$.

\subsection{Purpose of this paper}\label{sec-Purpose}

\noindent The main purpose of the present paper is to identify, in the thermodynamic limit, the limiting free energy with distinction between long and short loops. The most immediate such distinction would be to call all particles in loops with lengths $> L$ the $L${\it -condensate}, and to register its cardinality and to quantify its contribution to the partition function in the limit as $N\to\infty$, followed by $L\to\infty$. See Figure \ref{Pix_BoseGassub} for an illustration of a configuration with one long loop.
\begin{figure}[!htpb]
\centering
\includegraphics[scale=0.4]{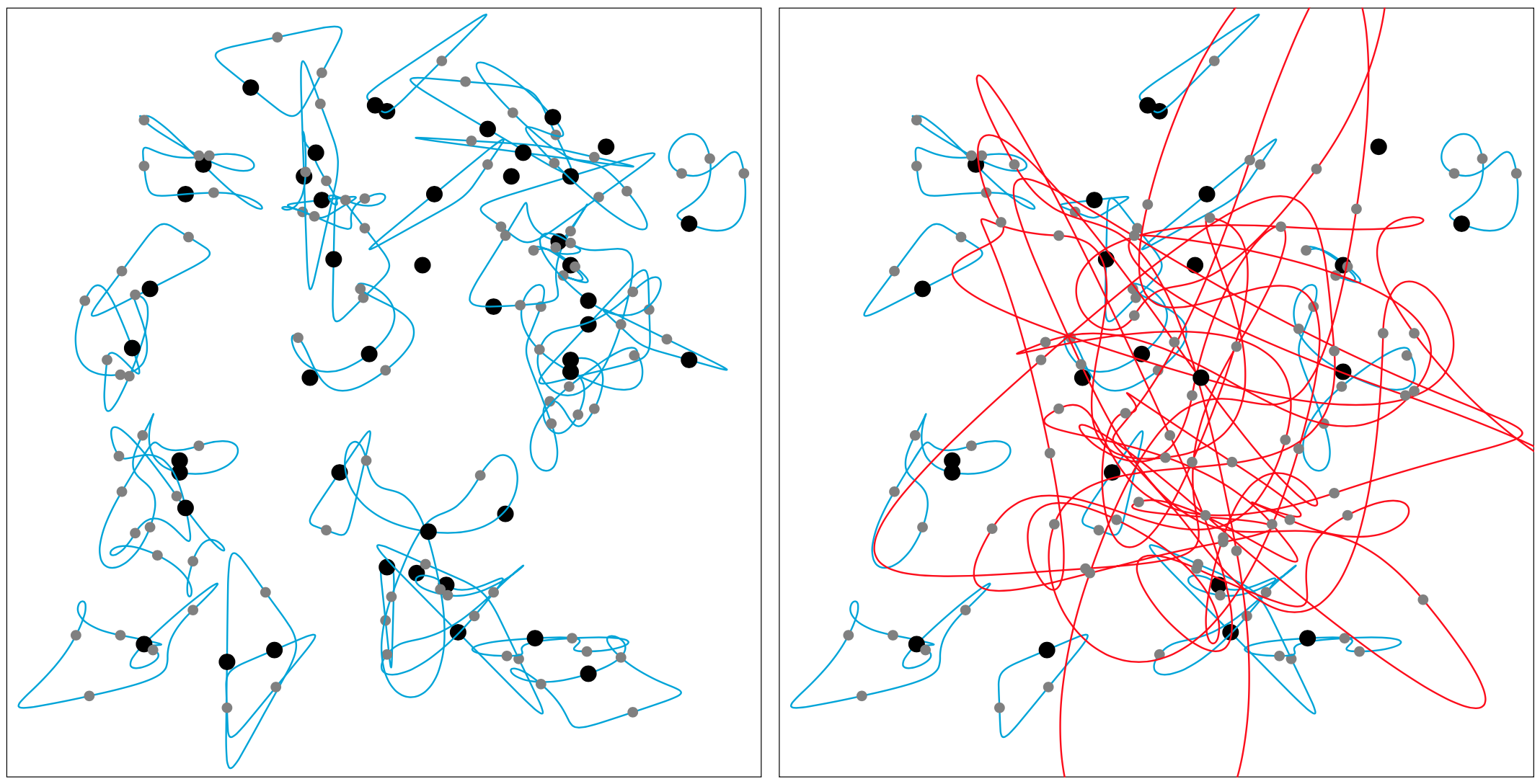}
\caption{Illustration (taken from \cite{CJK23}) of two realisations of a Bose gas: left with low density with only short loops, and right with higher density with one additional long loop (red).}
\label{Pix_BoseGassub}
\end{figure}

However, we are going to rely on a slightly different distinction, which distinguishes the loops according to space criteria rather than length criteria, which will come in handy in our proofs and is also dictated by the presence of interactions, which are spatially defined. Furthermore, this new concept of distinction between short and long loops opens up the possibility of suitable methods to cut the long loops into pieces (``shreds'') and to embed these pieces later into infinitely long trajectories.

Fix $\rho\in(0,\infty)$  and a centred box $\L_N$ in $\R^d$ with volume $|\L_N|\sim N/ \rho$. Fix $R\in\N$ and  write $W=[-R,R]^d$ and $W_z= z+W$ for $z\in 2R\Z^d$. We assume $N$ and $R$ to be such that for suitable $Z_{N,R}\subset 2R\Z^d$ we have that $\L_N=\bigcup_{z\in Z_{N,R}}W_z $ is the (up to boundaries, disjoint) union of the subboxes $W_z$ with  $z\in Z_{N,R}$. (Here, $R$ may depend on $N$ and converges to $R$ as $N\to\infty$; we will sometimes not indicate this dependence in the notation.) We will consider $N$ particles in $\L_N$.

We will call a loop $R${\it -crossing} if it has particles in  at least two different ones of these subboxes. Now we consider the number 
\begin{equation}\label{numberRcrossing}
{\mathfrak N}_\L^{\ssup {\ell,R}}(\omega)=\sum_{x\in\zeta \cap\L}\ell(f_x)\1\{f_x\mbox{ is $R$-crossing}\},\qquad \omega=\sum_{x\in\zeta}\delta_{(x,f_x)},
\end{equation} 
of particles in $R$-crossing loops and take this as a substitute of the number of particles in long loops. We call the particles in $R$-crossings the $R${\em -condensate}, and $\frac1{|\L_N|}{\mathfrak N}_\L^{\ssup {\ell,R}}(\omega)$ the $R${\em -condensate density}. The number of all the other particles is denoted by
\begin{equation}\label{numbernonRcrossing}
{\mathfrak N}_\L^{\ssup {\ell,\neg R}}(\omega)=\sum_{x\in\zeta \cap\L}\ell(f_x)\1\{f_x\mbox{ is not $R$-crossing}\},
\end{equation} 
i.e., the number of particles in loops whose particles are entirely contained in one of the subboxes $z+W$. Note that ${\mathfrak N}_\L^{\ssup {\ell,R}}+{\mathfrak N}_\L^{\ssup {\ell,\neg R}}={\mathfrak N}_\L^{\ssup {\ell}}$. 

The main object of interest is the pair $({\mathfrak N}_{\L_N}^{\ssup {\ell, R}},{\mathfrak N}_{\L_N}^{\ssup {\ell,\neg R}})$  in the limit $N\to\infty$, followed by $R\to\infty$. We will formulate our main findings in Theorem~\ref{thm-freeenergy} in terms of a large-deviation principle for this pair (properly normalised) under the measure $\widehat \LPP^{\ssup{\L,{\rm bc}}}$ defined in \eqref{transformedmeasure}.

See Section~\ref{sec-condensate} for an explanation that this concept of $R$-crossing loops is equivalent to the more familiar concept of long loops.

Here is a survey of the remainder of the paper: In Section~\ref{sec-MainRes} we present our main results, in Section~\ref{sec-Comments} we make various comments about the impact and background of our results and the history of the problem, and we survey our proof strategy, in Section~\ref{sec-Scanning} we begin our proof by introducing the necessary framework for point processes, introducing our main tool, a crucial empirical measure of the loop-shred configurations in subboxes of  a fixed radius $R$, and rewriting the free energy in terms of it. In Section~\ref{sec-distshreds}, we examine the distribution of a crucial part of our empirical measure, the configurations of the entry-exit sites of the shreds into the subboxes. The necessary large-deviation setting is developed and applied in Section~\ref{sec-LDPana}. For making the limit as $R\to\infty$ afterwards, we introduce in Section~\ref{sec-limitingentropy} the crucial specific relative entropy density. Finally, we finish the proof of our main theorem (the description of the free energy in terms of a variational formula) in Section~\ref{sec-finishproof}. As preparations, some crucial, but technical properties of the formula are proved in Section~\ref{sec-propchi}. In the Appendix, Section~\ref{sec-InterlacePPP}, we explain the connection of our description with the Brownian interlacement Poisson point process by Sznitman.

\section{Main results}\label{sec-MainRes}

\noindent In this section we describe our main results. In Section~\ref{sec-mainresult} we formulate our result on the identification of the constrained free energy with full control on the number of particles in $R$-crossing loops and in the others (i.e., in long and short loops). This formula will be a variational formula that plays in the world of stationary processes of points with loops as marks, superposed with a point process of interlacements, i.e., infinitely long double-sided locally finite trajectories. In the formula, such processes minimise the sum of an energy term plus an entropy term, subject to a fixed particle density in loops and in interlacements, respectively. The underlying loop space was already introduced in Section~\ref{sec-PPPdescription}, but we also need to introduce the interlacement space, which we do in Section~\ref{sec:SPP}. The crucial and novel specific relative entropy density term is introduced in Section~\ref{sec-twoentropies}. Then we formulate our main result on the free energy in Section~\ref{sec-mainresult}. The arising characteristic variational formula contains a lot of information about the limiting behaviour of the interacting Bose gas; this is presented in Section~\ref{sec-VP}.

\subsection{The interlacement space and the shred spaces}\label{sec:SPP}

We are going to introduce the space on which we will describe the long loops and their particles in the limit as $N\to\infty$. Define
\begin{equation}\label{Cinftydef}
\Ccal_\infty=\Big\{g\in\Ccal(\R\to\R^d)\colon \{g(k\beta)\colon k\in\Z\}\mbox{ is locally finite}\Big\},
\end{equation}
where by $\Ccal(\R\to\R^d)$ we denote the space of continuous functions from $\R$ to $\R^d$. We will always tacitly identify functions in $\Ccal_\infty$ as equal if one of them is the time-shift of the other by an integer multiple of $\beta$; hence strictly speaking we are dealing with equivalence classes modulo time shifts in $\beta\Z$.

\begin{remark}[Interlacements]\label{rem-interlacements}
The term \lq interlacements\rq\ (in plural) is often used for the range $\bigcup_{g\in \Gamma_{\rm B}}g(\R)$ of a certain Poisson point process $\varpi^{\ssup{\rm B}}=\sum_{g\in\Gamma_{\rm B}}\delta_g$ of bi-infinite Brownian motions in $\R^d$ with $d\geq 3$ (see Remark~\ref{rem-BrownianInterlacements}), but this set will play no role in this paper, as we will have to work always with the entire trajectories $g$. Therefore, we will take the freedom to call every element of $\Ccal_\infty$ an {\it interlacement}.
\hfill$\Diamond$
\end{remark}

We call the sites $g(k\beta)$  the {\em particles} of the interlacement $g\in\Ccal_\infty$ and the pieces $g_k= g|_{[\beta k,\beta(k+1)]}$ the {\em legs} of $g$. We consider the space
\begin{equation}\label{shredprocess}
\Shreds=\Big\{\varpi=\sum_{g\in\Gamma}\delta_{g}\colon \Gamma\subset\Ccal_{\infty}, \varpi\mbox{ is simple}, \{g(k\beta)\colon k\in\Z, g\in \Gamma\}\mbox{ is locally finite}\Big\}
\end{equation}
of all simple locally finite point processes with points in $\Ccal_\infty$.
The set $\{g(k\beta)\colon k\in\Z, g\in \Gamma\}$ is the set of all particles of $\varpi$. For $W\Subset \R^d$, 
\begin{equation}\label{partnumbinter}
\mathfrak N_W^{\ssup\ell}(\varpi)=\sum_{g\in \Gamma}\ell_W(g),\qquad\mbox{where }\ell_W(g)=\sum_{k\in\Z}\1_W(g(k\beta)),
\end{equation}
is the number of particles that $\varpi$ has in $W$. The maximal pieces $g|_{[\beta k_1,\beta k_2]}$ with $k_1,k_2\in\Z$ and $g(\beta k_1), g(\beta (k_1+1)), g(\beta(k_1+2)), \dots,g(\beta(k_2-1))\in W$ (and hence $g(\beta (k_1)-1))\notin W$ and $g(\beta k_2)\notin W$) are the $W${\em -shreds} of $g$; see Section~\ref{sec-chopping}, in particular \eqref{eq:shred_op}. Then  $\Ccal_W$ denotes the set of all $W$-shreds, and $\Scal_W=\Mcal_{\N_0}(\Ccal_W)$ denotes the set of locally finite point measures of all the $W$-shreds. On $\Shreds$ we consider the topology that is generated by a family of certain projection (\lq shredding\rq) operators $\Pi_W^{\ssup\Scal}\colon\Scal\to\Scal_W$ with $W\Subset\R^d$, which we spell out in \eqref{shredding}. Let us already mention that these shredding operators cut the interlacements $g\in\Ccal_\infty$ into a point process of all the $W$-shreds, but never cut legs. Let $\Mcal_1^\ssup{{\rm s}}(\Shreds)$ denote the set of shift-invariant probability measures on $\Shreds$. On this set we consider the weak topology that is generated by the shredding operators. For details, see Section~\ref{sec-chopping}.

We now introduce a crucial Brownian reference measure on the set $\Scal_W$. Let $B=(B_t)_{t\in[0,\infty)}$ be a standard Brownian motion in $\R^d$ that starts from $x\in\R^d$ under $\P_x$.  We denote by
\begin{equation}
q_{x,y}^{\ssup {W,l}}(\d g)=\P_x\big((B_t)_{t\in [0,\beta l]}\in\d g\mid B_\beta,B_{2\beta},\dots,B_{(l-1)\beta}\in W, B_{l\beta}=y\big)\in\Mcal_1(\Ccal_W)
\end{equation}
the distribution of a Brownian shred from $x\in W$ to $y\in W^{\rm c}$ with $l\in\N$ particles, and by
\begin{equation}\label{kernelK}
{\tt K}_W(\mu,\d\varpi)=\bigotimes_{i\in I}q_{x_i,y_i}^{\ssup {W,l_i}}(\d g_i)\in\Mcal_1(\Scal_W),\qquad \mu=\sum_{i\in I}\delta_{(x_i,l_i,y_i)},\varpi=\sum_{i\in I}\delta_{g_i},
\end{equation}
the conditional distribution of a Brownian $W$-shred configuration given the point process $\mu\in\Mcal_{\N_0}(W\times \N\times W^{\rm c})$ of their entry and exit sites and their lengths (certainly under the assumption that $g_i(0)=x_i$ and $g_i(l_i\beta)=y_i$ and $g_i\in \Ccal_{l_i}$ for all $i\in I$, and $I$ is a finite index set).

\begin{remark}[Brownian interlacements PPP]\label{rem-BrownianInterlacements} 
In dimension $d\geq 3$, the kernel ${\tt K}_W$  appears as a regular version of the conditional distribution of a variant of the well-known Brownian interlacement Poisson point process; see  Section~\ref{sec-InterlacePPP}. More precisely, given that the entry and exit sites and lengths of all the $W$-shreds of this PPP are given as $\mu$, $K_W(\mu,\cdot)$ is the conditional distribution  of the point process of the $W$-shreds in this PPP, see Lemma~\ref{lem-KWident}. This works only in $d\geq 3$ because of the transience; in $d\leq 2$, this PPP would be uninteresting; it would not even be locally finite.
\hfill$\Diamond$
\end{remark}

\subsection{Our first result: A new specific relative entropy density}\label{sec-twoentropies}

In our description of the free energy of the interacting Bose gas in Theorem~\ref{thm-freeenergy} below, a crucial ingredient will be a new specific relative entropy density with respect to the superposition of the reference measure  introduced in Definition~\ref{def-PPP} and the family of kernels defined in \eqref{kernelK}. Here we introduce it as our first main result. By $H(\mu\mid\nu)$ we denote the relative entropy of a finite measure $\mu$ with respect to another one, $\nu$, on a Polish space $\mathcal X$, defined by
\begin{equation}\label{Entropydef}
H(\mu\mid\nu)= \nu(\mathcal X)-\mu(\mathcal X)+ \int_{\mathcal X}\mu(\d x)\log \frac{\d\mu}{\d\nu}(x),
\end{equation}
if $\mu\ll\nu$, and otherwise $H(\mu\mid\nu)=\infty$. A characterisation that is often helpful if $\mu$ and $ \nu$ are probability measures is the formula
\begin{equation}\label{entropyformula}
H(\mu\mid\nu)=\sup_{f}\Big(\langle f,\mu\rangle - \log\langle \e^f,\nu\rangle\Big),\qquad\mu,\nu\in\Mcal_1(\Xcal),
\end{equation}
where the supremum is on measurable and bounded functions $f\colon \Xcal\to\R$ (the supremum is approximately attained in $f=\log\frac{\d\mu}{\d\nu}$).

It is well-known, see  \cite[Ch.~15]{G88} and \cite{GZ93},
that there is a notion of a {\em specific relative entropy density} with respect to the loop reference process $\LPP$, defined as
\begin{equation}\label{entropydensitydef}
\h(P\mid\LPP)=\lim_{R\to\infty}\frac 1{|W_R|}H\big(\Pi_{W_R}(P)\mid \Pi_{W_R}(\LPP)\big),\qquad P\in\Mcal_1^{\ssup{\rm s}}(\Lcal),W_R=[-R,R]^d,
\end{equation}
for all shift-invariant probability measures $P$ on the set $\Lcal$ of simple marked point measures with loops as marks; here $\Pi_W$ denotes the usual projection to the loops with starting site in $W\Subset \R^d$. The function $\h$ is introduced in the general frame of point processes with marks; and the shape or any properties of the marks play hardly any role in this theory.

In the present paper, we are going to work on the extended space $\Lcal\times \Shreds$, which we conceive as the set $\Mcal_{\N_0}([\R^d\times\Ccal^{\ssup\circlearrowleft}]\times \Ccal_\infty)$ of simple point measures on the set of loops and interlacements in $\R^d$.
Let $\Mcal_1^{\ssup{\rm s}}(\Lcal\times \Shreds)$ denote the set of all shift-invariant probability measures on $\Lcal\times \Shreds$. We write the elements of $\Lcal$ as $\omega=\sum_{x\in\zeta}\delta_{(x,f_x)}$ and the elements of $\Scal$ as $\varpi=\sum_{g\in\Gamma}\delta_g$. 

We make a distinction between loops whose particles are contained entirely in one of the subboxes $z+W_R$ with $z\in 2R \Z^d$ and those whose particles are not; the latter loops are called $R$-crossings, see Section~\ref{sec-Purpose}. Hence, adapted notions of specific relative entropies need to be introduced.
For the first (the loops whose particles are contained in some $z+W$ with $z\in 2R\Z^d$), we need to introduce the {\em restricted} projection  operator of loops,
\begin{equation}\label{RestrictedLoopsProj}
\Pi_W^{\ssup\Lcal}(\omega)=\sum_{x\in \zeta\colon f_x(k\beta)\in W\,\forall k\in[\ell(f_x)]}\delta_{(x,f_x)}\in\Lcal_W=\Mcal_{\N_0}(W\times \Ccal_W^{\ssup{\circlearrowleft}}),
\end{equation}
which restricts to all the loops that have all their particles in $W$ (we used the notation $[n ]=\{1,\dots,n\}$). We wrote $\Mcal_{\N_0}(\Xcal)$ for the set of all point measures on points in some space $\Xcal$ and $\Ccal_W^{\ssup{\circlearrowleft}}$ for the set of loops with all particles in $W$. (See Section~\ref{sec-PPPs} for more explanations.) Note that the legs $f_{x,i}$ start and  end in $W$; but do not have to stay in $W$; they can travel arbitrarily far away from $W$. 

For describing the second kind of loops (the $R$-crossing ones), we introduce the shredding operator 
\begin{equation}\label{shredding}
\Pi_W^{\ssup\Scal}(\omega)=\sum_{x\in\zeta\colon \{f_x(k\beta)\colon k\in[\ell(f_x)]\}\not\subset W}\sum_{g\in\Gamma_W(f_x)}\delta_{g}\in \Scal_W=\Mcal_{\N_0}(\Ccal_W),
\end{equation}
where $\Gamma_W(f_x)$ is the set of $W$-shreds of the loop $f_x$, and $\Ccal_W$ is the set of all $W$-shreds. (See Section~\ref{sec-chopping} for more details and explanations.)  Analogously, the operator $\Pi_W^{\ssup \Scal}(\varpi)$ is defined also for point measures $\varpi=\sum_{g\in\Gamma}\delta_g$ of interlacements.  By $\Pi_W(\omega,\varpi)$ we denote the sum (i.e., the superposition) $\Pi_W^{\ssup\Lcal}(\omega)+\Pi_W^{\ssup\Scal}(\varpi)$.  Another important operator is the boundary-shred operator
\begin{equation}\label{boundaryshred}
\partial\Pi_W^{\ssup\Scal}(\varpi)=\sum_{g\in\Gamma}\sum_{g' \in \Gamma_W(g)}\delta_{(g'(\beta),\ell(g'),g'(\ell(g')\beta))}\in \Tcal_W=\Mcal_{\N_0}(W\times\N\times W^{\rm c}),
\end{equation}
which registers initial and terminal sites and particle numbers of the $W$-shreds $(g'(\beta),\dots,g'(\ell(g')\beta))$ with $g'(0)\in W^{{\rm c}}$ and $g'(\beta),\dots,g((\ell(g')-1)\beta)\in W$ and $g'(\ell(g')\beta)\in W^{{\rm c}}$. For probability measures $P$ on loop or shred space and any of these operators $\Pi$, we denote by $\Pi(P)$ the image measure $P\circ\Pi ^{-1}$ of $P$ under $ \Pi$. Hence, for $ P\in\Mcal^{\ssup{ \rm s}}_1(\Lcal\times\Scal)$, the measure $\Pi_W(P)$ is the distribution of all the loops of $P$ that have all particles in $W$, superposed with all the $W$-shreds that come from interlacements or from the other loops; {\em a priori} the $W$-shreds cannot be distinguished as to whether they come from a loop or from a shred.

As it is standard in measure theory, the product of a measure $\mu$ on a measurable space $\Xcal$ with a kernel $K$ from $\Xcal$ into another measurable space $\Ycal$ is defined by
\begin{equation}\label{productmeasure}
\mu \otimes K(\d (x,y))= \mu(\d x)\,K(x,\d y).
\end{equation}
We introduce the relative entropy
\begin{equation}\label{JWdeffirst}
J_W(\xi)=\frac 1{|{W}|}H_{\Lcal_{W}\times\Scal_{W}}\big(\xi\,\big|\,\Pi_{W}^{\ssup\Lcal}(\LPP)\otimes[ \partial\Pi_{W}^{\ssup\Shreds}(\xi)\otimes {\tt K}_{W}]\big)\in[0,\infty],\qquad W\Subset \R^d,\xi\in\Mcal_1(\Lcal_W\times\Scal_W).
\end{equation}
In our next result, we  introduce the main entropy function of this paper,
\begin{equation}\label{jointentropydef}
\h^{\ssup{\Lcal,\Scal}}(P)
=\lim_{R\to\infty} J_{W_R}(\Pi_{W_R}(P)),\qquad P\in\Mcal_1^{\ssup {\rm s}}(\Lcal\times\Scal), \mbox{with }W_R=[-R,R]^d.
\end{equation}
This can be seen as a kind of specific relative entropy density with respect to the $\Pi_W^{\ssup\Lcal}$-projections of the reference loop process ${\tt Q}$ and the family of Brownian $W$-shred reference kernels ${\tt K}_W$ defined in \eqref{kernelK}; observe that the shred-part of the reference measure comes from the boundary-shred distribution of $P$. This means that the loop-part reference measure has a particle density, but the shred-part inherits it from $P$.

We need to introduce also some particle-counting operators. Let ${\mathfrak N}_W ^{\ssup{\ell,\Lcal}}$ and ${\mathfrak N}_W ^{\ssup{\ell,\Shreds}}$ denote the functionals that count the particles in $W $ of the loop point process and of the shred point process, respectively (recall \eqref{partnumbloop} and \eqref{partnumbinter}):
\begin{equation}\label{def_particlenumbers}
	{\mathfrak N}_W ^{\ssup{\ell,\Lcal}}(\omega,\varpi)={\mathfrak N}_W ^{\ssup{\ell}}(\omega)=\sum_{x\in\zeta\cap W}\ell(f_x)\qquad\mbox{and}\qquad {\mathfrak N}_W ^{\ssup{\ell,\Shreds}}(\omega,\varpi)={\mathfrak N}^{\ssup{\ell}}_W (\varpi)=\sum_{g\in\Gamma} \ell_W (g),
\end{equation}
and put $\mathfrak N_W^{\ssup {\ell}}(\omega,\varpi)= {\mathfrak N}_W ^{\ssup{\ell,\Lcal}}(\omega,\varpi)+{\mathfrak N}_W ^{\ssup{\ell,\Shreds}}(\omega,\varpi)$ for the total particle number in $W$.
Note that ${\mathfrak N}_W ^{\ssup{\ell,\Lcal}}$ counts particles in all loops that start from $W$ (regardless whether the particle lies in $W$ or not), while ${\mathfrak N}_W ^{\ssup{\ell,\Shreds}}$ counts only particles that lie in $W$ coming from any interlacement. (See Remark~\ref{rem-counting} for an alternate way of counting particles.)

\begin{theorem}[Joint specific relative entropy density]\label{thm-specrelent} If $P\in\Mcal_1^{\ssup {\rm s}}(\Lcal\times\Scal)$ satisfies $\langle P,\mathfrak N_U^{\ssup\ell}\rangle <\infty$ (with $U=[-\frac 12,\frac 12]^d$ the unit box), then the limit in \eqref{jointentropydef} exists in $[0,\infty]$, and it satisfies
\begin{equation}\label{hbound}
J_{W_R}(\Pi_{W_R}(P))\leq  \h^{\ssup{\Lcal,\Scal}}(P),\qquad R\in\N.
\end{equation}
Furthermore, the map $P\mapsto \h^{\ssup{\Lcal,\Scal}}(P)$ is lower semi-continuous and convex, and it is also concave (i.e., even affine). Moreover, for  any sequence $(K_{R_N})_{N\in\N}$ of compact sets $K_{R_N}\subset\Tcal_{W_{R_N}}$ such that $R_N\to\infty$ as $N\to\infty$, the restricted level set 
\begin{equation}\label{levelset}
\bigcap_{N\in\N}\big\{P\in\Mcal_1^{\ssup {\rm s}}(\Lcal\times\Scal)\colon \partial\Pi_{W_{R_N}}^{\ssup\Scal}(P)\in K_{R_N}, \h^{\ssup{\Lcal,\Scal}}(P)\leq c, \langle P,\mathfrak N_U^{\ssup\ell}\rangle \leq c\big\}
\end{equation}
is compact for any $c\in\R$.
\end{theorem}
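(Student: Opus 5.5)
The plan is to follow the classical route for specific relative entropies, as in \cite[Ch.~15]{G88}, via superadditivity over box partitions. The new ingredient is that the reference measure is not a product in the shred part. For each $W$ it is a mixture $\Pi_W^{\ssup\Lcal}(\LPP)\otimes[\partial\Pi_W^{\ssup\Scal}(\xi)\otimes{\tt K}_W]$ built from $\xi$'s own boundary-shred marginal.

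The first step is a chain-rule decomposition. Since $\partial\Pi_W^{\ssup\Scal}$ is a measurable function of the shred configuration and the reference measure has exactly the $\xi$-marginal on $\Tcal_W$, we get
\[
|W|\,J_W(\xi)=\int \partial\Pi_W^{\ssup\Scal}(\xi)(\d\mu)\,H\big(\xi(\cdot\mid\mu)\,\big|\,\Pi_W^{\ssup\Lcal}(\LPP)\otimes{\tt K}_W(\mu,\cdot)\big).
\]
This is a conditional relative entropy given the boundary data, so the boundary marginal itself carries no entropy cost.

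Next I would show monotonicity along $R\mapsto W_R$. Take $R'=mR$, with $W_{R'}$ partitioned into $m^d$ translates $z+W_R$.
\begin{itemize}
\item Restricting $\Pi_{W_{R'}}(P)$ to the subboxes maps the $W_{R'}$-loops and $W_{R'}$-shreds onto $W_R$-loops and $W_R$-shreds in each subbox. Loops that cross subboxes are shredded, which is exactly consistent with the definitions of $\Pi^{\ssup\Lcal}$ and $\Pi^{\ssup\Scal}$.
\item Under the reference measure, the image is the following. The $\LPP$-loops in $W_{R'}$ contained in one subbox form independent Poisson pieces, giving $\bigotimes_z \Pi_{z+W_R}^{\ssup\Lcal}(\LPP)$. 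Crossing loops, together with the Brownian shreds of ${\tt K}_{W_{R'}}$, cut at the subbox boundaries become Brownian shreds. By the Markov property at the particle times, their conditional law given all sub-boundary data is $\bigotimes_z{\tt K}_{z+W_R}$.
\item The crossing $\LPP$-loops are Brownian bridges. Given their particle positions at the cut times, the pieces are again Brownian shreds with the same conditioning, because ${\tt K}$ only conditions on particle positions in $W$.
\end{itemize}
The data-processing inequality, combined with the chain rule and the superadditivity of entropy for a product reference ($H(\xi\mid\otimes_z\eta_z)\ge\sum_z H(\xi_z\mid\eta_z)$), then gives $|W_{R'}|J_{W_{R'}}\ge\sum_z|W_R|J_{z+W_R}=m^d|W_R|J_{W_R}$ by stationarity. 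For general $R<R'$ I would use approximately-nested boxes. The assumption $\langle P,\mathfrak N_U^{\ssup\ell}\rangle<\infty$ controls the boundary errors, because the loops starting near the boundary have bounded expected mass.

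Monotonicity yields existence of the limit as a supremum, and \eqref{hbound} follows directly. Lower semicontinuity then follows from writing $\h^{\ssup{\Lcal,\Scal}}$ as a supremum of the maps $P\mapsto J_{W_R}(\Pi_{W_R}(P))$. Each of these is lower semicontinuous via the variational formula \eqref{entropyformula}, written for the conditional entropy as a supremum over bounded continuous $f$ of $\langle f,\xi\rangle-\langle\log\langle \e^f,\Pi^{\ssup\Lcal}_W(\LPP)\otimes{\tt K}_W(\mu,\cdot)\rangle,\partial\Pi(\xi)(\d\mu)\rangle$. This expression is affine, hence convex, in $\xi$, provided ${\tt K}_W$ is Feller, and the shredding projections are continuous by the definition of the topology. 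The supremum of convex lower semicontinuous functionals is convex and lower semicontinuous.

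Concavity is the standard argument. For $P=\lambda P_1+(1-\lambda)P_2$, relative entropy satisfies $H(\xi\mid\cdot)\ge\lambda H(\xi_1\mid\cdot)+(1-\lambda)H(\xi_2\mid\cdot)-\log 2$ for mixtures. The mixing of the boundary marginals inside the reference measure also costs at most $\log 2$, by comparing densities, since ${\tt K}_W(\mu,\cdot)$ is the same for both. Dividing by $|W_R|$ and letting $R\to\infty$ kills the error.

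For compactness, lower semicontinuity gives closedness, so tightness remains. The bound on $\langle P,\mathfrak N_U^{\ssup\ell}\rangle$ controls the particle counts. The bound on $J_{W_{R_N}}$ together with compactness of the boundary data $K_{R_N}$ gives tightness of $\Pi_{W_{R_N}}(P)$: a family $\xi$ with bounded entropy relative to a tight family of reference measures, which is tight because $K_{R_N}$ is compact and $\mu\mapsto{\tt K}_W(\mu,\cdot)$ is continuous, is itself tight. A diagonal argument over $N$, together with the projective structure, then gives compactness in the shredding topology.

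The main obstacle I expect is the consistency step in the monotonicity argument. It must be verified that cutting ${\tt K}_{W_{R'}}$-shreds and $\LPP$-crossing loops at subbox boundaries yields exactly a product of ${\tt K}_{z+W_R}$, conditionally on the finer boundary data. It must also be handled that the finer boundary marginal is not the image of the coarser one, so the reference measures do not simply push forward. The way to handle this is to apply data processing to the joint law including the fine boundary data, and then apply the chain rule once more.
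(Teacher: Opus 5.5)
Your argument is correct in substance, but it takes a genuinely different route from the paper. The paper deliberately avoids superadditivity: the remark opening Section~\ref{sec-limitingentropy} says the author could not prove it for the interlacement part.

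\emph{The paper's route.} The existence of the limit and \eqref{hbound} come from the large-deviation comparison in Proposition~\ref{lem-uppboundJ_W}(2). This is an inequality $J_{W_R;M}(\Pi_{W_R}(P))\le J_{W_{mR};M}(\Pi_{W_{mR}}(P))+o(1)$, and it requires $W_{mR}$-ergodicity (for admissibility, Lemma~\ref{lem-Psi_admissible}), bounded leg spreads, and regularity events that control the boundary term $\langle\psi,\mathfrak p_{W,M}\rangle$ (Lemma~\ref{lem-lowerboundlogpint}). General $P$ are then reached through Lemma~\ref{lem-ergappr}, Lemma~\ref{lem-EntProjImag} and Proposition~\ref{lem-uppboundJ_W}(3).

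\emph{Why your key step holds.} By the Markov property, as in Lemmas~\ref{lem-distboundshred} and \ref{lem-Sanovcritsatisfied}(1), the subbox projections of $\Pi_{W'}^{\ssup\Lcal}({\tt Q})\otimes[\partial\Pi_{W'}^{\ssup\Scal}(\xi')\otimes{\tt K}_{W'}]$ have joint law $\lambda(\d\underline\mu)\bigotimes_z[\Pi^{\ssup\Lcal}_{z+W}({\tt Q})\otimes{\tt K}_{z+W}(\mu_z,\cdot)]$. Here $\lambda$ is the law of the fine boundary data $\underline\mu=(\mu_z)_z$. The chain rule then discards only the nonnegative entropy of the law of $\underline\mu$ under $\xi'$ relative to $\lambda$. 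Passing from conditioning on all of $\underline\mu$ to conditioning on $\mu_z$ alone can only lower the entropy, because each subbox kernel depends on $\mu_z$ only. This uses the convention of \eqref{SWdef}, under which a shred consists of the legs starting at particles in the box, so every leg lies in exactly one subbox shred.

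\emph{What each route buys.} Yours gives $\h^{\ssup{\Lcal,\Scal}}(P)=\sup_R J_{W_R}(\Pi_{W_R}(P))$ directly, without ergodicity, cut-offs or the density hypothesis. Lower semicontinuity and convexity then follow at once, since a supremum of lower semicontinuous convex functionals is again both. The paper's LDP machinery additionally yields the reverse estimate of Proposition~\ref{lem-uppboundJ_W}(4), which extends $\xi$ from $W$ to $3^kW$ without increasing the entropy. Data processing cannot give that estimate, and the paper needs it in Section~\ref{sec-finishuppbound}.

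A few points to tighten:
\begin{itemize}
\item For general $R<R'$ you need neither approximately nested boxes nor the particle density. Project $W_{R'}\to W_{mR}$ with $m=\lfloor R'/R\rfloor$ and run the same argument: this gives $|W_{R'}|J_{W_{R'}}\ge|W_{mR}|J_{W_{mR}}\ge m^d|W_R|J_{W_R}$ exactly, because the leftover strip only contributes a nonnegative term. Full shift invariance handles translates not lying on $2R\Z^d$.
\item For lower semicontinuity you must actually check that $\mu\mapsto{\tt K}_W(\mu,\cdot)$ is Feller on $\Tcal_W$. This holds because each $q^{\ssup{W,l}}_{x,y}$ is a mixture of concatenated Brownian bridges whose mixing density is continuous in $(x,y)$. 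You also need $\partial\Pi_W^{\ssup\Scal}$ to be continuous on $\Scal_W$.
\item In the compactness step, read $K_{R_N}$ as compact subsets of $\Mcal_1(\Tcal_{W_{R_N}})$.
\item Turning the consistent family of box limits into a limit point $P$ requires the extension result Proposition~\ref{Prop-construction_of_P}, since $\Lcal\times\Scal$ is not a product space. Your phrase ``projective structure'' hides this nontrivial step.
\end{itemize}
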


The proof is in Section~\ref{sec-limitingentropy}. While establishing $\h(P\mid\LPP)$ relies on superadditivity, our proof of \eqref{jointentropydef} relies on probabilistic arguments in connection with large-deviation estimates for  a crucial empirical measure.

We call the function $\h^{\ssup{\Lcal,\Scal}}$ the {\em specific relative entropy density} with respect to the superposition of the Brownian loop soup and the family of Brownian shred kernels ${\tt K}_W$. 
The reference measure in \eqref{jointentropydef} samples with $\Pi_W^{\ssup\Lcal}({\tt Q})$ the loops and then, independently, with $\partial\Pi_W^{\ssup\Scal}(P)$ the boundary shred configuration and then with ${\tt K}_W$ the shred interior configuration in $W$.

\subsection{Our main result: an extended variational description}\label{sec-mainresult}
 We are heading towards a formulation of our main result.
We work on the space $\Lcal\times \Shreds$ of pairs of point processes of points with loops as marks and point processes of interlacements. Recall that we identify this set as the set of all superpositions of point processes of loops and interlacements.

We need to introduce an interaction functional, decomposed into its different loop-interlacement components. For this, we need notation for interactions between any two measurable sets $W,\widetilde W\subset \R^d$. For  $(\omega,\varpi)=(\sum_{x\in \zeta}\delta_{(x,f_x)},\sum_{g\in\Gamma}\delta_g)\in\Loops\times\Shreds$ we put
\begin{eqnarray}
	F_{W,\widetilde W}^{\ssup {\Lcal\Lcal}}(\omega,\varpi)&=& \sum_{x\in\zeta\cap W}\sum_{y\in\zeta\cap \widetilde W}\sum_{i=1}^{\ell(f_x)}\sum_{j=1}^{\ell(f_y)}\1_{(x,i)\not=(y,j)} V(f_{x,i},f_{y,j}),\label{FLL}\\
	F_{W,\widetilde W}^{\ssup {\Lcal\Shreds}}(\omega,\varpi)&=& \sum_{x\in\zeta\cap W}\sum_{g\in\Gamma}\sum_{i=1}^{\ell(f_x)}\sum_{k \in \Z}\1_{\widetilde W}(g(k\beta)) V(f_{x,i},g_{k+1}),\label{FLS}\\
	F_{W,\widetilde W}^{\ssup {\Shreds\Shreds}}(\omega,\varpi)&=&\sum_{g,g'\in\Gamma}\sum_{k,k'\in \Z}\1_{(g,k)\not=(g',k')}\1_W(g(k\beta))\1_{\widetilde W}(g'(k'\beta)) V(g_{k+1}, g'_{k'+1}),\label{FSS}
\end{eqnarray}
where we write the legs of $g$ here as $g_k(\cdot)=g((k-1)\beta+\cdot)|_{[0,\beta]}$, i.e., the $(k+1)$-st leg starts at the particle $g(k\beta)=g_{k+1}(0)$. In words, $F_{W,\widetilde W}^{\ssup {\Lcal\Lcal}}$ is the interaction between any two different legs of loops that start in $W$ and $\widetilde W$, respectively, (regardless where the leg starts), $F_{W,\widetilde W}^{\ssup {\Lcal\Shreds}}$ is the one between legs of loops that start in $W$ and legs starting in $\widetilde W$ of any interlacement, and $F_{W,\widetilde W}^{\ssup {\Shreds\Shreds}}$ is the one between any two legs that start in $W$, respectively in $\widetilde W$, of any (one of two) interlacement(s). Then we define the total interaction within $W$ as
\begin{equation} \label{FWWdef}
F_{W,W}= \frac 12 F_{W,W}^{\ssup {\Lcal\Lcal}}+F_{W,W}^{\ssup {\Lcal\Scal}}+\frac 12 F_{W,W}^{\ssup {\Scal\Scal}},
\end{equation}
(since every pair of  legs should appear precisely once and because of the possibility of double-counting, we need to be a bit cautious).

For the formulation of Theorem~\ref{thm-freeenergy}, we need the total interaction between the unit box $U=[-\frac 12,\frac 12]^d$ and $\R^d$:
\begin{equation}\label{FUdef}
F_U=\frac 12 \big[F_{U,U}^{\ssup{\Lcal\Lcal}}+ 2F_{U,U}^{\ssup {\Lcal\Shreds}}+ F_{U,U}^{\ssup {\Shreds\Shreds}}\big]
+F_{U,U^{\rm c}}^{\ssup {\Lcal\Lcal}}+ F_{U,U^{\rm c}}^{\ssup {\Lcal\Shreds}}+ F_{U^{\rm c},U}^{\ssup {\Lcal\Shreds}}+F_{U,U^{\rm c}}^{\ssup {\Shreds\Shreds}}.
\end{equation}
In words, this is the interaction between all legs of loops that start in $U$ and all the other legs plus the interaction between all interlacement-legs that start in $U$ and all the others (without double-counting). See Remark~\ref{rem-counting} for an alternative way of counting the interaction between $U$ and the rest.
 
We are going to work on the following assumption on the interaction potential:

\medskip

\noindent{\bf Assumption (V).} {\em The interaction potential $v\colon \R^d\to[0,\infty)$ is  bounded and continuous and has compact support. Furthermore, there exists $C>0$ such that, for any box $W\Subset\R^d$ containing $[-1,1]^d$,}
 \begin{equation}\label{LowBoundAss}
	\sum_{1\leq i<j\leq k} v(x_i-x_j)\geq C\left(\frac {k^2}{|W|} - k\right),\qquad k\in\N, x_1,\dots,x_k\in W.
 \end{equation}
 
 The last part of Assumption (V) is called \emph{superstability} of $v$~\cite[\textsection 3.2.9]{Rue69}.

\begin{remark}[Assumptions on the interaction]\label{rem-interaction}
As far as we are aware of, there is no canonical choice of an interaction function $v$ from a physicist's point of view. Details of $v$ might be motivated by physical or chemical properties of the matter that is to be modelled, but we have no particular choice in mind. We tried to keep $v$ general and put the assumptions of measurability, boundedness of $v$ and of the support just for convenience to keep the technicalities in the proofs low. In fact, several lengthy technicalities in the proofs in \cite{ACK10} are due to our decision to allow $v$ and its support  to be unbounded there, but do not appear in the present paper. However, in our proof we need that the effect of $v$ is sufficiently repulsive, which motivates \eqref{LowBoundAss}. 
\hfill$\Diamond$
\end{remark} 

The following is our main result; it identifies the limiting free energy in the thermodynamic limit with fixed mass in the $R$-crossings in the limit $R\to\infty$. Recall the definition of the number $\mathfrak N_\L^{\ssup{\ell,R}}(\omega)$ of particles in $R$-crossing loops from \eqref{numberRcrossing} and the number $ \mathfrak N_{\L_N}^{\ssup{\ell, \neg R}}$ of the remaining particles from \eqref{numbernonRcrossing}.

\begin{theorem}[Explicit formula for constrained free energy]\label{thm-freeenergy} Assume $\beta, \rho\in(0,\infty)$ to be fixed and suppose that $v$ satisfies Assumption (V). Fix a sequence of centred boxes $\L_N\subset\R^d$ such that $\lim_{N\to\infty}|\L_N|/N=1/\rho$. We partition (up to boundary elements) $\L_N=\bigcup_{z\in Z_{N,R}}W_z $ into the subboxes $W_z=z+W=z+[-R,R]^d$ with  $Z_{N,R}\subset 2R\Z^d$. Then, in the limit as $N\to\infty$, followed by $R\to\infty$, the pair $\frac 1{|\L_N|}(\mathfrak N_{\L_N}^{\ssup{\ell, R}},\mathfrak N_{\L_N}^{\ssup{\ell, \neg R}})$ satisfies under the transformed measure $\widehat{\tt Q }^{\ssup{\L_N,{\rm bc}}}$ defined in \eqref{transformedmeasure} a large-deviations principle on the set $\{(\rho_1,\rho_2)\in[0,\infty)^2\colon \rho_1+\rho_2=\rho\}$ with continuous and convex rate function $(\rho_1,\rho_2)\mapsto \chi(\rho_1,\rho_2)-\overline \chi(\rho)$, where $\overline \chi(\rho)=\inf_{\rho_1+\rho_2=\rho}\chi(\rho_1,\rho_2)$ and
\begin{equation}\label{chidefneu}
	\chi(\rho_1,\rho_2)\defeq \inf\Big\{\h^{\ssup{\Lcal,\Scal}}(P)+\langle F_U,P\rangle\colon 
	P\in \Mcal_1^{\ssup{\rm s}}(\Lcal\times \Shreds), \langle {\mathfrak N}_U^{\ssup {\ell,\Lcal}},P\rangle=\rho_1 , \langle {\mathfrak N}_U^{\ssup {\ell,\Shreds}},P\rangle=\rho_2\Big\}.
\end{equation}
\end{theorem}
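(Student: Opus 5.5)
We will prove Theorem~\ref{thm-freeenergy} by a mesoscopic block decomposition followed by a large-deviation analysis of an empirical field. The heart of the argument is an LDP for that field with rate $J_{W}$, which then passes to the limit $R\to\infty$ via Theorem~\ref{thm-specrelent}.

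\textbf{Step 1: block decomposition and the empirical field.} Fix $R$ and write $\L_N=\bigcup_{z\in Z_{N,R}}W_z$. For a configuration $\omega$ under $\LPP^{\ssup{\L_N,{\rm bc}}}$ (it suffices to treat bc $=$ par and compare the others afterwards), we cut the $R$-crossing loops into their $W_z$-shreds. The non-crossing loops are kept in their boxes. This gives, in each $W_z$, a pair $(\Pi^{\ssup\Lcal}_{W}(\theta_z\omega),\Pi_W^{\ssup\Scal}(\theta_z\omega))\in\Lcal_W\times\Scal_W$. We then form the empirical measure
\[
L_{N,R}=\frac1{|Z_{N,R}|}\sum_{z}\delta_{\Pi_W(\theta_z\omega)} ,
\]
or its stationarised version obtained by averaging over shifts in $W$.

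Under the particle boundary condition, the loop parts are i.i.d.\ over $z$ with law $\Pi_W^{\ssup\Lcal}(\LPP)$, because the reference process factorises. The crossing loops couple the boxes only through their entry/exit data $\partial\Pi_W^{\ssup\Scal}$. Conditionally on these data, the shred interiors are independent with kernel ${\tt K}_W$ by the Markov property of Brownian motion at the particle times.

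The particle numbers $\mathfrak N^{\ssup{\ell,\neg R}}$ and $\mathfrak N^{\ssup{\ell,R}}$, as well as the interaction $\Phi_{\L_N,\L_N}$, become functionals of $L_{N,R}$. The only exception is the cross-box interaction, which is small by the compact support of $v$ once one uses boundary layers and the superstability bound \eqref{LowBoundAss}.

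\textbf{Step 2: LDP at fixed $R$.} We aim for an LDP for $L_{N,R}$ with rate $\xi\mapsto|W|J_W(\xi)$ plus a term accounting for the combinatorics of gluing the boundary data. We proceed in two stages:
\begin{itemize}
\item First we prove an LDP for the empirical measure of the boundary configurations, using the gluing/Markov structure.
\item Then we apply a conditional Sanov argument with the kernel $\Pi_W^{\ssup\Lcal}(\LPP)\otimes{\tt K}_W$.
\end{itemize}
The entropy formula \eqref{entropyformula} supplies the upper bound via exponential tilts. The lower bound comes from the chain rule $H(\xi\mid\Pi^{\ssup\Lcal}_W(\LPP)\otimes[\partial\Pi(\xi)\otimes{\tt K}_W])$. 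For the boundary-data part, we expect the cost to be exponentially negligible at scale $|\L_N|$ once one optimises over gluings, because the reference kernel inherits the boundary distribution from $\xi$.

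Exponential tightness comes from superstability: $\e^{-\Phi}$ penalises large particle numbers quadratically. Hence the level sets $\{\langle\cdot,\mathfrak N\rangle\le c\}$ are effectively restricted, which yields compact sets $K_R$ of boundary data as in \eqref{levelset}. Varadhan's lemma, together with the contraction to the two particle densities, then gives an LDP for $\frac1{|\L_N|}(\mathfrak N^{\ssup{\ell,R}},\mathfrak N^{\ssup{\ell,\neg R}})$ with a rate $\chi_R(\rho_1,\rho_2)-\overline\chi_R(\rho)$. Here $\chi_R$ is the infimum of $J_W(\xi)+\frac1{|W|}\langle F_{W,W},\xi\rangle$ under the density constraints. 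The constraint must be imposed as a closed-set condition and approximated by upper and lower bounds, since $\mathfrak N$ is unbounded.

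\textbf{Step 3: $R\to\infty$.} We show $\chi_R\to\chi$.
\begin{itemize}
\item For the $\liminf$ direction, we take near-minimisers $\xi_R$ and stationarise them by periodising. Using the compactness in \eqref{levelset}, we extract a limit $P\in\Mcal_1^{\ssup{\rm s}}(\Lcal\times\Scal)$. The $W$-shreds glue into interlacements, or into loops of finite length, so loops and shreds assemble into $P$ with the crossing mass becoming the $\Scal$-density. Lower semicontinuity from Theorem~\ref{thm-specrelent}, together with \eqref{hbound} and affinity of $\h^{\ssup{\Lcal,\Scal}}$, gives $\liminf\chi_R\ge\chi$. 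The energy is handled by Fatou, since $v\ge0$.
\item For the $\limsup$ direction, we start from a given $P$ and use $\xi=\Pi_{W_R}(P)$. Then $J_{W_R}(\Pi_{W_R}(P))\le\h(P)$ by \eqref{hbound}, and $\frac1{|W|}\langle F_{W,W},\Pi_W P\rangle\le\langle F_U,P\rangle$ by stationarity, since the interaction is nonnegative and bounded-range.
\end{itemize}
Convexity and continuity of $\chi$ follow from the affinity of $\h$ and the linearity of the constraints, via mixtures of $P$'s.

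\textbf{Main obstacle.} We expect the hardest point to be Step 2's treatment of the boundary data. The entry/exit sites of shreds in neighbouring boxes must match, and the gluing must produce admissible loops whose combinatorial and Gaussian weights reproduce exactly ${\tt K}_W$ with no extra entropy at leading order. The plan is to control this by a coarse-graining of the boundary data on compact sets. We would then show that the number of matchings and the transition densities contribute only $o(|\L_N|)$ on the exponential scale, uniformly on these compact sets.
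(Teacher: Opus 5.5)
Your architecture (block empirical measure $\Xi_{N,R}$, conditioning on the boundary-shred data so that the shred interiors become independent with kernel ${\tt K}_W$, superstability-based compactification, Varadhan's lemma, then $R\to\infty$ via $\h^{\ssup{\Lcal,\Scal}}$) is the same as the paper's. However, two steps would fail as you have stated them.

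\textbf{Boundary data (Step 2).} You plan to prove an LDP for $\partial\Xi^{\ssup\Scal}_{N,R}$. You expect its cost to be $o(|\L_N|)$ ``because the reference kernel inherits the boundary distribution from $\xi$''. That inheritance only explains why, in the \emph{upper} bound, the probability of the boundary data can be bounded by one. This is what the paper does, and it explicitly avoids counting admissible arrangements.

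For the \emph{lower} bound you need a lower bound on ${\tt Q}(\partial\Xi^{\ssup\Scal}_{N,R}\approx\psi)$, and two things go wrong.
\begin{enumerate}
\item For general $\psi$ this event can be empty. The data must be admissible (Definition~\ref{def-admisssible}), which is a global, long-range property. You need the existence of at least one global loop configuration realising $\psi$, not a bound on the number of matchings, so coarse-graining on compact sets does not address it. The paper obtains admissibility only for $\psi=\partial\Pi^{\ssup\Scal}_W(P)$ with $W$-ergodic $P$ (Lemma~\ref{lem-Psi_admissible}).
\item Even for admissible $\psi$, the cost is $\frac{|\L_N|}{|W|}\langle\psi,\mathfrak p_{W,M}\rangle$ (Lemma~\ref{lem-UpperLDPbound}). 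This is of volume order in general: ballistic shreds cost about $|x_i-y_i|^2/l_i\asymp l_i$ each, giving a cost of order $\rho_2|\L_N|$ uniformly in $R$. It becomes small per unit volume only as $R\to\infty$, and only for configurations with bounded leg spreads, bounded local particle numbers and diffusive $S$-step displacements (Lemma~\ref{lem-lowerboundlogpint}(2)). Compact sets defined by particle-number moments do not enforce any of this.
\end{enumerate}
So the fixed-$R$ LDP with rate $\chi_R-\overline\chi_R$ is not available. The paper gets only an upper bound with $\chi_R$, and a lower bound restricted to ergodic, uniformly bounded $P$ with explicit error terms. It then removes the restriction using the entropy estimate for projections (Lemma~\ref{lem-EntProjImag}) and the ergodic approximation by rewiring shreds in an annulus (Lemma~\ref{lem-ergappr}). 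Your plan contains neither ingredient.

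Likewise, superstability cannot control cross-box interaction in the lower bound, since it is a lower bound on the interaction. The paper bounds it via Lemma~\ref{lem-Interactesti} on the bounded-configuration events $A_{W;\Theta}$.

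\textbf{Upper bound as $R\to\infty$ (Step 3).} Periodising $\xi_R$ by independent copies does not give an element of $\Mcal_1^{\ssup{\rm s}}(\Lcal\times\Scal)$. Exit sites of shreds in one box do not coincide with entry sites in the independent neighbouring copy, so nothing glues into loops or interlacements.

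More fundamentally, to deduce $\liminf_R\chi_R\ge\chi$ from lower semicontinuity you need $\h^{\ssup{\Lcal,\Scal}}(P_R)\le J_{W_R}(\xi_R)+o(1)$. But \eqref{hbound} gives $J_{W_R}(\Pi_{W_R}(P))\le\h^{\ssup{\Lcal,\Scal}}(P)$, which is the opposite direction. The direction you need is a subadditivity-type property that the paper says it could not prove. The paper replaces it as follows:
\begin{enumerate}
\item First it projects onto bounded configurations, so that the error term below is small.
\item Proposition~\ref{lem-uppboundJ_W}(4), an LDP/contraction-principle argument, then produces consistent, $W$-shift-invariant extensions $\xi^{\ssup k}$ to $3^kW$ whose $J$-values exceed $J_W(\xi)$ only by a small error.
\item The extension result Proposition~\ref{Prop-construction_of_P} glues these into a global measure, which is then averaged over shifts in $W$.
\item Finally, \eqref{JWupperboundx} together with convexity bounds $\h^{\ssup{\Lcal,\Scal}}(P)$.
\end{enumerate}
Fatou alone also does not control the energy. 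The cross-box interaction of the extended configuration must be bounded, and the paper does this on $A_{W;\Theta}$, where it is a surface term of order $1/R$.
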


The proof of Theorem~\ref{thm-freeenergy} comes in three steps in Sections~\ref{sec-uppboundN} (preparation), \ref{sec-LDPupperbound} (making $N\to\infty$) and \ref{sec-finishuppbound} (making $R\to\infty$) for the upper bound, and  Sections~\ref{sec-lowboundN},  \ref{sec-Nlowbound} and \ref{sec-finishlowerbound} for the lower bound. A survey on our proof strategy is in Section~\ref{sec-strategy}. The variational formula in \eqref{chidefneu} has good compactness properties. Indeed, as we formulate in Lemma~\ref{lem-propertieschi} below, the infimum in \eqref{chidefneu} may be restricted to some compact set, and $\chi$ is continuous.

We also obtain the same LDP for the numbers of particles in long and short loops, which are more natural quantities from the viewpoint of  Feynman's proposal to look at loop lengths as order parameters:

\begin{cor}[LDP for particle numbers in long and short loops]\label{cor-LDPlongshortloops}
 In the situation of Theorem~\ref{thm-freeenergy}, the pair $\frac 1{|\L_N|}(\mathfrak N_{\L_N,> L}^{\ssup\ell},\mathfrak N_{\L_N,\leq L}^{\ssup\ell})$ satisfies the same LDP as the pair $\frac 1{|\L_N|}(\mathfrak N_{\L_N}^{\ssup{\ell, R}},\mathfrak N_{\L_N}^{\ssup{\ell, \neg R}})$, where $\mathfrak N_{\L_N,\leq L}^{\ssup\ell}$ and $\mathfrak N_{\L_N,> L}^{\ssup\ell}$ are the numbers of particles in loops of length $\leq L$ and $>L$, respectively.
\end{cor}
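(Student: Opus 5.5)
The plan is to show that the two pairs are exponentially equivalent in the double limit: first $N\to\infty$, then $R\to\infty$ and $L\to\infty$ in a suitable coupled order. Once that is in place, the LDP of Theorem~\ref{thm-freeenergy} transfers to the long/short-loop pair. Since both pairs have sum $\frac 1{|\L_N|}\mathfrak N^{\ssup\ell}_{\L_N}$, it suffices to compare the first coordinates. Concretely, we aim to show that for every $\eps>0$
\begin{equation}\label{expequivplan}
\lim_{L\to\infty}\limsup_{R\to\infty}\limsup_{N\to\infty}\frac1{|\L_N|}\log\widehat\LPP^{\ssup{\L_N,{\rm bc}}}\Big(\big|\mathfrak N_{\L_N,>L}^{\ssup\ell}-\mathfrak N_{\L_N}^{\ssup{\ell,R}}\big|>\eps|\L_N|\Big)=-\infty ,
\end{equation}
possibly with the order of $L$ and $R$ arranged differently. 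We then use the standard fact that, under superexponential closeness, the lower and upper bounds for closed/open sets transfer, with $\eps\downarrow0$ absorbed by continuity of the rate function $\chi-\overline\chi$. Continuity is part of the statement of Theorem~\ref{thm-freeenergy}.

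We split the difference into two error terms. The first counts particles in loops of length $\leq L$ that are $R$-crossing. The second counts particles in loops of length $>L$ that are not $R$-crossing.

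For the first term, a loop of length $k\le L$ is $R$-crossing only if it starts within distance of order $\sqrt{k\beta}\,$-fluctuations of the boundary of its subbox, or if its Gaussian walk makes a large excursion. Under the reference process $\LPP$, the expected number of such particles per unit volume is at most $\sum_{k\le L}k\cdot k^{-1-d/2}\,O(\sqrt{kL}/R)$. This tends to zero as $R\to\infty$ for fixed $L$. To pass from expectation to superexponential estimates, we use that these loops are Poisson, and under $\LPP$ their counts in different subboxes are independent, because loops of bounded length are essentially local. An exponential Chebyshev bound with tilt parameter $\lambda$, followed by $\lambda\to\infty$, gives the rate $-\infty$ because the Poisson log-Laplace transform $\nu(\e^{\lambda\ell}-1)$ restricted to these loops is small for $R$ large. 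The tilt by $\e^{-\Phi}$ costs at most $\e^{o(|\L_N|)}$ relative to the normalisation $\widehat Z$, and $\widehat Z\ge\LPP(\text{no points})$ gives a lower bound of the form $\e^{-O(|\L_N|)}$. Hence the density of $\widehat\LPP$ with respect to $\LPP$ is at most $\e^{C|\L_N|}$, which is harmless against a rate of $-\infty$.

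The second term is the main obstacle. We must show that loops with more than $L$ particles, all lying in one box $W_z$, carry negligible mass superexponentially. Here the superstability in Assumption (V), \eqref{LowBoundAss}, is the tool. Suppose a box $W_z$ contains $n$ particles. The interaction energy is then at least $C(n^2/|W|-n)$. Meanwhile, the entropic gain from long loops confined to $W$ is controlled: for $k\ge L$, the mass $\frac1k\mu^{\ssup{k,W,{\rm par}}}_{x,x}$ decays like $\e^{-k\lambda_1(W)\beta}$ with $\lambda_1(W)\asymp R^{-2}$. That decay alone is too weak, so we combine two bounds. First, we count particles in such loops by bounding the Laplace transform $\LPP[\e^{\lambda \mathfrak N}\e^{-\Phi}]$ box by box, dropping the nonnegative cross-box interactions; this uses $v\ge0$. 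Second, in each box we use superstability to cap the particle number at $O(R^d)$, up to superexponentially small probability. The contribution of loops of length in $(L, CR^d]$ that are confined to $W$ then has log-Laplace transform at most $|W|\sum_{k>L}k^{-d/2}(\e^{\lambda k}-1)\e^{-ck\beta/R^2}$. Taking $R\to\infty$ first and then $L\to\infty$ does not obviously work, so we would instead let $L=L(R)\to\infty$ slowly along with $R$. The delicate point is matching the order of limits. If needed, we would use Lemma~\ref{lem-propertieschi}-type compactness, namely that near-optimal $P$ have bounded $\h^{\ssup{\Lcal,\Scal}}$, to argue that the limiting rate function does not see this coupling.
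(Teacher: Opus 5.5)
Your decomposition $|\mathfrak N^{\ssup\ell}_{\L_N,>L}-\mathfrak N^{\ssup{\ell,R}}_{\L_N}|\le \mathfrak N^{\ssup{\ell,R}}_{\L_N,\le L}+\mathfrak N^{\ssup{\ell,\neg R}}_{\L_N,>L}$ is exactly the one behind Lemma~\ref{lem-condensatenotion}. Your treatment of the first term is essentially the paper's Cram\'er argument: compound-Poisson exponential Chebyshev under $\LPP$, using $\e^{-\Phi_{\L_N,\L_N}}\le 1$ and paying only $1/\widehat Z^{\ssup{\rm bc}}(\L_N)\le \e^{C|\L_N|}$. Note that it works only for $L=o(\log R)$, because density $\eps$ in $R$-crossing loops of length $L$ costs only about $\frac{\eps}{L}\log R$ per unit volume; the paper takes $L_R^{\ssup{\leq}}=\log\log R$.

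The genuine gap is the second term. The missing idea is a penalty that is superlinear in the length of each single confined loop. By Lemma~\ref{lem-pathinteraction}, a loop with $k$ particles all in one subbox has expected self-interaction Boltzmann factor at most $\e^{-C_Rk^{3/2}}$, once $k$ exceeds an $R$-dependent threshold (of order $|W_R|R^2$). The paper drops all interactions between different loops ($v\ge0$), so the non-crossing long loops stay a Poisson process. For a tilt $a$, each length-$k$ contribution to the log-Laplace functional is then a positive mass times $\e^{ak-C_Rk^{3/2}}-1\le0$ as soon as $a\le C_RL^{1/2}$. This gives $\widehat Z\,\widehat\LPP(\mathfrak N^{\ssup{\ell,\neg R}}_{\L_N,>L}>\eps|\L_N|)\le \e^{-a\eps|\L_N|}$, and $a_R\to\infty$ with $a_R\ll C_R(L_R^{\ssup{>}})^{1/2}$ gives $-\infty$.

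Your substitute does not produce such a bound, for three reasons.
\begin{itemize}
\item The claim that $n$ particles in $W_z$ force interaction $\ge C(n^2/|W|-n)$ is unjustified. Superstability concerns positions at a common time, and the legs can leave $W_z$ between the times in $\beta\Z$; this is exactly what the $\gamma$-argument in Lemma~\ref{lem-pathinteraction} handles.
\item A per-box cap of $O(R^d)$ particles is irrelevant, since the bad event needs only about $\eps|W|$ particles per box.
\item With only the decay $\e^{-ck/R^2}$, your log-Laplace bound $|W|\sum_{L<k\le CR^d}k^{-d/2}(\e^{\lambda k}-1)\e^{-ck/R^2}$ is of order $\e^{(\lambda-c/R^2)CR^d}$ for any fixed $\lambda>c/R^2$. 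For $\lambda\le c/R^2$ the resulting rate is at most $c\eps/R^2$. Either way Chebyshev yields nothing.
\end{itemize}

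Your remedy of letting $L=L(R)$ grow slowly points the wrong way for the second term. If $\sqrt L\ll R$, loops of length about $L$ are typically non-crossing. Adding density $\eps$ of them costs entropy about $\frac{\eps}{L}\log(\eps L^{d/2})\to0$ and only $O(\eps)$ interaction, since their local particle density $\approx L^{1-d/2}$ is too small for superstability to bite. So that event is not superexponentially unlikely. The interaction suppresses only non-crossing loops beyond an $R$-dependent threshold, which is why the paper proves the second estimate for a large $L_R^{\ssup{>}}$. The first estimate, by contrast, needs $L=o(\log R)$. Hence no single $L$ makes both error terms negligible, and the exponential equivalence with a common $L$ that you aim for cannot come out of these estimates.

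The paper's one-line deduction does not address this either: Lemma~\ref{lem-condensatenotion} is stated with two different sequences. Outside superexponentially unlikely events, what the two estimates actually give is the sandwich
\[
\mathfrak N^{\ssup\ell}_{\L_N,>L_R^{\ssup{>}}}-\eps|\L_N|\;\le\;\mathfrak N^{\ssup{\ell,R}}_{\L_N}\;\le\;\mathfrak N^{\ssup\ell}_{\L_N,>L_R^{\ssup{\leq}}}+\eps|\L_N| .
\]
This transfers only one-sided bounds between the two counting variables. Obtaining the full LDP for the long/short pair requires an additional argument beyond exponential equivalence. The compactness in Lemma~\ref{lem-propertieschi} concerns the variational formula and does not resolve this coupling.
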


This follows from  Lemma~\ref{lem-condensatenotion} below, where we show that these two pairs of counting variables are exponentially equivalent to each other in the limit $N\to\infty$, followed by $R\to\infty$, respectively $L\to\infty$.

For the general theory of (the probabilities of) large deviations, see \cite{DZ98}. The way how the large-deviations principle (LDP) of Theorem~\ref{thm-freeenergy} is to be understood (and how it will be proved) is that, for any  $\rho_1,\rho_2\in[0,\infty)$, 
\begin{equation}\label{freeenergyident}
\begin{aligned}
\lim_{\delta\to 0}\limsup_{R\to\infty}\limsup_{N\to\infty}&\frac1{|\Lambda_N|}\log \Big[\widehat Z^{\ssup{\rm bc}}(\L_N)\widehat{\tt Q }^{\ssup{\L_N,{\rm bc}}}\Big(\Big|\frac1{|\L_N|}\mathfrak N_{\L_N}^{\ssup{\ell, R}}-\rho_1\Big|\leq \delta, \Big|\frac1{|\L_N|}\mathfrak N_{\L_N}^{\ssup{\ell,\neg R}}-\rho_2\Big|\leq \delta\Big)\Big]\\
&\leq-\chi(\rho_1,\rho_2),
\end{aligned}
\end{equation}
and a corresponding lower bound with $\limsup$ twice replaced by $\liminf$. As a consequence, the minimiser(s) $(\rho_1,\rho_2)$ play the decisive role when further analysing the limiting Bose gas. As a first partial result, we can identify the free energy defined in \eqref{freeenergy} in the following way (recall \eqref{limnorming}).

\begin{cor}[Variational description of the free energy]\label{cor-freeenergyVP}
In the situation of Theorem~\ref{thm-freeenergy},
\begin{equation}
\begin{aligned}
\beta \,{\rm f}(\beta, \rho)=-\overline q+\inf_{\rho_1,\rho_2\geq 0\colon \rho_1+\rho_2=\rho}\chi(\rho_1,\rho_2)=-\overline q+\overline\chi(\rho),
\end{aligned}
\end{equation}
where
\begin{equation}\label{chifreeenergy}
\begin{aligned}
\overline\chi(\rho)=\inf\Big\{\h^{\ssup{\Lcal,\Scal}}(P)+\langle F_U,P\rangle\colon P\in \Mcal_1^{\ssup{\rm s}}(\Lcal\times \Shreds), \langle P,{\mathfrak N}_U^{\ssup{\ell,\Lcal}}+{\mathfrak N}_U^{\ssup{\ell,\Scal}}\rangle=\rho\Big\}.
\end{aligned}
\end{equation}
\end{cor}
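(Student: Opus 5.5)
Combine Proposition~\ref{lem-rewrite}, the normalisation \eqref{limnorming}, and the large-deviations statement of Theorem~\ref{thm-freeenergy} in its quantitative form \eqref{freeenergyident}.

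First, by \eqref{rewrite} and \eqref{transformedmeasure},
$$
Z_N^{\ssup{\rm bc}}(\L_N)=\e^{\nu_{\L_N}^{\ssup{\rm bc}}(\L_N\times\Ccal^{\ssup\circlearrowleft})}\,\widehat Z^{\ssup{\rm bc}}(\L_N)\,\widehat{\tt Q}^{\ssup{\L_N,{\rm bc}}}\big(\mathfrak N_{\L_N}^{\ssup\ell}=N\big).
$$
Taking $-\frac1{\beta|\L_N|}\log$ and using \eqref{limnorming}, the first factor contributes $-\overline q/\beta$. It therefore suffices to show
$$
\lim_{N\to\infty}\frac1{|\L_N|}\log\Big[\widehat Z^{\ssup{\rm bc}}(\L_N)\,\widehat{\tt Q}^{\ssup{\L_N,{\rm bc}}}\big(\mathfrak N_{\L_N}^{\ssup\ell}=N\big)\Big]=-\overline\chi(\rho).
$$
The event $\{\mathfrak N^{\ssup\ell}_{\L_N}=N\}$ is the disjoint union over splittings $N=N_1+N_2$ of the events $\{\mathfrak N^{\ssup{\ell,R}}_{\L_N}=N_1,\ \mathfrak N^{\ssup{\ell,\neg R}}_{\L_N}=N_2\}$. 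This holds for every fixed $R$, since the two counts always add up to $\mathfrak N^{\ssup\ell}_{\L_N}$ and $N/|\L_N|\to\rho$.

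\emph{Upper bound.} The pair $\frac1{|\L_N|}(\mathfrak N^{\ssup{\ell,R}},\mathfrak N^{\ssup{\ell,\neg R}})$ lies in the compact segment $\{\rho_1+\rho_2\in[\rho-\delta,\rho+\delta]\}$. Cover this segment by finitely many $\delta$-neighbourhoods of points $(\rho_1,\rho_2)$ with $\rho_1+\rho_2=\rho$. Because there are only finitely many pieces, we can apply \eqref{freeenergyident} to each and take the maximum. This gives $\limsup_N\leq-\min\chi+o(1)$ as $\delta\to0$, after $R\to\infty$. The left-hand side does not depend on $R$, so the $R$-limit is harmless. Continuity of $\chi$ (Lemma~\ref{lem-propertieschi}) lets us pass $\delta\to0$ and conclude that the bound is $-\overline\chi(\rho)$.

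\emph{Lower bound.} Fix a minimiser (or a near-minimiser) $(\rho_1,\rho_2)$ of $\chi$ on $\{\rho_1+\rho_2=\rho\}$, and use the lower-bound counterpart of \eqref{freeenergyident} for the $\delta$-box around it. The main obstacle is that this box only controls the total particle number up to $\pm2\delta|\L_N|$, whereas we need exactly $N$ particles.

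To pass from the box to the exact event, I would adjust the particle number by inserting or removing a number $m=O(\delta|\L_N|)$ of particles. The plan is to add short loops (length one) in a sparse sublattice of subboxes, or to delete them. Using the Poisson structure, the probability cost is $\e^{-O(\delta\log(1/\delta))|\L_N|}$. The interaction change costs $O(\delta|\L_N|)$, since $v$ is bounded with compact support and superstability keeps local densities bounded on the relevant event.

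An alternative, which may be what the paper intends, is that the lower bound of Theorem~\ref{thm-freeenergy} is already proved with the exact constraint $\mathfrak N^{\ssup\ell}=N$ built in. In that case \eqref{freeenergyident} applies directly.

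Finally, $\inf_{\rho_1+\rho_2=\rho}\chi(\rho_1,\rho_2)$ equals the infimum in \eqref{chifreeenergy}: splitting the constraint $\langle P,\mathfrak N_U^{\ssup{\ell,\Lcal}}+\mathfrak N_U^{\ssup{\ell,\Scal}}\rangle=\rho$ according to the values of the two summands turns one infimum into the other.
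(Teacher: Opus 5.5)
Your skeleton is exactly the combination the paper intends when it leaves this corollary to the reader: factor out $\e^{\nu_{\L_N}^{\ssup{\rm bc}}(\L_N\times\Ccal^{\ssup\circlearrowleft})}$ via \eqref{rewrite} and \eqref{transformedmeasure}, use \eqref{limnorming}, apply \eqref{freeenergyident}, and note that $\inf_{\rho_1+\rho_2=\rho}\chi(\rho_1,\rho_2)=\overline\chi(\rho)$. Your upper bound is fine, with one caveat. Strictly, \eqref{freeenergyident} is pointwise in $(\rho_1,\rho_2)$ with $\delta\to0$ outermost, so you cannot cover by boxes of a common radius $\delta$ directly. Either cover the compact set $\{a,b\geq 0\colon |a+b-\rho|\leq\eta\}$ by boxes whose radii depend on the centre, or use the fixed-$\delta$ bound $-\inf\{\chi(\rho_1',\rho_2')\colon |\rho_i'-\rho_i|\leq 2\delta\}$ that the proof in Section~\ref{sec-finishuppbound} actually delivers. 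What you need at the end is lower semicontinuity of $\chi$.

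For the lower bound you rightly notice that \eqref{freeenergyident} controls only a window of total particle numbers, not the event $\{\mathfrak N^{\ssup\ell}_{\L_N}=N\}$; the paper passes over this point. Your fallback does not hold: the lower bound is proved only for windows (see \eqref{hatZ} and the sets $\Bcal_{\delta/3}(\rho_i)$ in Lemma~\ref{lem-LowerBound}). So the surgery is genuinely needed, and as sketched it has two weak spots.

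First, avoid deletion, which needs many length-one loops that you do not control. Since the events grow with $\delta$, the local lower bound gives, for every \emph{fixed} $\delta$ and with the pair $\frac1{|\L_N|}(\mathfrak N^{\ssup{\ell,R}}_{\L_N},\mathfrak N^{\ssup{\ell,\neg R}}_{\L_N})$,
\[
\liminf_R\liminf_N\frac1{|\L_N|}\log{\tt Q}^{\ssup{\L_N,{\rm bc}}}\big[\e^{-\Phi_{\L_N,\L_N}}\1\{\text{pair in the closed $\delta$-box around }y\}\big]\geq-\chi(y).
\]
Take $y$ near the minimiser with $y_1+y_2=\rho-3\delta$. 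Then exactly $m=N-\mathfrak N^{\ssup\ell}_{\L_N}\in[\frac\delta2|\L_N|,6\delta|\L_N|]$ particles must be inserted, and none removed.

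Second, superstability does not bound local densities pointwise on the window event, so you cannot control the interaction of the inserted loops configuration by configuration. Average over their positions instead. Let $\lambda_N$ be the $\nu^{\ssup{\rm bc}}_{\L_N}$-mass of length-one loops starting in a bulk region $D\subset\L_N$ with $|D|\asymp|\L_N|$, and let $\overline\nu$ be its normalisation. On $\{\mathfrak N^{\ssup\ell}_{\L_N}=N\}$ there are at most $N^m$ ordered $m$-tuples of such loops, so the Mecke formula gives
\[
N^m\,{\tt Q}^{\ssup{\L_N,{\rm bc}}}\big[\e^{-\Phi_{\L_N,\L_N}}\1\{\mathfrak N^{\ssup\ell}_{\L_N}=N\}\big]\geq\lambda_N^m\,{\tt Q}^{\ssup{\L_N,{\rm bc}}}\Big[\1\{\mathfrak N^{\ssup\ell}_{\L_N}=N-m\}\int\overline\nu^{\otimes m}(\d y)\,\e^{-\Phi_{\L_N,\L_N}(\omega+\delta_{y_1}+\dots+\delta_{y_m})}\Big].
\]
A uniformly placed Brownian leg has expected interaction $O(\beta\|v\|_1/|D|)$ with any fixed leg. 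Hence, by Jensen, the averaged increase of $\Phi$ is at most $C\beta\|v\|_1\,mN/|D|=O(m)$, and no superstability is needed. Since $N/\lambda_N$ is bounded, summing over $m$ yields
\[
{\tt Q}\big[\e^{-\Phi}\1\{\mathfrak N^{\ssup\ell}_{\L_N}=N\}\big]\geq\e^{-C\delta|\L_N|}\,{\tt Q}\big[\e^{-\Phi}\1\{\text{pair in the box around }y\}\big]
\]
for every $R$ and all large $N$. Letting $\delta\downarrow0$ and using continuity of $\chi$ (Lemma~\ref{lem-propertieschi}) concludes.
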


We leave the proof of this corollary to the reader. 
In \eqref{chibarloopsformula} below it turns out that in \eqref{chifreeenergy}, the interlacements are not necessary, i.e., the value does not change if we restrict $P$ to having only loops.

\begin{remark}[Interpretation of Theorem~\ref{thm-freeenergy}]\label{rem-interpretation}
The limiting  Bose gas with density parameters $\rho_1$ and $\rho_2$ is characterised as a random superposition of a loop point process and an interlacement point process. The particles of the latter ones represent the condensate with condensate density $\rho_2$ per unit volume, and the loop part contains the remaining particles with density $\rho_1$. This random loop-interlacement process is stationary and has strong correlations between the loop- and the interlacement part via the interaction.  The process is characterised by minimality of the sum of energy (the interaction) and entropy with respect to the reference PPP-process ${\tt Q}$ and the family of kernels ${\tt K}_W$ under the constraint of having  these two densities.  Any minimiser $P$ in \eqref{chifreeenergy} has the interpretation of the effective spatial distribution of all the bosons in space, organised in loops and interlacements, of the interacting Bose gas in the thermodynamic limit with total particle density $\rho$.
\hfill$\Diamond$
\end{remark}

\begin{remark}[More refined assertions]\label{rem-moredetails}
Using Lemma~\ref{lem-condensatenotion} below, it is also no problem to derive an LDP for more quantities, like the joint distribution over $k\in\N$ of the number of particles in loops of length $k$ (without any reference to the decomposition of $\L_N$ into $R$-subboxes)  or the number of such loops. The corresponding rate function that arises is clear; one needs to restrict the variational problem to those $P$ that have on an average the requested density of $k$-loops. Examples of proofs for such statements in a different context can be found in \cite{AKP19} and \cite{AKLP24}.
\hfill$\Diamond$
\end{remark}

\begin{remark}[Approximations of $\chi(\rho_1,\rho_2)$.]\label{rem-chiappr}
In the course of the proof of Theorem~\ref{thm-freeenergy}, we are forced to establish some technical approximations of the minimizers $P$ in the variational formula $\chi(\rho_1,\rho_2)$ in \eqref{chidefneu}. In particular, in Lemma~\ref{lem-ergappr}, we approximate $P$ with ergodic measures that are concentrated on uniformly bounded loop/interlacement configurations with some nice additional properties, and in Lemma~\ref{lem-propertieschi}, we compactify the set of $P$s on which we minimize.
\hfill$\Diamond$
\end{remark}

\begin{remark}[Counting particle numbers and interaction in $U$]\label{rem-counting}
Instead of counting particle numbers in loops that start from $U$, one can also count particles in $U$ in loops starting at any place. More precisely, if  we define 
$\widetilde{\mathfrak N}_U^{\ssup{\ell,\Lcal}}(\omega)=\sum_{x\in\zeta}\sum_{i=1}^{\ell(f_x)}\1\{f_x((i-1)\beta)\in U\}$, then one can easily see that $\widetilde{\mathfrak N}_U^{\ssup{\ell,\Lcal}}$ has the same expectation under any $P\in\Mcal_1^{\ssup{\rm s}}(\Lcal\times\Scal)$ as ${\mathfrak N}_U^{\ssup{\ell,\Lcal}}$. Indeed,
$$
\begin{aligned}
\langle P,\mathfrak N_U^{\ssup{\ell,\Lcal}}\rangle
&=\int P(\d\omega)\,\sum_{x\in\zeta}\sum_{i=1}^{\ell(f_x)}\1\{x\in U\}\sum_{z\in\Z^d}\1\{f_x((i-1)\beta)\in z+U\} \\
&
=\sum_{z\in\Z^d}\int P(\d\omega)\, \sum_{x\in\zeta}\sum_{i=1}^{\ell(f_x)}\1\{f_x((i-1)\beta)\in U\}\1\{x\in -z+U\}
= \langle P,\widetilde{\mathfrak N}_U^{\ssup{\ell,\Lcal}}\rangle.
\end{aligned}
$$

 Analogously, one can see that the expected interaction between legs whose loop starts in $U$ and all other legs in loops is the same as the expected interaction between legs that start in $U$ (regardless where the loop starts) and all other legs in loops, with an analogous statement about interaction between loops and interlacements. More precisely, defining 
 $$
 \widetilde F_{W,\widetilde W}^{\ssup{\Lcal\Lcal}}(\omega,\varpi)
=\sum_{x,y\in\zeta}\sum_{i=1}^{\ell(f_x)}\sum_{j=1}^{\ell(f_y)}\1\{f_x((i-1)\beta))\in W\}\1\{f_y((j-1)\beta)\in \widetilde W\} \1_{(x,i)\not=(y,j)}V(f_{x,i},f_{y,j}),
$$
one sees in the same way that $\langle P,\frac 12 F_{U,U}^{\ssup{\Lcal\Lcal}}+F_{U,U^{\rm c}}^{\ssup{\Lcal\Lcal}}\rangle =\langle P, \frac 12 \widetilde F_{U,U}^{\ssup{\Lcal\Lcal}}+\widetilde F_{U,U^{\rm c}}^{\ssup{\Lcal\Lcal}}\rangle$ for any $P\in\Mcal_1^{\ssup{\rm s}}(\Lcal\times\Scal)$. Consequently, $F_U$ defined in \eqref{FUdef} has the same expectation under $P$ as the total interaction between any leg that starts in $U$ (regardless where its loop starts, if it is in a loop) and any other leg.
\hfill$\Diamond$
\end{remark}

\subsection{The variational formula and BEC}\label{sec-VP}

In this section, inspired by the results of \cite[Section 2]{CJK23}, we analyse the variational formula $\chi$ in \eqref{chidefneu} with respect to its analytic properties (existence and properties of minimisers), depending on the parameters $\rho_1$ and $\rho_2$. On the technical side, it will be nice to know that the infimum may be restricted to the set
\begin{equation}\label{Kdef}
K=\big\{P\in \Mcal_1^{\ssup{\rm s}}(\Lcal\times \Shreds)
\colon  \h^{\ssup{\Lcal,\Scal}}(P)\leq L,\langle P, ({\mathfrak N}^{\ssup\ell}_{U})^{5/4}\rangle \leq L\big\},
\end{equation}
with a sufficiently large $L\in(0,\infty)$ (recall that $U=[-\frac 12,\frac 12]^d=W_{1/2}$). This, together with the compactness property of Theorem~\ref{thm-specrelent}, makes the domain of the variational formula compact, and it gives us a good control on the two particle densities.

\begin{lemma}[Regularity properties of $\chi$]\label{lem-propertieschi}
Suppose that the interaction potential $v$ satisfies Assumption (V), in particular \eqref{LowBoundAss}. Then the following holds.
\begin{enumerate}
\item The set $K$ in \eqref{Kdef} is compact, and for any $\rho_1,\rho_2$, the infimum in the definition \eqref{chidefneu} of $\chi(\rho_1,\rho_2)$ may be restricted to $K$, not changing its value for any sufficiently large $L$. Furthermore, the maps $P\mapsto \langle P,\mathfrak N_U^{\ssup{\ell,\Lcal}}\rangle$ and $P\mapsto\langle P,\mathfrak N_U^{\ssup{\ell,\Scal}}\rangle$ are continuous on $K$.

\item The maps  $[0,\infty)^2 \ni (\rho_1,\rho_2)\mapsto \chi(\rho_1,\rho_2)$ and $[0,\infty)\ni\rho \mapsto \overline\chi(\rho)$ are convex. Furthermore, for any $\rho_1$, respectively $\rho_2\in[0,\infty)$, $\lim_{\rho_2\to\infty}\chi(\rho_1,\rho_2)=\infty$ and $\lim_{\rho_1\to\infty}\chi(\rho_1,\rho_2)=\infty$.

\item The map $\rho_1\mapsto \chi(\rho_1,0)$ is  continuous at $\rho_1=0$ with $\chi(0,0)=\chi^{\ssup{v=0}}(0,0)=\overline q=(4\pi\beta)^{-d/2}\zeta(1+d/2)$ and $\partial_{ \rho_1}\chi(\rho_1,0)|_{\rho_1=0}=-\infty$.

\item For any $\rho_1, \rho_2 \in[0,\infty)$, there is at least one minimiser $P$ in the formula for $\chi(\rho_1,\rho_2)$ on the right-hand side of \eqref{chidefneu}.

\item For any $\rho\in[0,\infty)$, there is at least one minimiser $P$ in the formula for $\overline\chi(\rho)$ on the right-hand side of \eqref{chifreeenergy}.

\end{enumerate}

\end{lemma}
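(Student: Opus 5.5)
We prove the five items in order; (1) carries the technical weight and the others follow from it together with Theorem~\ref{thm-specrelent}.

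\textbf{Item (1): compactness of $K$.} The plan is to use superstability \eqref{LowBoundAss} twice.

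First, the interaction controls the $5/4$-moment. For boxes $W\supset[-1,1]^d$, the leg interaction between particles sitting in $W$ dominates $C\big((\mathfrak N_W^{\ssup\ell})^2/|W| - \mathfrak N_W^{\ssup\ell}\big)$. Since $v$ is bounded and continuous, $V(f_{x,i},g)$ over a leg of duration $\beta$ is comparable to $\beta v$ at the starting points, up to an error controlled by the leg's oscillation. Brownian legs have Gaussian tails, and the entropy term forces these tails on $P$ via the variational formula \eqref{entropyformula}. This gives
\[
\langle F_U,P\rangle \ge c\,\langle P,(\mathfrak N_U^{\ssup\ell})^2\rangle - C'\big(\h^{\ssup{\Lcal,\Scal}}(P)+1\big).
\]
Hence $\langle F_U,P\rangle+\h^{\ssup{\Lcal,\Scal}}(P)\le M$ bounds both $\h^{\ssup{\Lcal,\Scal}}(P)$ and $\langle P,(\mathfrak N_U^{\ssup\ell})^{5/4}\rangle$. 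So for $L$ large, restricting the infimum in \eqref{chidefneu} to $K$ does not change its value: $\chi<\infty$ is witnessed by some competitor, and any near-minimiser lies in $K$.

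Second, compactness of $K$. We want to invoke the level-set compactness \eqref{levelset} of Theorem~\ref{thm-specrelent}. This requires compact sets $K_{R_N}\subset\Tcal_{W_{R_N}}$ containing all $\partial\Pi^{\ssup\Scal}_{W_{R_N}}(P)$, $P\in K$. These come from the moment bound: the expected number of boundary shreds in $W_R$ is at most $\langle P,\mathfrak N^{\ssup\ell}_{W_R}\rangle\le L|W_R|$. Tightness of the shred lengths and of the exit sites follows from the same bound combined with the entropy bound \eqref{hbound}, which ties exit sites to Gaussian jumps. Closedness of $K$ follows from lower semicontinuity of $\h^{\ssup{\Lcal,\Scal}}$ and of the $5/4$-moment.

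Continuity of the density functionals on $K$ is uniform integrability: the $5/4$-moment bound lets us truncate $\mathfrak N_U^{\ssup{\ell,\Lcal}}$ and $\mathfrak N_U^{\ssup{\ell,\Scal}}$ to bounded continuous functionals. A boundary-of-$U$ subtlety is handled by stationarity, which gives the particle process no atoms on $\partial U$.

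I expect this item to be the main obstacle, in particular building the compact sets $K_{R_N}$ and the lower bound of $F_U$ through superstability for Brownian legs, as opposed to the discrete particles to which \eqref{LowBoundAss} directly applies.

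\textbf{Item (2): convexity and growth.} Since $\h^{\ssup{\Lcal,\Scal}}$ is affine (Theorem~\ref{thm-specrelent}) and $P\mapsto\langle F_U,P\rangle$ and the density constraints are linear, a mixture $tP+(1-t)P'$ is admissible for $(t\rho_1+(1-t)\rho_1', \dots)$. Its cost is the convex combination of the two costs, which gives convexity of $\chi$, and $\overline\chi$ inherits it as an inf-projection. Growth to infinity follows from the quadratic lower bound above together with Jensen: $\langle F_U,P\rangle\ge c(\rho_1+\rho_2)^2-C$.

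\textbf{Item (3): behaviour at $\rho_1=0$.} At $(0,0)$ the only admissible $P$ is the empty configuration. Its cost is the entropy relative to $\Pi_W^{\ssup\Lcal}(\LPP)$, namely $|W|^{-1}\nu(\text{loops in }W)\to\overline q$, with $F_U=0$. This is independent of $v$.

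For small $\rho_1>0$, compare with a thinned or tilted loop soup $\LPP_\alpha$ with intensity $\e^{\alpha k}$ on $k$-loops. The entropy is $\overline q-\rho_1 \alpha +O(\cdot)$ while the interaction is $O(\rho_1^2)$. Optimising over $\alpha$, i.e. using $\alpha\to-\infty$ with $\rho_1\sim\e^{\alpha}$, gives entropy $\approx\overline q-\rho_1\log(1/\rho_1)$ and hence slope $-\infty$. Continuity at $0$ follows from this upper bound together with the lower semicontinuity of $\chi$ obtained from compactness.

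\textbf{Items (4) and (5): existence of minimisers.} Restrict to $K$, which is compact by item (1). On $K$ the constraints are continuous (item (1)) and the objective is lower semicontinuous: $\h^{\ssup{\Lcal,\Scal}}$ is lower semicontinuous by Theorem~\ref{thm-specrelent}, and $\langle F_U,\cdot\rangle$ is a supremum of truncations by nonnegativity of $v$. A minimiser therefore exists, for $\chi(\rho_1,\rho_2)$ and for $\overline\chi(\rho)$ alike.
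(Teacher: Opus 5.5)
\textbf{The gap: placing near-minimisers in $K$ (item (1)).} Several parts of your plan match the paper: the compactness of $K$ itself via \eqref{levelset}, the convexity in (2), and the value $\chi(0,0)=\overline q$ and the slope in (3). For compactness, the first-moment bound on boundary shreds already gives relative compactness in $\Mcal_1(\Tcal_{W_R})$, so your remark about exit sites is not needed. In (3) mind the sign: for $m_k=q_k\e^{\alpha k}$ one has $H(m\mid q)=\overline q-\sum_k m_k+\alpha\rho_1$ with $\alpha<0$.

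The real problem is the step that puts near-minimisers into $K$. Your inequality $\langle F_U,P\rangle\ge c\langle P,(\mathfrak N_U^{\ssup\ell})^2\rangle-C'(\h^{\ssup{\Lcal,\Scal}}(P)+1)$ is false. More than that, no bound of $\langle P,(\mathfrak N_U^{\ssup\ell})^{5/4}\rangle$ in terms of energy and entropy can hold. The reason is that $\mathfrak N_U^{\ssup{\ell,\Lcal}}$ counts all particles of the loops starting in $U$, wherever they lie, while superstability only sees particles that are close together.

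Here is a counterexample in $d\ge3$. Take the Poisson loop soup with intensities $m_j=q_j+\frac\delta k\1\{j=k\}$.
\begin{itemize}
\item By the computation in Remark~\ref{rem-freegas}, $\h^{\ssup{\Lcal,\Scal}}(P)=H(m\mid q)=O(\delta k^{-1}\log k)$.
\item A Brownian loop with $k$ legs has self-interaction $O(k)$ by transience ($\sum_n n^{-d/2}<\infty$), and such loops have intensity $\delta/k$. Hence $\langle F_U,P\rangle$ stays bounded as $k\to\infty$.
\item Yet $\langle P,(\mathfrak N_U^{\ssup\ell})^{5/4}\rangle\ge\delta k^{1/4}\to\infty$.
\end{itemize}
These are exactly the long loops that locally mimic interlacements (Remark~\ref{rem-minimizers}).

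Even for the local count $\widetilde{\mathfrak N}_U^{\ssup\ell}$ the exponent $2$ is out of reach. The entropy price of atypical legs that you invoke is linear in the number of legs, while the energy saved is quadratic. Pulling apart the legs of $k$ particles sitting in one unit box within time $\gamma$ costs $O(\gamma k^2)$ energy and $O(k^{1+2/d}/\gamma+k\log k)$ entropy. For $\gamma=k^{1/d-1/2}$ this is $O(k^{3/2+1/d})=o(k^2)$. The correct scale is the $k^{3/2}$ of Lemma~\ref{lem-pathinteraction}, which is why the paper uses the power $5/4<3/2$.

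For the growth claim in (2), use instead superstability at fixed times on large boxes plus the ergodic theorem. This gives $\langle P,F_U\rangle\ge c\rho^2-C$ for ergodic $P$, with $\rho=\langle P,\widetilde{\mathfrak N}_U^{\ssup\ell}\rangle=\rho_1+\rho_2$ by Remark~\ref{rem-counting}. That square of the mean, not a second moment, is what the proof of Lemma~\ref{lem-lowboundinteract} actually delivers. Then extend to stationary $P$ by ergodic decomposition.

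\textbf{Consequences for (3)--(5) and for continuity on $K$.} Items (3)--(5) as you argue them inherit this gap.
\begin{itemize}
\item For continuity at $0$ in (3) you can bypass it with $\chi(\rho_1,0)\ge\chi^{\ssup{v=0}}(\rho_1,0)\to\overline q$, as the paper does.
\item For (4), at $\rho_2=0$ the admissible $P$ are pure loop measures. So (4) there is exactly the existence of a minimiser in \eqref{chibarloopsformula} for every $\rho$, i.e.\ $\rho_{\rm c}=\infty$ in \eqref{rhocdef}. Remark~\ref{rem-minimizers} says no soft argument is available for this; it is conjectured to fail in $d\ge3$, and it does fail for $v=0$. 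A compactness argument over a set that excludes long loops cannot settle it.
\item The paper's own route (Lemma~\ref{lem-lowboundinteract} for ergodic $P$, then Lemma~\ref{lem-ergappr}) does not close the gap either. It controls only the local count $\widetilde{\mathfrak N}_U^{\ssup\ell}$, not the whole-loop count in \eqref{Kdef}.
\end{itemize}

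Finally, your continuity argument on $K$ handles unboundedness but not non-locality. Neither $\mathfrak N_U^{\ssup{\ell,\Lcal}}$ nor $\mathfrak N_U^{\ssup{\ell,\Scal}}$ is a function of any $\Pi_W$: a loop leaving $W$ appears in $\Pi_W^{\ssup\Scal}$ as shreds indistinguishable from interlacement shreds. On $K$ this is repairable. By superadditivity of $x\mapsto x^{5/4}$, the density of particles in loops of length $\ge k$ is at most $Lk^{-1/4}$ uniformly on $K$. Bounded entropy, via \eqref{hbound} on larger boxes, makes short loops that leave a large box rare. This is precisely where the whole-loop $5/4$-moment is used. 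On mere energy--entropy sublevel sets the split between loop and interlacement density is genuinely discontinuous, as the long-loop soups above show.
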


The proof of Lemma~\ref{lem-propertieschi} is in Section~\ref{sec-lemmaproof}. As easy consequences of Lemma~\ref{lem-propertieschi}, the set of minimising $P$'s is convex and compact, and $\chi$ and $\overline \chi$ are continuous and have minimisers $(\rho_1,\rho_2)$ respectively $\rho$.

Now we prove that every value of energy plus entropy that can be achieved with loops and interlacements can also be achieved with only loops, with the same total particle density. In particular, the free energy can be expressed in terms of configurations that have no interlacements, for any arbitrarily large particle density.

\begin{lemma}[Interlacements are not necessary]\label{lem-noshreds}
Suppose that the interaction potential $v$ satisfies Assumption (V). Then $\chi(\rho_1,\rho_2)\geq \chi(\rho_1+\rho_2,0)$ for any $\rho_1,\rho_2\in[0,\infty)$.
\end{lemma}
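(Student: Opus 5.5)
The plan is to take any $P\in\Mcal_1^{\ssup{\rm s}}(\Lcal\times\Scal)$ that is admissible for $\chi(\rho_1,\rho_2)$ with finite value, and build stationary loop-only processes $P_n$ with $\langle\mathfrak N_U^{\ssup{\ell,\Lcal}},P_n\rangle\to\rho_1+\rho_2$ and
\[
\limsup_{n\to\infty}\bigl(\h^{\ssup{\Lcal,\Scal}}(P_n)+\langle F_U,P_n\rangle\bigr)\le \h^{\ssup{\Lcal,\Scal}}(P)+\langle F_U,P\rangle .
\]
Continuity of $\chi$ (Lemma~\ref{lem-propertieschi}) then absorbs the small density error and gives the claim. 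By Lemma~\ref{lem-ergappr} we may assume from the start that $P$ is ergodic and concentrated on uniformly bounded configurations: bounded loop lengths, bounded local particle numbers, and only finitely many interlacement particles per unit volume.

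The construction is a periodisation, i.e.\ closing up the shreds into loops. Fix a large $R$ and $W=W_R$, and draw a sample from $\Pi_W(P)$ in $W$. Its $W$-shreds have entry/exit data given by $\partial\Pi_W^{\ssup\Scal}$. We pair each exit site with an entry site, say by a matching in the periodic copy of $W$ (or in a slightly larger box $W_{R+r}$), and join them with Brownian bridge pieces of a few legs in a thin corridor $W_{R+r}\setminus W$. Gluing the shreds along these bridges produces closed loops whose particles lie in $W_{R+r}$. Next we place i.i.d.\ copies of this configuration in the boxes $z+W_{R+r}$, $z\in 2(R+r)\Z^d$, and shift-average to obtain a stationary $P_R\in\Mcal_1^{\ssup{\rm s}}(\Lcal\times\{0\})$. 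Every particle of a former interlacement now sits in a loop, so $\langle\mathfrak N_U^{\ssup{\ell,\Lcal}},P_R\rangle=\rho_1+\rho_2+O(r/R)$.

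\textbf{Energy.} $\langle F_U,P_R\rangle$ differs from $\langle F_U,P\rangle$ only through interactions across box boundaries and inside the corridors. Because $v$ is bounded with compact support and the configurations are uniformly bounded, this error is $O(r/R)$.

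\textbf{Entropy.} I would use the bound \eqref{hbound}, $J_{W}(\Pi_W(P))\le\h^{\ssup{\Lcal,\Scal}}(P)$, together with the fact that, for i.i.d.\ block constructions, $\h^{\ssup{\Lcal,\Scal}}(P_R)$ equals $\frac1{|W_{R+r}|}H\bigl(\cdot\mid\Pi_{W_{R+r}}^{\ssup\Lcal}(\LPP)\bigr)$ of the block law, which follows from the affinity of $\h$ in Theorem~\ref{thm-specrelent}. It then remains to compare the law of the glued loops with $\Pi^{\ssup\Lcal}_{W_{R+r}}(\LPP)$. The reference measure ${\tt K}_W$ gives the interior shreds as Brownian pieces between fixed entry and exit points. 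Concatenating these with Brownian bridges yields exactly Brownian loops, weighted by the Poisson loop intensity $\frac1k\,\d x\,\mu^{\ssup k}_{x,x}$. So the relative entropy splits into the shred part, which is at most $|W|\,J_W(\Pi_W(P))$, plus the log-densities of the boundary point process $\partial\Pi_W^{\ssup\Scal}(P)$, of the connecting bridges, and of the combinatorial factors $1/k$ and the pairing choices, all taken relative to the loop soup.

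The step I expect to be the main obstacle is showing that these boundary contributions are $o(|W|)$. The number of boundary shreds is only of order $R^{d-1}$ times the number of interlacement particles near $\partial W$, which is bounded by the uniform-boundedness reduction, and each contributes $O(\log R)$ of entropy: a Gaussian density cost for a bridge of bounded length between sites at bounded distance, plus $\log$ of the loop length. This gives a total of $O(R^{d-1}\log R)=o(R^d)$. The delicate point is that the entry and exit sites of $P$ need not be at bounded distance from one another in any matching. I would handle this by first modifying $P$ locally near $\partial W$, costing $o(|W|)$ in entropy and energy, so that the boundary points are spread out with bounded local density. A nearest-neighbour matching then uses bridges of length $O(1)$. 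Finally I let $R\to\infty$, then $r\to\infty$ slowly.
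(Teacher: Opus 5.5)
Your construction has the same architecture as the paper's proof in Section~\ref{sec-noshreds}: rewire the $W$-shreds of a bounded approximant into loops inside a collar around $W$, tile with i.i.d.\ copies, shift-average, and control densities and energy by surface terms. The paper closes all shreds into \emph{one} loop through a cyclic bijection $\sigma$, using $W'$-bridges of their natural hitting lengths and Lemma~\ref{lem-explengthbrige}, rather than a nearest-neighbour matching.

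The gap is in the step you flag yourself, and your accounting of it is incomplete. Relative to $\Pi^{\ssup\Lcal}_{W_{R+r}}(\LPP)$, the boundary data $\mu=\sum_i\delta_{(x_i,l_i,y_i)}$ of the shreds are no longer free. In $J_W$ the reference measure contains $\partial\Pi_W^{\ssup\Scal}(P)$ itself, whereas under the loop soup every shred carries its Brownian weight $p^{\ssup W}_{x_i}(l_i,y_i)$. Hence the cost of closing contains $-\langle\partial\Pi_W^{\ssup\Scal}(P),\mathfrak p_W\rangle$ (cf.~\eqref{pmathfrakdef}). Per shred this is of order $|x_i-y_i|^2/l_i$, so it is proportional to the length of any shred that crosses $W$ ballistically; it is not $O(\log R)$. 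Your count charges only the closing bridges and $\log k$. The paper's tool for this term is Lemma~\ref{lem-lowerboundlogpint}(2), which requires all shreds to lie in $\Scal^{\ssup{M,\vartheta,S}}$ (see \eqref{SMvarthetaSdef}) with $\vartheta^2/S$ small. Your reduction lists bounded lengths and bounded local particle numbers, and these do not control this term.

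This is a real obstruction, not bookkeeping, as drifted interlacements show. Let $P$ be a stationary Poisson process of two-sided Brownian motions with common drift $u e_1$, possibly superposed with an independent loop process.
\begin{itemize}
\item By Girsanov, a drift is invisible to the bridge laws $q^{\ssup{W,l}}_{x,y}$. So, given $\mu$, the $W$-shreds are exactly ${\tt K}_W(\mu,\cdot)$-distributed, and the interlacements add nothing to $J_W(\Pi_W(P))$ for any $W$, hence nothing to $\h^{\ssup{\Lcal,\Scal}}(P)$.
\item A typical shred, however, runs from the face $\{x_1=-R\}$ to $\{x_1=R\}$ in about $2R/(u\beta)$ steps. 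The $\mathfrak p$-term is then about $\frac{u^2\beta}{4}$ per particle, i.e.\ of order $|W|$.
\item Entries and exits sit on opposite faces, so no short matching exists. Closing bridges must run around $W$ and carry order-$|W|$ particles in a collar of volume $O(rR^{d-1})$.
\end{itemize}
No local modification near $\partial W$ repairs this. For such $P$, Lemma~\ref{lem-ergappr} gives concentration on $\Scal^{\ssup{M,\vartheta,S}}$ only with $\vartheta\gtrsim u\beta\sqrt S$, so $\vartheta^2/S$ does not become small. You would need a separate argument that replaces ballistic interlacements by non-ballistic ones without increasing $\h^{\ssup{\Lcal,\Scal}}+\langle F_U,\cdot\rangle$. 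That is not obvious, since drift lowers the self-interaction.

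Be aware that the paper's own estimate of the corresponding term (the last summand of \eqref{entropycalc}) is also incompatible with this example. It bounds that term by $\log C_M$ times the expected number of $W$-shreds and then invokes Corollary~\ref{cor-shrednumberesti}; in the drifted example that number is $O(R^{d-1})$ while the term is of order $|W|$. So it cannot be imported to close your gap.

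A minor point: affinity of $\h^{\ssup{\Lcal,\Scal}}$ on $\Mcal_1^{\ssup{\rm s}}(\Lcal\times\Scal)$ does not give your block formula for $\h^{\ssup{\Lcal,\Scal}}(P_R)$, because $P_R$ averages merely periodic measures. The upper bound you need comes instead from convexity of $J$ together with \eqref{JWupperboundx}, applied at the scales $3^kW_{R+r}$.
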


The proof of Lemma~\ref{lem-noshreds} is in Section~\ref{sec-noshreds}. In particular, in the variational formula $\overline\chi(\rho)$ describing the free energy according to Corollary~\ref{cor-freeenergyVP}, we  see now that
\begin{equation}\label{chibarloopsformula}
\overline\chi(\rho)=\inf\Big\{\h^{\ssup{\Lcal,\Scal}}(P)+\langle F_U,P\rangle\colon P\in \Mcal_1^{\ssup{\rm s}}(\Lcal), \langle P,{\mathfrak N}_U^{\ssup{\ell,\Lcal}}\rangle=\rho\Big\},\qquad \rho\in[0,\infty),
\end{equation}
where we conceive $P$ as an element of $\Mcal_1^{\ssup{\rm s}}(\Lcal\times \Scal)$ with empty interlacement part. Note that its entropy $\h^{\ssup{\Lcal,\Scal}}(P)$ has a shred-part that is {\it a priori} non-trivial, but Lemma~\ref{lem-loopshredsnegl} says that the expected particle number in shreds is negligible.

Hence, on the level of the value of the free energy, it is not possible to distinguish between optimal configuration with loops {\em and} interlacements or just with loops alone. In \cite{ACK10}, using totally different tools, a variant of \eqref{chibarloopsformula} was proved for a range of $\rho$'s that is bounded from above; instead of $\h^{\ssup\Lcal}$, the function $\h$ from \eqref{entropydensitydef} appeared.

\begin{remark}[Existence of minimizers?]\label{rem-minimizers} For the variational problem $\chi(\rho_1,\rho_2)$ in \eqref{chidefneu} there is a relatively simple argument that it has always minimizer(s) (see Lemma~\ref{lem-propertieschi}(1)), which is based on 
 the set $K$ and Assumption (V) and the compactness property of Theorem ~\ref{thm-specrelent}.

However, there is no argument like that for the formula for $\overline \chi(\rho)$ in \eqref{chibarloopsformula}. Indeed, if we use the topology on $\Mcal_1^{\ssup{\rm s}}(\Lcal\times\Scal)$ that is induced by the weak topologies defined by all the $\Pi_W^{\ssup\Lcal}$ and $\Pi_W^{\ssup \Scal}$ with $W\Subset \R^d$ (as we do in the proofs), then $\Mcal_1^{\ssup{\rm s}}(\Lcal)$ is not a closed subset of $\Mcal_1^{\ssup{\rm s}}(\Lcal\times\Scal)$, since it is possible to approach loop/interlacement configurations having non-trivial interlacement part with pure loop configurations, which is a kind of condensation effect. On the other hand, if one is based only on the topology defined by all the $\Pi_W^{\ssup\Lcal}$ with $W\Subset \R^d$, then the particle density in loops is not a continuous function, since this quantity needs the shreds of the loops (see the proof of Lemma~\ref{lem-lowboundinteract}).
\hfill$\Diamond$
\end{remark}

Let us therefore introduce  the critical threshold  between existence and non-existence of interlacements in minimizing configurations:
\begin{equation}\label{rhocdef}
\begin{aligned}
\rho_{\rm c}&=\sup \big\{\rho\in[0,\infty)\colon \exists \mbox{ minimiser }P\in\Mcal_1^{\ssup{\rm s}}(\Lcal\times\Scal)\mbox{ in \eqref{chifreeenergy} with }\Pi^{\ssup\Scal}(P)=0\big\}\\
&=\sup \big\{\rho\in[0,\infty)\colon \eqref{chibarloopsformula}\mbox{ has a minimizer}\big\}
\in[0,\infty].
\end{aligned}
\end{equation}

We conjecture the following:
\begin{itemize}
\item For $\rho\leq \rho_{\rm c}$, \eqref{chibarloopsformula} has a minimizer, and for $\rho>\rho_{\rm c}$ it has not. 

\item $\rho\mapsto \overline\chi(\rho)$ is real-analytic in $(0,\rho_{\rm c})$ and in $(\rho_{\rm c},\infty)$, but not in any larger interval.

\item $\rho_{\rm c}<\infty$ for $d\geq 3$, but $\rho_{\rm c}=\infty$ in $d\leq 2$.
\end{itemize}

Note that these statements are true in the free case (i.e., for $v=0$); see Remark~\ref{rem-freegas}. Proofs of these conjectures for some choices of $v$ (for example, those that satisfy our Assumption (V)) may be within reach using the new variational framework that was introduced in this work. In some technical proofs in this paper (see the proofs of Lemmas~\ref{lem-noshreds} and \ref{lem-ergappr} in Sections~\ref{sec-noshreds} and \ref{sec-ergappr}), we were able to develop strategies that make it possible to compare different loop/interlacement configurations with each other by rewiring shreds in large boxes outside the box. See the short descriptions of the two proofs at the beginnings of Sections~\ref{sec-noshreds} and \ref{sec-ergappr}. The variational description in terms of entropy plus energy in large boxes made it possible to show that every configuration can be approximated by loop-configurations, by just rewiring the interlacement-shreds outside a large box in such a way that only loops appear.

However, variants that could lead to a proof of finiteness of $\rho_{\rm c}$ in $d\geq 3$ seem to require much deeper ideas. One needs to show that, if $\rho$ is sufficiently large, any configuration that has only loops is not optimal.  One idea is to remove a non-vanishing loop part from the large box and replace it by an interlacement part of the same particle density and to show that the entropy goes down by more than the energy, but Lemma~\ref{lem-loopshredsnegl} says that this cannot be done just be rewiring all the shred ends; they concern too few particles. Another idea would be to execute some few manipulations on the loop configuration in the interior of the large box and a rewiring outside the box, such that interlacements appear and the energy is hardly changed, but the entropy drops down significantly, but it is not clear that such a strategy exists. Therefore more extensive manipulations have to be carried out, which we leave to future work.

 However, our conjectures extend much further: we think that $\rho_{\rm c}$ will turn out to be the threshold for the occurrence of BEC, i.e., for the validity of off-diagonal long-range order (ODLRO) for the symmetrised and normalized exponential of the Hamiltonian in \eqref{Hamiltonian}, but proving this seems to require much more work than we can currently indicate. Nevertheless, the framework introduced here is likely to serve as a platform for attacking proofs for such a statement.

\begin{remark}[The free gas]\label{rem-freegas}
Let us briefly discuss BEC in the {\em free Bose gas}, where no interaction is present, i.e., $v=0$. Here it is known since long that $\rho_{\rm c} =(4\pi\beta)^{-d/2}\zeta(d/2)$ with $\zeta$ the Riemann zeta function. In this case, we easily see that a minimiser of the formula in \eqref{chifreeenergy} is given as 
\begin{equation}\label{minimizerfree}
P_\rho=\begin{cases}{\tt Q}^{\ssup\rho}\otimes \delta_{\underline 0},&\mbox{if }\rho\leq \rho_{\rm c},\\
{\tt Q}\otimes {\tt R}^{\ssup{u_\rho,\beta}},&\mbox{if }\rho > \rho_{\rm c},
\end{cases}
\end{equation}
where $\underline 0$ is the empty interlacement point process, and ${\tt Q}^{\ssup\rho}$ is the marked  Poisson process as in Definition~\ref{def-PPP} with $q_k=\frac 1k(4\pi\beta k)^{-d/2}$ for $k\in\N$ replaced by $m_k^{\ssup\rho}=q_k\e^{\alpha_\rho k} $ with the unique suitable $\alpha_\rho\in(-\infty,0]$ such that $\sum_{\in\N}k m_k^{\ssup{\rho}}=\rho$. Furthermore, in $d\geq 3$ (only in this case the second case is not empty), ${\tt R}^{\ssup{u_\rho,\beta}}$ is the Brownian interlacement PPP introduced in Section~\ref{sec-InterlacePPP}, and the density parameter  $u_\rho$ is picked in such a way that the expected number of particles of ${\tt R}^{\ssup{u_\rho,\beta}}$ in $U$ is equal to $\rho-\rho_{\rm c}$.  One can write \eqref{minimizerfree} in one line by writing ${\tt Q}^{\ssup 0}={\tt Q}$ and noting that $\alpha_\rho=0$ and $m^{\ssup \rho}=q$ for $\rho\geq \rho_{\rm c }$, and putting  ${\tt R}^{\ssup{u_0,\beta}}=\delta_{\underline 0}$. Indeed, using the notation in \eqref{JWdef}, we see that
$$
\begin{aligned}
J_W(\Pi_W(P_\rho))&=\frac 1{|W|}H_{\Lcal_W\times\Scal_W}\big(\Pi_W^{\ssup\Lcal}({\tt Q}^{\ssup\rho})\otimes \Pi_W^{\ssup\Scal}({\tt R}^{\ssup{u_\rho,\beta}})\,\big|\,
\Pi_W^{\ssup\Lcal}({\tt Q}^{\ssup{\rho \wedge \rho_{\rm c}}})\otimes \Pi_W^{\ssup\Scal}({\tt R}^{\ssup{u_\rho,\beta}})\big)\\
&\to H(m^{\ssup\rho}\mid q)\qquad \mbox{ as }W\uparrow \R^d,
\end{aligned}
$$
and can deduce that
 \eqref{chifreeenergy} drastically reduces to the formula
\begin{equation}\label{entropylengthstat}
\overline\chi^{\ssup{v=0}}(\rho)=\inf\Big\{H(m|q)\colon m\in [0,\infty)^{\N}, \sum_{k\in\N}k m_k=\rho\Big\},
\end{equation}
where $H(m|q)=\sum_k(q_k-m_k+m_k\log \frac {m_k}{q_k})$ is the relative entropy of the sequence $m$ with respect to $q$ and to find the unique minimiser $m^{\ssup\rho}$ via a standard Lagrange minimization.  See \cite{BKM24} for a more direct way to show that $-\beta {\rm f}(\beta)$ is equal to the right-hand side of \eqref{entropylengthstat}.

In words, the limiting non-interacting Bose gas puts Brownian loops of length $k$ independently with density $m_k^{\ssup\rho}$ and adds, if their total particle mass has not reached $\rho_{\rm c}$, additionally and independently a Brownian interlacement process with the appropriate density. The formula in \eqref{entropylengthstat} appeared in several earlier works on the free Bose gas, e.g., in \cite{A07}. See also \cite{V23} for a proof of the emergence of the above minimiser (i.e., the one on point-process level) in the thermodynamic limit as a limit distribution.
\hfill$\Diamond$
\end{remark}

\section{Discussion}\label{sec-Comments}

\noindent In this section, we discuss some aspects and explain some background. In Section~\ref{sec-condensate} we show that our concept of $R$-crossing loops is equivalent to the usual idea of long loops. In Section \ref{sec-strategy} we concisely describe our strategy to prove Theorem~\ref{thm-freeenergy},  and in Section~\ref{sec-literature} we give an account on the literature on the thermodynamic limit of the Bose gas in connection with path-integral analysis.

\subsection{$R$-crossings versus long loops}\label{sec-condensate}

\noindent In the partition function, we want to distinguish between condensate particles and the others. In the loop soup representation of the Bose gas, the former should be the ones in \lq long loops\rq. The separation line between long and short requires an additional parameter, $L\in\N$: one can call the $L$-condensate all the particles in loops with lengths $>L$. This requires taking another limiting procedure, namely $L\to\infty$, after having taken $N\to\infty$. In our work, we decided to stick to another definition of the condensate, namely we call the $R$-condensate the totality of all the particles which form part of loops that are an $R${\it -crossing}, i.e., that have particles in more than one of the subboxes $z+ [-R,R]^d$ with $z\in 2R\Z^d$. Again, after taking the limit as $N\to\infty$, one needs to take the limit as $R\to\infty$.

In this section, we point out that these two notions are asymptotically the same under the measure $\widehat {\tt Q}^{\ssup{\L_N,{\rm bc}}}$ defined in \eqref{transformedmeasure}. That is, we will show that the number of particles in $R$-crossings and the number of particles in loops of lengths $>L$ are exponentially equivalent under $\widehat  \LPP^{\ssup{\L_N,{\rm bc}}}$ in the limit $N\to\infty$, followed by $R\to\infty$, respectively $L\to\infty$. For this purpose, let, for $\omega=\sum_{x\in\zeta}\delta_{(x,f_x)}\in \Lcal$,
\begin{equation}\label{numberRcrossing<L}
{\mathfrak N}_{\L,\leq L}^{\ssup {\ell,R}}(\omega)=\sum_{x\in\zeta \cap\L}\ell(f_x)\1\{f_x\mbox{ is $R$-crossing}\}\1\{\ell(f_x)\leq L\}
\end{equation} 
denote the number of particles in loops that are $R$-crossing and have length $\leq L$, and let 
\begin{equation}\label{numbernotRcrossing>L}
{\mathfrak N}_{\L,>L}^{\ssup {\ell,\neg R}}(\omega)=\sum_{x\in\zeta \cap\L}\ell(f_x)\1\{f_x\mbox{ is not $R$-crossing}\}\1\{\ell(f_x) > L\}
\end{equation}  
be the number of particles in loops that are not $R$-crossings, but longer than $L$. The following lemma shows that these two particle numbers are exponentially negligible if $L=L_R^{\ssup{\leq}}$ respectively $L=L_R^{\ssup  >}$ are chosen properly, tending to infinity as $R\to\infty$. This implies that our notion of condensate via $R$-crossings  is  equivalent to the usual notion via the length for the interacting Bose gas.

We remark that one half of this property (namely, the negligibility of $ {\mathfrak N}_{\L,>L}^{\ssup {\ell,\neg R}}(\omega)$) highly depends on the interaction. Indeed, the fact that long loops that stay within one of the boxes $z+W_R$ with $z\in 2R\Z^d$ are suppressed does not come from combinatorics of numbers of loops, but only from the fact that their self-interaction is large, using the superstability of the potential $v$. Therefore, as a pre-step, we first estimate the expectation of $\e^{-\Phi_{\L,\L}}$ from above for any Brownian loop that lies in some subbox $z+W_R$ with $z\in Z_{N,R}$ and has a large length. For this, we are going to use \eqref{LowBoundAss} in Assumption (V) in order to obtain a good lower bound for the self-interaction of this loop. Using this is not so immediate, since such a loop has only all its particles in $W_R$ (without loss of generality, we put $z=0$), but each leg may travel far away and evade the interaction. For future reference, we isolate the necessary part of the statement as follows.

\begin{lemma}[Upper bound for expected path interaction]\label{lem-pathinteraction}
Suppose that Assumption (V) holds. Then there is a $C\in(0,\infty)$ (only depending on $d$, $v$ or $\beta$) such that, for any box $W$ of diameter $R\in\N$ that is contained in the box $\L\subset \R^d$, and for any $k\in\N$ that is sufficiently large (depending on $R$),
\begin{equation}
\E\big[\e^{-\Phi_{\L,\L}}\big]\leq \e^{-Ck^{3/2}R/\sqrt{|W|}},\qquad  x_1,\dots,x_k\in W,
\end{equation}
where $\E=\bigotimes_{i=1}^k \P_{x_i}$ is with respect to $k$ independent Brownian motions with time interval $[0,\beta]$, starting from $x_1,\dots,x_k\in W$ and having any of the three boundary conditions in $ \L$, and $\Phi_{\L,\L}$ is the interaction between all the $k$ motions.
\end{lemma}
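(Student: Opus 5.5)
We want an upper bound on $\E[\e^{-\Phi_{\L,\L}}]$ for $k$ independent Brownian legs of duration $\beta$ started in $W$. Superstability \eqref{LowBoundAss} only controls configurations of points inside a box, whereas the legs may wander. The plan is therefore to localise: find a deterministic set of times and a slightly enlarged box in which, with overwhelming probability, many legs sit simultaneously. Then apply \eqref{LowBoundAss} at each such time and integrate over time.

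\textbf{Step 1: localisation of most legs for a fraction of the time.} Fix a short time horizon $t_0\in(0,\beta]$, to be chosen of order $\beta$ or as a power of $R$, and the enlarged box $\widetilde W=W+[-R,R]^d$, so that $|\widetilde W|\le 3^d|W|$. By the Gaussian tail (with an additional reflection or particle-boundary argument in $\L$, which only helps, since killed paths contribute nothing), each leg stays in $\widetilde W$ during $[0,t_0]$ with probability at least $1-\e^{-cR^2/t_0}$, or at least with a fixed probability $p>0$ for $t_0\asymp R^2\wedge\beta$. The legs are independent, so a Chernoff bound shows that the number $M$ of legs staying in $\widetilde W$ up to time $t_0$ satisfies $M\ge pk/2$, except on an event of probability $\e^{-c'k}$.

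\textbf{Step 2: energy lower bound on the good event.} On $\{M\ge pk/2\}$, at each time $s\in[0,t_0]$ the positions of these $M$ legs all lie in $\widetilde W$. Since $v\ge0$, we may drop the other pairs, and \eqref{LowBoundAss} gives
\[
\sum_{i<j} v(B^i_s-B^j_s)\ge C\Big(\frac{M^2}{|\widetilde W|}-M\Big)\ge C'\frac{k^2}{|W|}
\]
for $k\ge c|W|$. Integrating over $s\in[0,t_0]$ yields $\Phi_{\L,\L}\ge C't_0k^2/|W|$. Hence
\[
\E[\e^{-\Phi}]\le \e^{-C't_0k^2/|W|}+\e^{-c'k}.
\]

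\textbf{Step 3: matching the claimed exponent $k^{3/2}R/\sqrt{|W|}$.} I expect the main obstacle to be balancing the two error terms to get exactly this form. The stated rate is the geometric mean of $k$ and $k^2R^2/|W|$, which suggests optimising the confinement radius. Let the legs spread to a box of side $r\ge R$, with volume $\asymp r^d$, and pay a Gaussian price only for legs leaving it. Alternatively, use time $t_0$ with $t_0$ tuned so that $k^2t_0/|W|\asymp kR/\sqrt{t_0}$-type terms balance. I would first try choosing $t_0$ in terms of $R$, $k$ and $|W|$ so that the deviation probability $\e^{-ck\cdot(\cdot)}$ and the energy $t_0k^2/|W|$ coincide. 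Since the claimed bound is weaker than $\e^{-ck}$ once $k\gg |W|/R^2$, the simplest route may be the crude estimate $\e^{-c\min(k,\,k^2/|W|)}$ from Step 2, followed by checking that $\min(k,\,k^2/|W|)\ge k^{3/2}R/\sqrt{|W|}$ up to constants for large $k$, adjusting $C$ (and absorbing $R$, which is fixed while $k\to\infty$). If that comparison fails, I would refine Step 1 with a union bound over $2^k$ subsets and a Gaussian estimate on the maximal excursion, which produces a factor $\sqrt{k}$ naturally.
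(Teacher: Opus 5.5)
Steps 1--2 are sound as far as they go, but Step 3 rests on a reversed comparison, so the crude route cannot work. For $k\ge |W|/R^2$ you have $k^{3/2}R/\sqrt{|W|}\ge k$, and the ratio $\sqrt{k}R/\sqrt{|W|}$ tends to $\infty$. Hence the claimed bound $\e^{-Ck^{3/2}R/\sqrt{|W|}}$ is much \emph{stronger} than $\e^{-ck}$, not weaker. Since $\min(k,k^2/|W|)=k$ for $k\ge |W|$, the inequality $\min(k,k^2/|W|)\ge c\,k^{3/2}R/\sqrt{|W|}$ fails for large $k$. The real obstruction sits in Step 1. With a $k$-independent horizon $t_0$ and a fixed confinement probability $p$, the bad event $\{M<pk/2\}$ has probability of order $\e^{-c'k}$. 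No energy estimate on the good event can beat this linear-in-$k$ error.

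The missing idea is to let the confinement time shrink with $k$, so that the per-leg escape probability $\e^{-cR^2/\gamma}$ becomes tiny. Let $\gamma\le\beta$, and let $M$ be the number of legs staying in $W_{2R}$ (or your $\widetilde W$) during $[0,\gamma]$. Independence gives $\P(M<k/2)\le 2^k\e^{-ckR^2/(2\gamma)}$, whose exponent is superlinear in $k$ once $\gamma\to0$. On $\{M\ge k/2\}$ with $k\ge|\widetilde W|$, superstability gives $\Phi_{\L,\L}\ge C\gamma\big(k^2/(4|\widetilde W|)-k/2\big)$. Balancing $kR^2/\gamma$ against $\gamma k^2/|W|$ forces $\gamma\asymp R\sqrt{|W|/k}$. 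Then both exponents are of order $k^{3/2}R/\sqrt{|W|}$, which absorbs the factor $2^k$ and the term $C\gamma k/2=O(R\sqrt{|W|k})$ for $k$ large depending on $R$.

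This is exactly the paper's argument. It bounds $\P(L_\gamma=m)$ by $\e^{-(k-m)CR^2/\gamma}$ (a combinatorial factor $\binom{k}{m}\le 2^k$ is harmless). It then sums $\e^{-C\gamma(m^2/|W|-m)-(k-m)CR^2/\gamma}$ over $m\in\{0,\dots,k\}$, bounds the sum by $(k+1)$ times its maximum over $m$, and takes $\gamma=R\sqrt{|W|/k}$. Your two-event split with threshold $k/2$ is a slightly simpler variant: it avoids the optimisation over $m$ and the comparison of constants that the paper's positive correction term requires. It needs the same $k$-dependent $\gamma$, however.

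Your remark about tuning $t_0$ points in the right direction. It must be paired with the small-$p$ binomial tail rather than a fixed-$p$ Chernoff bound. The factor $\sqrt{k}$ then comes from $R^2/\gamma$, not from a maximal-excursion estimate.
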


\begin{proof}
Without loss of generality, we put $W=W_R=[-R,R]^d$, as the boundary conditions will not be used anyway in our proof. We use $C$ as a generic constant that depends only on $d$, $v$ or $\beta$ and may change its value from appearance to appearance.

Even though all starting sites of the $k$ motions lie in $W$, we get a useful bound only for the pairs of those legs that stay, say, in the larger box $W_{2R}$, at least for some part of the time interval $[0,\beta]$. For a vector $f$ of $k$ legs $f_1,\dots,f_k\in\Ccal_1$, let $L_\gamma(f)=\#\{i\in[k] \colon f_i([0,\gamma])\subset W_{2R}\}$ denote the number of those legs who stay for the first $\gamma$ time units in $W_{2R}$. We will be using \eqref{LowBoundAss} only in the way that the self-intersection of $f$ satisfies the lower bound
\begin{equation}\label{lowboundinteraction}
\sum_{1\leq i<j\leq k} \int_0^\beta v\big(|f_i(s)-f_j(s)|\big)\,\d s\geq C\gamma\Big(\frac {m^2}{|W_R|}-m\Big),\qquad m,R\in\N\mbox{ if }m=L_\gamma(f).
\end{equation}
For a vector $B$ of $k$ independent motions $B_1,\dots,B_k$ with starting sites $x_1,\dots,x_k$, we lower bound $\e^{-\Phi_{\L,\L}}$ against the exponential  of the negative left-hand side of \eqref{lowboundinteraction} with $f_i=B_i$ and hence against the exponential  of the negative right-hand side, and we split the expectation according to the values of $L_\gamma(B)\in\{0,1,\dots,k\}$. Note that the probability of $\{L_\gamma(B)=m\}$ is not larger than $\e^{-(k-m)CR^2/\gamma}$, since $k-m$ of them travel a distance $\geq R$ during the time interval $[0,\gamma]$. Using \eqref{lowboundinteraction} on this event, we obtain  
\begin{equation}\label{calc1}
\begin{aligned}
\E\big[\e^{-\Phi_{\L,\L}}\big]\leq \sum_{m=0}^k \e^{-C\gamma (\frac {m^2}{|W_R|}-m)-(k-m)CR^2/\gamma}
&=\e^{-CkR^2/\gamma}\sum_{m=0}^k \e^{Cm(\gamma+R^2/\gamma)}\e^{-Cm^2\gamma/|W_R|}\\
&\leq \e^{-CkR^2/\gamma}(k+1) \e^{\frac C4\gamma |W_R|(1+R^2/\gamma^2)^2},
\end{aligned}
\end{equation}
where in the last step we estimated against the maximum over $m$, namely taking $m=\frac{|W_R|}2(1+R^2/\gamma^2)$. Now, assuming that $k$ is much larger than $|W_R|R^2$, we pick  $\gamma=R \sqrt{|W_R|/k}\in[0,\beta]$ and estimate, for sufficiently large $k$,  the second exponent as follows.
$$
\frac C4\gamma |W_R|(1+R^2/\gamma^2)^2
\leq \frac C3 \gamma\,|W_R|\frac{R^4}{\gamma^4}=\frac C3   k^{3/2}\frac R{\sqrt{|W_R|}}.
$$
Hence, adapting the value of $C$ and assuming that $k$ is large enough (only depending on $R$) we obtain that the right-hand side of \eqref{calc1} is not larger than $\exp\{-Ck^{3/2}R/\sqrt{|W_R|}\}$.
\end{proof}

Now we prove that the concepts of $R$-crossing loops and long loops are equivalent. Recall the definition of $\widehat\LPP^{\ssup{\L_N,\rm bc}}$ in \eqref{transformedmeasure}.

\begin{lemma}[Equivalence of condensate notions]\label{lem-condensatenotion} Suppose that $v$ satisfies Assumption (V).
Fix $\rho\in(0,\infty)$ and let $\L_N$ be the centered box in $\R^d$ with volume $|\L_N|/N\to 1/\rho$ as $N\to\infty$. Then there are diverging sequences $(L_R^{\ssup{>}})_{R\in\N}$ and $(L_R^{\ssup{\leq }})_{R\in\N}$ such that, for any $\eps\in(0,1)$,
\begin{eqnarray}
\limsup_{R\to\infty}\limsup_{N\to\infty}\frac 1{|\L_N|}\log \widehat\LPP^{\ssup{\L_N,\rm bc}}\big(\mathfrak N^{\ssup{\ell,R}}_{\L_N,\leq L_R^{\ssup{\leq}}}>\eps |\L_N|\big)&=& - \infty,\label{condensatenotionnegligibleleq}\\
\limsup_{R\to\infty}\limsup_{N\to\infty}\frac 1{|\L_N|}\log \widehat\LPP^{\ssup{\L_N,\rm bc}}\big(\mathfrak N^{\ssup{\ell,\neg R}}_{\L_N, > L_R^{\ssup{> }}}>\eps |\L_N|\big)&=& - \infty.\label{condensatenotionnegligible>}
\end{eqnarray}

\end{lemma}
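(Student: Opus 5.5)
Both estimates follow the same scheme. First reduce to the Poisson measure: since $v\ge 0$, $\Phi_{\L_N,\L_N}\ge 0$, and $\widehat Z^{\ssup{\rm bc}}(\L_N)\ge \e^{-C|\L_N|}$ by restricting to a configuration event with bounded interaction (for instance the empty configuration, whose probability is $\e^{-\nu_{\L_N}(\L_N\times\Ccalcirc)}$, and this is $\e^{-O(|\L_N|)}$ by \eqref{limnorming}). Hence
$$
\widehat\LPP^{\ssup{\L_N,\rm bc}}(A)\le \e^{C|\L_N|}\,\LPP^{\ssup{\L_N,\rm bc}}\big[\e^{-\Phi_{\L_N,\L_N}}\1_A\big]
$$
for any event $A$. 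It therefore suffices to show that $\frac1{|\L_N|}\log \LPP^{\ssup{\L_N,\rm bc}}[\e^{-\Phi}\1_A]\to-\infty$ in the iterated limit. The constant $C$ does not depend on $R$, $L$ or $\eps$, so any super-linear decay in $R$ will absorb it.

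\textbf{Proof of \eqref{condensatenotionnegligibleleq}.} Here I drop the interaction, since $\e^{-\Phi}\le 1$. Under the Poisson process $\LPP^{\ssup{\L_N,\rm bc}}$, the variable $\mathfrak N^{\ssup{\ell,R}}_{\L_N,\le L}$ is a compound Poisson sum with intensity
$$
\lambda_{R,L}(k)=\frac1k\int_{\L_N}\d x\,\mu_{x,x}^{\ssup{k,\L_N,\rm bc}}(f \mbox{ is $R$-crossing}),\qquad k\le L .
$$
A loop of length $k\le L$ with starting point $x$ at distance $\ge r$ from $\partial(z+W_R)$ is $R$-crossing only if the bridge moves distance $r$, which has probability $\le \e^{-r^2/(Ck\beta)}$. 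Integrating over $x$ gives
$$
\sum_{k\le L}k\,\lambda_{R,L}(k)\le |\L_N|\,\frac{C}{R}\sum_{k\le L}\sqrt{k\beta}=:|\L_N|\,\delta_{R,L}.
$$
The exponential Chebyshev inequality with parameter $t$ then yields the bound
$$
\exp\Big(-t\eps|\L_N|+\sum_{k\le L}\lambda_{R,L}(k)(\e^{tk}-1)\Big)\le \exp\Big(|\L_N|\big(-t\eps+\e^{tL}\delta_{R,L}\big)\Big).
$$
Choose $L=L_R^{\ssup\le}$ so that $L^{3/2}/R\to0$ slowly, for instance $L=\log R$. Then take $t=t_R\to\infty$ with $\e^{tL}\delta_{R,L}\to0$, for example $t=\log R/(2L)$. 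The exponent becomes $-\infty$ after dividing by $|\L_N|$. The extra $\e^{C|\L_N|}$ factor is harmless because $t_R\to\infty$.

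\textbf{Proof of \eqref{condensatenotionnegligible>}.} This is the interaction-driven half, and I rely on Lemma~\ref{lem-pathinteraction}. Let $A$ be the event $\{\mathfrak N^{\ssup{\ell,\neg R}}_{\L_N,>L}>\eps|\L_N|\}$, and bound $\e^{-\Phi_{\L_N,\L_N}}$ by the product over non-crossing loops of length $>L$ of their self-interaction factors. This is valid because $v\ge0$ lets us discard all other pairs. Each such loop of length $k$ has all its particles in a single subbox $z+W_R$. Conditioning on the particle positions (a Gaussian random walk), its legs are independent Brownian bridges. I expect Lemma~\ref{lem-pathinteraction} to carry over from free motions to bridges, at the price of a constant: the bridge density relative to free motion over one leg is bounded when one uses the Markov property on $[0,\beta/2]$ and chooses $\gamma\le\beta/2$. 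This gives a factor $\le \e^{-c_R k^{3/2}}$ with $c_R=CR/\sqrt{|W_R|}>0$, for $k\ge k_0(R)$. Consequently the thinned process of these loops is dominated by a Poisson process with intensity $\frac1k q_k\,\e^{-c_Rk^{3/2}}|\L_N|$ for $k>L$. The same Chebyshev computation with $\e^{tk}$ gives
$$
|\L_N|\Big(-t\eps+\sum_{k>L}q_k\,\e^{tk-c_Rk^{3/2}}\Big),
$$
and for fixed $R$ the sum tends to $0$ as $L\to\infty$, for every $t$. So choose $L_R^{\ssup>}\ge k_0(R)$ large enough that the sum with $t=R$ is at most $1$. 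Letting $R\to\infty$ then produces $-\infty$.

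\textbf{Main obstacle.} The delicate step is justifying the factorised domination in the second part: transferring Lemma~\ref{lem-pathinteraction} to loops (bridges with constrained particle positions) while restricting the interaction to self-interactions within the exponential moment. The loops are independent under $\LPP$ and $\e^{-\Phi}\le\prod_{\rm loops}\e^{-\Phi_{\rm self}}$, so the weighted measure is dominated by a Poisson process with thinned intensity. Making that thinning rigorous, together with the bridge-versus-free-motion comparison, is what I would check most carefully.
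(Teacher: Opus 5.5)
Your route is the paper's. For \eqref{condensatenotionnegligibleleq} you discard the interaction and use a Poisson tail bound for the $R$-crossing loops of length at most $L$. For \eqref{condensatenotionnegligible>} you combine the superstability bound of Lemma~\ref{lem-pathinteraction} with an exponential Chebyshev inequality. In both parts a diverging Chebyshev parameter absorbs $1/\widehat Z^{\ssup{\rm bc}}(\L_N)\le \e^{C|\L_N|}$.

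One step fails as written. With $L=\log R$ and $t=\log R/(2L)$ you get $t=\frac 12$, so your estimate only gives $\frac 1{|\L_N|}\log \LPP^{\ssup{\L_N,{\rm bc}}}(\cdot)\le -\frac\eps 2+o(1)$. That does not beat the factor $\e^{C|\L_N|}$. The condition $L^{3/2}/R\to 0$ is not the relevant one. Since $\delta_{R,L}\asymp L^{3/2}/R$, requiring $t\to\infty$ together with $\e^{tL}\delta_{R,L}\to 0$ forces $tL\le \log R$, i.e.\ $L=o(\log R)$.

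This is not an artefact of the crude bound $\e^{tk}\le \e^{tL}$. For $L=\log R$, the number of $R$-crossing loops of length exactly $L$ is Poisson with mean of order $|\L_N|L^{-(d+1)/2}/R$. It exceeds $\eps|\L_N|/L$ with probability at least $\e^{-2\eps|\L_N|}$ for large $R$ and $N$, so no bound of the form $\LPP^{\ssup{\L_N,{\rm bc}}}(\cdot)/\widehat Z^{\ssup{\rm bc}}(\L_N)$ can reach $-\infty$. Take instead $L=\log\log R$ (the paper's choice) or $L=\sqrt{\log R}$, still with $t=\log R/(2L)\to\infty$. Then $\e^{tL}\delta_{R,L}\le CL^{3/2}R^{-1/2}\to 0$, which repairs the step.

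For \eqref{condensatenotionnegligible>} your argument is sound, and the step you flag as the main obstacle needs no separate domination argument. By the exponential formula for Poisson processes,
\[
\LPP^{\ssup{\L_N,{\rm bc}}}\Big[\prod_f \e^{t\ell(f)-\Phi_{\rm self}(f)}\Big]=\exp\Big(\int\nu(\d f)\,\big(\e^{t\ell(f)-\Phi_{\rm self}(f)}-1\big)\Big),
\]
with product and integral over the non-crossing loops of length $>L$. Equivalently, condition on the numbers $\widetilde X_k$ of such loops, as the paper does. It then remains to bound $\int \nu(\d f)\,\e^{t\ell(f)}\e^{-\Phi_{\rm self}(f)}$ by conditioning on the particle positions.

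Your transfer of Lemma~\ref{lem-pathinteraction} from free motions to bridges is also correct, and it fills in a point the paper passes over. On $[0,\gamma]$ with $\gamma\le \beta/2$, the density of the leg from $x_i$ to $x_{i+1}$ with respect to free Brownian motion is $g_{\beta-\gamma}(B_\gamma,x_{i+1})/g_\beta(x_i,x_{i+1})\le 2^{d/2}\e^{dR^2/\beta}$, where $g_s$ is the Gaussian kernel of variance $2s$. This holds because $x_i,x_{i+1}$ lie in the same subbox $z+W_R$. Over the $k$ legs this costs a factor $\e^{C_Rk}$ rather than a single constant, but $\e^{-c_Rk^{3/2}}$ absorbs it for $k\ge k_0(R)$.
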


\begin{proof} We put $W_R=[-R,R]^d$ and assume that $\L_N$ is equal to the union of $z+W_R$ over $z\in Z_{N,R}=\{z\in 2R\Z^d\colon z+W_R\subset\L_N\}$. Possibly we need to admit that $R=R_N$ depends on $N$ with $R_N\to R$, but we suppress this from notation.

We first prove \eqref{condensatenotionnegligibleleq}. This will not use the interaction, but only combinatorics and large deviations for the Poisson distribution. Note that $\mathfrak N^{\ssup{\ell,R}}_{\L_N,\leq L}=\sum_{k=1}^L k X_{k}$ where the $X_{k}$ are independent and Poisson-distributed with parameter
$$
\frac 1k \int_{\L_N}\d x\, \mu_{x,x}^{\ssup{k,\L_N,{\rm bc}}}(\tau_R\leq k);
$$
here, $\tau_R=\inf\{k\in\N\colon B_{k\beta}\notin z_{B_0}+W_R\}$, and $z_{B_0}\in 2R\Z^d$ is picked such that $B_0\in z_{B_0}+W_R$. The random variable $\sum_{k=1}^L X_k$ is Poisson-distributed with parameter $|\L_N| a_N(R)$, where $a_N(R)=\frac1{|\L_N|}\sum_ {k=1}^L \frac 1k\int_{\L_N}\d x\, \mu_{x,x}^{\ssup{k,\L_N,{\rm bc}}}(\tau_R\leq k)$.   Hence, we can estimate, as $N\to\infty$,
\begin{equation}\label{estiequiv1}
\begin{aligned}
\widehat\LPP^{\ssup{\L_N,\rm bc}}\big(\mathfrak N^{\ssup{\ell,R}}_{\L_N,\leq L}>\eps |\L_N|\big)
&\leq \widehat\LPP^{\ssup{\L_N,\rm bc}}\Big(\sum_{k=1}^L X_k>\frac\eps L |\L_N|\Big)\\
&\leq \frac1{\widehat Z_N^{\ssup{\rm bc}}(\L_N)}\exp\Big(-|\L_N|(1+o(1)) \big[ a_N(R)-\smfrac \eps L+\smfrac \eps L\log\frac\eps{L a_N(R)}\big]\Big),
\end{aligned}
\end{equation}
according to Cram\'er's theorem for the sum of $\asymp |\L_N|$ i.i.d.~Poisson-distributed random variables with parameter $a_N(R)$ (we dropped the interaction). Let us estimate $a_N(R)$. We are going to show that $\limsup_{N\to\infty}a_N(R)\leq O(1/R)$ as $R\to\infty$. We are going to use $C$ as a generic constant $\in(0,\infty)$, depending only on $d$ or on $\beta$ and possibly changing its value from appearance to appearance. We will use the estimate $\mu_{x,x}^{\ssup{k,\L_N,{\rm bc}}}(\tau_R\leq k)\leq C k^{-d/2}$ for any $x\in \L_N$ and any $k\in \N$, and we use that $\mu_{x,x}^{\ssup{k,\L_N,{\rm bc}}}(\tau_R\leq k)\leq C\e^{-Cm^2/k}$ for any $x\in\L_N$ that is further away from $\bigcup_{z\in Z_{N,R}} \partial (z+W_R)$ than $m$ (use that for the Brownian motion with time interval $[0,k\beta]$ it has probability $\leq C\e^{-Cm^2/k}$ to terminate at distance $\geq m$ away from its starting site). This gives, for any choice of $m_1,\dots,m_L \in (0,\infty)$,
$$
\begin{aligned}
a_N(R)&=\frac1{|\L_N|}\sum_{z\in Z_{N,R}}\sum_{k=1}^L \frac 1k \int_{z+W_R}\d x\, \mu_{x,x}^{\ssup{k,\L_N,{\rm bc}}}(\tau_R\leq k)\\
&\leq C \frac{1}{|W_R|} \sum_{k=1}^L \frac 1k \Big[\int_{W_{R-m_k}} \e^{-C m_k^2/k}\,\d x+k^{-d/2}\big(|W_R|-|W_{R-m_k}|\big)\Big]\\
&\leq C\sum_{k=1}^L \frac 1k \big[\e^{-C m_k^2/k}+k^{-d/2} \frac {m_k}R\Big].
\end{aligned}
$$
We pick now $m_k=\sqrt{\frac kC\log (k R)}$ and obtain the upper bound (recall that $d\geq 3$)
$$
a_N(R)\leq C\Big[\frac 1R+\sum_{k=1}^L k^{-(d+1)/2}\frac{\sqrt{\log(kR)}}R\Big]\leq \frac CR \sqrt{\log R}.
$$
We now put $L=L_R^{\ssup\leq}=\log\log R$. Substituting this in the large-deviation rate on the right-hand side of \eqref{estiequiv1}, we see that, as $R\to\infty$, 
$$
\begin{aligned}
 a_N(R)-\smfrac \eps L+\smfrac \eps L\log\frac\eps{L a_N(R)}
 &\geq o(1)+\frac \eps{\log\log R}\log \Big(\frac{\eps R}{ (\log\log R)\,C \sqrt{\log R}}\Big)
 \to\infty,
 \end{aligned}
$$
which implies \eqref{condensatenotionnegligibleleq}.

Now we prove \eqref{condensatenotionnegligible>}. Here we use the superstability of $v$, according to Assumption (V). Observe that $\mathfrak N^{\ssup{\ell,\neg R}}_{\L_N, > L}
=\sum_{k= L+1}^N k \widetilde X_{k},$
where  the $\widetilde X_{k}$ are independent Poisson-distributed random variables with parameter
$$
\frac 1k \int_{\L_N}\d x \mu_{x,x}^{\ssup{k,\L_N,{\rm bc}}}(\tau_R> k). 
$$
(Using the fact alone that this is $\leq  \frac 1k |\L_N| \e^{-Ck/R^2}$ for some $C\in(0,\infty)$ and all $k,R$ does not help in this proof, as one sees when trying it. Therefore, we employ the self-interaction of each such loop.)

 Lemma~\ref{lem-pathinteraction} implies that the expected exponential interaction of each of the $\widetilde X_k$ Brownian loops of length $k$ is not larger than $\e^{ -C_R k^{3/2}}$ for some $C_R$, if $k$ is large, only depending on $R$. Pick $L_R$ so large that $L_R ^{1/2} C_R\to\infty$ as $R\to\infty$.  Therefore, for any $a\in(0,\infty)$ and any $L\in\N$ that is larger than $L_R$, using the exponential Chebyshev inequality,
\begin{equation}\label{loopinteractionbound}
\begin{aligned}
\widehat Z_N^{\ssup{\rm bc}}(\L_N)\widehat\LPP^{\ssup{\L,\rm bc}}\big(\mathfrak N^{\ssup{\ell,\neg R}}_{\L_N, > L}>\eps |\L_N|\big)
&\leq \e^{-a \eps|\L_N|} \prod_{k>L}\E\big[\e^{a k \widetilde X_{k}-C_R k^{3/2}\widetilde X_{k}}\big] \\
&\leq \e^{-a \eps|\L_N|} \prod_{k>L}\E\big[\e^{k \widetilde X_{k}[a-C_R L^{1/2}]}\big] . 
 \end{aligned}
\end{equation}
We pick now  $a=a_R$ such that $1\ll a_R\ll L_R ^{1/2} C_R$, then the exponent in the expectation is negative for all large enough $R$, and the product is $\leq 1$. This implies \eqref{condensatenotionnegligible>}.
\end{proof}

\subsection{Our strategy}\label{sec-strategy}

\noindent We are going to describe our strategy to prove the LDP in Theorem~\ref{thm-freeenergy}. This strategy is very comprehensive and in particular provides a number of results that are also vital for the proof of Theorem~\ref{thm-specrelent}. Recall that we decompose the centred box $\L_N$ with volume $\sim N/\rho$ regularly into all the subboxes $W_z=z+W_R$ with $W_R=[-R,R]^d=W$ and $z\in Z_{N,R}\subset 2R\Z^d$. We have to find the large-$N$ exponential asymptotics of  the restricted partition function
$$
\widehat Z_{N,R,\delta}^{\ssup{\rm bc}}(\L_N,\rho_1,\rho_2)
=\LPP^{\ssup{\L_N,\rm bc}}\Big[{\rm e}^{-\Phi_ {\L_N,\L_N}}\1\{|\smfrac 1{|\L_N|} {\mathfrak N}^{\ssup{\ell,\neg R}}_{\L_N}-\rho_1|\leq\delta\}\1\{|\smfrac 1{|\L_N|} {\mathfrak N}^{\ssup{\ell,R}}_{\L_N}-\rho_2|\leq\delta\}\Big],
$$
followed by making $R\to\infty$ and then $\delta\downarrow0$.

For each $z\in Z_{N,R}$, we distinguish the  loops that are entirely contained in $W_z$, and the $W_z$-shreds of all the other loops (the $R$-crossing loops). Hence, we have a loop-shred configuration in the subboxes $W_z$.  We are going to neglect all the interaction between different subboxes. Therefore, with the shift-operator $\theta_z$ that satisfies $\theta_z(z)=0$, we can describe the entire system in terms of the crucial empirical measure
$$
\Xi_{N,R}(\omega)=\frac 1{\# Z_{N,R}}\sum_{z\in Z_{N,R}}
\delta_{(\theta_{z}(\Pi_{W_z}^{\ssup\Lcal}(\omega)),\theta_{z}(\Pi_{W_z}^{\ssup\Scal}(\omega)))},
$$
which is a probability measure on the set $\Lcal_W\times \Scal_W$ of loop-shred configurations in $W$; see the notation in Section~\ref{sec-twoentropies}. (We identify $\Lcal_W\times \Scal_W$ with the set $\Lcal_W\oplus \Scal_W$ of superpositions of loop configurations and shred configurations.) Neglecting the interaction between any two different ones of the subboxes, we will show in Section~\ref{sec-uppboundN} (upper bound) and \ref{sec-lowboundN} (lower bound), respectively, that the partition function can be roughly be written as
$$
\begin{aligned}
\widehat Z_{N,R,\delta}^{\ssup{\rm bc}}(\L_N,\rho_1,\rho_2)
&\approx \LPP^{\ssup{\L_N,\rm bc}}\Big[{\rm e}^{-|\L_N|\frac 1{|W_R|}\langle F_{W_R,W_R},\Xi_{N,R}\rangle}\\
&\qquad\1\{\smfrac 1{|W_R|}\langle \mathfrak N^{\ssup{\Lcal,\ell}}_{W_R},\Xi_{N,R}\rangle\approx \rho_1\}\1\{\smfrac 1{|W_R|}\langle \mathfrak N^{\ssup{\Scal,\ell}}_{W_R},\Xi_{N,R}\rangle\approx \rho_2\}\Big],
\end{aligned}
$$
where $F_{W,W}$ is the interaction between $W$-loops and $W$-shreds as defined in \eqref{FLL} -- \eqref{FSS} and \eqref{FWWdef}.

We need to understand the large-$N$ properties of the distribution of $\Xi_{N,R}$ under $\LPP^{\ssup{\L_N,\rm bc}}$. 
Observe that, by the virtue of the Poisson point process, the loop part is independent of the shred part, and the loop parts are essentially (modulo boundary effects from the boundary of the large box $\L_N$) i.i.d.~over $z$. However, the shred parts are by far not i.i.d. But luckily, we have the following conditional independence property: given the empirical measure of boundary configurations, 
$$
\partial \Xi^{\ssup\Scal}_{N,R}=\frac 1{\# Z_{N,R}}\sum_{z\in Z_{N,R}}
\delta_{(\theta_{z}(\partial\Pi_{W_z}^{\ssup\Scal}(\omega)))},
$$
these shred configurations $\Pi_{W_z}^{\ssup\Scal}(\omega)$, $z\in Z_{N,R}$, are independent, and the boundary configuration contains all information about their distribution that is necessary to establish an LDP for $\Xi_{N,R}$ with an explicit rate function (see below). In Section~\ref{sec-boundaryshreds} we derive the lower bound
$$
\liminf_{N\to\infty}\frac 1{|\L_N|} \log{\tt Q}^{\ssup{\L_N,\rm bc}}(\partial\Xi_{N,R}\approx\psi)\geq -\gamma_R(\psi),\qquad \psi\in\Tcal_W,
$$
where $\gamma_R(\psi)$ will turn out in Section~\ref{sec-lowboundlogpint}   to be small in the limit $R\to\infty$ if $\psi$ is picked carefully (for the upper bound we can essentially take $\gamma_R(\psi)=0$). Indeed, we will need certain ergodic properties of $\psi$ to derive the asymptotics and certain concentration properties of it in order to show that $\gamma_R(\psi)$ is small. The crucial point is that the question whether or not the event $\{ \partial\Xi_{N,R}\approx\psi\}$ has positive probability for all large $N$ or not is highly difficult to decide, since this requires knowledge about very long-range properties of the loop configuration; the long loops have global properties; and by far not any collection of loop-shred configurations in many subboxes can come from suitable global loop ensembles. This tricky problem is taken care of by a clever argument involving an ergodicity property and the ergodic theorem.

 Moreover, the contribution to the large-deviation probabilities coming from $\{\partial \Xi^{\ssup\Scal}_{N,R}=\psi\}$ can in Section~\ref{sec-LDPabstract} be explicitly be expressed using a large-deviation principle of the form
$$
\frac 1{|\L_N|} \log{\tt Q}(\Xi_{N,R}\approx \xi|\partial \Xi_{N,R}=\psi)\approx -J^{\ssup\psi}_{W_R}(\xi),\qquad N\to\infty.
$$ 
Here $J_W^{\ssup\psi}$ is the entropy term that appears in the definition of $\h^{\ssup{\Lcal,\Scal}}$ with $\partial \Pi_W^{\ssup\Scal}(\xi)$ replaced by $\psi$, 
$$
J^{\ssup\psi}_W(\xi)
=\frac 1{|W|} H_{\Lcal_W\times\Scal_W}\big(\xi\,\big|\,\Pi_W^{\ssup\Lcal}({\tt Q})\otimes [\psi\otimes {\tt K}_W]\big).
$$
For the upper bound in the proof of Theorem~\ref{thm-freeenergy}, we just optimize over all $\psi$ that have the required expected particle number  in the shreds, $\rho_2 |W_R|$, and for the lower bound we restrict to a $\psi$ of the form $\psi=\partial\Pi_{W_R}^{\ssup\Scal}(\xi)$ for a $\xi$, whose properties we still can refine. Using the above large-deviation principle and Varadhan's lemma, we will in Sections~\ref{sec-LDPupperbound} (upper bound) and \ref{sec-Nlowbound} (lower bound) arrive at 
$$
\begin{aligned}
\frac 1{|\L_N|} \log\widehat Z_{N,R,\delta}^{\ssup{\rm bc}}(\L_N,\rho_1,\rho_2)
&\approx - \inf\Big\{J_{W_R}(\xi)+\frac 1{|W_R|}\langle F_{W_R,W_R},\xi\rangle
\colon \xi\in\Mcal_1(\Lcal_{W_R}\times \Scal_{W_R}), \\
&\quad\smfrac 1{|W_R|}\langle \mathfrak N^{\ssup{\Lcal,\ell}}_{W_R},\xi\rangle\approx \rho_1, \smfrac 1{|W_R|}\langle \mathfrak N^{\ssup{\Lcal,\ell}}_{W_R},\xi\rangle\approx \rho_2\Big\}.
\end{aligned}
$$

On the way to this, a number of technical problems are to be handled, as is usual in large-deviation theory. Crucial ones of these obstacles come from topological technicalities like missing compactness and missing upper semi-continuity in the proof of the upper bound and missing openness and lower  semi-continuity in the proof of the lower bound; therefore both proofs have to separately be approached and require many separate steps. Another general issue that renders the proof lengthy and technical is the fact that Brownian motions are infinitely flexible with positive probability and need to be controlled and enclosed in bounded areas in various senses, in order to gain a clear control on the various influences on the interaction energy and particle numbers, in particular in view of the decomposition into the boxes $z+W$.

The continuities of the energy functional and of the two particle number functionals is not clear {\it a priori} and needs to be achieved by restrictions to suitable events, both for the proof of the upper and the lower bound. In Section~\ref{sec-Compactness}, using Assumption (V), we introduce two events that we need to insert such that their complement is negligible and on these sets, the two particle numbers are continuous. In Sections~\ref{sec-lowboundN} and \ref{sec-lowboundlogpint}, we restrict to an event on which only loop and shred configurations with several particular properties appear, namely only legs of bounded spread, only configuration with locally bounded particle numbers everywhere and only shreds with bounded stretches. This requires the usage of several cut-off parameters, which will be sent to $\infty$ at the very end of the proof. In Section~\ref{sec-projentr} (after a preparation in Section~\ref{sec-entropy}, where we have a closer look at the various terms in the entropy $J_{W_R}$) we control the most involved part, the entropy $J_{W_R}$, when replacing an arbitrary $\xi$ by some measure that satisfies all these cut-off properties, and additionally the ergodicity property mentioned above. This is used in Section~\ref{sec-ergappr} to control the value of the variational problem above when doing this change.

In order to carry out the limit as $R\to\infty$ in the above variational formula, the most important point is of course the existence of the limit of $J_{W_R}$ (Theorem~\ref{thm-specrelent}), which we prove in Section~\ref{sec-limitingentropy}, after some preparations in the other sections.
In the finish of the proof of the upper bound in Theorem~\ref{thm-freeenergy}, we need to embed the above  variational formula on $R$-subboxes into a limiting variational formula on gobal stationary loop-interlacement configurations in the entire $\R^d$. Here we first use some tool from large-deviation theory (see Proposition~\ref{lem-uppboundJ_W}(4)) to extend a minimizing $\xi$ to arbitrarily large boxes without noticeably enlarging the entropy. Afterwards, we use an extension theorem in the spirit of Carath\'eodory's theorem to extend it to a global configuration on the entire $\R^d$; see Proposition~\ref{Prop-construction_of_P}. In Sections~\ref{sec-finishuppbound} (upper bound) and \ref{sec-finishlowerbound} (lower bound) all these arguments are put together and the proof of Theorem~\ref{thm-freeenergy} is finished.

The proof of  Theorem~\ref{thm-specrelent} in Section~\ref{sec-limitingentropy} and several crucial arguments in other sections  relies on various properties and estimations for the entropy functional $J_{W_R}$, which we derive in Section~\ref{sec-PropertiesJW}. We remark that we carry out the necessary estimates with the help of large-deviation arguments, see Proposition~\ref{lem-uppboundJ_W}. In particular, we derive a crucial estimate of the form 
$$
J_{W_R}(\Pi_{W_R}(P))\leq J_{W_{mR}}(\Pi_{W_{mR}}(P))+o_{mR}(1), \qquad m\to\infty.
$$
This replaces the ubiquitous super-additivity argument that is usually employed for establishing the existence of a specific relative entropy (actually we did not succeed in proving sub-additivity and believe that this property does not hold).

\subsection{Literature survey}\label{sec-literature}

The study of quantum gases, in particular the Bose gas and its statistical mechanics and condensation, is a huge fascinating subject that provides many challenging questions and involves a lot of mathematical ansatzes and toolboxes, see \cite{PS01,PS03} for extensive summaries.

Unlike many investigations on the interacting Bose gas in the mathematical-physics literature, the present paper follows the ansatz of a {\em path-integral analysis}. This goes back to the vague suggestion by Feynman in \cite{F53} that Brownian bridges and the statistics of their lengths might be taken as an order parameter. To our best knowledge, there is no result in the mathematical physics literature on deeper properties of the free energy of the interacting Bose gas in the thermodynamic limit, hence we will restrict here to reviewing only ansatzes with path-integral analysis.

Technical foundations where laid by Ginibre \cite{G70}. There are not many rigorous works yet that follow this advice, starting with phenomenological
discussions in \cite{U06} and discussions of the relation between long loops and condensate in \cite{S93,S02} for some simplified models, including the free Bose gas. More recently, \cite{FKSS20} conceived the rescaled interaction of the Brownian loops in $d = 4$ as a regularisation as the intersection local time as a possible ansatz for deriving $\Phi^4$-theories.

The Brownian-bridge ansatz is greatly strengthened by combining it with a spatial point-process ansatz and a Poisson point process description and additionally with a large deviation approach that is sometimes called a level-3 approach. An early version of such a Poisson point process, the Brownian loop soup was introduced by Lawler and Werner \cite{LW04}, however with a different purpose (not for the Bose gas), namely for making conformal invariance rigorously provable in models from two-dimensional statistical mechanics. The idea of an LDP theory for marked point processes like the Bose gas was implicitly present in papers by Georgii/Zessin in the 1990s  \cite{GZ93,G94}, and it was combined with the desire to find criteria for the equivalence of ensembles for interacting marked Poisson point processes. Further works (see, e.g, \cite{R09, NPZ13, Z22, BDM24}) concentrated on the construction of such processes in the entire space $\R^d$ and to discuss them from the view point of classical Gibbs point theory as was laid down in the famous monograph \cite{G88}. 

A recent achievement in the tradition of constructing Gibbs measures of marked point processes is \cite{BDM24}, where the interacting Bose gas is achieved as such a process with the legs as marks: {\it nota bene}, the legs as members of $\Ccal_1$, not the loops. The formation of an ensemble of interacting loops is part of the interaction term, which renders this interaction term highly complicated and discontinuous. In \cite{BDM24}, an infinite-space Gibbs measure is constructed for any value of $\rho$ and $\beta$; however it is rather difficult to decide whether infinite long loops appear in this measure. 

For the problem of analysing the limiting free energy of the Bose gas using Brownian bridges, the crucial Poisson description of Proposition~\ref{lem-rewrite} was introduced in \cite{ACK10}. It was used a number of times in the last years for answering various partial questions. In the non-interacting case, an LDP   with identification of the phase transition in terms of the rate function was derived in \cite{A07}. A proof for the phase transition of Bose--Einstein condensation (that is, for off-diagonal long range order (ODLRO), the generally acknowledged criterion for BEC) in terms of Brownian bridges was given  recently in \cite{KVZ23}, and for a (still non-interacting) mean-field version in \cite{BKV24}. In \cite{BKM24} interactions only within the same loop were admitted in the gas and a related kind of condensation phase transition was proved in connection with the famous self-avoiding walk problem.

As it comes now to interacting Bose gases, the most immediate predecessor paper is \cite{ACK10}, which is based on the PPP-description of Proposition~\ref{lem-rewrite} and applies large-deviation arguments based on the ansatz from  \cite{GZ93} for the {\em empirical stationary process}, $\mathcal R_\L(\omega_{\rm P})=\frac 1{|\L|}\int_{\L}\delta_{\theta_x(\omega_{\rm P})}\,\d x$, a kind of continuous variant of the main tool of the present paper, $\Xi_{N,R}$. A weaker variant of Corollary~\ref{cor-freeenergyVP} was derived for all small enough particle densities $\rho$, but without any attempt to describe long loops, since the topological difficulties could not be resolved there and any kind of concept was missing for long loops.
Another important predecessor of the current work is \cite{CJK23}, which develops and demonstrates the strategy that we use in the proof of Theorem~\ref{thm-freeenergy} for a simpler model, where the loops are replaced by boxes with random and unbounded sizes. 

A different ansatz was followed in \cite{QT23}, which gives the -- to the best of my knowledge, first -- proof of the existence of a loop of length $\asymp N$ in an interacting spatial ensemble of random loops of the type of an interacting Bose gas. More precisely, it is shown that the expectation of the length of the loop that contains the origin is not smaller than a constant times the volume of the box. This formidable result has been achieved by transferring the well-known reflection-positive ansatz to this setting; indeed, it was found a way to employ some clever  correlation inequalities about the behaviour of the system under reflections. However, there are a number of caveats: the class of interactions is restricted by some unnatural conditions that come from some steps in the Fourier setting, so far the technique could be done only in the time- and space-discrete setting, and only for periodic boundary condition. Removing each of these three shortcomings seems to require serious new input.

A Poisson point process of interlacements was introduced by Sznitman \cite{Sz10} for simple random walks and in  \cite{Sz13} for Brownian motions, however without making any connection to Bose gases. A version of this process can be seen as lurking in the background of our description of the Bose gas in Theorem~\ref{thm-freeenergy} for $d\geq 3$, see Section~\ref{sec-InterlacePPP}. The idea of using such a process for the description of the condensate of the Bose gas was first introduced to the literature in \cite{AFY21}, however without any relation to any limits or interactions or free energies. There, the version for Gaussian random walks was constructed (alternatively to what we do in Section~\ref{sec-InterlacePPP}), and some properties were examined. Also the independent superposition of the corresponding loop soup and the Gaussian interlacement PPP was introduced and studied. In \cite{V23}, it is shown that the simple-random-walk loop soup converges, in the thermodynamic limit with free boundary condition, towards the independent superposition of the same process (with critical density) and the simple-random walk interlacement PPP, see Remark~\ref{rem-freegas}. In an interacting Bose gas (however, with interactions only involving loop lengths, no spatial details) in the spatially continuous setting, in \cite{DV24} convergence of the gas towards the Brownian loop soup, superposed with the Brownian interlacement PPP, was proved.

\begin{remark}[LDP for empirical processes]\label{Rem-LDPempproc}
As we explained, an important ingredient of the proof in the present paper is a kind of large-deviation principle for the (discrete version of the) empirical stationary field of the Poisson loop process. This is remarkably close to the main object of \cite{Sz23}, but has also decisive differences. Let us comment on that here.

Indeed, \cite{Sz23} studies  the stationary empirical field $\mathcal L_\L(\varpi_{\rm B})=\frac 1{|\L|}\int_{\L}\d x\,\delta_{\theta_x(\varpi_{\rm B})}$ of  the Brownian interlacement PPP introduced in \cite{Sz13}, more precisely, the probability of large deviations of certain linear test functions $\langle F, \mathcal L_\L(\varpi_{\rm B})\rangle$. This process $\Lcal_\L$ is the interlacement counterpart of $\mathcal R_\L$ above and it is the continuous version of the shred part of our empirical process $\Xi_{N,R}$ (which can be analogously defined for interlacements in place of $R$-crossing loops). Under certain conditions on the test function $F$, \cite{Sz23} proves the upper bound of the large-$N$ exponential rate of the probabilities of upward deviations of $\langle F, \mathcal L_\L(\varpi_{\rm B})\rangle$ in terms of an explicit variational formula that shows some reminiscences of the famous Donsker--Varadhan LDP rate function. (See \cite{CN23} for the corresponding lower bound.) In principle, this kind of result seems to be close to what is also needed for the description of the Bose gas (if we neglect the fact that Sznitman's process is based on the interlacement process rather than the long-loop part of the Brownian loop soup). But however, the precise assumptions made in \cite{Sz23} make it impossible to apply the results of \cite{Sz23} to the interacting Bose gas.

The surprise is that the LDP in \cite{Sz23} is on capacity scale $|\L_N|^{1-\frac 2d}$ rather than on volume scale $|\L_N|$, the scale on which we are working in this paper. This seems to be due to the fact that the description of the most likely behaviour of the interlacements to meet the event $\{\langle F, \mathcal L_\L(\varpi_{\rm B})\rangle>C \}$ for some large $C$ comes from an upscaling of an event in a box of a diverging radius (i.e., from some mesoscopic scale), while we have to describe all the details of the interaction on the microscopic scale. Equivalently, one can say that the description of the Bose gas needs a finer scale of details and is therefore adequately described on volume scale. 
\hfill$\Diamond$
\end{remark}

\section{Scanning the configuration with finite windows}\label{sec-Scanning}

\noindent In this section we introduce a framework for describing the loop configuration in the box $\L_N$ in terms of an empirical mixture of the local loop and shred configurations that we see in all the shifts of the window $[-R,R]^d$  that scans $\L_N$ in regular steps of size $2R$. 

In Sections~\ref{sec-PPPs} and \ref{sec-chopping} we introduce the loop space consisting of marked point processes, and the shred space consisting of point measures on shreds. In Section~\ref{sec-Caratheodory} we combine the two spaces and give a Carath\'eodory-like result on the existence of stationary probability measures on the space of loops and interlacements with given marginals on projections on boxes. Our main tool, the mentioned empirical measure $\Xi_{N,R}$ of the loop and shred configurations in the subboxes, is introduced in Section~\ref{sec-empmeasconf}. In Sections~\ref{sec-uppboundN} and \ref{sec-lowboundN} we rewrite the constrained partition function in terms of $\Xi_{N,R}$ and  derive preliminary upper and lower bounds towards a proof of Theorem~\ref{thm-freeenergy}.

Let us recall and introduce some basic notation. We denote by $\Ccal_k$ the set of all continuous functions $[0,\beta k]\to\R^d$ and write $\Ccal=\bigcup_{k\in\N}\Ccal_k$. Each $\Ccal_k$ is equipped with the topology induced by the supremum norm. On $\Ccal$ we use the topology within each $\Ccal_k$, i.e., a sequence $(f_n)_{n\in\N}$ in $\Ccal$ converges towards some $f\in\Ccal$ if there is some $k$ such that $f \in \Ccal_k$ and $f_n \in \Ccal_k$ for all $n$ large enough, and $f_n\to f$ in the supremum norm on $\Ccal_k$. Hence, we do not assign a distance to function pairs with different lengths. We introduce the loop space 
\begin{equation}\label{loopspace}
\Ccal^{\ssup\circlearrowleft}=\bigcup_{k\in\N }\Ccal^{\ssup\circlearrowleft}_k,\qquad\mbox{where}\qquad \Ccal^{\ssup\circlearrowleft}_k=\{f\in\Ccal_k\colon f(0)=f(k\beta)\}.
\end{equation}
Since loops are cyclic, two elements of $\Ccalcirc$ are considered equal if they are time-shifts by a time in $\beta \Z$ of each other; hence we conceive $\Ccalcirc_k$ actually as a quotient space. By default, if nothing else is said, we write an $f\in \Ccalcirc_k$ as $f\colon[0,k\beta]\to\R^d$. We  call the pieces $f|_{[j\beta,(j+1)\beta]}$ with $j\in\N_0$ the {\em legs} of $f$ and the sites $f(0),f(\beta),f(2\beta), \dots, f((k-1)\beta)$ the {\em particles} of $f$. The number of particles of $f$ is denoted by $\ell(f)=k$. 
Furthermore, we denote by $\Ccal_\infty$ the set of all continuous functions $f\colon \R\to\R^d$ satisfying $\lim_{t\to-\infty}|f(t)|=\lim_{t\to\infty}|f(t)|=\infty$. Such functions have only finitely many particles in $W$ for any compact set $W\subset \R^d$.

\subsection{Point processes with loops as  marks}\label{sec-PPPs}

\noindent We introduce notation for point processes in $\R^d$ with marks in $\Ccalcirc$ and provide some remarks on their topologies.

We consider the space $\Loops=\Mcal_{\N_0}(\R^d\times \Ccalcirc)$ of simple point processes on $\R^d\times \Ccalcirc$, more precisely, the set of marked point processes in $\R^d$ with marks in $\Ccalcirc$. That is, an element $\omega\in\Loops$ can be represented in terms of a point process $\zeta$  on $\R^d$ as
\begin{equation}\label{omegaloops}
	\omega = \sum_{x\in\zeta}\delta_{(x,f_{x})},\qquad\mbox{where } f_{x}\in\Ccalcirc\mbox{ with }f_{x}(0)=f_{x}(\beta \ell(f_{x})) = x,
\end{equation}
and such that $\omega(A\times \Ccal^{\ssup\circlearrowleft})<+\infty$, for any bounded measurable $A\subset\R^d$. We see $\Lcal$ as the set of superpositions $\sum_{k\in \N}\sum_{x\in\zeta_k}\delta_{(x,f_x)}$, where $\zeta_k=\{x\in \zeta\colon \ell(f_x)=k\}$. We equip $\Loops$ with the usual evaluation $\sigma$-algebra, i.e., the one that is generated by all the maps $\omega\mapsto \Pi_W(\omega)=\sum_{x\in \zeta\cap  W}\delta_{(x,f_x)}$ with $W\Subset\R^d$. Let $\Mcal_1(\Loops)$ denote the set of probability measures on $\Loops$, and $\Mcal_1^{\ssup{\rm s}}(\Loops)$ the subset of shift-invariant probability measures on $\Loops$, i.e., the set of those $P\in\Mcal_1(\Lcal)$ such that $P\circ \theta_x^{-1}=P$ for any $x\in \R^d$; by $\theta_x$ we denote the shift operator by $x\in\R^d$ satisfying $\theta_x(x)=0$, acting on vectors, subsets of $\R^d$, functions in $\R^d$, measures on $\R^d$ and on point processes in $\R^d$ with and without marks.

For any $W\Subset\R^d$, consider the projection on all the loops that start in $W$ and have all their particles in $W$:
\begin{equation}\label{PiWLdef}
\Pi_{W}^{\ssup{\mathcal L}}(\omega) = \sum_{x\in\zeta\cap W}\delta_{(x,f_{x})}\1\{f_x(k\beta )\in W\,\forall k\in [\ell(f)]\}.
\end{equation}
We denote by $\Loops_W$ the image of $\Loops$ under this projection, i.e., the set $\Lcal_W=\Mcal_{\N_0}(W\times\Ccal_W^{\ssup\circlearrowleft})$ of all point measures in $\R^d$ with marks that are in the set 
\begin{equation}\label{LoopWdef}
\Ccal_W^{\ssup\circlearrowleft}=\bigcup_{k\in\N}\big\{ f\in \Ccal_k^{\ssup\circlearrowleft}\colon f(i\beta)\in W\ \forall i\in [k]\big\}
\end{equation}
of loops whose particles are entirely contained in $W$.
 Observe the difference to the usual projection $\Pi_W$, which restricts to all loops that start in $W$ but may have particles outside $W$. Furthermore, observe that the loops in $\Ccal_W^{\ssup\circlearrowleft}$ may also leave $W$ on the way from particle to particle. It is clear that $\Pi_W^{\ssup\Lcal}$ is indeed a projection, that is, $\Pi_{\widetilde W}^{\ssup\Lcal}\circ \Pi_{W}^{\ssup\Lcal}=\Pi_W^{\ssup\Lcal}\circ \Pi_{\widetilde W}^{\ssup\Lcal}=\Pi_W^{\ssup\Lcal}$ if $W\subset \widetilde W\Subset \R^d$. A significant difference to $\Pi_W$ is that, for two disjoint  sets $W,\widetilde  W\Subset \R^d$, the image $\Lcal_{W\cup\widetilde  W}$ consists of {\em three} parts: $\Lcal_W$, $\Lcal_{\widetilde W}$ and the set of simple point measures with loops that have particles both in $W$ and in $\widetilde W$, but nowhere elsek.

The topology on $\Lcal_W$ that we use is the $w^\#$-topology, which is induced by test integrals against continuous and bounded functions $\Mcal_{\N_0}(W\times\Ccalcirc_W)\to\R$ that vanish outside some ball. Here we define a ball of radius $r$ around some $(x,f_x)\in W\times \Ccalcirc_k$ as the set of $(y,g)\in W\times \Ccalcirc_k$ such that $|x-y|+\|f_x-g\|_\infty<r$ and $g(0)=x$, and a ball around some configuration $\sum_{x\in\zeta}\delta_{(x,f_x)}$ is the union of these balls around the points of the configuration.
This turns $\Lcal_W$ itself into a Polish space, according to \cite[Prop.~9.1.IV]{DVJ08}. According to \cite[Exercise 11.1.7]{DVJ08}, convergence of point measures $\sum_{x\in \zeta_n\cap W}\delta_{(x,f^{\ssup n }_x)}$ towards a point measure $\sum_{x\in \zeta\cap W}\delta_{(x,f_x)}$ as $n\to\infty$ is equivalent to having $\#\xi_n=\#\xi$ for all large $n$ and pointwise convergence of the points $(x,f^{\ssup n})$ of $\xi_n$ in $W\times\Ccalcirc$ towards the ones of $\xi$ as $n\to\infty$ in some assignment. We denote this topology on $\Lcal_W$ by $\Gcal_W^{\ssup\Lcal}$ and the induced topology on $\Lcal$ by $\Fcal_W^{\ssup\Lcal}=(\Pi_W^{\ssup\Lcal})^{-1}(\Gcal_W^{\ssup\Lcal})$.

On the set $\Mcal_1(\Lcal_W)$ of probability measures on $\Lcal_W$, we will be using the weak topology, i.e., the one that is induced by test integral against bounded and continuous functions $\Lcal_W\to\R$. According to \cite[Theorem 11.1.VII]{DVJ08}, for probability measures $\xi,\xi_1,\xi_2,\dots $ on $\Lcal_W$, the weak convergence $\xi_n\Longrightarrow \xi$ is true if and only if the finite-dimensional distributions of $\xi_n$ converge towards the one of $ \xi$, i.e., if for any $k\in\N$ and any bounded measurable sets $A_1,\dots,A_k\subset \Lcal_W$, we have  that $(\xi_n(A_1),\dots,\xi_n(A_k))$ converges weakly towards $(\xi(A_1),\dots,\xi(A_k))$.

We write $f(P)$ for the image measure $P\circ f^{-1}$. 
On $\Mcal_1(\Lcal)$, we will consider the weak topology induced by all the projections $\Pi_W^{\ssup\Lcal}$ with $W\Subset\R^d$, i.e., $P_n\Longrightarrow P$ if and only if $\Pi_W^{\ssup\Lcal}(P_n)\Longrightarrow \Pi_W^{\ssup\Lcal}(P)$ for any $W\Subset\R^d$.

Let us also mention that the weak topology on the set $\Mcal( \Xcal)$ of all finite measures on a Polish space $\Xcal$ is induced by the Prokhorov metric
\begin{equation}\label{Prokhorovmetric}
\d(\mu,\nu)=\inf\big\{\eps\geq 0\colon  \mu(F)\leq \nu(F^\eps),\nu(F)\leq  \mu(F^\eps)\mbox{ for all closed }F\subset \Xcal\big\},\qquad \nu,\mu\in\Mcal(\Xcal),
\end{equation}
where $F^\eps=\{y\in\Xcal\colon \d(y,F)<\eps\}$ denotes the open $\eps$-neighbourhood of $F$ \cite[Sect.~A.2.5]{DVJ03}. This turns $\Mcal(\Xcal)$ into a Polish space as well \cite[A2.5.III]{DVJ03}.

\subsection{Shredding paths}\label{sec-chopping}

This section is the heart of our ansatz that we introduce in this paper. It is a chopping procedure on path level, which cuts, for each compact set $W\subset\R^d$, any loop that has not all its particles in $W$ or any infinitely long path into the pieces that run within $W$ and collects all these pieces in terms of a point measure. In this way, we introduce a kind of projection operator $\Pi_W^{\ssup\Scal}$ that has many properties similarly to $\Pi_W$ or to $\Pi_W^{\ssup\Lcal}$ and can be used to induce a natural topological structure on the set $\Scal$ of point measures on $\Ccal_\infty$.

 For any $f\in\Ccalcirc$, we introduce the stopping times
\begin{eqnarray}
\tau_W(f)&=&\inf\{k\in\N_0\colon f(k\beta)\in W\}\in\N_0\cup\{\infty\},\\
\sigma_W(f)&=&\inf\{k\in\N\colon k>\tau_W(f), f(k\beta)\in W^{\rm c}\}.
\end{eqnarray}
An analogous definition is used for $f \in\Ccal_\infty$. For $f\in\Ccalcirc_k$ or $f \in\Ccal_\infty$, we write the times of the successive entries of $(f(i\beta))_{i\in\{0,1,\dots,k\}}$, respectively $(f(i\beta))_{i\in \Z}$ to $W$ and exits from $W$ as
$$
-\infty<\tau_W^{\ssup1}(f)<\sigma_W^{\ssup1}(f)<\tau_W^{\ssup2}(f)<\sigma_W^{\ssup2}(f)<...<\infty;
$$
a more formal definition of these stopping times may be given in terms of appropriate time-shifts of $\tau_W(f)$ and $\sigma_W(f)$. Then each restriction $f|_{[(\tau_W^{\ssup i}-1)\beta ,\sigma_W^{\ssup i}\beta]}$, shifted in time to obtain a function $f^{\ssup i}\colon[0,(\sigma_W^{\ssup i}-\tau_W^{\ssup i})\beta]\to\R^d$, is called a $W${\em -shred} of $f$; it is a member of the {\em $W$-shred space}
\begin{equation}\label{SWdef}
\Ccal_W=\bigcup_{k\in\N}\big\{g\in\Ccal_{k}\colon   g(0),g(\beta),g(2\beta),\dots,g((k-1)\beta)\in W,g(k\beta)\in W^{\rm c}\big\}.
\end{equation}
(We ignore at this point the fact that part of the characterisation of $g\in\Ccal_W$ is that $g(-\beta)\in W^{\rm c}$.)
We denote by $\Mcal_{\N_0}(\Xcal)$ the set of all simple point measures in a measurable set $\Xcal$. Then we define 
\begin{equation}\label{eq:shred_op}
\Scal_W=\Mcal_{\N_0}(\Ccal_W)\qquad\mbox {and}\qquad 	\Pi_W^{\ssup\Scal}\colon \Ccalcirc\cup\Ccal_\infty\to \Scal_W,\qquad \Pi_W^{\ssup\Scal}(f)=\sum_i \delta_{f^{\ssup i}}.	
\end{equation}

All topological assertions that we spelled out in Section~\ref{sec-PPPs} for $ \Lcal_W$ hold also for $\Scal_W$ analogously. In particular, a sequence $(\Gamma_n)_{n\in\N}$ of point measures in $\Shreds_W$ converges to some $\Gamma\in \Shreds_W$ if and only if, for all sufficiently large $n$, $|\Gamma_n|=|\Gamma|$, and the $W$-shreds of $\Gamma_n$ converge to the ones of $\Gamma$ in some assignment. If $ \Gcal^{\ssup\Scal}_W$ denotes the corresponding  Borel $\sigma$-field, then $(\Shreds_W,  \Gcal^{\ssup\Scal}_W)$ is a measurable space.

Introduce $\ell_W(g)=\sum_{i}\1_W(g((i-1)\beta))$, the number of particles  of $g\in\Ccalcirc\cup\Ccal_\infty$ in $W$. 
 Observe that $g\in\Ccal_ \infty$ is characterised by the requirement that $\ell_W(g)<\infty$ for any compact $W\subset\R^d$. We also introduce the space 
\begin{equation}\label{Scaldef}
\Scal=\Big\{\varpi=\sum_{g\in\Gamma}\delta_g\in \Mcal_{\N_0}(\Ccal_\infty)\colon \sum_{g\in\Gamma}\ell_W(g)<\infty\mbox{ for all compact }W\subset\R^d\Big\}
\end{equation} 
of simple point measures of doubly-infinite continuous functions, such that the cloud has only finitely many particles in any compact subset of $\R^d$.

We also use the symbol $\Pi_{W}^{\ssup{\mathcal S}}$ for the shredding operation on point process level: for  $W$, $ \widetilde W$ compact such that $W\subset\widetilde W$, define the point process of $W$-shreds derived from $\omega$ by
\begin{equation} \label{eq:projW}
\Pi_{W}^{\ssup{\mathcal S}}\colon\Scal\cup\Scal_{\widetilde W}\to\Scal_W,\qquad \Pi_{W}^{\ssup{\mathcal S}}(\omega)=\sum_{f\in\omega} \sum_{i}\delta_{f^{\ssup i}},\qquad\mbox{where }f^{\ssup i} \mbox{ are the $W$-shreds of }f.
\end{equation} 
We sometimes write $\Pi^{\ssup\Scal}_{\widetilde W\to W}\colon \Scal_{\widetilde W}\to\Scal_W$ for this map in order to indicate the domain.
It is easy to see that the shredding operators are consistent in the sense that
\begin{equation}\label{Piconsistent}
W\subset\widetilde W\qquad\Longrightarrow\qquad\Pi_{W}^{\ssup\Shreds}\circ \Pi_{\widetilde W}^{\ssup\Shreds}=\Pi_W^{\ssup\Shreds}.
\end{equation}

On the set $\Shreds$, we  introduce a measurable structure by taking $\Fcal^{\ssup\Scal}$ as the $\sigma$-field such that all the shredding operators $\Pi_W^{\ssup\Shreds}$ are measurable, i.e., 
$$
\Fcal^{\ssup\Scal}=\sigma\big( \Fcal^{\ssup\Scal}_W\colon W\subset\R^d\mbox{ compact}\big),\qquad\mbox{
where}\quad  \Fcal^{\ssup\Scal}_W=(\Pi_W^{\ssup\Shreds})^{-1}( \Gcal^{\ssup\Scal}_W).
$$
On the set $\Mcal_1(\Shreds)$ we introduce the weak topology induced by all the $\Pi_W^{\ssup\Shreds}$ with $W\Subset \R^d$: A sequence $(P_n)_{n\in\N}$ of probability measures on $\Shreds$ converges towards some $P\in \Mcal_1(\Shreds)$ if and only if $\Pi_W^{\ssup\Shreds}(P_n)$ (which is by definition equal to the image measure $P_n\circ(\Pi_W^{\ssup\Shreds})^{-1}$) converges weakly to $\Pi_W^{\ssup\Shreds}(P)$ for any $W\Subset \R^d$. Then $(\Shreds,\Fcal)$ is also a Polish space with metric
\begin{equation}\label{metriconS}
\d(\Gamma_1,\Gamma_2)=\sum_{R\in\N}2^{-R}\frac{\d_{W_R}\big(\Pi_{W_R}^{\ssup\Shreds}(\Gamma_1),\Pi_{W_R}^{\ssup\Shreds}(\Gamma_2)\big)}{1+\d_{W_R}\big(\Pi_{W_R}^{\ssup\Shreds}(\Gamma_1),\Pi_{W_R}^{\ssup\Shreds}(\Gamma_2)\big)},\qquad W_R=[-R,R]^d,
\end{equation}
where we lifted $\Pi_{W_R}^{\ssup\Shreds}$ to $\Shreds$ by putting $\Pi_{W_R}^{\ssup\Shreds}(\Gamma)=\sum_{g\in\Gamma}\Pi_{W_R}^{\ssup\Shreds}(g)$. (Here ${\rm d}_{W}$ denotes some metric on $\Scal_W$ that induces the topology on $\Scal_W$.)

Now we introduce a kind of boundary-shredding operation as follows. Define the operator that maps each $W$-shred $g$ satisfying $g(0),g(\beta),g(2\beta),\dots,g((k-1)\beta)\in W$ and $g(k\beta)\in W^{\rm c}$ on the triple of its initial and terminal condition and its length:
\begin{equation}\label{boundaryshredoper}
\partial\Pi_W^{\ssup\Shreds}\colon \Ccal_W\to W\times\N\times W^{\rm c},\qquad  \partial\Pi_W^{\ssup\Shreds}(g)=(g(0),l,g(k\beta)).
\end{equation}
We write
$$
\begin{aligned}
\Tcal_W&=\Mcal_{\N_0}(W\times \N \times W^{\rm c})\\
&=\Big\{\mu=\sum_{i\in I}\delta_{(x_i,l_i,y_i)}\colon I\mbox{ finite},  (x_i,l_i,y_i)\in W\times \N\times W^{\rm c}\,\mbox{pairwise distinct }\forall i\in I\Big\}
\end{aligned}
$$
for the set of all finite simple counting measures on $W\times \N \times W^{\rm c}$. We use the same symbol for the projection operation on shred configurations:
\begin{equation}\label{partialPidef}
\partial\Pi_W^{\ssup\Shreds}\colon \Shreds_W\to \Tcal_W,\qquad \partial\Pi_W^{\ssup\Shreds}\Big(\sum_g\delta_g\Big)= \sum_g\delta_{\partial\Pi_W^{\ssup\Shreds}(g)},
\end{equation}
and we extend the domain of this operator to $\Scal_{\widetilde W}\cup\Scal$ by putting $\partial\Pi_W^{\ssup\Scal}(\varpi)=\partial\Pi_W^{\ssup\Scal}(\Pi_W^{\ssup\Scal}(\varpi))$ for $\varpi\in \Scal_{\widetilde W}\cup\Scal$ for compact sets $W\subset\widetilde W\subset\R^d$. Again, all topological assertions of Section~\ref{sec-PPPs} about $\Lcal_W$ hold analogously for $ \Tcal_W$.

Note that $\mu=\partial \Pi_W^{\ssup\Scal}(\varpi)$ contains all information about the number of particles of the shred configuration $\varpi=\sum_{g\in\Gamma}\delta_g$ in $W$, when we define 
\begin{equation}\label{particlenumber}
{\mathfrak N}_{\partial W}^{\ssup{\ell,\Shreds}}( \mu)=\sum_{i\in I}l_i\qquad \mbox{for }\mu=\sum_{i\in I}\delta_{(x_i,l_i,y_i)};
\end{equation}
then ${\mathfrak N}_W^{\ssup{\ell,\Shreds}}(\varpi)={\mathfrak N}_{\partial W}^{\ssup{\ell,\Shreds}}( \mu)$ and $\langle \xi, {\mathfrak N}_W^{\ssup{\ell,\Shreds}}\rangle = \langle \psi,{\mathfrak N}_{\partial W}^{\ssup{\ell,\Shreds}}\rangle$ for $ \xi\in\Mcal_1(\Scal_W)$ and $\psi=\partial\Pi_W ^{\ssup\Scal}(\xi)\in\Mcal_1(\Tcal_W)$.

Let us also mention the following tightness criterion \cite[Proposition 11.1.VI]{DVJ08}: A subset $K$ of $\Mcal_1(\Tcal_W)$ is tight if for any ball $B \subset W\times \N\times W^{\rm c}$ and for any $\eps>0$, there is $M>0$  such that $\psi(\{\mu \in\Tcal_W\colon \mu(\overline B) >M\})<\eps$  for any $\psi\in K$. In particular, the set $\{\psi\in\Mcal_1(\Tcal_W)\colon \langle  \psi, {\mathfrak N}_{\partial W}^{\ssup{\ell,\Shreds}}\rangle \leq \rho_2\}$ is relative compact for any $\rho_2\in(0,\infty)$, since $\mu(\overline B)\leq |I|\leq \sum_{i\in I}l_i={\mathfrak N}_{\partial W}^{\ssup{\ell,\Shreds}}(\mu) $. (However, closedness of this set does not follow, since the map $\psi\mapsto \langle  \psi, {\mathfrak N}_{\partial W}^{\ssup{\ell,\Shreds}}\rangle$ is {\it a priori} not continuous.)

\subsection{An extension result for loop-interlacement configurations}\label{sec-Caratheodory}

We need to introduce also some specifics of the space of the joint loop-shred configurations in boxes, respectively loop-interlacement configurations in $\R^d$. In Proposition~\ref{Prop-construction_of_P} we prove an existence theorem in the spirit of Kolmogorov's theorem in this setting.

We conceive the loop-interlacement configuration space 
\begin{equation}\label{LcaltimesScal}
\Lcal\times \Scal= \Mcal_{\N_0}\big((\R^d\times \Ccal^{\ssup\circlearrowleft})\cup \Ccal_\infty\big)
\end{equation}
as the superposition point measure space with loops and interlacements.  We conceive the shredding operator $\Pi_W^{\ssup\Scal}$ defined in \eqref{eq:shred_op} as an operator $\Pi_W^{\ssup\Scal}\colon \Lcal\times\Scal\to\Scal_W$ and the loop-restriction operator $\Pi_W^{\ssup\Lcal}$ defined in \eqref{PiWLdef} as an operator $\Pi_W^{\ssup\Lcal}\colon \Lcal\times\Scal\to\Lcal_W$. Note that $\Pi_W^{\ssup\Lcal}$ can produce something non-trivial only from $\Lcal$ (not from shreds or from interlacements), while $\Pi_W^{\ssup\Scal}$ can cut both loops and shreds into shreds. Then the operator 
\begin{equation}\label{PiWdef}
\Pi_W\colon\Lcal\times\Scal\to\Lcal_W\times\Scal_W,
\qquad\mbox{defined by }\Pi_W(\omega,\varpi)=\Pi_W^{\ssup\Lcal}(\omega)+\Pi_W^{\ssup\Scal}(\varpi),
\end{equation}
is the superposition operator of $\Pi_W^{\ssup\Scal}$  and $\Pi_W^{\ssup\Lcal}$, when we conceive $\Lcal_W\times\Scal_W$ as $\Mcal_{\N_0}((W\times\Ccal_W^{\ssup\circlearrowleft})\cup \Ccal_W)$.  On $\Scal_W$ and $\Lcal_W$, respectively, we have the natural sigma fields $\Gcal_W^{\ssup\Scal}$ and $\Gcal_W^{\ssup\Lcal}$, respectively, that are generated by the vague topology. On $\Lcal\times \Scal$ we consider the sigma field $\Fcal$ that is generated by $\Fcal_W=\sigma((\Pi_W^{\ssup\Scal})^{-1}(\Gcal_W^{\ssup\Scal})\cup (\Pi_W^{\ssup\Lcal})^{-1}(\Gcal_W^{\ssup\Lcal}))$ with $W=W_R=[-R,R]^d$ for all $R\in\N$, i.e., $\Fcal=\sigma(\bigcup_{R\in\N}\Fcal_{W_R})$. The definition of the  projection operator $\Pi_{\widetilde W\to W}\colon \Lcal_{\widetilde W}\times \Scal_{\widetilde W}\to\Lcal_W \times\Scal_W$ for $W\subset \widetilde W\Subset \R^d$ is clear.

The following proposition will be enormously helpful in the proof of properties of the specific relative entropy density in the proof of Theorem~\ref{thm-specrelent} and in the proof of the upper bound for the free energy of the interacting Bose gas (Theorem~\ref{thm-freeenergy}). It is an extension theorem for consistent families of probability measures on loop/shred configurations in boxes. It is in the spirit of the famous extension theorem by Kolmogorov, but this theorem seems not applicable, since we are not working with product spaces, and the objects on which the extension is a measure (interlacements) are global and have a long range. Hence, we need to redo its proof (actually, the proof of Ionescu-Tulcea's theorem, the \lq discrete\rq\ variant) with the help of Carath\'eodory's theorem.

\begin{prop}[Loop-interlacements point process extension]\label{Prop-construction_of_P}
Let $(\xi^{\ssup{W_R}})_{R\in\N}$ be a family of probability measures $\xi^{\ssup{W_R}}\in\Mcal_1(\Lcal_{W_R}\times\Shreds_{W_R})$, such that $\Pi_{W_{R+1}\to W_R}(\xi^{\ssup{W_{R+1}}}) = \xi^{\ssup{W_R}}$ for any $R\in\N$, i.e., $(\xi^{\ssup{W_R}})_{R\in\N}$ is consistent. Then there exists a unique $P\in\Mcal_1(\Lcal\times \Shreds)$ such that $\Pi_{W_R}(P) = \xi^{\ssup{W_R}}$ for all $R\in\N$.
\end{prop}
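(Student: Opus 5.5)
We prove this the way Ionescu-Tulcea's theorem is proved: build a finitely additive set function on the algebra of cylinder events, show it is $\sigma$-additive through a compactness argument, and extend it with Carath\'eodory's theorem. Set $\Acal=\bigcup_{R\in\N}\Fcal_{W_R}$. Since $\Pi_{W_R}=\Pi_{W_{R+1}\to W_R}\circ\Pi_{W_{R+1}}$ (by \eqref{Piconsistent} for the shred part and the projection property of $\Pi^{\ssup\Lcal}$ for the loop part), the $\sigma$-fields $\Fcal_{W_R}$ increase in $R$, so $\Acal$ is an algebra generating $\Fcal$. Every $A\in\Acal$ has the form $A=\Pi_{W_R}^{-1}(B)$ with $B$ measurable in $\Lcal_{W_R}\times\Scal_{W_R}$, and we define $P(A)=\xi^{\ssup{W_R}}(B)$.

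The first step is to check that $P$ is well defined. If $A=\Pi_{W_R}^{-1}(B)=\Pi_{W_{R'}}^{-1}(B')$ with $R<R'$, consistency of the family gives $\xi^{\ssup{W_{R'}}}(\Pi_{W_{R'}\to W_R}^{-1}B)=\xi^{\ssup{W_R}}(B)$. We then still need $\Pi_{W_{R'}\to W_R}^{-1}B=B'$, at least up to a $\xi^{\ssup{W_{R'}}}$-null set. This holds if $\Pi_{W_{R'}}$ is surjective onto (the support of $\xi^{\ssup{W_{R'}}}$ in) $\Lcal_{W_{R'}}\times\Scal_{W_{R'}}$. We argue surjectivity by completing each $W_{R'}$-shred outside $W_{R'}$ to a bi-infinite path in $\Ccal_\infty$ that never returns to $W_{R'}$, for instance by straight lines going off to infinity. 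With this, $P$ is well defined and finitely additive on $\Acal$, with $P(\Lcal\times\Scal)=1$.

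The second step, which is the main obstacle, is $\sigma$-additivity. We need: if $A_n\in\Acal$ decrease and $P(A_n)\ge\eps>0$ for all $n$, then $\bigcap_n A_n\neq\emptyset$. We may take $A_n=\Pi_{W_{R_n}}^{-1}(B_n)$ with $R_n\uparrow\infty$. Since each $\Lcal_{W_R}\times\Scal_{W_R}$ is Polish and $\xi^{\ssup{W_R}}$ is inner regular, we replace $B_n$ by compact $K_n\subset B_n$ with $\xi^{\ssup{W_{R_n}}}(B_n\setminus K_n)\le\eps 2^{-n-1}$. Setting $C_n=\bigcap_{m\le n}\Pi_{W_{R_n}\to W_{R_m}}^{-1}(K_m)\cap K_n$, the sets $C_n$ are compact (projections are continuous, since shredding a convergent configuration gives a convergent configuration) and have mass at least $\eps/2$, so in particular they are nonempty.

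The third step extracts a point of $\bigcap_n A_n$. For each $m$, the images $\Pi_{W_{R_n}\to W_{R_m}}(C_n)$ with $n\ge m$ form a decreasing sequence of nonempty compacts, so their intersection $D_m$ is nonempty, and moreover $\Pi_{W_{R_{m+1}}\to W_{R_m}}(D_{m+1})=D_m$. Hence we can choose a consistent sequence $\eta_m\in D_m$ with $\Pi_{W_{R_{m+1}}\to W_{R_m}}(\eta_{m+1})=\eta_m$. What remains, and what is genuinely new compared with Kolmogorov's theorem, is to glue $(\eta_m)_m$ into one configuration $(\omega,\varpi)\in\Lcal\times\Scal$:

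\begin{itemize}
\item Loops in $\eta_m$ persist in $\eta_{m+1}$, so $\omega$ is their union.
\item A shred in $\eta_m$ is a concatenation of pieces in $\eta_{m+1}$: it consists of some $W_{R_m}$-shreds of a $W_{R_{m+1}}$-shred, which can also be part of an $R_{m+1}$-loop. The shreds of $\eta_{m+1}$ that are not loops therefore determine the shreds of $\eta_m$ uniquely.
\item Following shreds through $m$ gives nested paths that extend in both time directions. Each limit object is either a loop, when the path eventually closes, or a bi-infinite path.
\item Local finiteness holds because $\eta_m$ has finitely many particles in $W_{R_m}$.
\item Each bi-infinite path leaves every $W_{R_m}$, since a shred ends outside. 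We need that it eventually leaves for good, i.e.\ has finitely many particles in each box; this follows from finiteness of the particle count of $\eta_m$ in $W_{R_m}$.
\end{itemize}

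This construction yields $(\omega,\varpi)$ with $\Pi_{W_{R_m}}(\omega,\varpi)=\eta_m\in K_m\subset B_m$ for all $m$, so $\bigcap_nA_n\neq\emptyset$. Carath\'eodory's theorem then extends $P$ to $\Fcal$. Uniqueness follows from the $\pi$-$\lambda$ theorem, since $\Acal$ is an algebra generating $\Fcal$ on which $P$ is determined.
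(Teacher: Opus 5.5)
The gap is in your second and third steps. They rest on the claim that the maps $\Pi_{W_{R'}\to W_R}$ are continuous, and this is false.

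\emph{Why continuity fails.} Take a $W_{R'}$-loop whose particles lie in the interior of $W_R$, except one particle on $\partial W_R$. It is a $W_R$-loop. Push that particle slightly out of the closed box $W_R$: this gives a sequence converging to the loop, but each image contains no $W_R$-loop and a $W_R$-shred instead. Convergence in $\Lcal_W\times\Scal_W$ forces the number of points to stabilise, so the images do not converge. Similarly, a $W_{R'}$-shred with one inner particle on $\partial W_R$ is cut into one $W_R$-shred, while its perturbations are cut into two. Such configurations can carry positive $\xi^{\ssup{W_{R'}}}$-mass (Dirac masses), so you cannot discard them.

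\emph{Consequences.} $C_n$ need not be closed, $\Pi_{W_{R_n}\to W_{R_m}}(C_n)$ need not be compact, and $\Pi_{W_{R_{m+1}}\to W_{R_m}}(D_{m+1})=D_m$ is unjustified.

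\emph{Repair.} By Lusin's theorem, choose the compact $K_n\subset B_n$ so that, in addition, $\Pi_{W_{R_n}\to W_{R_{n-1}}}$ restricted to $K_n$ is continuous. Then define recursively $C_n=\{\eta\in K_n\colon \Pi_{W_{R_n}\to W_{R_{n-1}}}(\eta)\in C_{n-1}\}$. This is the same set as yours, and it is compact. Every composition $\Pi_{W_{R_n}\to W_{R_m}}$ is continuous on $C_n$, because $\Pi_{W_{R_n}\to W_{R_k}}(C_n)\subset K_k$ for all $k\le n$. With this, your nested-compacts argument goes through. Alternatively, apply Kolmogorov's theorem on the product $\prod_R(\Lcal_{W_R}\times\Scal_{W_R})$, where only coordinate maps occur, and observe that the projective-limit set is Borel of full mass. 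The paper avoids topology altogether: it runs the Ionescu-Tulcea argument with disintegration kernels $K_{n,m}$ and the monotone functions $h_{n,m}$. Your well-definedness step via surjectivity of $\Pi_{W_{R'}}$ is fine, and more careful than the paper's.

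\emph{Backward extension in the gluing.} You justify only the forward extension (``a shred ends outside''). The backward direction is where the definition of a shred matters. It holds because a $W$-shred carries its entry leg from the particle $g(-\beta)\in W^{\rm c}$, cf.\ \eqref{boundaryshred}; the paper's proof uses $f_m((T_m^{-}-1)\beta)\in W_m^{\rm c}$. The argument is as follows. Suppose the piece of $\eta_{m'}$ containing a fixed shred of $\eta_m$ began at the same particle for every $m'\ge m$. Consistency would then force the same pre-entry particle at all levels, and that particle would have to lie outside every $W_{R_{m'}}$, which is impossible. So the start moves strictly backwards infinitely often, and the glued path really is bi-infinite. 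Without the entry leg the proposition is false: the $W_R$-shreds $t\mapsto te_1$, $t\in[0,(\lfloor R/\beta\rfloor+1)\beta]$, of a half-line form a consistent family of Dirac masses with no extension. You should state and use this point explicitly.
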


\begin{proof} We want to apply Carath\'eodory's theorem to the algebra $\Fcal^{\ssup 0}=\bigcup_{R\in\N}\Fcal_{W_R}$ and to the map $P_0\colon \Fcal^{\ssup 0}\to[0,1]$ that is defined by
\begin{equation}\label{P0def}
P_0(A)=\xi^{\ssup{W_R}}(B),\qquad A\in\Pi_{W_R}^{-1}(\Gcal_{W_R}),
\end{equation}
where $B\in \Gcal_{W_R}$ is the unique set such that $A=\Pi_{W_R}^{-1}(B)$. The fact that $P_0\colon \Fcal^{\ssup 0}\to[0,1]$ is well-defined follows from the consistency in the same way as in the proof of Ionescu-Tulcea's theorem. It is also not difficult to show that $P_0$ is a content, that is, finitely additive and satisfies $P_0(\emptyset)=0$. It also obviously satisfies $\Pi_W(P_0)=\xi^{\ssup W }$ for any $W\Subset\R^d$.

We would like to extend $P_0$ from $\Fcal^{\ssup 0}$ to a measure $P$ on $\Fcal$, which finishes the proof. According to the uniqueness part of Carath\'eodory's theorem, this $P$ is unique, and according to its existence part, for showing the existence it is sufficient to show that $P_0$ is continuous in the empty set. Hence, assume that $(A_m)_{m\in\N}$ is a sequence in $\Fcal^{\ssup 0}$ that decreases, as $m\to\infty$, to $A=\bigcap_{m\in\N} A_m\in\Fcal^{\ssup 0}$. We assume that $\liminf_{m\to\infty}P_0(A_m)>0$ and need to show that $A$ is not empty.

Without loss of generality, we can assume that $A_m=\Pi_{W_m}^{-1}(B_m)$ for some $B_m\in \Gcal_{W_m}$.

We can easily extend the consistency relation $\Pi_{W_{R+1}\to W_R}(\xi^{\ssup{W_{R+1}}})=\xi^{\ssup{W_R}}$ for all $R\in\N$ to the  consistency relation $\Pi_{ W_{n+m}\to W_{n}}(\xi^{\ssup{W_{n+m}}}) = \xi^{\ssup{W_{n}}}$ for any $n\in\N $ and $m\in\N_0$. For $W\subset \widetilde W\Subset \R^d$, we can write $\Lcal_{\widetilde W}\times \Scal_{\widetilde W}
=(\Lcal_W\times \Scal_W)\times (\Lcal\times \Scal)_{\widetilde W\setminus W}$, where 
$$
\big(\Lcal\times \Scal\big)_{\widetilde W\setminus W}=\Mcal_{\N_0}\big(\big[(\widetilde W\times \Ccal_{\widetilde W}^{\ssup\circlearrowleft})\cup \Ccal_{\widetilde W}\big]\setminus\big[(W\times  \Ccal_{W}^{\ssup\circlearrowleft})\cup \Ccal_{W}\big]\big).
$$
Since $\Mcal_1(\Lcal_W\times \Scal_W)$ is a Polish space for any $W\Subset\R^d$, there are Markov kernels $K_{n,m}$ from $\Lcal_{W_n}\times \Scal_{W_n}$ into $(\Lcal\times \Scal)_{W_{n+m}\setminus W_n}$ such that  $\xi^{\ssup{W_{n+m}}}=\xi^{\ssup{W_n}}\otimes K_{n,m}$  (with $K_{n,0}=\1_{B_n}$). 

We are going to construct now $(\omega^{\ssup m},\varpi^{\ssup m})\in B_m$ for $m\in\N$  such that $( \omega^{\ssup {m-1}},\varpi^{\ssup{m-1}})=\Pi_{W_m\to W_{m-1}}(\omega^{\ssup m},\varpi^{\ssup m})$. For this, define $h_{n,m}\colon \Lcal_{W_n}\times \Scal_{W_n}\to [0,1]$ by putting
$$
h_{n,m}(\omega_{W_n},\varpi_{W_n})= K_{n,m}\big((\omega_{W_n},\varpi_{W_n}),(B_{n+m})^{\ssup 1}_{(\omega_{W_n},\varpi_{W_n})}\big),
$$
where
$$
\begin{aligned}
(B_{n+m})^{\ssup 1}_{(\omega_{W_n},\varpi_{W_n})}
&=\big\{(\omega^{\ssup n}_{W_{n+m}},\varpi^{\ssup n}_{W_{n+m}})\in (\Lcal\times \Scal)_{W_{n+m}\setminus W_n}\colon\\
&\qquad  (\omega_{W_n}+\omega^{\ssup n}_{W_{n+m}},\varpi_{W_n }+\varpi^{\ssup n}_{W_{n+m}})\in B_{n+m}\big\}
\end{aligned}
$$
is the $W_n$-cut of $B_{n+m}$. It is relatively easy to show that, for any $n\in\N$ and any $(\omega_{W_n},\varpi_{W_n})$, the sequence $(h_{n,m}(\omega_{W_n},\varpi_{W_n}))_{m\in\N_0}$ is non-increasing, since  $B_m\subset \Pi_{W_{m}\to W_{m-1}}^{-1}(B_{m-1})$ and hence $(B_{n+m+1})^{\ssup 1}_{(\omega_{W_n},\varpi_{W_n})}\subset (B_{n+m})^{\ssup 1}_{(\omega_{W_n},\varpi_{W_n})}$. 

We construct now recursively a sequence of pairs $(\omega_{W_n},\varpi_{W_n})\in B_n\subset  \Lcal_{W_n}\times \Scal_{W_n}$ such that $\lim_{m\to\infty} h_{n,m}(\omega_{W_n},\varpi_{W_n})>0$. Because of monotonicity in $m$ and $h_{n,0}=\1_{B_n}$, this finishes the proof of the existence of  a sequence $(\omega^{\ssup m},\varpi^{\ssup m})\in B_m$ for $m\in\N$  such that $( \omega^{\ssup {m-1}},\varpi^{\ssup{m-1}})=\Pi_{W_m\to W_{m-1}}(\omega^{\ssup m},\varpi^{\ssup m})$.

First, we consider $n=1$. For $m\in \N_0$, we have
$$
\int h_{1,m}(\omega_{W_1},\varpi_{W_1})\,\xi^{\ssup{{W_1}}}(\d (\omega_{W_1},\varpi_{W_1}))
=\xi^{\ssup{{W_m}}}(B_m)=P_0(A_m).
$$
By our assumption that $P_0(A_m)$ has a non-zero limit, there is a $(\omega_{W_1},\varpi_{W_1})\in  B_1$. Now we make the induction step from $n$ to $n+1$. A calculation shows that, for any $m\in\N_0$, 
$$
\begin{aligned}
\int_{(\Lcal\times \Scal)_{W_{n+m}\setminus W_n}} &h_{n+1,m}\big((\omega_{W_n},\varpi_{W_n}),(\omega^{\ssup n}_{W_{n+m}},\varpi^{\ssup n}_{W_{n+m}})\big)\,
K_{n,1}\big((\omega_{W_n},\varpi_{W_n}), \d (\omega^{\ssup n}_{W_{n+m}},\varpi^{\ssup n}_{W_{n+m}})\big)\\
&=h_{n, m+1}(\omega_{W_n},\varpi_{W_n}).
\end{aligned}
$$
Hence, the sequence $(h_{n+1,m}((\omega_{W_n},\varpi_{W_n}),(\omega^{\ssup n}_{W_{n+m}},\varpi^{\ssup n}_{W_{n+m}}))_{m\in\N}$ is a non-increasing sequence of functions in the second argument that does not vanish as $m\to\infty$. Therefore, there is $(\omega_{W_{n+1}},\varpi_{W_{n+1}})\in B_{n+1}$ whose projection on $W_n$ is equal to $(\omega_{W_n},\varpi_{W_n})$.

Now that we have a sequence of $(\omega^{\ssup n},\varpi^{\ssup n})\in B_n\subset\Lcal_{W_n}\times\Scal_{W_n}$ such that $(\omega^{\ssup {n-1}},\varpi^{\ssup{n-1}})=\Pi_{W_n\to W_{n-1}}(\omega^{\ssup n },\varpi^{\ssup n})$ for any $n\in\N$; hence we can construct an element $(\omega,\varpi)\in\Lcal\times \Scal$ such that $\Pi_{W_n}(\omega,\varpi)=(\omega^{\ssup n},\varpi^{\ssup  n})$ for any $n\in\N$. Therefore, $(\omega,\varpi)\in A_n$ for any $n$, i.e., it lies in $A$, which ends the proof. First we construct $\widetilde\omega$ by simply extending all the $\omega^{\ssup n}$ from point measures in  $W_n\times \Ccal_{W_n}^{\ssup\circlearrowleft}$ to point measures in  $\R^d\times \Ccal^{\ssup\circlearrowleft}$. The construction of $\varpi$ and of $\omega-\widetilde \omega$ goes as follows. Start with some of the points, $f_1\in\Ccal_{W_1 }$ of $\varpi^{\ssup 1}$ with some time interval $[T_1^- \beta ,T_1^+\beta]$ and integers $T_1^-, T_1^+$. In $\varpi^{\ssup 2}$, there must be a point $f_2\in (W_2\times \Ccal_{W_2}^{\ssup\circlearrowleft})\cup\Ccal_{W_2}$ that extends $f_1$ (possibly also involving some other points of $\varpi^{\ssup 1}$)  to a continuous function with time interval $[T_2^-\beta,T_2^+\beta]$. If $f_2\in W_2\times \Ccal_{W_2}^{\ssup\circlearrowleft}$, then we add $\delta_{(f_2(0),f_2)}$ to $\widetilde \omega$. Otherwise we keep $f_2$ as a $W_2$-shred and proceed. Recall that $f_1$ satisfies $f_1(T_1^{-}\beta)\in W_1^{\rm c}$, since it is a $W_1$-shred. Inductively, one can go on and find, for any $m\in\N$, either a $W_m$-loop $f_m\in W_m\times \Ccal_{W_m}^{\ssup\circlearrowleft}$, which we add to $\widetilde \omega$, or a $W_m$-shred $f_m\in \Ccal_{W_m}$ in $\varpi^{\ssup m}$ that extends $f_{m-1}$ and has a time interval $[T_m^-\beta,T_m^+\beta]$ with integers $T_m^-, T_m^+$, and $T_m^-\leq T_{m-1}^-$ and $T_m^+\geq T_{m-1}^+$ and $f_{m}((T_m^--1)\beta)\in W_m^{\rm c}$ and $f_{m}(T_m^+\beta)\in W_m^{\rm c}$. Since $\lim_{m\to\infty}T_m^-=-\infty$ and $\lim_{m\to\infty}T_m^+=\infty$ and $\lim_{m\to\infty} f_m((T_m^--1)\beta)=\infty$ and $\lim_{m\to\infty} f_m(T_m^+\beta)=\infty$, we have constructed a continuous function $f\in\Ccal_\infty$, the extension of all the $f_m$ that are $W_m$-shreds. This we take as a point of $\varpi$. Repeat the same procedure with all the other points of $\varpi^{\ssup 1}$ (if there are some left), otherwise with $\varpi^{\ssup 2}$ and so on. The sum of all the delta measures on $W_m$ loops that we obtained during the entire procedure is defined as $\omega-\widetilde\omega$, and this defines $\omega$.
\end{proof}

\subsection{The empirical measure of the loop/shred configurations}
\label{sec-empmeasconf}

\noindent We now introduce the main tool in our proof of Theorem~\ref{thm-freeenergy} and explain the first steps of our strategy. The main object is the empirical measure of all the loop/shred configurations in all the $R$-subboxes within the macrobox $\L_N$. We are in the situation of Section~\ref{sec-Purpose}: we fix $R\in\N$ and write $W=[-R,R]^d$ and $W_z=z+W$ for $z\in 2R\Z^d$ and fix $\rho\in(0,\infty)$ and a centered box $\L_N$ with volume $ \sim N/\rho$ as $N\to\infty$. We decompose $\L_N=\bigcup_{z\in Z_{N,R}}W_z$ for some suitable  $Z_{N,R}\subset 2R\Z^d$ (possibly $R=R_N$ depends on $N$ and converges to $R$; we suppress this in the notation). Note that $\# Z_{N,R}\sim |\L_N|/|W_R|\sim\frac N{\rho (2R)^d}$ as $N\to\infty$.

We introduce now our main tool in the proof of Theorem~\ref{thm-freeenergy}: for any $\omega\in \Lcal$, we define
\begin{equation}\label{Xidef}
\Xi_{N,R}^{\ssup\omega}=\frac 1{\# Z_{N,R}}\sum_{z\in Z_{N,R}}
\delta_{(\theta_{z}(\Pi_{W_z}^{\ssup\Lcal}(\omega)),\theta_{z}(\Pi_{W_z}^{\ssup\Scal}(\omega))}\in \Mcal_1(\Lcal_W\times\Scal_W),
\end{equation}
where $\theta_z$ is the shift-operator such that $\theta_z(W_z)=W$. (Recall the operators $\Pi_W^{\ssup \Lcal}$ and $\Pi_W^{\ssup \Scal}$ from  \eqref{PiWLdef} and \eqref{eq:projW}.) This is the empirical measure of all the loop/shred configurations of $\omega$ in the subboxes, shifted to the origin.

Let us briefly recall our plan from Section~\ref{sec-strategy} to prove Theorem~\ref{thm-freeenergy}: for $\omega=\omega_{\rm P}$, the marked reference PPP of Definition~\ref{def-PPP}, 
\begin{enumerate}
 \item rewrite the partition function $\widehat Z_N$ in terms of an integral over $\Xi_{N,R}^{\ssup{\omega_{\rm P}}}$,
 
 \item find a large deviation principle for $\Xi_{N,R}^{\ssup{{\omega_{\rm P}}}}$ as $N\to\infty$,
 
\item use Varadhan's lemma to express the large-$N$ exponential rate of the partition function as a variational formula on the space $\Mcal_1(\Lcal_W\times \Scal_W)$ with $W=[-R,R]^d$,

\item make $R\to\infty$ in that formula to arrive at $\chi$ defined in \eqref{chidefneu}. 

\end{enumerate}

For making the first step, we will present upper and lower bounds in Sections~\ref{sec-uppboundN} and \ref{sec-lowboundN}, respectively. The second step will be done in Section~\ref{sec-LDPabstract}, and the third in Sections~\ref{sec-LDPupperbound} and \ref{sec-Nlowbound}. In the fourth step in Section~\ref{sec-finishproof}, we are employing a new relative entropy density that we introduce in Section~\ref{sec-limitingentropy}. For various reasons, the proof of the upper and the lower bound will have substantial differences. 

It will be convenient to distinguish the the loop part and the shred part of $\Xi_{N,R}$, 
\begin{eqnarray}\label{XiLdef}
\Xi_{N,R}^{\ssup{\omega_{\rm P},\Lcal}}&=&\Pi_W^{\ssup\Lcal}(\Xi_{N,R}^{\ssup{\omega_{\rm P}}})=\frac 1{\# Z_{N,R}}\sum_{z\in Z_{N,R}}
\delta_{\theta_{z}(\Pi_{W_z}^{\ssup\Lcal}(\omega_{\rm P}))}\in \Mcal_1(\Lcal_W),\\
\Xi_{N,R}^{\ssup{\omega_{\rm P},\Scal}}&=&\Pi_W^{\ssup\Scal}(\Xi_{N,R}^{\ssup{\omega_{\rm P}}})=\frac 1{\# Z_{N,R}}\sum_{z\in Z_{N,R}}
\delta_{\theta_{z}(\Pi_{W_z}^{\ssup\Scal}(\omega_{\rm P}))}\in \Mcal_1(\Scal_W).
\end{eqnarray}
These two are independent under the reference PPP, since the first one comes from the part of the PPP that has no $R$-crossings, and the second from the $R$-crossing part of the PPP, which are independent. The second observation is that the loop part is an empirical measure of independent objects,  whose distributions are basically identical, modulo the boundary conditions, whose influence will be shown to vanish in the limit as $N\to\infty$. 

Now, as for the shred part, it is obvious that the subconfigurations in the $R$-subboxes are very far from independent, which presents a major obstacle in the analysis of its distribution. However, the third observation is that the configurations of the shreds in the subboxes are independent if the statistics of their starting sites, lengths and terminal sites are fixed, i.e., conditional on the boundary-shred empirical measure
\begin{equation}\label{partialXiSdef}
 \partial \Xi_{N,R}^{\ssup{\omega_{\rm P},\Scal}}=\partial\Pi_W^{\ssup\Scal}(\Xi_{N,R}^{\ssup{\omega_{\rm P}}})
 =\frac1{\# Z_{N,R}}\sum_{z\in Z_{N,R}}\delta_{\theta_{z}(\partial\Pi_{W_z}^{\ssup\Scal}(\omega_{\rm P}))}\in\Mcal_1(\Tcal_W).
\end{equation}

\subsection{Upper bound}\label{sec-uppboundN}

We are going to derive an upper bound for the partition function in terms of a expectation with respect to the empirical measure $\Xi_{N,R}$. We need to estimate it in such a way that it is decomposed into a product of the contributions over all the disjoint subboxes. For this, we restrict the interaction to the sum of (slightly modified) interactions within the subboxes. In the proof of the upper bound, since the interaction potential $v$ is nonnegative, we are in the lucky situation that we can simply drop all the interaction between different subboxes and that we can drop also the probabilistic cost of the behaviour of the boundary configurations of the shreds. 

Assume the situation and notation described in Section~\ref{sec-empmeasconf}.  As one sees from \eqref{freeenergyident}, we want to find an upper bound for 
\begin{equation}\label{hatZ}
\widehat Z_{N,R,\delta}^{\ssup{\rm bc}}(\L_N,\rho_1,\rho_2)
=\LPP^{\ssup{\L_N,\rm bc}}\big[{\rm e}^{-\Phi_ {\L_N,\L_N}}\1\{|\smfrac 1{|\L_N|} {\mathfrak N}^{\ssup{\ell,\neg R}}_{\L_N}-\rho_1|\leq\delta\}\1\{|\smfrac 1{|\L_N|} {\mathfrak N}^{\ssup{\ell,R}}_{\L_N}-\rho_2|\leq\delta\}\big].
\end{equation}
The main result of this section is the following.

\begin{lemma}[Upper bound]\label{lem-UpperBound}
For any $N,R\in\N$, $\delta\in(0,1)$, $\rho_1,\rho_2\in[0,\infty)$,
\begin{equation}\label{hatZ2}
\begin{aligned}
\widehat Z_{N,R,\delta}^{\ssup{\rm bc}}(\L_N,\rho_1,\rho_2)
&\leq \LPP^{\ssup{\L_N,\rm bc}}\Big[\exp\Big\{- N_R |W|\big\langle \Xi_{N,R}^{\ssup{\omega_{\rm P}}},\smfrac 1{|W|}F_{W,W}\big\rangle\Big\}\\
&\quad\times \1\Big\{\langle\Xi_{N,R}^{\ssup{\omega_{\rm P}}},\smfrac 1{|W|}{\mathfrak N}^{\ssup{\ell,\Lcal}}_W\rangle  \in\overline\Bcal_{2\delta}(\rho_1)\Big\}\1\Big\{\langle \partial \Xi_{N,R}^{\ssup{\omega_{\rm P},\Scal}},\smfrac 1{|W|}{\mathfrak N}^{\ssup{\ell,\Scal}}_{\partial W}\rangle \in\overline\Bcal_{2\delta}(\rho_2)\Big\}
\Big],
\end{aligned}
\end{equation}
where $N_R=\# Z_{N,R}$, and $F_{W,W}$ is defined in \eqref{FWWdef}, and the particle-counting operators are defined in  \eqref{def_particlenumbers}.
 \end{lemma}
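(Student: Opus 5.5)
This is a pointwise inequality between integrands under $\LPP^{\ssup{\L_N,\rm bc}}$, so I will fix a configuration $\omega=\omega_{\rm P}$ and verify two facts.

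\emph{First}, the interaction bound $\Phi_{\L_N,\L_N}(\omega)\geq N_R\langle \Xi_{N,R}^{\ssup{\omega}},F_{W,W}\rangle=\sum_{z\in Z_{N,R}}F_{W,W}(\theta_z\Pi_{W_z}(\omega))$.

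\emph{Second}, the indicator inclusions: the event $|\frac1{|\L_N|}\mathfrak N^{\ssup{\ell,\neg R}}_{\L_N}-\rho_1|\leq\delta$ implies $\langle\Xi_{N,R}^{\ssup{\omega}},\frac1{|W|}\mathfrak N^{\ssup{\ell,\Lcal}}_W\rangle\in\overline\Bcal_{2\delta}(\rho_1)$, and similarly for the $R$-crossing part with $\partial\Xi^{\ssup{\omega,\Scal}}_{N,R}$ and $\rho_2$.

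Multiplying these pointwise, using $\e^{-\Phi}\leq \e^{-\sum_z F}$ and monotonicity of the indicators, and integrating against $\LPP^{\ssup{\L_N,\rm bc}}$ gives \eqref{hatZ2}.

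For the energy, recall that $\Phi_{\L_N,\L_N}=\sum_{x,y}\mathfrak V(f_x,f_y)$, which is $\frac12$ times the sum over all ordered pairs of distinct legs of all loops. Each leg of any loop starts at a particle, and by the particle boundary condition (or after reduction modulo the torus) this particle lies in exactly one box $W_z$, up to null boundary sets. I keep only the pairs of legs whose starting particles lie in the same box $W_z$ and discard the rest; this is legitimate since $v\geq0$, so $V\geq0$.

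Inside a fixed box $W_z$, the legs starting there fall into two groups.
\begin{itemize}
\item Legs of loops all of whose particles lie in $W_z$. These are the loops of $\Pi^{\ssup\Lcal}_{W_z}(\omega)$.
\item Legs of $W_z$-shreds of $R$-crossing loops. By definition of shreds, every leg starting at a particle in $W_z$ of a crossing loop belongs to exactly one $W_z$-shred.
\end{itemize}
I then check that the pair sum within the box equals $\frac12F^{\ssup{\Lcal\Lcal}}_{W,W}+F^{\ssup{\Lcal\Scal}}_{W,W}+\frac12F^{\ssup{\Scal\Scal}}_{W,W}$ evaluated on $\theta_z\Pi_{W_z}(\omega)$. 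The factors $\frac12$ account for ordered pairs within the same type, and the cross term is counted once. Shift invariance of $v$ handles $\theta_z$. The one subtlety is that in $F^{\ssup{\Scal\Scal}}$ the legs of the shreds are indexed so that the $(k+1)$-st leg starts at $g(k\beta)$. I need to confirm that the shred defined as $f|_{[(\tau-1)\beta,\sigma\beta]}$ contributes exactly its legs starting in $W$. Its first leg starts outside $W$ and is excluded by the indicator $\1_W(g(k\beta))$, so only the legs starting in $W$ are counted, as required.

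For the particle numbers, I need a counting identity. Every particle of a non-crossing loop lies in the box containing its starting point, so $\mathfrak N^{\ssup{\ell,\neg R}}_{\L_N}=\sum_z\mathfrak N^{\ssup{\ell,\Lcal}}_W(\theta_z\Pi^{\ssup\Lcal}_{W_z}\omega)=N_R\langle\Xi,\mathfrak N^{\ssup{\ell,\Lcal}}_W\rangle$. Every particle of a crossing loop lies in exactly one box and in exactly one shred there, so $\mathfrak N^{\ssup{\ell,R}}_{\L_N}=\sum_z\mathfrak N^{\ssup{\ell,\Scal}}_{\partial W}(\theta_z\partial\Pi^{\ssup\Scal}_{W_z}\omega)$, using \eqref{particlenumber}. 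Dividing by $|\L_N|$ and using $N_R|W|=|\L_N|$ (exactly, or up to a factor $1+o(1)$ absorbed by enlarging $\delta$ to $2\delta$) gives the inclusions.

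The main obstacle is the bookkeeping at the boundaries of the boxes. Two issues need care: particles lying on a common face of two boxes, which I handle by a half-open convention or by noting they form a null set under the PPP; and, for periodic boundary conditions, loops wrapping around the torus. For these I would also record that $N_R|W|$ may differ slightly from $|\L_N|$ when $R=R_N$, which is exactly why the statement uses $2\delta$ instead of $\delta$.
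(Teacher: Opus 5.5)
Your proposal is correct and is essentially the paper's argument. The paper likewise bounds the two indicators by the corresponding subbox averages, using that non-crossing loops live in a single $W_z$ and that the particles of crossing loops are partitioned among the $W_z$-shreds via \eqref{particlenumber}; it also lower-bounds $\Phi_{\L_N,\L_N}$ by $\sum_{z}F_{W,W}(\theta_z\omega_{W_z},\theta_z\varpi_{W_z})$ by discarding all cross-box interactions, which is legitimate since $v\geq 0$. Your leg-by-leg bookkeeping and your remark that $2\delta$ only absorbs a possible mismatch between $N_R|W|$ and $|\L_N|$ simply spell out details the paper leaves implicit.
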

 
 \begin{proof}We are going to express $\widehat Z_{N,R,\delta}^{\ssup{\rm bc}}(\L_N,\rho_1,\rho_2) $ in terms of the subconfigurations
\begin{equation}\label{subconfigurations}
\omega_{W_z}=\Pi_{W_z}^{\ssup\Lcal}(\omega)=\sum_{x\in W_z\colon f_x(i\beta)\in W_z\forall i}\delta_{(x,f_x)}
\qquad\mbox{and}\qquad \varpi_{W_z}=\Pi_{W_z}^{\ssup\Scal}(\omega)=\sum_{g\in\Gamma_z}\delta_{g}\qquad \mbox{with }z\in Z_{N,R},
\end{equation}
where we wrote $\Gamma_z=\Gamma_z(\omega)$ for the set of $W_z$-shreds of $\omega$. Recall that we write $g\colon[0,\ell(g)\beta]\to\R^d$ for a $W$-shred $g$ with $ g(0),g(\beta),g(2\beta),\dots,g((\ell(g)-1)\beta)\in W$ and $g(\ell(g)\beta)\in W^{\rm c}$, and its $i$-th leg is $g_i=g|_{[(i-1)\beta,i\beta]}$ for $i\in[\ell(g)]$. Then we estimate, for  all sufficiently large $N$,
\begin{equation}\label{particlenumberesti}
\begin{aligned}
\1\{|\smfrac 1{|\L_N|} &{\mathfrak N}^{\ssup{\ell,\neg R}}_{\L_N}-\rho_1|\leq\delta\}\1\{|\smfrac 1{|\L_N|} {\mathfrak N}^{\ssup{\ell,R}}_{\L_N}-\rho_2|\leq\delta\}\\
&\leq \1\Big\{\frac 1{N_R}\sum_{z\in Z_{N,R}} \frac 1{|W|}{\mathfrak N}^{\ssup{\ell,\Lcal}}_{W_z}(\omega_{W_z})\in \overline\Bcal_{2\delta}(\rho_1)\Big\}
\1\Big\{\frac 1{N_R}\sum_{z\in Z_{N,R}} \frac 1{|W|}{\mathfrak N}^{\ssup{\ell,\Scal}}_{W_z}(\varpi_{W_z})\in\overline\Bcal_{2\delta}(\rho_2)\Big\},
\end{aligned}
\end{equation}
where we write $\Bcal_\delta(x)=(x-\delta,x+\delta)$ and $\overline \Bcal_\delta(x)$ for its closure, for any $x\in\R$ and $\delta>0$.

Recall the interaction defined in \eqref{interaction}.  We drop all interactions between subboxes $W_z$ and $W_{\widetilde z}$ for different $z,\widetilde z \in Z_{N,R}$ and obtain a lower bound for the entire interaction. Recall the notation from \eqref{Vdef} and from \eqref{FLL}--\eqref{FSS}. We extend \eqref{FLS} and \eqref{FSS} from $\varpi\in \Scal$ to $\varpi_W=\sum_{g\in\Gamma}\delta_g\in \Scal_W$, i.e., from $\Gamma\subset\Ccal_\infty$ to $\Gamma\subset \Ccal_W$, in an obvious way. Then we estimate
\begin{equation}\label{interactionlowbound}
\Phi_{\L_N,\L_N}(\omega)\geq \sum_{z\in Z_{N,R}} F_{W,W}(\theta_z(\omega_{W_z}),\theta_z(\varpi_{W_z})),
\end{equation}
where $F_{W,W}(\omega_{W},\varpi_{W})=\frac 12 F_{W,W}^{\ssup {\Lcal\Lcal}}(\omega_{W})+F_{W,W}^{\ssup {\Lcal\Scal}}(\omega_{W},\varpi_{W})+\frac 12 F_{W,W}^{\ssup {\Scal\Scal}}(\varpi_{W})$.

This implies the following for all sufficiently large $N$: 
\begin{equation}\label{hatZ1}
\begin{aligned}
\widehat Z_{N,R,\delta}^{\ssup{\rm bc}}(\L_N,\rho_1,\rho_2)
&\leq \LPP^{\ssup{\L_N,\rm bc}}\Big[\1\Big\{\frac 1{N_R}\sum_{z\in Z_{N,R}} \frac 1{|W|} {\mathfrak N}^{\ssup{\ell,\Lcal}}_{W_z}\in\overline\Bcal_{2\delta}(\rho_1)\Big\}\\
&\qquad\times \1\Big\{\frac 1{N_R }\sum_{z\in Z_{N,R}}\frac 1{|W|} {\mathfrak N}^{\ssup{\ell,\Scal}}_{W_z}\in\overline\Bcal_{2\delta}(\rho_2)\Big\}\prod_{z\in Z_{N,R}}\e^{-F_{W,W}(\theta_z(\omega_{W_z}),\theta_z(\varpi_{W_z}))}\Big].
\end{aligned}
\end{equation}

We now write the right-hand side of \eqref{hatZ1} in terms of the empirical measure $\Xi_{N,R}^{\ssup{\omega_{\rm P}}}$ from \eqref{Xidef} and its boundary-shred projection $\partial\Xi_{N,R}^{\ssup{\omega_{\rm P},\Scal}}$ from \eqref{partialXiSdef} as follows. Recall \eqref{particlenumber} for writing the particle number in shreds as a functional of $\partial \Xi_{N,R}^{\ssup{\omega_{\rm P},\Scal}}$. From this, the assertion follows.
\end{proof}

For later developments in Section~\ref{sec-LDPana}, when we will take the limit as $N\to\infty$, it will be convenient to work exclusively with particle boundary condition, which was introduced below \eqref{nnBBM}. Therefore, let us further estimate from above, until we arrive at a similar expectation with ${\rm bc}={\rm par}$. For \lq bc\rq\ being Dirichlet zero boundary conditions, this is trivial, since zero boundary conditions in particular satisfy particle boundary conditions. For periodic boundary conditions, however, we have to work harder for this point; this will be done in Sections~\ref{sec-distshreds} and at the beginning of Section \ref{sec-LDPabstract}.

At the beginning of Section~\ref{sec-LDPabstract} we will make some technical preparations for applying a suitable large-deviation principle in the limit as $N\to\infty$ to the right-hand side of \eqref{hatZ2}. A suitable formulation of this LDP will be in Corollary~\ref{cor-LDP}. After inserting in Section~\ref{sec-Compactness} some events that induce some necessary compactness and control on the particle number and interaction, we will apply the LDP in Section~\ref{sec-LDPupperbound}, resulting in an $R$- and $\delta$-depending variational formula. This will be used in Section~\ref{sec-finishuppbound} to make $R\to\infty$ and $\delta\downarrow0$, and to finish the proof of the upper bound in Theorem~\ref{thm-freeenergy}.

\subsection{Lower bound}\label{sec-lowboundN}

We are going to derive a lower bound for $\widehat Z_{N,R,\delta}^{\ssup{\rm bc}}(\L_N,\rho_1,\rho_2)$ defined in \eqref{hatZ} for any sufficiently large $N$.  The main result of this section is Lemma~\ref{lem-LowerBound} below. Our lower bound there will be analogous to the upper bound in Lemma~\ref{lem-UpperBound}, but will require much more restrictions in the integration area. 

In general, we assume the situation and notation described in Section~\ref{sec-empmeasconf}. We will have to neglect all the interactions between loops and shreds in different subboxes $W_z$, such that the contributions will be independent over the subboxes. For this, we need to restrict the amount of particles in the boundary area of  $W_z$ for any $z\in Z_{N,R}$. We will do that in a much more comprehensive way than what is needed here, in order to prepare for future steps in Section~\ref{sec-Nlowbound}, where we will take the limit as $N\to\infty$. That is, we will impose here that, in each subbox $W_z$, all the legs (of loops and of shreds) have a {\it spread} $\|f\|_{\rm sp}=\max_{t\in[0,\beta]}|f(t)-f(0)|$ that is $< M$, and in each ball of radius $\mathfrak r$ (to be introduced), the particle number is $\leq T$ for some large $M, T$. Furthermore, we need to restrict the global configuration close to the boundary of $\L_N$ in order to achieve particle boundary condition.

We are going to use $C\in(0,\infty)$ as a generic constant that depends only on $d$, $\beta$ or $v$ (which we neglect) and may change its value from appearance to appearance. If it may depend on additional parameters, then these are appended as indices.

We need to restrict to configurations in
\begin{equation}\label{LoopsMvarthetaS}
\Lcal^{\ssup {M}}=\Big\{\sum_{x\in\zeta}\delta_{(x,f_x)}\in\Lcal\colon  \|f_{x,i}\|_{\rm sp}< M\ \forall x\in\zeta,\forall i\Big\},\qquad M\in(1, \infty),
\end{equation}
where we recall that $f_{i}$ is the $i$-th leg of the loop $f$.  We will write $\Lcal_W^{\ssup  {M}}=\Lcal_W\cap \Lcal^{\ssup  {M}}$. We will make a change of measure from the original PPP ${\tt Q}$ (see Definition~\ref{def-PPP}) to the PPP ${\tt Q}_{M}$ with intensity measure 
\begin{equation}\label{nuMdef}
\nu_{\L_N, {M}}^{\ssup{\rm par}}(\d x,\d f)=\sum_{k\in\N}\frac 1k \d x\otimes\mu_{x,x}^{\ssup{k, {M},\L_N,{\rm par}}}(\d f),
\end{equation}
where $\mu_{x,x}^{\ssup{k, {M},\L_N,{\rm par}}}$ is the restriction of $\mu_{x,x}^{\ssup{k,\L_N,{\rm par}}}$ to the set of loops of length $k$ whose legs have spread $<M$. The $W_R$-shred-configurations under ${\tt Q}_{M}$ lie in the set 
\begin{equation}\label{SMdef}
\Scal^{\ssup  {M}}=\Big\{\sum_{g\in\Gamma}\delta_g\in \Scal\colon \| g_i\|_{\rm sp}< M\ \forall g\in \Gamma,i\in \Z\Big\}.
\end{equation}

We also need a notation for the number of all particles in $W\Subset \R^d$, regardless where their loop or shred comes from:
\begin{equation}\label{particlenumberinW}
\widetilde {\mathfrak N}^{\ssup\ell}_W(\omega,\varpi)
=\sum_{x\in\zeta}\sum_i\1\{f_{x,i}(0)\in W\}+\sum_{g\in\Gamma}\sum_i \1\{g_i(0)\in W\},\qquad \omega\in\Lcal,\varpi\in \Scal.
\end{equation}
With a parameter $\mathfrak r\in(0,1)$, we decompose (up to the boundaries) $W$ into the boxes $z+W_{\mathfrak r}$ with $z\in W\cap 2\mathfrak r\Z^d$. With positive parameters $ L,K,T$, define the events
\begin{eqnarray}
A_{W;M,L,K}^{\ssup{\Lcal}}&=&\bigcap_{z\in W\cap 2\mathfrak r\Z^d}\Big\{\omega\in\Lcal_W^{\ssup{M ,\leq L}}\colon \forall l\in [L]\colon |\zeta_{z,l}|\leq K\Big\},\label{ALcaldef}\\
A_{W;M,\mathfrak r, T}^{\ssup{\Scal}}&=&\bigcap_{z\in W\cap 2\mathfrak r\Z^d}\Big\{\varpi\in \Scal_W^{\ssup {M}}\colon \widetilde{ \mathfrak N}_{z+W_\mathfrak r}^{\ssup{ \ell}}(\varpi)\leq T\Big\},\label{AScaldef}
\end{eqnarray}
where  $\zeta_{z,l}$ is the set of initial points of $l$-length loops of $\omega$ in $z+W_\mathfrak r$. That is, the loop configurations in $A_{W;M,L,K}^{\ssup{\Lcal}}$ have only legs with spread $< M$, only loop lengths $ \leq L$, and there are, in any of the subboxes $z+W_\mathfrak r$, no more $l$-length loops than $K$, while in $ A_{W;M,\mathfrak r, T}^{\ssup{\Scal}} $ we impose a uniform upper bound for the number of particles in any of the subboxes $z+W_\mathfrak r$.

We will later restrict the integration with respect to the PPP $\omega_{\rm P}$  to the event 
\begin{equation}\label{Adef}
A_{W;\Theta}= A_{W;M,L,K}^{\ssup{\Lcal}}\times A_{W;M,\mathfrak r, T}^{\ssup{\Scal}},\qquad\mbox{with }\Theta=(M,L,K,\mathfrak r, T).
\end{equation}
More precisely, we will restrict to  the event $\bigcap_{z\in Z_{N,R}^\circ} \{(\omega_{W_z},\varpi_{W_z})\in A_{W_z;\Theta}\}$, where we introduce the slightly smaller (than $\L_N$) box $\L_N'=\bigcup_{z\in Z_{N,R}^\circ}W_z$, and $Z_{ N,R}^\circ$ is the set of all $z\in Z_{N,R}$ that have no neighbour in $Z_{N,R}^{\rm c}$ (where we adopt the usual neighbourhood relation in $\Z^d$, upscaled by a factor of $2R$). Let us introduce the conditional version of $\LPP_{M}^{\ssup{\L_N',\rm par}}[\,\cdot\, |\partial \Xi_{N,R}^{\ssup {\circ,\omega,\Scal}}=\psi]$ given this event, i.e., given that $(\omega_{W_z},\varpi_{W_z}) \in A_{W_z;\Theta}$ for all $z\in Z_{N,R}^\circ $, which we denote by $\LPP^{\ssup{\L_N',{\rm par}}}_{W;\Theta}[\,\cdot\, |\partial \Xi_{N,R}^{\ssup {\circ,\omega,\Scal}}=\psi]$. The slightly modified empirical measure is denoted
\begin{equation} \label{eq:PNdefcirc}
\Xi_{N,R}^{\ssup{\circ,\omega}} = \frac1{\# Z_{N,R}^\circ}\sum_{z\in Z_{N,R}^\circ} \delta_{ \big(\theta_{z}(\omega_{W_z}^{\ssup{\Lcal} }),\theta_{z}(\varpi_{W_z}^{\ssup{\Scal}})\big)}\in\Mcal_1(\Lcal_W\times \Scal_W)
\end{equation}
and $\partial \Xi_{N,R}^{\ssup {\circ,\omega,\Scal}}$ is the boundary-shred projection of $\Xi_{N,R}^{\ssup{\circ,\omega}}$ analogously to \eqref{partialXiSdef}. 
We also need the function
\begin{equation}\label{pMTdef}
p_{W;M,\mathfrak r,T}(\mu)= {\tt K}_W\big(\mu; A_{W;M,\mathfrak r,T}^{\ssup\Scal}\big),\qquad \mu\in \Tcal_W.
\end{equation}

As usual, we write $W=W_R=[-R,R]^d$ for short.

\begin{lemma}[Lower bound]\label{lem-LowerBound}
Fix  $\delta\in(0,1)$ and $\rho_1,\rho_2\in[0,\infty)$. Then, for any $M,L,K,T,\mathfrak r\in(0,\infty)$, there are constants $c_M\in(0,1)$ satisfying $ \lim_{M\to\infty}c_M=1$ and  $C_{M,L,K,T}\in(0,\infty)$ (depending only on $M$ respectively only on $M,L,K,T$) such that, for any $N,R\in\N$ such that $R>M$, and for any $\psi\in\Mcal_1(\Tcal_W)$ (possibly depending on $N$) such that the following conditional expectation is well-defined,
\begin{equation}\label{hatZ4}
\begin{aligned}
\widehat Z_{N,R,\delta}^{\ssup{\rm bc}}&(\L_N,\rho_1,\rho_2)
\geq
\e^{-\frac 1R{C_{M,L,K,T}}|\L_N |} \e^{-o(|\L_N|)}c_{M}^{|\L_N|}\big[\Pi_W^{\ssup\Lcal}({\tt Q})(A^{\ssup\Lcal}_{W;M,L,K})\big]^{|\L_N'|/|W|}\e^{\frac{|\L_N'|}{|W|}\langle \log p_{W;M,\mathfrak r,T},\psi\rangle }\\
&\quad \times
\LPP_{M}^{\ssup{\L_N',\rm par}}\big(\partial \Xi_{N,R}^{\ssup {\circ,\omega,\Scal}}=\psi \big) \\
&\quad \times
\LPP^{\ssup{\L_N',{\rm par}}}_{W;\Theta}\Big[\exp\Big\{-|\L_N'|\big\langle \Xi_{N,R}^{\ssup{\circ,\omega_{\rm P}}},\smfrac 1{|W|}F_{W,W}\big\rangle\Big\}\\
&\quad\quad\times \1\Big\{\langle\Xi_{N,R}^{\ssup{\circ,\omega_{\rm P}}},\smfrac 1{|W|}{\mathfrak N}^{\ssup{\ell,\Lcal}}_W\rangle \in \Bcal_{\delta/3}(\rho_1)\Big\}
\1\Big\{\langle \psi,\smfrac 1{|W|}{\mathfrak N}^{\ssup{\ell,\Scal}}_{\partial W}\rangle \in \Bcal_{\delta/3}(\rho_2)\Big\}
\Big|\partial \Xi_{N,R}^{\ssup {\circ,\omega,\Scal}}=\psi\Big] .
\end{aligned}
\end{equation}
\end{lemma}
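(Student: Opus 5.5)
We obtain the lower bound by successively restricting the expectation $\widehat Z_{N,R,\delta}^{\ssup{\rm bc}}(\L_N,\rho_1,\rho_2)$ in \eqref{hatZ} to smaller events, each time paying an explicit exponential price. Every such price will be one of the prefactors on the right-hand side of \eqref{hatZ4}.

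\emph{Step 1: boundary layer and particle boundary condition.} We discard all loops that start in the boundary layer $\L_N\setminus\L_N'$, or that have particles in that layer. Because this layer consists of $O(|\L_N|/R)$ volume (the subboxes $W_z$ with $z\notin Z_{N,R}^\circ$), the Poisson probability that no loop of the reference process is attached there is at least $\e^{-C|\L_N|/R}$, via \eqref{limnorming} localised. Up to this factor, the restriction of $\LPP^{\ssup{\L_N,{\rm bc}}}$ to the remaining loops dominates $\LPP^{\ssup{\L_N',{\rm par}}}$ for each of the three boundary conditions. For periodic and zero conditions we compare bridge densities: on loops whose particles stay in $\L_N'$, the kernels $g^{\ssup{\L_N,{\rm bc}}}$ dominate the free Gaussian kernels up to a factor $1-o(1)$ per leg, on the event that legs have spread $<M<R$. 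Any mismatch in the normalising constants $\nu_{\L_N}(\L_N\times\Ccalcirc)$ is $o(|\L_N|)$ and goes into $\e^{-o(|\L_N|)}$. The particle-number constraints survive with $\delta$ replaced by $\delta/3$, because the removed boundary region carries only $O(|\L_N|/R)$ particles on the restricted event.

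\emph{Step 2: spread cutoff and the change to ${\tt Q}_M$.} We restrict to configurations in which every leg has spread $<M$. This is the change of measure from $\LPP^{\ssup{\L_N',{\rm par}}}$ to $\LPP_M^{\ssup{\L_N',{\rm par}}}$. On the restricted event the Radon--Nikodym density equals $\exp\{-(\nu-\nu_M)(\L_N'\times\Ccalcirc)\}$. The mass difference $(\nu-\nu_M)(\L_N'\times\Ccalcirc)=|\L_N'|\sum_k \frac1k\mu^{\ssup k}_{0,0}(\exists\mbox{ leg with spread}\ge M)$ is bounded by $|\L_N'|\,\eps_M$ with $\eps_M\to0$, using Gaussian tails and the summability of $k^{-d/2}$ in $d\ge3$. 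This produces the factor $c_M^{|\L_N|}$ with $c_M\to1$.

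\emph{Step 3: factorisation and the conditional independence of the shreds.} Since $R>M$, legs with spread $<M$ that start in $W_z$ can interact only with legs starting in $W_z$ or in neighbouring boxes. We then impose the events $A_{W_z;\Theta}$ for all $z\in Z_{N,R}^\circ$. On these events the particle number per $\mathfrak r$-box is bounded by $LK\,(\#\text{lengths})+T$, and only a boundary strip of width $M$ around each $\partial W_z$ is involved, so the inter-box interaction is at most $C_{M,L,K,T}\,|\L_N|/R$. This bound uses the boundedness and finite range of $v$ from Assumption (V). Hence $\e^{-\Phi_{\L_N,\L_N}}\ge \e^{-C_{M,L,K,T}|\L_N|/R}\prod_z\e^{-F_{W,W}(\cdots)}$.

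Next we condition on $\partial\Xi_{N,R}^{\ssup{\circ,\omega,\Scal}}=\psi$, which yields the factor $\LPP_M^{\ssup{\L_N',{\rm par}}}(\partial\Xi=\psi)$. The loop part, meaning loops entirely inside some $W_z$, is independent of the shred part and is i.i.d.\ over $z$ under the particle boundary condition. The probability of the loop events is therefore $[\Pi_W^{\ssup\Lcal}({\tt Q})(A^{\ssup\Lcal}_{W;M,L,K})]^{|\L_N'|/|W|}$, up to the $M$-restriction already accounted for. Given the boundary data, the shreds in distinct boxes are independent with law ${\tt K}_W(\theta_z\mu_z,\cdot)$, still restricted to spread $<M$. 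Their joint probability of the events $A^{\ssup\Scal}$ is then $\prod_z p_{W;M,\mathfrak r,T}(\theta_z\mu_z)=\exp\{\frac{|\L_N'|}{|W|}\langle\log p,\psi\rangle\}$. Finally, we write the remaining integrand as the conditional expectation under $\LPP_{W;\Theta}^{\ssup{\L_N',{\rm par}}}$, and we express the shred particle number through $\psi$ via \eqref{particlenumber}.

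The main obstacle is Step 3: we must justify the exact conditional product structure of the $W_z$-shreds given all boundary triples. This is the Markov property of the Gaussian particle chain, so that ${\tt K}_W$ is the right kernel. We also need it to remain compatible with the $M$-spread restriction, which we handle by noting that the restriction acts leg by leg. The boundary comparison for periodic boundary conditions in Step 1 requires similar care.
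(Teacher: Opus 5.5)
Your proposal is correct and follows essentially the same route as the paper. The steps match: you restrict to leg spreads $<M$ and to configurations without particles in $\L_N\setminus\L_N'$, which gives particle boundary condition in $\L_N'$ and the factors $c_M^{|\L_N|}\e^{-o(|\L_N|)}$. You then condition on $\partial\Xi_{N,R}^{\ssup{\circ,\omega,\Scal}}=\psi$ and bound the cross-box interaction on the events $A_{W_z;\Theta}$ by $C_{M,L,K,T}|\L_N|/R$; the paper does this via the Cauchy--Schwarz estimate of Lemma~\ref{lem-Interactesti}. Finally you factor the probability of these events into the i.i.d.\ loop part and the ${\tt K}_W$-shred part given the boundary data, as the paper does.

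Two small corrections are needed.
\begin{itemize}
\item The boundary layer has volume $O(R|\L_N|^{(d-1)/d})=o(|\L_N|)$, not $O(|\L_N|/R)$, and it carries no particles at all on your restricted event. The density constraints therefore pass to $\Bcal_{\delta/3}$ simply because $|\L_N'|/|\L_N|\to1$. An $O(|\L_N|/R)$ particle count, as you argued, would not suffice for fixed $R$ and small $\delta$.
\item The boundary-condition comparison in your Step~1 uses the spread cut, so the spread cut of Step~2 should be imposed first. Once it is, the comparison is an exact equality of path measures rather than a loss of $1-o(1)$ per leg: legs starting in $\L_N'$ with spread $<M<R$ never reach $\partial\L_N$.
\end{itemize}
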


\begin{proof} In the expectation in \eqref{hatZ} that defines $\widehat Z_{N,R,\delta}^{\ssup{\rm bc}}(\L_N,\rho_1,\rho_2)$ we insert the indicator on the event that the configuration lies in $\Lcal^{\ssup  {M}}$. Furthermore, we now require that the configuration $\omega_{\L_N}$ has no particle in the boundary area $\L_N\setminus \L_N'$. This implies that the restricted configuration $\omega_{\L_N'}$ satisfies  particle-boundary conditions in $\L_N'$ (which was introduced below \eqref{nnBBM}), since no loop reaches $\partial \L_N$ (because of $M<R$) and this holds for both, Dirichlet zero and periodic boundary condition in $\L_N$. The event of having no particles in $\L_N\setminus\L_N'$ is the intersection of the two independent events that no loops start in $\L_N\setminus\L_N'$ (whose probability is $\geq \e^{-o(|\L_N|)}$, since the volume of $\L_N\setminus\L_N'$ is $o(|\L_N|)$) and the event that no shred terminates in $\L_N\setminus\L_N'$, which is absorbed in the expectation with respect to particle-boundary conditions in $\L_N'$. Hence, we will restrict to an expectation on configurations in $\L_N'$ instead of $\L_N$, which will be asymptotically lead to no changes in the limit as $N\to\infty$. 

Note that 
$$
\lim_{N\to\infty}\frac 1{|\L_N|}\big([\nu_{\L_N}^{\ssup{\rm par}}-\nu_{\L_N, {M}}^{\ssup{\rm par}}](\L_N\times \Ccal^{\ssup{\circlearrowleft}})\big)
=\sum_{k\in\N}\frac 1k \mu_{0,0}^{\ssup k}(\Ccal^{\ssup{\circlearrowleft}}_k\setminus \Ccal^{\ssup{\circlearrowleft}}_{k,M})\to 0\qquad \mbox{as }M\to\infty,
$$
where $\Ccal^{\ssup{\circlearrowleft}}_{k,M}$ denotes the set of loops of length $k$ whose legs have spread $<M$. Furthermore, the number of loops in the event in \eqref{hatZ} is $\leq O(|\L_N|)$;  hence the change of measure from ${\tt Q}^{\ssup{\L_N',{\rm par}}}$ to  ${\tt Q}_{M}^{\ssup{\L_N',{\rm par}}}$ yields, as a lower bound a factor of $c_{M}^{|\L_N|}$ with some $c_M\in(0,1)$ satisfying $c_M\to 1$ as $M\to\infty$.

Summarizing, this transition to the $M$-restricted configurations in $\L_N'$ instead of $\L_N$ leads to the bound
\begin{equation}\label{lowbound1}
\widehat Z_{N,R,\delta}^{\ssup{\rm bc}}(\L_N,\rho_1,\rho_2)
\geq \e^{o(|\L_N|)} c_{M}^{|\L_N|}
\LPP_{M}^{\ssup{\L_N',\rm par}}\Big[{\rm e}^{-\Phi_ {\L_N',\L_N'}}\1\{|\smfrac 1{|\L_N'|} {\mathfrak N}^{\ssup{\ell,\neg R}}_{\L_N'}-\rho_1|\leq\smfrac\delta 2\}\1\{|\smfrac 1{|\L_N'|} {\mathfrak N}^{\ssup{\ell,R}}_{\L_N'}-\rho_2|\leq\smfrac\delta2\}\Big].
\end{equation}

Our next step is to restrict to the event $\{\partial \Xi_{N,R}^{\ssup {\circ,\omega,\Scal}}=\psi\}$ for some (later particularly chosen) $\psi\in \Mcal_1(\Tcal_W)$, where $\Xi_{N,R}^{\ssup{\circ,\omega}}$ is defined in \eqref{eq:PNdefcirc}.

Here we will pick $\psi$ (possibly depending on $N$) such that this event has positive probability, and hence we write \eqref{lowbound1} as
\begin{equation}\label{lowbound2}
\begin{aligned}
\widehat Z_{N,R,\delta}&^{\ssup{\rm bc}}(\L_N,\rho_1,\rho_2)
\geq \e^{o(|\L_N|)} c_{M}^{|\L_N|}\LPP_{M}^{\ssup{\L_N',\rm par}}\big(\partial \Xi_{N,R}^{\ssup {\circ,\omega,\Scal}}=\psi \big)\\
&\times\LPP_{M}^{\ssup{\L_N',\rm par}}\Big[{\rm e}^{-\Phi_ {\L_N',\L_N'}}\1\{|\smfrac 1{|\L_N'|} {\mathfrak N}^{\ssup{\ell,\neg R}}_{\L_N'}-\rho_1|<\smfrac\delta 2\}\1\{|\smfrac 1{|\L_N'|} {\mathfrak N}^{\ssup{\ell,R}}_{\L_N'}-\rho_2|<\smfrac\delta2\}\,\Big|\,\partial \Xi_{N,R}^{\ssup {\circ,\omega,\Scal}}=\psi\Big].
\end{aligned}
\end{equation}

Our next task is to write the expectation in the second line of \eqref{lowbound2} as an expectation with respect to $\Xi_{N,R}^{\ssup{\circ,\omega_{\rm P}}}$, which is not able to express interaction between any two subboxes $z+W$ and $z'+W$  with distinct $z.z'\in 2R\Z^d$. For this, we need to split all the interaction into the sum of the interactions in all the subboxes $z+W_R$ with $z\in 2R\Z^d$, and we need to upper estimate the interaction between any two of such subboxes. This will be based on the following estimate of the interaction in terms of an integral of the square of particle numbers. Recall the notation from \eqref{Vdef} and from \eqref{FLL}--\eqref{FSS} and \eqref{subconfigurations} and that $F_{W,W}=\frac 12 F_{W,W}^{\ssup {\Lcal\Lcal}}+F_{W,W}^{\ssup {\Lcal\Scal}}+\frac 12 F_{W,W}^{\ssup {\Scal\Scal}}$. For two  (not necessarily disjoint) subboxes $W,\widetilde W$, we  write $F_{W,\widetilde W}((\omega_W,\varpi_W),(\omega_{\widetilde W},\varpi_{\widetilde W}))$ for all the interaction of loops and shreds between $W$ and $\widetilde W$ (i.e., the loops with all their particles in $W$ respectively in $\widetilde W$).

\begin{lemma}[Estimating interaction]\label{lem-Interactesti}
Assume that the interaction potential $v$ satisfies Assumption (V). Then, for any $M \in(0,\infty)$, with $\mathfrak r_{M }^{\ssup v}=M+\sup\supp(v)$,  for  any configuration $\omega\in \Lcal^{\ssup {M }} $, and for any $R\in(1,\infty)$ and any $z,z'\in 2R\Z^d$,
\begin{equation}\label{crossinteractionesti}
\begin{aligned}
F_{W_z,W_{z'}}\big((\omega_{W_z},\varpi_{W_z}),(\omega_{W_{z'}},\varpi_{W_{z'}})\big)
&\leq \beta \|v\|_\infty \Big(\int_{\Bcal_{\mathfrak r_{M }^{\ssup v}}(W_z\cap W_{z'})}\d x\,\widetilde{ \mathfrak N}_{\Bcal_{\mathfrak r_{M }^{\ssup v}}(x)}^{\ssup{ \ell}}(\omega_{W_z},\varpi_{W_z})^2\Big)^{1/2}\\
&\qquad \times\Big(\int_{\Bcal_{\mathfrak r_{M }^{\ssup v}}(W_z\cap W_{z'})}\d x\,\widetilde {\mathfrak N}_{\Bcal_{\mathfrak r_{M }^{\ssup v}}(x)}^{\ssup{ \ell}}(\omega_{W_{z'}},\varpi_{W_{z'}})^2\Big)^{1/2},
\end{aligned}
\end{equation}
where $\Bcal_r(A)$ denotes the $r$-ball around a compact set $A$.
\end{lemma}

\begin{proof}We write $\mathfrak r$ instead of $\mathfrak r_{M }^{\ssup v}$. The main observation is that any leg of $\omega$ in $W$ (i.e., any leg that starts from a particle in $W$) stays in the $M$-neighbourhood of $W$ and has therefore interaction only in the $\mathfrak r$-neighbourhood of $W$. Hence,  any $\omega\in\Lcal^{\ssup M}$ has the property that, for any two legs $g,g'$ of a loop or shred in $W_z$ and of a loop or shred in $W_{z'}$ (meaning that the initial site lies in $W_z$ respectively in $W_{z'}$), respectively, that have a non-vanishing interaction with each other, there is a site $x\in \Bcal_{\mathfrak r}(W_z\cap W_{z'})$ such that $|g(0)-x|\leq \mathfrak r$ and $|g'(0)-x|\leq \mathfrak r$. Denote by $\mathfrak L(\omega_W,\varpi_W)=\{f_{x,i}\colon x\in\zeta\cap W, i\in[\ell(f_x)]\}\cup\{g_i\colon i\in \Gamma_W(g), g\in\varpi_{W}\}$ the set of all the legs of loops or of shreds of $\omega_W=\sum_{x\in\zeta}\delta_{(x,f_x)}$ or of $\varpi_W$. (Recall that all particles of all the loops of $\omega_W$ are in $W$, according to \eqref{subconfigurations}.) Then, for any $z,z'\in 2R\Z^d$,
$$
\begin{aligned}
F_{W_z,W_{z'}}&\big((\omega_{W_z},\varpi_{W_z}),(\omega_{W_{z'}},\varpi_{W_{z'}})\big)\\
&=\sum_{g\in \mathfrak L(\omega_{W_z},\varpi_{W_z})}\sum_{g' \in \mathfrak L(\omega_{W_{z'}},\varpi_{W_{z'}})} V(g,g')\\
& \leq \beta \|v\|_\infty \sum_{g,g'} \int_{\Bcal_\mathfrak r(W_z\cap W_{z'})}\d x\, \1\{|g(0)-x|\leq \mathfrak r\}\1\{|g'(0)-x|\leq \mathfrak r\}\\
&\leq \beta \|v\|_\infty \int_{\Bcal_\mathfrak r(W_z\cap W_{z'})}\d x\,
\widetilde{ \mathfrak N}_{\Bcal_\mathfrak r(x)}^{\ssup{ \ell}}(\omega_{W_z},\varpi_{W_z})\widetilde{ \mathfrak N}_{\Bcal_\mathfrak r(x)}^{\ssup{ \ell}}(\omega_{W_{z'}},\varpi_{W_{z'}})\\
&\leq \beta \|v\|_\infty \Big(\int_{\Bcal_\mathfrak r(W_z\cap W_{z'})}\d x\,\widetilde{ \mathfrak N}_{\Bcal_\mathfrak r(x)}^{\ssup{ \ell}}(\omega_{W_z},\varpi_{W_z})^2\Big)^{1/2}
\Big(\int_{\Bcal_\mathfrak r(W_z\cap W_{z'})}\d x\,\widetilde{ \mathfrak N}_{\Bcal_\mathfrak r(x)}^{\ssup{ \ell}}(\omega_{W_{z'}},\varpi_{W_{z'}})^2\Big)^{1/2}.
\end{aligned}
$$
\end{proof}

In words, this estimates the amount of cross-interaction between two neighbouring  subboxes against the product of the integrals  over $x$ in a neighbourhood of the intersection of the subboxes over the square of particle numbers in a certain box around $x$, and the amount of interaction within one of such  subboxes in terms of the integral of the squares of the particle numbers in a neighbourhood of the subbox.

Let us proceed with the proof of Lemma~\ref{lem-LowerBound}, restarting from \eqref{lowbound2} and showing what Lemma~\ref{lem-Interactesti} gives us for the interaction.
We will, as usual, always write $\omega=\sum_{x\in\zeta}\delta_{(x,f_x)}$ for any $\omega\in\Lcal$. We write $\Lcal_W^{\ssup{M,\leq L}}=\{\omega\in\Lcal_W^{\ssup {M}}\colon \ell(f_x)\leq L\, \forall x\in\zeta\}$. We pick now an arbitrary $\mathfrak r\in(0,\infty)$. Without loss of generality, we assume that the boxes $z+W_\mathfrak r$ with $z\in W\cap 2\mathfrak r\Z^d$ cover $W$, i.e.,  $W=\bigcup_{z\in W\cap 2\mathfrak r\Z^d} z+W_\mathfrak r$. Recall from Remark~\ref{rem-counting} that $\widetilde {\mathfrak N}^{\ssup\ell}_V$ counts particles only within $V$ for any $V\subset W$; hence for $(\omega,\varpi)\in \Lcal_W\times\Scal_W$, we have that $\widetilde{\mathfrak N}_W^{\ssup\ell}(\omega,\varpi)=\sum_{z\in W\cap 2\mathfrak r\Z^d} \widetilde{\mathfrak N}^{\ssup\ell}_{z+W_\mathfrak r}(\omega,\varpi)$. 

 For $\omega\in A_{W;M,L,K}^{\ssup{\Lcal}}$ we easily see that $\widetilde{ \mathfrak N}_{\Bcal_ {\mathfrak r_M^{\ssup v}}(x)}^{\ssup{ \ell}}(\omega)\leq C_{M,L,K} $ for every $x\in \Bcal_\mathfrak r(W)$ with some $C_{M,L,K}$ that depends only on $M,L,$ and $K$. Analogously, for $ \varpi\in A_{W;M,\mathfrak r, T}^{\ssup{\Scal}}$ we easily see that $\widetilde{ \mathfrak N}_{\Bcal_{\mathfrak r_M^{\ssup v}}(x)}^{\ssup{ \ell}}(\varpi)\leq C_MT $ for every $x\in \Bcal_\mathfrak r(W)$ with some $C_M\in(0,\infty)$ that depends only on $M$. 

  On the event $A_{W;\Theta}$ defined in \eqref{Adef}, since $R$ is much larger than $M$, for each $z\in Z^{\ssup\circ}_{N,R}$, the number of $z' \not= z$ such that there is some interaction between $W_z$ and $W_{z'}$ is not larger than $3^d$. Furthermore, the volume of $\Bcal_{\mathfrak r}(\partial W_z)$ is not larger than $C M R^{d-1}$. Hence,  we can estimate, with the help of Lemma~\ref{lem-Interactesti},
\begin{equation}\label{Phiestidecomp}
\begin{aligned}
\Phi_{\L_N',\L_N'}(\omega)
&=\sum_{z,z'\in Z^{\ssup\circ}_{N,R}}F_{W_z,W_{z'}}\big((\omega_{W_z},\varpi_{W_z}),(\omega_{W_{z'}},\varpi_{W_{z'}})\big)\\
&\leq \sum_{z\in Z^{\ssup\circ}_{N,R}}\Big( F_{W_z,W_z}(\omega_{W_z},\varpi_{W_z})+C_{M,L,K,T} R^{d-1}\Big)
\leq \frac {|\L_N|}{|W|}\langle F_{W,W},\Xi_{N,R}^{\ssup{\circ,\omega}}\rangle+ \frac {C_{M,L,K,T}}R |\L_N |,
\end{aligned}
\end{equation}
with some $C_{M,L,K,T}\in(0,\infty)$ that does not depend on $R$ nor on $\omega$ nor on $N$. (We choose the event $A_{W;\Theta}$ much smaller than what is needed for \eqref{Phiestidecomp}; this will come in handy in our further developments in Section~\ref{sec-Nlowbound}.)

So far, we have argued that we may estimate, for any sufficiently large $N$,
\begin{equation}\label{particlenumberestilower}
\begin{aligned}
\e^{-\Phi_{\L_N',\L_N'}}&\1\{|\smfrac 1{|\L_N'|} {\mathfrak N}^{\ssup{\ell,\neg R}}_{\L_N'}-\rho_1|\leq\smfrac\delta2\}\1\{|\smfrac 1{|\L_N'|} {\mathfrak N}^{\ssup{\ell,R}}_{\L_N'}-\rho_2|\leq\smfrac \delta2\}\\
&\geq \e^{-|\L_N|\frac 1{|W|}\langle F_{W,W},\Xi^{\ssup\circ}_{N,R}\rangle} \e^{-\frac {C_{M,L,K,T}}R |\L_N |}\prod_{z\in Z^{\ssup\circ }_{N,R}}\1_{A_{W_z;\Theta}}(\omega_{W_z},\varpi_{W_z})\\
&\quad\times\1\Big\{\frac 1{N_R}\sum_{z\in Z^{\ssup\circ }_{N,R}} \frac 1{|W|}{\mathfrak N}^{\ssup{\ell,\Lcal}}_{W_z}\in \Bcal_{\delta/2}(\rho_1)\Big\}
\1\Big\{\frac 1{N_R}\sum_{z\in Z_{N,R}^{\ssup{\circ}}} \frac 1{|W|}{\mathfrak N}^{\ssup{\ell,\Scal}}_{W_z}\in\Bcal_{\delta/2}(\rho_2)\Big\}.
\end{aligned}
\end{equation}

Under $\LPP_{M}^{\ssup{\L_N',\rm par}}[\,\cdot\, |\partial \Xi_{N,R}^{\ssup {\circ,\omega,\Scal}}=\psi]$, the indicators $\1_{A_{W_z;\Theta}}(\omega_{W_z},\varpi_{W_z})$ are independent over $z\in Z^{\ssup\circ }_{N,R}$ and their product has the probability 
$$
\big[\Pi_W^{\ssup\Lcal}({\tt Q})(A^{\ssup\Lcal}_{W;M,L,K})\big]^{|Z_{N,R}^{\circ}|}\e^{|Z_{N,R}^{\circ}|\langle \log p_{W;M,\mathfrak r,T},\psi\rangle },
$$
where $p_{W;M,\mathfrak r,T}$ is defined in \eqref{pMTdef}.
 Using the notation for the conditional measure $\LPP^{\ssup{\L_N',{\rm par}}}_{W;\Theta}[\,\cdot\, |\partial \Xi_{N,R}^{\ssup {\circ,\omega,\Scal}}=\psi]$ given just before the lemma, we obtain the assertion of the lemma, by substituting \eqref{particlenumberestilower} into \eqref{lowbound2}.
\end{proof}

This lower bound is analogous to the upper bound in \eqref{hatZ2} with $Z_{N,R}$ replaced by $Z_{N,R}^\circ$ and the corresponding amendments and with $\overline\Bcal_{2\delta}(\rho_i)$ replaced by $\Bcal_{\delta/3}(\rho_i)$ and with additional auxiliary terms whose exponential rates turn out to be small when we later make $R, M,T,L\to\infty$. Note that we have now particle boundary condition in the (slightly smaller) box $\L_N'$. Therefore, all the $\theta_z(\omega_{W_z})$ with $z\in Z_{N,R}^\circ$ have now the same distribution; in particular the $\theta_z(\omega_{W_z})$ are i.i.d. 

In Lemma~\ref{lem-UpperLDPbound} below, we are going to take $\psi\in\Mcal_1(\Tcal_W)$ in a particular way and derive a suitable large-deviation lower bound for the term in the second line in Section~\ref{sec-boundaryshreds}. In Corollary~\ref{cor-LDP} in Section~\ref{sec-LDPabstract} we explain abstractly how to find the large-$N$ exponential rate of the expectation in the last two lines, and in Section~\ref{sec-Nlowbound} we proceed with making $N\to\infty$ in \eqref{hatZ4}. In Section~\ref{sec-finishlowerbound} we put everything together and finish the proof of the lower bound in Theorem~\ref{thm-freeenergy}.

\section{The empirical measure of boundary-shred configurations}\label{sec-distshreds}

\noindent In this section we provide crucial information about the distribution of the empirical measure of the boundary-shred configuration, $\partial \Xi_{N,R}^{\ssup{\omega_{\rm P},\Scal}}$, defined in \eqref{partialXiSdef}, for the reference marked PPP $\omega_{\rm P}$ under ${\tt Q}^{\ssup{\L_N,{\rm bc}}}$. The main result of this section will be a lower bound for the large-$N$ deviations in Lemma~\ref{lem-UpperLDPbound}. This will be crucial for handling the term in the second line of \eqref{hatZ4}.  Finally, this term should and will turn out to be negligible in the limit $R\to\infty$ since it is ``only'' a boundary term after all, but the way to proving this will be long. A step towards this is done in Lemma~\ref{lem-lowerboundlogpint}, where the lower bound of Lemma~\ref{lem-UpperLDPbound} is further lower bounded for particular  configurations. More work in Section~\ref{sec-ergappr} will show that a general configuration can be approximated with such particular ones. In contrast, for proving the upper bound, we will just drop the influence coming from $\partial \Xi_{N,R}^{\ssup{\omega_{\rm P},\Scal}}$, i.e., we estimate probabilities involving this just by one, which is of course much easier.

In Lemma~\ref{lem-UpperLDPbound} and its proof we make a connection between long loops and interlacements; we need to construct large-loop configurations with a given empirical measure $\psi$ of boundary-shred configurations. By far not every $\psi$ admits the existence of loop-configurations in arbitrarily large boxes $\L_N$ that is consistent with $\psi$, the criterion for this property is rather subtle and has a global character. In Lemma~\ref{lem-UpperLDPbound} we first circumvent this problem by coining the term ``admissible'' for $\psi$'s having this extension property and prove the lemma under this assumption. But in Lemma~\ref{lem-Psi_admissible} we show that the requirement of ergodicity of shift-invariant random interlacement configurations implies admissibility; this is the reason that in  Section~\ref{sec-ergappr} we need to approximate arbitrary loop-interlacement configurations with ergodic ones.

As a preparation, in Section~\ref{sec-distshredoneloop} we look at the $W$-shreds emanating from a single loop and describe, given the boundary-shred configuration, the distribution of the resulting shred configuration via the strong Markov property of the Brownian motion. This is used in Section~\ref{sec-boundaryshreds} in Lemma~\ref{lem-UpperLDPbound} for deriving the crucial LDP lower bound for the probability of the event $\{\partial \Xi_{N,R}^{\ssup{\omega_{\rm P},\Scal}}\approx \psi\}$ for any admissible $\psi$. In Section~\ref{sec-lowboundlogpint} we derive further estimates for this lower bound for particular shred distributions. Furthermore, as an application and also for independent interest and further use later, we show that the expected number of shreds in a large box is negligible with respect to the volume of the box for any shift-invariant interlacement configuration measure with bounded entropy and bounded expected particle density; a property that is very suggesting, but surprisingly difficult to prove.

In the entire section, assume that the situation and the notation described in Section~\ref{sec-Purpose} is given. That is, the box $\L_N$ with volume $\sim N/\rho$ is decomposed into  $N_R\sim |\L_N|/|W_R|$ regular boxes $W_z=z+W$ of diameter $2R$, where $R$ may depend on $N$ and converges to a given $R\in\N$.

\subsection{The shreds of one loop}\label{sec-distshredoneloop}

In this section, we are examining the joint distribution of all the  $W$-shreds of a given Brownian loop in a large box $\L$ containing the box $W$. Using the strong Markov property, we will collect all the pieces between entries and exits to and from $W$ and drop the remaining pieces. That is, we will consider one $W$-shred $g\in \Ccal_{k}$ as in \eqref{SWdef}, ignoring  the last particle before $g(0)$, which is in $W^{\rm c}$. Recall that $\P_x^{\ssup{\L,{\rm bc}}}$ denotes the distribution of a Brownian motion $B=(B_t)_{t\in[0,\infty)}$ in $\L$ with boundary condition \lq bc\rq\, starting from $x$.  We assume that \lq{\rm bc}\rq\ is one of the three boundary conditions \lq per\rq, \lq 0\rq\ or \lq par\rq.

Define
\begin{equation}\label{pdef}
p_x^{\ssup{W,\L,{\rm bc}}}(l,y)=\P_x^{\ssup{\L,{\rm bc}}}(\tau=l,B_{\tau \beta}\in\d y)/\d y,\qquad (x,l,y)\in \L\times \N\times \L,
\end{equation}
where 
\begin{equation}\label{taudef}
\tau=\tau_W=\inf\{l\in\N\colon B_{l \beta}\notin W^{\ssup {B_0}}\}
\end{equation}
is the first of the times in $\beta\N$ at which the motion changes the box, where we write  $W^{\ssup x}=W_{z_x}$ with $x\in W_{z_x}$ for that subbox that contains $x$. Then $p_x$ is a sub-probability density (with respect to the counting measure on $\N$ times the Lebesgue measure) on $\N\times (W^{\ssup x})^{\rm c}$ (for periodic boundary condition even a probability density). Furthermore, for a loop $B\in\Ccalcirc$, we define $\tau_1=\tau$ and $\tau_{j+1}=\inf\{l>\tau_j\colon l\in\N, B_{l\beta}\notin W^{\ssup{B_{\tau_j}}}\}$ (the $(j+1)$-st time that the motion $B$ changes the box at some time in $\beta\Z$) for $j\in\N$. Since $B$ is a cycle, there is a unique smallest $\mathfrak m_R=\mathfrak m_R(B)\in\N$ such that $B_{\tau_{\mathfrak m_R+1}\beta}=B_{\tau_1 \beta}$. The number $\mathfrak m_R(B)$ is the number of times that $B$ changes the subbox at times in $\beta\N$.

In order to describe the pieces of the loop between the changes between the subboxes (i.e., the shreds), we introduce  the following probability measure on $\Ccal_l$: by
\begin{equation}\label{qdef}
q^{\ssup{l,W,\L,{\rm bc}}}_{x,y}(\d f)
=\frac{\P_x^{\ssup{l,\L,{\rm bc}}}(\tau=l,B\in \d f, B_{\tau \beta}\in\d y)}{p_x^{\ssup {W,\L,{\rm bc}}}(l,y)\, \d y}
=\P_x^{\ssup{l,\L,{\rm bc}}}\big((B_s)_{s\in [0, l\beta]}\in \d f\mid \tau=l, B_{l\beta}=y\big) 
\end{equation}
we denote the  distribution of a Brownian $W$-shred with $l$ legs from $x\in W$ to $y\in W^{\rm c}$. 

We also need the restricted versions of $p_x$ and $q_{x,y}$ for shreds whose legs have spreads $< M$ for some $M\in(0, \infty)$. For this, we denote 
\begin{equation}\label{Cboundedspread}
\Ccal_1^{\ssup{M}}=\{f\in\Ccal_1\colon \|f\|_{\rm sp}< M\},\qquad\mbox{where }\|f\|_{\rm sp}=\max_{t\in[0,\beta]}|f(t)-f(0)|.
\end{equation}
Denote 
\begin{equation}\label{pMdef}
p_x^{\ssup {W,M}}(l,y)=\P_x\big(\tau=l,\forall j\in[l]\colon B_j\in \Ccal_1^{\ssup{M}}, B_{\tau \beta}\in\d y\big)/\d y,\qquad (x,l,y)\in W\times \N\times W^{\rm c},
\end{equation}
and
\begin{equation}\label{qMdef}
\begin{aligned}
q^{\ssup{l,W,M}}_{x,y}(\d f)
&=\frac{\P_x(\tau=l,\forall j\in[l]\colon B_j\in \Ccal_1^{\ssup{M}},B\in \d f, B(\tau \beta)\in\d y)}{p_x^{\ssup {W,M}}(l,y)\,\d y}\\
&=\P_x\big(B\in \d f\mid \tau=l, B(l\beta)=y, \forall j\in[l]\colon B_j\in \Ccal_1^{\ssup{M}}\big).
\end{aligned}
\end{equation}
Note that $B$ does not leave the $M$-neighbourhood of the subbox in which it started. Hence, if this box is in the interior of $\L$, then it satisfies particle-boundary condition in $\L$.

We are going to identify the distribution of the lengths of the time lags between subsequent exit times from subboxes and the location of the Brownian cycle at these times. 

For $k\in\N$ we consider the (non-normalised) measure 
\begin{equation}\label{Phatkdef}
\widehat\P^{\ssup{k,\L,{\rm bc}}}(\cdot)=\int_\L\d x\, \mu_{x,x}^{\ssup{k,\L,{\rm bc}}}(\{\mathfrak m_R\geq 1\}\cap\cdot).
\end{equation}
Recall the decomposition of $\L_N$ into the subboxes $z+W$ with $z\in Z_{N,R}$ and the smaller box $\L_N'$, which is the union of the $z+W$ over $z\in Z_{N,R}^\circ$, the (discrete) interior of $Z_{N,R}$, as introduced before Lemma ~\ref{lem-LowerBound}.

\begin{lemma}[Distribution of boundary shred configuration of one loop]\label{lem-distboundshred} 
Fix $R\in(0,\infty)$ and $W=W_R=[-R,R]^d$ and $M\in(0,R)$.  Assume that $\L$ is a large centred box.

\begin{enumerate}
\item Fix $m\in\N$ and $l_1,l_2,\dots,l_m\in\N$ such that $\sum_{j=1}^m l_j=k$. Fix $w_1,\dots,w_m\in W$ and $z_1,\dots,z_m\in Z_{N,R}$ such that $z_j\not=z_{j-1}$ for any $j$, where $z_0=z_m$. Then
\begin{equation}\label{intrance-exit-distribution}
\begin{aligned}
\widehat \P^{\ssup{k,\L,{\rm bc}}}&\Big(\mathfrak  m(B)=m,\forall j=1,\dots,m \colon \tau_{j}-\tau_{j-1}=l_j, B_{l_j \beta}\in \d ( z_j+w_j)\Big)\\
&=\bigotimes_{j=1}^m p_{z_{j-1}+w_{j-1}}^{\ssup{W,\L,{\rm bc}}}(l_i,z_j+w_j)\,\d w_j, \qquad z_0=z_m, w_0=w_m.
\end{aligned}
\end{equation}
Furthermore, under  conditioning on the event 
\begin{equation}\label{event}
A_{m}\big((l_j,w_j)_{j\in[m]}\big)=\{\mathfrak m_R(B)=m,\forall j=1,\dots,m \colon \tau_{j+1}-\tau_j=l_j, B_{l_j \beta}=  z_j+w_j\},
\end{equation}
the shreds $(B_{s +\tau_j\beta})_{s\in [0,l_j\beta]}$ are independent over $j\in\{1,\dots,m \}$ and have distribution $q^\ssup{l_j,W,\L,{\rm bc}}_{ z_{j-1}+w_{j-1}, z_j+w_j}$.

\item For ${\rm  bc}={\rm par}$ and $\widehat \P^{\ssup{k,\L,{\rm par}}}$ restricted to all the legs of $B$ lying in $\Ccal_1^{\ssup{M}}$, we can replace $p^{\ssup{W,\L,{\rm par}}}$ by $p^{\ssup{W,M}}$ and $q^\ssup{l_j,W,\L,{\rm par}}$ by $q^{\ssup{l_j,W,M}}$ in these two statements.

\item For any $z\in Z_{N,R}^\circ$ such that all legs start start in $W_z$ have spread $< M$ and for any $j$ satisfying $z_{j-1}=z$, we may replace $p_{ z_{j-1}+w_{j-1}}^{\ssup{W,\L,{\rm per}}}$ by $p_{ z_{j-1}+w_{j-1}}^{\ssup{W,M}}$ and $q^\ssup{l_j,W,\L,{\rm per}}_{ z_{j-1}+w_{j-1}, z_j+w_j}$ by $q^\ssup{l_j,W,M}_{ z_{j-1}+w_{j-1}, z_j+w_j}$ in these statements.
\end{enumerate}
\end{lemma}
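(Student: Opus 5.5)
The plan is to disintegrate the measure $\widehat\P^{\ssup{k,\L,{\rm bc}}}$ along the sequence of box-changing times. We use the Markov property of the Gaussian chain $(B_{l\beta})_{l}$ together with the bridge structure of the interpolating pieces.

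First we fix a loop $B$ with $\mathfrak m_R(B)\geq 1$. Since loops are identified modulo time shifts in $\beta\Z$, we re-root the loop at the first box change. The integral $\int_\L\d x\,\mu^{\ssup{k,\L,{\rm bc}}}_{x,x}$ over the starting point is invariant under such cyclic re-rooting: for each $i$, the map $x\mapsto B_{i\beta}$ is measure-preserving for the Brownian loop measure, a standard Chapman--Kolmogorov identity. Hence we may regard the starting point as $B_{\tau_1\beta}=z_0+w_0$ with $z_0=z_m$, integrated against Lebesgue measure. This explains why the product in \eqref{intrance-exit-distribution} is cyclic and contains exactly $m$ factors.

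Next we write the loop density as a product of transition kernels, $\prod_{i=1}^k g_\beta^{\ssup{\L,{\rm bc}}}(B_{(i-1)\beta},B_{i\beta})$, times the bridge laws of the legs given their endpoints. We group the factors into the $m$ blocks between consecutive times $\tau_{j-1},\tau_j$. On a block, the constraint that the intermediate particles stay in the box $z_{j-1}+W$ while the endpoint lies outside (in $z_j+W$) is exactly the event $\{\tau=l_j\}$. Integrating out the intermediate particles with this constraint therefore gives $p^{\ssup{W,\L,{\rm bc}}}_{z_{j-1}+w_{j-1}}(l_j,z_j+w_j)$, by the Markov property at the integer times (equivalently, the strong Markov property at the stopping time $\tau$ for the discrete skeleton). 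The event $\{\mathfrak m_R=m\}$ together with the prescribed lengths and landing boxes is a disjoint-union constraint. Since $\sum_j l_j=k$ forces the loop to close after exactly $m$ changes, the density factorises as claimed. This proves \eqref{intrance-exit-distribution}.

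For the conditional statement we observe that, after fixing the endpoints of all blocks, the remaining density is the product over $j$ of the normalised block densities. These are exactly $q^{\ssup{l_j,W,\L,{\rm bc}}}$ as defined in \eqref{qdef}, which gives conditional independence.

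For (2), with particle boundary condition the transition kernel is the free Gaussian kernel restricted to particles in $\L$. Inserting the indicator that every leg lies in $\Ccal_1^{\ssup M}$ factorises over legs and hence over blocks. Because $M<R$, a leg started in a box in $\L$ with spread less than $M$ keeps its endpoint inside the $M$-neighbourhood, so the $\L$-constraint becomes automatic. The block integrals then become $p^{\ssup{W,M}}$ and $q^{\ssup{l,W,M}}$.

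For (3), the same reasoning applies blockwise. For a block starting in $W_z$ with $z\in Z_{N,R}^\circ$ and all legs of spread less than $M$, the path never comes within distance $R-M>0$ of $\partial\L$. There the periodic kernel coincides with the free kernel up to the wrap-around images, which are excluded by the spread restriction.

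The main obstacle is the rigorous justification of the cyclic re-rooting step. We need to check that the shift-invariance of $\d x\otimes\mu_{x,x}$ under the random shift to $\tau_1$ does not introduce the multiplicity factor that typically appears when a loop is rooted at one of several marked times. I would handle this by summing over the possible positions of $\tau_1$ within the loop and using that the length labels $l_j$ fix the rooting uniquely.
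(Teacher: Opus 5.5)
The gap is the re-rooting step in your first paragraph. The rest of the proposal — the block factorisation via the Markov property, the identification of the normalised blocks with $q^{\ssup{l_j,W,\L,{\rm bc}}}$, and your treatment of (2) and (3) — follows the paper's route.

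Invariance of $\int_\L\d x\,\mu^{\ssup{k,\L,{\rm bc}}}_{x,x}$ under shifting the root by a \emph{fixed} number $i$ of steps does not transfer to a shift by the path-dependent time $\tau_1$. The map $(x,B)\mapsto(B_{\tau_1\beta},\theta_{\tau_1\beta}B)$ is many-to-one onto loops rooted at a box-change point. The old root can be any of the $l$ particles of the block that ends at the new root, and by deterministic shift invariance each of these preimages carries the same mass. So the image of $\widehat\P^{\ssup{k,\L,{\rm bc}}}$ is its restriction to loops rooted at a box change, multiplied by that block length ($l_m$ in the labelling you use after re-rooting). The claimed invariance is therefore false.

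This is exactly the multiplicity you flag, and your remedy does not remove it. After re-rooting, the labels $(l_j)$ carry no information about where inside the preceding block the original root sat, so they do not fix the rooting.

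The fix is to avoid random re-rooting. There are two ways.

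\emph{Literal reading.} Take the event in \eqref{intrance-exit-distribution} with $\tau_0=0$. Then the root $B_0=z_m+w_m$ is already a box-change point and $\d x_0=\d w_m$, so your block factorisation gives the formula directly.

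\emph{The paper's route.} Keep an arbitrary root $x_0$ and fix its offset $\tau_1=\widetilde l_1\in\{1,\dots,l_1\}$. By the Markov property the density is
$p^{\ssup{W,\L,{\rm bc}}}_{x_0}(\widetilde l_1,z_1+w_1)\prod_{j=2}^m p^{\ssup{W,\L,{\rm bc}}}_{z_{j-1}+w_{j-1}}(l_j,z_j+w_j)$ times $\P^{\ssup{\L,{\rm bc}}}_{z_m+w_m}(\tau>l_1-\widetilde l_1,\,B_{(l_1-\widetilde l_1)\beta}\in\d x_0)$. Now integrate out $x_0$. The tail piece after $\tau_m$ and the head piece before $\tau_1$ both live in $W_{z_m}$, so Chapman--Kolmogorov merges them into the single factor $p^{\ssup{W,\L,{\rm bc}}}_{z_m+w_m}(l_1,z_1+w_1)$. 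This gives $\prod_j p\,\d w_j$ for every fixed offset. For fixed $\widetilde l_1$ the shift is deterministic, which is the correct form of your invariance argument. Summing over all offsets and over which block contains the root produces a total factor $\sum_j l_j=k$, which the $\frac1k$ in \eqref{nudef} absorbs.

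A minor correction to (2): the $\L$-constraint is automatic because on the event every particle lies in $\bigcup_{z\in Z_{N,R}}W_z=\L$. It is not a consequence of the spread bound, since a leg of spread $<M$ started in a box adjacent to $\partial\L$ can end outside $\L$.
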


\begin{proof} (1) For any $m\in\N$ and $\widetilde l_1,l_2,\dots,l_{m}\in \N$ such that $\widetilde l_1+\sum_{j=2}^{m-1} l_j\leq k$, and $z_1,\dots,z_m\in Z_{N,R}$  satisfying $z_j\not=z_{j-1}$ for all $j$ and  $w_1,\dots,w_m\in W$, we have, with $\tau_0=0$,
\begin{equation}
\begin{aligned}
\widehat \P^{\ssup{k,\L,{\rm bc}}}\big(&\tau_1=\widetilde l_1,B_{\widetilde l_1 \beta}\in \d( z_1+w_1), \forall j=1,\dots,m-1\colon \tau_j-\tau_{j-1}=l_j, B_{l_j\beta}\in\d\big(  z_j+w_j\big)\big)\\
&= \int_\L\,\d x_0 \, p^{\ssup{W,\L,{\rm bc}}}_{x_0}\big(\widetilde l_1,\d (  z_1+w_1)\big)
\Big[\prod_{j=2}^{m} p^{\ssup{W,\L,{\rm bc}}}_{ z_{j-1}+w_{j-1}}\big(l_j,\d (  z_j+w_j)\big)\Big]
\\
&\qquad\P^{\ssup{\L,{\rm bc}}}_{ z_m+w_m}\Big(\tau>k-\widetilde l_1-\sum_{j=2}^m l_j,B_{(k-\widetilde l_1-\sum_{j=2}^m l_j)\beta}\in\d x_0\Big)\\
&= \prod_{j=1}^{m} p^{\ssup{W,\L,{\rm bc}}}_{ z_{j-1}+w_{j-1}}\big(l_j,z_j+w_j\big)\,\d w_j,\qquad l_1=k-\sum_{j=2}^{m} l_j, z_0=z_m, w_0=w_m,
\end{aligned}
\end{equation}
where we used the Markov property in the second step to concatenate the last piece of the Brownian cycle from the last change of a subbox at time $\tau_m \beta$ until the end of the time interval, $k\beta$.

The additional assertion stems also directly from the strong Markov property, applied successively at the times $\tau_j \beta$. 

(2) follows from the definition of $p^{ \ssup{W,M}}$, and (3) from the fact that, by $M<R$, the legs in such subboxes to not reach the boundary of $\L$ but their particles stay in the subbox, and hence periodic boundary condition in $\L$ can be replaced by particle boundary condition in $W$.
\end{proof}

\subsection{Large-deviation lower bound for the boundary-shred configurations}\label{sec-boundaryshreds} 

In this section we show how to lower bound the term in the second line on the right-hand side of \eqref{hatZ4}. That is, we now use Lemma~\ref{lem-distboundshred} to derive a large-deviations lower bound for  the distribution of the boundary configuration $\partial \Xi_{N,R}^{\ssup{\omega_{\rm P},\Scal}}$ defined in \eqref{partialXiSdef} of all the $W$-shreds, i.e., their  lengths and initial and starting sites, for shreds coming from the $R$-crossing part of marked Poisson process $\omega_{\rm P}$.

The distribution of $ \partial \Xi_{N,R}^{\ssup\Scal}$ is not easy to describe.  As it concerns large-deviation upper bounds, we will in Section~\ref{sec-finishuppbound} be satisfied with estimating the probability of  $ \partial \Xi_{N,R}^{\ssup\Scal}$ being in a compact set simply against one. However, for the lower bound we have to work.  Because of our preparations in Lemma~\ref{lem-LowerBound}, we need the lower bound only  for particle boundary conditions. We will be consent with a large-deviation lower bound for the probability of the event $\{ \partial \Xi_{N,R}^{\ssup\Scal}=\psi\}$ for some given $\psi\in\Mcal_1(\Tcal_W)$ (possibly $N$-dependent) with certain properties.  For a general $\psi$, it is not {\it a priori} clear whether or not this event has positive probability at all under any loop configuration measure in a large box $\L_N$, i.e., whether or not it is possible to find a loop configuration in $\L_N$ such that the empirical measure of its boundary-shred triples is close to the given $\psi$. For the time being we are therefore working under the following assumption on $\psi$.

\begin{defn}[Admissible boundary-shred configurations]\label{def-admisssible} Fix $R>0$ and $W=W_R=[-R,R]^d$. We call $\psi\in\Mcal_1(\Tcal_W)$ {\em admissible} (more precisely, $W$-admissible) if, for any open neighbourhood $\Bcal$ of $\psi$, for any sufficiently large $N$, there is a loop-configuration $\omega\in\Lcal_{\L_N}$ such that $\partial \Xi_{N,R}^{\ssup{\omega,\Scal}}\in \Bcal$.
\end{defn}

Actually, we will later need to apply the lower bound only to $\psi$'s of the form $\partial\Pi_W^{\ssup\Scal}(P)$ with $P\in \Mcal_1^{\ssup{\rm s}}(\Lcal\times \Scal)$.  An important ingredient will then be Lemma~\ref{lem-Psi_admissible}, which says that such a $\psi$ is $W$-admissible if $P$ is $W$-ergodic.

Recall the particle-number operator $\mathfrak N_{\partial W}^{\ssup{\ell,\Scal}}$ from  \eqref{particlenumber} and observe that the expected shred particle number of a configuration $\xi\in \Mcal_1(\Lcal_W\times \Scal_W)$  may be written, if $\psi=\partial\Pi_W^{\ssup\Scal}(\xi)$,
 $$
 \langle  {\mathfrak N}^{\ssup{\ell,\Scal}}_{W},\xi\rangle=\langle  {\mathfrak N}^{\ssup{\ell,\Scal}}_{\partial W},\psi\rangle=\int\psi(\d\mu)\,\sum_{i\in I}l_i,\qquad \mu=\sum_{i\in I}\delta_{(x_i,l_i,y_i)}.
 $$
By $\Bcal_\delta(x)$ we always denote the open $\delta$-ball around $x$ for any $x$ in a metric space. If $x=\psi\in\Mcal_1(\Tcal_W)$, then we use the Prokhorov metric defined in \eqref{Prokhorovmetric}, which induces the weak topology.

The following lemma will later be applied for $\L_N'$ and $Z_{N,R}^\circ$ instead of $\L_N$ and $Z_{N,R}$. Recall that ${\tt Q}^{\ssup{\L_N,{\rm par}}}_M$ is the PPP with intensity measure  given in \eqref{nuMdef}, i.e., with all leg spreads $<M$. Analogously, $\mu_{x,x}^{\ssup{k,M}}$ is the canonical Brownian loop measure starting at $x$ with $k$ particles and all leg spreads $<M$. We recall from Section~\ref{sec-distshredoneloop} that $\mathfrak m_R(B)$ is the number of times that the loop $B$ changes the subbox of radius $R$. By $C_M$ we denote some constant $\in(0,\infty)$ that depends only on $M$ ( and possibly on $d$ and $\beta$).

\begin{lemma}[Lower LDP bound for boundary shred configurations]\label{lem-UpperLDPbound}
Assume the situation and the notation described in Section~\ref{sec-Purpose}. That is, $\L_N$ is a centred box with volume $|\L_N|\sim N/\rho$ for some $\rho\in(0,\infty)$, and is decomposed into the subboxes $z+W_R=z+[-R,R]^d$ for some $R\in(0,\infty)$ with $z\in 2R\Z^d\cap\L_N$. We consider particle boundary condition and indicate it by \lq par\rq. Fix a large parameter $M\in(0,R/2)$ and fix  $\xi\in \Mcal_1(\Lcal_W^{\ssup {M}}\times\Scal_W^{\ssup {M}})$ such that $ \langle  {\mathfrak N}^{\ssup{\ell,\Scal}}_{\partial W},\psi\rangle<\infty$, where $\psi=\partial\Pi_W^{\ssup \Scal}(\xi)\in  \Mcal_1(\Tcal_W)$, and assume that $\psi$ is $W$-admissible.  Then, for any sufficiently large $R$, there is $\psi_N\in \Mcal_1(\Tcal_W)$ that converges to $\psi$ as $N\to\infty$ such that
\begin{equation}\label{lowboundboundaryshreds}
\begin{aligned}
\liminf_{N\to\infty}\frac 1{|\L_N|}&\log{\tt Q}^{\ssup{\L_N,{\rm par}}}_{M}\Big(\partial\Xi^{\ssup\Scal}_{N,R} =\psi_N\Big)\geq -\frac{C_M}{R^{d/2}}+\frac 1{|W|}\langle\psi,\mathfrak p_{W,M}\rangle,
\end{aligned}
\end{equation} 
where we recall \eqref{pMdef} and introduce 
\begin{equation}\label{pmathfrakdef}
\mathfrak p_{W,M}(\mu)=\sum_{i\in I}\log p_{x_i}^{\ssup{W,M}}(l_i,y_i),\qquad \mu=\sum_{i\in I}\delta_{(x_i,l_i,y_i)}.
\end{equation}
\end{lemma}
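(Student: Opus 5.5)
Our plan is to produce the event $\{\partial\Xi^{\ssup\Scal}_{N,R}=\psi_N\}$ by an explicit, prescribed family of $R$-crossing loops under ${\tt Q}^{\ssup{\L_N,{\rm par}}}_M$, and to bound its probability from below by the Poisson probability of exactly this family times the Markov-chain densities of Lemma~\ref{lem-distboundshred}(2). First we use $W$-admissibility of $\psi$. For every neighbourhood of $\psi$ and all large $N$ it gives a loop configuration $\omega^{\ssup N}$ in $\L_N$ whose boundary-shred empirical measure $\psi_N:=\partial\Xi^{\ssup{\omega^{\ssup N},\Scal}}_{N,R}$ is close to $\psi$. By a diagonal argument we may take $\psi_N\to\psi$. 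We also arrange that $\langle \mathfrak N^{\ssup{\ell,\Scal}}_{\partial W},\psi_N\rangle$ stays bounded and that $\langle\psi_N,\mathfrak p_{W,M}\rangle\to\langle\psi,\mathfrak p_{W,M}\rangle$. To get this we truncate $\psi$ to boundary triples with bounded lengths and with entry/exit sites at distance $<M$ from the exit face, so that $\mathfrak p_{W,M}$ is bounded and continuous there, and we send the truncation to infinity at the end. Every $R$-crossing loop of $\omega^{\ssup N}$ is then a cyclic sequence of triples $(z_{j-1}+w_{j-1},l_j,z_j+w_j)$. These sequences are exactly the data in \eqref{intrance-exit-distribution}, and their union over all loops determines $\partial\Xi^{\ssup\Scal}_{N,R}$.

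Second, we bound the probability. The event contains the following one: the PPP has exactly one loop in a small neighbourhood of each prescribed loop's skeleton, no further $R$-crossing loops occur, and the non-crossing part is left unrestricted. The latter is independent of the crossing part. By Lemma~\ref{lem-distboundshred}(1)--(2), the intensity of a loop with $\mathfrak m_R=m$ and prescribed skeleton is $\frac1k\prod_j p^{\ssup{W,M}}_{z_{j-1}+w_{j-1}}(l_j,z_j+w_j)\,\d w_j$. The factor $\frac 1k$ is compensated by the $m$ choices of starting shred, up to a factor $m/k\ge 1/\max l$, which is subexponential once the lengths are truncated. The probability of having no other $R$-crossing loops is $\exp(-\nu(\mathfrak m_R\ge1))$. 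Here $\frac1{|\L_N|}\nu_M(\mathfrak m_R\geq 1)\le C_M R^{-d/2}$ uniformly in $N$: by the estimate used in the proof of Lemma~\ref{lem-condensatenotion}, only loops starting within distance $\sim\sqrt k$ of $\partial W_z$ cross, and summing $\frac1k k^{-d/2}\cdot\sqrt k/R$ together with the long-loop tail gives the bound. Taking the product over all prescribed shreds and dividing by $|\L_N|=N_R|W|$ turns the sum of $\log p$ into $\frac1{|W|}\langle\psi_N,\mathfrak p_{W,M}\rangle$.

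Third, we need exact equality $\partial\Xi=\psi_N$, not just closeness. Since $\psi_N$ is atomic, the event has probability zero in the Lebesgue sense. We therefore interpret the event, as it is used in Lemma~\ref{lem-LowerBound}, through densities, or equivalently via small boxes of width $\eta$ around each entry/exit site. The $\d w_j$-volume factors then give $\log\eta$ per shred, which is $O(\log\eta)\cdot\langle\mathfrak N,\psi\rangle|\L_N|/|W|$. This factor must either be absorbed into the conditional-density normalisation or sent to zero at the end. We expect this bookkeeping, together with proving that admissibility can be realised with configurations whose shreds have spread $<M$ (so that Lemma~\ref{lem-distboundshred}(2) applies and $p^{\ssup{W,M}}>0$), to be the main obstacle. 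As a first attempt we would replace the given $\omega^{\ssup N}$ by Brownian-like skeletons, moving entry/exit sites slightly into the $M$-strip.
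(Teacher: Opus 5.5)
Your outline is the paper's: restrict to the $R$-crossing part of the Poisson process, pay $\exp(-\nu_M(\mathfrak m_R\geq 1))$ for having no other crossing loops, express the intensity of a prescribed crossing loop through the shred densities $p^{\ssup{W,M}}$ of Lemma~\ref{lem-distboundshred}, and keep a single configuration supplied by admissibility. However, you stop exactly where the right-hand side of \eqref{lowboundboundaryshreds} must be produced, and the device you propose there fails.

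Your product of densities equals $\exp(N_R\langle\psi_N,\mathfrak p_{W,M}\rangle)$, evaluated at the uncontrolled skeleton points of the admissible configuration. Since $\mathfrak p_{W,M}$ is unbounded below, weak convergence $\psi_N\to\psi$ gives no lower bound on $\liminf_N\langle\psi_N,\mathfrak p_{W,M}\rangle$. Truncating $\psi$ does not help either: admissibility is a global matching of exit sites in one subbox with entry sites in the neighbouring ones, and deleting triples (or reweighting configurations) destroys it.

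The missing step is the paper's comparison with the limit. Take $\psi=\sum_\alpha s_\alpha\delta_{\mu_\alpha}$ with finitely many atoms. Note that $N_R s^{\ssup N}_\alpha$ subboxes carry a configuration in $\Bcal_\delta(\mu_\alpha)$, with $s^{\ssup N}_\alpha\to s_\alpha$. Bound each shred factor in such a subbox from below by $\inf_{x\in\Bcal_\delta(x^{\ssup\alpha}_\gamma)}p^{\ssup{W,M}}_x(l^{\ssup\alpha}_\gamma,\Bcal_\delta(y^{\ssup\alpha}_\gamma))$, which depends on $\psi$ only. Letting $\delta\downarrow0$ then uses only the behaviour of $p^{\ssup{W,M}}$ near finitely many triples; this is where $\xi\in\Mcal_1(\Lcal_W^{\ssup M}\times\Scal_W^{\ssup M})$ enters. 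Only the entry/exit sites and lengths of the admissible configuration are used, never its paths, which takes care of your concern about spreads $<M$.

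Second, the volume factor cannot be ``sent to zero'': $\log\eta\to-\infty$, so for genuine probabilities no bound of the stated form survives $\eta\downarrow0$. The statement must be read as a lower bound on the density of $\partial\Xi^{\ssup\Scal}_{N,R}$ at $\psi_N$ with respect to Lebesgue measure on the free entry sites (one $\d w_j$ per shred). That is, you divide by the ball volumes before shrinking them. This is what the paper does implicitly when it lets $\delta\downarrow0$ in $\log\P^{\ssup{\partial W,M}}_{\inf}(\Bcal_\delta(\mu))$. It is also how the bound is used in Lemma~\ref{lem-LowerBound}, where it multiplies a conditional expectation given $\partial\Xi^{\ssup{\circ,\omega,\Scal}}_{N,R}=\psi$.

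Two smaller corrections. First, rooting only at the $m$ shred starts costs a factor $m/k$ per loop. This is not subexponential in general, since there can be of order $|\L_N|$ crossing loops. Summing over all $k$ rootings, each of weight $\frac1k$, gives the unrooted intensity $\prod_j p^{\ssup{W,M}}\,\d w_j$ without loss. Second, $\sum_k\frac1k k^{-d/2}\sqrt k/R$ is $O(1/R)$, not $O(R^{-d/2})$. In fact $1/R$ is the true order of $\frac1{|\L_N|}\nu_M(\mathfrak m_R\geq1)$, since a fixed fraction of the length-two loops rooted within distance $M/2$ of a face already cross. This is harmless, because later only $R\to\infty$ matters.
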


This lower bound is no surprise, since $\langle\psi,\mathfrak p_{W,M}\rangle$  is nothing but the rate of the product of all the probability densities for the shreds, given they start from some $x_i\in W$, having length $l_i$ and terminating at $y_i\in W^{\rm c}$. The $W$-admissibility of $\psi$ guarantees that there is at least one way to arrange the shred-triple configurations in all the subboxes (each configuration $\mu$ appears in $\approx\frac {|\L_N|}{|W|}\psi(\mu)$ subboxes) in such a way that a global loop configuration arises. A similar upper bound is generally suggesting, but here one would have to upper bound the number of such arrangements, which appears a very difficult task. Since $\langle\psi,\mathfrak p_{W,M}\rangle$ should be roughly of $W$-surface order and therefore negligible in the limit $R\to\infty$, we decided to control only the lower bound so explicitly. Lemma~\ref{lem-lowerboundlogpint} in Section~\ref{sec-lowboundlogpint} is devoted to deriving a lower bound for the last term  on the right-hand side of \eqref{lowboundboundaryshreds}.

\begin{proof}[Proof of Lemma~\ref{lem-UpperLDPbound}.]
Note that the admissibility of $\psi$ guarantees the existence of some $\psi_N\to\psi$ such that $\{\partial\Xi^{\ssup\Scal}_{N,R}=\psi_N\}$ is not empty for all sufficiently large $N$. Picking $M$ large enough (not depending on $N$), we also have that this event has a positive probability under ${\tt Q}^{\ssup{\L_N,{\rm par}}}_{M}$, and we assume that $N$ is so large. We are going to carefully decompose this event and to identify a lower bound for its probability.

We are considering only the shred-configuration part of $\Xi_{N,R}$ under the  PPP in $\L_N$, that is, we neglect all loops that have particles only in one of the subboxes. In other words, we can assume that the intensity measure is equal to $\sum_{k\in\N}\frac 1k\int_{\L_N}\d x \,\mu_{x,x}^{\ssup{k,M,\L_N,{\rm par}}}(\{\mathfrak m_R\geq 1\}\cap\cdot)$, where  $\mu_{x,x}^{\ssup{k,M,\L_N,{\rm par}}}$ is the Brownian loop measure for $k$-particle loops starting at $x$ and having all leg spreads $<M$ with particle-boundary condition in $\L_N$.

From now on we drop the superscript $\Scal$ at $\Xi_{N,R}$ and write $\Xi_{N,R}^{\ssup B}$ for the empirical shred configuration measure coming from just one single path $B \in\Ccal$, then $\Xi_{N,R}=\sum_{k\in\N}\sum_{i=1}^{r_k}\Xi_{N,R}^{\ssup{B^{\ssup {i,k}}}}$ if $B^{\ssup {i,k}}\in\Ccal_k$ is the $i$-th Brownian loop of length $k$, and $r_k$ is the number of loops of length $k$ in the Brownian loop soup. We are going to use that the PPP is realised by a bunch of $r_k$ independent Brownian loops $B_1^{\ssup k},\dots,B_{r_k}^{\ssup k}$ with $k\in\N$ and normalised distribution, where the $r_k$ are picked as Poisson-distributed random variables with parameter $\frac 1k \int_\L\d x \,\mu_{x,x}^{\ssup{k,M,\L_N,{\rm par}}}( \mathfrak m_R\geq 1)$. This gives the formula
\begin{equation}\label{boundarydistr1}
\begin{aligned}
{\tt Q}^{\ssup{\L_N,{\rm par}}}_{M}\big(&\partial\Xi_{N,R}=\psi_N\big)
=\e^{-\sum_k\frac 1k \int_\L\d x \,\mu_{x,x}^{\ssup{k,M,\L_N,{\rm par}}}(\mathfrak m_R\geq 1)}\\
&\quad \sum_{(r_k)_{k\in\N}\in\N_0^\N}\Big(\prod_k \frac{k^{-r_k}}{r_k!}\Big)\Big(\bigotimes_k\big(\widehat \P_{M}^{\ssup {k,\L_N,{\rm par}}}\big)^{\otimes r_k}\Big)\Big(\sum_{k\in\N}\sum_{i=1}^{r_k}\partial\Xi_{N,R}^{\ssup{B^{\ssup {i,k}}}}=\psi_N\Big),
\end{aligned}
\end{equation}
where $\widehat\P^{\ssup{k,\L_N,{\rm par}}}$ is defined in \eqref{Phatkdef} (note that the Poisson parameter $\frac 1k \int_{\L_N}\d x \,\mu_{x,x}^{\ssup{k,M,\L_N,{\rm par}}}( \mathfrak m_R\geq 1)$ is, up to the factor of $\frac 1k$, equal to its total mass) and $\widehat\P_{M}^{\ssup {k,\L_N,{\rm par}}}$ is its restriction to $ \bigcap_{z\in Z_{N,R}}\{\varpi_{W_z}^{\ssup\Scal}\in \Scal_{W_z}^{\ssup{M}}\} $.  The sum on $(r_k)_k$ may be restricted to those that satisfy $|\frac 1{|\L_N|}\sum_k k r_k-\rho_2|\leq C \delta$ for some $C$ (where $\rho_2=\frac 1{|W|}\langle \psi,\mathfrak N^{\ssup\ell}_{\partial W}\rangle$), but this restriction is implicit in the event $\{\sum_{k\in\N}\sum_{i=1}^{r_k}\partial\Xi_{N,R}^{\ssup{B^{\ssup {i,k}}}}\in U_\delta\}$.

It is easy to prove that
$$
\lim_{N\to\infty}\frac 1{|\L_N|}\sum_{k\in\N}\frac 1k \int_{\L_N}\d x \,\mu_{x,x}^{\ssup{k,M,\L_N,{\rm par}}}(\mathfrak m_R\geq 1)=\sum_{k\in\N}\frac 1k  \mu_{0,0}^{\ssup{ k,M}}(\mathfrak m_R\geq 1).
$$
Observe that 
\begin{equation}\label{cRMvanishes}
\sum_{k\in\N}\frac 1k  \mu_{0,0}^{\ssup{ k,M}}(\mathfrak m_R\geq 1)\leq \frac{C_M}{ R^{d/2}},\qquad M,R\in(1,\infty),
\end{equation} 
for some $C_M\in (0,\infty)$, only depending on $M$ (and on $d$ and $\beta$). Indeed, from  the definition of $\mathfrak m_R$ in Section~\ref{sec-distshredoneloop} and the one of $\mu_{0,0}^{\ssup {k,M}}$ in \eqref{nnBBM} we see that the event $\{\mathfrak m_R\geq 1\}$ is empty if $k\leq R/M$, since the spreads $|B_{i\beta}-B_{(i-1)\beta}|$ are $\leq M$ under $\mu_{0,0}^{\ssup {k,M}}$ and hence only the sum on $k>R/M$ remains, and the total mass of $\mu_{0,0}^{\ssup {k,M}}$ is $\leq C k^{-d/2}$; we leave the details to the reader. This explains the first term on the right-hand side of \eqref{lowboundboundaryshreds}. 

Since $R>2M$,  every step from a box $z+W$ with size $\leq M$ can terminate only in that box or its $\boxplus$-neighbouring boxes. By $\boxplus$-neighbours of the origin we mean the members of the set $\{-1,0,1\}^d\setminus\{0\}$; analogously we talk of $\boxplus$-neighbouring boxes in the sense of an $2R$-upscaling of this notion.

Each $B^{\ssup {i,k}}$ contributes to $\partial\Xi_{N,R}$ a vector of triples $(x_j^{\ssup{i,k}},l_j^{\ssup{i,k}},y_j^{\ssup{i,k}})\in W\times\N\times W^{\rm c}$, $j\in[m^{\ssup{i,k}}]$, with a distribution given by the product measure of the $p_{x_j}^{\ssup{k,M,\L,{\rm par}}}(l_j^{\ssup{i,k}},\d y_j)$, where the positive integers $l_j^{\ssup{i,k}}$ sum up over $j$ to $k$, and $x_j= z_{j-1}^{\ssup{i,k}}+w_{j-1}^{\ssup{i,k}}$ and $y_j= z_j^{\ssup{i,k}}+w_j^{\ssup{i,k}}= z_{j-1}^{\ssup{i,k}}+\widetilde w_j^{\ssup{i,k}}$, and $\widetilde w_j^{\ssup{i,k}}= \e_j^{\ssup{i,k}}+w_j^{\ssup{i,k}}\in W^{\rm c}$ and $\e_j^{\ssup{i,k}}= z_j^{\ssup{i,k}}-z_{j-1}^{\ssup{i,k}}$. We have to sum over all $m^{\ssup{i,k}}\in\N$ (since ${\mathfrak m_R}\geq 1$) and all $z_1^{\ssup{i,k}},\dots,z_{m^{\ssup{i,k}}}^{\ssup{i,k}}\in Z_{N,R}$ and integrate over all $w_1^{\ssup{i,k}},\dots,w_{m^{\ssup{i,k}}}^{\ssup{i,k}}\in W$ such that the event $\{\partial \Xi_{N,R} =\psi_N\}$ is met. Each $(z^{\ssup{i,k}}_0, z^{\ssup{i,k}}_1,\dots,z_{m^{\ssup{i,k}}}^{\ssup{i,k}})$ is a cycle, i.e., $z^{\ssup{i,k}}_0=z_{m^{\ssup{i,k}}}^{\ssup{i,k}}$.  We may and shall assume that all the $w_j^{\ssup{i,k}}$ are distinct. For any $j$ such that $z_ {j-1}^{\ssup{i,k}}\in Z_{N,R}$, we know that $\e_j^{\ssup{i,k}}$ is a $\boxplus$-neighbour of the origin; more precisely $z_j^{\ssup{i,k}}$ and $z_{j-1}^{\ssup{i,k}}$ are $\boxplus$-neighbours. Each $w_j^{\ssup{i,k}}$ has a distance $\leq M$ to the boundary of $W^{\rm c}$.

Without loss of generality (see \cite[Example 8.1.6(i)]{Bogachev2}), we may and shall restrict to a $\psi\in \Mcal_1(\Tcal_W)$ (the limit point of the $\psi_N$)  that consists of finitely many atoms only, i.e., it is of the form $\psi=\sum_{\alpha\in I}s_\alpha\delta_{\mu_\alpha}$ with $I$ a finite set and $\sum_\alpha s_\alpha=1$ and $s_\alpha>0$ for all $\alpha \in I$, and $\mu_\alpha=\sum_{\gamma\in J_\alpha }\delta_{(x^{\ssup\alpha }_\gamma, l^{\ssup\alpha }_\gamma,y^{\ssup\alpha }_\gamma)}\in \Tcal_W$ a simple point measure. We may assume that all the $x_\gamma^{\ssup \alpha}$  with $\alpha\in I$ and $\gamma\in J_\alpha$ are distinct; the same for the $y_\gamma^{\ssup \alpha}$.

Let us identify $\partial \Xi_{N,R}^{\ssup{B^{\ssup {i,k}}}}$ for one single loop $B^{\ssup {i,k}}$. For $z\in Z_{N,R}$, we see that $\theta_{z}(\Pi_{W_z}(\delta_{B^{\ssup {i,k}}}))=\sum_{j\in[m^{\ssup{i,k}}]\colon z_{j-1}^{\ssup{i,k}}=z}\delta_{(l_j^{\ssup{i,k}},w_{j-1}^{\ssup{i,k}},\widetilde w_j^{\ssup{i,k}})}$ is equal to the shred configuration that $B^{\ssup {i,k}}$ induces in the subbox $W_z$. Hence, writing ${\bf w}=(w_j^{\ssup{i,k}})_{k,i,j}$ and ${\bf z}=(z _j^{\ssup{i,k}})_{k,i,j}$ and ${\bf l}=(l_j^{\ssup{i,k}})_{k,i,j}$, the superposition
$$
C_z( {\bf z},{\bf w},{\bf l})=\sum_{k\in\N}\sum_{i\in[r_k]}\sum_{j\in[m^{\ssup{i,k}}]\colon z_{j-1}^{\ssup{i,k}}=z}\delta_{(l_j^{\ssup{i,k}},z+w_{j-1}^{\ssup{i,k}},z+\widetilde w_j^{\ssup{i,k}})}\in \Tcal_{W_z}
$$
is the configuration that all the  loops $B^{\ssup {i,k}}$ induce in the subbox $W_z$.  Then
$$
\partial\Xi_{N,R}
=\sum_{k,i}\partial\Xi_{N,R}^{\ssup{B^{\ssup {i,k}}}}
=\frac1{\# Z_{N,R}}
\sum_{z\in Z_{N,R}}\delta_{\theta_{z}(C_z({\bf z},{\bf w},{\bf l}))}\in\Mcal_1(\Tcal_W).
$$
We call every such collection 
\begin{equation}\label{admissible}
(\mu_z )_{z\in Z_{N,R}}=
(C_z ({\bf z},{\bf w},{\bf l}))_{z\in Z_{N,R}}\in \bigtimes_{z\in Z_{N,R}} \Tcal_{W_z}
\end{equation}
{\em admissible}. In other words, a collection $(\mu_z )_{z\in Z_{N,R}}\in \bigtimes_{z\in Z_{N,R}} \Tcal_{W_z}$ of configurations is called admissible if the arrangement of all these $\mu_z$ connects up precisely at all the boundaries of all the $W_z$ and produces a bunch of continuous loops inside $\L_N$.  With this notion, we can write the probability term on the right-hand side of \eqref{boundarydistr1} as follows.
\begin{equation}\label{probreprloops}
\begin{aligned}
\Big(\bigotimes_k&\big(\widehat \P_{M}^{\ssup {k,\L,{\rm par}}}\big)^{\otimes r_k}\Big)\Big(\sum_{k\in\N}\sum_{i=1}^{r_k}\partial\Xi_{N,R}^{\ssup{B^{\ssup {i,k}}}}= \psi_N\Big)
=\int_{\bigtimes_{z\in Z_{N,R}} \Tcal_{W_z}} \1\big\{(\mu_z )_{z\in Z_{N,R}}\mbox{ admissible}\big\}\\
&\quad \1\Big\{\frac 1{N_R}\sum_{z\in Z_{N,R}}\delta_{\theta_z(\mu_z) }= \psi_N\Big\}\,\Big(\bigotimes_k\big(\widehat \P_{M}^{\ssup {k,\L,{\rm par}}}\big)^{\otimes r_k}\Big)\big(\d((\mu_z )_{z\in Z_{N,R}})\big).
\end{aligned}
\end{equation}
Strictly speaking, the indicator on admissibility is superfluous, since the probability measure is concentrated on collections of $\mu_z$ that admit only global configurations of continuous paths. But we are going to calculate now the probability by means of a product measure over $z\in Z_{N,R}$, which ignores admissibility. Only under admissibility, it gives a proper lower bound. For a upper bound, we have to upper estimate the number of admissible collections $(\mu_z)_z$; the remainder is the same. For the lower bound, we rely here on the assumption of admissibility of $\psi_N$, which guarantees the existence of at least one admissible $(\mu_z)_z$.

Let us express what $\Delta=\psi_N$ means, where $\Delta=\frac 1{N_R}\sum_{z\in Z_{N,R}}\delta_{\theta_z(\mu_z) }$. Since $\psi_N\to\psi$, and $\psi$ is a simple point measure at the $\mu_\alpha$'s, this means that, for any small $\delta\in(0,1)$ and all sufficiently large $N$ and all $\alpha\in I$, we have $\Delta(B_\delta(\mu_\alpha))=\psi_N(B_\delta(\mu_\alpha))$, and the latter is equal to some $s_\alpha^{\ssup N}$ that converges to $s_\alpha$ as $N\to\infty$. We pick $\delta$ so small that all the $\delta$-balls around the $x_\gamma^{\ssup\alpha}$'s and the $\delta$-balls  around the $y_\gamma^{\ssup\alpha}$'s are pairwise disjoint. Now observe that, for any $\alpha$,
$$
\begin{aligned}
N_R \Delta(B_\delta(\mu_\alpha))
&=\#\{z\colon \theta_{ z}(\mu_z )\in \Bcal_\delta(\mu_\alpha)\}\\
&=\#\{z\colon \forall \gamma\in J_\alpha  \exists (k,i,j)\colon z_{j-1}^{\ssup{i,k}}=z, w_{j-1}^{\ssup{i,k}}\in \Bcal_\delta(x_\gamma^{\ssup\alpha}), l_j^{\ssup{i,k}}=l_\gamma^{\ssup\alpha},\widetilde w_j^{\ssup{i,k}}\in \Bcal_\delta(y_\gamma^{\ssup\alpha})\}.
\end{aligned}
$$ 
Note that, for any $\alpha\in I$ and $\gamma\in J_\alpha$ and all $(k,i)$, the index $j$ in the description of the set is unique. We denote the set of all $({\bf z},{\bf l}, {\bf w})$ having the property on the right-hand side by $A$.

Recall the notation for certain densities $p_x(l,\cdot)$ from \eqref{pdef} and \eqref{pMdef} (with various super-indices). Now using Lemma~\ref{lem-distboundshred} and the independence of all the $B^{\ssup {i,k}}$, we can write (conceiving $p_x(l,\cdot)$ as a measure instead of a density)
\begin{equation}\label{probrepre}
\begin{aligned}
\Big(\bigotimes_k&\big(\widehat \P_{M}^{\ssup {k,\L,{\rm par}}}\big)^{\otimes r_k}\Big)\Big(\frac 1{N_R}\sum_{z\in Z_{N,R}}\delta_{\mu_z }=\psi_N\,\Big|\, (\mu_z )_{z\in Z_{N,R}}\mbox{ admissible}\Big)\\
&=\sum_{\bf z,l}\int \d{\bf w} \1_{A}({\bf z,l,w})\,\prod_{z\in Z_{N,R}}\prod_{k\in\N}\prod_{i\in[r_k]}\prod_{j\in[m^{\ssup{i,k}}]\colon z=z_{j-1}^{\ssup{i,k}}} p^{\ssup{W,M,\L_N,{\rm par}}}_{z+w_{j-1}^{\ssup{i,k}}}(l_j^{\ssup{i,k}},\d (z+\widetilde w_j^{\ssup{i,k}})).
\end{aligned}
\end{equation}
We may replace the $p$-term  by $p^{\ssup{W,M}}_{w_{j-1}^{\ssup{i,k}}}(l_j^{\ssup{i,k}},\d\widetilde w_j^{\ssup{i,k}})$, since every leg that starts from $W_z$ has spread $\leq M$ and hence this term is invariant under shift by $z$. For these $z$, we obtain a lower bound by estimating
\begin{equation}\label{upperboundp}
p^{\ssup {W,M}}_{w_{j-1}^{\ssup{i,k}}}(l_j^{\ssup{i,k}},\d\widetilde w_j^{\ssup{i,k}}))
=p^{\ssup {W,M}}_{w_{j-1}^{\ssup{i,k}}}(l_\gamma^{\ssup \alpha},\d (\e_j^{\ssup{i,k}}+w_j^{\ssup{i,k}}))
\geq \inf_{x\in \Bcal_\delta(x_\gamma^{\ssup\alpha})}p^{\ssup {W,M}}_x(l_\gamma^{\ssup\alpha},\d (\e_j^{\ssup{i,k}}+w_j^{\ssup{i,k}})),
\end{equation}
if $(k,i,j)$ is such that $w_{j-1}^{\ssup{i,k}}\in \Bcal_\delta(x_\gamma^{\ssup\alpha}), l_j^{\ssup{i,k}}=l_\gamma^{\ssup\alpha}, w_j^{\ssup{i,k}}\in \Bcal_\delta(y_\gamma^{\ssup\alpha})$.

We further have 
\begin{equation}\label{lowboundloopprob}
\begin{aligned}
\mbox{l.h.s.~of }\eqref{probrepre}
&=\sum_{\bf z,l}\int \d{\bf w} \1_{A}({\bf z,l,w})\,\prod_{k\in\N}\prod_{i\in[r_k]}\prod_{j\in[m^{\ssup{i,k}}]} p^{\ssup{W,M}}_{w_{j-1}^{\ssup{i,k}}}(l_j^{\ssup{i,k}},\d \widetilde w_j^{\ssup{i,k}})\\
&\geq \prod_{\alpha\in I} \Big(\prod_{\gamma\in J_\alpha }\inf_{x\in \Bcal_\delta(x_\gamma^{\ssup\alpha})}p^{\ssup  {W,M }}_x(l_\gamma^{\ssup\alpha}, \Bcal_\delta(y_ \gamma^{\ssup\alpha}))\Big)^{s_\alpha^{\ssup N}N_R}\\
&=\e^{-N_R \Ocal(\delta)}\exp\Big(N_R \int \psi(\d \mu)\,\log \P^{\ssup {{\partial W,M}}}_{\inf}(\Bcal_\delta(\mu))\Big),
\end{aligned}
\end{equation}
where $\P^{\ssup {{\partial W,M}}}_{\inf}$ is defined by
\begin{equation}\label{Pmaxdef}
\P^{\ssup {\partial W,M}}_{\inf}(\Bcal_\delta(\mu))=\prod_{i\in I} \inf_{x\in \Bcal_\delta(x_i)}p_x^{\ssup{W,M}}(l_i,\Bcal_\delta(y_i)),\qquad \Bcal_\delta(\mu)= \bigtimes_{i\in I}\big[\Bcal_\delta(x_i)\times \{l_i\}\times \Bcal_\delta(y_i)\big], 
\end{equation}
where $\mu=\sum_{i\in I}\delta_{(x_i,l_i,y_i)}$ and $\Bcal_\delta(z)$ is the ball with radius $\delta$ around $z\in\R^d$.

Note that this lower bound does not depend on $(r_k)_k$. For the lower bound, one can pick one choice of $(r_k)_{k\in\N}$ and lower estimate the sum over $(r_k)_k$ against this single summand. This implies, for some $C\in(0,\infty)$,
$$
\liminf_{N\to\infty}\frac 1{|\L_N|}\log{\tt Q}^{\ssup{\L_N,{\rm par}}}_{M}\big(\partial\Xi^{\ssup\Scal}_{N,R}= \psi_N\big)
\geq -\frac {C_M}{R^{d/2}}-\frac {C \delta}{|W|}+\frac 1{|W|}\int \psi(\d \mu)\,\log \P^{\ssup {{\partial W,M}}}_{\inf}(\Bcal_\delta(\mu)).
$$
Since the left-hand side does not depend on $\delta$, we can make $\delta\downarrow 0$ on the right-hand side and obtain the assertion in \eqref{lowboundboundaryshreds}.
\end{proof}

We now give an existence assertion for admissible boundary-shred configurations $\psi$ in a setting that will be relevant in Section~\ref{sec-finishproof} when we finish the proof of the lower bound in Theorem~\ref{thm-freeenergy}.  We keep using the notation that we reminded on at the beginning of Section~\ref{sec-distshreds}. We call a measure $P\in\Mcal_1(\Lcal\times \Scal)$ {\em ergodic with respect to} $W_R$ or simply $W_R${\em -ergodic}, if it is $W_R$-shift invariant (i.e., $\theta_x(P)=P$ for $x\in 2R\Z^d$) and ergodic under shifts by vectors in $2R\Z^d$.

\begin{lemma}[$W_R$-admissible $\psi$'s]\label{lem-Psi_admissible} Fix $R\in(0,\infty)$  and put $W=W_R=[-R,R]^d$. Pick  $P\in\Mcal_1(\Lcal\times \Scal)$ and assume that $P$ is $W$-ergodic. Then $\psi=\partial\Pi_W^{\ssup \Scal}(P)$ is $W$-admissible.
\end{lemma}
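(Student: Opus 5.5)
The plan is to use the multiparameter ergodic theorem to produce, for large $N$, a single realisation $(\omega,\varpi)$ of $P$ whose empirical boundary-shred measure over the subboxes in $\L_N$ is already close to $\psi$. Then I would modify this configuration only in a thin boundary layer of $\L_N$, so that all shreds close up into loops inside $\L_N$, and the empirical measure changes very little.

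Concretely, for $(\omega,\varpi)\in\Lcal\times\Scal$ set
\[
\partial\Xi_{N,R}^{(\omega,\varpi)}=\frac 1{\#Z_{N,R}}\sum_{z\in Z_{N,R}}\delta_{\theta_z(\partial\Pi_{W_z}^{\ssup\Scal}(\omega,\varpi))}.
\]
Since $P$ is stationary and ergodic under the $\Z^d$-action of shifts by $2R\Z^d$, and the boxes $Z_{N,R}$ form a Følner (even van Hove) sequence, the ergodic theorem applies. I would use it for a countable convergence-determining family of bounded continuous test functions on $\Tcal_W$. Since $\Tcal_W$ is Polish, such a family exists. This gives $\partial\Xi_{N,R}^{(\omega,\varpi)}\Rightarrow\psi$ for $P$-a.e.\ $(\omega,\varpi)$. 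I would fix one such realisation $(\omega,\varpi)$.

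Next I would fix an open neighbourhood $\Bcal$ of $\psi$ and a small $\eps>0$, and let $Z_{N,R}^{\rm b}$ be the boxes $z\in Z_{N,R}$ within graph distance $k$ of $\partial Z_{N,R}$. Their proportion is $O(k/N^{1/d})\to0$. In the Prokhorov metric, changing the configurations in a fraction $\eps'$ of the boxes moves the empirical measure by at most $\eps'$. It therefore suffices to build a loop configuration $\omega'\in\Lcal_{\L_N}$ that agrees with $(\omega,\varpi)$ on every $W_z$, $z\notin Z_{N,R}^{\rm b}$, in the boundary-shred sense. The construction is as follows. Every loop of $\omega$ lying entirely inside $\L_N$ is kept. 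Every other trajectory (interlacement, or loop leaving $\L_N$) is cut into excursions inside $\L_N$. Each excursion enters and leaves $\L_N$, or more conveniently the inner region $\bigcup_{z\notin Z_{N,R}^{\rm b}}W_z$, through the boundary layer. There it has finitely many exit and entry particles, finite by local finiteness. The exit particles of the excursions are then paired with entry particles and connected by continuous paths running inside the boundary layer. The pairing can be chosen, for instance, by concatenating all excursions cyclically into one long loop. The particles of these connecting paths are placed only in boundary-layer boxes. This yields a finite configuration of closed loops in $\L_N$. All their particles lie in $\L_N$, which is what particle boundary condition requires. Its shreds in the interior boxes coincide with those of $(\omega,\varpi)$.

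The main obstacle is this rewiring step. Three things must be checked. First, the connecting paths must not create new particles or shreds in the interior boxes, which is why they are routed entirely through the layer. Second, the count of entry and exit points is consistent; this is automatic because each excursion contributes exactly one of each. Third, the resulting measure is simple. A subtlety is that the layer boxes themselves acquire arbitrary shred configurations. Their contribution to $\partial\Xi_{N,R}$ is bounded in mass by their proportion, since Prokhorov distance only sees mass, not particle numbers. Thus $\partial\Xi^{\ssup{\omega',\Scal}}_{N,R}\in\Bcal$ for all large $N$, which proves $W$-admissibility.
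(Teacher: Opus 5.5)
Your strategy is the paper's: the ergodic theorem yields a realisation whose empirical box measure is close to the target. All $\L_N$-shreds are then closed into loops inside $\L_N$, and you control how far the empirical measure moves. Your boundary layer $Z^{\rm b}_{N,R}$ and the Prokhorov mass bound make explicit what the paper dismisses with ``it is easily seen''.

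There is, however, a false step. You assert that every excursion leaves $\bigcup_{z\notin Z^{\rm b}_{N,R}}W_z$ \emph{through the boundary layer}, and hence that the rewired configuration agrees with $(\omega,\varpi)$ on all interior boxes. The lemma imposes no bound on leg spreads, so a single leg may jump from an interior box $W_z$ across the whole layer to a point outside $\L_N$. That point is the exit site $y$ in the triple $(x,l,y)$ of the corresponding $W_z$-shred. Every particle of a configuration in $\Lcal_{\L_N}$ must lie in $\L_N$, so you are forced to change $y$. Interior boxes can therefore be modified as well, and a bound that counts only layer boxes does not cover them. The entry side is harmless, because the triple records only the first particle inside $W_z$, not its predecessor.

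The repair uses the tool you already have. Let $B_k$ be the event that $\partial\Pi_W^{\ssup\Scal}$ of the configuration contains a triple $(x,l,y)$ with $\dist(y,W)\geq 2Rk$.
\begin{itemize}
\item An interior box $W_z$ can only be affected if $\theta_z(\omega,\varpi)\in B_k$.
\item Apply the ergodic theorem also to $\1_{B_k}$ for all $k\in\N$. The proportion of such boxes then converges a.s.\ to $P(B_k)$.
\item $P(B_k)\to 0$ as $k\to\infty$, because $\partial\Pi_W^{\ssup\Scal}(\omega,\varpi)$ is a.s.\ a finite point measure.
\end{itemize}
Count these boxes among the modified ones. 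The proportion of changed boxes, and hence the Prokhorov distance, is then at most $O(kN^{-1/d})+P(B_k)+o(1)$ as $N\to\infty$. This is small after choosing first $k$ and then $N$ large.

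In the paper's applications $P$ has all leg spreads $<M<2R$. There your layer argument is exact as written.
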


\begin{proof}Put $\xi=\Pi_W(P)\in \Mcal_1(\Lcal_W)\times \Scal_W)$. Extending the definition \eqref{Xidef} to loops and interlacements, we conceive $\Xi_{N,R}=\Xi_{N,R}^{\ssup{\omega,\varpi}}$ as the empirical measure coming from loops and interlacements in $\L_N$ (and $\omega\in\Lcal$ is the loop configuration and $\varpi\in \Scal$ the interlacement configuration). Indeed, it is an empirical measure (discrete mixture) of $W$-shifted copies of $\Pi_W(\omega,\varpi)$, which have distribution $\xi$ under $P(\d(\omega,\varpi))$. Here we use the shift-invariance of $P$, more precisely only the invariance under shifts by vectors $\in 2R\Z^d$. 

According to the ergodic theorem for $P$, for any open neighbourhood $U(\xi)\subset \Mcal_1(\Lcal_W\times \Scal_W)$ of $\xi$,
$$
\lim_{N\to\infty} P(\Xi_{N,R}\in U(\xi))=1.
$$
In particular, the event $\{(\omega,\varpi)\in\Lcal\times\Scal\colon \Xi^{\ssup{\omega,\varpi}}_{N,R}\in U(\xi) \}$ is non-empty for all sufficiently large $N$. This implies that, for any $N$, there is a configuration $(\omega_N,\varpi_N)\in \Lcal_{\L_N}\times \Scal_{\L_N}$ (namely $\omega_N=\Pi_{\L_N}^{\ssup\Lcal}(\omega+\varpi)
$ and $\varpi_N=\Pi_{\L_N}^{\ssup\Scal}(\omega+\varpi)$) such that $\Xi_{N,R}^{\ssup{\omega_N,\varpi_N}}$ (which is equal to $\Xi^{\ssup{\omega,\varpi}}_{N,R}$) lies in $\Ucal(\xi)$. From $(\omega_N,\varpi_N)$, we can construct a loop configuration $ \widetilde \omega_N\in \Lcal_{\L_N}$ as follows. We keep the loop configuration $\omega_N$ unchanged and modify all the shreds in $\varpi_N$ by concatenating all $\L_N$-shreds to a loop by removing the last leg (the only one that leaves $\L_N$) and adding a piece from its termination site in $\L_N$ to its initial site in $\L_N$ that runs within $\L_N$. It is easily seen that the above modification can be done in such a way that the distance between $\Xi_{N,R}^{\ssup {\omega_N,\varpi_N}}$ and $\Xi_{\widetilde N,R}^{\ssup{\widetilde \omega_N}}$ vanishes as $N\to\infty$, so $\Xi_{\widetilde N,R}^{\ssup{\widetilde \omega_N}}$ lies also in $U(\xi)$ for all large $N$.

Now let $\Bcal(\psi)$ be some open neighbourhood of $\psi$, and pick the neighbourhood $\Bcal(\xi)$ in such a way that $\{\partial\Pi_{W}^{\ssup\Scal}(\xi')\colon \xi'\in \Bcal(\xi)\}\subset \Bcal(\psi)$. Therefore, for any large $N$, $\partial\Xi_{\widetilde N,R}^{\ssup{\widetilde \omega_N,\Scal}}$ lies in $ \Bcal(\psi)$. Hence $\psi$ is admissible.
\end{proof}

\subsection{Estimating the LDP lower bound}\label{sec-lowboundlogpint}

In this section, we give in Lemma~\ref{lem-lowerboundlogpint} a further lower bound for the LDP lower bound of Lemma~\ref{lem-UpperLDPbound}, which shows that it is negligible for all the (approximations of) $\psi$'s that we will later apply it to. Furthermore, we use this in order to prove that the expected shred number in a large box under a shift-invariant interlacement-configuration distribution is negligible with respect to the volume of the box, see Corollary~\ref{cor-shrednumberesti}, which needs the preparation in Lemma~\ref{lem-expnumbershreds}.

The idea in the proof of Lemma~\ref{lem-lowerboundlogpint} is that we will be able to control $\langle \psi,\mathfrak p_{W,M}\rangle$ (see \eqref{pmathfrakdef}) only for $\psi$'s that  are concentrated on shred configurations in $W_R$ with legs that have a bounded spread, whose shreds have a bounded spread-out behaviour on longer scales, and that produce a bounded number of particles in any subbox of a given size. More precisely, with large parameters $M\in(1,\infty)$ and $\vartheta,S\in\N$, introduce the set of configurations of shreds whose legs have a spread $<M$ and which satisfy an upper bound for subpieces of length $S$,
\begin{equation}\label{SMvarthetaSdef}
\begin{aligned}
\Scal^{\ssup  {M,\vartheta, S}}
&=\Big\{\sum_{g\in\Gamma}\delta_g\in \Scal\colon \forall g\in \Gamma, \forall i\in[\ell(g)], \| g_i\|_{\rm sp}< M,\\
&\quad \forall g\in \Gamma,\forall j\in [\lfloor \smfrac 1S \ell(g)\rfloor], g(((jS)\wedge \ell(g))\beta)-g((j-1)S\beta)|\leq \vartheta \sqrt S\Big\},
\end{aligned}
\end{equation}
where we remind that $g\in\Ccal_{\ell(g)}$ is a shred of length $\ell(g)$. Write $\Scal_W^{\ssup  {M,\vartheta, S}}=\Pi_W^{\ssup\Scal}(\Scal^{\ssup  {M,\vartheta, S}})$ and $A^{\ssup\Scal}_{W;\Theta}=A^{\ssup\Scal}_{W;M,\mathfrak r,T}\cap \Scal_W^{\ssup  {M,\vartheta, S}}$, where $\Theta=(M,\mathfrak r,T,\vartheta,S)$, then we will assume that $\psi$ is concentrated on $A^{\ssup\Scal}_{W;\Theta}$.

\begin{lemma}[Lower estimates for expectations of $\mathfrak p_{W,M}$]\label{lem-lowerboundlogpint} For any $R,M\in (1,\infty)$, fix a shred configuration measure $\xi\in\Mcal_1(\Scal_{W_R}^{\ssup M})$.

\begin{enumerate}
\item There is a $C_M$ (only depending on $M$, $d$ and $\beta$) such that,  for any $R,M\in (1,\infty)$ and any shred configuration measure $\xi\in\Mcal_1(\Scal_{W_R}^{\ssup M})$,
\begin{equation}\label{finallog_p_integralallgemein}
-\langle\partial\Pi_{W_R}^{\ssup{\Scal}}(\xi),\mathfrak p_{W,M}\rangle \leq C_M\langle \xi,\mathfrak N_{W_R}^{\ssup{\ell,\Scal}}\rangle.
\end{equation}

\item There are constants $C_{M,\mathfrak r,T}$ (only depending on $M,\mathfrak r, T$ and $d$, $\beta$) and $C$ (depending only on $d$ and $\beta$) such that, for any $R,M,\mathfrak r, T, \vartheta\in(0,\infty)$ and $S\in\N$ and for any $\xi\in\Mcal_1(\Scal_{W_R})$ that is concentrated on $A^{\ssup\Scal}_{W_R;\Theta}$, where $\Theta=(M,\mathfrak r,T,\vartheta,S)$,
\begin{equation}\label{finallog_p_integral}
-\langle\partial\Pi_{W_R}^{\ssup{\Scal}}(\xi),\mathfrak p_{W,M}\rangle \leq \Big[\frac{C_{M,\mathfrak r,T}}R+\frac{C\vartheta^2}{S}\Big]\langle \xi,\mathfrak N_{W_R}^{\ssup{\ell,\Scal}}\rangle.
\end{equation}
\end{enumerate}
\end{lemma}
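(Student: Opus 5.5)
Both parts reduce to a pointwise lower bound on $\log p^{\ssup{W,M}}_x(l,y)$ for each boundary-shred triple $(x,l,y)$ in the support of $\partial\Pi_W^{\ssup\Scal}(\xi)$. We then sum over the shreds of a configuration and integrate against $\xi$. By \eqref{pmathfrakdef},
$$-\langle\partial\Pi_{W}^{\ssup\Scal}(\xi),\mathfrak p_{W,M}\rangle=\int\xi(\d\varpi)\sum_{g}\big(-\log p^{\ssup{W,M}}_{g(0)}(\ell(g),g(\ell(g)\beta))\big).$$
It therefore suffices to bound $-\log p^{\ssup{W,M}}_x(l,y)$ by a multiple of $l$, and to use $\sum_g \ell(g)=\mathfrak N_W^{\ssup{\ell,\Scal}}$.

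For (1), I would write $p^{\ssup{W,M}}_x(l,y)$ via the Markov property as an $l$-fold convolution of the one-step sub-density $\tilde g(u,v)=\P_u(B_\beta\in\d v,\ \|B|_{[0,\beta]}\|_{\rm sp}<M)/\d v$. The intermediate points are restricted to $W$ and the last point to $y\in W^{\rm c}$. Since $\xi$ is concentrated on $\Scal^{\ssup M}_W$, every step of an actual shred has $|u-v|<M$. Within distance $<M$ the density $\tilde g$ is bounded below by some $c_M>0$ once we allow a little room, for instance by asking $|u-v|<M-1$ together with a small tube condition. Lower-bounding the convolution by integrating the intermediate points over small balls along a chain from $x$ to $y$ inside $W$ gives $p^{\ssup{W,M}}_x(l,y)\ge (c'_M)^{l}$. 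Such a chain exists because $x\in W$, $y$ is within $M$ of $W$, and $W$ is convex with $R>1$. This yields $-\log p\le C_M\, l$, and summing gives \eqref{finallog_p_integralallgemein}. The one detail to handle is that $y$ near $\partial W$ must be reached in exactly one step from a point of $W$, which is possible since $|x'-y|<M$ for some $x'\in W$.

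For (2), we need a much better bound, because most particles of a long shred are far from the boundary. I would compare $p^{\ssup{W,M}}_x(l,y)$ with the free Gaussian density of $B_{l\beta}$ minus the cost of the constraints. This splits the shreds into two classes.

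\emph{Short shreds} are those with $l$ bounded by a constant depending on $M,\mathfrak r,T$. Such shreds live within distance $\lesssim lM$ of $\partial W$, so by the $A^{\ssup\Scal}_{W;M,\mathfrak r,T}$ constraint (at most $T$ particles per $\mathfrak r$-box) their total particle number is at most $C_{M,\mathfrak r,T}R^{d-1}$. Using part (1) for them gives a contribution $\le C_{M,\mathfrak r,T}R^{d-1}$ per configuration. Here I also need $R^{d-1}\lesssim \frac1R\,\mathfrak N$, or else this contribution should be compared to $|W|/R$. I would state it as $C_{M,\mathfrak r,T}|W|/R$ and absorb it, possibly after noting the normalisation in the subsequent use.

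\emph{Long shreds}, with large $l$, are handled via blocks of length $S$. I would lower bound the probability that the walk follows the prescribed skeleton $g(0),g(S\beta),g(2S\beta),\dots$. Each block displacement is at most $\vartheta\sqrt S$, so the Gaussian block density costs at most $\exp(-C\vartheta^2)$ relative to a typical block. The remaining cost per block, coming from keeping particles in $W$, keeping spreads $<M$ and pinning to a ball, is $O(1)$ per block by a local CLT or ballot-type argument when the skeleton stays at distance $\gtrsim\sqrt S$ from $\partial W$. This gives $-\log p\le C(\vartheta^2/S)\,l + O(\#\text{boundary blocks})$, and the boundary blocks are again counted by the $T$-density bound near $\partial W$. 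Summing gives \eqref{finallog_p_integral}.

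The main obstacle is this last per-block estimate. One has to show that conditioning on $B_{jS\beta}$ near a given point, with all intermediate particles in $W$ and bounded spread, costs only $O(\vartheta^2)+o(S)$ per block rather than $O(S)$. The normalisation is also delicate, since constant costs per block must be comparable to $\vartheta^2/S$ per particle. I would try to get this from the Gaussian lower bound $\P(B_{S\beta}\in\d v)\ge (CS)^{-d/2}e^{-C\vartheta^2}$, and treat the restrictions as a factor $\ge c^{S/S}$ via a tube argument of width $\sqrt S$. If this turns out to produce an extra $\log S/S$ term per particle, I would absorb it into the $C\vartheta^2/S$ term for $\vartheta\ge1$.
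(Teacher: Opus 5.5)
The genuine gap is in (2). It sits exactly at the step you yourself call the main obstacle, and that obstacle is of your own making. You try to lower-bound $p^{\ssup{W,M}}_x(l,y)$ by forcing the walk to follow the skeleton $g(0),g(S\beta),g(2S\beta),\dots$ of the given shred $g$. But $p^{\ssup{W,M}}_x(l,y)$ depends only on the triple $(x,l,y)$. So the block condition defining $\Scal^{\ssup{M,\vartheta,S}}$ is needed only through the triangle inequality $|x-y|\le\sum_j|g(jS\beta)-g((j-1)S\beta)|\le\vartheta l/\sqrt S$.

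The paper then uses a single pointwise bound, with two regimes:
\begin{itemize}
\item $\log p^{\ssup{W,M}}_x(l,y)\ge -c|x-y|^2/l-C_M$ for $|x-y|/M\le l\le CR^2$ (invariance principle, resp.\ moderate/large deviations when $l\ll|x-y|^2$);
\item $\log p^{\ssup{W,M}}_x(l,y)\ge -cl/R^2+\log(C_MR^{1-d})$ for $l\ge CR^2$ (confinement in $W_R$).
\end{itemize}
With the displacement bound, the first regime costs $c\vartheta^2l/S+C_M$. In the second, the $\log R$ is absorbed via $\1\{l\ge CR^2\}\le l/(CR^2)$, giving $C_Ml/R$. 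The per-shred constants $C_M$ are summed over \emph{all} shreds using the surface bound $|I|\le C_{M,\mathfrak r,T}R^{d-1}$: every shred starts within $M$ of $\partial W_R$, and there are at most $T$ particles per $\mathfrak r$-box. No block-by-block estimate and no separate treatment of short shreds is needed. The same bound with $|x-y|\le lM$ gives (1).

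Your proposed way around the block estimate does not work as written.
\begin{itemize}
\item Absorbing an extra $\log S/S$ per particle into $C\vartheta^2/S$ ``for $\vartheta\ge1$'' would require $\log S\le C\vartheta^2$ with $C$ independent of $\vartheta,S$. This fails as $S\to\infty$ at fixed $\vartheta$.
\item Counting boundary blocks through the $T$-bound in the $\sqrt S$-shell of $\partial W_R$ gives $C_{\mathfrak r,T}\sqrt S\,R^{d-1}$, an $S$-dependent surface term.
\item For $\sqrt S\gtrsim R$ every block is a boundary block, so your plan gives nothing there. This is exactly the regime where the paper uses the confinement cost $cl/R^2$ instead.
\item Keeping all $S$ legs of a block at spread $<M$ costs order $S$ per block, not $O(1)$. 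Indeed $p^{\ssup{W,M}}_x(l,y)\le(1-\eta_M)^l\,p^{\ssup{W}}_x(l,y)$ for some $\eta_M>0$, so for long shreds a cost of order $|\log(1-\eta_M)|$ per particle cannot be avoided. The paper's bound also omits it; it is harmless only because it vanishes as $M\to\infty$.
\end{itemize}

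Your chain argument for (1) is a valid and more elementary alternative to the paper's derivation. One caveat applies to it and equally to the paper's constant $C_M$ (the paper treats the restricted one-step density as a Gaussian truncated to $B_M(0)$). The existence of a single $x'\in W_R$ with $|x'-y|<M$ does not give a uniform lower bound on the last-step density under the spread restriction, which degenerates as $\mathrm{dist}(y,W_R)\uparrow M$.

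Finally, you are right that the constants only produce an additive surface term $C_{M,\mathfrak r,T}R^{d-1}$, not literally $\frac CR\langle\xi,\mathfrak N^{\ssup{\ell,\Scal}}_{W_R}\rangle$. The paper's proof produces exactly the same term and absorbs it in the same way. That is what matters downstream, after dividing by $|W_R|$.
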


\begin{proof} Let us first derive the lower bound in \eqref{pMlowbound} for $p_{x}^{\ssup{W,M}}(l,i)$, also using that $|x-y|\leq l M$ and that $y\in W_{R+M}\setminus W_R$ and $x\in W_R\setminus W_{R-M}$ for any $(x,l,y)$ in the support of any $\mu=\sum_{i\in I}\delta_{(x_i,l_i,y_i)}$ in the support of $\partial\Pi_{W_R}^{\ssup{\Scal}}(\xi)$. 

Recall that $\tau \beta\in\beta \N$ is the first time $\in\beta\N$ at which the shred leaves $W$. The steps of the underlying random walk $(B_{k\beta})_{k\in\N_0}$ are Gaussian, conditioned on being $<M$, and have therefore a density that is bounded from below (by some $M$-depending constant). Therefore, for any $(x,l,y)\in ( W_R\setminus W_{R-M})\times\N\times (W_{R+M}\setminus W_R)$, using the Markov property at time $\beta(l-1)$, writing $g_M$ for the Gaussian density restricted to $B_M(0)$,
$$
\begin{aligned}
p_{x}^{\ssup{W,M}}(l,y)
&\geq \int_{W\cap B_M(y)} \P_x^{\ssup M}(\tau>l-1, B_{\beta(l-1)}\in \d w)g_M(w-y)\\
&\geq C_M \P_x^{\ssup M}(\tau>l-1, B_{\beta(l-1)}\in W\cap B_M(y)).
\end{aligned}
$$
Now we distinguish the cases where $l\gg R^2$ (where the long stay of the walk in $W$ is a late-escape deviation), $\eps |x-y|^2\leq l\leq C R^2$ (where the walk has so much time to arrive in $W\cap B_M(y)$ that the CLT can be applied) and $\frac {|x-y|}M\leq l\ll |x-y|^2$ (where the sprint to $W\cap B_M(y)$ is a large or moderate deviation, since it makes very long steps).

Indeed, if $l\geq C R^2$ for some $C\in (0,\infty)$, then we can estimate $\P_x^{\ssup M}(\tau>l-1, B_{\beta(l-1)}\in W\cap B_M(y))$ from below against $\e^{-c l/R^2} C_M R^{1-d}$ since the probability for a long path to stay in a given box decays exponentially in the length of the path divided by the square of the radius of the box, according to Brownian scaling; and since the endpoint of such a path has a positive density in $W_R\setminus W_{R-M}$, which has volume $\geq C_M R^{d-1}$. (This asymptotics may be made much more precise with the help of the principal eigenvalue of the Laplace operator in $W_R$, which is $\sim C/R^2$, and the corresponding eigenfunction, but this would be cumbersome.)

Second, if $\eps |x-y|^2\leq l\leq C R^2$ for some small $\eps\in(0,1)$, then, on the event $\{\tau>l-1\}$, the path $(B_{\beta k})_{k\in\{0,1, \dots,l-1\}}$ behaves like a Brownian motion  with diffusive constant $\asymp l$  and arrives in $B_M(y)$ with probability $\geq \e^{-c|x-y|^2/l} C_M$, according to Donsker's invariance principle, where the factor $C_M$ expresses the volume of $B_M(y)$. (Another explanation of the term $\e^{-c|x-y|^2/l}$ is that the Brownian motion path can be divided into $l$ steps of length $\approx |x-y|/l$ each.)

Furthermore, if we consider the case $\frac {|x-y|}M\leq l\leq  \eps |x-y|^2$, then we estimate $\P_x^{\ssup M}(\tau>l-1, B_{\beta(l-1)}\in W\cap B_M(y))\geq \e^{-c |x-y|^2/l}$, according to a large-deviation principle (if $l\asymp |x-y|$), respectively a moderate-deviation principle (if $|x-y|\ll l\leq \eps |x-y|^2$). Indeed, the walk has to make $\asymp l$ steps of sizes $\asymp \frac{|x-y|}l$, and the probability for  making such large steps is approximately Gaussian.

Summarizing the second and third case, we obtain
\begin{equation}\label{pMlowbound}
\log p_{x}^{\ssup{W,M}}(l,y)
\geq 
\begin{cases}
-c \frac l{R^2}+\log (C_M R^{1-d}),&\mbox{if  }l\geq C R^2,\\
-c\frac{|x-y|^2}l-C_M,&\mbox{if  }\frac {|x-y|}M\leq l\leq C R^2.
\end{cases}
\end{equation}

For an arbitrary $\xi\in\Mcal_1(\Scal_{W_R}^{\ssup M})$, abbreviate $\psi=\partial \Pi_{W_R}^{\ssup{\Scal}}(\xi)$. Then the left-hand side of  \eqref{finallog_p_integralallgemein} or  \eqref{finallog_p_integral} is an integral with respect to $\psi(\d\mu)$, where $\mu=\sum_{i\in I}\delta_{(x_i,l_i,y_i)}$. 
Hence, it  can be estimated from above against
\begin{equation}\label{mathfrak_p_esti}
\int\psi(\d\mu)\,\sum_{i\in  I}\Big[\big(c\smfrac {l_i}{R^2}-\log(C_M R^{1-d})\big)\1\{l\geq C R^2\}+\big(c \smfrac{|x_i-y_i|^2}{l_i}+ C_M\big)\1\{  \smfrac{|x_i-y_i|}M\leq l_i\leq C R^2\}\Big].
\end{equation} 
Estimating $c\frac {l_i}{R^2}\1\{l\geq C R^2\}\leq c\frac {l_i}{R^2}$ and using in the next term that $\1\{l\geq C R^2\}\leq \frac l{C R^2}$, the first part can be estimated, for any large $R$, against
$$
\frac {C_M}{R}\int\psi(\d\mu)\,\sum_{i\in  I} l_i=\frac {C_M}{R}\langle \xi,\mathfrak N_{W_R}^{\ssup{\ell,\Scal}}\rangle.
$$

Furthermore, we can simply estimate the second part against $C_M l_i$ and obtain the bound in \eqref{finallog_p_integralallgemein}.

Now assume that $\xi$  is concentrated on $A^{\ssup\Scal}_{W;\Theta}$. We recall that $\sum_{i\in I}\frac{|x_i-y_i|^2}{l_i}$ is indeed integrated with respect to $\xi$, under which $|f_i(jS\beta)-f_i((j+1)S\beta)|\leq \vartheta \sqrt S$ holds for any $j\in\Z$ and for any $i\in I$, where we recall that we wrote $f_i\in\Ccal_{l_i}$ for the $i$-th shred (with $f_i(0)=x_i$ and $f_i(\ell(f_i)\beta)=y_i$), such that we can estimate $|x_i-y_i|\leq \sum_{j=1}^{\lceil l_i/S\rceil}|f_i(jS\beta)-f_i((j-1)S\beta)|\leq \frac{\vartheta l_i}{\sqrt S}$. Therefore, the second part is upper bounded against 
$$
\begin{aligned}
\int\xi(\d\varpi)\,&\sum_{i\in  I} \Big[c\frac{|x_i-y_i|^2}{l_i}+C_M\Big]
\leq \int\xi(\d\varpi)\,\sum_{i\in  I}\Big[ C\frac {\vartheta^2 l_i}{S}+C_M\Big]=\frac{C\vartheta^2}{S}\langle \xi,\mathfrak N_{W_R}^{\ssup{\ell,\Scal}}\rangle+C_M \int\xi( \d\varpi)\,|I|.
\end{aligned}
$$
Recall that $\xi$ is concentrated on $A^{\ssup\Scal}_{W_R;\Theta}$, which in particular means that in any of the boxes $z+W_{\mathfrak r}$ there are no more than $T$ particles in the shreds and each leg has spread $<M$ with $\xi$-probability one. This implies that, with $\xi$-probability one, there are no more than of surface order (i.e., $O(R^{d-1})$) of shreds, and the proportionality factor depends on $M,\mathfrak r$ and $T$. This shows that $C_M \int\xi( \d\varpi)\,|I|\leq C_{M,\mathfrak r,T}R^{d-1}$ for some constant that depends only on $M$, $\mathfrak r$, and $T$ (and on $d$ and on $\beta$). Finally, we absorb the $C_M$ from above in $C_{M,\mathfrak r,T}$, which finishes the proof of \eqref{finallog_p_integral}.
\end{proof}

\begin{remark}[Simpler estimates for Brownian bridges]\label{rem-mathfrakp_esti_for_BM}
It is clear that, under shred-configuration measures $\xi$ whose shreds are independent Brownian bridges (for example $\Pi_W^{\ssup\Scal}({\tt Q})$), there are much simpler upper bounds for $-\langle \xi,\mathfrak p_{W_R,M}\rangle$, but the main point in Lemma~\ref{lem-lowerboundlogpint} is that we need to control expectations under pretty arbitrary $\xi$s and therefore need to put the condition that {\it all} of them lie in $\Scal_W^{\ssup  {M,\vartheta, S}}$ under $\xi$. Indeed, if all the shreds under $\xi$ are Brownian bridges, then the only term in the above proof that needs and offers a simpler argument is $\int \xi(\d\varpi)\,\sum_{i\in I}\frac{|x_i-y_i|^2}{l_i}$, which is easily estimated  by estimating $\frac{|x_i-y_i|^2}{l_i}
\leq M\sum_{j=1}^{l_i/S}|f_i(jS\beta)-f_i((j+1)S\beta)|$, which gives, by the Gaussian nature of $f_i(jS\beta)-f_i((j+1)S\beta)$,
$$
\int \xi(\d\varpi)\,\sum_{i\in I}\frac{|x_i-y_i|^2}{l_i}
\leq M C\sqrt S \int \xi(\d\varpi)\,\sum_{i\in I}\frac {l_i}S
\leq \frac{C_M}{\sqrt S} \langle \xi,\mathfrak N_{W_R}^{\ssup{\ell,\Scal}}\rangle.
$$
\hfill$\Diamond$
\end{remark}

Let us append now some useful estimate for the expected number of interlacement-shreds of shift-invariant interlacement configuration measures in a large box. Together with Lemma~\ref{lem-lowerboundlogpint}, it will lead to a flexible  and useful bound in Corollary~\ref{cor-shrednumberesti} below.

\begin{lemma}[Estimating the expected number of interlacement-shreds]
\label{lem-expnumbershreds} 
There is a $C\in(0,\infty)$ such that, for any $P\in\Mcal_1^{\ssup {\rm s}}(\Scal)$, and for any $R\in\N$,
\begin{equation}\label{expshrednumberesti}
\int \partial\Pi_{W_{R}}^{\ssup\Scal}(P)(\d\mu)\,|\mu|
\leq \frac {C|W_R|}R \Big(J_{W_R}(\Pi_{W_{R}}(P))+\langle P,\mathfrak N_U^{\ssup\ell}\rangle-\frac1{|W_R|}\langle \partial \Pi_{W_R}^{\ssup{\Scal}}(P),\mathfrak p_{W_R}\rangle\Big),
\end{equation}
where we recall the definition of $\mathfrak p$ from \eqref{pmathfrakdef} (with $M=\infty$) and \eqref{pMdef}.

Additionally, if $P\in\Mcal_1^{\ssup {\rm s}}(\Scal^{\ssup M})$ for some $M\in(1,\infty)$, then \eqref{expshrednumberesti} holds with $\mathfrak p_{W_R}$ replaced by $\mathfrak p_{W_R,M}$.
\end{lemma}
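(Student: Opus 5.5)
We would use the variational characterisation \eqref{entropyformula} of relative entropy with a cleverly chosen test function, exploiting that the entropy $J_{W_R}(\Pi_{W_R}(P))$ compares the shred configuration under $P$ to the reference kernel ${\tt K}_{W_R}$ fed with the boundary data $\partial\Pi_{W_R}^{\ssup\Scal}(P)$. The reference shreds are Brownian: given an entry site $x$ and length $l$, a shred is a Brownian path conditioned to keep its particles in $W_R$ and exit at step $l$. The key point is that a shred starting close to $\partial W_R$ (it must, since its predecessor particle is outside) is very unlikely under Brownian motion to be long and deep inside, but under $P$ the shreds carry the particle mass. Concretely, we would split each shred according to whether it reaches distance $\geq R/2$ from $\partial W_R$ or not. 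Shreds that stay in the boundary layer $W_R\setminus W_{R/2}$ have their particles in a region of volume $\asymp R^{d-1}\cdot R$, which is not helpful by itself; so instead we bound the number of shreds by counting, for each shred, its particle mass versus its length: $|\mu|\le \sum_i l_i\cdot \frac{1}{l_i}$, and separate short shreds ($l_i\le R$) from long ones.

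Step one: long shreds ($l_i> R$) number at most $\frac1R\sum_i l_i=\frac1R\mathfrak N^{\ssup{\ell,\Scal}}_{\partial W_R}(\mu)$, whose $\psi$-expectation is $\le\frac{|W_R|}R\langle P,\mathfrak N_U^{\ssup\ell}\rangle$ by shift invariance (Remark~\ref{rem-counting}). Step two, the main obstacle: short shreds ($l_i\le R$). Here we need that a positive-density set of entry points cannot support many short excursions without cost. We would use the entropy inequality $\langle f,\xi\rangle\le H(\xi\mid \text{ref})+\log\langle \e^f,\text{ref}\rangle$ with $f=a\cdot\#\{\text{short shreds}\}$ is useless since under the reference the boundary data is the same. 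Instead we compare with the loop/shred density of the product measure $\psi\otimes{\tt K}$ via the joint law of $(x_i,l_i,y_i)$: the true probability weight of the boundary triples is $\prod_i p_{x_i}(l_i,y_i)$, and $-\mathfrak p_{W_R}$ measures its negative log. A short shred of length $l\le R$ starting at distance $\ge$ (something) from $\partial W_R$... Since this term appears explicitly on the right-hand side, we plan to bound $\#\{\text{short shreds}\}\le \frac{C}{R}\sum_i\big(l_i - \log p_{x_i}(l_i,y_i)\big)$ pointwise up to a surface-order term: each shred either has $l_i\gtrsim R$, or $-\log p_{x_i}(l_i,y_i)\gtrsim$ a constant and, crucially, since entry points must be distinct shred starts within $W_R$, the number of shreds with $-\log p$ bounded and $l$ bounded is controlled by the surface $R^{d-1}=|W_R|/R$ times the local particle density, via the particle count $\mathfrak N_U^{\ssup\ell}$ near the boundary, which is again $\le C\frac{|W_R|}{R}\langle P,\mathfrak N^{\ssup\ell}_U\rangle$ after averaging over shifts (the boundary layer of width $O(1)$ has volume $C|W_R|/R$).

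Step three: the entropy term $J_{W_R}$ enters to handle shreds whose entry is in the boundary layer but which are atypically placed relative to ${\tt K}_{W_R}$; we would use \eqref{entropyformula} with $f$ proportional to the number of shreds whose interior particles lie in the thin layer, comparing with ${\tt K}_{W_R}$ where such counts have exponential moments bounded by $e^{C|W_R|/R}$. Summing the three contributions gives \eqref{expshrednumberesti}; the $M$-version follows identically with $p^{\ssup{W,M}}$.
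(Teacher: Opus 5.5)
There is a genuine gap. Your Step one is fine. Step two, which you rightly call the main obstacle, does not work, and Step three does not repair it.

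\textbf{Step two.} You want a shred-by-shred bound: each shred is either long, or pays $\gtrsim R$ in $-\log p_{x_i}(l_i,y_i)$, or has its entry point in a boundary layer whose expected particle content is $O(|W_R|/R)$. Intermediate shreds fall into none of these cases. Take $l_i\approx\sqrt R$, entry point at distance $\approx R^{1/4}$ from $\partial W_R$, and exit just outside. Then $-\log p^{\ssup W}_{x_i}(l_i,y_i)=O(\log R)$, so $\frac CR(l_i-\log p_{x_i}(l_i,y_i))\approx R^{-1/2}$. Yet the layer containing such entry points has width $R^{1/4}$, hence expected particle content $\asymp R^{1/4}|W_R|\rho/R$. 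If you choose thresholds compatible with the prefactor $C/R$ (i.e.\ $l_i\geq R/C$ or $-\log p_{x_i}(l_i,y_i)\geq R/C$ for shreds outside the layer), the remaining shreds can enter anywhere within distance $\asymp R$ of $\partial W_R$. That layer has volume of order $|W_R|$, not $|W_R|/R$.

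Two further points. Your premise that an entry point must lie near $\partial W_R$ is false for general $P\in\Mcal_1^{\ssup{\rm s}}(\Scal)$, since jumps are unbounded. Also, $\mathfrak p_{W_R}$ only sees the triple $(x_i,l_i,y_i)$, so it gives no control over where the interior particles of a shred go under $P$.

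\textbf{Step three} is circular. As you noted yourself, the entropy cannot see anything that is a function of $\mu$. Any count of shreds is at most $|\mu|$ and is typically comparable to it under ${\tt K}_{W_R}(\mu,\cdot)$. Its exponential moment is therefore of order $\e^{c|\mu|}$, and bounding that by $\e^{C|W_R|/R}$ presupposes the lemma.

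\textbf{The missing idea.} The factor $1/R$ is not a per-shred effect; it comes from stationarity. The shred count must first be dominated by a functional of the shred interiors, which the entropy can control.
\begin{enumerate}
\item Fix a face, say $\{x_1=R\}$; every shred exiting through it produces a jump across that hyperplane.
\item Let $M_z$ be the number of jumps from the slab $[z-R,z)\times[-R,R]^{d-1}$ across $\{x_1=z\}$. By shift invariance, $\E[M_z]$ does not depend on $z$.
\item Sum over $z\in\{-R,\dots,R\}$. Each jump is counted at most its length plus one times.
\item Hence the expected number of $W_R$-shreds is at most $\frac CR$ times the expected total jump length $\sum_g\1\{g(0)\in W_R\}|g(\beta)-g(0)|$ of legs starting in $W_R$.
\item This functional depends on interior particle positions. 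Apply \eqref{entropyformula} conditionally on $\mu$ with $f$ equal to the total jump length. This bounds it by $|W_R|J_{W_R}(\Pi_{W_R}(P))$ plus $\sum_i\log\int q^{\ssup{l_i,W}}_{x_i,y_i}(\d f)\prod_k\e^{|f(k\beta)-f((k-1)\beta)|}$.
\item Write $q^{\ssup{l_i,W}}_{x_i,y_i}$ as the Gaussian path density divided by $p^{\ssup W}_{x_i}(l_i,y_i)$. Bound the step weights $\e^{|w|}g_\beta(w)$ in $L^1$ for the inner steps and in $L^\infty$ for the last one. Each summand is then at most $Cl_i-\log p^{\ssup W}_{x_i}(l_i,y_i)$.
\end{enumerate}
This is how $-\mathfrak p_{W_R}$ really enters: as the normalisation of the conditioned shred law, not as a measure of distance to the boundary. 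The $M$-version is identical with $q^{\ssup{l,W,M}}$ and $p^{\ssup{W,M}}$, and the same $C$.
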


\begin{proof} We write $W$ instead of $W_R$. We claim that there is a constant $C\in(0,\infty)$ such that
\begin{equation}\label{shrednumberbound}
\int \partial\Pi_{W}^{\ssup\Scal}(P)(\d\mu)\,|\mu|
\leq \frac CR \int_{\Scal} P(\d \varpi)\,\sum_g\1\{g(0)\in W\}|g(\beta)-g(0)|,\qquad R\in\N,
\end{equation}
where $\sum_g$ is the sum on all legs in the entire interlacement configuration (there are no loops).
In other words, the expected number of $W$-shreds under $\Pi_{W}(P)$ is not larger than $C/R$ times the expected sum of all jump lengths of all the legs with starting sites in $W$. Indeed, our argument for this is as follows. Fix one of the $2d$ hyperplanes that define the boundaries of $W$, say $H=\{x\in\R^d\colon x_1=R\}$. Consider all the $W$-shreds that make a final jump over this hyperplane (i.e., with a starting site with first index $\in[-R,R)$ and a target site with first index in $(R,\infty)$). Put $I_x=[x,x+1)\times [-R,R]^{d-1}$ and call $M(x,y)$ the number of jumps of any leg in the configuration from $I_x$ into $I_y$. For $z\in\Z$, consider $M_z=\sum_{x=z-R}^{z-1}\sum_{y=z}^\infty M(x,y)$, then the the number of shreds with final jump over $H$ is not larger than $M_R$. By shift-invariance of $P$, the expectation of $M_z$ does not depend on $z$. Hence, the expected number of $W$-shreds that make a final jump over $H$ is not larger than $C/R$ times the expectation of 
$$
\sum_{z=-R}^R M_z=\sum_{x,y,z\in\Z\colon -R\leq x\leq z\leq y, z\leq  R} M(x,y)\leq \sum_{x=-R}^R \sum_{y\geq x}|y-x| M(x,y)
\leq \sum_g\1\{g(0)\in W\}|g(\beta)-g(0)|,
$$
where $\sum_g$ is the sum over all legs $\in \Ccal_1$ in the entire configuration. Note that the expectation of $M_z$ (i.e., the one of $M_R$) is not smaller than the left-hand side of \eqref{shrednumberbound}. Summing over all the $2d$ hyperplanes, our argument implies \eqref{shrednumberbound}.

Now we estimate the expectation on the right-hand side of \eqref{shrednumberbound} by using \eqref{entropyformula}. We write the sum over the legs as $\sum_{i,k}\1\{f_i((k-1)\beta)\in W\} |f_i(k\beta)-f_i((k-1)\beta)|$, where $\varpi=\sum_{i\in I}\delta_{f_i}$ is the shred configuration in $W$. We abbreviate the kernel $(\Pi_{W}(P))_{\Tcal_{W}\to\Scal_{W}}(\mu,\cdot)$ by $P(\mu,\cdot)$ and see (by first considering a cut-off version of $\sum_{i,k}|f_i(k\beta)-f_i((k-1)\beta)|$) that
\begin{equation}\label{expectedleglengthsum}
\begin{aligned}
\int & \Pi_{W}^{\ssup\Scal}(P)(\d \varpi)\,\sum_{i,k}\1\{f_i((k-1)\beta)\in W\}|f_i(k\beta)-f_i((k-1)\beta)|\\
&=\int_{\Tcal_{W}} \partial\Pi_{W}^{\ssup\Scal}(P)(\d \mu)\,\int_{\Scal_{W}} P(\mu,\d\varpi)\sum_{i,k}|f_i(k\beta)-f_i((k-1)\beta)|\\
&\leq\int \partial\Pi_{W}^{\ssup\Scal}(P)(\d \mu)\,\Big[H_{\Scal_{W}}(P(\mu,\cdot)\mid {\tt K}_{W}(\mu, \cdot))+\log \int {\tt K}_{W}(\mu,\d \varpi)\Big(\prod_{i,k}\e^{|f_i(k\beta)-f_i((k-1)\beta)|}\Big)\Big]\\
&\leq |W| J_{W}(\Pi_{W}(P))+ \int \partial\Pi_{W}^{\ssup\Scal}(P)(\d \mu)\, \sum_i \log \int q_{x_i,y_i}^{\ssup{W,l_i}}(\d f)\prod_{k=1}^{l_i}\e^{|f(k\beta)-f((k-1)\beta)|},
\end{aligned}
\end{equation}
where we recall the definition of ${\tt K}_W$ in \eqref{kernelK}, and we wrote $\mu=\sum_i \delta_{(x_i,l_i,y_i)}$, as usual. Recalling the definition of $p_{x}^{\ssup W}(l,y)$ from \eqref{pMdef} and \eqref{qMdef} (both with $M=\infty$) and writing $g_\beta$ for the Gaussian density with variance $2\beta$, we further estimate
$$
\begin{aligned}
\int q_{x_i,y_i}^{\ssup{W,l_i}}(\d f)\prod_{k=1}^{l_i}\e^{|f(k\beta)-f((k-1)\beta)|}
&=
\frac 1{p_{x_i}^{\ssup W}(l_i, y_i)}\int_{W^{l_i-1}}\d w_1\dots \d w_{l_i-1}\, \prod_{k=1}^{l_i-1} \big[\e^{|w_{k}-w_{k-1}|}g_\beta (w_k-w_{k-1})\big]\\
&\leq \frac 1{p_{x_i}^{\ssup W}(l_i, y_i)}
C^{l_i-1} C',
\end{aligned}
$$
with $C=\log \E_0[\e^{|B_\beta|}]$ and $C'=\sup_{x\in\R^d}[\e^{|x|}g_\beta(x)]$, and we wrote $w_0=x_i$ and $w_{l_i}=y_i$. We estimate the $C$-terms against $C^{l_i}$ (using our convention that $C$ is a generic constant) and see that the last term on the right-hand side of \eqref{expectedleglengthsum} is not larger than $\int \partial\Pi_{W}^{\ssup\Scal}(P)(\d \mu)\, \sum_i [Cl_i-\log p_{x_i}^{\ssup W}(l_i, y_i)]=C\langle P, \mathfrak N_{W}^{\ssup{\ell,\Scal}}\rangle -\int \partial\Pi_{W}^{\ssup\Scal}(P)(\d \mu)\, \sum_{i\in I} \log p_{x_i}^{\ssup W}(l_i, y_i)$. Now use the shift-invariance of $P$ to see that $\frac 1{|W_R|}\langle P, \mathfrak N_{W}^{\ssup{\ell,\Scal}}\rangle=\langle P, \mathfrak N_{U}^{\ssup{\ell,\Scal}}\rangle$, which finishes the proof (recalling \eqref{pmathfrakdef} for the second term).
\end{proof}

As a consequence, here  is an upper bound for the expected number of interlacement-shreds in a large box $W_R$ that vanishes as $R\to\infty$  for every shift-invariant measure $P$ that has finite particle density and finite entropy $\h^{\ssup{\Lcal,\Scal}}(P)$ (thanks to \eqref{hbound}).

\begin{cor}[Estimating the expected number of interlacement-shreds]\label{cor-shrednumberesti} For any $M\in(1,\infty)$, there is a $C_M\in(1,\infty)$ such that, for any $P\in \Mcal_1^{\ssup{\rm s}}(\Scal^{\ssup M})$ and any sufficiently large $R$, 
$$
\frac 1{|W_R|}\int \partial \Pi_{W_R}^{\ssup\Scal}(P)(\d\mu)\, |\mu|\leq\frac  {1}R \Big(J_{W_R}(\Pi_{W_R}(P))+C_M\langle P,\mathfrak N_{U}^{\ssup{\Scal,\ell}}\rangle\Big).
$$
\end{cor}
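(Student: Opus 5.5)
The plan is to combine Lemma~\ref{lem-expnumbershreds} in its $M$-restricted form with part (1) of Lemma~\ref{lem-lowerboundlogpint}. Since $P\in\Mcal_1^{\ssup{\rm s}}(\Scal^{\ssup M})$, the second statement of Lemma~\ref{lem-expnumbershreds} gives
\begin{equation*}
\frac 1{|W_R|}\int \partial\Pi_{W_{R}}^{\ssup\Scal}(P)(\d\mu)\,|\mu|
\leq \frac {C}R \Big(J_{W_R}(\Pi_{W_{R}}(P))+\langle P,\mathfrak N_U^{\ssup\ell}\rangle-\frac1{|W_R|}\langle \partial \Pi_{W_R}^{\ssup{\Scal}}(P),\mathfrak p_{W_R,M}\rangle\Big),
\end{equation*}
with $C$ depending only on $d$ and $\beta$. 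Because $P$ has no loop part, $\mathfrak N_U^{\ssup\ell}=\mathfrak N_U^{\ssup{\ell,\Scal}}$ here.

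Next, I would bound the last term. Put $\xi=\Pi_{W_R}^{\ssup\Scal}(P)\in\Mcal_1(\Scal_{W_R}^{\ssup M})$; the spread restriction is inherited from $\Scal^{\ssup M}$, since shredding never cuts legs. Then \eqref{finallog_p_integralallgemein} gives
\begin{equation*}
-\langle\partial\Pi_{W_R}^{\ssup\Scal}(P),\mathfrak p_{W_R,M}\rangle\leq C_M\langle \xi,\mathfrak N^{\ssup{\ell,\Scal}}_{W_R}\rangle= C_M|W_R|\,\langle P,\mathfrak N_U^{\ssup{\ell,\Scal}}\rangle.
\end{equation*}
The last equality uses shift-invariance of $P$, exactly as at the end of the proof of Lemma~\ref{lem-expnumbershreds}; by \eqref{particlenumber}, the shred particle count in $W_R$ is determined by the boundary triples.

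Combining the two bounds yields
\begin{equation*}
\frac 1{|W_R|}\int\partial\Pi_{W_R}^{\ssup\Scal}(P)(\d\mu)\,|\mu|\leq \frac CR\Big(J_{W_R}(\Pi_{W_R}(P))+(1+C_M)\langle P,\mathfrak N_U^{\ssup{\ell,\Scal}}\rangle\Big).
\end{equation*}
It then remains to remove the prefactor $C$ in front of $J_{W_R}$, which the statement does not carry. I would do this by tracking the constant in the proof of \eqref{shrednumberbound}. In \eqref{expectedleglengthsum}, one can use the entropy inequality \eqref{entropyformula} with the test function $\lambda\sum|f_i(k\beta)-f_i((k-1)\beta)|$ for a large fixed $\lambda$. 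This replaces $H$ by $\frac1\lambda H$ at the price of a larger ($\lambda$-dependent) constant in front of the particle number and the $\log p$ term, and both are then absorbed into $C_M$. Alternatively, since $J\ge 0$, one can relabel $C_M$ and simply state the bound with a generic constant, as the paper's conventions allow.

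The main obstacle is this bookkeeping of constants. The substantive content is already in the two cited lemmas, so the rest is routine.
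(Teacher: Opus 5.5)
Your argument is the paper's: the $M$-version of Lemma~\ref{lem-expnumbershreds}, then \eqref{finallog_p_integralallgemein} applied to $\xi=\Pi_{W_R}^{\ssup\Scal}(P)$, then shift-invariance. The paper's one-line proof silently drops the constant $C$ that Lemma~\ref{lem-expnumbershreds} places in front of $J_{W_R}$, and your exponential tilting in \eqref{expectedleglengthsum}, with $\lambda$ at least the constant of \eqref{shrednumberbound}, is a valid way to remove it. Your alternative of ``relabelling $C_M$'' does not work, though: $J_{W_R}\geq 0$ does not let a factor $C>1$ in front of $J_{W_R}$ be absorbed into $C_M$. Note also that, since all legs under $P$ have spread $<M$, \eqref{shrednumberbound} alone already gives the claim with $C_M$ of order $M$ and no entropy term.
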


\begin{proof} In \eqref{expshrednumberesti} for $M<\infty$, use \eqref{finallog_p_integralallgemein} with $\xi=\Pi_{W_R}(P)$ for estimating the last term and use the shift-invariance of $P$.
\end{proof}

\section{Large deviation analysis: making $N\to\infty$}\label{sec-LDPana}

\noindent In this section, we derive the necessary LDP-framework for the thermodynamic limit as $N\to\infty$ for the empirical measure $\Xi^{\ssup{\omega_{\rm P}}}_{N,R}$ defined at the beginning of Section~\ref{sec-empmeasconf}, where $\omega_{\rm P}$ is the marked PPP under ${\tt Q}^{\ssup{\L_N,{\rm bc}}}$.  In Section~\ref{sec-LDPabstract} we give first some technical preparation and then the basic large deviation principle that we need (see Corollary \ref{cor-LDP}) and in Section~\ref{sec-PropertiesJW} some crucial properties of the rate function, $J_W$, which will be important for the proofs of the main results, Theorems~\ref{thm-freeenergy} and \eqref{thm-specrelent}. In order to apply later the upper bound of that LDP, we need to derive some crucial assertions about compactness, which we do in Section~\ref{sec-Compactness}. In Section~\ref{sec-LDPupperbound} we apply it to derive an upper bound for the limit superior as $N\to\infty$ of $\frac 1{|\L_N|}$ times the logarithm of the right-hand side of \eqref{hatZ2}; in Section~\ref{sec-Nlowbound} we give a lower bound for the limit inferior of $\frac 1{|\L_N|}$ times  the logarithm of  the right-hand side of \eqref{hatZ4}.

As always, we assume that  the situation of Section~\ref{sec-Purpose} is given, i.e., the regular decomposition of the centred box $\L_N$ into subboxes of radius $R$, which possibly depends on $N$ and converges to a given $R\in\N$. Recall the notion of a relative entropy from \eqref{Entropydef}.

\subsection{The conditional large-deviation principle}\label{sec-LDPabstract}

\noindent The empirical measure $\Xi^{\ssup{\omega_{\rm P}}}_{N,R}$ defined in \eqref{Xidef} is a mixture over $z\in Z_{N,R}$ of the   $\Lcal_W\times \Scal_W$-valued random variable pairs $(\theta_z(\omega_{W_z}^{\ssup\Lcal}), \theta_z(\omega_{W_z}^{\ssup\Scal}))$ defined in \eqref{subconfigurations}, and we consider the distribution under the PPP ${\tt Q}^{\ssup{\L_N,{\rm bc}}}$. The first member of these pairs comes from those loops that are entirely contained in some of these  subboxes, while the second one, the shred part, comes from those loops  that are $R$-crossing. If these pairs would be i.i.d.~over $z$ and their distribution would not depend on $N$, then Sanov's theorem (see \cite[Section ]{DZ98}) would immediately give the validity of the LDP with an explicit rate function. But the shred-parts are far from being i.i.d., and the distribution of the pairs depends on $N$ (namely via the boundary condition).

Hence, we need to take care of these two obstacles. The second one is only technical: because of the boundary condition (periodic or particle) in $\L_N$, the distribution in the subboxes depend slightly on $N$ and on $z$. We will handle this problem with the help of the concept of exponential equivalence (which says that two exponentially equivalent sequences satisfy the same LDP, if any, see \cite[Section 4.2.2]{DZ98}) and will be using the following technical lemma, whose proof is straight-forward and is left to the reader.

\begin{lemma}[Perturbed empirical measures are exponentially equivalent]\label{lem-LDPtechnical}
On a Polish space $\Xcal$, assume that an i.i.d.~sequence $(X_z)_{z\in[N]}$ is given  and another sequence $(X_z^{\ssup N})_{z\in[N]}$ that satisfies
\begin{equation}\label{SanovCriterion}
\lim_{N \to\infty}\frac 1N\sum_{z=1}^N \log \E\big[\e^{f(X_z^{\ssup N})}\big] =\log\E[\e^{f(X_1)}],\qquad f\in\Ccal_{\rm b}(\Xcal).
\end{equation}
Then the empirical measures $\frac 1N\sum_{z=1}^N \delta_{X_z^{\ssup N}}$ and $\frac 1N\sum_{z=1}^N \delta_{X_z}$ are exponentially equivalent.
\end{lemma}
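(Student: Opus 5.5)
The plan is to show that both empirical measures satisfy the same LDP, namely Sanov's LDP with rate function $H(\cdot\mid\mathcal D)$, where $\mathcal D$ is the law of $X_1$. The comparison will be made through the Gärtner–Ellis/Dawson–Gärtner (projective limit) route, with the hypothesis \eqref{SanovCriterion} taking the place of independence. I read the statement, as its use in the paper suggests, with $(X_z^{\ssup N})_{z\in[N]}$ independent over $z$ for each fixed $N$. Write $L_N=\frac1N\sum_z\delta_{X_z^{\ssup N}}$ and $\widetilde L_N=\frac1N\sum_z\delta_{X_z}$.

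First I compute the logarithmic moment generating functional of $L_N$ on the dual pairing with $\Ccal_{\rm b}(\Xcal)$. By independence, $\frac1N\log\E[\e^{N\langle f,L_N\rangle}]=\frac1N\sum_z\log\E[\e^{f(X_z^{\ssup N})}]$, and \eqref{SanovCriterion} says this converges to $\Lambda(f)=\log\E[\e^{f(X_1)}]$. The same limit $\Lambda(f)$ holds trivially for $\widetilde L_N$. The functional $\Lambda$ is finite and Gateaux differentiable on $\Ccal_{\rm b}(\Xcal)$ (it is a cumulant generating functional of bounded functions). Its Legendre transform on $\Mcal_1(\Xcal)$ is $H(\cdot\mid\mathcal D)$ by the Donsker–Varadhan variational formula \eqref{entropyformula}.

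Next I establish the LDP for $L_N$. In the weak topology, for each finite family $f_1,\dots,f_k\in\Ccal_{\rm b}$ the vector $(\langle f_i,L_N\rangle)_i$ satisfies an LDP in $\R^k$ by the finite-dimensional Gärtner–Ellis theorem, since its limiting log-MGF $\Lambda(\sum t_if_i)$ is finite and differentiable. The Dawson–Gärtner projective limit theorem then gives the LDP in the weak$^*$ (i.e. $\tau$-like) topology induced by $\Ccal_{\rm b}$, which is the weak topology on $\Mcal_1(\Xcal)$, with rate $\sup_f(\langle f,\cdot\rangle-\Lambda(f))=H(\cdot\mid\mathcal D)$. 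I expect the main obstacle to be the exponential tightness needed to upgrade from the projective statement and to guarantee a good rate function on $\Mcal_1(\Xcal)$ rather than on the algebraic dual. To handle it I would choose, from the tightness of the single law $\mathcal D$, compact sets $K_j$ with $\mathcal D(K_j^c)\le \e^{-j^2}$. I then apply \eqref{SanovCriterion} to bounded continuous approximations $f$ of $a\1_{K_j^c}$, obtaining $\limsup\frac1N\log\P(L_N(K_j^c)>1/j)\le -a/j+\log(1+\e^{a}\e^{-j^2}\cdot\text{const})$. Choosing $a=j^2/2$ makes this bound tend to $-\infty$, and this yields exponential tightness exactly as in the standard proof of Sanov's theorem.

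Finally, since both sequences obey the same LDP with the same good rate function, I conclude as the lemma is meant to be used: the perturbed empirical measure can replace the i.i.d. one in all LDP statements. If genuine exponential equivalence of a coupling is required, I would couple $X_z^{\ssup N}$ and $X_z$ maximally for each $z$. I would then apply the same exponential Chebyshev bound with functions $f$ from a countable convergence-determining family (bounded Lipschitz functions) to show that $\frac1N\log\P(d(L_N,\widetilde L_N)>\eps)\to-\infty$, using a Prokhorov metric dominated by countably many such test functions. This last step is more delicate, and I would fall back on the "same LDP" formulation if it fails.
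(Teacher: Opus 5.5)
You are right that the $X_z^{(N)}$ must be taken independent over $z$. With that reading, your Gärtner--Ellis/projective-limit route does give Sanov's LDP for $L_N$, after one repair in the tightness step. A continuous majorant of $a\1_{K_j^{\rm c}}$ is $\geq a$ on $\partial K_j$, so its exponential moment is not controlled by $\mathcal D(K_j^{\rm c})$. Use instead the minorant $a\min\{1,j\dist(\cdot,K_j)\}$, which bounds $L_N(\{\dist(\cdot,K_j)\geq 1/j\})$, together with the closed, totally bounded (hence compact) sets $\bigcap_{j\geq J}\{\dist(\cdot,K_j)\leq 1/j\}$ in Prokhorov's criterion.

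That LDP transfer is all the paper uses; it leaves the proof to the reader. It is not the stated conclusion, however, and this is where the gap lies. Exponential equivalence concerns a joint realisation of $L_N$ and $\widetilde L_N$ and is strictly stronger than sharing an LDP. If $X_z^{(N)}$ is an independent copy of $X_z$ and $\mathcal D$ is non-degenerate, the hypothesis holds trivially. Yet $L_N$ and $\widetilde L_N$ are then independent, and $\frac1N\log\P(d_{\rm P}(L_N,\widetilde L_N)>\delta)$ stays bounded below for small $\delta>0$. So the lemma can only assert that a suitable coupling exists, and building it is the whole content.

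Your fallback, the maximal (total-variation) coupling, does not provide such a coupling, because the hypothesis controls the marginals only weakly. Take $\mathcal D$ uniform on $[0,1]$ and $X_z^{(N)}$ uniform on $\{1/N,2/N,\dots,1\}$. The hypothesis holds by Riemann sums, but the laws are mutually singular, so the independent coupling is maximal and the obstruction above applies. Likewise, exponential Chebyshev bounds for $\langle g,L_N-\widetilde L_N\rangle$ need the joint law of the pairs $(X_z^{(N)},X_z)$, and the hypothesis says nothing about it.

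The missing idea is to couple in the Prokhorov metric, after first extracting from the hypothesis that most marginals $\mu_z^{(N)}={\rm law}(X_z^{(N)})$ are close to $\mathcal D$.
\begin{itemize}
\item The convex functions $t\mapsto\frac1N\sum_z\log\E[\e^{tg(X_z^{(N)})}]$ converge pointwise to the differentiable $t\mapsto\log\mathcal D(\e^{tg})$. Hence their derivatives at $0$ converge, i.e.\ $\bar\mu_N=\frac1N\sum_z\mu_z^{(N)}\to\mathcal D$ weakly.
\item Consequently $\mathfrak M_N=\frac1N\sum_z\delta_{\mu_z^{(N)}}$ is tight in $\Mcal_1(\Mcal_1(\Xcal))$. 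Every limit point $\mathfrak M$ satisfies $\int\log\nu(\e^f)\,\mathfrak M(\d\nu)=\log\mathcal D(\e^f)=\log\int\nu(\e^f)\,\mathfrak M(\d\nu)$ for all $f\in\Ccal_{\rm b}(\Xcal)$. Equality in Jensen's inequality for the strictly concave $\log$, applied to a countable measure-determining family of $f$, forces $\mathfrak M=\delta_{\mathcal D}$.
\item Choose $\eps_N\downarrow 0$ slowly enough that the fraction $\alpha_N$ of $z$ with $d_{\rm P}(\mu_z^{(N)},\mathcal D)\geq\eps_N$ still tends to $0$. For the remaining $z$, Strassen's theorem gives couplings with $\P(\dist(X_z^{(N)},X_z)>\eps_N)\leq\eps_N$. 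Take all pair couplings independent over $z$.
\item Then $d_{\rm P}(L_N,\widetilde L_N)\leq\eps_N+\alpha_N+B_N/N$, where $B_N$ is stochastically dominated by a ${\rm Bin}(N,\eps_N)$ variable. Chernoff's bound $\P(B_N\geq aN)\leq\exp(-N[a\log\frac a{\eps_N}-a])$ with $a=\delta/2$ gives $\frac1N\log\P(d_{\rm P}(L_N,\widetilde L_N)>\delta)\to-\infty$.
\end{itemize}
This proves the lemma as stated with elementary tools. Via the standard transfer theorem, it also recovers the LDP you were aiming at without the infinite-dimensional Gärtner--Ellis machinery.
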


In order to apply this, we need to prove that the distributions of the pairs satisfy the criterion in \eqref{SanovCriterion}. 
Recall the Brownian kernel ${\tt K}_W$ defined in \eqref{kernelK} and introduce also the version ${\tt K}_W^{\ssup {\L_N,{\rm bc}}}(\mu,\cdot)\in\Mcal_1(\Scal_W)$, where $q^{\ssup{l_i, W}}$ is replaced by $q^{\ssup{l_i, W,\L_N,{\rm bc }}}$; see \eqref{qdef}. This measure is concentrated on the set of those collections $(f_i)_i$ of $W$-shreds such that $f_i$ is a length-$l_i$ shred with $f_i(0)=x_i$ and $f_i(l_i\beta)=y_i$ for any $i\in I$. 

For any $\xi\in\Mcal_1(\Shreds_W)$ and any $W\Subset \R^d$, we denote by $\xi_{\Tcal_W\to  \Scal_W}$ the kernel from $\Tcal_W$ into $\Shreds_W$ that is defined by letting $\xi_{\Tcal_W\to  \Scal_W}(\mu,\cdot)$ be a regular conditional version of $\xi$ given $\partial\Pi_W^{\ssup\Shreds}(\xi)=\mu$ for $\mu\in\Tcal_W$. Then $\xi$ is equal to $\partial\Pi_W^{\ssup\Shreds}(\xi)\otimes \xi_{\Tcal_W\to  \Scal_W}$.

\begin{lemma}[The shred-configuration kernel]\label{lem-Sanovcritsatisfied} We consider either periodic or particle boundary condition.

\begin{enumerate}

\item For any $N\in\N$ and any $z\in Z_{N,R}$, the map $\Tcal_{W_z}\times  \Gcal^{\ssup\Scal}_{W_z}\ni(\mu,\d\xi)\mapsto {\tt K}_{W_z}^{\ssup {\L_N,{\rm bc}}}(\mu,\d \xi)$ is equal to the kernel $\Pi_{W_z}^{\ssup\Scal}(\LPP^{\ssup{\L_N,{\rm bc}}})_{\Tcal_{W_z}\to  \Scal_{W_z}}$, where we recall that $W_z=z+W$.

\item The empirical measure of the kernels from (1) converges to the kernel ${\tt K}_W$ in the following sense. Assume that $\frac 1{\# Z_{N,R}}\sum_{z\in Z_{N,R}}\delta_{\mu_z}\to \delta_\mu$ in $\Mcal_1(\Tcal_W)$ as $N\to\infty$, then 
\begin{equation}\label{LDPconditionkernel}
\lim_{N\to\infty}\frac 1{\# Z_{N,R}}\sum_{z\in Z_{N,R}}
\log \E_{\mu_z}\big[\e^{f(\theta_z(\Pi_{W_z}^{\ssup\Scal}))}\big]
=\log {\tt K}_W\Big(\mu,\e^{f(\Pi_W^{\ssup\Scal})}\Big),\qquad f\in \Ccal_{\rm b}(\Scal_W),
\end{equation}
where $\E_{\mu_z}[\cdot]={\tt Q}^{\ssup{\L_N,{\rm bc}}}[\cdot\mid \theta_z(\partial\Pi_{W_z}^{\ssup\Scal})=\mu_z]$.

\item   Both under ${\tt Q}^{\ssup{\L_N,{\rm par}}}$ and ${\tt Q}^{\ssup{\L_N,{\rm per}}}$, under the assumption that $\frac 1{\# Z_{N,R}}\sum_{z\in Z_{N,R}}\delta_{\mu_z}\to \delta_\mu$ in $\Mcal_1(\Tcal_W)$ as $N\to\infty$, the
pairs $(\theta_z(\omega_{W_z}^{\ssup\Lcal}), \theta_z(\omega_{W_z}^{\ssup\Scal}))$, $z\in Z_{N,R}$ satisfy \eqref{SanovCriterion}.
\end{enumerate}

\end{lemma}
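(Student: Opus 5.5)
The plan is to prove the three parts in order. Each rests on the Poisson structure of ${\tt Q}^{\ssup{\L_N,{\rm bc}}}$ and on the strong Markov decomposition of Lemma~\ref{lem-distboundshred}.

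For (1), we use that the $R$-crossing loops form a Poisson point process independent of the non-crossing ones. Each $R$-crossing loop is cut, by Lemma~\ref{lem-distboundshred}(1), at the successive subbox-change times $\tau_j\beta$. Conditional on the entry/exit data $(w_{j-1},l_j,w_j)$, these pieces are independent with laws $q^{\ssup{l_j,W,\L_N,{\rm bc}}}$. Superposing over all loops, and using that the Poisson process is a mixture of independent loops, gives the following. Conditional on the full boundary-shred data of all subboxes, and in particular on $\partial\Pi_{W_z}^{\ssup\Scal}=\mu$ for a fixed $z$, the $W_z$-shreds are independent with those laws. Integrating out the data of the other subboxes does not change this conditional law. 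This is exactly ${\tt K}_{W_z}^{\ssup{\L_N,{\rm bc}}}(\mu,\cdot)$, which identifies it as a regular version of the conditional kernel. The only care needed is measurability and regularity of the kernel. Both follow because $q$ depends continuously on its endpoints for Lebesgue-almost every choice.

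For (2), the key is that ${\tt K}_{W_z}^{\ssup{\L_N,{\rm bc}}}$ differs from the free kernel ${\tt K}_W$, after shifting by $z$, only through the boundary condition of $\L_N$. If $W_z$ has distance $\gg R$ from $\partial\L_N$, or under particle boundary condition in general, the shred law $q^{\ssup{l,W,\L_N,{\rm bc}}}_{x,y}$ is a conditioned Brownian piece whose particles stay in $W_z$. It can therefore differ from $q^{\ssup{l,W}}_{x,y}$ only through legs reaching $\partial\L_N$ or wrapping around the torus. For fixed $l$ and fixed bounded $x,y$, the total variation distance of this difference is $\le l\,\e^{-c\,\dist(W_z,\partial\L_N)^2}$. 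The subboxes within distance $o(N^{1/d})$ of $\partial\L_N$ form a vanishing fraction of $Z_{N,R}$. Since $f$ is bounded, $\log\E_{\mu_z}[\e^f]$ is bounded by $\|f\|_\infty$ uniformly, so these boundary boxes contribute $o(1)$ to the average. For the bulk boxes, the assumption that the empirical measure of the $\mu_z$ converges to $\delta_\mu$ lets us use weak continuity of $\mu\mapsto{\tt K}_W(\mu,\e^{f})$ on $\Tcal_W$. This continuity holds because $q^{\ssup{W,l}}_{x,y}$ is weakly continuous in $(x,y)$ with fixed $l$, and, under the convergence topology of Section~\ref{sec-PPPs}, the number of atoms and the lengths $l_i$ eventually agree. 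The map $\nu\mapsto\int\log{\tt K}_W(\cdot,\e^f)\,\d\nu$ is bounded and continuous, so the averages converge to $\log{\tt K}_W(\mu,\e^{f})$.

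For (3), we combine the two parts. The loop components $\theta_z(\omega^{\ssup\Lcal}_{W_z})$ are independent over $z$ and independent of the shred part. Their laws equal $\Pi^{\ssup\Lcal}_W({\tt Q})$ up to the same boundary-layer perturbation, which is trivial for particle boundary condition in bulk boxes. For a product test function, the average of the logarithmic moment generating functions therefore splits into the loop average and the conditional shred average from (2). For a general $f\in\Ccal_{\rm b}(\Lcal_W\times\Scal_W)$, we condition on the loop part and apply (2) with $f(\omega,\cdot)$, using dominated convergence.

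The main obstacle I expect is the uniformity needed in (2): the continuity of ${\tt K}_W(\cdot,\e^f)$ must hold jointly with the $z$-dependent perturbation, on a noncompact set of $\mu$. I would first handle this by truncating to $\mu$ with boundedly many atoms and bounded lengths, using tightness of $\{\mu_z\}$, and then pass to the limit in the truncation.
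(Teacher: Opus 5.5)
Your proposal is correct. For (1) and (3) it follows the paper: the strong Markov decomposition of Lemma~\ref{lem-distboundshred}(1), together with superposition of the independent loops of the soup, gives the conditional kernel, and (3) reduces to (2) via independence of the loop and shred parts.

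The real difference is in (2). The paper notes that under particle boundary condition there is nothing to compare: by Lemma~\ref{lem-distboundshred}(2), $q^{\ssup{l,W,\L_N,{\rm par}}}_{x,y}=q^{\ssup{l,W}}_{x,y}$ holds exactly, for every subbox including those touching $\partial\L_N$. For periodic boundary condition it invokes the heat-kernel comparison \eqref{kernelperiodicesti}. This gives the multiplicative bound \eqref{periodic_p_esti}, which is uniform in $z$ because the torus is homogeneous.

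You instead discard the vanishing fraction of subboxes near $\partial\L_N$ and use a total-variation bound in the bulk. This is cruder, since the ``boundary layer'' is only an artifact of writing torus paths in $\L_N$-coordinates, but it is adequate because $|\log\E_{\mu_z}[\e^{f}]|\le\|f\|_\infty$.

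What your route buys is the step the paper dismisses as ``trivially true without taking the limit''. Even with identical kernels, the left-hand side of \eqref{LDPconditionkernel} is an average of $\log{\tt K}_W(\mu_z,\e^{f})$. Passing to $\log{\tt K}_W(\mu,\e^{f})$ requires your observation that $\mu'\mapsto\log{\tt K}_W(\mu',\e^{f})$ is bounded and continuous on $\Tcal_W$, which follows from weak continuity of $q^{\ssup{W,l}}_{x,y}$ in $(x,y)$ at fixed $l$. This continuity needs no truncation by itself; tightness of the $\mu_z$ is only needed to make your total-variation estimate uniform.

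One phrase in (3) needs repair. After conditioning on the loop part you cannot literally ``apply (2) with $f(\omega,\cdot)$''. The $\omega$-integral sits inside the logarithm, whereas (2) concerns averages over $z$ of logarithms. What you need is that
\[
\mu'\mapsto\log\int\Pi_W^{\ssup\Lcal}({\tt Q})(\d\omega)\,{\tt K}_W(\mu',\e^{f(\omega,\cdot)})
\]
is bounded and continuous. This follows by dominated convergence from the pointwise continuity in $\mu'$ established in your proof of (2). Your perturbation bounds hold uniformly over test functions bounded by $\|f\|_\infty$, so combining them with this continuity, the same averaging argument as in (2) concludes.
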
 

\begin{proof}(1) Lemma~\ref{lem-distboundshred}(1) implies that $\LPP^{\ssup{\L_N,\rm bc}}$, conditional on the family $(\mu_z)_{z\in Z_{N,R}}=(\theta_{z}(\partial\Pi^{\ssup{\Scal}}_{W_z}(\omega_{\rm P} )))_{z\in Z_{N,R}}$ of the boundary shred configurations, the shred-configuration family $(\theta_{z}(\Pi^{\ssup{\Scal}}_{W_z}(\omega_{\rm P})))_{z\in Z_{N,R}}$ has the distribution $\bigotimes_{z\in Z_{N,R}}{\tt K}_{W_z}^{\ssup{\L_N,\rm bc}}(\mu_z,\cdot)$. By superposition of all the independent loops of the loop soup, it implies also that, for any $z$, the conditional distribution of the family of shred configurations in $W_z$ given the boundary shred configuration $\mu_z$ is given by the product measure of the $q_{x_i,y_i}^{\ssup{l_i,W_z,\L_N,{\rm bc}}}$.

(2) For \lq$\rm bc$\rq\ equal to particle boundary condition, according to  Lemma~\ref{lem-distboundshred}(2), the boundary condition has no effect, since  all particles involved in the definition of $q$ lie in $W$ anyway, i.e., $q^{\ssup{W,\L_N,{\rm bc}}}=q^{\ssup W}$. Hence, by Assertion (1), \eqref{LDPconditionkernel} is trivially true without taking the limit. However, for periodic boundary condition, one needs to prove that ${\tt K}_W^{\ssup{\L_N,{\rm bc}}}(\mu,\cdot)$ is close to ${\tt K}_W(\mu,\cdot)$, i.e., $q_{x_i,y_i}^{\ssup {l_i, W,\L_N,{\rm bc}}}$ is close to $q_{x_i,y_i}^{\ssup {l_i, W}}$ in the sense of \eqref{LDPconditionkernel}. We leave this to the reader, see the hints in the proof of (3).

(3)  Observe that, since  the empirical measures of the $\mu_z$ are given and converge, the loop part and the shred part are independent, and the necessary assertion for the shred part is in (2). The proof for the corresponding loop part is similar, hence we restrict ourselves to giving just some hints and leave the details to the reader.

Note that we can estimate, for some $C\in(0,\infty)$, only depending on $R$ (and on $\beta$ and on $d$), for any sufficiently large $N$,
\begin{equation}\label{periodic_p_esti}
p_x^{\ssup{W,\L_N,{\rm per}}}(l,y)\leq p_x^{\ssup{W}}(l,y)\big[1+\e^{-C N^{2/d}/l}\big],\qquad x\in W,l\in\N,y\in W^{\rm c}.
\end{equation}
The reason for this is the following. According to \cite[Theorem 6.3.8]{BR97}, there are, for any $K\in(0,\infty)$, constants $C_1,C_2, C_3$ such that the solution $g^{\ssup{\L,\rm per}}_t(x,y)$ to the heat equation with the Laplace operator with periodic boundary condition in the box $\L$ satisfies
\begin{equation}\label{kernelperiodicesti}
0\leq g^{\ssup{\L,\rm per}}_t(x,y)-g_t(x,y)\leq C_1 g_{t/C_2}(x,y) \,\e^{-C_3 \dist(y,\partial\L)^2/t},\qquad x,y\in \L,t\in (0, K],
\end{equation}
where $g_t$ denotes the free fundamental solution, i.e., the standard Gaussian kernel with variance $2t$. We leave the remainder of the details to the reader.
\end{proof}

Let us turn to the first of the two obstacles. This is more serious. Indeed, the first members of the pairs are independent over $z$ and (ignoring the small dependence on $z$ and $N$) also identically distributed. But the second ones are not at all, since neighbouring subboxes have a strong dependence, and this dependence has very long correlations.  Nevertheless, there is a way out of that, and this is a conditioning on the boundary configurations. The idea is that, under this conditioning, the configurations inside the boxes are independent with a distribution that depends only on the boundary configuration (by the virtue of the Markov property of the Brownian motion). More precisely, we condition on the boundary empirical measure of $\Xi_{N,R}$, namely
\begin{equation}\label{partialXidef}
\partial \Xi^{\ssup\Scal}_{N,R}=\partial \Pi_W^{\ssup\Scal}\big(\Xi_{N,R}\big)=\frac1{\# Z_{N,R}}\sum_{z\in Z_{N,R}}\delta_{\partial\Pi_W^{\ssup{\Scal}}(\theta_{z}\varpi_{W_z}^{\ssup{\Scal}})}\in\Mcal_1(\Tcal_W)
\end{equation}
and obtain that $\Xi_{N,R}$ is indeed an empirical measure of independent random variables, and we also have sufficient information about their distributions.

 In order to catch the large deviation behaviour of the conditional distribution of $\Xi_{N,R}$, we need to employ an LDP of the following type.  The following LDP is taken from \cite[Thm.~17]{PRV13}; it goes back to an unpublished manuscript by C.~L\'eonard; see \cite[Thm.~1]{ADPZ11} for a conditional version of this LDP.

\begin{lemma}[Sanov-type LDP for type-dependent independent random variables]\label{lem-AbstractLDP}
Let $\Xcal, \Ycal$ be two Polish spaces and pick $\rho\in\Mcal_1(\Xcal)$ and let $(x_i^{\ssup n})_{i\in\N}$ be a sequence in $\Xcal$ such that $\frac 1n \sum_{i=1}^n\delta_{x_i^{\ssup n}}$ converges weakly towards $\rho$ as $n\to\infty$. Let $\zeta\colon \Xcal\times \sigma(\Ycal)\to[0,1]$ be a continuous Markov kernel from $\Xcal$ to $ \Ycal$, where $\sigma(\Ycal)$ denotes the Borel sigma field on $\Ycal$. Furthermore, let $(Y_i^{\ssup n})_{i\in[n]}$ have the distribution $\bigotimes_{i\in [n]} \zeta(x_i^{\ssup n},\cdot)$. Then the empirical pair measure $\frac 1n\sum_{i=1}^n \delta_{(x_i^{\ssup n},Y_i^{\ssup n})}$ satisfies an LDP with rate function
\begin{equation}\label{RFabstract}
\Mcal_1(\Xcal\times\Ycal)\ni q\mapsto \begin{cases} H_{\Xcal\times\Ycal}(q\,|\,\rho\otimes \zeta)&\mbox{if }\pi_1(q)=\rho,\\
+\infty&\mbox{otherwise,}
\end{cases}
\end{equation}
where $\pi_1$ is the canonical projection $\Xcal\times\Ycal\to\Xcal$, and $\pi_1(q)$ is the corresponding image measure.
\end{lemma}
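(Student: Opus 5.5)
The statement is a Sanov-type LDP for empirical pair measures of independent but non-identically distributed variables, whose laws depend on given "types" $x_i^{\ssup n}$. I would prove it by the Gärtner--Ellis/projective route: first establish the LDP for a fixed finite family of bounded continuous test functions, then pass to the weak topology via Dawson--Gärtner. The rate function is then identified with the relative entropy through the Donsker--Varadhan variational formula \eqref{entropyformula}.

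\textbf{Step 1: log-moment generating functional.} For $F\in\Ccal_{\rm b}(\Xcal\times\Ycal)$, independence gives
$$
\frac 1n\log \E\Big[\e^{\sum_{i=1}^n F(x_i^{\ssup n},Y_i^{\ssup n})}\Big]=\frac 1n\sum_{i=1}^n \Lambda_F(x_i^{\ssup n}),\qquad \Lambda_F(x)=\log\int \zeta(x,\d y)\,\e^{F(x,y)}.
$$
Continuity of the kernel $\zeta$ (weak continuity of $x\mapsto\zeta(x,\cdot)$) and boundedness and continuity of $F$ make $\Lambda_F$ bounded and continuous on $\Xcal$. The assumed weak convergence $\frac 1n\sum_i\delta_{x_i^{\ssup n}}\Rightarrow\rho$ then gives the limit $\langle\rho,\Lambda_F\rangle$. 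This limit is finite, linear-in-the-exponent and Gateaux differentiable in $F$.

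\textbf{Step 2: exponential tightness and the abstract LDP.} The first marginals are deterministic and converge, so they form a tight set. For the second marginals, tightness of $\{\zeta(x,\cdot)\colon x\in K\}$ for compact $K\subset\Xcal$ follows from continuity of $\zeta$. This yields compacts $K_m\subset\Xcal\times\Ycal$ with $\sup_n \frac1n\sum_i\zeta(x_i^{\ssup n},K_m^{\rm c}(x_i))\le \e^{-m^2}$. A standard Chebyshev/Bernstein bound then gives exponential tightness of the empirical pair measures in $\Mcal_1(\Xcal\times\Ycal)$. The projective-limit Gärtner--Ellis theorem (\cite[Cor.~4.6.11]{DZ98}) now yields a full LDP with rate $I(q)=\sup_F\{\langle q,F\rangle-\langle\rho,\Lambda_F\rangle\}$.

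\textbf{Step 3: identify $I$.} Taking $F(x,y)=f(x)$ shows $I(q)=\infty$ unless $\pi_1(q)=\rho$. If $\pi_1(q)=\rho$, then $\langle\rho,\Lambda_F\rangle=\log$-integrals fiberwise, and I disintegrate $q=\rho\otimes q(x,\cdot)$. Applying \eqref{entropyformula} pointwise in $x$ and then using a measurable-selection argument to interchange $\sup_F$ and $\int\rho(\d x)$ gives $I(q)=\int\rho(\d x)\,H(q(x,\cdot)\mid\zeta(x,\cdot))=H(q\mid\rho\otimes\zeta)$ by the chain rule for relative entropy. The bound $I\le H$ is immediate from Jensen; $I\ge H$ uses the approximation of $\log\frac{\d q}{\d(\rho\otimes\zeta)}$ by bounded continuous functions.

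\textbf{Main obstacle.} The hard part is Step 2's exponential tightness, together with the interchange of supremum and integral in Step 3. Exponential tightness requires the uniform-in-type tightness of $\zeta$ over the compact hull of the $x_i^{\ssup n}$. I would obtain this from Prokhorov together with continuity of $\zeta$, exactly where the continuity hypothesis is used. If Gärtner--Ellis proves awkward, the fallback is a direct proof: upper bound via exponential Chebyshev, lower bound via tilting by $\frac{\d q}{\d(\rho\otimes\zeta)}$ and a law of large numbers for independent non-identical variables.
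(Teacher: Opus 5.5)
The paper gives no proof of this lemma. It quotes it from \cite[Thm.~17]{PRV13} (going back to L\'eonard) and cites \cite[Thm.~1]{ADPZ11} for a conditional variant. Your proposal is therefore an independent, self-contained proof, and it is essentially correct. It is the standard G\"artner--Ellis route to Sanov's theorem on Polish spaces: abstract G\"artner--Ellis \cite[Cor.~4.6.11]{DZ98} on finite signed measures with the $\Ccal_{\rm b}$-topology, plus exponential tightness. The only change from the i.i.d.\ case is that $\log\langle\mu,\e^F\rangle$ is replaced by $\langle\rho,\Lambda_F\rangle$, which is finite and Gateaux differentiable because $|\partial_t\Lambda_{F+tG}|\leq\|G\|_\infty$ pointwise. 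Compared with the citation, your route shows exactly where the hypotheses enter:
\begin{itemize}
\item Continuity of $\zeta$, together with tightness of $\zeta(x_k,\cdot)$ along convergent $x_k$ and uniform continuity of $F$ on compacts, makes $\Lambda_F$ bounded and continuous. Weak convergence of the type measures then gives the limit.
\item The same continuity makes $\frac1n\sum_i\delta_{x_i^{\ssup n}}\otimes\zeta(x_i^{\ssup n},\cdot)$ converge to $\rho\otimes\zeta$, hence tight.
\item Goodness of the rate function comes for free.
\end{itemize}
The citation buys only brevity.

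Two points need tightening.

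First, the types need not lie in any compact set: a single $x_i^{\ssup n}$ may escape to infinity. So there is no ``compact hull'' over which $\zeta$ is uniformly tight. What suffices is tightness of the type measures. Take $K\subset\Xcal$ compact carrying all but a fraction $\eps$ of the types for every $n$, and $L\subset\Ycal$ compact with $\sup_{x\in K}\zeta(x,L^{\rm c})\leq\eps$. Types outside $K$ lie outside $K\times L$ deterministically. Only the remaining indicators need the exponential Chebyshev bound $\prod_i(1+(\e^\lambda-1)p_i)\leq\exp((\e^\lambda-1)\sum_ip_i)$.

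Second, Step~3 needs neither measurable selection nor an interchange of supremum and integral, so it is not a real obstacle. Assume $\pi_1(q)=\rho$.
\begin{itemize}
\item The bound $I(q)\leq H(q\mid\rho\otimes\zeta)$ follows from \eqref{entropyformula} applied in each fibre of the disintegration $q=\rho\otimes q(x,\cdot)$, plus the chain rule (the equality in \eqref{Hdecomp}).
\item The bound $I(q)\geq H(q\mid\rho\otimes\zeta)$, which you attribute to an approximation argument, follows from Jensen in the $x$-variable, $\int\rho(\d x)\,\Lambda_F(x)\leq\log\langle\rho\otimes\zeta,\e^F\rangle$. Combine this with the Donsker--Varadhan formula with supremum over $\Ccal_{\rm b}(\Xcal\times\Ycal)$, which is valid on Polish spaces. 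This also covers $q\not\ll\rho\otimes\zeta$.
\end{itemize}
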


We recall that $\rho\otimes\zeta$ is defined by $\rho\otimes \zeta(\d(x,y))=\rho(\d x)\zeta(x,\d y)$.

Based on Lemma~\ref{lem-AbstractLDP}, we can give a conditional LDP for $\Xi_{N,R}$ for a fixed box $W=W_R=[-R,R]^d$. We need to take care of  a consistency property (or marginal property) that  $\Xi_{N,R}$ asymptotically has: the rate function is equal to $\infty$ in any configuration measures $\xi\in\Mcal_1(\Lcal_W\times\Scal_W)$ that cannot be extended to the larger box $3^k W$ for some $k\in\N$ such that the extension is $W$-shift invariant (meaning that projection of the extension to any subbox $W_z=z+W$ of $3^kW$, $z\in 2R\Z^d$, is equal to $\xi$ after shifting back to $W$. (This property is analogous to the marginal  property of the rate function for the empirical measures $\frac 1N\sum_{k=1}^N \delta_{(M_{k-1},M_k)}$ of a Markov chain $(M_n)_{n\in\N}$, whose two marginal measures are equal in the limit $N\to\infty$, and therefore the rate function is equal to $\infty$ if this property is not satisfied; see \cite[Section 3.1.3]{DZ98}.) Here is a precise definition. We introduce the set $\Rcal_{3^k,W}$ of rectangles $\subset 3^kW$ of the form $\bigcup_{z\in 2R Z}z+W$ for some $Z\subset\Z^d$. Then we introduce the set of $W$-shift invariant configurations in $3^k W$,
\begin{equation}\label{MvcalkdefWshift}
\begin{aligned}
\widetilde \Mcal_1^{\ssup k}&=\Big\{\xi^{\ssup k}\in\Mcal_1(\Lcal_{3^kW}\times\Scal_{3^kW})\colon\forall \widetilde W\in\Rcal_{3^k,W}\forall z\in 2R\Z^d\colon\\
&\qquad z+\widetilde W\in\Rcal_{3^k,W}\Longrightarrow \theta_{z}(\Pi_{z+\widetilde W}(\xi^{\ssup k}))=\Pi_{\widetilde W}(\xi^{\ssup k})\Big\}
\end{aligned}
\end{equation}
and the set 
\begin{equation}\label{Mvcalkdef}
\begin{aligned}
\Mcal_1^{\ssup k}(\Lcal_W\times \Scal_W)
&=\big\{\xi\in\Mcal_1(\Lcal_W\times\Scal_W)\colon\exists \xi^{\ssup k}\in\widetilde \Mcal_1^{\ssup k}\colon \Pi_{3^k W \to W}(\xi^{\ssup k})=\xi \big\}=\Pi_{3^kW\to W}(\widetilde \Mcal_1^{\ssup k}),
\end{aligned}
\end{equation}
where we remind that $\Pi_{3^kW\to W}\colon \Mcal_1(\Lcal_{3^kW}\times \Scal_{3^kW})\to \Mcal_1(\Lcal_{W}\times \Scal_{W})$ is the well-known projection operator from \eqref{PiWdef}, here with domain $\Mcal_1(\Lcal_{3^kW}\times \Scal_{3^kW})$.

\begin{lemma}[LDP for $\Xi_{N,R}$, conditional on $\partial \Pi_W^{\ssup\Scal}(\Xi_{N,R})$]\label{lem-LDPforXi}
Fix a window $W=[-R,R]^d$ and any $\psi\in\Mcal_1(\Tcal_W)$.  Then, under $\LPP^{\ssup{\L_N,{\rm bc}}}$ conditioned on $\{\partial\Pi_W^{\ssup \Scal}(\Xi_{N,R})=\psi_N\}$ for some $\psi_N\to\psi$, the sequence $(\Xi_{N,R})_{N\in\N}$ satisfies an LDP on $\Mcal_1(\Lcal_W\times\Scal_W)$ with speed $|\L_N|$ with rate function
\begin{equation}\label{RFinW}
J_{W}^{\ssup {\psi}}(\xi) =\begin{cases}\frac 1{|W|} H_{\Lcal_W\times\Scal_W}\big(\xi\,\big|\,\Pi_W^{\ssup\Lcal}({\tt Q})\otimes [\psi\otimes {\tt K}_{W}]\big)&\mbox{if }\partial\Pi_W^{\ssup\Scal}(\xi)=\psi\mbox{ and }\xi\in\bigcap_{k\in\N}\Mcal_1^{\ssup k}(\Lcal_W\times\Scal_W),\\
+\infty&\mbox{otherwise.}
\end{cases}
\end{equation}
\end{lemma}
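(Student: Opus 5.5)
The plan is to reduce the statement to the abstract Sanov-type LDP of Lemma~\ref{lem-AbstractLDP}, combined with the exponential-equivalence device of Lemma~\ref{lem-LDPtechnical}, Lemma~\ref{lem-Sanovcritsatisfied}, and a separate argument producing the consistency constraint $\xi\in\bigcap_k\Mcal_1^{\ssup k}$. The first step is a conditional independence statement. Condition $\LPP^{\ssup{\L_N,{\rm bc}}}$ on the full family of boundary-shred configurations $(\mu_z)_{z\in Z_{N,R}}=(\theta_z(\partial\Pi_{W_z}^{\ssup\Scal}(\omega_{\rm P})))_z$, which is finer than conditioning on $\partial\Xi_{N,R}^{\ssup\Scal}=\psi_N$. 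Under this conditioning, the pairs $(\theta_z(\omega^{\ssup\Lcal}_{W_z}),\theta_z(\omega^{\ssup\Scal}_{W_z}))$ are independent over $z$, and their law is $\Pi_{W}^{\ssup\Lcal}(\LPP^{\ssup{\L_N,{\rm bc}}})$-shifted $\otimes\,{\tt K}^{\ssup{\L_N,{\rm bc}}}_{W_z}(\mu_z,\cdot)$. The loop part gives this because the $\Pi^{\ssup\Lcal}_{W_z}$-parts of a PPP over disjoint boxes are independent and independent of the $R$-crossing part. The shred part gives it by Lemma~\ref{lem-distboundshred} and Lemma~\ref{lem-Sanovcritsatisfied}(1).

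With this in hand, take $\Xcal=\Tcal_W$ as the type space, the types being $x_z=\mu_z$ with empirical measure $\psi_N\to\psi$, and take $\Ycal=\Lcal_W\times\Scal_W$ with kernel $\zeta(\mu,\cdot)=\Pi_W^{\ssup\Lcal}({\tt Q})\otimes{\tt K}_W(\mu,\cdot)$. Lemma~\ref{lem-AbstractLDP} then gives an LDP for the pair empirical measure $\frac1{N_R}\sum_z\delta_{(\mu_z,Y_z)}$ with rate $H(q\mid\psi\otimes\zeta)$, subject to $\pi_1(q)=\psi$. Since $\mu_z=\partial\Pi^{\ssup\Scal}_W(Y_z)$ is a deterministic function of $Y_z$, the contraction to the $\Ycal$-marginal $\Xi_{N,R}$ is a bijection on the relevant set. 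Hence $H(q\mid\psi\otimes\zeta)=H(\xi\mid\Pi_W^{\ssup\Lcal}({\tt Q})\otimes[\psi\otimes{\tt K}_W])$, and the constraint becomes $\partial\Pi_W^{\ssup\Scal}(\xi)=\psi$. Changing speed from $N_R$ to $|\L_N|\sim N_R|W|$ produces the factor $1/|W|$. Two approximations remain to be justified: replacing $\LPP^{\ssup{\L_N,{\rm bc}}}$ by $\LPP$ (and ${\tt K}^{\ssup{\L_N,{\rm bc}}}$ by ${\tt K}_W$), and removing the $z$-dependence near $\partial\L_N$. Both follow from Lemma~\ref{lem-Sanovcritsatisfied}(2),(3) together with Lemma~\ref{lem-LDPtechnical}, applied in its type-dependent form. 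Continuity of the kernel $\mu\mapsto{\tt K}_W(\mu,\cdot)$, as required by Lemma~\ref{lem-AbstractLDP}, would be checked from the continuity of the Brownian shred laws $q^{\ssup{W,l}}_{x,y}$ in $(x,y)$. Finally, averaging over the family $(\mu_z)_z$ consistent with $\psi_N$ preserves the LDP, because the rate obtained depends on the family only through the limit $\psi$ of its empirical measure.

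The hardest step, I expect, is the extra constraint $\xi\in\bigcap_k\Mcal_1^{\ssup k}$, which Lemma~\ref{lem-AbstractLDP} does not see. I would handle it as in the Markov-chain pair-empirical-measure case. For fixed $k$, also consider the empirical measure $\Xi_{N,3^kR}^{\ssup{\rm all}}$ of configurations in all $2R\Z^d$-shifted windows $z+3^kW\subset\L_N$. This empirical measure is asymptotically $W$-shift invariant in the sense of \eqref{MvcalkdefWshift}, since boundary effects are $O(3^kR/N^{1/d})$. It is also exponentially tight on the level sets under consideration, and its $\Pi_{3^kW\to W}$-projection coincides with $\Xi_{N,R}$ up to a vanishing error. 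For the upper bound, this shows that any closed set avoiding $\Mcal_1^{\ssup k}$ eventually receives no mass, because $\widetilde\Mcal_1^{\ssup k}$ is closed and the projection is continuous; thus the rate is $\infty$ off $\bigcap_k\Mcal^{\ssup k}_1$. For the lower bound, at points of $\bigcap_k\Mcal_1^{\ssup k}$ the Sanov lower bound applies directly, since the additional constraint only restricts the domain. The delicate point is to ensure that the rate from Lemma~\ref{lem-AbstractLDP} is not undercut by non-extendable $\xi$, that is, to show lower semicontinuity of the combined rate. This requires closedness of $\bigcap_k\Mcal_1^{\ssup k}$ in the weak topology.
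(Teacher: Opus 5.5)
Your proposal is correct and follows essentially the same route as the paper. Both condition on the boundary-shred configurations to make the subbox pairs independent. Both apply Lemma~\ref{lem-AbstractLDP} with $\Xcal=\Tcal_W$ and $\Ycal=\Lcal_W\times\Scal_W$, restricted to compatible pairs, and remove the $N$- and $z$-dependence via Lemmas~\ref{lem-Sanovcritsatisfied} and \ref{lem-LDPtechnical}. Both use the overlapping $3^kW$-window empirical measure $\Xi_{N,R}^{\ssup k}$ to force the rate to be $+\infty$ off $\Mcal_1^{\ssup k}(\Lcal_W\times\Scal_W)$.

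You also combine these two ingredients explicitly in the upper bound, using compact level sets of the relative entropy together with closedness of $\Mcal_1^{\ssup k}$. The paper leaves this step implicit, and your way of doing it is correct.
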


\begin{proof} First  we show that the rate function is $\infty$ outside $\Mcal_1^{\ssup k}(\Lcal_W\times\Scal_W)$, for any $k\in\N$.  Clearly, $\Mcal_1^{\ssup k}(\Lcal_W\times\Scal_W)$ is closed, since all the projection operators appearing in the definition of $\Mcal_1^{\ssup k}(\Lcal_W\times\Scal_W)$ are continuous. 

Consider the measure
\begin{equation}\label{XiNkdef}
\Xi_{N,R}^{\ssup k}(\omega)=\frac1{\#Z_{N,R}}\sum_{z\in Z_{N,R}}\delta_{\theta_z(\Pi_{z+3^kW}(\omega))}\in\Mcal_1(\Lcal_{3^kW}\times\Scal_{3^kW}),\qquad \omega\in \Lcal\times\Scal.
\end{equation}
This is the empirical measure of the loop/shred configuration in the boxes $z+3^kW$ with $z\in Z_{N,R}=\{z\in 2R\Z^d\colon z+W\subset\Lambda_N\}$ (these boxes overlap each other significantly by disjoint unions of shifted copies of $W$), shifted to the origin by $z$. (It is of no importance for this proof if we replace $Z_{N,R}$ by $\{z\in 2R\Z^d\colon z+3^kW\subset\Lambda_N\}$ or not, since we only rely on the limit as $N\to\infty$.)

Up to boundary effects, its $\Pi_{3^kW\to W}$-projection is equal to $\Xi_{N,R}(\omega)$, by which we mean that
 \begin{equation}\label{TVconvergence}
\sup_\omega{\d}_{\rm TV}\big(\Pi_{3^kW\to W}(\Xi_{N,R}^{\ssup k}(\omega)),\Xi_{N,R}(\omega)\big)\to 0,\qquad N \to\infty,
\end{equation}
where ${\d}_{\rm TV}$ denotes the total-variation distance on $\Mcal_1(\Lcal_W\times \Scal_W)$. Similarly,
 \begin{equation}\label{TVconvergence2}
\sup_\omega{\d}_{\rm TV}\big(\theta_{z}(\Pi_{z+\widetilde W}(\Xi_{N,R}^{\ssup k}(\omega))),\Pi_{\widetilde W}(\Xi_{N,R}^{\ssup k}(\omega))\big)\to 0,\qquad N \to\infty,z\in \Z^d,\widetilde W,z+\widetilde W\in\Rcal_{3^k,W}.
\end{equation}
This means that the distance between $\Xi_{N,R}^{\ssup k}$ and $\widetilde \Mcal_1^{\ssup k}$ vanishes as $N\to\infty$, uniformly in $\omega$. By continuity of $\Pi_{3^kW\to W}$, also the distance between $\Pi_{3^kW\to W}(\Xi_{N,R}^{\ssup k})$ and $\Pi_{3^kW\to W}(\widetilde \Mcal_1^{\ssup k})$ vanishes as $N\to\infty$. Combining with \eqref{TVconvergence}, we see that the distance between $\Xi_{N,W}$ and $\Pi_{3^kW\to W}(\widetilde \Mcal_1^{\ssup k})$ vanishes as $N\to\infty$, uniformly in $\omega$.

As a consequence, for any open set $ \Ucal\subset \Mcal_1(\Lcal_W\times \Scal_W)$ that entirely lies in the complement of $\Mcal_1^{\ssup k}(\Lcal_W\times \Scal_W)=\Pi_{3^kW\to W}(\widetilde \Mcal_1^{\ssup k})$ and has a positive distance to it, the probability of $\{\Xi_{N,R}\in \Ucal\}$ under ${\tt Q}^{\ssup {\L_N,{\rm bc}}}$ is zero for all large $N$, since the event is empty. Indeed, the distance  of $\Ucal$ to $\Pi_{3^kW\to W}(\widetilde\Mcal_1^{\ssup k})$ is positive, but $\Xi_{N,R}$ comes arbitrarily close to it. This implies that the rate function is equal to $+\infty$ outside $\Mcal_1^{\ssup k}(\Lcal_W\times \Scal_W)$ for any $k\in\N$.

Now we use Lemma~\ref{lem-AbstractLDP} to prove the LDP and to identify the rate function. We conceive the state space $\Lcal_W\times \Scal_W$ as the product $\Tcal_W\times (\Lcal_W \times \Scal_W)$, which is meant as the set of those pairs $(\mu,(\omega,\varpi))$ that are compatible in the sense that $\partial\Pi_W^{\ssup\Scal}(\omega,\varpi)=\mu$. That is, the $\Lcal$-part of $\xi$ does not depend on $\mu$, and $\mu$ is the configuration of all the triples of initial sites, lengths and terminal sites of the shreds of $\varpi$. Hence, in Lemma~\ref{lem-AbstractLDP}, $\Xcal=\Tcal_W$ and $\Ycal=\Lcal_W \times \Scal_W$, and the projection $\pi_1$ there is called $\partial\Pi_W^{\ssup\Scal}$ in our situation. 

Now consider $\omega \in \Lcal_{\L_N}$, distributed under $ \Pi_W^{\ssup \Lcal}({\tt Q}^{\ssup{\L_N,{\rm bc}}})$. Conditionally given $(\theta_z(\partial\Pi_{W_z}^{\ssup\Scal}(\omega)))_{z\in Z_{N,R}}$, the family of pairs $(\theta_z(\omega_{W_z}^{\ssup\Lcal}), \theta_z(\omega_{W_z}^{\ssup\Scal}))_{z\in Z_{N,R}}$ has the distribution $\bigotimes_{z\in Z_{N,R}} \zeta_z(\partial\Pi_W^{\ssup\Scal}(\omega),\cdot)$, where $\zeta_z(\mu,\cdot)$ is, for general $\mu$, the product measure of the distribution $\Pi_W^{\ssup \Lcal}({\tt Q}^{\ssup{\L_N,{\rm bc}}})$ (not depending on $ \mu$) and $\P_{\mu}^{\ssup {W_z,\L_N,{\rm bc}}}$ (see Lemma~\ref{lem-Sanovcritsatisfied}(3)).  We are under the assumption that the empirical measure of the $\theta_z(\partial\Pi_{W_z}^{\ssup\Scal}(\omega))$ (namely $\partial\Xi^{\ssup\omega}_{N,R}$) converges towards $\psi$.  If $\zeta_z(\partial\Pi_W^{\ssup\Scal}(\omega),\cdot)$ would not depend on $z$ nor on $N$ (it does through our boundary condition in $ \Pi_W^{\ssup \Lcal}({\tt Q}^{\ssup{\L_N,{\rm bc}}})$), i.e., if it would be just equal to $\zeta(\mu, \d(\omega,\varpi))= \Pi_W^{\ssup \Lcal}({\tt Q})(\d\omega){\tt K}_W(\mu,\d\varpi)$ (see \eqref{kernelK}), then Lemma~\ref{lem-AbstractLDP} would be directly applicable, and we would obtain that the empirical measure of these pairs (and this is $\Xi^{\ssup\omega}_{N,R}$) satisfies an LDP with the rate function in \eqref{RFinW}. Indeed, the reference measure in \eqref{RFinW} is equal to $\psi\otimes \zeta(\d\mu,\d(\omega,\varpi))=\psi(\d \mu)\Pi_W^{\ssup \Lcal}({\tt Q})(\d\omega){\tt K}_W(\mu,\d\varpi)=\Pi_W^{\ssup \Lcal}({\tt Q})\otimes[\psi\otimes {\tt K}_W](\d(\omega,\varpi))$. Now apply Lemmas~\ref{lem-Sanovcritsatisfied} and \ref{lem-AbstractLDP} to finish the proof.
\end{proof}

Hence, it is natural to restrict to the set of $\xi$'s and $\psi$'s to those that satisfy $\partial\Pi_W^{\ssup\Scal}(\xi)=\psi$ and to use only the rate function
\begin{equation}\label{JWdef}
J_{W}(\xi)=J_{W}^{\ssup{\partial \Pi_W^{\ssup\Scal}(\xi)}}(\xi)
=\frac 1{|W|} H_{\Lcal_W\times\Scal_W}\big(\xi\,\big|\,\Pi_W^{\ssup\Lcal}({\tt Q})\otimes [\partial \Pi_W^{\ssup\Scal}(\xi)\otimes {\tt K}_{W}]\big)
,\qquad \xi\in \bigcap_{k\in\N}\Mcal_1^{\ssup k}(\Lcal_W\times \Scal_W).
\end{equation}
Note that $\Pi_W(P)$ lies automatically in $\bigcap_{k\in\N}\Mcal_1^{\ssup k}(\Lcal_W\times \Scal_W)$ for $P\in \Mcal_1^{\ssup{\rm s}}(\Lcal\times\Scal)$, hence for considering $J_W(\Pi_W(P))$ later we will not need to take extra care of the restriction.

Let us formulate Lemma \ref{lem-LDPforXi} in a way in which we will actually apply it later. For the lower bound, because of our approach in Lemmas~\ref{lem-LowerBound} and \eqref{lem-lowerboundlogpint}, we need to condition the integration to the event $\{\Xi_{N,R}^{\ssup\circ}(A_{W;\Theta})=1\}$, where we collect all the cutting parameters in the tuple $\Theta=(M,L,K,\mathfrak r,T, S, \vartheta)$ and summarize the definitions in \eqref{ALcaldef}, \eqref{AScaldef} and \eqref{Adef} and \eqref{SMvarthetaSdef} by putting $A_{W;\Theta}=A_{W;M,\mathfrak r, K,L,T}^{\ssup\Lcal}\times (A_{W;M,\mathfrak r, T}^{\ssup\Scal}\cap\Scal^{\ssup{M,\vartheta,S}})$. By ${\tt Q}^{\ssup{\L_N,{\rm par}}}_{W;\Theta}$ we denote probability with respect to the PPP with intensity measure $\nu^{\ssup{\rm par}}_{\L_N,W;\Theta}$, which is defined as the restriction of $\nu_{\L_N}^{\ssup{{\rm par}}}$ defined in \eqref{nudef} to the event $\bigcap_{z\in Z_{N,R}}\{(\omega_{W_z},\varpi_{W_z})\in A_{W_z;\Theta}\}$.

\begin{cor}[Upper and lower LDP bounds for $\Xi_{N,R}$]\label{cor-LDP}
Fix $R\in\N$ and $W=W_R=[-R,R]^d$ and a function $F\colon\Mcal_1(\Lcal_W\times\Scal_W)\to\R$ and a measurable set $B\subset\Mcal_1(\Lcal_W\times\Scal_W)$.

\begin{enumerate}
\item If $F$ is lower semi-continuous and bounded from below and $B$ closed and $B'\subset \Mcal_1(\Tcal_W)$ compact, then
$$
\begin{aligned}
\limsup_{N\to\infty}\frac 1{|\L_N|}\log {\tt Q}^{\ssup{\L_N,{ \rm bc }}}&\Big[\e^{-|\L_N| F(\Xi_{N,R})}\1\{\Xi_{N,R}\in B\}\{\partial\Xi_{N,R}^{\ssup\Scal}\in B'\}\Big]\\
&\leq -\inf_{\xi\in B\cap\bigcap_{k\in \N}\Mcal_1^{\ssup k}(\Lcal_W\times  \Scal_W)\colon \partial\Pi_W^{\ssup\Scal}(\xi)\in B'}\big(F(\xi)+J_{W}(\xi)\big).
\end{aligned}
$$
\item Fix all the parameters $M,L,K,\mathfrak r,T,\vartheta,S\in(0,\infty]$ and put $\Theta=(M,L,K,\mathfrak r,T,\vartheta, S)$. Then, for any $W$-admissible $\psi\in\Mcal_1(\Tcal_W)$ and for any sequence $\psi_N\to\psi$ such that ${\tt Q}^{\ssup{\L_N,{ \rm bc }}}_{W;\Theta}(\partial\Pi_W^{\ssup\Scal}(\Xi_{N,R})=\psi_N)>0$ for any $N$, if $F$ is upper semi-continuous and $B$ is subset of and open in $A_{W;\Theta}$,
$$
\begin{aligned}
\liminf_{N\to\infty}\frac 1{|\L_N|}&\log {\tt Q}^{\ssup{\L_N,{ \rm bc }}}_{W;\Theta}\Big[\e^{-|\L_N| F(\Xi_{N,R})}\1\{\Xi_{N,R}\in B\}\,\Big|\,\partial\Pi_W^{\ssup\Scal}(\Xi_{N,R})=\psi_N\Big]\\
&\geq -\inf_{\xi\in B\colon \partial\Pi_W^{\ssup\Scal}(\xi)=\psi}\big(F(\xi)+J_{W;\Theta}(\xi)\big),
\end{aligned}
$$
where  
\begin{equation}\label{JWMTdef}
J_{W;\Theta}(\xi)=\frac 1{|W|} H_{\Lcal_W\times\Scal_W}\big(\xi\,\big|\,\Pi_W^{\ssup\Lcal}({\tt Q}_{M})_{L,K}\otimes [\partial \Pi_W^{\ssup\Scal}(\xi)\otimes {\tt K}^{\ssup{M,\mathfrak r, T,\vartheta,S}}_{W}]\big)
,\qquad \xi\in\Mcal_1(\Lcal_W\times \Scal_W),
\end{equation}
where $\Pi_W^{\ssup\Lcal}({\tt Q}_{M})_{L,K}$ is the conditional version of $\Pi_W^{\ssup\Lcal}({\tt Q}_M)$ given $A_{W; M,L,K}^{\ssup\Lcal}$, and  ${\tt K}^{\ssup{M,\mathfrak r, T,\vartheta,S}}_{W}(\mu,\cdot)$ is the conditional version of ${\tt K}_{W}(\mu,\cdot)$ given $A_{W;M,\mathfrak r, T}^{\ssup\Scal}\cap\Scal_W^{\ssup{M,\vartheta,S}}$. (In particular, $J_{W;\Theta}(\xi)=\infty$ if $\xi(A_{W;\Theta})<1$.)
\end{enumerate}
\end{cor}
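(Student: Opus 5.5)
Both parts follow the same route. First we condition on the boundary-shred empirical measure and apply the conditional LDP of Lemma~\ref{lem-LDPforXi}. Then we transfer it through Varadhan-type arguments: the standard half-Varadhan bounds for semicontinuous functionals, as in \cite[Lemmas 4.3.4, 4.3.6]{DZ98}.

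For part (1), we first discard the shred-boundary information beyond membership in $B'$. We disintegrate ${\tt Q}^{\ssup{\L_N,{\rm bc}}}$ with respect to $\partial\Xi_{N,R}^{\ssup\Scal}$ and bound the expectation by
\[
\sup_{\psi'\in B'}{\tt Q}^{\ssup{\L_N,{\rm bc}}}\big[\e^{-|\L_N|F(\Xi_{N,R})}\1\{\Xi_{N,R}\in B\}\,\big|\,\partial\Xi_{N,R}^{\ssup\Scal}=\psi'\big],
\]
estimating the probability of $\{\partial\Xi_{N,R}^{\ssup\Scal}\in B'\}$ by one. Compactness of $B'$ is what lets us control this supremum. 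We pick a sequence $\psi_N\in B'$ that almost realizes the supremum and pass to a subsequence converging to some $\psi\in B'$ along which the $\limsup$ is attained. Along this subsequence Lemma~\ref{lem-LDPforXi} gives the LDP upper bound with rate $J_W^{\ssup\psi}$.

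Lower semicontinuity of $F$, boundedness from below and closedness of $B$ then give the usual Varadhan upper bound $-\inf_{\xi\in B}(F+J_W^{\ssup\psi})$. Since $J_W^{\ssup\psi}=\infty$ unless $\partial\Pi_W^{\ssup\Scal}(\xi)=\psi$ and $\xi\in\bigcap_k\Mcal_1^{\ssup k}$, the rate reduces to $J_W(\xi)$ on the admissible set, and $\psi\in B'$. This yields the claimed infimum.

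The delicate point is the uniformity of the conditional LDP upper bound along a moving $\psi_N$. We handle it by noting that Lemma~\ref{lem-AbstractLDP} already allows arbitrary type sequences converging to $\psi$, and that the perturbation by boundary conditions is controlled uniformly by Lemma~\ref{lem-Sanovcritsatisfied}.

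For part (2), we work under ${\tt Q}^{\ssup{\L_N,{\rm bc}}}_{W;\Theta}$. Conditional on $\partial\Pi_W^{\ssup\Scal}(\Xi_{N,R})=\psi_N$, the pairs in the subboxes are again independent, this time with kernel $\zeta(\mu,\cdot)=\Pi_W^{\ssup\Lcal}({\tt Q}_M)_{L,K}\otimes{\tt K}_W^{\ssup{M,\mathfrak r,T,\vartheta,S}}(\mu,\cdot)$. This follows from Lemma~\ref{lem-distboundshred}(2),(3): the restriction to $A_{W_z;\Theta}$ factorizes over $z$ because the events are local to each box given the boundary triples. Hence Lemma~\ref{lem-AbstractLDP}, together with exponential equivalence via Lemma~\ref{lem-LDPtechnical}, gives an LDP lower bound with rate $J_{W;\Theta}(\xi)$ on $\{\partial\Pi_W^{\ssup\Scal}(\xi)=\psi\}$. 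We simply drop the marginal-consistency constraint here, since omitting it only makes the rate smaller, and a lower bound is all we need.

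For the Varadhan lower bound we fix $\xi\in B$ with $\partial\Pi_W^{\ssup\Scal}(\xi)=\psi$ and $J_{W;\Theta}(\xi)<\infty$. Upper semicontinuity of $F$ gives a neighbourhood of $\xi$, open in $A_{W;\Theta}$ and contained in $B$, on which $F\le F(\xi)+\eps$. The LDP lower bound for that open set then gives $-F(\xi)-\eps-J_{W;\Theta}(\xi)$. We finish by letting $\eps\downarrow0$ and optimizing over $\xi$.

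The main obstacle is twofold. First, continuity of the kernel $\mu\mapsto\zeta(\mu,\cdot)$ is required in Lemma~\ref{lem-AbstractLDP}, and the conditioned kernel ${\tt K}_W^{\ssup{\dots}}$ is only continuous where $p_{W;M,\mathfrak r,T}(\mu)>0$. Second, openness is relative to $A_{W;\Theta}$. We would address both by restricting the state space to $A_{W;\Theta}$, where the conditioned Gaussian kernels depend continuously on $(x_i,y_i)$, and by using the positivity assumption on ${\tt Q}_{W;\Theta}(\partial\Pi_W^{\ssup\Scal}(\Xi_{N,R})=\psi_N)$.
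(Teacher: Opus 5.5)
\textbf{Part (1).} This part is correct, but it handles compactness differently from the paper.

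The paper drops the probability of the boundary event and applies the upper half of Varadhan's lemma directly with the rate $J_W$, whose reference measure moves with $\xi$. Because $J_W$ is not good, it invokes the proof of Lemma~\ref{lem-exponentiallytight}(1) to get compactness of $\{J_W\le c\}\cap\{\xi\colon\partial\Pi_W^{\ssup\Scal}(\xi)\in B'\}$.

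You instead bound by the (essential) supremum over conditioning values $\psi'\in B'$. You use compactness of $B'$ only to extract near-maximisers $\psi_N\to\psi\in B'$ along a subsequence realising the $\limsup$, and then apply Lemma~\ref{lem-LDPforXi} to that single sequence.

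This gains you a fixed reference measure. $J_W^{\ssup\psi}$ is $\frac1{|W|}$ times the relative entropy with respect to the fixed probability measure $\Pi_W^{\ssup\Lcal}({\tt Q})\otimes[\psi\otimes{\tt K}_W]$, restricted by the closed constraints $\partial\Pi_W^{\ssup\Scal}(\xi)=\psi$ and $\xi\in\bigcap_k\Mcal_1^{\ssup k}(\Lcal_W\times\Scal_W)$. It is therefore automatically a good rate function, which is exactly what the Varadhan upper bound requires. You should state this explicitly, since it is what replaces the paper's compactness argument.

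Your paragraph on ``uniformity along a moving $\psi_N$'' is unnecessary once the subsequence is fixed. You only need to pick the $\psi_N$ from the full-measure set on which the conditional product structure holds.

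\textbf{Part (2).} This is the paper's argument written out; the paper only says that the proof of Lemma~\ref{lem-LDPforXi} carries over to ${\tt Q}^{\ssup{\L_N,{\rm bc}}}_{W;\Theta}$. Your observation that the restriction to the events $A_{W_z;\Theta}$ factorises over $z$ given the boundary triples is the right justification for the product kernel $\zeta$.

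However, the sentence ``we simply drop the marginal-consistency constraint, since omitting it only makes the rate smaller, and a lower bound is all we need'' has the logic reversed. A smaller rate makes $-\inf(F+J)$ larger, so a lower bound with the unconstrained rate is the \emph{stronger} statement. If the constraint $\xi\in\bigcap_k\Mcal_1^{\ssup k}(\Lcal_W\times\Scal_W)$ from Lemma~\ref{lem-LDPforXi} were binding at some $\xi\in B$ with $\partial\Pi_W^{\ssup\Scal}(\xi)=\psi$, the asserted inequality would fail there.

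What legitimately delivers the unconstrained rate $J_{W;\Theta}$ is Lemma~\ref{lem-AbstractLDP} itself, applied to the conditionally independent box interiors. Its rate is the relative entropy with respect to $\psi\otimes\zeta$ with no consistency condition, so that condition cannot be binding on $\{\partial\Pi_W^{\ssup\Scal}(\xi)=\psi\}$. Delete the ``dropping'' rationale and rely on that instead; the paper's one-line proof passes over this point silently.

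Finally, the continuity problem you raise concerns the type variable $\mu\in\Tcal_W$. Restricting $\Ycal$ to $A_{W;\Theta}$ and using positivity for the $\psi_N$ therefore does not resolve it. What is needed is that the limit $\psi$ is carried by $\{p_{W;M,\mathfrak r,T}>0\}$ and that the conditioned kernel is continuous there; the paper leaves this implicit as well.
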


The upper bound also motivates the following Section~\ref{sec-Compactness}: we will need good compactness criteria on $\Mcal_1(\Tcal_W)$. The lower bound will later be applied for $\psi=\partial \Pi_{W_R}^{\ssup\Scal}(P)$ with an ergodic $P\in\Mcal_1^{\ssup{\rm s}}(\Lcal\times \Scal)$, evoking Lemma~\ref{lem-Psi_admissible}.

\begin{proof}
 (1) follows, by the virtue of the arguments in the proof of  Varadhan's lemma (see \cite[Section 4.3]{DZ98}) from Lemma~\ref{lem-LDPforXi} by dropping all probability terms that concern $\partial\Pi_W^{\ssup\Scal}(\Xi_{N,R})$. However, for the application of Varadhan's lemma, we need to ensure that the rate function $J_W$ has compact level sets, and this is not true since its reference measure  has no autonomous $\Mcal_1(\Tcal_W)$-part. But, by the proof of Lemma~\ref{lem-exponentiallytight}(1), the intersection of the level set with $\{\xi\colon \partial \Pi_W^{\ssup\Scal}(\xi)\in B'\}$ is compact, which suffices for the upper-bound part of Varadhan's lemma.
 
 \medskip
 
To prove (2), again by the virtue of the arguments in the proof of  Varadhan's lemma, we only need to observe that the proof of Lemma~\ref{lem-LDPforXi} applies also for the restricted and conditional version ${\tt Q}^{\ssup{\L_N,{ \rm bc }}}_{W;\Theta}$ instead of ${\tt Q}^{\ssup{\L_N,{ \rm bc }}}$ and that $J_{W;\Theta}(\xi)=\infty$ if $\partial\Pi_W^{\ssup\Scal}(\xi)\not=\psi$. 
\end{proof}

\subsection{Properties of $J_W$}\label{sec-PropertiesJW}

We are going to collect some crucial properties of $J_W$ defined  in \eqref{JWdef}, in particular of $J_{W_R}(\Pi_{W_{R}}(P))$ for shift-invariant distributions $P$ of loops and interlacements. These properties will be essential for all the future developments. Our main result here is Proposition~\ref{lem-uppboundJ_W}; it has four assertions.

In the first two assertions, we show that the $J$-value in a box $mW$ (with $m\in  \N$) of a given configuration measure $\xi$ is not smaller than the $J$-value in $W$ of a kind of convex combination of parts of $\xi$. This will be used in the proof of Theorem~\ref{thm-specrelent} when we show that the limit inferior of $J_{W_R}(\Pi_{W_R}(P))$ as $R\to\infty$ is large enough for any $P\in\Mcal_1(\Lcal\times\Scal)$. The third assertion is a technical help here in form of a certain lower-semicontinuity property. The fourth assertion is in a sense dual to the first one, as it solves the question how to construct from a configuration measure $\xi$ in a box $W$ a configuration measure $\xi^{\ssup k}$ in the box $3^k W$ (in fact, an extension of $\xi$) such that its $J$-values do not significantly increase if $k$ gets large. (This will be crucial in the proof of the upper bound in Theorem~\ref{thm-freeenergy}, when we have to make a connection from the variational problem in $W_R$ to the $\R^d$-version of it, $\chi(\rho_1,\rho_2)$.) Here we construct rather comfortable extensions $\xi^{\ssup k}$ of $\xi$ which lie in the nice set $\widetilde \Mcal_1^{\ssup k}$ of $W$-shift invariant measures defined in \eqref{MvcalkdefWshift}; in particular they are extensions of each other.

The main body of our arguments relies on LDP-arguments instead of the explicit entropy formula in \eqref{JWdef}. One of the biggest difficulties comes from the fact that extensions of point measures on shreds are much more complicated than extensions of point measures of loops, and also the latter is not at all clear in our setting, since our $W$-loops have all their particles in $W$: an extension to a larger box $\widetilde W$ should also produce loops that have particles both in $W$ and $\widetilde W$. Our LDP-arguments guarantee also that.

For $m\in\N$, denote by $Z_{m,R}$ the set of those $y\in 2R \Z^d$ such that $W_{mR}=mW_R$ is equal to the pairwise disjoint (up to boundaries)  union of all the $y+W_R$ with $y\in Z_{m,R}$. Furthermore, consider the map
\begin{equation}\label{fmdef}
f_{m,R}\colon \Mcal_1(\Lcal_{W_{mR}}\times \Scal_{W_{mR}}) \to \Mcal_1(\Lcal_{W_{R}}\times \Scal_{W_{R}}),\qquad f_m(\xi)= \frac 1{\# Z_{m,R}}\sum_{z\in Z_{m,R}}\theta_z(\Pi_{W_{mR}\to z+W_R}(\xi)).
\end{equation}
Recall the notion of admissibility from Definition~\ref{def-admisssible} and the definitions of $\Lcal^{\ssup M}$ and $\Scal^{\ssup M}$ from \eqref{LoopsMvarthetaS} and \eqref{SMdef}, respectively. Recall the definition of the set $\widetilde\Mcal_1^{\ssup k}$ from  \eqref{MvcalkdefWshift}.

\begin{prop}[Estimates for $J_{W}$]
\label{lem-uppboundJ_W}
Fix $M,R\in(1,\infty)$ and any odd $m\in\N$. Recall $\mathfrak p_{W_{R},M}$  defined in \eqref{pMdef}. The constants $C$ and $C_{M,\mathfrak r,T}$ stem from Lemma~\ref{lem-lowerboundlogpint}, while $C_M$ appears first in Lemma~\ref{lem-UpperLDPbound}. Then the following hold.

\begin{enumerate} 
\item For any $\xi\in\Mcal_1(\Lcal_{W_{mR}}^{\ssup M}\times\Scal^{\ssup M}_{W_{mR}})$ such that $\psi=\partial \Pi_{W_{mR}}^{\ssup \Scal}(\xi)$ is $W_{mR}$-admissible,
\begin{equation}\label{JWupperbound}
J_{W_R;M}(f_{m,R}(\xi))
 \leq J_{W_{mR};M}(\xi)+ \frac {C_M}{(mR)^{d/2}}-\frac 1{|W_{mR}|}\langle\psi,\mathfrak p_{W_{mR},M}\rangle,
\end{equation}
and, for any $x\in \R^d$ such that $x+W_R\subset W_{mR}$,
\begin{equation}\label{JWupperboundx}
J_{W_R;M}(\theta_x(\Pi_{x+W_R}(\xi)))\leq J_{W_{mR};M}(\xi)+ \frac {C_M}{(mR)^{d/2}}-\frac 1{|W_{mR}|}\langle\psi,\mathfrak p_{W_{mR},M}\rangle.
\end{equation}
where $J_{W;M}$ is defined in \eqref{JWMTdef} with all other parameters $K,L,T,\vartheta$ equal to $\infty$.

\item For any  $P\in \Mcal_1^{\ssup{\rm s}}(\Lcal^{\ssup M}\times \Scal^{\ssup M})$ such that  $P$ is $W_{mR}$-ergodic and $\Pi^{\ssup\Scal}_{W_R}(P)$ is concentrated on $A^{\ssup\Scal}_{W_R;M,\mathfrak r, T}\cap\Scal_{W_R}^{\ssup{\vartheta, S}}$,
\begin{equation}
\begin{aligned}
J_{W_R;M}(\Pi_{W_R}(P))&\leq J_{W_{mR};M}(\Pi_{W_{mR}}(P))+ \frac {C_M}{(mR)^{d/2}}+\Big[\frac {C_{M,\mathfrak r,T}}{mR}+\frac{C\vartheta^2}S\Big]\langle P,\mathfrak N_U^{\ssup\ell}\rangle.
\end{aligned}
\end{equation}

\item For any  $P,P_1,P_2,\dots\in \Mcal_1^{\ssup{\rm s}}(\Lcal\times \Scal)$ such that $P_n$ converges weakly to $P$, we have $J_{W_R}(\Pi_{W_R}(P))\leq \liminf_{n\to\infty} J_{W_{R_n}}(\Pi_{W_{R_n}}(P_n))$  for any sequence $R_n\downarrow R$.

\item For any $\xi\in\Mcal_1(\Lcal^{\ssup M}_{W_{R}}\times\Scal^{\ssup M}_{W_{R}})$ such that $\psi=\partial \Pi_{W_{R}}^{\ssup \Scal}(\xi)$ is $W_{R}$-admissible,  and for  any $k\in\N$,
\begin{equation}\label{JWgeqJ_3W}
\begin{aligned}
J_{W_R;M}(\xi)&\geq  \inf\big\{J_{3^k W_{R};M}(\xi^{\ssup k})\colon \xi^{\ssup k}\in \widetilde\Mcal_1^{\ssup k},\Pi_{3^k W_R\to W_R}(\xi^{\ssup k})=\xi\big\}
 -\frac {C_M}{R^{d/2}}+\frac 1{|W_{R}|}\langle\psi,\mathfrak p_{W_{R},M}\rangle.
\end{aligned}
\end{equation}
\end{enumerate}
\end{prop}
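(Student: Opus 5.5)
The plan is to prove all four assertions by large-deviation arguments rather than by manipulating the entropy formula directly. The common device is to compute one exponential rate in two ways, once with windows $W_R$ and once with windows $W_{mR}$, and to compare the results.

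For (1), take the reference process ${\tt Q}^{\ssup{\L_N,{\rm par}}}_M$ in a large box $\L_N$ tiled by copies of $W_{mR}$. Each such copy is in turn tiled by the $m^d$ boxes $z+W_R$, $z\in Z_{m,R}$. The empirical measure $\Xi_{N,R}$ on $R$-boxes is a continuous image of the empirical measure $\Xi^{(m)}_{N,mR}$ on $mR$-boxes, with the shift-averaging along the way, and that image is exactly $f_{m,R}$. Step one is a lower bound for $\Xi^{(m)}_{N,mR}$ near $\xi$. We combine Lemma~\ref{lem-UpperLDPbound}, applied with $W_{mR}$ and the admissible $\psi$, with the lower LDP bound of Corollary~\ref{cor-LDP}(2), taking only the cut-off $M$ and all other parameters infinite. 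This gives
\[
\liminf_{N\to\infty}\frac{1}{|\L_N|}\log {\tt Q}_M\big(\Xi^{(m)}_{N,mR}\in\Bcal_\eps(\xi)\big)\geq -J_{W_{mR};M}(\xi)-\frac{C_M}{(mR)^{d/2}}+\frac{1}{|W_{mR}|}\langle\psi,\mathfrak p_{W_{mR},M}\rangle .
\]
Step two is an upper bound for $\Xi_{N,R}$ near $f_{m,R}(\xi)$. We use Corollary~\ref{cor-LDP}(1) for $W_R$ with $F=0$. The boundary-shred probability is bounded by one, and compactness is supplied by a compact $B'$ containing a neighbourhood of $\partial\Pi^{\ssup\Scal}_{W_R}(f_{m,R}(\xi))$. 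This yields $-\inf_{\Bcal_\eps(f_{m,R}(\xi))}J_{W_R;M}$. Since the first event is contained in the second (by continuity of $f_{m,R}$), we can let $\eps\downarrow0$ and use lower semicontinuity of the relative entropy, which gives \eqref{JWupperbound}. For \eqref{JWupperboundx} we repeat the argument, but tile with boxes $z+x+W_R$ for a single fixed shift $x$ instead of averaging over $Z_{m,R}$.

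For (2), put $\xi=\Pi_{W_{mR}}(P)$. It is admissible by Lemma~\ref{lem-Psi_admissible}, because $P$ is $W_{mR}$-ergodic. Shift invariance gives $f_{m,R}(\xi)=\Pi_{W_R}(P)$, so (1) applies. The term $-\frac1{|W_{mR}|}\langle\psi,\mathfrak p_{W_{mR},M}\rangle$ is bounded with Lemma~\ref{lem-lowerboundlogpint}(2) at scale $mR$. The cut-off properties on $W_R$ transfer to $W_{mR}$ with adjusted constants, since density bounds in $\mathfrak r$-boxes and the $(\vartheta,S)$-condition are local. We then use $\frac1{|W_{mR}|}\langle\xi,\mathfrak N^{\ssup{\ell,\Scal}}_{W_{mR}}\rangle\le\langle P,\mathfrak N_U^{\ssup\ell}\rangle$.

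For (3), lower semicontinuity is obtained from the variational formula \eqref{entropyformula}. We write $J_{W_R}(\Pi_{W_R}(P))$ as a supremum over bounded continuous test functions $g$ on $\Lcal_{W_R}\times\Scal_{W_R}$ of
\[
\langle g,\xi\rangle-\Big\langle \partial\Pi^{\ssup\Scal}_{W_R}(\xi),\,\log\!\int \e^{g}\,\d\big(\Pi^{\ssup\Lcal}_{W_R}({\tt Q})\otimes{\tt K}_{W_R}(\mu,\cdot)\big)\Big\rangle .
\]
This relies on the conditional (kernel) form of the relative entropy, with $\mu\mapsto\log\int\e^g\,{\tt K}_{W_R}(\mu,\cdot)$ continuous and bounded. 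Each such expression is continuous in $P$, and passing from $R_n$ to $R$ uses $\Pi_{W_{R_n}\to W_R}$ together with monotonicity of relative entropy under projections.

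The main obstacle is (4), the extension to $3^kW_R$. Here the roles of (1) are reversed. We take the lower bound for $\Xi_{N,R}$ near $\xi$ (Lemma~\ref{lem-UpperLDPbound} and Corollary~\ref{cor-LDP}(2), which produce the $-C_M/R^{d/2}$ and $\mathfrak p$ terms) and compare it with the upper LDP bound for the overlapping empirical measure $\Xi^{(k)}_{N,R}$ from \eqref{XiNkdef}, whose rate is $J_{3^kW_R;M}$ restricted to $\widetilde\Mcal^{\ssup k}_1$. This restriction is exactly what the proof of Lemma~\ref{lem-LDPforXi} provides. The two events coincide up to the TV-small error \eqref{TVconvergence}, and the upper bound gives the infimum over extensions of $\xi$. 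The delicate points are the needed exponential tightness of $\partial\Xi^{(k)}$, which should come from the particle-number control of Section~\ref{sec-Compactness}, and the closedness of the set of extensions of $\xi$ in the limit $\eps\downarrow0$.
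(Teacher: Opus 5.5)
The genuine gap is the shifted bound \eqref{JWupperboundx}. That inequality is false as stated, and the paper's proof of it fails at the same point yours does. Your plan for \eqref{JWupperbound}, for (2) and for (4) otherwise follows the paper's LDP-comparison strategy. Your route to (3) is different from the paper's, valid, and more transparent. It needs one extra observation: $\Pi_{W_{R_n}\to W_R}$ maps the $W_{R_n}$-reference measure to one of the form $\Pi^{\ssup\Lcal}_{W_R}({\tt Q})\otimes[\psi_n'\otimes{\tt K}_{W_R}]$, and replacing $\psi_n'$ by $\partial\Pi^{\ssup\Scal}_{W_R}(P_n)$ can only lower the entropy.

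\textbf{Why your argument for \eqref{JWupperboundx} fails.} Keeping one window $z+x+W_R$ per $mR$-box gives the paper's $\Xi^{\ssup x}_{N,mR}$. It has only $\#Z_{N,mR}\approx|\L_N|/|W_{mR}|$ summands, so its conditional Sanov upper bound at speed $|\L_N|$ has rate $\frac 1{|W_{mR}|}H(\cdot\mid\cdot)=m^{-d}J_{W_R;M}$, not $J_{W_R;M}$. The comparison therefore only yields
$m^{-d}J_{W_R;M}(\theta_x(\Pi_{x+W_R}(\xi)))\leq J_{W_{mR};M}(\xi)+C_M(mR)^{-d/2}-|W_{mR}|^{-1}\langle\psi,\mathfrak p_{W_{mR},M}\rangle$. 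In \eqref{JWupperbound} the normalisation is correct only because $f_{m,R}$ averages over all $m^d$ sub-boxes.

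\textbf{Counterexample showing the factor $m^d$ cannot be removed.} Take $m\geq 3$, $x$ with $x+W_R\subset W_{mR}$, and a law $\nu$ on $\Lcal^{\ssup M}_{x+W_R}$ with $h=H(\nu\mid\Pi^{\ssup\Lcal}_{x+W_R}({\tt Q}_M))<\infty$. Let $\xi$ be the law of $(\omega_1+\omega_2,\underline 0)$, where $\omega_1\sim\nu$ and, independently, $\omega_2$ is the Poisson process of those loops of $\Pi^{\ssup\Lcal}_{W_{mR}}({\tt Q}_M)$ that do not have all their particles in $x+W_R$.
\begin{itemize}
\item $\psi=\delta_{\underline 0}$ is admissible, with $\langle\psi,\mathfrak p_{W_{mR},M}\rangle=0$.
\item The reference loop process splits into independent pieces on these two disjoint loop sets, so $J_{W_{mR};M}(\xi)=h/|W_{mR}|$.
\item The loop part of $\theta_x(\Pi_{x+W_R}(\xi))$ is $\theta_x(\nu)$. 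Lemma~\ref{lem-entropyproducts} and shift-invariance of ${\tt Q}_M$ give $J_{W_R;M}(\theta_x(\Pi_{x+W_R}(\xi)))\geq h/|W_R|=m^dh/|W_{mR}|$.
\end{itemize}
This contradicts \eqref{JWupperboundx} once $h$ is large. So only the weakened inequality holds in general, and any use of \eqref{JWupperboundx} (e.g.\ with $m=3$) needs a separate argument exploiting the special form of the measure at hand.

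\textbf{A missing step in (4).} You take for granted that the overlapping empirical measure $\Xi^{\ssup k}_{N,R}$ of \eqref{XiNkdef} satisfies an upper bound with rate $J_{3^kW_R;M}$. The proof of Lemma~\ref{lem-LDPforXi} only gives concentration near $\widetilde\Mcal_1^{\ssup k}$. The windows $z+3^kW_R$, $z\in Z_{N,R}$, overlap, so there is no conditional independence given boundary data and Corollary~\ref{cor-LDP}(1) does not apply. Treating the $\approx|\L_N|/|W_R|$ windows as a Sanov sample would even give the wrong rate $3^{kd}J_{3^kW_R;M}$, the same normalisation trap as above. The paper's device, in \eqref{XiNkdecomp}, is as follows.
\begin{itemize}
\item Split $Z_{N,R}$ into its $3^{kd}$ classes modulo $2\cdot 3^kR\,\Z^d$. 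Each class gives non-overlapping windows, i.e.\ a shifted copy of $\Xi_{N,3^kR}$ with upper rate $J_{3^kW_R;M}$.
\item Combine a joint local upper bound with rate $\max_y J_{3^kW_R;M}(\xi_y)$, the contraction principle for the averaging map, and convexity of $J_{3^kW_R;M}$. This gives rate at least $J_{3^kW_R;M}(\xi^{\ssup k})$ for $\Xi^{\ssup k}_{N,R}$.
\end{itemize}

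\textbf{Exponential tightness.} The exponential tightness you need in (4) does not come from Section~\ref{sec-Compactness}, whose estimates use the superstable interaction under $\widehat{\tt Q}$. Under ${\tt Q}_M$ it comes from the spread cut-off: the intensity of $k$-loops decays exponentially in $k$, so the particle density has small exponential moments. The same remark repairs your step in (1), where no compact $B'\subset\Mcal_1(\Tcal_{W_R})$ can contain a neighbourhood of a point.
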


\begin{proof}Recall that ${\tt Q}^{\ssup{\L_N,{\rm par}}}_{M}$ denotes the PPP with intensity measure given in \eqref{nuMdef}. All asymptotic assertions as $N\to\infty$ in the following are meant under this measure.

(1) Note that $f_{m,R}(\Xi_{N,mR})=\Xi_{N,R}$ for any large $N\in\N$ that is an odd multiple of $m R$. Hence, for any $\delta$-neighbourhood $ \Bcal_\delta$ of $\xi$, there is an $\delta'$-neighbourhood $\Bcal'_{\delta'}$ of $f_{m,R}(\xi)$ such that
\begin{equation}\label{Q-Qcomparison}
{\tt Q}_M^{\ssup{\L_N,{\rm par}}}\big(\Xi_{N,mR }\in \Bcal_{\delta}\big)
\leq {\tt Q}_M^{\ssup{\L_N,{\rm par}}}\big(\Xi_{N,R }\in \Bcal'_{\delta'}\big).
\end{equation}
We take $\Bcal_\delta$ such that $\widetilde U=\partial\Pi_{W_R}^{\ssup \Scal}(\overline \Bcal'_{\delta'})=\{\partial\Pi_{W_R}^{\ssup \Scal}(\xi')\in\Mcal_1(\Tcal_W)\colon \xi'\in \overline \Bcal'_{\delta'}\}$ is compact. Then, according to Lemma~\ref{lem-LDPforXi} (or Corollary~\ref{cor-LDP}(1)), the right-hand side can be further estimated as follows.
$$
\limsup_{N\to\infty} \frac1{|\L_N|}\log {\tt Q}_M^{\ssup{\L_N,{\rm par}}}\big(\Xi_{N,R }\in \Bcal'_{\delta'}\big)
\leq -\inf_{\psi\in \widetilde U} \inf_{\xi'\in  \overline \Bcal'_{\delta'}}J_{W_R;M}^{\ssup\psi}(\xi')=-\inf_{\xi'\in \overline \Bcal'_{\delta'}}J_{W_R;M}(\xi'),
$$
where the last step derives from the fact that $J_{W_R;M}^{\ssup\psi}(\xi')=\infty$ if $\psi\not=\partial\Pi_{W_R}^{\ssup\Scal}(\xi')$. Because of the lower semicontinuity of $ J_{W_R;M}$, we can estimate $-\inf_{\xi'\in \overline \Bcal'_{\delta'}}J_{W_R;M}(\xi')\leq - J_{W_R;M}(f_{m,R}(\xi))+C_\delta$ for some $C_\delta$ that vanishes as $\delta\downarrow 0$; here we use that $\overline \Bcal'_{\delta'}$ is an $ \delta'$-neighbourhood of $f_{m,R}(\xi)$, and $\delta'$ vanishes as $\delta\downarrow0$.

The left-hand side of \eqref{Q-Qcomparison} is lower estimated using a similar strategy to what we will present in Section~\ref{sec-Nlowbound}. Indeed, we can lower bound
$$
{\tt Q}_M^{\ssup{\L_N,{\rm par}}}\big(\Xi_{N,mR }\in \Bcal_{\delta}\big)
\geq {\tt Q}_{M}^{\ssup{\L_N,{\rm par}}}\big(\Xi_{N,mR }\in \Bcal'_{\delta'}\,\big|\,\partial \Xi_{N,mR}^{\ssup\Scal} =\psi_N\big){\tt Q}_{M}^{\ssup{\L_N,{\rm par}}}\big(\partial\Xi_{N,mR }=\psi_N),
$$
where $\psi_N\in\Mcal_1(\Tcal_{W_R})$ are picked such that they converge towards $\psi$ as $N\to\infty$. Now Lemma ~\ref{lem-UpperLDPbound} and Corollary~\ref{cor-LDP}(2) give that 
$$
\begin{aligned}
\liminf_{N\to\infty} \frac1{|\L_N|}&\log {\tt Q}_M^{\ssup{\L_N,{\rm par}}}\big(\Xi_{N,mR }\in \Bcal_{\delta}\big)
\geq -\inf_{\widetilde \xi\in  \Bcal'_{\delta'}\colon \partial\Pi_{W_{mR}}^{\ssup\Scal}(\widetilde\xi)=\psi}J_{W_{mR};M}(\widetilde \xi)\\
&-\frac{C_M}{(mR)^{d/2}}+\frac 1{|W_{mR}|}\langle\psi,\mathfrak p_{W_{mR},M}\rangle,
\end{aligned}
$$
recalling that $\psi=\partial \Pi_{W_{mR}}^{\ssup \Scal}(f_{m,R}(\xi))$ is $W_{mR}$-admissible. 
Clearly, the infimum on the right-hand side is not larger than $J_{W_{mR};M}(\xi)$. This yields
\begin{equation}\label{JestiLDP}
\begin{aligned}
- J_{W_R;M}(f_{m,R}(\xi))&+C_\delta
 \geq -J_{W_{mR};M}(\xi)-\frac{C_M}{(mR)^{d/2}}+\frac 1{|W_{mR}|}\langle\psi,\mathfrak p_{W_{mR},M}\rangle.
 \end{aligned}
\end{equation}
Making $\delta\downarrow 0$, this implies \eqref{JWupperbound}.

For proving \eqref{JWupperboundx}, fix   $x\in \R^d$ such that $x+W_R\subset W_{mR}$ and an odd $m\in\N\setminus \{1\}$ and define
\begin{equation}\label{fxdef}
f^{\ssup x}\colon \Mcal_1(\Lcal_{W_{mR}}\times \Scal_{W_{mR}}) \to \Mcal_1(\Lcal_{W_{R}}\times \Scal_{W_{R}}),\qquad f^{\ssup x}(\xi^{\ssup m})= \theta_x\big(\Pi_{x+W_R}(\xi^{\ssup m})\big).
\end{equation}
Then $f^{\ssup x}(\Xi_{N,mR}(\omega))=\Xi_{N,mR}^{\ssup x}(\omega)=\frac 1{\# Z_{N,mR}}\sum_{z\in Z_{N,mR}}\delta_{\theta_{z+x}(\Pi_{z+x+W_{R}}(\omega))}$. We proceed now as above with $f_{m,R}$ replaced by $f^{\ssup x}$. Indeed, for any   $\delta$-neighbourhood $\Bcal_\delta$ of $\Pi_{W_{mR}}(P)$, there is an $\delta'$-neighbourhood $\Bcal'_{\delta'}$ of $f^{\ssup x}(\Pi_{W_{mR}}(P))=\theta_x(\Pi_{x+W_R}(P))$ such that 
\begin{equation}\label{Q-Qcomparisonx}
{\tt Q}_M^{\ssup{\L_N,{\rm par}}}\big(\Xi_{N,mR }\in \Bcal_{\delta}\big)
\leq {\tt Q}_M^{\ssup{\L_N,{\rm par}}}\big(\Xi^{\ssup x }_{N,mR }\in \Bcal'_{\delta'}\big).
\end{equation}
For further estimating the right-hand side, observe that Lemma~\ref{lem-LDPforXi} applies to $\Xi^{\ssup x }_{N,mR }$ when conditioned on $\{\partial \Xi^{\ssup x }_{N,mR }=\psi_N\}$ for some $\psi_N\to\psi \in\Mcal_1(\Tcal_{W_R })$, where we define $\partial \Xi^{\ssup x }_{N,mR }=\frac 1{\# Z_{N,mR}}\sum_{z\in Z_{N,mR}}\delta_{\theta_{z+x}(\partial\Pi_{z+x+W_{R}}^{\ssup\Scal}(\omega))}$, since both ${\tt Q}_M$ and the kernel ${\tt K}_W$ are shift-invariant and can be equally applied in $x+W_R$ as in $W_R$. Therefore, as above, we arrive at \eqref{JWupperboundx}.

\medskip

(2) We apply (1) to   $\xi= \Pi_{W_{mR}}(P)$ and note that $f_{m,R}(\Pi_{W_{mR}}(P))=\Pi_{W_R}(P)$ by stationarity of $P$. Note that $\partial \Pi_{W_{mR}}^{\ssup\Scal}(P)$ is $W_{mR}$-admissible by Lemma~\ref{lem-Psi_admissible}. Use Lemma~\ref{lem-lowerboundlogpint} for $mR$ instead of $R$ in order to estimate the right-hand side of \eqref{JWupperbound}.

\medskip

(3) Fix $R\in(0,\infty)$ and a sequence $(R_n)_{n \in\N }$ that tends to $R$ from above. Using \eqref{entropyformula}, we see that, for any bounded measurable function $f\colon \Lcal_{W_{R_n}}\times  \Scal_{W_{R_n}}\to\R$,
$$
|W_{R_n} | J_{W_{R_n}}(\Pi_{W_{R_n}}(P))\geq \langle \Pi_{W_{R_n}}(P), f\rangle 
-\log \big[\Pi^{\ssup\Lcal}_{W_{R_n}}({\tt Q})\otimes[\partial\Pi_{W_{R_n}}^{\ssup\Scal}(P)\otimes {\tt K}_{W_{R_n}}\big](\e^f).
$$
We use this for a function $f$ that depends only on $\Lcal_{W_R}\times\Scal_{W_R}$, i.e., for an $f$ of the form $f=g\circ \Pi_{W_R}$ with some bounded measurable $g\colon \Lcal_{W_{R}}\times  \Scal_{W_{R}}\to\R$. Using that $R\leq R_n$ and hence $\Pi_{W_R}\circ \Pi_{W_{R_n}}=\Pi_{W_R}$, we see that 
$$
\langle \Pi_{W_{R_n}}(P), f\rangle =\langle P, f\circ \Pi_{W_{R_n}}\rangle =\langle P, g\circ\Pi_{W_R}\rangle= \langle \Pi_{W_{R}}(P), g\rangle.
$$
On the other hand, for such an $f$, the integral of $\e^f$ with respect to $\Pi^{\ssup\Lcal}_{W_{R_n}}({\tt Q})$ is equal to the integral of $\e^g$ with respect to $ \Pi^{\ssup\Lcal}_{W_{R}}({\tt Q})$, and the integral of $\e^g $ with respect to $\partial\Pi_{W_n}^{\ssup\Scal}(P_n)\otimes {\tt K}_{W_{R_n}}$ is asymptotic to its integral with respect to $\partial\Pi_{W_R}^{\ssup\Scal}(P)\otimes {\tt K}_{W_{R}}$ as $n\to\infty$, since the $P_n$-probability that a $W_{R_n}$-shred is not a $W_R$-shred vanishes. Picking $g$ such that the supremum over all bounded measurable functions $ \Lcal_{W_{R}}\times  \Scal_{W_{R}}\to\R$ is approached in the formula \eqref{entropyformula} for $|W_{R} | J_{W_{R}}(\Pi_{W_{R}}(P))$, we arrive at the assertion.

\medskip

(4) Recall the empirical process $\Xi_{N,R}^{\ssup k}$ defined in \eqref{XiNkdef} and recall from \eqref{TVconvergence} that $\Xi_{N,R}$ and $\Pi_{W}(\Xi_{N,R}^{\ssup k})$ have a vanishing total-variation distance as $N\to\infty$, where we abbreviate here and in the rest of the proof $\Pi_W=\Pi_{3^k W\to W}$ and $W=W_R$. We also note that
\begin{equation}\label{XiNkdecomp}
\Xi_{N,R}^{\ssup k}=\frac 1{\#(3^k W\cap\Z^d)}\sum_{y\in 3^k W\cap\Z^d}\Xi_{N,R}^{\ssup{k,y}}, \qquad\mbox{where }\Xi_{N,R}^{\ssup{k,y}}
=\frac1{\# Z_{N,3^k R}}\sum_{z\in Z_{N,3^k R}}\delta_{\theta_{y+z}(\Pi_{y+z+3^kW}(\omega))},
\end{equation}
and note, for any $y\in 3^k W\cap\Z^d$, that $(\Xi_{N,R}^{\ssup{k,y}})_{N\in\N}$ can be shown to be exponentially equivalent to $(\Xi_{N,R}^{\ssup{k,0}})_{N\in\N}=(\Xi_{N,3^k R})_{N\in\N}$, since they are only a fixed shift of each other, which corresponds to replacing $\L_N$ by $\L_N+y$. In particular, every $(\Xi_{N,R}^{\ssup{k,y}})_{N\in\N}$ satisfies, according to (a variant of) Corollary~\ref{cor-LDP}, the LDP upper bound with rate function $J_{3^k W;M}$.

We introduce the projection $\pi_y\colon \Mcal_1(\Lcal_{3^k W}\times\Scal_{3^k W})^{3^k W\cap\Z^d}$ on the $y$-th factor. We first show that $((\Xi_{N,R}^{\ssup {k,y}})_y)_{N\in\N}$ satisfies the LDP upper bound with rate function $(\xi_{y})_{y}\mapsto \max_y J_{3^k W}( \xi_y)$. Indeed, for any closed set $U$ in 
$ \Mcal_1(\Lcal_{3^k W}\times\Scal_{3^k W})^{3^k W\cap\Z^d}$, we can easily estimate, for any $y$,
$$
{\tt Q}^{\ssup{\L_N,{\rm par}}}_{M}\big((\Xi_{N,R}^{\ssup {k,y}})_y\in U\big)
\leq {\tt Q}^{\ssup{\L_N,{\rm par}}}_{M}\big(\Xi_{N,R}^{\ssup {k,y}}\in \pi_y(U)\big)
\leq \exp\Big\{-(1+o(1))|\L_N|\inf_{\pi_y(U)} J_{3^k W,M }\Big\}.
$$
We abbreviate $G((\xi_y)_{y\in 3^k W\cap\Z^d})=\frac 1{\#(3^k W\cap\Z^d)}\sum_y \xi_y$, then $\Xi_{N,R}^{\ssup k}=G((\Xi_{N,R}^{\ssup{k,y}})_y)$ satisfies, according to  the (upper-bound half of the) contraction principle from the theory of large deviations (see \cite[Section 4.2.1]{DZ98}) the upper-bound half of the LDP with rate function $\Mcal_1(\Lcal_{3^k W}\times\Scal_{3^k W})\ni \xi^{\ssup k}\mapsto \inf\{\max_y J_{3^k W,M}\circ \pi_y((\xi_{y})_{y})\colon G((\xi_{y})_{y})=\xi^{\ssup k}\}$. Since $\Xi_{N,R}^{\ssup k }$ has a vanishing distance to the set $\widetilde \Mcal_1^{\ssup k}$, the rate function is $=\infty$ in points $\xi^{\ssup k}\in (\widetilde \Mcal_1^{\ssup k})^{\rm c}$. Note that this rate function is not smaller than $J_{3^k W,M}(\xi^{\ssup k})$, since, by convexity, $\max_y J_{3^k W,M}\circ \pi_y((\xi_{y'})_{y'})\geq G((J_{3^k W,M}(\xi_{y})_{y})
\geq J_{3^k W,M}(G((\xi_{y}))_{y})=J_{3^k W,M}(\xi^{\ssup k})$. Hence, $\Xi_{N,R}^{\ssup k}$ satisfies the upper-bound half of the LDP with rate function $\xi^{\ssup k}\mapsto J_{3^k W,M}(\xi^{\ssup k})$ if $\xi^{\ssup k}\in \widetilde \Mcal_1^{\ssup k}$ and $\mapsto \infty$ otherwise. Now recall that $\Xi_{N,R}$ (and therefore $\Pi_W( \Xi_{N,R}^{\ssup k})$, since they have vanishing total-variation distance) satisfies the LDP lower bound  with rate function $\xi\mapsto J_{W,M}(\xi)+\frac {C_M}{R^{d/2}}-\frac 1{|W|}\langle \psi, \mathfrak p_{W,M}\rangle$ (for a proof, adapt the arguments of the proof of (1)). Again using the contraction principle for the function $\Pi_{W}$, we see that \eqref{JWgeqJ_3W} is satisfied.
\end{proof}

\subsection{Compactnesses}\label{sec-Compactness}

In Section~\ref{sec-LDPupperbound}  we will apply the LDP of Corollary~\ref{cor-LDP}(1) to the right-hand side of \eqref{hatZ2}. For this, it is necessary to generate compactness in the integration area beforehand. Furthermore,  we need to introduce a condition that makes the two particle densities that appear in \eqref{hatZ2}  continuous functions of $\Xi_{N,R}$, since it is crucial for our purposes to keep control on the condensate density and the total density of particles. The purpose of this section is to introduce this condition and to show how it can be incorporated in \eqref{hatZ2}. 

Part of our solution to this problem is to prove the compactness of the sub-level sets of $J_W$, which will be also used in the proof of Theorem~\ref{thm-specrelent}. But we will also need to explicitly control the expected particle number of the boundary-shred configuration $\partial\Pi_W^{\ssup\Scal}(\xi)$, since this is not controlled in the sub-level sets.

More explicitly, we will apply the (upper bound of the) LDP on the event $ \{\Xi^{\ssup\Lcal}_{N,R }\in K^{\ssup\Lcal}\}\cap\{\partial\Xi^{\ssup\Scal}_{N,R}\in K^{\ssup\Scal}\}$, where $K^{\ssup\Lcal}$ and $K^{\ssup\Scal}$ are sets that need to satisfy three crucial properties:

\begin{enumerate}
\item they are closed, and $K^{\ssup\Scal}$ induces compactness for the $\Mcal_1(\Tcal_W)$-part on the sub-level sets of $J_W$,

\item  the probability of $\Xi^{\ssup\Lcal}_{N,R }$, respectively $\partial\Xi^{\ssup\Scal}_{N,R }$, being in their complement is extremely small on the exponential scale,

\item on these sets, the number of particles is a continuous function of $\Xi^{\ssup\Lcal}_{N,R }$ respectively $\partial\Xi^{\ssup\Scal}_{N,R }$.

\end{enumerate}

The problem with (3) is that, {\em a priori}, the maps $\xi\mapsto\langle \xi,\mathfrak N_W^{\ssup{\ell,\Lcal}}\rangle$, respectively $\psi\mapsto\langle \psi,\mathfrak N_{\partial W}^{\ssup{\ell,\Scal}}\rangle$, are  not continuous in the weak topology since neither $\mathfrak N_W^{\ssup{\ell,\Lcal}}$ nor $ \mathfrak N_{\partial W}^{\ssup{\ell,\Scal}}$ are bounded. 
Therefore, we make use of Assumption (V), in particular \eqref{LowBoundAss}, which gives an efficient bound on having an extraordinarily high number of particles in the subboxes. Hence, (2) will be handled in terms of the transformed probability measure $\widehat {\tt Q}^{\ssup{\L_N,{\rm bc}}}$ defined in \eqref{transformedmeasure}.

\subsubsection{Loop part.} \label{sec-loopscompact}
Following Remark~\ref{rem-counting}, we decided to work (instead with the  number ${\mathfrak N}_{W_R}^{\ssup{\ell,\Lcal}}$ of particles in all loops in $W_R$; see \eqref{def_particlenumbers}) with the number $\widetilde{\mathfrak N}_{y+U}^{\ssup{\ell,\Lcal}}(\omega)$ of particles in $y+U$ (recall that $U=[-\frac 12,\frac 12]^d=W_{1/2}$) coming from any of the loops of $\omega\in \Lcal_W$ starting in $W$. Note that $\sum_{y\in Z_{R,1/2}}\widetilde{\mathfrak N}_{y+U}^{\ssup{\ell,\Lcal}}(\omega)=\widetilde{\mathfrak N}_{W}^{\ssup{\ell,\Lcal}}(\omega)={\mathfrak N}_{W}^{\ssup{\ell,\Lcal}}(\omega)$, where $Z_{R,1/2}$ (in a small abuse of notation) is the set of those $y \in \frac 12\Z^d$ such that $y+U \subset W_R$ (which implies $|Z_{R,1/2}|\leq |W_R|$).  Then we consider the set
\begin{equation}\label{KAloopdef}
\begin{aligned}
K_{R}^{\ssup{\Lcal}}(L)&=\Big\{\xi\in\Mcal_1(\Lcal_{W_R})\colon\sum_{y\in Z_{R,1/2}} \big\langle \xi, \big(\widetilde{\mathfrak N}_{y+U}^{\ssup{\ell,\Lcal}}\big)^{5/4}\big\rangle\leq L |W_{R}|
\Big\},\qquad L\in(0,\infty).
\end{aligned}
\end{equation}
In words,  under any $\xi\in K_{R}^{\ssup{\Lcal}}(L)$,  the number of particles in any shift of $U$ in $W_R$ is not extremely unevenly distributed, uniformly over the subbox and over $\xi$.

The set $K_{R}^{\ssup{\Lcal}}(L)$ turns out to possess the mentioned three properties:

\begin{lemma}[Restriction of the loop-part]\label{lem-compactnessLoop}Suppose that Assumption (V) holds, then the following assertions hold.

\begin{enumerate}
\item For any $L\in(0,\infty)$ and for any $R\in \N$, the set $K_{R}^{\ssup {\Lcal}}(L)$ is closed in the weak topology. 

\item 
\begin{equation}\label{exptightKLcal}
\lim_{L\to \infty}\sup_{R\in\N}\limsup_{N\to\infty}\frac 1{|\L_N|} \log \widehat\LPP^{\ssup{\L_{N},{\rm par}}}\Big(\Xi_{N,R}^{\ssup{\Loops}}\in \big(K_{R}^{\ssup\Lcal}(L)\big)^{\rm  c}\Big)=-\infty.
\end{equation}

\item  For any $L\in(0,\infty)$ and for any $R\in \N$, the  map $\xi\mapsto \langle \xi,\widetilde{\mathfrak N}_{W}^{\ssup{\ell,\Lcal}}\rangle $ is continuous on the set $K_{R}^{\ssup{\Lcal}}(L)$.

\end{enumerate} 
\end{lemma}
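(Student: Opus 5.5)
The plan is to handle the three assertions separately: (1) and (3) are soft measure-theoretic facts, while (2) is where Assumption (V) is needed.

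For (1), I would write the defining functional as a supremum of continuous bounded functionals. For each $y\in Z_{R,1/2}$, the map $\omega\mapsto \widetilde{\mathfrak N}_{y+U}^{\ssup{\ell,\Lcal}}(\omega)$ is lower semicontinuous on $\Lcal_{W_R}$ if we take a slightly open version of $y+U$, or pass to boundary-negligible approximations. Hence its $5/4$-power is the increasing limit of bounded continuous functions $h_{y,n}$, for instance obtained by truncation at level $n$ and smoothing of the indicator of $y+U$. The map $\xi\mapsto\sum_y\langle\xi,(\widetilde{\mathfrak N}_{y+U}^{\ssup{\ell,\Lcal}})^{5/4}\rangle$ is then a supremum over $n$ of weakly continuous maps, so it is lower semicontinuous. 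Its sub-level set $K_R^{\ssup\Lcal}(L)$ is therefore closed.

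For (3), I would use uniform integrability. On $K_R^{\ssup\Lcal}(L)$ the $5/4$-moment of every $\widetilde{\mathfrak N}_{y+U}^{\ssup{\ell,\Lcal}}$ is bounded by $L|W_R|$. Hence the truncations satisfy
$$
\langle\xi,\widetilde{\mathfrak N}_{y+U}\1\{\widetilde{\mathfrak N}_{y+U}>n\}\rangle\leq n^{-1/4}L|W_R|
$$
uniformly in $\xi\in K_R^{\ssup\Lcal}(L)$. Each truncated functional $\langle\xi,\widetilde{\mathfrak N}_{y+U}\wedge n\rangle$ is continuous, up to the boundary of $y+U$. The boundary can be handled by noting that in the limit the loops' particles almost surely do not sit on $\partial(y+U)$, since Brownian marginals have densities, or by slightly enlarging $U$ and using the same uniform bound. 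Summing over the finitely many $y$ gives continuity of $\xi\mapsto\langle\xi,\widetilde{\mathfrak N}_W^{\ssup{\ell,\Lcal}}\rangle$ on $K_R^{\ssup\Lcal}(L)$.

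The main step is (2). First, by Jensen/H\"older over the empirical measure, the event $\Xi_{N,R}^{\ssup\Lcal}\notin K_R^{\ssup\Lcal}(L)$ means
$$
\sum_{y}\widetilde{\mathfrak N}_{y+U}(\omega_{\rm P})^{5/4}>L|\L_N|,
$$
where $y$ now ranges over the unit boxes of $\L_N$ and only loops inside subboxes are counted. This count is bounded by the total particle counts $n_y$ in the boxes $y+U$. Next I would use superstability \eqref{LowBoundAss} locally, applied to boxes of size $O(1)$ containing $[-1,1]^d$, together with the spread restriction or a Lemma~\ref{lem-pathinteraction}-type argument for legs. This should give $\Phi_{\L_N,\L_N}\geq c\sum_y n_y^2-C\sum_y n_y$. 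On the other hand $\sum_y n_y^{5/4}>L|\L_N|$, and $n^2-Cn/c\geq L'n^{5/4}$ holds whenever $n$ is large, while the small $n_y$ contribute at most $O(|\L_N|)$ to the sum. Hence on the event, $\e^{-\Phi}\leq \e^{C|\L_N|-c'L^{8/5}|\L_N|}$, by H\"older with $n^2\geq n^{5/4\cdot 8/5}$. Dividing by $\widehat Z\geq\e^{-C|\L_N|}$, which holds by restricting to no particles, or by \eqref{limnorming}, gives a rate tending to $-\infty$ uniformly in $R$.

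The obstacle is making the superstability lower bound on $\Phi$ hold for the leg interaction $\int_0^\beta v(f_i(s)-f_j(s))\,\d s$ rather than for static positions. I would handle it as in Lemma~\ref{lem-pathinteraction}: split on legs staying in a neighbourhood for time $\gamma$, and pay a Gaussian cost for the others. I would then average the probability bound over the boxes via independence of the Poisson legs, which yields a per-box exponential bound $\e^{-cn_y^{3/2}}$ that still suffices after the same H\"older argument.
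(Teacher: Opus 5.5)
Your proposal is correct and follows essentially the paper's proof. Parts (1) and (3) go through lower semicontinuity and the uniform integrability coming from the $5/4$-moment bound. Part (2) applies Lemma~\ref{lem-pathinteraction} in each unit box to the legs, which are conditionally independent given the particle positions, together with $\widehat Z^{\ssup{\rm bc}}(\L_N)\geq \e^{-C|\L_N|}$. The only difference in (2) is packaging: the paper uses exponential Chebyshev and the boundedness of $m\mapsto m^{5/4}-Cm^{3/2}\1\{m\geq C\}$, whereas you use Jensen to get $\sum_y n_y^{3/2}\gtrsim L^{6/5}|\L_N|$ on the event, which is equivalent. You must drop the deterministic bound $\Phi_{\L_N,\L_N}\geq c\sum_y n_y^2-C\sum_y n_y$ you state first, since it is false for Brownian legs, as your own "obstacle" paragraph notes. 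The final estimate then bounds the conditional expectation of $\e^{-\Phi}$, not $\e^{-\Phi}$ pointwise, and the rate becomes $C-cL^{6/5}$ rather than $C-c'L^{8/5}$.
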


\begin{proof} We drop the super-indices \lq$\Lcal$\rq, and we abbreviate $W=W_R$.

(1) Since the map $\xi\mapsto \langle \xi, \widetilde{\mathfrak N}^{\ssup{\ell}}_{U}\1\{\widetilde{\mathfrak N}^{\ssup{\ell}}_{U}>L_k \} \rangle$ is lower semi-continuous for any $k\in\N$, $K_{R}(L) $ is closed.

(2) Now we show \eqref{exptightKLcal}. We adapt and  extend the proof of Lemma~\ref{lem-condensatenotion} and will also employ Lemma~\ref{lem-pathinteraction} (which uses \eqref{LowBoundAss} from Assumption (V)). As always, we assume that $\L_N$ is equal to the (up to boundaries) disjoint union of the boxes $W_z=z+W$ with $z\in Z_{N,R}$ (and $R$ may depend on $N$). We use that $\# Z_{N,R}\sim |\L_N|/|W|$ as $N\to\infty$.

First note that, for any $y\in Z_{R,1/2}$, in the limit as $N\to\infty$,
$$
\big\langle \Xi_{N,R}(\omega_{\rm P}), \big(\widetilde{\mathfrak N}^{\ssup{\ell}}_{y+U}\big)^{5/4}\big \rangle
\sim \frac {|W|}{|\L_N|}\sum_{z\in Z_{N,R}} \big(\widetilde{\mathfrak N}^{\ssup{\ell}}_{z+y+U}(\omega_{\rm P})\big)^{5/4}.
$$
Hence, for any $L\in (0,\infty)$, recalling the notation in \eqref{subconfigurations},
\begin{equation}\label{uppboundsmallprob}
\begin{aligned}
\widehat\LPP^{\ssup{\L_{N},{\rm par}}}&\big(\Xi_{N,R}\in (K_{R}(L))^{\rm  c}\big)
\leq \widehat\LPP^{\ssup{\L_{N},{\rm par}}}\Big(\sum_{z\in Z_{N,R}}\sum_{y\in Z_{R,1/2}} \big(\widetilde{\mathfrak N}^{\ssup{\ell}}_{z+y+U}(\omega_{W_z})\big)^{5/4}>L |\L_N|\Big)\\
&\leq \e^{-L|\L_N|} \widehat\LPP^{\ssup{\L_{N},{\rm par}}}\Big(\e^{ \sum_{z\in Z_{N,R}}\sum_{y\in Z_{R,1/2}} \big(\widetilde{\mathfrak N}^{\ssup{\ell}}_{z+y+U}(\omega_{W_z})\big)^{5/4}}\Big)\\
&\leq\frac{\e^{- L|\L_N|}}{\widehat Z^{\ssup{\rm bc}}_N(\L_N)}\prod_{z\in Z_{N,R}}\LPP^{\ssup{\L_{N},{\rm par}}}\Big(\e^{\sum_{y\in Z_{R,1/2}} \big(\widetilde{\mathfrak N}^{\ssup{\ell}}_{z+y+U}(\omega_{W_z})\big)^{5/4}}\e^{-\sum_{y\in Z_{R,1/2}}\Phi_{z+y+U,z+y+U}(\omega_{W_z})}\Big),
\end{aligned}
\end{equation}
where we used that, under particle boundary condition,  the configurations in the subboxes $W_z$ are i.i.d.\ over $z$, and we estimated the total interaction $\Phi_{\L_n,\L_N}$ in $\L_N$ from below against the sum of the interactions within the $z+y+U$ (i.e., the interactions between any pair of different legs that start in $z+y+U$).

We  apply Lemma~\ref{lem-pathinteraction} for any $z$ and $y$ to the expectation of $\e^{-\Phi_{z+y+U,z+y+U}}$  over all the legs starting in $z+y+U$, conditioned on their starting sites, which are the particles. Hence, there is some $C\in(0,\infty)$ such that this conditional expectation is upper bounded by $\e^{-C  (\widetilde{\mathfrak N}^{\ssup{\ell}}_{z+y+U})^{3/2} \1\{\widetilde{\mathfrak N}^{\ssup{\ell}}_{z+y+U}\geq C\}}$. Abbreviate $f(m)=m^{5/4}-C m^{3/2}\1\{m\geq C\}$. Hence, when carrying out the expectation over all the (independent!) legs in the configuration in each subbox $z+y+U$, we obtain from \eqref{uppboundsmallprob}  the following upper bound:
\begin{equation}
\begin{aligned}
\widehat\LPP^{\ssup{\L_{N},{\rm par}}}\big(\Xi_{N,R}\in (K_{R}(L))^{\rm  c}\big)
\leq \frac{\e^{- L|\L_N|}}{\widehat Z^{\ssup{\rm bc}}_N(\L_N)}\prod_{z\in Z_{N,R}}
\LPP^{\ssup{\L_{N},{\rm par}}}\Big[\e^{\sum_{y\in Z_{R,1/2}} f(\widetilde{\mathfrak N}^{\ssup{\ell}}_{z+y+U}(\omega_{W_z}))
}\Big].
\end{aligned}
\end{equation} 
Since $f$ is bounded from above (here is the point where we need the decomposition into the sum over $y$ of the $5/4$-moments of the particle numbers in the unit boxes in the definition of $K_R^{\ssup\Lcal}(L)$ instead of directly bounding only the $5/4$-moment of the particle number in $W$), and since the number of summands $(z,y)$ does not depend on $R$ and is $\sim |\L_N|$, and noting that the normalization constant $\widehat Z^{\ssup{\rm bc}}_N(\L_N)$ has a positive exponential rate, this shows that \eqref{exptightKLcal} holds.

(3) First note that the  map $\xi\mapsto \langle \xi,\widetilde{\mathfrak N}_{W}^{\ssup\ell}\rangle $ is lower semicontinuous on $K_{R}^{\ssup\Lcal}(L)$, since $\widetilde{\mathfrak N}_{W}^{\ssup\ell} $ can be monotonously approached from below with nonnegative bounded continuous functions. Furthermore, the upper semicontinuity can be easily be derived, since on $K_{R}^{\ssup\Lcal}(L)$ there is a uniform upper bound for a higher moment of $\widetilde{\mathfrak N}_{W}^{\ssup\ell}$. More precisely, for any sequence $(\xi_n)_n$ in $K_R^{\ssup\Lcal}(L)$, tending weakly towards some $\xi\in K_R(L)$, we use that $\widetilde{\mathfrak N}_W^{\ssup\ell}=\sum_{y\in Z_{R,1/2}}\widetilde{\mathfrak N}_{y+U}^{\ssup\ell}$ and estimate, for any $n\in\N$ and any $T\in(0,\infty)$,
$$
\begin{aligned}
\big|\langle \xi_n-\xi,&\widetilde{\mathfrak N}_W^{\ssup\ell}\rangle\big|
\leq \sum_{y\in Z_{R,1/2}}\Big(\big|\langle \xi_n-\xi,\widetilde{\mathfrak N}_{y+U}^{\ssup\ell}\1\{\widetilde{\mathfrak N}_{y+U}^{\ssup\ell}\leq T\}\rangle\big|
+T^{-1/4}\langle \xi_n+\xi,(\widetilde{\mathfrak N}_{y+U}^{\ssup\ell})^{5/4}\rangle\Big)\\
&\leq \sum_{y\in Z_{R,1/2}}\big|\langle \xi_n-\xi,\widetilde{\mathfrak N}_{y+U}^{\ssup\ell}\1\{\widetilde{\mathfrak N}_{y+U}^{\ssup\ell}\leq T\}\rangle\big|+T^{-1/4} L |W|.
\end{aligned}
$$
The latter term  can be made arbitrarily small by picking $T$ large, and then the former sum can be made arbitrarily small by picking $n$ large. 
\end{proof}

\subsubsection{Compactness for shred part.} Analogously to the loop part, we present now a strategy to compactify the boundary-shred-part in the variational formula in \eqref{hatZ2} such that the number of particles in the shreds, i.e., the map $\psi\mapsto  \langle \psi,{\mathfrak N}_{\partial W_R}^{\ssup{\ell,\Scal}}\rangle $, becomes continuous, where we recall from \eqref{particlenumber} that ${\mathfrak N}_{\partial W_R}^{\ssup{\ell,\Scal}}(\mu)= \sum_{i\in I}l_i$ denotes the number of particles in the configuration $\mu=\sum_{i\in I}\delta_{(x_i,l_i,y_i)}$. We also denote by ${\mathfrak N}_{y+U}^{\ssup{\ell,\Scal}}(\varpi)$ the number of particles in $y+U$ from all the shreds in $\varpi\in \Scal_W$ (compare to \eqref{def_particlenumbers}).

Then  we consider the set 
\begin{equation}\label{KAdef}
\begin{aligned}
K_{R}^{\ssup{\Scal}}(L)&=\Big\{\xi\in\Mcal_1(\Scal_{W_R})\colon\sum_{y\in Z_{R,1/2}} \big\langle \xi, \big({\mathfrak N}_{y+U}^{\ssup{\ell,\Scal}}\big)^{5/4}\big\rangle\leq L |W_{R}|
\Big\},\qquad L\in(0,\infty).
\end{aligned}
\end{equation}

The set $K_{R}^{\ssup{\Scal}}$ turns out to possess the mentioned three properties:

\begin{lemma}[Compactification of the shred-part]\label{lem-exponentiallytight}Suppose that Assumption (V) holds. Then the following holds.
\begin{enumerate}
\item For any $R,L\in \N$, the set $K_{R}^{\ssup\Scal}(L)$ is closed in the weak topology. 
Furthermore, the set
\begin{equation}\label{compactsetR}
\big\{\xi\in \Mcal_1(\Lcal_{W_R}\times\Scal_{W_R})\colon  \Pi_{W_R}^{\ssup\Scal}(\xi)\in K_R^{\ssup\Scal}(L),\langle \partial\Pi_{W_R}^{\ssup\Scal}(\xi), \mathfrak N_{\partial {W_R}}^{\ssup\Scal}\rangle =\rho_2, J_{W_R}(\xi)\leq C\big\}
\end{equation}
is compact for any $\rho_2,C\in(0,\infty)$.

\item
\begin{equation}\label{exptightK_A}
\lim_{L\to\infty}\sup_{R\in \N}\limsup_{N\to\infty}\frac 1{|\L_N|} \log \widehat\LPP^{\ssup{\L_N,{\rm par}}}\Big(\Xi_{N,R}^{\ssup{\Shreds}}\in \big(K_{R}^{\ssup\Scal}(L)\big)^{\rm  c}\Big)=-\infty.
\end{equation}

\item 
For any  $R,L\in \N$, the map $\psi\mapsto \int\psi(\d \mu)\,\sum_{i\in I} l_i$ is continuous on $K_{R}^{\ssup\Scal}(L)$.

\end{enumerate}
\end{lemma}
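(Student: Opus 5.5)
The plan is to mirror the proof of Lemma~\ref{lem-compactnessLoop}, adding one ingredient for the compactness claim in (1): tightness of the $\Mcal_1(\Tcal_W)$-marginal coming from the particle constraint.

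For (1), I would first treat closedness of $K_R^{\ssup\Scal}(L)$. For each $y$, the map $\xi\mapsto\langle\xi,(\mathfrak N_{y+U}^{\ssup{\ell,\Scal}})^{5/4}\rangle$ is a monotone limit of integrals against bounded continuous truncations, so it is lower semicontinuous. Hence the defining sum is lower semicontinuous and its sublevel set is closed.

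The compactness of the set in \eqref{compactsetR} I would split into three steps.

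1. \textbf{Tightness of the shred marginal.} The boundary marginals $\psi=\partial\Pi_{W_R}^{\ssup\Scal}(\xi)$ lie in $\{\psi\colon\langle\psi,\mathfrak N^{\ssup{\ell,\Scal}}_{\partial W}\rangle\le\rho_2\}$. This set is relatively compact by the tightness criterion of \cite[Prop.~11.1.VI]{DVJ08}, as noted after \eqref{particlenumber}.

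2. \textbf{Tightness of $\xi$ itself.} With $\psi$ confined to a compact set, the reference measures $\Pi_W^{\ssup\Lcal}({\tt Q})\otimes[\psi\otimes{\tt K}_W]$ form a tight family, because the kernel ${\tt K}_W$ is continuous in $\mu$. The entropy bound $J_W(\xi)\le C$ then yields tightness of $\xi$, via the standard inequality $\xi(A)\le(\log 2+H)/\log(1+1/\nu(A))$.

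3. \textbf{Closedness.} Lower semicontinuity of $(\xi,\psi)\mapsto H(\xi\mid\Pi_W^{\ssup\Lcal}({\tt Q})\otimes[\psi\otimes{\tt K}_W])$ follows from the variational formula \eqref{entropyformula} together with continuity of $\psi\mapsto\psi\otimes{\tt K}_W$. The constraint $\langle\psi,\mathfrak N_{\partial W}\rangle=\rho_2$ is closed once (3) is known, because the condition $\Pi^{\ssup\Scal}_{W}(\xi)\in K_R^{\ssup\Scal}(L)$ makes the particle number continuous.

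For (2), I would copy \eqref{uppboundsmallprob}. Applying the exponential Chebyshev inequality under $\widehat{\tt Q}$ gives a factor $\e^{-L|\L_N|}$ times the expectation of $\exp\{\sum_{z,y}(\widetilde{\mathfrak N}^{\ssup\ell}_{z+y+U})^{5/4}\}\e^{-\Phi_{\L_N,\L_N}}$. I then lower bound $\Phi_{\L_N,\L_N}$ by the sum over unit boxes $z+y+U$ of the interactions among legs starting there. Shred particles are genuine particles of the $R$-crossing loops, so the same estimate applies.

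Next I would condition on all particle positions and use that the legs are independent Brownian motions given their start points. Lemma~\ref{lem-pathinteraction} then bounds each factor by $\e^{-C m^{3/2}\1\{m\ge C\}}$, with $m$ the particle count in the unit box. Since $f(m)=m^{5/4}-Cm^{3/2}\1\{m\ge C\}$ is bounded above, the remaining product is at most $\e^{C'|\L_N|}$, with the number of unit boxes of order $|\L_N|$ uniformly in $R$. Finally $\widehat Z$ has a finite exponential rate, and letting $L\to\infty$ gives $-\infty$.

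The obstacle here is that shreds are not i.i.d. over $z$, so I would not factorize over $z$ at all. I would bound the whole $\Lambda_N$ expectation at once, with loops and shreds together and counted by $\widetilde{\mathfrak N}$, treating the legs as conditionally independent given the particles. That costs at most an overcount, since the loop-part moments are also included.

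For (3), I would use $\sum_i l_i=\mathfrak N^{\ssup{\ell,\Scal}}_W=\sum_y\mathfrak N^{\ssup{\ell,\Scal}}_{y+U}$. Truncating at level $T$ gives continuous bounded pieces, and the uniform $5/4$-moment bound controls the tail by $T^{-1/4}L|W|$, exactly as in Lemma~\ref{lem-compactnessLoop}(3). Lower semicontinuity plus this uniform integrability then gives continuity.

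The main difficulty I expect is step 2 of (1): justifying continuity of $\psi\mapsto{\tt K}_W(\psi,\cdot)$, whose Brownian-shred densities are only regular away from degenerate $x_i,y_i$, so as to transfer the entropy bound into tightness.
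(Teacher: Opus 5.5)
Your plan is correct. For the closedness of $K_R^{\ssup\Scal}(L)$ and for (2) and (3) it is the paper's argument; the paper only says these parts are analogous to Lemma~\ref{lem-compactnessLoop}.

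Your remark that in (2) one must not factorize over $z$ is a useful clarification. After the legs are integrated out with Lemma~\ref{lem-pathinteraction}, the integrand is bounded by $\e^{C|\L_N|}$ for every particle configuration, so no independence between subboxes is needed. Keep the conditioning as in the paper, though: condition on the starting sites of the legs and integrate the legs out. If you literally fix all particle positions, the legs become bridges with prescribed endpoints. The exit estimate on $[0,\gamma]$ in the proof of Lemma~\ref{lem-pathinteraction} then fails for legs with very long jumps, and the pointwise bound is lost.

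Where you genuinely deviate from the paper is the compactness of \eqref{compactsetR}.
\begin{itemize}
\item The paper splits $|W_R|J_{W_R}$ by Lemma~\ref{lem-entropyproducts} into the loop entropy and the conditional shred entropy given the boundary configuration. Bounded loop entropy gives tight loop marginals, as in Sanov's theorem. For the shred part it picks, for each $\mu$ in a compact $L_1\subset\Tcal_{W_R}$, a compact $L_2(\mu)$ with ${\tt K}_{W_R}(\mu,L_2(\mu)^{\rm c})\le\eps$. It then applies the entropy inequality to the conditional kernels and integrates against $\partial\Pi^{\ssup\Scal}_{W_R}(\xi)$.
\item You instead show that the whole family $\Pi_W^{\ssup\Lcal}({\tt Q})\otimes[\psi\otimes{\tt K}_W]$, with $\psi$ in the closure of $K_{\rho_2}$, is tight. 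You then apply the entropy inequality once to $\xi$.
\end{itemize}

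The paper's version mirrors the decomposition \eqref{jointentropy}. Yours is shorter and makes explicit two points the paper leaves out.
\begin{itemize}
\item The uniformity in $\mu\in L_1$ that continuity of ${\tt K}_W$ provides is exactly what is needed for the paper's set $\bigcup_{\mu\in L_1}\{\mu\}\times L_2(\mu)$ to be compact. A pointwise choice of $L_2(\mu)$ does not give this.
\item Your lower semicontinuity argument yields closedness of $\{J_{W_R}\le C\}$, which the paper does not address.
\end{itemize}

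The continuity you flag as the main difficulty is unproblematic. Set $w_0=x$ and $w_l=y$. Then $p_x^{\ssup W}(l,y)=\int_{W^{l-1}}\prod_{k=1}^{l}g_\beta(w_k-w_{k-1})\,\d w_1\cdots\d w_{l-1}$ is continuous and strictly positive on $W\times W^{\rm c}$, so there is no degeneracy. By Scheff\'e, the skeleton law under $q^{\ssup{W,l}}_{x,y}$ therefore depends continuously on $(x,y)$ in total variation. The interpolating Brownian bridges depend continuously on their endpoints. Convergence in $\Tcal_W$ means that eventually the number of atoms and the lengths $l_i$ agree. Hence the finite product $\mu\mapsto{\tt K}_W(\mu,\cdot)$ is weakly continuous.
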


\begin{proof} Much of this proof is analogous to the one of Lemma~\ref{lem-compactnessLoop}, and we leave the details to the reader. Indeed, the closedness of $K_R^{\ssup \Scal}(L)$ is analogous, the proof of (2) and the one of (3) as well. However, the compactness of the set in \eqref{compactsetR} needs to be shown. Here we can restrict to showing the relative compactness of the set of $\xi$ such that $J_{W_R}(\xi)\leq C$ and $\langle \partial\Pi_{W_R}^{\ssup\Scal}(\xi), \mathfrak N_{\partial {W_R}}^{\ssup\Scal}\rangle \leq\rho_2$, since the continuity of the map $\xi\mapsto \langle \partial\Pi_{W_R}^{\ssup\Scal}(\xi), \mathfrak N_{\partial {W_R}}^{\ssup\Scal}\rangle$ follows from assertion (3).

First recall from the end of Section~\ref{sec-chopping} that the set $K_{\rho_2}$ of all $\psi\in\Mcal_1(\Tcal_{W_R})$ such that $\langle \psi, {\mathfrak N}_{\partial W_R}^{\ssup{\ell,\Scal}}\rangle\leq \rho_2$ is relatively compact. Hence, we only need to show that its intersection with the sub-level set of $J_{W_R}$ (more precisely, the set of $\xi$ such that $\partial\Pi_{W_R}^{\ssup\Scal}(\xi)$ is in that set and $J_{W_R}(\xi)\leq C$) is relative compact in $\Mcal_1(\Lcal_{W_R}\times\Scal_{W_R})$.

Pick a sequence $(\xi_n)_{n\in\N}$ in that set. Using  the standard  Lemma~\ref{lem-entropyproducts} below we see that its loop-part, i.e., the sequence $(H_{\Lcal_W}(\Pi_{W_R}^{\ssup\Lcal}(\xi_n)\mid \Pi_{W_R}^{\ssup\Lcal}({\tt Q})))_{n\in\N}$, is bounded. Therefore, as it is proved in the large-deviations literature as part of the proof of Sanov's theorem, $(\Pi_{W_R}^{\ssup\Lcal}(\xi_n))_{n\in\N}$ is tight. Therefore, we only need to show that the shred-part  $(\Pi_{W_R}^{\ssup\Shreds}(\xi_n))_{n\in\N}$ is also tight. Then $(\Pi_{W_R}(\xi_n))_{n\in\N}$ is also tight, since, for any compact sets $K^{\ssup\Lcal}\subset \Lcal_W$ and $K^{\ssup\Scal}\subset \Scal_W$, also $K^{\ssup\Lcal}\times K^{\ssup\Scal}$ is compact, and its complement is contained in $[(K^{\ssup\Lcal})^{\rm c}\times \Scal_W]\cup [ \Lcal_W\times (K^{\ssup\Scal})^{\rm c}]$.

Let us show tightness of the sequence $(\Pi_{W_R}^{\ssup\Shreds}(\xi_n))_{n\in\N}$.
Again from \eqref{hbound} and Lemma~\ref{lem-entropyproducts} we see that it lies in the set
\begin{equation}
A_R= \big\{\xi\in\Mcal_1(\Shreds_{W_R})\colon \partial \Pi_{W_R}^{\ssup\Scal}(\xi)\in K_{\rho_2}, H_{\Shreds_{W_R}}(\xi\mid \partial\Pi_{W_R}^{\ssup\Shreds}(\xi)\otimes {\tt K}_{W_R})\leq C_R\big\},
\end{equation} 
for some $C_R\in\R$.

We show now that this set is compact. The following is an adaptation of the standard proof of compactness of level sets of entropies, as it can be found in the literature on Sanov's theorem (see \cite[Section 6.2]{DZ98}). Indeed, let a small $\delta\in(0,1)$ be given. Since $K_{\rho_2}$ is compact, there is a compact set $L_1\subset\Tcal_{W_R}$ such that $\psi(L_1^{\rm c})\leq \delta$ for any $\psi\in K_R$. Note that there is an $\alpha\in \N$ such that, for any $\mu=\sum_{i}\delta_{(x_i,y_i,l_i)}\in L_1$, we have $|y_i|\leq \alpha$, $l_i\leq \alpha$ and $|\mu|\leq \alpha$. Additionally, let a small $\eps\in(0,1)$ be given. Then, for any $\mu\in \Tcal_{W_R}$, there is a compact set $L_2(\mu)\subset \Scal_{W_R }$ such that ${\tt K}_{W_R}(\mu,L_2(\mu)^{\rm c})\leq \eps$. Then the set $L=\bigcup_{\mu\in L_1}\{\mu\}\times L_2(\mu)$ is compact. 

We denote a regular version of the conditional distribution of $\xi$ given $\partial \Pi_{W_R}^{\ssup\Scal}(\xi)$ by $\xi(\mu,\cdot)$ (in earlier notation $\xi_{\Tcal_{W_R}\to \Scal_{W_R}}(\mu,\cdot)$).  Using the characterisation of the entropy from \eqref{entropyformula}, we see by taking $f=\1_{L_2(\mu)^{\rm c}}\log M$ for any large $M$ that, for any $\xi \in\Mcal_1(\Scal_{W_R})$,
$$
H_{\Scal_{W_R}}(\xi(\mu,\cdot)\mid{\tt K}_{W_R}(\mu,\cdot))
\geq \xi(\mu,L_2(\mu)^{\rm c}) \log M-\log\big(1-{\tt K}_{W_R}(\mu,L_2(\mu)^{\rm c})+M {\tt K}_{W_R}(\mu,L_2(\mu)^{\rm c})\big),
$$
and hence
$$
\xi(\mu,L_2(\mu)^{\rm c})\leq\frac1 {\log M }\Big[H_{\Scal_{W_R}}(\xi(\mu,\cdot)\mid{\tt K}_{W_R}(\mu,\cdot))+\log(1+(M-1)\eps)\Big].
$$
Now we assume that $\xi\in A_R$ and integrate over $\partial\Pi_{W_R}^{\ssup\Scal}(\xi)(\d \mu)$ to obtain
$$
\begin{aligned}
\xi\Big(\bigcup_{\mu\in \Tcal_{W_R}}\{\mu\}\times L_2(\mu)^{\rm c}\Big)
&=\int \partial\Pi_{W_R}^{\ssup\Scal}(\xi)(\d \mu)\,\xi(\mu,L_2(\mu)^{\rm c})\\
&\leq \frac1 {\log M }H_{\Scal_{W_R}}(\xi\mid \partial\Pi_{W_R}^{\ssup\Scal}(\xi)\otimes{\tt K}_{W_R})+\frac{\log(1+(M-1)\eps)} {\log M }\\
&\leq \frac{c|W_R |+\log(1+(M-1)\eps)} {\log M }.
\end{aligned}
$$
Now picking $M=1+\frac 1\eps$ and making $\eps$ small, we can press the right-hand side under the given small number $\delta\in(0,1)$.  Now we want to argue that $\xi(L^{\rm c})\leq 2\delta$ for any $\xi\in A_R$. 

Indeed, split $L^{\rm c}$ into the union of $\{\mu\}\times L_2(\mu)^{\rm c }$ with $\mu\in L_1$ and  the union of $\{\mu\}\times L_2(\mu)$ with  $\mu\in L_1^{\rm c}$ to see that the $\xi$-measure of the complement of the former is below $\delta$ by the choice of $L_1$ and the one of the latter is below $\delta$ by the preceding argument. This finishes the proof of tightness of $A_R$ and therefore the proof of the entire lemma.
\end{proof}

\subsection{Upper large deviations  bound}\label{sec-LDPupperbound}

In this section we derive an upper bound for the large-$N$ exponential rate of $\widehat Z_{N,R,\delta}^{\ssup{\rm bc}}(\L_N,\rho_1,\rho_2)$ defined in \eqref{hatZ}, on the base of the upper bound of Lemma~\ref{lem-UpperBound} in \eqref{hatZ2}.  Recall that we fixed $\rho_1,\rho_2\in(0,\infty)$ with $\rho=\rho_1+\rho_2$. Furthermore, as always, we fix $R\in\N$ and  decompose the box $\L_N$ into the subboxes $z+W$ with $z\in Z_{N,R}$, where $R$ might depend on $N$ and converges then to $R$. Furthermore, we fix a small $\delta\in(0,1)$.

We want to apply Corollary~\ref{cor-LDP}(1) to the right-hand side of \eqref{hatZ2}. For this, we insert the indicator on $\{\Xi_{N,R}^{\ssup\Lcal}\in K_{R}^{\ssup\Lcal}(L)\}\cap\{ \Xi_{N,R}^{\ssup\Scal}\in K_{R}^{\ssup\Scal}(L)\}$, plus the one on its complement, and we note that the second part (the one with the indicator on the complement) is negligible on the exponential scale in the limit $N \to\infty$ by Lemmas~\ref{lem-compactnessLoop} and \ref{lem-exponentiallytight}, if $L$ is taken large enough (not depending on $R$, according to assertion (2) there). On the first event, the map $\xi\mapsto \langle \xi,\frac 1{|W|}F_{W,W}\rangle$ is lower semicontinuous, and the two maps $\xi\mapsto\langle \xi,\mathfrak N_W^{\ssup{\ell,\Lcal}}\rangle$ and $\psi\mapsto\langle \psi, \mathfrak N_{\partial W}^{\ssup{\ell,\Scal}}\rangle$ are continuous; hence the two indicators on the right-hand side of \eqref{hatZ2} are on closed sets, and the $\Mcal_1(\Tcal_W)$-part is even continuous. Applying Corollary~\ref{cor-LDP}(1), we  arrive at
\begin{equation}\label{hatZNlimit}
\begin{aligned}
\limsup_{N\to\infty}\frac 1{|\L_N|}&\log \widehat Z_{N,R,\delta}^{\ssup{\rm bc}}(\L_N,\rho_1,\rho_2)\\
&\leq -\inf\Big\{\frac1{|W|}\langle \xi,F_{W,W}\rangle+J_W(\xi)
\colon \xi\in\bigcap_{k\in \N}\Mcal_1^{\ssup k}(\Lcal_{W}\times \Scal_{W}),\\
&\quad 
\Pi_W^{\ssup\Lcal}(\xi)\in K_{R}^{\ssup\Lcal}(L),\Pi_{W}^{\ssup\Scal}(\xi)\in K_{R}^{\ssup \Scal}(L), \\
&\quad\langle \xi,\smfrac 1{|{W}|}{\mathfrak N}_{W}^{\ssup{\ell,\Lcal}}\rangle\in\overline\Bcal_{2\delta}(\rho_1), \langle \xi,\smfrac 1{|{W}|}{\mathfrak N}_{W}^{\ssup{ \ell,\Scal}}\rangle\in\overline\Bcal_{2\delta}(\rho_2)\Big\}.
\end{aligned}
\end{equation}

In Section~\ref{sec-limitingentropy} we show how to handle the limit as $R\to\infty$ for the crucial entropy term $J_W(\xi)$ if $\xi=\Pi_W(P)$ for some $P\in\Mcal_1^{\ssup{\rm s}}(\Lcal\times\Scal)$. The finish of the proof of the upper bound in Theorem~\ref{thm-freeenergy} will be carried out in Section~\ref{sec-finishuppbound}.

\subsection{Lower large deviations bound}\label{sec-Nlowbound}

Now we apply the large deviations assertions of Corollary~\ref{cor-LDP}(2) to our preliminary lower bound in Lemma~\ref{lem-LowerBound} in \eqref{hatZ4} to obtain a lower bound for the limiting free energy. We keep $R\in\N$ fixed and put $W=[-R,R]^d$. Furthermore, we fix also all the auxiliary parameters $\mathfrak r, M ,K,L,T\in(0,\infty)$ and recall the events in \eqref{ALcaldef}, \eqref{AScaldef} and \eqref{Adef}.  We also recall that $\L_N'$ is the \lq interior\rq\ of $\L_N$, i.e., the union of the $W_z$ with $z\in Z_{N,R}^\circ$.
 Recall also other notation from Section~\ref{sec-lowboundN}, in particular that we write $\|f\|_{{\rm sp}}=\max_{t\in[0,\beta]}|f(t)-f(0)|$ for the spread of a leg $f\in\Ccal_1$. We pick any admissible $\psi^*\in \Mcal_1(\Tcal_W)$ (see Definition~\ref{def-admisssible}). We assume that $\langle \psi^*,\frac 1{|W|}\mathfrak N_{\partial W}^{\ssup{\ell,\Scal}}\rangle \in \Bcal_{\delta/3}(\rho_2)$. For some small $\delta\in(0,1)$, we insert the indicator on the event $\{\partial \Xi_{N,R}^{\ssup{\circ,\omega_{\rm P},\Scal}}\in \Bcal_\delta(\psi^*)\}$ into the expectation on the right-hand side of \eqref{hatZ4}, where $\Bcal_\delta(\psi^*)$ is a $\delta$-neighbourhood of $\psi^*$. Recall the kernel ${\tt K}_W$ from \eqref{kernelK}. 
We are going to use $C\in(0,\infty)$ as a generic constant that depends only on $d$, $\beta$, $\rho_1$, $\rho_2$ or $v$ and may change its value from appearance to appearance.

In view of an application of  Corollary~\ref{cor-LDP}(2) we switched already in \eqref{hatZ4} in Section~\ref{sec-lowboundN} to the integration with respect the conditional version given an event, namely $ 
\bigcap_{z\in Z_{N,R}^\circ}\{(\omega_{W_z},\varpi_{W_z})\in A_{W_z;M,L,K,T}\}$, on which the particle numbers in loops and in shreds, respectively,  and the interaction are continuous functionals of $\Xi_{N,R}$. This is necessary in order that the event 
$$
B=\Big\{\xi\in\Mcal_1(A_{W;\mathfrak r,M,K,L,T})\colon \langle\xi, \smfrac 1{|W|} \mathfrak N_W^{\ssup{\ell,\Lcal}}\rangle \in\Bcal_{\delta/3}(\rho_1)\mbox{ and }\langle\xi, \smfrac 1{|W|} \mathfrak N_W^{\ssup{\ell,\Scal}}\rangle \in\Bcal_{\delta/3}(\rho_2)\Big\}
$$
is open in $\Mcal_1(A_{W;\mathfrak r,M,K,L,T})$ and the map $\xi\mapsto \langle F_{W,W},\xi\rangle$ is continuous. Indeed the functionals $\mathfrak N_W^{\ssup{\ell,\Lcal}}$, $\mathfrak N_W^{\ssup{\ell,\Scal}}$ and $F_{W,W}$ need to be bounded and continuous  on our restriction, since we are working in the weak topology. Note that their continuity is not a problem, since (see Section~\ref{sec-PPPs}) convergence of loop-shred configurations is defined leg-wise, and $V$ defined in \eqref{Vdef} is continuous. Indeed, on $A_{W;\mathfrak r,M,L,K,T}$, the three functions $\mathfrak N_W^{\ssup{\ell,\Lcal}}$, $\mathfrak N_W^{\ssup{\ell,\Scal}}$ and $F_{W,W}$ are bounded. This is clear for the first two, and for $F_{W,W}$ this follows from Lemma~\ref{lem-Interactesti} with $z=z'$. Hence, we are in the setting of our crucial LDP-tool, Corollary~\ref{cor-LDP}(2). 

Now taking the limit $N\to\infty$ in \eqref{hatZ4} gives, according to Lemma~\ref{lem-UpperLDPbound} and  Corollary~\ref{cor-LDP}(2),
\begin{equation}\label{hatZNlimitlowbound}
\begin{aligned}
\liminf_{N\to\infty}&\frac 1{|\L_N|}\log \widehat Z_{N,R,\delta}^{\ssup{\rm bc}}(\L_N,\rho_1,\rho_2)
\geq -\frac {C_{M,L,K,T}}R- \frac {C_M}{R^{d/2}}+\langle \psi^*,\mathfrak p_{W_R,M}\rangle
\\
&-\inf\Big\{\frac1{|W|}\langle \xi,F_{W,W}\rangle+J_{W;\mathfrak r,M,L,K,T}(\xi)-\langle \log p_{W;\mathfrak r,M,L,K,T},\psi^*\rangle
\colon  \\ 
&
\qquad\qquad\xi\in\Mcal_1(A_{W;\mathfrak r,M,L,K,T}),\partial\Pi_W^{\ssup\Scal}(\xi)=\psi^*,\langle \xi,\smfrac 1{|W|}{\mathfrak N}_W^{\ssup{\ell,\Lcal}}\rangle\in \Bcal_{\delta/3}(\rho_1)\Big\},
\end{aligned}
\end{equation}
where $ J_{W;\mathfrak r,M,L,K,T}$ is defined in \eqref{JWMTdef} with $\vartheta=\infty$. Note that, for $\xi\in\Mcal_1(\Lcal_W\times \Scal_W)$ that is concentrated on $A_{W;\mathfrak r,M,L,K,T}$ with $\partial\Pi_W^{\ssup\Scal}(\xi)=\psi^*$, we have
$$
\begin{aligned}
J_{W;\mathfrak r,M,L,K,T}(\xi)-\langle \log p_{W;\mathfrak r,M,L,K,T},\psi^*\rangle
&=\int_{A_{W;\mathfrak r,M,L,K,T}}\xi(\d(\omega,\varpi))\,\log\frac{\d \xi}
{\d[\Pi_W^{\ssup\Lcal}({\tt Q})\otimes [\psi^ *\otimes {\tt K}_W]]}(\omega,\varpi)\\
&=J_{W}(\xi),
\end{aligned}
$$
since the integral on $A_{W;\mathfrak r,M,L,K,T}^{\rm c}$ does not contribute. We take $\psi^*$ of this form.

In order to apply Lemma~\ref{lem-lowerboundlogpint} to the third term on the right-hand side of \eqref{hatZNlimitlowbound}, we additionally require that $\xi$ is concentrated on $\Lcal_W\times \Scal_W^{\ssup{M,\vartheta,S}}$ where $\Scal^{\ssup{M,\vartheta,S}} $ is  defined in \eqref{SMvarthetaSdef}. We further confine the infimum to  those $\xi$'s of the form $\xi=\Pi_W(P)$ with an ergodic measure $P\in\Mcal_1^{\ssup{\rm s}}(\Lcal\times\Scal)$. Then $\psi^*=\partial\Pi_W^{\ssup\Scal}(P)$ is admissible according to Lemma~\ref{lem-Psi_admissible}. This implies that, for any $R,M, L,K,T,\vartheta, S,\delta,\delta'\in(0,\infty)$,
\begin{equation}\label{hatZNlimitlowbound3}
\begin{aligned}
\liminf_{N\to\infty}&\frac 1{|\L_N|}\log \widehat Z_{N,R,\delta}^{\ssup{\rm bc}}(\L_N,\rho_1,\rho_2)
\geq -\frac {C_{M,L,K,T}}R-\frac {C_M}{R^{d/2}}-\Big[\frac {C_{M,\vartheta,S}}R+\frac{C\vartheta^2}S\Big](\rho_2+\delta')
\\
&-\inf\Big\{\frac1{|W|}\langle \Pi_W(P),F_{W,W}\rangle+J_{W}(\Pi_W(P)) \colon P\in\Mcal_1^{\ssup{\rm s}}(\Lcal\times \Scal)\mbox{ ergodic, }\\ 
&\quad \Pi_W(P)(A_{W;\mathfrak r,M,L,K,T})=1
=\Pi_W^{\ssup\Scal}(P)(\Scal_W^{\ssup{M,\vartheta,S}}), \langle P,\mathfrak N_U^{\ssup {\ell,\Lcal}}\rangle\in\Bcal_{\delta/3}(\rho_1), \langle P,\mathfrak N_U^{\ssup {\ell,\Scal}}\rangle\in\Bcal_{\delta'}(\rho_2)\Big \}.
\end{aligned}
\end{equation}

In Lemma~\ref{lem-ergappr} we will show how to get rid of the two restrictions of $P$ being ergodic and $\Pi_W(P)$ being concentrated on $A_{W;\mathfrak r,M,L,K,T} $. In Section~\ref{sec-finishlowerbound}, we put this all together, make $R\to\infty$ and afterwards $M,T\to\infty$ and then $L,K\to\infty$ and finish the proof of the lower bound in Theorem~\ref{thm-freeenergy}.

\section{Analysis of the variational formula}\label{sec-propchi}

\noindent  This section is devoted to proofs of a number of results that evolve around the variational formula $\chi(\rho_1,\rho_2)$ in \eqref{chidefneu}.
In Section~\ref{sec-lemmaproof} we give  the proof of Lemma~\ref{lem-propertieschi} from  Section~\ref{sec-VP}, the regularity properties of $\chi(\rho_1,\rho_2)$. The other sections  give a number of technical preparations for some crucial points in the proofs of Theorems~\ref{thm-specrelent} and \ref{thm-freeenergy}. That is, in Section~\ref{sec-entropy} we explicitly decompose the entropies for loop point processes and for interlacement point processes according to the various parts of these point processes. In Section~\ref{sec-noshreds} we use this to prove Lemma~\ref{lem-noshreds}, i.e., that every loop/interlacement configuration can be approximated with a pure loop-configuration. In Section~\ref{sec-projentr} we show that entropies of  loop-shred configuration measures that are concentrated on uniformly bounded configurations approximate the entropy of arbitrary loop-shred configurations if the  bound diverges. Using this and a similar technique as in Section~\ref{sec-noshreds}, in Section~\ref{sec-ergappr} we carry out an  ergodic approximation in our main variational formula, i.e., we prove that the minimizers can be approximated by ergodic loop/interlacements configurations that are additionally uniformly bounded in various senses.

\subsection{Proof of Lemma~\ref{lem-propertieschi}}\label{sec-lemmaproof}

In this section, we prove Lemma~\ref{lem-propertieschi}.
As a preparation, we give a crucial lower bound on the expected energy $F_U$ defined in \eqref{FUdef}, which will be important for obtaining compactness properties. By $\widetilde {\mathfrak N}_W^{\ssup\ell}(\omega,\varpi)$ we denote the number of particles in $W$ in the loop/interlacement configuration $(\omega,\varpi)$, regardless where the loop starts in which is the particle is contained (if any). In Remark~\ref{rem-counting}, we argued that $\widetilde {\mathfrak N}_W^{\ssup\ell}$ has the same expectation under any $P\in\Mcal_1^{\ssup{\rm s}}(\Lcal\times\Scal)$ as ${\mathfrak N}_W^{\ssup\ell}$ defined in and below \eqref{def_particlenumbers}.

\begin{lemma}[Lower bound for expected interaction]\label{lem-lowboundinteract}
Suppose that $v$ satisfies  Assumption (V). Then, for any ergodic  $P\in\Mcal_1^{\ssup{\rm s}}(\Lcal\times\Scal)$,
\begin{equation}\label{FUmajorant}
\langle P,F_U\rangle \geq\frac{C\beta}2\big\langle P, \big(\widetilde{\mathfrak N}_{U}^{\ssup\ell}\big)^2\big\rangle -\frac {5C \beta}4,
\end{equation}
where $C$ is introduced in \eqref{LowBoundAss}. Moreover, the maps $\Mcal_1^{\ssup{\rm s}}(\Lcal\times\Scal)\ni P\mapsto  \langle P, \widetilde{\mathfrak N}_{U}^{\ssup{\ell,\Lcal}}\rangle$ and  $\Mcal_1^{\ssup{\rm s}}(\Lcal\times\Scal)\ni P\mapsto  \langle P, \widetilde{\mathfrak N}_{U}^{\ssup{\ell,\Scal}}\rangle$ are continuous on the set $\{P\colon \langle P, (\widetilde{\mathfrak N}_{U}^{\ssup\ell})^2\rangle\leq L\}$ for any $L\in(0,\infty)$.
\end{lemma}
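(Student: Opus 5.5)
The plan is to use Remark~\ref{rem-counting} to rewrite $\langle P,F_U\rangle$ as the expected total interaction between legs starting in $U$ and all other legs. Then we lower bound it using only pairs of legs both starting in $U$, and apply superstability \eqref{LowBoundAss} pointwise in time. Concretely, nonnegativity of $v$ gives
\[
F_U\geq \tfrac12\sum_{(a,b)\colon a\neq b,\ a(0),b(0)\in U}V(a,b),
\]
where the sum runs over ordered pairs of distinct legs from loops or interlacements. By Remark~\ref{rem-counting} this holds in $P$-expectation with the full $F_U$ on the left.

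The main difficulty is that legs starting in $U$ may wander far away during $[0,\beta]$, so \eqref{LowBoundAss} cannot be applied directly at every time $s$. I would handle this with a time-averaging/stationarity trick. For each $s\in[0,\beta]$, consider the positions of all legs at time $s$. Because the leg-increment structure is shift-covariant, the expected number of pairs whose time-$s$ positions both lie in $U$ should equal, under a stationary $P$, the expected number of pairs with starting points in $U$, after re-indexing legs by their position at time $s$ instead of time $0$. (Strictly, this uses stationarity of the point process of the positions $\{f_{x,i}(s)\}$, which holds since $P$ is shift-invariant.)

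It is therefore cleaner to define $\widetilde{\mathfrak N}_U(s)$, the number of legs at position in $U$ at time $s$. Stationarity gives $\langle P,\widetilde{\mathfrak N}_U(s)^2\rangle=\langle P,(\widetilde{\mathfrak N}_U^{\ssup\ell})^2\rangle$, because the configuration $\{f_{\cdot,i}(s)\}$ is again a stationary particle configuration of the same law. Here I would have to argue that the time-$s$ snapshot has the same distribution as the time-$0$ one. For interlacements this follows by a time shift; for loops a leg-rotation argument is needed, and ergodicity might be invoked. Then, within the box $U$ (taking $W=[-1,1]^d\supset U$ in \eqref{LowBoundAss}),
\[
\sum_{a\ne b\colon a(s),b(s)\in U}v(a(s)-b(s))\geq 2C\bigl(\tfrac{m^2}{|W|}-m\bigr),\qquad m=\widetilde{\mathfrak N}_U(s).
\]
Integrating over $s\in[0,\beta]$ and taking expectations gives a bound of the form $C\beta\langle m^2\rangle-C'\beta\langle m\rangle$. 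Absorbing the linear term via $m\le \frac{m^2}{4}+1$ yields \eqref{FUmajorant}, with constants adjusted to match $\frac{C\beta}2$ and $\frac{5C\beta}4$.

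For the continuity statement, I would proceed as in Lemma~\ref{lem-compactnessLoop}(3): truncate $\widetilde{\mathfrak N}_U^{\ssup{\ell,\Lcal}}$ at level $T$, which gives a bounded local functional continuous in the topology generated by $\Pi_W^{\ssup\Lcal},\Pi_W^{\ssup\Scal}$. The tail is bounded by $\langle P,(\widetilde{\mathfrak N}_U^{\ssup\ell})^2\rangle/T\le L/T$ uniformly on the set. The particles in $U$ of loops not fully contained in a large box are seen as shreds, so the loop and interlacement counts must be separated through $\Pi_{W_R}^{\ssup\Lcal}$ and $\Pi_{W_R}^{\ssup\Scal}$ with $R\to\infty$. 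This separation is the delicate point I expect to need extra care.
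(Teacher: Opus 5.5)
The step that fails is the identification $\langle P,\widetilde{\mathfrak N}_U(s)^2\rangle=\langle P,(\widetilde{\mathfrak N}_U^{\ssup\ell})^2\rangle$.

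What stationarity does give you is first moments only. The mass-transport argument of Remark~\ref{rem-counting} shows that the time-$s$ leg positions form a stationary point process with the same \emph{intensity} as the particles, i.e.\ $\langle P,\widetilde{\mathfrak N}_U(s)\rangle=\langle P,\widetilde{\mathfrak N}_U^{\ssup\ell}\rangle$. It says nothing about second moments. $P$ is an arbitrary stationary law and is not invariant under time shifts by $s\notin\beta\Z$, since interlacements are only identified modulo $\beta\Z$. There is no leg-rotation symmetry, and ergodicity plays no role here.

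In fact \eqref{FUmajorant} with the second moment is false for general ergodic $P$, so this step cannot be repaired. Take $v=0$ outside $B_r(0)$, $G=2r\Z^d\cap B_D(0)$ and $k=|G|\asymp (D/r)^d$. Let $P$ be the uniformly randomly shifted, $3D\Z^d$-periodic pure-loop configuration which has, at each lattice site $z$, the $k$ length-one loops $t\mapsto z+\lambda(t)p$ for $p\in G$. Here $\lambda(0)=\lambda(\beta)=\epsilon$ and $\lambda=1$ off a time set of length $2\delta$.

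This $P$ is stationary and ergodic. Two distinct legs interact only while $\lambda\le\frac12$, so $\langle P,F_U\rangle\le 2\delta\|v\|_\infty k^2(3D)^{-d}$. On the other hand, $\langle P,(\widetilde{\mathfrak N}_U^{\ssup\ell})^2\rangle\ge\frac12 k^2(3D)^{-d}\asymp D^d$ for small $\epsilon$. Taking $\delta$ small and $D$ large violates \eqref{FUmajorant}. At times with $\lambda(s)=1$, $\widetilde{\mathfrak N}_U(s)$ is bounded, which is exactly where your identification breaks.

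Your snapshot argument does yield a correct weaker bound. Combine the first-moment identity, $\langle P,\widetilde{\mathfrak N}_U(s)^2\rangle\ge\langle P,\widetilde{\mathfrak N}_U(s)\rangle^2$, and \eqref{LowBoundAss} in $[-1,1]^d$. This gives $\langle P,F_U\rangle\ge C\beta(2^{-d}\rho^2-\rho)$ with $\rho=\langle P,\widetilde{\mathfrak N}_U^{\ssup\ell}\rangle$, for every stationary $P$.

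The paper's route is different but lands on the same weaker statement. It applies superstability in a large box $W_R$ at each time and uses Jensen in time to pass to the time-averaged number of legs in $W_R$. It bounds that average below by the number of particles in $W_{R-M}$ with leg spread $\le M$, and then lets $R\to\infty$ by the spatial ergodic theorem. For ergodic $P$, however, the normalised count converges to the constant $\langle P,\widetilde{\mathfrak N}^{\ssup{\ell,M}}_U\rangle$, not to the random variable $\widetilde{\mathfrak N}^{\ssup{\ell,M}}_U$ as asserted there. So that argument also only proves a bound in $\langle P,\widetilde{\mathfrak N}_U^{\ssup\ell}\rangle^2$. Your corrected version reaches it without ergodicity or a large box.

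For continuity, truncation plus the tail bound $L/T$ is also what the paper does. It proves continuity of the total map $P\mapsto\langle P,\widetilde{\mathfrak N}_U^{\ssup\ell}\rangle$ on $\{\langle P,(\widetilde{\mathfrak N}_U^{\ssup\ell})^2\rangle\le L\}$.

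The split you flag as delicate cannot be achieved. $\Pi_W^{\ssup\Scal}$ superposes shreds of loops and of interlacements, so $\widetilde{\mathfrak N}_U^{\ssup{\ell,\Lcal}}\wedge T$ is not a function of $\Pi_W$ for any bounded $W$. Sending $R\to\infty$ only exhibits the loop part as a supremum, and the interlacement part as an infimum, of continuous functionals. That gives lower, respectively upper, semicontinuity and nothing more.

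Continuity genuinely fails, as Remark~\ref{rem-minimizers} anticipates. In $d\ge2$, let $P$ be the randomly shifted family of straight interlacements $t\mapsto\xi+te_1+j$ with $j\in\{0\}\times\Z^{d-1}$. Let $P_n$ arise by cutting these lines every $n$ particles at randomly shifted positions and closing neighbouring pairs of lines into thin loops with $2n$ particles. Then $\Pi_W(P_n)\to\Pi_W(P)$ for every box $W$, and the particle numbers per unit box are uniformly bounded. Yet $\langle P_n,\widetilde{\mathfrak N}_U^{\ssup{\ell,\Scal}}\rangle=0\neq1/\beta=\langle P,\widetilde{\mathfrak N}_U^{\ssup{\ell,\Scal}}\rangle$. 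The paper's proof has the same gap where it treats particles of loops not contained in $U$ as a function of $\Pi_U^{\ssup\Scal}$.
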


\begin{proof} According to Remark~\ref{rem-counting}, we can replace $\langle P, F_U\rangle$ by the expectation of $\widetilde F_U(\omega,\varpi)$, the amount of interaction in the loop/interlacement configuration between any leg that starts in $U$ (regardless, whether it is in a loop or in an interlacement) and any other leg. Define $\widetilde F_W$ in the same way with $U$ replaced by $W=[-R,R]^d$. Define $\mathfrak L(\omega,\varpi)$ as the set of all legs of     $\omega$ or of $\varpi$. Then, by shift-invariance, for $R\in \N$, we can estimate
$$
\begin{aligned}
\langle P,F_U\rangle &=\frac 1{|W|}\langle P, \widetilde F_W\rangle
\geq \frac 1{|W|}\int_0^\beta \d s\,\int P(\d(\omega,\varpi))\,\sum_{g,g'\in\mathfrak L(\omega,\varpi)\colon g(s),g'(s)\in W}\1\{g\not= g'\}v(g(s)-g'(s))\\
&\geq \frac C{|W|}\int_0^\beta \d s\,\int P(\d(\omega,\varpi))\,\Big(\frac{\widetilde{\mathfrak N}_W(s)^2}{|W|}-\widetilde{\mathfrak N}_W(s)\Big),
\end{aligned}
$$ 
where we denote $\widetilde{\mathfrak N}_W(s)=\#\{ g\in\mathfrak L(\omega,\varpi)\colon g(s)\in W\}$, and we used the assumption \eqref{LowBoundAss}  of superstability in Assumption (V). Jensen's inequality gives
$$
\langle P,F_U\rangle\geq \frac {C\beta}{|W|}\int\d P\, \Big(\frac{\mathfrak G_W^2}{|W|}-\mathfrak G_W\Big)=C\beta\int \d P\,\Big(\Big(\frac{\mathfrak G_W}{|W|}\Big)^2-\frac{\mathfrak G_W}{|W|}\Big),
$$
where $\mathfrak G_W=\frac 1\beta \int_0^\beta\d s\,\widetilde{\mathfrak N}_W(s)$ denotes $\frac 1\beta$ times the total local time in $W$ of all the legs in the loop/interlacements configuration. We estimate $x^2-x\geq \frac 12 x^2 $ for $x\geq 2$ and $x^2-x\geq -\frac 14$ for $x\leq 2$; hence
$$
\langle P,F_U\rangle\geq -\frac{C\beta} 4+\frac {C\beta}2 \int \d P\,\Big(\frac{\mathfrak G_W}{|W|}\Big)^2\1\{\mathfrak G_W\geq 2 |W|\}.
$$
Denote by $\widetilde {\mathfrak N}_{\widetilde W}^{\ssup {\ell,M}}$ the number of particles in the entire configuration that lie in $\widetilde W$ (regardless where its loop starts, if it is contained in a loop) and have spread $\leq M$. If $R>M$, then  $ \mathfrak G_W\geq\widetilde{\mathfrak N}_{W_{R-M}}^{\ssup{\ell,M}}$, since all legs that start in $W_{R-M}$ and have spread $\leq M$ are entirely contained in $W=W_R$. Using this, we estimate
$$
\langle P,F_U\rangle\geq -\frac{C\beta} 4+\frac {C\beta}2 \int \d P\,\Big(\frac{\widetilde {\mathfrak N}_{W_{R-M}}^{\ssup {\ell,M}}}{|W|}\Big)^2\1\{\widetilde {\mathfrak N}_{W_{R-M}}^{\ssup {\ell,M}}\geq 2 |W|\}.
$$
Making $R\to\infty$ (recalling that $W=W_R$) and using the spatial ergodic theorem, and making afterwards $M\to\infty$, we see that
$$
\langle P,F_U\rangle\geq -\frac{C\beta} 4+\frac {C\beta}2 \int \d P\, \big(\widetilde{\mathfrak N}_{U}^{\ssup\ell}\big)^2\1\{\widetilde{\mathfrak N}_{U}^{\ssup\ell}\geq 2\}\geq \frac{C\beta}2\big\langle P, \big(\widetilde{\mathfrak N}_{U}^{\ssup\ell}\big)^2\big\rangle -\frac {5C \beta}4.
$$
More precisely, by ergodicity, $\widetilde{\mathfrak N}_{W_{R-M}}^{\ssup {\ell,M}}/|W|$ converges $P$-almost surely to $\widetilde{\mathfrak N}_{U}^{\ssup {\ell,M}}$ as $R\to\infty$, and we can use Lebesgue's theorem after lower estimating the integrand against $(\widetilde{\mathfrak N}_{W_{R-M}}^{\ssup {\ell,M}}/|W|)^2\wedge K\1\{\widetilde{\mathfrak N}_{W_{R-M}}^{\ssup {\ell,M}}\geq 2 |W|\}$ for some $K$. Afterwards, we make $K \to\infty$ and $M\to\infty$, using the monotone convergence theorem.

Now about the continuities. Recall that we are working with the topology on $\Mcal_1^{\ssup{\rm s}}(\Lcal\times\Scal) $ that is defined by the weak topologies induced by all the projections $\Pi_W^{\ssup\Lcal}$ and $\Pi_W^{\ssup\Scal}$ with $W\Subset \R^d$. Since the counting functionals are nonnegative, the lower semi-continuity is clear (for example via Fatou's lemma). As it concerns the shred-part, the upper semi-continuity follows from the estimate, for any $K \in (0,\infty)$,
$$
\widetilde{\mathfrak N}_{U}^{\ssup{\ell,\Scal}}
\leq \widetilde{\mathfrak N}_{U}^{\ssup{\ell,\Scal}}\,\1\{\widetilde{\mathfrak N}_{U}^{\ssup{\ell,\Scal}}\leq K\}
+\frac 1K \Big(\widetilde{\mathfrak N}_{U}^{\ssup{\ell,\Scal}}\Big)^2,
$$
since the $P$-integral of the last term vanishes uniformly in $P$ in that set as $K\to\infty$, and the first one is a bounded and continuous function of $\Pi_U^{\ssup\Scal}$. For the loop-part, we need to distinguish particles in loops that are entirely contained in $U$ and in the other loops. The first ones are functionals of $\Pi_U^{\ssup\Lcal}$, the latter ones are functionals of $\Pi_U^{\ssup\Scal}$. More precisely, writing $f_x\subset W$ if $f_{x,k}(0)\in W$ for all $k\in[\ell(f_x)]$, and using the notation $N_U(f)=\sum_{k\in[\ell(f)]}\1_U(f_{k}(0))$ for $f\in \Ccal^{\ssup\circlearrowleft}$, we have
$$
\begin{aligned}
\widetilde{\mathfrak N}_{U}^{\ssup{\ell,\Lcal}}(\omega)
&\leq \Big(\sum_{x\in \zeta\cap U\colon f_x\subset U}N_U(f_x)\Big)\1\Big\{\sum_{x\in \zeta\cap U\colon f_x\subset U}N_U(f_x)\leq K\Big\}+\frac 1K \Big(\sum_{x\in \zeta\cap U\colon f_x\subset U}N_U(f_x)\Big)^2\\
&\quad 
+\Big(\sum_{x\in \zeta\colon f_x\not\subset U}N_U(f_x)\Big)\1\Big\{\sum_{x\in \zeta\colon f_x\not\subset U}N_U(f_x)\leq K\Big\}+\frac 1K \Big(\sum_{x\in \zeta \colon f_x\not\subset U}N_U(f_x)\Big)^2.
\end{aligned}
$$
The first term on the right is a bounded and continuous function of $\Pi_U^{\ssup\Lcal}(\omega)$, the third is a bounded and continuous function of $\Pi_U^{\ssup\Scal}(\omega)$, and each of the two other terms are not bigger than $\frac 1K \widetilde {\mathfrak N}_U^{\ssup\ell}(\omega,\varpi)^2$, whose integral with respect to $P$ from that set is $\leq \frac LK$. This shows the upper semi-continuity of $P\mapsto \langle P, \widetilde{\mathfrak N}_{U}^{\ssup{\ell,\Lcal}}\rangle $ on the set of $P\in\Mcal_1^{\ssup{\rm s}}(\Lcal\times\Scal)$ satisfying $\langle P,( \widetilde{\mathfrak N}_{U}^{\ssup{\ell}})^2\rangle\leq L$.
\end{proof}

Now we proceed with the proof of Lemma~\ref{lem-propertieschi}.

{\em Proof of (1):} 
The compactness of $K$ follows from Theorem~\ref{thm-specrelent}, noting that, for any $P\in K$ and any $R\in\N$,
$$
\langle \partial\Pi_{W_R}^{\ssup\Scal}(P),(\mathfrak N_{\partial W_R}^{\ssup\ell})^2\rangle=\langle \Pi_{W_R}^{\ssup\Scal}(P),(\mathfrak N_{W_R}^{\ssup\ell})^2\rangle
\leq |W_R| \sum_{y\in Z_{R,1/2}}\langle \Pi_{y+U}^{\ssup\Scal}(P),(\mathfrak N_{y+U}^{\ssup\ell})^2\rangle\leq |W_R|^2 L,
$$
using that the distribution of $\mathfrak N_{\partial W_R}^{\ssup\ell}$ (under $\partial\Pi_{W_R}^{\ssup\Scal}(P)$) and of $\mathfrak N_{W_R}^{\ssup\ell}$ (under $\Pi_{W_R}^{\ssup\Scal}(P)$) are identical. Hence, $K$ is contained in a set of the form \eqref{levelset} and is therefore compact, according to Theorem~\ref{thm-specrelent}.

Furthermore, the maps $P\mapsto \langle P,\mathfrak N_U^{\ssup{\ell,\Lcal}}\rangle$ and $P\mapsto\langle P,\mathfrak N_U^{\ssup{\ell,\Scal}}\rangle$ are continuous on $K$, which can be proved similarly to Lemma~\ref{lem-lowboundinteract}.

From Lemma~\ref{lem-lowboundinteract} it directly follows that the value of the variational formula on the right-hand side of \eqref{chidefneu} with restriction to ergodic $P$s does not depend on the $L$ in the definition of the set $K$, if the variational problem in \eqref{chidefneu} is restricted to $K$ and $L$ is large enough. According to Lemma~\ref{lem-ergappr}, for every $P\in\Mcal_1^{\ssup{\rm s}}(\Lcal\times\Scal)$ with finite entropy and energy and particle density there is an ergodic measure $\in \Mcal_1^{\ssup{\rm s}}(\Lcal\times\Scal)$ with hardly differing values of all these quantities, therefore the statement is also true for \eqref{chidefneu} without restrictions.

\medskip

{\em Proof of (2):} Proving the convexities of $\chi$ and $\widetilde \chi$ are elementary exercises, which we leave to the reader; they rely on the convexity of the maps $P\mapsto \langle P, F_U\rangle$ and $P\mapsto \h^{\ssup{\Lcal,\Scal}}(P)$. Continuity  of $\overline  \chi$ and of $\chi(\rho_1,\cdot)$ and $\chi(\cdot,\rho_2)$  in $(0,\infty)$ for fixed $\rho_1$, respectively $\rho_2$, follows from convexity. The limiting statements as $\rho_1\to\infty$ and $\rho_2\to\infty$ follow directly from Lemma~\ref{lem-lowboundinteract} after using Jensen's inequality.

\medskip

{\em Proof of (3):}  For $\rho_1=0=\rho_2$, only the empty measure (the shift-invariant distribution of loops and interlacements that has none of them) is admissible in the variational problem. When examining its value in $\h^{\ssup{\Lcal,\Scal}}(\cdot)$, we first see that the interlacement part is zero, since also the reference measure $\partial\Pi_W^{\ssup\Scal}$ is empty. Furthermore, the $ {\tt Q}$-part is equal to the entropy $H(0\mid (q_k)_{k\in\N})$ (see \eqref{Entropydef}) of the zero sequence $m=(m_k)_{k\in \N}=(0)_k=0$ with respect to $(q_k)_k=(\frac 1k 4\pi\beta k)^{-d/2})_{k\in\N}$, and this entropy is equal to $q=\sum_{k\in\N} q_k=(4\pi\beta)^{-d/2 }\zeta (1+\frac d2)$ with $\zeta$ the Riemann zeta function (recall \eqref{limnorming}). That is, $\chi(0,0)=q$. For proving the second assertion, we need to upper bound $\frac 1{\rho_1}(\chi(\rho_1,0)-\chi(0,0))$ and make $\rho_1\downarrow 0$. We get an upper bound for $\chi(\rho_1,0)$ by inserting the marked Poisson point process ${\tt Q}^{\ssup m}$, defined as the reference process ${\tt Q}$, but with $(q_k)_{k\in\N}$ replaced by $m=(m_k)_{k\in\N}= 
\rho_1\delta_1=(\rho_1,0,0,\dots)$, which is admissible in the variational formula. Since the interaction potential $v$ is bounded, the expected interaction of ${\tt Q}^{\ssup m}$ is not larger than $C \rho_1^2$ for some $C\in(0,\infty)$ and all sufficiently small $\rho_1$. Its relative entropy density with respect to ${\tt Q}$ is equal to $H(m\mid (q_k)_k)=\sum_{k\in\N} q_k -\rho_1+\rho_1\log\frac {\rho_1}{q_1}$. Hence, 
$$
\frac 1{\rho_1}(\chi(\rho_1,0)-\chi(0,0))\leq 
\frac 1{\rho_1}\Big(C\rho_1^2-\rho_1+\rho_1\log\frac {\rho_1}{q_1}\Big)\leq \log\frac {\rho_1}{q_1},
$$
which tends to $-\infty$ as $\rho_1\downarrow 0$. By noting that $\chi(\rho_1,0)\geq \chi^{\ssup{v=0}}(\rho_1,0)$, which tends to $\chi(0,0)$ as $\rho_1\downarrow 0$, we also see that $\chi(\rho_1,0)$ is continuous in $\rho_1=0$. 

\medskip

{\em Proof of (4) and (5):} The two assertions (4) and (5) rely on the same standard arguments, which are based on the assertions in (1) and the lower semi-continuity of the two maps $P\mapsto \langle P, F_U\rangle $ and  $P\mapsto \h^{\ssup{\Lcal,\Scal}}(P)$, the latter according to Theorem~\ref{thm-specrelent}. We leave the details to the reader.

\subsection{Preparation: entropy decomposition}\label{sec-entropy}

Let us have a general look at the entropy terms that appear in the definition \eqref{JWdef} of $J_W(\xi)$ with respect to the reference measures, and decompose them into its pieces, first for loops. The next two lemmas will be jointly used in later sections for loop-interlacement distributions $\xi\in\Mcal_1(\Lcal_{W}\times\Scal_W)$ according to the formula
\begin{equation}\label{jointentropy}
\begin{aligned}
|W|J_W(\xi)&=H_{\Lcal_W\times\Scal_{W}}\big(\xi\,\big|\,\Pi_W^{\ssup\Lcal}({\tt Q})\otimes[ \partial\Pi_{W}^{\ssup\Scal}(\xi)\otimes{\tt K}_W]\big)\\
&=H_{\Lcal_W}\big(\Pi_W^{\ssup\Lcal}(\xi)\mid \Pi_W^{\ssup\Lcal}({\tt Q})\big) +\int_{\Lcal_W}\Pi_W^{\ssup\Lcal}(\xi)(\d\omega)\,H_{\Scal_{W}}\big(\xi^{\ssup\Scal}_\omega\,\big|\,\partial\Pi_{W}^{\ssup\Scal}(\xi)\otimes{\tt K}_W\big),
\end{aligned}
\end{equation}
writing $\xi^{\ssup\Scal}_\omega$ for the kernel $\xi_{\Lcal_W\to\Scal_W}(\omega,\cdot)$.

The entropy of a random loop configuration with respect to the Poisson loop reference measure (the Brownian loop soup) in a set $W$ is decomposed into the following partial entropies according to the random mechanisms: 
\begin{enumerate}

\item the expected number of $k$-length loops in $W$ as a sequence in $k\in\N$, 

\item the number of $k$-length loops in $W$ for any $k$, 

\item the locations of the starting sites of all these loops, and finally 

\item the loops themselves. 
\end{enumerate}

We write $\omega=\sum_{k\in\N}\sum_{x\in\zeta_k}\delta_{(x,f_x)}$ for loop configurations,  with $\zeta_k=\{x\in \zeta\colon \ell(f_x)=k\}$.  Furthermore, $\mathfrak N_W(\omega)$ denotes the number of loops in $W$ and $\mathfrak N_W^{\ssup{\delta_k}}(\omega)$ the number of loops of length $k$ in $W$. Let $m_{W,k}( \xi)=\frac1{|W|}\int \xi(\d\omega)\, \mathfrak N^{\ssup{\delta_k}}_W(\omega)$ denote the normalized expected number of loops of length $k$ in $W$ under $ \xi\in\Mcal_1(\Lcal_W)$. The Poisson parameter for $\Pi_{W,k}^{\ssup\Lcal}({\tt Q})$, the projection of the PPP ${\tt Q}$ on the set of $k$-length loops with all particles in $W$, is equal to
\begin{equation}\label{qWkdef}
q_{W,k}= \frac 1k\int_W\d x\,\mu_{x,x}^{\ssup{W,k,{\rm par}}}(\Ccal_k^{\ssup\circlearrowleft})= \frac 1k\int_W\d x\,\mu_{x,x}^{\ssup k}(\tau_W>k),
\end{equation}
where we recall that $\mu_{x,y}^{\ssup{W,k,{\rm par}}}$ is the (not normalized) Brownian bridge measure from $x$ to $y$ with all the particles in $W$, and we recall from \eqref{taudef} that $\beta \tau_W$ denotes the first time $\in \beta\N$ of an exit from $W$. We write $\Lcal_W^{\ssup k}$ for the set of $k$-length loop configurations in $W$. Introduce the projection $\Pi_k\colon \Lcal\to\Lcal^{\ssup k}$ defined by $\Pi_k(\omega)=\sum_{x\in\zeta}\1\{\ell(f_x)=k\}\delta_{(x,f_x)}=\sum_{x\in\zeta_k}\delta_{(x,f_x)}$. By $\overline \nu=\nu/\nu(\Xcal)$ we define the normalised version of a positive and finite measure $\nu$ on a Polish space $\Xcal$.

\begin{lemma}[Entropy of loop distributions]\label{lem-entrident} For any box $W\Subset \R^d$ and any $\xi\in\Mcal_1(\Lcal_W)$, 
\begin{equation}\label{entropyident}
\begin{aligned}
H&_{\Lcal_W}\big(\xi\mid \Pi_W^{\ssup\Lcal}({\tt Q})\big)\\
&=|W|\,H\big((m_k)_k\,\big|\, \smfrac 1{|W|} (q_{k})_k\big)+ H_{\Lcal_W}\Big(\xi\,\Big|\,\bigotimes_{k\in\N} \Pi_{k}(\xi)\Big)\\
&\quad +  \sum_{k\in\N} H_{\N_0}(\Pi_k(\xi)\circ  \mathfrak N_W^{-1}\mid \Poi_{m_k|W|})\\
&\quad + \sum_{k\in\N}\sum_{n\in\N} \Pi_k(\xi)(\mathfrak N_W=n)\, H\big(\Pi_{k}^{\ssup n}(\xi)\circ (\zeta_k\cap W)^{-1}\mid \overline \Leb_W^{\otimes n}\big)\\
 &\quad + \sum_{k\in\N}\sum_{n\in\N_0}\int \Pi_k^{\ssup n}(\xi)(\zeta_k\cap W \in \d A)\,\Big[ H\Big(\Pi_k^{\ssup A}(\xi)\,\Big|\, \bigotimes_{x\in A} \Pi_{k,x}^{\ssup A}(\xi)\Big)+ \sum_{x\in A} H\big(\Pi_{k,x}^{\ssup A}(\xi) \mid \overline \mu _{x,x}^{\ssup{W,k,{\rm par}}}\big)\Big],
\end{aligned}
\end{equation}
where we  abbreviate $m_k=m_{W,k}(\xi)$ and $q_k=m_{W,k}(\Pi_W^{\ssup\Lcal}({\tt Q}))$ and denote by $\Pi_k, \Pi_k^{\ssup n}, \Pi_k^{\ssup A}, \Pi_{k,x}^{\ssup A}\colon \Lcal_W\to\Lcal_W^{\ssup k}$ the projections on the set of loops of length $k$, respectively on the set of configurations in $W$ with precisely $n$ loops of length $k$ and no loops else, respectively on the set of configurations of loops of length $k$ with set of initial sites equal to $A\subset W$.
\end{lemma}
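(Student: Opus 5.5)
The plan is to obtain \eqref{entropyident} as a chain of applications of the chain rule for relative entropy. Each step disintegrates both $\xi$ and the reference measure $\Pi_W^{\ssup\Lcal}({\tt Q})$ along the same measurable map. We then identify the conditional reference laws using the Poisson structure of ${\tt Q}$.

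The first step splits off the dependence between loop lengths. Under $\Pi_W^{\ssup\Lcal}({\tt Q})$ the projections $\Pi_k$, $k\in\N$, are independent Poisson processes, so the reference law equals $\bigotimes_k \Pi_k(\Pi_W^{\ssup\Lcal}({\tt Q}))$. For a product reference measure we have the standard identity
\[
H(\xi\mid \otimes_k\nu_k)=H\big(\xi\mid \otimes_k \Pi_k(\xi)\big)+\sum_k H(\Pi_k(\xi)\mid\nu_k).
\]
This gives the second term and reduces everything to a fixed length $k$. Countably many factors are handled by first taking finitely many $k$ and then passing to the limit by monotonicity of entropy under the projections.

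For fixed $k$, we disintegrate along the number of loops $\mathfrak N_W$. Under the reference measure it is $\Poi(q_{W,k})$, and conditionally on $n$ loops the starting sites are i.i.d.\ with density proportional to $x\mapsto\mu^{\ssup{W,k,{\rm par}}}_{x,x}(\Ccal_k^{\ssup\circlearrowleft})$. Given the sites $A$, the loops are independent with laws $\overline\mu^{\ssup{W,k,{\rm par}}}_{x,x}$. The chain rule along $\omega\mapsto\mathfrak N_W$ yields the number term with respect to the Poisson law. The chain rule along $\omega\mapsto \zeta_k\cap W$ yields the location term, and the remaining conditional laws yield the loop term. That loop term is again split by the product identity into a dependence part and a sum of single-loop entropies, exactly as in the last line of \eqref{entropyident}.

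The term $|W|H((m_k)_k\mid \frac1{|W|}(q_k)_k)$ arises from recentering the Poisson term. For $\pi=\Pi_k(\xi)\circ\mathfrak N_W^{-1}$ with mean $m_k|W|$, a direct computation gives
\[
H(\pi\mid\Poi_{q})=H(\pi\mid \Poi_{m_k|W|})+\big(q-m_k|W|+m_k|W|\log\tfrac{m_k|W|}{q}\big),
\]
because $\log\frac{\d\Poi_{m}}{\d\Poi_{q}}(n)=q-m+n\log\frac mq$ is affine in $n$. Summing the bracket over $k$ gives precisely the relative entropy of sequences from \eqref{Entropydef}, with the normalizations as stated.

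The main obstacle is the location term. The reference density of starting sites is $\mu^{\ssup{W,k,{\rm par}}}_{x,x}(\Ccal_k^{\ssup\circlearrowleft})$, which is not constant on $W$: it drops near the boundary because of the particle condition. Comparing instead with $\overline\Leb_W^{\otimes n}$ and with the normalized bridges produces a correction $\int\log(\cdots)$. I would check that this correction cancels between the location term and the single-loop term, since the unnormalized bridge mass appears in both. If it does not cancel, the statement should be read with $\overline\mu$ absorbing the density. The remaining steps are bookkeeping, with care that all entropies are nonnegative so that the rearrangements of the countable sums are justified.
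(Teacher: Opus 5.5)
Your route is the paper's: the product-reference identity that splits off $H_{\Lcal_W}(\xi\mid\bigotimes_k\Pi_k(\xi))$, then, for each $k$, disintegration along $\mathfrak N_W$, $\zeta_k\cap W$ and the loops, and the Poisson recentering, which you compute correctly. The recentering gives $|W|H\big((m_k)_k\mid (q_{W,k}/|W|)_k\big)$ with $q_{W,k}$ from \eqref{qWkdef}. The statement defines $q_k=m_{W,k}(\Pi_W^{\ssup\Lcal}({\tt Q}))=q_{W,k}/|W|$, so the extra factor $\smfrac1{|W|}$ in the first term is a typo, and the normalisations do not match literally ``as stated''.

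The real gap is exactly at the step you flag: the correction does not cancel. Put $c_k(x)=\mu^{\ssup{W,k,{\rm par}}}_{x,x}(\Ccal_k^{\ssup\circlearrowleft})$ and $\bar c_k=c_k/\int_W c_k(y)\,\d y$. Your chain rule gives the location term relative to $(\bar c_k(x)\,\d x)^{\otimes n}$. For $\lambda=\Pi_k^{\ssup n}(\xi)\circ(\zeta_k\cap W)^{-1}$,
\[
H\big(\lambda\mid(\bar c_k(x)\,\d x)^{\otimes n}\big)=H\big(\lambda\mid\overline\Leb_W^{\otimes n}\big)-\int\lambda(\d A)\sum_{x\in A}\log\big(|W|\bar c_k(x)\big).
\]
The single-loop terms in the last line of \eqref{entropyident} are relative to the normalised bridges $\overline\mu^{\ssup{W,k,{\rm par}}}_{x,x}$, in which $c_k$ no longer appears. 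So nothing absorbs this correction.

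No argument can remove it either. We have $c_1\equiv(4\pi\beta)^{-d/2}$, but for $k\geq2$ the intermediate particles must lie in $W$, which is less likely for $x$ near $\partial W$, so $c_k$ is not constant. Take $\xi=\Pi_W^{\ssup\Lcal}({\tt Q})$. The left-hand side of \eqref{entropyident} is $0$. Every term on the right (first term correctly normalised) vanishes except the location term, which equals $\sum_{k\geq 2}q_{W,k}\,H(\bar c_k(x)\,\d x\mid\overline\Leb_W)>0$.

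The paper's own proof does not get around this. In \eqref{Qdecomp} it simply asserts that under $\Pi_{W,k}({\tt Q})$ the $n$ starting sites are i.i.d.\ uniform on $W$, which is precisely the point you doubted.

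What your argument does prove is a corrected identity. Either replace $\overline\Leb_W^{\otimes n}$ by $(\bar c_k(x)\,\d x)^{\otimes n}$, or keep $\overline\Leb_W^{\otimes n}$ and add $-\sum_k\int\Pi_k(\xi)(\d\omega)\sum_{x\in\zeta_k\cap W}\log(|W|\bar c_k(x))$ to the right-hand side.

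Your fallback of letting $\overline\mu$ ``absorb the density'' does not give a valid formula under the convention \eqref{Entropydef}. With single-loop reference $|W|\bar c_k(x)\,\overline\mu^{\ssup{W,k,{\rm par}}}_{x,x}$ you do gain $-\log(|W|\bar c_k(x))$. But you also pick up the mass terms $|W|\bar c_k(x)-1$. Summed over $x\in\zeta_k\cap W$ and $k$ under $\xi=\Pi_W^{\ssup\Lcal}({\tt Q})$, they contribute $\sum_k q_{W,k}\big(|W|\int_W\bar c_k^2\,\d x-1\big)>0$ by Cauchy--Schwarz. So the statement has to be amended at the location term, not reinterpreted at the loop term.
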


\begin{proof}
  We write $\Pi_{W,k}^{\ssup\Lcal}
=\Pi_{W}^{\ssup\Lcal}\circ\Pi_k=\Pi_k\circ \Pi_{W}^{\ssup\Lcal}$. In the following we drop the super-index $\Lcal$. Note that $\Pi_W({\tt Q})$ is equal to the convolution of the measures $\Pi_{W,k}({\tt Q})$ over $k\in\N$. Then one sees elementarily that 
\begin{equation}\label{firstsplit}
H_{\Lcal_W}(\xi\mid \Pi_W({\tt Q}))
=\sum_{k\in\N} H_{\Lcal_W^{\ssup k}} (\Pi_k(\xi)\mid \Pi_{W,k}({\tt Q})) +  H_{\Lcal_W}\Big(\xi\,\Big|\,\bigotimes_{k\in\N} \Pi_k(\xi)\Big).
\end{equation}
Let us further decompose $H_{\Lcal_W^{\ssup k}} (\Pi_k(\xi)\mid \Pi_{W,k}({\tt Q})) $.  We write a configuration in $\Lcal_W^{\ssup k}$ as $\omega_k=\sum_{x\in \zeta_k\cap W}\delta_{(x,f_x)}$. Note that $|\zeta_k\cap W|$ has the distribution $\Poi_{q_{W,k}}$ under $\Pi_{W,k}({\tt Q})$, where we recall \eqref{nnBBM}. By the properties of a Poisson point process, $\Pi_{W,k}({\tt Q})$ can be decomposed as
\begin{equation}\label{Qdecomp}
\Pi_{W,k}({\tt Q})(\d\omega_k)=\Poi_{q_{W,k}}(n)\,\overline \Leb_W^{\otimes n}(\d A)\,\bigotimes_{x\in A} \overline \mu _{x,x}^{\ssup{k,W}}(\d f_x),\qquad n=\mathfrak N_W(\omega_k)=|A|, A=\zeta_k\cap W.
\end{equation}

We now write $\Pi_k(\xi)$ in an analogous manner. We abbreviate  $ \Pi_k^{\ssup n}(\xi)$ for the conditional distribution under $\Pi_k(\xi)$ given that $|\zeta_k\cap W|=n$ and  $ \Pi_k^{\ssup A}(\xi)$ for the conditional distribution under $\Pi_k(\xi)$ given that $\zeta_k\cap W=A$, and $\Pi_{k,x }^{\ssup A}(\xi)$ for the distribution of any $f_x$ under $\Pi_k^{\ssup A}(\xi)$. Write $m_k=\frac 1{|W|}\Pi_k(\xi)(\mathfrak N^{\ssup\ell}_W)$ for the normalised expected particle number in the configuration under $\Pi_{k}(\xi)$ in $W$. Then we can write, with the same notation as above,
$$
\begin{aligned}
\Pi_k(\xi)(\d \omega_k)
&={\Poi}_{m_k |W|}(n) 
\frac{\d \Pi_k(\xi)\circ  \mathfrak N_W^{-1}}{\d{\Poi}_{m_k|W|}}(n)
\,\overline\Leb_W^{\otimes n}(\d A)\,
\frac{\d  \Pi^{\ssup n}_{k}(\xi)\circ (\zeta_k\cap W)^{-1}}
{\d \overline\Leb_W^{\otimes n}}(A)\\
&\quad \times\frac{\d \Pi_k^{\ssup A}(\xi)}{\d \bigotimes_{x\in A} \Pi_{k,x }^{\ssup A}(\xi)}\big((f_x)_{x\in A}\big)
\bigotimes_{x\in A}\frac{\d \Pi_{k,x }^{\ssup A}(\xi)}
{\d  \overline \mu _{x,x}^{\ssup{W,k,{\rm par}}}}(f_x)
\,\bigotimes_{x\in A} { \overline \mu _{x,x}^{\ssup{W,k,{\rm par}}}}(\d f_x).
\end{aligned}
$$
Hence, we see that the density is given by
$$
\begin{aligned}
\frac{\d \Pi_k(\xi)}{\d \Pi_{W,k}({\tt Q})}(\omega_k)
=&\frac{\d {\Poi}_{m_k|W|}(n)}{\d {\Poi}_{q_{W,k}}(n)}\, \frac{\d \Pi_k(\xi)\circ  \mathfrak N_W^{-1}}{\d{\Poi}_{m_k|W|}}(n)\, \frac{\d \Pi_{k}^{\ssup n}(\xi)\circ (\zeta_k\cap W)^{-1}}
{\d \overline \Leb_W^{\otimes n}}(A)\\
&\quad\times  \frac{\d \Pi_{k}^{\ssup A}(\xi)}{\d \bigotimes_{x\in A} \Pi_{k,x}^{\ssup A}(\xi)}\big((f_x)_{x\in A}\big)\,
\prod_{x\in A}\frac{\d \Pi_{k,x}^{\ssup A}(\xi)}{\d  \overline \mu _{x,x}^{\ssup{k,W}}}(f_x).
\end{aligned}
$$
Now take the logarithm and integrate with respect to $\Pi_k(\xi)(\d\omega_k)$ and use  \eqref{firstsplit}, to obtain the assertion in \eqref{entropyident}.
\end{proof}

Now we do the same for the shreds. We write $\xi_W$ for the kernel $\xi_{\Tcal_W\to\Scal_W}$. We also recall the notation
\begin{equation}\label{qfreedef}
q_{x,y}^{\ssup{n,W}}(\d f)=\P_x(B\in\d f\mid \tau_W=n,B_{\tau \beta}=y),\qquad n\in\N, x\in W,y\in W^{\rm c},
\end{equation}
for the distribution of an $n$-leg $W$-shred from $x$ to $y$ (which appears in the definition \eqref{kernelK} of ${\tt K}_W$).

\begin{lemma}[Entropy of shred distributions]\label{lem-entridentshreds} For any box $W\Subset \R^d$ and any $\xi\in\Mcal_1(\Scal_W)$, 
\begin{equation}\label{entropyidentshred}
\begin{aligned}
H_{\Scal_{W}}\big(\xi\,\big|\, \partial\Pi_{W}^{\ssup\Scal}(\xi)\otimes{\tt K}_W\big)
&=\int_{\Tcal_W}\partial\Pi_{W}^{\ssup\Scal}(\xi)(\d \mu)\, H_{\Scal_W}\Big(\xi(\mu,\cdot)\,\Big|\,\bigotimes_{i\in I}\Pi_i(\xi(\mu,\cdot))\Big)\\
&\quad +\int_{\Tcal_W}\partial\Pi_{W}^{\ssup\Scal}(\xi)(\d \mu)\, \sum_{i\in I}H_{\Ccal_{l_i}}\big( \Pi_i(\xi(\mu,\cdot))\,\big|\, q_{x_i,y_i}^{\ssup{l_i,W}}\big),
\end{aligned}
\end{equation}
where we wrote $\mu=\sum_{i\in I}\delta_{(x_i,l_i,y_i)}$ and $\Pi_i\colon \Ccal^I\to \Ccal$ for the canonical projection on the $i$-th factor and $\xi(\mu,\cdot)$ for the kernel $(\xi)_{\Tcal_W\to\Scal_W}(\mu,\cdot)$.
\end{lemma}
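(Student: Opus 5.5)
The identity is a chain rule for relative entropy with respect to a kernel. We first disintegrate both measures along the map $\partial\Pi_W^{\ssup\Scal}$, and then split the conditional part into a dependence term and marginal terms, exactly as in the proof of Lemma~\ref{lem-entrident}. First we note that the reference measure $\partial\Pi_{W}^{\ssup\Scal}(\xi)\otimes{\tt K}_W$ and $\xi$ have the same $\Tcal_W$-marginal, namely $\psi=\partial\Pi_W^{\ssup\Scal}(\xi)$. Indeed, ${\tt K}_W(\mu,\cdot)$ is concentrated on shred configurations whose boundary triples are exactly $\mu$, and $\xi=\psi\otimes\xi(\mu,\cdot)$ by the definition of the regular conditional kernel $\xi(\mu,\cdot)=\xi_{\Tcal_W\to\Scal_W}(\mu,\cdot)$. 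The standard chain rule for relative entropy of measures with equal first marginals therefore gives
\[
H_{\Scal_W}\big(\xi\,\big|\,\psi\otimes{\tt K}_W\big)=H_{\Tcal_W}(\psi\mid\psi)+\int_{\Tcal_W}\psi(\d\mu)\,H_{\Scal_W}\big(\xi(\mu,\cdot)\,\big|\,{\tt K}_W(\mu,\cdot)\big),
\]
and the first term vanishes. To make this rigorous we write the Radon--Nikodym density of $\xi$ with respect to $\psi\otimes{\tt K}_W$ as $\frac{\d\xi(\mu,\cdot)}{\d{\tt K}_W(\mu,\cdot)}$ for $\psi$-a.e.\ $\mu$. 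If $\xi\not\ll\psi\otimes{\tt K}_W$, then $\xi(\mu,\cdot)\not\ll{\tt K}_W(\mu,\cdot)$ on a set of positive $\psi$-measure, so both sides are $+\infty$. Since all measures involved are probability measures, the mass terms in \eqref{Entropydef} cancel.

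Next we fix $\mu=\sum_{i\in I}\delta_{(x_i,l_i,y_i)}$, with $I$ finite. Because the triples are pairwise distinct, we can identify $\Scal_W$ restricted to configurations with boundary data $\mu$ with the product $\prod_{i\in I}\Ccal_{l_i}$. Under this identification ${\tt K}_W(\mu,\cdot)=\bigotimes_{i\in I}q^{\ssup{l_i,W}}_{x_i,y_i}$ by \eqref{kernelK}, and $\Pi_i$ is the $i$-th coordinate. For a probability measure $\nu$ on a finite product and a product reference measure $\bigotimes_i\kappa_i$, the density factorizes as
\[
\frac{\d\nu}{\d\bigotimes_i\kappa_i}=\frac{\d\nu}{\d\bigotimes_i\Pi_i(\nu)}\cdot\prod_i\frac{\d\Pi_i(\nu)}{\d\kappa_i}.
\]
Taking logarithms and integrating against $\nu$ gives
\[
H\big(\nu\,\big|\,\textstyle\bigotimes_i\kappa_i\big)=H\big(\nu\,\big|\,\textstyle\bigotimes_i\Pi_i(\nu)\big)+\sum_i H\big(\Pi_i(\nu)\mid\kappa_i\big).
\]
Here the $i$-th product term integrates against $\nu$ to $H(\Pi_i(\nu)\mid\kappa_i)$ because it depends only on the $i$-th coordinate. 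We apply this with $\nu=\xi(\mu,\cdot)$ and $\kappa_i=q^{\ssup{l_i,W}}_{x_i,y_i}$, and then integrate against $\psi(\d\mu)$, which yields \eqref{entropyidentshred}.

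The main care is needed in justifying this splitting when some terms are infinite, and in checking measurability in $\mu$. Both sides are sums of nonnegative terms. If some $\Pi_i(\nu)\not\ll\kappa_i$, then $\nu\not\ll\bigotimes\kappa_i$, and both sides are infinite. Otherwise we use the truncation or variational characterization \eqref{entropyformula} to avoid $\infty-\infty$: $\log$ of a product of densities is a sum of logarithms, each bounded below in $\nu$-expectation by $-1/e$-type bounds, so the integrals add. Measurability of $\mu\mapsto H(\xi(\mu,\cdot)\mid\cdots)$ follows from regularity of the kernels together with the supremum representation over a countable family of test functions. A minor point is the labelling of the index set $I$ measurably in $\mu$, for instance by lexicographic order of $(x_i,l_i,y_i)$. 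This does not affect the sums, since they are symmetric in $i$.
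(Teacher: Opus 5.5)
Your proposal is correct and follows essentially the paper's route: disintegrate along $\partial\Pi_W^{\ssup\Scal}$ to get $\int\partial\Pi_W^{\ssup\Scal}(\xi)(\d\mu)\,H_{\Scal_W}(\xi(\mu,\cdot)\mid{\tt K}_W(\mu,\cdot))$, then use the product form ${\tt K}_W(\mu,\cdot)=\bigotimes_{i\in I}q^{\ssup{l_i,W}}_{x_i,y_i}$ to split each conditional entropy into the dependence term plus the sum of the marginal entropies. The paper does this in two lines, and your justifications for infinite values and measurability are a welcome addition; one case is missing from your infinite-value discussion: if all $\Pi_i(\nu)\ll\kappa_i$ but $\nu\not\ll\bigotimes_i\kappa_i$, then $\nu\not\ll\bigotimes_i\Pi_i(\nu)$, so both sides are again $\infty$.
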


\begin{proof}We drop all super-indices $\Scal$.  Then we  see that
$$
 \begin{aligned}
H_{\Scal_{W}}&\big(\xi\,\big|\, \partial\Pi_{W}(\xi)\otimes{\tt K}_W\big)
=\int_{\Tcal_W}\partial\Pi_{W}(\xi)(\d \mu)\, H_{\Scal_W}\big(\xi(\mu,\cdot)\,\big|\,{\tt K}_W(\mu,\cdot)\big)\\
&=\int_{\Tcal_W}\partial\Pi_{W}(\xi)(\d \mu)\,
\Big[H_{\Scal_W}\Big(\xi(\mu,\cdot)\,\Big|\,\bigotimes_{i\in I}\Pi_i(\xi(\mu,\cdot))\Big)
+\sum_{i\in I}H_{\Ccal_{l_i}}\big( \Pi_i(\xi(\mu,\cdot))\,\big|\, q_{x_i,y_i}^{\ssup{l_i,W}}\big)\Big].
\end{aligned}
$$
\end{proof}

\subsection{Proof of Lemma~\ref{lem-noshreds}}\label{sec-noshreds}

In this section, we prove Lemma~\ref{lem-noshreds}, which says that $\chi(\rho_1+\rho_2,0)\leq \chi(\rho_1,\rho_2)$ for any $\rho_1,\rho_2\in[0,\infty)$. For this, we will replace an arbitrary loop-interlacements configuration by a loop configuration without changing much the number of particles per volume and without increasing the energy nor the entropy terms. We do this by rewiring the ends of all the shreds at the boundary of a large box $W=W_R$ in such a way that all the shreds of interlacements are turned into the shreds of one long loop just outside that box, i.e., in $W_{R+M}\setminus W_R$. The new configuration has only loops in $W_{R+M}$ and no shreds. Our manipulation does not change anything inside $W_R$, which is the main part, and  the changes in $W_{R+M}\setminus W_R$ have only a vanishing influence in the limit as $R\to\infty$, as it concerns the three main quantities: entropy, expected interaction energy per volume and particle densities. Finally, we put independent copies of the new configuration into the boxes $z+W_{R+M}$, $z\in 2(R+M)\Z^d$, and mix the configuration in $\R^d$ over $x\in W_{R+M}$ and control the entropy of this configuration.

We need some more notation. We need some large parameters $R,M,T\in(1,\infty)$. As usual, we write $W=[-R,R]^d$. Denote by $\widetilde\Pi_{W;M}\colon \Lcal_W\times\Scal_W\to\Lcal^{\ssup M}_W\times\Scal^{\ssup M}_W$ the natural projection on loops, respectively shreds, whose legs $f\in\Ccal_1$ have throughout spread $\|f\|_{\rm sp}=\max_{t\in [0,\beta]}|f(t)-f(0)| < M$. We denote by $\widetilde J_{W;M}$ the normalized entropy function as in \eqref{JWdef}, but with the reference measure $\Pi_W^{\ssup\Lcal}({\tt Q})\otimes [\partial \Pi_W^{\ssup\Scal}(P)\otimes {\tt K}_W]$ replaced by its image measure under $\widetilde \Pi_{W;M}$. Note that $\widetilde J_{W;M}(\xi\circ \widetilde \Pi_{W;M}^{-1})\leq J_W(\xi)$ for any $\xi\in\Mcal_1(\Lcal_W\times\Scal_W)$, by monotonicity of the relative entropy in the reference $\sigma$-field, see \cite[Proposition 15.5(c)]{G88}.

Furthermore, we write $\widetilde W=W_{R+M}=[-R-M,R+M]^d$ and  denote by $A_{\widetilde W;M,T}=\{(\omega,\varpi)\in \Lcal^{\ssup M}_{\widetilde W} \times\Scal^{\ssup M}_{\widetilde W}\colon \widetilde{\mathfrak N}^{\ssup\ell}_{z+U}(\omega,\varpi)\leq T\,\forall z\in {\widetilde W}\cap \Z^d\}$, where we recall that $U=[-\frac 12,\frac 12]^d$ is the unit box and $\widetilde{\mathfrak N}^{\ssup\ell}_{z+U}$ denotes the number of particles in the loop/shred configuration in $z+U$, regardless where the loop starts; see also Remark~\ref{rem-counting}. (This event is very similar to  $A_{{\widetilde W};\frac 12,M,K,L,T}$ defined in \eqref{Adef}.) Furthermore, let $\widetilde \Pi_{{\widetilde W};M,T}\colon \Lcal_{\widetilde W}\times\Scal_{\widetilde W} \to A_{{\widetilde W};M,T}$ be some projection onto $A_{{\widetilde W};M,T}$.

Let us explain the structure of our proof.
We may assume that $\rho_1,\rho_2\in(0,\infty)$. Pick $P\in\Mcal_1^{\ssup{\rm s}}(\Lcal\times\Scal)$ with $\langle P,\mathfrak N_U^{\ssup{\ell,\Lcal}}\rangle=\rho_1$ and $\langle P,\mathfrak N_U^{\ssup{\ell,\Scal}}\rangle=\rho_2$. Furthermore, fix $\eps\in(0,1)$. Pick $M$ and $T$ so large that $\langle \Pi_U(P),\widetilde{\mathfrak N}_U^{\ssup\ell}\circ \widetilde \Pi_{U;M,T}\rangle \geq \rho_1+\rho_2-\eps$, where $\widetilde{\mathfrak N}_U^{\ssup\ell}= \widetilde{\mathfrak N}_U^{\ssup{\ell, \Lcal}}+\widetilde{\mathfrak N}_U^{\ssup{\ell,\Scal}}$ is the number of all particles in loops and in shreds in $U$.

We are going to prove that, for any sufficiently large $M$ and $T$, and for any sufficiently large  $R\in(1,\infty)$, there exists some ergodic $\widetilde P\in\Mcal_1^{\ssup {\rm s}}(\Lcal)$ such that $|\langle \Pi_U(\widetilde P), \widetilde{\mathfrak N}_U^{\ssup{\Lcal,\ell}}\circ \widetilde \Pi_{U;M,T}\rangle- \langle \Pi_U(P), \widetilde{\mathfrak N}_U^{\ssup\ell}\circ \widetilde \Pi_{U;M,T}\rangle|\leq \eps$ and $\langle \frac 1{|\widetilde W|}F_{\widetilde W,\widetilde W}\circ\widetilde \Pi_{\widetilde W;M,T},\Pi_{\widetilde W}(\widetilde P)\rangle \leq \langle \frac 1{|W|}F_{W,W}\circ\widetilde \Pi_{W;M,T}, \Pi_W(P)\rangle +\eps$ and $J_{\widetilde W}(\Pi_{\widetilde W}(\widetilde P))\leq \widetilde J_{W;M}(\Pi_{W}(P)\circ \widetilde \Pi_{W;M}^{-1})+\eps$.

Indeed, this implies for any $\eps$ and all sufficiently large $M$ and $T$ and for any sufficiently large $R$ that
$$
\begin{aligned}
\inf\Big\{&\Big\langle  \frac 1{|W_{R+M}|}F_{W_{R+M},W_{R+M}}\circ \widetilde\Pi_{W_{R+M};M,T},\Pi_{W_{R+M}}(P)\Big\rangle +J_{W_{R+M}}(\Pi_{W_{R+M}}(P))\colon \\
&\qquad\qquad\qquad P\in\Mcal_1^{\ssup{\rm s}}(\Lcal)\mbox{ ergodic},\langle  \Pi_U(P),\mathfrak N_U^{\ssup{\Lcal,\ell}}\circ \widetilde \Pi_{U;M,T}\rangle \in\Bcal_\eps(\rho_1+\rho_2)\Big\}\\
&\leq 2\eps+\inf\Big\{\Big\langle \frac 1{|W|}F_{W,W}\circ \widetilde\Pi_{W;M,T},\Pi_W(P)\Big\rangle +\widetilde J_{W;M}(\Pi_{W}(P)\circ \widetilde\Pi_{W;M}^{-1})\colon \\
&\qquad\qquad\qquad  P\in\Mcal_1^{\ssup{\rm s}}(\Lcal\times\Scal),\langle P,\mathfrak N_U^{\ssup{\ell,\Lcal}}\rangle=\rho_1,\langle P,\mathfrak N_U^{\ssup{\ell,\Scal}}\rangle=\rho_2\Big\}.
\end{aligned}
$$
The right-hand side is not larger than $3\eps+\chi(\rho_1,\rho_2)$, as we show now. Indeed, clearly $F_{W,W}\circ \widetilde\Pi_{W;M,T}\leq F_{W,W} $. Furthermore, splitting all the contributions to $F_{W,W}$, one sees that, for any $\omega\in\Lcal$ and $\varpi\in\Scal$, 
\begin{equation}\label{splittingF}
\begin{aligned}
F_ {W,W}(\Pi_{W}(\omega,\varpi))&\leq \sum_{z\in W\cap\Z^d}
\Big[\frac 12 F_{z+U,z+U}^{\ssup{\Lcal\Lcal}}+F_{z+U,z+U}^{\ssup{\Lcal\Scal}}+\frac 12 F_{z+U,z+U}^{\ssup{\Scal\Scal}}\\
&\qquad +F_{z+U,z+U^{\rm c}}^{\ssup{\Lcal\Lcal}}+F_{z+U,z+U^{\rm c}}^{\ssup{\Lcal\Scal}}+F_{z+U^{\rm c},z+U}^{\ssup{\Lcal\Scal}}+
F_{z+U,z+U^{\rm c}}^{\ssup{\Scal\Scal}}\Big](\omega,\varpi).
\end{aligned}
\end{equation}
Now integrate both sides with respect to $P$ and use its shift-invariance, to see that $\langle \frac 1{|W|}F_{W,W}\circ \widetilde\Pi_{W;M,T},P\rangle\leq \langle F_U,P\rangle$, where we recall \eqref{FUdef} for the definition of $F_U$. Furthermore,  $\widetilde J_{W;M}(\Pi_{W}(P)\circ \widetilde\Pi_{W;M}^{-1})\leq \eps + J_{W}(\Pi_{W}(P))\leq \eps+ \h^{\ssup{\Lcal,\Scal}}(P)$ by \eqref{hbound}. 

The left-hand side converges, as $R,M,T\to\infty$ and $\eps\downarrow 0$ to $\chi(\rho_1+\rho_2,0)$, as we show now. Indeed, for any $\eps\in(0,1)$ and any ergodic $P\in\Mcal_1^{\ssup{\rm s}}(\Lcal)$ with $\langle  P,\widetilde {\mathfrak N}_U^{\ssup{\Lcal,\ell}}\rangle =\rho_1+\rho_2$ and finite values of $\h^{\ssup{\Lcal}}(P\mid{\tt Q})$ and $\langle F_U,P\rangle$, pick $M$ and $T$ so large that $P$ is amenable to the formula on the left and such that $ \langle F_U\circ\widetilde \Pi_{U;M,T},P\rangle\geq \langle F_U,P\rangle-\eps$. Making $R\to\infty$ and using Theorem~\ref{thm-specrelent}, we see that $J_{W_{R+M}}(\Pi_{W_{R+M}}(P))$ converges towards $\h^{\ssup\Lcal}(P\mid{\tt Q})$. Furthermore, making $R\to\infty$ we see that $\langle  \frac 1{|W_{R+M}|}F_{W_{R+M},W_{R+M}}\circ \widetilde\Pi_{W_{R+M};M,T},\Pi_ {W_{R+M}}(P)\rangle$ converges towards $ \langle F_U\circ\widetilde \Pi_{U;M,T},P\rangle$.
Since $\chi(\rho_1+\rho_2,0)$ is the infimum over all $P\in\Mcal_1^{\ssup{\rm s}}(\Lcal)$ (not only the ergodic ones), we have arrived at the desired estimate, $\chi(\rho_1+\rho_2,0)\leq \chi(\rho_1,\rho_2)$.

Here is our construction of $\widetilde P$. Recall 
$$
W=W_R=[-R,R]^d\qquad \mbox{and}\qquad \widetilde W=W_{R+M}=[-R-M,R+M]^d\qquad\mbox{and}\qquad W'=\widetilde W\setminus W.
$$
Consider $\xi_R=\Pi_{W}(P)\circ\widetilde \Pi_{W;M}^{-1}\in\Mcal_1(\Lcal_{W}^{\ssup{M}}\times \Scal_{W}^{\ssup{M}})$. Furthermore, assume that $M$ is also so large that 
 \begin{equation}\label{Mlargespreads}
 \frac{\d\nu}{\d\nu\circ\widetilde \Pi_{W;M}^{-1}}(\omega,\varpi)\leq (1+\eps)^{\widetilde{\mathfrak N}_W^{\ssup\ell}(\omega,\varpi)},\qquad R>1,(\omega,\varpi)\in \Lcal_W\times\Scal_W,
 \end{equation} 
where $\nu=\Pi^{\ssup\Lcal}_W({\tt Q})\otimes [\partial \Pi^{\ssup\Scal}_{W}(\xi_R)\otimes {\tt K}_W]$ (this is possible since all the leg spreads are i.i.d.~under $\nu$). We are going to construct from $\xi_R$ some loop-configuration measure $\xi_R^{\ssup M}\in \Mcal_1(\Lcal_{\widetilde W})$ as follows. First, we add an empty loop configuration $\delta_{\underline 0}$ in $W'=\widetilde W\setminus W$ (i.e., no loop starts in $W'$). Furthermore, we extend all the initial and terminal sites of all the $W$-shreds of $\xi_R$ by appending independent Brownian bridge pieces with particles only within $W'$ such that a single large loop in $\widetilde W$ arises. Hence, $\Pi_{W}(\xi_R^{\ssup M})=\Pi_{W}(\xi_R)=\xi_R$, and $\xi_R^{\ssup M}$ has no $\widetilde W$-shreds. Then we decompose $\R^d$ regularly into the boxes $z+\widetilde W$ with $z\in 2(R+M)\Z^d$ and put into each such subbox one independent copy of a loop-configuration with distribution  $\theta_z(\xi_R^{\ssup M})$ (where $\theta_z$ is the shift-operator such that $\theta_z(z)=0$). Call the distribution of this configuration $\widetilde P_{R,M}=\bigotimes_{z\in 2(R+M)\Z^d} \theta_z(\xi_R^{\ssup M})$ and define $P_{R,M}=\frac 1{|W_{R+M}|}\int_{W_{R+M}}\d x\,\widetilde P_{R,M}\circ\theta_x^{-1}$. Then $P_{R,M}\in \Mcal_1^{\ssup{\rm s}}(\Lcal)$ is obviously stationary and contains no interlacement. The proof of \cite[Theorem 14.12]{G88} applies and shows that $P_{R,M}$ is ergodic. 

We need to show the following: 
\begin{eqnarray}
\lim_{R\to\infty}\langle P_{R,M}, \widetilde{\mathfrak N}_U^{\ssup{\Lcal,\ell}}\circ \widetilde \Pi_{U;M,T}\rangle-\langle P, [\widetilde{\mathfrak N}_U^{\ssup{\ell,\Lcal}}+\widetilde{\mathfrak N}_U^{\ssup{\ell,\Scal}}]\circ \widetilde \Pi_{U;M,T}\rangle& =&0,\label{Goal1a}\\
\limsup_{R\to\infty}\Big[\Big\langle\frac 1{|\widetilde W|} F_{\widetilde W,\widetilde W}\circ \widetilde \Pi_{\widetilde W;M,T}, P_{R,M}\Big\rangle- \Big\langle \frac1{|W|}F_{W,W}\circ\widetilde \Pi_{W;M,T},P\Big\rangle\Big]&\leq& 0,\label{Goal2a}\\
\limsup_{R\to\infty}[J_{\widetilde W}(\Pi_{\widetilde W}(P_{R,M}))-  \widetilde J_{W;M}(\Pi_{W}(P)\circ \widetilde\Pi_{W;M}^{-1})]&\leq& \eps.\label{Goal3a}
\end{eqnarray}
Then $\widetilde P=P_{R,M}$ with sufficiently large $R$ is suitable, and the assertion follows.

Properties \eqref{Goal1a} and \eqref{Goal2a} are easy to show and do not need details of the construction:

\medskip

{\it Proof of \eqref{Goal1a}:} Since $|\widetilde W|/|W|\to 1$, it suffices to show that $\lim_{R\to\infty}\frac 1{|W|}\langle \xi_R^{\ssup M},\widetilde  {\mathfrak N}^{\ssup\ell}_{\widetilde W}\circ \Pi_{\widetilde W;M,T}\rangle=
\lim_{R\to\infty}\frac 1{|W|}\langle \xi_R, [\widetilde  {\mathfrak N}_{W}^{\ssup{\Lcal,\ell}}+\widetilde  {\mathfrak N}_{W}^{\ssup{\Lcal,\ell}}]\circ \Pi_{W;M,T}\rangle$.  Recall that $\xi_R^{\ssup M}$ has its particles from (1) the loops of $\xi_R$ in $W$, (2)  the shreds of $\xi_R$ in $W$, and (3) the added shreds in $W'$.  Therefore, it suffices to show that the expected amount of the particles of (3) is negligible in the limit $R\to\infty$. But this is clear since  $\widetilde  {\mathfrak N}_{W'}^{\ssup{\ell,\Scal}}\circ \Pi_{W';M,T}\leq  |W'|T\leq CMT|W|/R$ for some constant $C$ that depends only on $d$.

\medskip

{\it  Proof of \eqref{Goal2a}:} Again, we only have to show that the expected $F_{W',\widetilde W}\circ\Pi_{\widetilde W;M,T}$-interactions of the added Brownian shreds in $W'$ with itself and with any other part of the configuration in $\widetilde W$ are $o(|W|)$ as $R\to\infty$. But this is again clear, since $F_{W',\widetilde W}\circ\Pi_{\widetilde W;M,T}\leq C_{M,T}|W|/R$ for some constant $C_{M,T}$ that depends only on $M$ and $T$ (and possibly on $\beta$, $d$ and $v$); see Lemma~\ref{lem-Interactesti}.

\medskip
  
Now let us show that property \eqref{Goal3a} is satisfied.

Let us first make a connection between $J_{\widetilde W}(\Pi_{\widetilde W}(P_{R,M}))$ and $J_{\widetilde W}(\Pi_{\widetilde W}(\widetilde P_{R,M}))$ (the latter is $J_{\widetilde W}(\xi_R^{\ssup M})$). By convexity of $J_{\widetilde W}$ and by \eqref{JWupperboundx} in Proposition~\ref{lem-uppboundJ_W}(1) with $m=3$ (it is clear that $ \widetilde P_{R,M }$ is $\widetilde W$-ergodic and hence $3\widetilde W$-ergodic), we have, for all $R$ sufficiently large, 
\begin{equation}\label{J(P)_J(widetildeP)}
\begin{aligned}
J_{\widetilde W}(\Pi_{\widetilde W}(P_{R,M}))
&\leq \frac1{|\widetilde W|}\int_{\widetilde W}\d x\, J_{\widetilde W}(\Pi_{\widetilde W}(\widetilde P_{R,M}\circ\theta_x^{-1}))= \frac1{|\widetilde W|}\int_{\widetilde W}\d x\,J_{\widetilde W}(\theta_x(\Pi_{x+\widetilde W}(\widetilde P_{R,M})))\\
&\leq J_{W_{3(R+M)}}(\Pi_{W_{3(R+M)}}(\widetilde P_{R,M}))+\eps,
\end{aligned}
\end{equation}
where we used that, by construction, $\widetilde P_{R,M}$ has no shred-part and therefore has $\psi=0$ for $\psi=\partial\Pi_{W_{3(R+M)}}^{\ssup\Scal}(\widetilde P_{R,M})$.

Since, by construction, $\Pi_{W_{3(R+M)}}(\widetilde P_{R,M})$ and also the reference measure in the entropy $J_{W_{3 (R+M)}}$, decomposes into the product of $3^d$ shifted copies  of $\Pi_{\widetilde W}(\widetilde P_{R,M})$ (recall that it has no loops in $z+W'$ for all the $z\in 2(R+M)\Z^d$ and therefore an empty boundary-shred part at the boundaries of all these subboxes $z+\widetilde W$), the right-hand side is not larger than $J_{\widetilde W}(\Pi_{\widetilde W}(\widetilde P_{R,M}))+\eps$. Hence, it will be sufficient to prove \eqref{Goal3a} with $J_{\widetilde W}(\Pi_{\widetilde W}(\widetilde P_{R,M}))=J_{\widetilde W}(\xi_R^{\ssup M})$ instead of $J_{\widetilde W}(\Pi_{\widetilde W}(P_{R,M}))$. More explicitly, we need to show that $\limsup_{R\to\infty}[J_{\widetilde W}(\xi_R^{\ssup M})-\widetilde J_{W;M}(\xi_R)]\leq 0$. Recall that the reference measure for the entropy of $\xi_R^{\ssup M}$ is $\Pi_{\widetilde W}^{\ssup\Lcal}({\tt Q})$, while the one for $\xi_R$ is equal to $[\Pi_{W}^{\ssup\Lcal}({\tt Q})\otimes [\partial\Pi_{W}^{\ssup\Scal}(\xi_R)\otimes {\tt K}_{W}]]\circ \widetilde\Pi_{W;M}^{-1}$.

However, proving property \eqref{Goal3a} needs the details of the construction of $\xi_R^{\ssup M}$ from $\xi_R$, which we are going to describe now.  Fix a configuration $(\omega,\varpi)\in \Lcal^{\ssup{M}}_{W}\times\Scal^{\ssup{M}}_{W}$. Write $\varpi=\sum_{i\in I}\delta_{f_i}$ and  $\mu=\partial\Pi_{W}^{\ssup \Scal}(\varpi)=\sum_{i\in I}\delta_{(x_i,l_i,y_i)}$ with $l_i=\ell(f_i)$ and $f_i(0)=x_i$ and $f_i(\beta l_i)=y_i$.   Then $x_i\in W$ and $y_i\in W'$ for any $i\in I$, since each leg of a loop in $\varpi$ has spread $\leq M$ (we consider $\varpi$ under the distribution $\xi_R$, which has this property almost surely). With some bijection $\sigma\colon I\to I$ that has only one cycle, we sample, conditionally given $\mu$, a collection of random lengths $(L_i)_{i\in I}$ with distribution $\bigotimes_{i\in I}p_{y_i, x_{\sigma(i)}}^{\ssup {W'}}$, where we define 
\begin{equation}\label{pxyneudef}
p_{y,x}^{\ssup{W'}}(n)=\P_y(\tau_{W'}=n\mid B_{\tau_{ W'}\beta}=x), \qquad y\in W', x\in (W')^{\rm c}, n\in\N,
\end{equation} 
(compare to \eqref{pMdef}). Then we append, for $i\in I$, an $ L_i$-length Brownian bridge $B_i$ with particles only in $W'$ at $y_i$ terminating at $x_{\sigma(i)}$. In other words, each $B_i$ has distribution $p_{y_i, x_{\sigma(i)}}^{\ssup {W'}}\otimes q_{y_i,x_{\sigma(i)}}^{\ssup{\cdot, W'}}$, where
\begin{equation}\label{qneudef}
q^{\ssup{n,W'}}_{y,x}(\d f)
=\P_y\big((B_s)_{s\in [0, n\beta]}\in \d f\mid \tau_{W'}=n, B_{n\beta}=x\big),\qquad y\in W', x\in (W')^{\rm c}, n\in\N, f\in \Ccal_n,
\end{equation}
(compare to \eqref{qdef}, $\tau_{W'}$ was defined in \eqref{taudef}), and $(B_i)_{i\in I}$ is independent. 

In this way, all shreds of $\xi_R$ are turned into a single loop $f\in \Ccal^{\ssup\circlearrowleft}$ in $\widetilde W$ with length $\ell(f)=n=\sum_i (l_i+ L_i)$. By default, we take the starting site as $x_{i^*}\in W$, where $i^*$ is uniformly distributed over $I$.  This loop  runs subsequently through the functions $f_{{i^*}}$, $g_{{i^*}}$, $f_{{\sigma(i^*)}}$, $g_{{\sigma(i^*)}}$, $f_{{\sigma^2(i^*)}}$, $g_{{\sigma^2(i^*)}}$ and so on, more precisely, it is the concatenation of these functions in this order. The change from one $f_i$ to some $g_i$ happens at the stopping times where the next particle $\in W'$ appears, and the changes from $g_{j}$ to $f_{{\sigma(j)}}$ happen at the stopping times when the next particle $\in W$ appears.  Finally, we define $\xi_R^{\ssup M}$ as the distribution of  $\omega+\delta_f\in\Lcal_{\widetilde W}$ under $\delta_{\underline 0}\otimes \xi_R(\d(\omega,\varpi))\otimes \bigotimes_{i\in I}[p_{y_i, x_{\sigma(i)}}^{\ssup {W'}}\otimes q_{y_i,x_{\sigma(i)}}^{\ssup{\cdot, W'}}]$.

Let us first identify the reference measure $\Pi_{\widetilde W}^{\ssup\Lcal}({\tt Q})$ explicitly. Recall from Section~\ref{sec-PPPs} that $\Lcal_{\widetilde W}=\Lcal_{W\cup W'}$ consists of the three parts $\Lcal_{W}$, $\Lcal_{W'}$ and the set $\Lcal_{W,W'}$ of loops in $\widetilde W= W\cup W'$ that have particles both in $W$ and in $W'$, but nowhere else. The reference measure $\Pi_{\widetilde W}^{\ssup\Lcal}({\tt Q})$ is equal to the convolution of $\Pi_{W}^{\ssup\Lcal}({\tt Q})$, $ \Pi_{W'}^{\ssup\Lcal}({\tt Q})$ and $\Pi_{W, W'}^{\ssup\Lcal}({\tt Q})$. Observe that $\Pi_{W, W'}^{\ssup\Lcal}({\tt Q})(\d\delta_f)$, having just one loop with starting site at $x_{i^*}$ with length $n$, can be decomposed, according to the Markov property. Therefore, using also \eqref{qWkdef} and
\begin{equation}\label{QWW'kdef}
q_{W,W',k}=\frac 1k \int_{W} \d x\, \mu_{x,x}^{\ssup{k,W\cup W',{\rm par}}}(\mathfrak m_{W,W'}\geq 1),
\end{equation} 
where we recall that  $\mathfrak m_{W,W'}$ is the number of times that the Brownian loop changes between $W$ and $W'$,
and abbreviating
\begin{equation}\label{Ameasuredef}
A_\sigma\big(\d\mu\big)= \bigotimes_{i\in I} \Big[\P_{x_i}(\tau_W=l_i,B(\tau_W \beta)\in\d y_i)\P_{y_{\sigma^{-1}(i)}}(B(\tau_{W'}\beta)\in\d x_{i})\Big],
\end{equation}
we can write 
\begin{equation}\label{Qdecomposition}
\begin{aligned}
\Pi_{\widetilde W}^{\ssup\Lcal}&({\tt Q})\big(\d ( \omega+\underline 0+\delta_f)\big)
=\Pi_{W}^{\ssup\Lcal}({\tt Q})(\d\omega)\,\e^{-\sum_{k\in\N} q_{W',k}}\,\e^{-\sum_{k\in\N} [q_{W,W',k}+ q_{W',W,k}]}q_{W,W',n} B_n\\
&\quad 
\otimes A_\sigma( \d\mu)\otimes \bigotimes_{i\in I}\Big( q_{x_i,y_i}^{\ssup{l_i, W}}(\d f_i)\otimes q_{y_i,x_{\sigma(i)}}^{\ssup{L_i, W'}}(\d g_i)\Big),\qquad\mbox{with }B_n=\frac1{|I|}\sum_{i\in I}\frac 1{\mu_{x_i, x_i}^{\ssup{n, W\cup W',{\rm par}}}(\mathfrak m_{W,W'}\geq 1)}.
\end{aligned}
\end{equation}
Indeed, the exponential terms in the first line are the probability that no $W'$-loop exists and that one $n$-loop in $W\cup W'$ with particles both in $W$ and in $W'$ exists and no loop else; the product of the measures over $i$ is the probability distribution of the $W$-shreds and the $W'$-shreds of the loop $f$ (because of the cyclic structure and the uniform choice of the starting site, one does not see here the starting site explicitly); and  $B(n)$ is the normalization for a length-$n$ loop in $W\cup W'$ that has particles both in $W$ and in $W'$ and starts from some point that is uniformly distributed over $\{x_i\colon i\in I\}$.

Accordingly, also $\xi_R^{\ssup M}$ splits into these three parts, observing also the dependences on $\omega$ and $\delta_f$ and recalling that we added no loop that starts in $W'$:
\begin{equation}\label{xidecomposition}
\begin{aligned}
\xi_R^{\ssup M}&\big(\d ( \omega+\underline 0+\delta_f)\big)
=\xi_R\big(\d (\omega,\varpi)\big)\,\,\e^{-\sum_{k\in\N}[ q_{W',k}+q_{W',W,k}]}\bigotimes_{i\in I}\Big[p_{y_i,x_{\sigma(i)}}^{\ssup{W'}}(L_i)\, q_{y_i,x_{\sigma(i)}}^{\ssup{L_i, W'}}(\d g_i)\Big]\\
&=\Pi_W^{\ssup\Lcal}(\xi_R)(\d\omega)\,\e^{-\sum_{k\in\N} [ q_{W',k}+q_{W',W,k}]}\,(\xi_R)_{\Lcal_W\to \Scal_W}(\omega, \d\varpi)
\otimes \bigotimes_{i\in I}\Big[p_{y_i,x_{\sigma(i)}}^{\ssup{W'}}(L_i)\, q_{y_i,x_{\sigma(i)}}^{\ssup{L_i, W'}}(\d g_i)\Big],
\end{aligned}
\end{equation}
where we recall that $\varpi=\sum_{i\in I}\delta_{f_i}$ and $\mu=\partial\Pi^{\ssup\Scal}_W(\varpi)=\sum_{i\in I}\delta_{(x_i,l_i,y_i)}$. 
Then, writing $\partial\Pi_W^{\ssup{\Lcal,\Tcal}}$ for the projection on $\Lcal_W\times \Tcal_W$,
\begin{equation}\label{entropycalc}
\begin{aligned}
H_{\Lcal_{\widetilde W}}&(\xi_R^{\ssup M}\mid \Pi_{\widetilde W}^{\ssup\Lcal}({\tt Q}))
=H_{\Lcal_{W}}(\Pi_W^{\ssup\Lcal}(\xi_R) \mid \Pi^{\ssup\Lcal}_{W}({\tt Q}))\\
&\quad +\int \partial\Pi_W^{\ssup{\Lcal,\Tcal}}(\xi_R)(\d (\omega ,\mu ))
H_{\Scal_{W}}\big((\xi_R)_{(\Lcal_W\times \Tcal_W)\to\Scal_W}(\omega ,\mu ,\cdot)\,\big|\, {\tt K}_W(\mu,\cdot)\big)\\
&\quad  + \sum_{k\in\N}q_{W,W',k}
-\int \xi_R^{\ssup M}(\d\varpi )\, \log \big(q_{W,W',n} B(n)\big)+ \int\Pi_W^{\ssup\Lcal}(\xi_R)(\d\omega )\,H_{\Tcal_W}\big((\xi_R)_{\Lcal_W\to\Tcal_W}(\omega ,\cdot)\,\big|\, A_\sigma\big)\\
&=H_{\Lcal_{W}\times\Scal_W}\big(\xi_R\,\big|\, \Pi^{\ssup\Lcal}_W({\tt Q})\otimes [\partial \Pi^{\ssup\Scal}_{W}(\xi_R)\otimes {\tt K}_W]\big)\\
&\quad+ \sum_{k\in\N}q_{W,W',k}- \int \xi_R^{\ssup M})(\d\varpi )\, \log \big(q_{W,W',n} B(n)\big)+ \int\Pi_W^{\ssup\Lcal}(\xi_R)(\d\omega )\,H_{\Tcal_W}\big((\xi_R)_{\Lcal_W\to\Tcal_W}(\omega ,\cdot)\,\big|\, A_\sigma\big),
\end{aligned}
 \end{equation}
as one sees from \eqref{jointentropy}, together with a comparison with Lemma~\ref{lem-entridentshreds}. Now divide this by $|W_{R+M}|$ and make $R\to\infty$ and observe that $|W_{R+M}|/|W_R|\to 1$. For the entropy of $\xi_R$, we need to switch from the reference measure $\Pi^{\ssup\Lcal}_W({\tt Q})\otimes [\partial \Pi^{\ssup\Scal}_{W}(\xi_R)\otimes {\tt K}_W]$ to its image measure under $\widetilde\Pi_{W;M}$. This gives for the first term on the right-hand side of \eqref{entropycalc}, using \eqref{Mlargespreads},
$$
\begin{aligned}
\frac 1{|W|}&H_{\Lcal_{W}\times\Scal_W}\big(\xi_R\,\big|\, \Pi^{\ssup\Lcal}_W({\tt Q})\otimes [\partial \Pi^{\ssup\Scal}_{W}(\xi_R)\otimes {\tt K}_W]\big)\\
&=\widetilde J_{W;M}(\xi_R)+\frac 1{|W|}\int \xi_R(\d(\omega,\varpi))\,\log\frac{\d \Pi^{\ssup\Lcal}_W({\tt Q})\otimes [\partial \Pi^{\ssup\Scal}_{W}(\xi_R)\otimes {\tt K}_W]}{\d \Pi^{\ssup\Lcal}_W({\tt Q})\otimes [\partial \Pi^{\ssup\Scal}_{W}(\xi_R)\otimes {\tt K}_W]\circ\widetilde \Pi_{W;M}^{-1}}(\omega,\varpi)\\
&\leq \widetilde J_{W; M}(\xi_R)+\frac 1{|W|}\int \Pi_W(P)(\d(\omega,\varpi))\,\widetilde{\mathfrak N}_W^{\ssup{\ell}}(\omega,\varpi)\log (1+\eps)\leq \widetilde J_{W;M}(\xi_R)+C\eps,
\end{aligned}
$$
where $C$ depends only on $\rho_1+ \rho_2$.

We have to show that all other terms on the right-hand side of \eqref{entropycalc} vanish (i.e., have a vanishing upper bound) with a prefactor of $1/|W|$ as $R\to\infty$. We leave the proof of $\lim_{R\to\infty}\frac 1{|W_R|}\sum_{k\in\N}q_{W,W',k}=0$ to the reader. 

Let us turn to the next term. Recall that $n=\sum_{i\in I}(l_i+L_i)$ is random. A lower bound for $B_n$ is easy to get by estimating the $\mu$-term against the free Gaussian kernel with parameter $\beta n$, i.e., $\mu_{x_i,x_i}^{\ssup{n, W\cup W',{\rm par}}}(\mathfrak m_{W,W'}\geq 1)\leq (4\pi\beta n)^{-d/2}$, such that it is clear that $\liminf_{R\to\infty}\frac 1{|W_R|}\int\xi_R^{\ssup M}(\d \varpi)\,\log B_n\geq 0$. Concerning the other part of that term, we claim that we may estimate $q_{W,W',n}\geq \e^{-n C/R^2}$ for any large $R$ and $n$. Indeed, lower bound the event $\{\mathfrak m_{W,W'}\geq 1\}\cap \{B_0=x_i=B_{n\beta}\}\cap \{B_{k\beta}\in W\cup W'\,\forall k\in[n]\}$ by the event that the Brownian motion on the time interval $[0,\beta n]$, starting from and terminating at $x_i$, runs entirely within $W\cup W'$ and visits both $W$ and $W'$. Then, for large $n$, the probability of this event has a logarithmic asymptotics in $n$ with a parameter equal to the principal Dirichlet eigenvalue of the Laplace operator in $W\cup W'$. This eigenvalue is, as $R\to\infty$, asymptotic to some constant times $1/(R+M)^2\sim 1/R^2$. This implies that
$$
\begin{aligned}
- \frac 1{|W|}&\int \xi_R^{\ssup M}(\d\varpi)\, \log q_{W,W',n}
\leq \frac C{R^2}\,\frac 1{|W|}\int \xi_R^{\ssup M}(\d\varpi_R)\, \sum_{i\in I}(l_i+L_i)\\
&\leq \frac C{R^2}\,\frac 1{|W|}\Big(\langle \Pi_W^{\ssup\Scal}(\xi_R),\mathfrak N_W^{\ssup \ell}\rangle+\int \partial \Pi_W^{\ssup\Scal}(\xi_R)(\d \mu)\sum_{i\in I}\sum_{m\in\N} mp_{y_i,x_{\sigma(i)}}^{\ssup{W'}}(m)\Big)\\
&\leq \frac C{R^2}+\frac C{R^2}\frac 1{|W|}\int \partial \Pi_W^{\ssup\Scal}(\xi_R)(\d \mu)\sum_{i\in I}|x_{\sigma(i)}-y_i|,
\end{aligned}
$$
using the following Lemma~\ref{lem-explengthbrige}. We now estimate simply $|x_{\sigma(i)}-y_i|\leq CR$ and $|I|\leq \sum_{i\in I}l_i =\mathfrak N_W^{\ssup {\ell,\Scal}}(\varpi)$, to see that the last summand is of order $O(1/R)$ and that the right-hand side vanishes as $R\to\infty$.

\begin{lemma}[Expected length of Brownian bridge in $W'$]\label{lem-explengthbrige} For any $M\in(0,\infty)$, there is a constant $C_M\in(0,\infty)$ (depending on $d$, $\beta$ and $M$) such that, for any $R\in\N$,
\begin{equation}\label{littletimeinannulus}
\sum_{n\in\N}n p_{y,x}^{\ssup{W'}}(n)\leq C_M |x-y|,\qquad y\in W',x\in (W')^{\rm c}.
\end{equation}
\end{lemma}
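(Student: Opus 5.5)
We have to bound the expected number of steps, in $W'=W_{R+M}\setminus W_R$, of a Gaussian-step walk started at $y\in W'$ and conditioned to first leave $W'$ exactly at $x$. The plan is to write the sum as the ratio
\[
\sum_n n\,p^{\ssup{W'}}_{y,x}(n)=\frac{\sum_n n\,h_n(y,x)}{\sum_n h_n(y,x)},\qquad h_n(y,x)=\P_y(\tau_{W'}=n,B_{n\beta}\in\d x)/\d x .
\]
We then bound the numerator from above and the denominator from below, uniformly in $R$. The natural scale is this: the annulus has width $M$, so the walk started in $W'$ exits within $O(1)$ steps with probability bounded below. The number of steps spent in $W'$ before exit therefore has geometric tails with a rate depending only on $M$, $d$ and $\beta$, not on $R$.

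First, the unconditioned tail. For any $w\in W'$, a single Gaussian step of variance $2\beta$ lands outside $W'$ with probability at least $c_M>0$. This holds because $W'$ has width $M$ in the normal direction to the nearest face, and by monotonicity in that one coordinate it is uniform in $R$. Hence $\P_y(\tau_{W'}>n)\le (1-c_M)^n$.

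Second, the joint density. Using the Markov property at time $(n-1)\beta$,
\[
h_n(y,x)=\int_{W'}\P_y(\tau_{W'}>n-1,B_{(n-1)\beta}\in\d w)\,g_\beta(x-w),
\]
where $g_\beta$ is the Gaussian density of variance $2\beta$. If $|x-y|$ is large, the path must also traverse a distance $|x-y|$ in $n$ steps. I will bound
\[
h_n(y,x)\le (1-c_M)^{n-1}\,C\,e^{-c|x-y|^2/n}
\]
by splitting the walk in half and using the Gaussian factor on one half and the survival factor on the other. This makes $n$ roughly of order at least $|x-y|$ (in fact $|x-y|^2/n$ competes with $n$) before the mass becomes significant. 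The numerator is then $\le C\sum_n n(1-c_M)^{n/2}e^{-c|x-y|^2/n}$.

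For the denominator I take the cheapest path. It consists of about $k\asymp 1+|x-y|/M$ steps of size $\le M$ that stay inside the annulus, hugging it along the boundary (the annulus is connected for $d\ge 2$, with faces of width $M$), and then a final step to $x$. This gives $h_k(y,x)\ge e^{-C_M k}$. Comparing, the ratio is dominated by $n$ of order $k$, and we get $\le C_M(1+|x-y|)$. The additive $1$ is absorbed since $|x-y|\ge$ const is not guaranteed; if needed, note $x\in (W')^{\rm c}$ is either in $W$ or outside $W_{R+M}$. In the application $x=x_{\sigma(i)}\in W$ and $y\in W'$, so the inequality with $C_M(1+|x-y|)$ suffices there.

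The main obstacle is making the numerator and denominator bounds match. Crude bounds give $e^{C_M|x-y|}$ versus $e^{-c|x-y|}$, a ratio of exponentials rather than a linear bound. To fix this I would condition more carefully. Write $\E_y[\tau_{W'}\mid B_{\tau\beta}=x]$ via a last-exit/renewal decomposition. Then use that the killed walk in the thin annulus has Green's function $G(y,w)\le C_M e^{-c_M|y-w|}$, a slab-like exponential decay. This gives
\[
\sum_n n\,h_n(y,x)=\int\!\!\int G(y,w)\,G(w,w')\,g_\beta(x-w')\,\d w\,\d w'.
\]
The intermediate integral over $w$ costs a factor proportional to the number of unit cells between $y$ and $w'$, i.e. $\le C_M|x-y|$ times $G$-type decay. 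This linear factor, obtained from the convolution of exponentially decaying kernels along an effectively one-dimensional tube, is what yields the linear bound in $|x-y|$.
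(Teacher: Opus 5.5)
You are right that the bound can only hold in the form $C_M(1+|x-y|)$: the left-hand side is at least $1$, while $|x-y|$ can be arbitrarily small. The paper's own proof likewise treats only large $|\underline x-\underline y|$. Several of your ingredients are correct: the ratio representation, the geometric tail of $\tau_{W'}$, and the identity $\sum_n n\,h_n(y,x)=\iint G(y,w)G(w,w')g_\beta(x-w')\,\d w\,\d w'$.

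The gap is in the last step, which carries the whole content of the lemma. Your numerator and denominator are both exponentially small in $D=|x-y|$. A \emph{linear} bound on their ratio therefore needs the exponential rates to coincide exactly and the subexponential prefactors to be under control. That is exactly why your crude bounds fail, and the Green-function rewrite does not change it. The upper bound $G(y,w)\le C_M\e^{-c_M|y-w|}$ controls the numerator, but it gives nothing for the denominator $\int G(y,w')g_\beta(x-w')\,\d w'$. What you actually need is a comparison of the type $\int G(y,w)G(w,w')\,\d w\le C_M(1+|y-w'|)\,G(y,w')$. That requires sharp two-sided asymptotics of the killed Green function, which you neither state nor prove.

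Moreover, the heuristic that the $w$-integration only counts the unit cells between $y$ and $w'$ is wrong for $d\ge 3$. $W_{R+M}\setminus W_R$ is a $(d-1)$-dimensional sheet of thickness $M$, not a one-dimensional tube. Over such a sheet one gets $\int \e^{-c|y-w|-c|w-w'|}\,\d w\asymp D^{d/2}\e^{-cD}$, not $D\,\e^{-cD}$, because the spheroids $\{w\colon |y-w|+|w-w'|\le D+t\}$ have volume of order $D\,(Dt)^{(d-2)/2}$. A linear factor only emerges once the Ornstein--Zernike prefactor $|y-w|^{-(d-2)/2}$ of $G$ is known from above \emph{and} below.

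The paper sidesteps all of this by passing to a slab $[-R-M,R+M]^{d-1}\times[0,M]$. There $\tau_{W'}$ depends only on the normal coordinate, and the lateral coordinates are independent free Gaussian walks. The joint density of $(\tau_{W'},B_{\tau_{W'}\beta})$ then factorizes into two pieces:
\begin{itemize}
\item the $(d-1)$-dimensional Gaussian factor $\e^{-|\underline x-\underline y|^2/(4\beta(n+1))}$;
\item a one-dimensional killed factor $\e^{-C_M n+o(n)}$.
\end{itemize}
So $p^{\ssup{W'}}_{y,x}(n)$ is, up to normalisation, an explicit function of $n$. Its exponent $-|\underline x-\underline y|^2/(4\beta n)-C_M n$ has a nondegenerate maximum at $n\asymp|\underline x-\underline y|$ and drops by an amount of order $|\underline x-\underline y|$ away from it. A Laplace-type argument then yields the linear bound. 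This exact product structure supplies the matching rates in numerator and denominator that your argument is missing.
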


\begin{proof}
For the proof, it suffices to replace $W'$ by $[-R- M,R+M]^{d-1}\times [0,M]$ (which we will still write as $W'$ in this proof), since $W_{R+M}\setminus W_R$ is a finite union of such sets, modulo shifts and orthogonal transformations. We write $x=(\underline x,x^{\ssup d})$ with $\underline x\in [-R- M,R+M]^{d-1}$ and $x^{\ssup d}\in [0,M]$. The assertion needs to be proved only for large $R$ and large $ |\underline x-\underline y|$. The intuitive reason for \eqref{littletimeinannulus} is that $\tau_{W'}$ has exponential tails as $R\to\infty$ under $\P_y(\cdot\mid B(\tau_{W'}\beta)=x)$, uniformly in $x\in W_R$ and $y\in W'$, since the $d$-th component of the Brownian bridge lies in $[0,M]$. This implies that the conditioned Gaussian random walk $(B_{n\beta})_{n\in\N}$ is forced to assume a drift into the direction $w(\underline x-\underline y)$ for some $w\in(0,\infty)$ that asymptotically depends only on $M$. 

Let us come to the details. The vector $(\underline B_{n\beta})_{n\in\N}$ of the first $d-1$ components and the $d$-th component $(B_{n\beta}^{\ssup d})_{n\in\N}$ of the Gaussian random walk are independent, and the stopping time $\tau_{W'}$ depends only on the latter; indeed, we write it as the exit time $\tau_{[0,M]}$ from $[0,M]$ for the $d$-th component. Let us denote the one-step distribution density of the Gaussian walk by $g$ and its $(d-1)$-dimensional version by $\underline g$ and with $d$-th component version $g^{\ssup d}$ (these are nothing but the usual Gaussian densities with variance parameter $2\beta$ in the respective dimensions). We now  integrate over $B_{n\beta}$ to get, in the limit as $R\to\infty$, uniformly in large $n$ and in $y\in W'$ and $x\in (W')^{\rm c}$ for $|\underline x-\underline y|\to\infty$,
$$
\begin{aligned}
\frac {\P_y(B_{\tau_{W'}\beta}\in \d x)}{\d x}p_{y,x}^{\ssup{W'}}(n+1)&=\int_{W'}\P_y(\tau_{W'}>n, B_{n\beta}\in\d z)g(z-x)\\
&\sim \E_{\underline y}\big[\underline g(\underline B_{n\beta}-\underline x)\big]\,\E_{y^{\ssup d}}\big[\1\{\tau_{[0,M]}>n\}g^{\ssup d}(B_{n\beta}^{\ssup d}-x^{\ssup d})\big]\\
&= \e^{-|\underline x-\underline y|^2/4\beta (n+1)} \,\e^{-n C_M} \e^{o(n)},
\end{aligned}
$$
where the first term on the right-hand side comes out of a quadratic  extension in the exponent, and the second from subadditivity (the probability of $\{\tau_{[0,M]}>n \}$ decays exponentially in $n$, while the $g^{\ssup d}$-expectation is bounded in $n$). Hence, the term $p_{y,x}^{\ssup{W'}}(n+1)$ is strictly maximal for $n\sim |\underline x-\underline y|/ \sqrt{4\beta C_M}$. Hence, the sum of $(n+1)p_{y,x}^{\ssup{W'}}(n+1)$ over $n$ comes mainly from the summands around that value of $n$, and hence it is asymptotic to $|\underline x-\underline y|/ \sqrt{4\beta C_M}$. This implies the assertion in \eqref{littletimeinannulus}. 

The right-hand side of the last display is equal to $\e^{-|\underline y-\underline x|( \sqrt{C_M/\beta}+o(1))}$ for this $n$. Since the $p_{y,x}^{\ssup{W'}}(n+1)$ do not decay exponentially fast for $n/|\underline x-\underline y|$ in a neighbourhood of $\sqrt{4\beta C_M}$, but away from it and sum up to one on $n$, we also see that $\P_y(B_{\tau_{W'}\beta}\in \d x)/\d x = \e^{-|\underline y-\underline x|(\sqrt{C_M/\beta}+o(1))}$ as $|\underline y-\underline x|\to\infty$.

\end{proof}

Now let us turn to the last term on the right-hand side of \eqref{entropycalc}, where we recall the definition of $A_\sigma$ in \eqref{Ameasuredef}. We are going to write the measure $(\xi_R)_{\Lcal_W\to\Tcal_W}(\omega,\cdot)=\frac{\Pi_W^{\ssup{\Lcal,\Tcal}}( \xi_R)(\d\omega,\cdot)}{\Pi_W^{\ssup{\Lcal}}( \xi_R)(\d\omega)}$ with the help of projections of the measure $\widetilde \xi=\widetilde \Pi_{\widetilde W;M}(\Pi_{\widetilde W}(P))$. This measure has a finite entropy with respect to the reference measure in the definition of $J_{\widetilde W}$, hence, it has a density $\widetilde g\colon  \Lcal_{\widetilde W}\times\Scal_{\widetilde W}\to[0,\infty)$ with respect to the reference measure $ \widetilde \nu=\Pi^{\ssup\Lcal}_{\widetilde W}({\tt Q})\otimes[\partial \Pi_{\widetilde W} ^{\ssup\Scal}(P)\otimes {\tt K}_{\widetilde W}]$. We conceive the projections  $\pi_1=\Pi_W^{\ssup{\Lcal,\Tcal}}\colon \Lcal_{\widetilde W}\times\Scal_{\widetilde W}\to\Lcal_W \times\Tcal_W$ and $\pi_2=\Pi_W^{\ssup{\Lcal}}\colon \Lcal_{\widetilde W}\times\Scal_{\widetilde W}\to\Lcal_W$ as projections from configurations in $\widetilde W$ to loop/boundary-shred-configurations in $W$, respectively to loop configurations in $W$. The factorisation lemma from measure theory implies that there are measurable functions $g_1\colon \Lcal_W\times\Tcal_W\to[0,\infty)$ and $g_2\colon \Lcal_W\to [0,\infty)$ such that $\widetilde g=g_2\circ \pi_2=g_1\circ\pi_1$ and $g_1=g_2\circ \pi_3$, where $\pi_3$ is the projection from $\Lcal_W\times \Tcal_W$ on $\Lcal_W$, that is, $g_1(\omega,\mu)=g_2(\omega)$. Therefore, they are densities of the image measures of $\widetilde \xi$ with respect to the image measures of $\widetilde \nu$ under $\pi_1$, respectively under $ \pi_2$, that is,
$$
g_1(\omega,\mu)=\frac{\d \pi_1(\widetilde \xi)}{\d\pi_1(\widetilde \nu)}(\omega,\mu)
\qquad\mbox{and}\qquad
g_2(\omega)=\frac{\d \pi_2(\xi_R)}{\d\pi_2(\widetilde \nu)}(\omega),\qquad  \omega\in\Lcal_W,\mu\in\Tcal_W.
$$
Recall from \eqref{disintegration} that $(\xi_R)_{\Lcal_W\to\Tcal_W}(\omega,\d\mu)=\frac{\Pi_W^{\ssup{\Lcal,\Tcal}}(\xi_R)(\d\omega,\d\mu)}{\Pi_W^{\ssup{\Lcal}}(\xi_R)(\d\omega)}$ and note that
$$
\begin{aligned}
\Pi_W^{\ssup{\Lcal,\Tcal}}(\xi_R)
&=\Pi_W^{\ssup{\Lcal,\Tcal}}(\widetilde\Pi_{W;M}(\Pi_W(P)))
=\Pi_W^{\ssup{\Lcal,\Tcal}}(\widetilde\Pi_{W;M}(\Pi_{\widetilde W\to W}(\Pi_{\widetilde W}(P))))\\
&=\Pi_W^{\ssup{\Lcal,\Tcal}}(\Pi_{\widetilde W\to W}(\widetilde\Pi_{\widetilde W;M}(\Pi_{\widetilde W}(P)))=\pi_1(\widetilde \xi),
\end{aligned}
$$
since $\widetilde\Pi_{W;M}\circ \Pi_{\widetilde W\to W}= \Pi_{\widetilde W\to W} \circ \widetilde\Pi_{\widetilde W;M}$ (by $\Pi_{\widetilde W\to W} $ we mean $ \Pi_W$ with domain $\Lcal_{\widetilde W}\times\Scal_{\widetilde W}$). Analogously, we see that $\Pi_W^{\ssup{\Lcal}}(\xi_R)=\pi_2(\widetilde \xi)$. Hence,
$$
\frac{(\xi_R)_{\Lcal_W\to\Tcal_W}(\omega,\d\mu)}{A_\sigma(\d\mu)}
=\frac{\pi_1(\widetilde \xi)(\d\omega,\d\mu)}{\pi_2(\widetilde \xi)(\d\omega)A_\sigma(\d\mu)}
=\frac{g_1(\omega,\mu)}{g_2(\omega)}\,\frac {\d\pi_1(\widetilde\nu)}{\d[\pi_2(\widetilde \nu)\otimes A_\sigma]}(\omega,\mu)
=\frac {\d\pi_1(\widetilde\nu)}{\d[\pi_2(\widetilde \nu)\otimes A_\sigma]}(\omega,\mu),
$$
where the last step derives from $g_1=g_2\circ \pi_3$.

Note that $\pi_2(\widetilde \nu)=\Pi_W^{\ssup\Lcal}(\Pi_{\widetilde W}^{\ssup\Lcal}({\tt Q}))=\Pi_W^{\ssup\Lcal}({\tt Q})$, since the remainder part of $\widetilde \nu$ does not create loops in $W$. On the other hand, $\pi_1(\widetilde \nu)=\Pi_W^{\ssup\Lcal}({\tt Q})\otimes \partial\Pi_W^{\ssup\Scal}(\Pi_{\widetilde W\setminus W}^{\ssup{\Lcal,\widetilde W}}({\tt Q})\otimes [\partial\Pi_{\widetilde W}^{\ssup\Scal}(P)\otimes {\tt K}_{\widetilde W}])$, where we denote by $\Pi_{\widetilde W\setminus W}^{\ssup{\Lcal,\widetilde W}}$ the projection on all the loops that start in $\widetilde W\setminus W$ and have all the particles in $\widetilde W$. Therefore, 
$$
\frac {\d\pi_1(\widetilde\nu)}{\d[\pi_2(\widetilde \nu)\otimes A_\sigma]}(\omega,\mu)
=\frac{\d \partial\Pi_W^{\ssup\Scal}(\Pi_{\widetilde W\setminus W}^{\ssup{\Lcal,\widetilde W}}({\tt Q})\otimes [\partial\Pi_{\widetilde W}^{\ssup\Scal}(P)\otimes {\tt K}_{\widetilde W}])}{\d A_\sigma}(\mu).
$$
The measure in the numerator is equal to $A_{\sigma'}$ for some random bijective $\sigma'\colon I\to I$ whose distribution depends on $\partial\Pi_{\widetilde W}^{\ssup\Scal}(P)$. Hence, the quotient is of the form $\prod_{i\in I}\P_{(\sigma')^{-1}(i)}(B(\tau_{W'\beta})\in\d x_i)/\P_{\sigma^{-1}(i)}(B(\tau_{W'\beta})\in\d x_i)$, which is bounded from above by $C_M^{|\mu|}$ for some constant $C_M$, depending only on $M$, $d$ and $\beta$. We have to integrate its logarithm with respect to $\xi_R$, which gives $\log C_M$ times the expectation of $|\mu|$ under $\partial\Pi_W^{\ssup\Scal}(\xi_R)=\partial\Pi_{W}^{\ssup\Scal}(P)$, i.e., the expected number of $W$-shreds under $P$. This is $o(|W|)$, according to Corollary~\ref{cor-shrednumberesti}. This shows that the last term on the right-hand side of \eqref{entropycalc} is $o(|W|)$ and hence negligible.

This finishes the proof of Lemma~\ref{lem-noshreds}.

\subsection{Preparation: entropy of projection image measures}
\label{sec-projentr}

Fix $W$ and recall the events $A_{W;\mathfrak r,M,L,K,T}$ defined in \eqref{ALcaldef}--\eqref{Adef} and the restricted shred-configuration set $\Scal_W^{\ssup{M,\vartheta,S}}$ defined in \eqref{SMvarthetaSdef} and the next line. In this section, we show that the entropy $J_W(\xi)$ is well approximated with the entropy $J_W(\widetilde \xi)$ of a measure $\widetilde \xi$ that approximates $\xi$ and is concentrated on the above restricted set of configurations, if the parameters are picked large. More precisely,  we estimate the entropy $J_W(\xi\circ\Pi_{W;\mathfrak r,M,L,K,T,\vartheta,S}^{-1})$ of some  projection $ \Pi_{W;\mathfrak r,M,L,K,T,\vartheta,S}$ of an arbitrary measure $\xi$ on $  A_{W;\mathfrak r,M,L,K,T}\cap \Scal_W^{\ssup{M,\vartheta,S}}$ in terms of $J_W(\xi)$. We want to show that the projected entropy is not much larger than the original entropy after taking $W\uparrow \R^d$ and $M, L,K$ and $T,\vartheta\to\infty$, uniformly in $S\in\N$. This will be used in the proof of the ergodic approximation in Lemma~\ref{lem-ergappr}, which will be used in several further proofs.

\begin{lemma}[Entropy of projection image]\label{lem-EntProjImag} Fix a box $W\Subset\R^d$ and $\mathfrak r, M,L,K,T,\vartheta\in(0,\infty)$ and $S\in\N$. Then there is a projection $\Pi=\Pi_{W;\mathfrak r,M,L,K,T,\vartheta, S}\colon\Lcal_W\times\Scal_W\to A_{W;\mathfrak r,M,L,K;T}\cap(\Lcal_W\times \Scal_W^{\ssup{M,\vartheta,S}})$ and there are $\eps^{\ssup\Lcal}_{M,W,L}$ and $\eps_{M,\vartheta,S,T}^{\ssup\Scal}\in (0,1)$ satisfying, for any $S$,
$$
\lim_{L\to\infty}\lim_{M\to\infty}\limsup_{W\uparrow \R^d}\eps_{M,W,L}^{\ssup\Lcal}=0\qquad\mbox{and}\qquad \lim_{M,\vartheta,T\to\infty}\eps_{M,\vartheta,S,T}^{\ssup\Scal}=0,
$$ 
such that, for any  $\xi\in\Mcal_1(\Lcal_W \times\Scal_W)$ with finite expected total particle number $\langle \xi,\mathfrak N^{\ssup\ell}_W\rangle$, and for any $S\in\N$,
\begin{equation}
J_W(\xi\circ\Pi^{-1})\leq J_W(\xi)+\eps^{\ssup\Lcal}_{M,W,L}+\Big(\frac 1K+\eps_{M,\vartheta,S,T}^{\ssup\Scal}\Big)\frac 1{|W|}\langle \xi,\mathfrak N^{\ssup\ell}_W\rangle.
\end{equation}
\end{lemma}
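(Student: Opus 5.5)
We construct $\Pi$ as a composition of deletions, with a separate step for loops and for shreds. On the loop side we delete every loop that has a leg of spread $\ge M$ or length $>L$. In each small box $z+W_{\mathfrak r}$ we additionally delete surplus $l$-loops beyond the first $K$ in some fixed order. On the shred side we cannot simply delete a shred, because that would change the boundary-shred configuration $\partial\Pi_W^{\ssup\Scal}$. Instead we replace each offending shred configuration by a fixed admissible one with the same boundary triples $(x_i,l_i,y_i)$. Offending here means: some leg has spread $\ge M$, some $S$-block displacement exceeds $\vartheta\sqrt S$, or some box $z+W_{\mathfrak r}$ holds more than $T$ particles. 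Concretely, we choose a measurable selection $\mu\mapsto\varpi^*(\mu)$ in $A^{\ssup\Scal}_{W;M,\mathfrak r,T}\cap\Scal_W^{\ssup{M,\vartheta,S}}$ with $\partial\Pi_W^{\ssup\Scal}(\varpi^*(\mu))=\mu$, wherever this set is nonempty. Where it is empty we enlarge the parameters, or restrict to $\mu$ in a set whose complement carries small mass. This keeps $\partial\Pi_W^{\ssup\Scal}(\xi\circ\Pi^{-1})=\partial\Pi_W^{\ssup\Scal}(\xi)$, so the shred reference measure $\psi\otimes{\tt K}_W$ stays the same.

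We then use the decomposition \eqref{jointentropy}, which splits the entropy into a loop part and a conditional shred part, and handle each separately.

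For the loop part we use Lemma~\ref{lem-entrident}. Deleting loops of length $>L$ and loops with a large-spread leg changes the expected numbers $m_k$. It also shifts the reference Poisson intensities $q_{W,k}$ by an amount whose total mass per volume, $\sum_{k>L}q_k+\sum_k \frac1k\mu^{\ssup k}_{0,0}(\text{some spread}\ge M)$, vanishes as $M\to\infty$ and then $L\to\infty$. Boundary effects in $q_{W,k}$ disappear as $W\uparrow\R^d$. These contributions form $\eps^{\ssup\Lcal}_{M,W,L}$. The remaining terms do not increase under deletion, by the data-processing inequality for relative entropy of image measures (cf.\ \cite[Prop.~15.5]{G88}). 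For that inequality to apply, the reference measure must map into itself, and we arrange this by applying the same deletion map to $\Pi_W^{\ssup\Lcal}({\tt Q})$ and comparing it with the restricted reference measure. The truncation of numbers to $K$ per box is a map that merges at most a fraction of the configurations of order $1/K$ of the particle count. We bound its cost by $\frac1K\frac1{|W|}\langle\xi,\mathfrak N^{\ssup\ell}_W\rangle$, comparing the Poisson weights of the kept loops with those of the deleted ones.

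For the shred part we condition on $\mu$ and use Lemma~\ref{lem-entridentshreds}. Under $\Pi$, the conditional law $\xi(\mu,\cdot)$ becomes $\xi(\mu,\cdot)$ restricted to the good set $G_\mu$, plus the mass of $G_\mu^{\rm c}$ moved onto the point $\varpi^*(\mu)$. By convexity, the resulting entropy against ${\tt K}_W(\mu,\cdot)$ is at most $H(\xi(\mu,\cdot)\mid{\tt K}_W(\mu,\cdot))+\xi(\mu,G^{\rm c}_\mu)\log\frac1{{\tt K}_W(\mu,\{\varpi^*\})}$.

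This is the main obstacle: a point mass has infinite entropy against ${\tt K}_W$. We therefore replace $\varpi^*(\mu)$ by ${\tt K}_W(\mu,\cdot\mid G_\mu)$. The cost is then $\xi(\mu,G_\mu^{\rm c})\bigl(-\log{\tt K}_W(\mu,G_\mu)\bigr)$. We bound $-\log{\tt K}_W(\mu,G_\mu)$ by a product over shreds, giving $\sum_i -\log q^{\ssup{l_i,W}}_{x_i,y_i}(\text{good})$. Each term is $O(l_i)$ times a quantity that vanishes as $M,\vartheta\to\infty$ for fixed $S$. This uses Gaussian tails of the spreads and of the block displacements, with the block displacement controlled as in Lemma~\ref{lem-lowerboundlogpint}. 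The particle bound $T$ per box is handled similarly through a Chebyshev estimate. Summing gives a cost of at most $\eps^{\ssup\Scal}_{M,\vartheta,S,T}\sum_i l_i$. Integrated against $\psi$, this is $\eps^{\ssup\Scal}\langle\xi,\mathfrak N^{\ssup{\ell,\Scal}}_W\rangle$. The delicate point is uniformity in the bridge endpoints. For very long or very stretched shreds, the probability of staying in the good set is smaller, and we expect to need the $\vartheta$-constraint to be read relative to the bridge drift $|x_i-y_i|/l_i$.
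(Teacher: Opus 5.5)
Your loop part follows the paper's route: delete, use $H(P\circ f^{-1}\mid Q\circ f^{-1})\le H(P\mid Q)$, then bound the log-density of ${\tt Q}_W\circ(\Pi^{\ssup\Lcal})^{-1}$ with respect to ${\tt Q}_W$. Keeping the first $K$ loops in a fixed order works as well as the paper's uniform choice.

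\textbf{The shred part has a genuine gap.} It comes from your decision to keep the boundary triples $\mu$ fixed. For many $\mu\in\Tcal_W$ the good set $G_\mu$ is empty:
\begin{itemize}
\item the entry sites $x_i$ are particles of the shreds, so more than $T$ of them in one box $z+W_{\mathfrak r}$ already violates the cap in $A^{\ssup\Scal}_{W;M,\mathfrak r,T}$; the same holds if $\sum_i l_i$ exceeds $T$ times the number of such boxes;
\item if $|x_i-y_i|\ge l_iM$, no shred with these endpoints has all leg spreads $<M$;
\item the block condition similarly caps $|x_i-y_i|$ at order $\vartheta l_i/\sqrt S$.
\end{itemize}
On such $\mu$ your map is undefined and ${\tt K}_W(\mu,G_\mu)=0$. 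Neither of your remedies is admissible. The parameters are fixed before $\xi$, the inequality must hold for every $\xi$, and $\partial\Pi_W^{\ssup\Scal}(\xi)$ may be concentrated on such $\mu$, so there is no small-mass set to discard.

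Even when $G_\mu\neq\emptyset$, your cost $-\log{\tt K}_W(\mu,G_\mu)$ is not $\le\eps\sum_i l_i$ uniformly. The $T$-cap is a collective, non-product constraint. Markov/Chebyshev plus a union bound over boxes only give ${\tt K}_W(\mu,G_\mu^{\rm c})\le\frac1T\sum_i l_i$, which is useless once $\sum_i l_i\ge T$, i.e.\ for every $\mu$ of macroscopic shred density. Stretched bridges also have exponentially small good probability. Reading the $\vartheta$-constraint relative to the drift would change the target set. That set is prescribed by the statement, and Lemma~\ref{lem-lowerboundlogpint} uses it in its uncentred form $|x_i-y_i|\le\vartheta l_i/\sqrt S$.

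\textbf{The missing idea} is that deleting shreds is harmless, contrary to your opening remark, so the shreds can be handled exactly like the loops. Write $\partial$ for $\partial\Pi_W^{\ssup\Scal}$, $\xi'=\xi\circ\Pi^{-1}$, $\nu={\tt Q}_W\otimes[\partial\xi\otimes{\tt K}_W]$ and $\nu'={\tt Q}_W\otimes[\partial\xi'\otimes{\tt K}_W]$. Then
\[
H(\xi'\mid\nu')=H(\xi'\mid\nu\circ\Pi^{-1})+\int\log\frac{\d(\nu\circ\Pi^{-1})}{\d\nu}\,\d\xi'-H(\partial\xi'\mid\partial\xi).
\]
The first term is at most $H(\xi\mid\nu)$. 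The last term has a favourable sign and is dropped. The middle term involves only the image of the product reference under deletion. Its density is bounded by $1+x$, where $x$ collects ${\tt K}_W$-probabilities of the deleted (bad) pieces re-attached to a kept configuration.

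Using $\log(1+x)\le x$, you need only \emph{upper} bounds on bad events: a leg spread $\ge M$, a block displacement $>\vartheta\sqrt S$, or overfilling a box. This is exactly where Markov's inequality works, and it gives the linear bound $\eps^{\ssup\Scal}_{M,\vartheta,S,T}\langle\xi,\mathfrak N_W^{\ssup{\ell,\Scal}}\rangle$. Your resampling instead needs \emph{lower} bounds on good events inside a logarithm, which fail or are infinite. Accordingly, the paper's $\Pi^{\ssup\Scal}$ first deletes all shreds outside $\Ccal^{\ssup{M,\vartheta,S}}$. It then deletes further shreds box by box until each $z+W_{\mathfrak r}$ holds at most $T$ particles.
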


\begin{proof}
For the ease of notation,  we write $\xi^{\ssup\Lcal}$ instead of $\Pi_W^{\ssup\Lcal}(\xi)$ and  $\xi^{\ssup\Scal}$ instead of $\Pi_W^{\ssup\Scal}(\xi)$ and $\partial \xi^{\ssup\Scal}$ instead of $\partial \Pi_W^{\ssup\Scal}(\xi)$. Furthermore, we write ${\tt Q}_W$ instead of $\Pi_W^{\ssup\Lcal}({\tt Q})$. Furthermore, we drop the indices $W$, $\mathfrak r, M, L,K$, and $T,\vartheta,S$ and write $\Pi^{ \ssup\Lcal}$ and $\Pi^{\ssup\Scal}$ for the respective  projections on $A^{ \ssup\Lcal}$ and $A^{\ssup\Scal}\cap \Scal_W^{\ssup{M,\vartheta,S }}$ (to be defined in this proof). Then we put $\Pi=\Pi^{\ssup\Lcal}\otimes \Pi^{\ssup\Scal}$. We are going to use the decomposition  \eqref{jointentropy} of the entropies into a loop part and a shred part.

Note that, on any measurable space $\Omega$,
\begin{equation}\label{entropyofimages}
H(P\circ f^{-1}\mid Q\circ f^{-1})\leq H(P\mid Q),\qquad  P,Q \in\Mcal_1(\Omega), f\colon\Omega\to \Omega\mbox{ mb.},
\end{equation}
since the left-hand side is equal to the relative entropy of $P$ with respect to $Q$ on the $\sigma$-field generated by $f$, and relative entropies are increasing in the $\sigma$-field \cite[Prop.~15.5(c)]{G88}.

Therefore, we have
\begin{equation}\label{entropyofimage}
\begin{aligned}
H_{\Lcal_W\times\Scal_W}&\big(\xi\circ\Pi^{-1}\mid {\tt Q}_W\otimes [\partial (\xi\circ\Pi^{-1})\otimes {\tt K}_W]\big)\\
&=H_{\Lcal_W\times\Scal_W}\big(\xi\circ\Pi^{-1}\mid [{\tt Q}_W\otimes [\partial \xi\otimes {\tt K}_W]]\circ\Pi^{-1}\big)
+\int \d \xi\circ \Pi^{-1}\,\log\frac{\d [{\tt Q}_W\otimes [\partial \xi\otimes {\tt K}_W]]\circ\Pi^{-1}}{\d {\tt Q}_W\otimes [\partial (\xi\circ\Pi^{-1})\otimes {\tt K}_W]}\\
& \leq H_{\Lcal_W\times\Scal_W}\big(\xi\mid {\tt Q}_W\otimes [\partial \xi\otimes {\tt K}_W]\big)
+\int_{\Lcal_W} \d \xi^{\ssup\Lcal}\,\log\frac{\d [{\tt Q}_W\circ (\Pi^{\ssup\Lcal})^{-1}] }{\d {\tt Q}_W}\circ\Pi^{\ssup\Lcal}\\
&\quad +\int_{\Scal_W}\d \xi^{\ssup\Scal}\,\log\frac{\d [\partial\xi\otimes {\tt K}_W]\circ (\Pi^{ \ssup\Scal})^{-1}  }{\d [\partial\xi\otimes {\tt K}_W] }\circ \Pi^{\ssup\Scal}
-\int_{\Scal_W}\d \xi^{\ssup\Scal}\,\log\frac{\d [\partial(\xi^{\ssup\Scal}\circ (\Pi^{\ssup\Scal})^{-1})\otimes {\tt K}_W]}{\d \partial\xi^{\ssup\Scal}\otimes {\tt K}_W}\circ \Pi^{\ssup\Scal}.
\end{aligned}
\end{equation}
Note that, in the last term on the right-hand side of \eqref{entropyofimage}, we can drop the ${\tt K}_W$-terms and see that this is a relative entropy, which is nonnegative; hence we can drop it with ``$\leq$''.

Now we handle the second term  on the right-hand side of \eqref{entropyofimage}. We use the projection $\Pi=\Pi^{\ssup\Lcal}_{W;\mathfrak r,M,L,K}\colon \Lcal_W\to A^{\ssup\Lcal}_{W;\mathfrak r,M,L,K}$ defined, for all $\omega= \sum_{x\in\zeta}\delta_{(x,f_x)}$, by 
$$
\begin{aligned}
\Pi(\omega)&=\Pi^{\ssup\Lcal}_{W;\mathfrak r,M,L,K}(\omega)
=\sum_{z\in W\cap 2\mathfrak r \Z^d}\sum_{l\in [L]}\Big[\1\{|\zeta^{\ssup{M,l}}_z|\leq K\}\sum_{x\in\zeta_z^{\ssup{M,l}}}\delta_{(x,f_x)}\\
&\quad+\1\{|\zeta^{\ssup{M,l}}_z|> K\}\frac 1{\binom{|\zeta^{\ssup{M,l}}_z|}{K}}\sum_{B \subset \zeta^{\ssup{M,l}}_z\colon |B|=K}\sum_{x\in B}\delta_{(x,f_x)}\Big],
\end{aligned}
$$
where $\zeta^{\ssup{M,l}}_z=\{x\in\zeta\cap (z+W_{\mathfrak r})\colon \ell(f_x)=l , f_x\in \Ccal^{\ssup{M,\circlearrowleft}}\}$, where $\Ccal^{\ssup{M,\circlearrowleft}}$ denotes the set of all loops all of whose legs have a spread $\leq M$. In other words, $\Pi$ first removes all loops that are longer than $L$ or have a leg with spread $>M$ and then removes in each subbox $z+W_{\mathfrak r}$ and for any $l\in[L]$, if necessary, a uniformly distributed  choice of loops such that no more than $K$ loops of length $=l$ survive. This algorithm can be conceived as being carried out independently in each of these subboxes.

Recall that $\Lcal_W^{\ssup M}$ denotes the set of loop configurations in $\Lcal_W$ (i.e., with all particles in $W$) that have only legs in $\Ccal^{\ssup M}$, i.e., with all the spreads $\leq M$. With an additional super-index $l$ we restrict to configurations with all loop lengths $=l$, and the super-index $\leq L$ means the union over $l\in [L]=\{1,\dots,L\}$. With the index $W;\mathfrak r,W'$ for $W'\subset W$ we mean those configurations with particles in $W$ that have initial site in $W'$. All this notation also applies to measures ${\tt Q}$.

For $\omega$ in the image of $\Pi$, the pre-image $\Pi^{-1}(\{\omega\})$ is equal to the set of configurations of the form $\sum_{z\in W\cap 2\mathfrak r \Z^d}(\omega_z+\omega_z'+\sum_{l\in[L]}\omega_{z,l}'')$, where $\omega_z=\sum_{x\in \zeta\cap(z+W_{\mathfrak r})}\delta_{(x,f_x)}$ (the projection of $\omega$ to that subbox), and $\omega'_z\in \Lcal_{W;\mathfrak r,z+W_{\mathfrak r}}\setminus \Lcal^{\ssup{M,\leq L}}_{W;\mathfrak r,z+W_{\mathfrak r}}$ and $\omega_{z,l}''=0$ (empty configuration) if $|\zeta_z^{\ssup{M,l}}|<K$ and $\omega_{z,l}''\in \Lcal^{\ssup{M,l}}_{W;\mathfrak r,z+W_{\mathfrak r}}$ otherwise.  Note that all the configurations $\omega_z'$ and $\omega_{z,l}''$ are independent over $z$ and $l$ under ${\tt Q}_W$. For fixed $z$ and $l$, we split ${\tt Q}^{\ssup{l}}_{W;\mathfrak r,z+W_{\mathfrak r}}$ into the PPP ${\tt Q}_{W;\mathfrak r,z+W_{\mathfrak r}}^{\ssup{M,l}}$ on $\Lcal_{W;\mathfrak r,z+W_{\mathfrak r}}^{\ssup {M,l}}$  and $\Lcal_{W;\mathfrak r,z+W_{\mathfrak r}}^{\ssup {\infty,l}}\setminus \Lcal_{W;\mathfrak r,z+W_{\mathfrak r}}^{\ssup {M,l}}$. The first one has an intensity measure with total mass
$$
q_z^{\ssup{M,l}}= \frac 1l \int_{z+W_{\mathfrak r}}\d x\,  \mu_{x,x}^{\ssup{M,l,W,{\rm par}}}(\Ccal^{\ssup\circlearrowleft}),
$$
 (recall  \eqref{nudef}), where $\mu_{x,x}^{\ssup{M,l,W,{\rm par}}}$ is the restriction of $\mu_{x,x}^{\ssup{l,W,{\rm par}}}$ to loops configurations with loops in the set $\Ccal^{\ssup{M,\circlearrowleft}}=\{f\in \Ccal^{\ssup\circlearrowleft}\colon \forall i\colon f_{x,i}\in \Ccal_M\}$. These two PPPs are independent.  Therefore, the density $f$ of ${\tt Q}_W\circ\Pi^{-1}$ with respect to ${\tt Q}_W$ is the product 
\begin{equation}\label{productdensity}
f(\omega)=\prod_{z\in W\cap 2\mathfrak r \Z^d}\Big[\e^{\sum_{l=L+1}^\infty q_{z}^{\ssup l}}\prod_{l\in[L]}f_{z,l}( \omega_z^{\ssup l})\Big]
=\e^{\nu^{\ssup{W,{\rm par}}}(\Ccal_{>L}^{\ssup{\circlearrowleft}})}\prod_{z\in W\cap 2\mathfrak r \Z^d}\prod_{l\in[L]}f_{z,l}( \omega_z^{\ssup l}),
\end{equation}
with a density $f_{z,l}$ of ${\tt Q}_{W;\mathfrak r,z+W_{\mathfrak r}}^{\ssup{l}}\circ \Pi_{z+W_{\mathfrak r}}^{-1}$ with respect to ${\tt Q}_{W;\mathfrak r,z+W_{\mathfrak r}}^{\ssup{l}}$, where ${\tt Q}_{W;\mathfrak r,z+W_{\mathfrak r}}$ denotes the PPP on the set $\Lcal_{W;\mathfrak r,z+W_{\mathfrak r}}$ of loop configurations with starting sites in  $z+W_{\mathfrak r}$  and all the particles contained in $W$. The $q$-term expresses that, for $l>L$, the pre-image of the empty configuration is equal to $\Lcal^{\ssup l}_{W;\mathfrak r,z+W_{\mathfrak r}}$ and the empty configuration has probability $\e^{-q_{z}^{\ssup l}}$ under ${\tt Q}^{\ssup l}_{W;\mathfrak r,z+W_{\mathfrak r}}$. Recall that $\nu^{\ssup{W,{\rm par}}}$ denotes the intensity measure of the PPP on $W$, and we wrote $\Ccal_{>L}^{\ssup\circlearrowleft}$ for the set of loops with lengths $>L$.

Hence, the only difference between the distribution of a given $\omega_z^{\ssup l}\in \Lcal_{W;\mathfrak r,z+W_{\mathfrak r}}^{\ssup l}\setminus \Lcal_{W;\mathfrak r,z+W_{\mathfrak r}}^{\ssup {M,l}}$ under ${\tt Q}^{\ssup{l}}_{W;\mathfrak r,z+W_{\mathfrak r}}\circ\Pi^{-1}$ and under ${\tt Q}^{\ssup{l}}_{W;\mathfrak r,z+W_{\mathfrak r}}$ is the change of the Poisson parameter from $q_z^{\ssup{M,l}}$ to $q_z^{\ssup{l}}=q_z^{\ssup{\infty,l}}$. Therefore, for $ \omega_z$ with $|\zeta_z^{\ssup{M,l}}|<K$, we have
$$
f_{z,l}(\omega_z^{\ssup l})=\frac{\d {\tt Q}_{W;\mathfrak r,z+W_{\mathfrak r}}^{\ssup{M,l}}}{\d {\tt Q}^{\ssup{l}}_{W;\mathfrak r,z+W_{\mathfrak r}}}(\omega_z^{\ssup l})
=\e^{q_z^{\ssup{l}}-q_z^{\ssup{M,l}}}\Big(\frac{q_z^{\ssup{M,l}}}{q_z^{\ssup{l}}}\Big)^{|\zeta_{z,l}|},
$$
where $\zeta_{z,l}$ is the set of initial sites of the loops in $z+ W_{\mathfrak r}$ with length $l$.

For $ \omega_z^{\ssup l}$ with $|\zeta_z^{\ssup{M,l}}|=K$, in the pre-image there may be additional loops in $\Lcal_{W;\mathfrak r,z+W_\mathfrak r}^{\ssup{M,l}}$ with uniformly and independently distributed locations, such that an additional Poisson density comes in, but also a combinatorial term that assigns the $m$ points to the additional loops and the other $K$ to the existing loop:
$$
\begin{aligned}
f_{z,l}(\omega_z^{\ssup l})&=\frac{\d {\tt Q}_{W;\mathfrak r,z+W_{\mathfrak r}}^{\ssup{M,l}}}{\d {\tt Q}^{\ssup{l}}_{W;\mathfrak r,z+W_{\mathfrak r}}}(\omega_z^{\ssup l})
\sum_{m\in\N}\frac{\Poi_{q_z^{\ssup{M,l}}}(K+m)}{\Poi_{q_z^{\ssup{M,l}}}(K)\Poi_{q_z^{\ssup{M,l}}}(m)}{\tt Q}_{W;\mathfrak r,z+W_{\mathfrak r}}(\{\omega_z'\colon |\zeta_{z,l}|=m\})\\
&= \e^{q_z^{\ssup{l}}-q_z^{\ssup{M,l}}}\Big(\frac{q_z^{\ssup{M,l}}}{q_z^{\ssup{l}}}\Big)^{|\zeta_{z,l}|}\e^{q_z^{\ssup{M,l}}}
= \e^{q_z^{\ssup{l}}}\Big(\frac{q_z^{\ssup{M,l}}}{q_z^{\ssup{l}}}\Big)^{|\zeta_{z,l}|}.
\end{aligned}
$$

Recall that we picked $\mathfrak r$ such that $W$ is the disjoint (up to boundaries) union of the subboxes $z+ W_{\mathfrak  r}$ with $z\in W\cap 2\mathfrak r\Z^d$. Therefore $\sum_z \sum_{l\in\N}  q_z^{\ssup l}=\nu^{\ssup {W,{\rm par}}}(\Ccal^{\ssup\circlearrowleft})$ and $\sum_z \sum_{l\in\N}  q_z^{\ssup {M,l}}=\nu^{\ssup {W,{\rm par}}}(\Ccal^{\ssup{M,\circlearrowleft}})$, the total mass of the reference measure of the set of all loop configurations with all spreads of the legs $\leq M$. Therefore, we can summarize
$$
\begin{aligned}
f(\omega)&=\e^{\nu^{\ssup{W,{\rm par}}}(\Ccal_{>L}^{\ssup{\circlearrowleft}})} \e^{\nu^{\ssup {W,{\rm par}}}(\Ccal_{\leq L}^{\ssup\circlearrowleft})-\nu^{\ssup {W,{\rm par}}}(\Ccal_{\leq L}^{\ssup{M,\circlearrowleft}})}\prod_{z\in W\cap 2\mathfrak r \Z^d}\prod_{l\in[L]}\Big[\Big(\frac{q_z^{\ssup{M,l}}}{q_z^{\ssup{l}}}\Big)^{|\zeta_{z,l}|}\e^{q_z^{\ssup{M,l}}\1\{|\zeta_{z,l}|=K\}}\Big]\\
&\leq \e^{\eps_{M,W,L}^{\ssup\Lcal}|W|}\exp\Big\{\sum_{z\in W\cap 2\mathfrak r \Z^d}\sum_{l\in [L]}q_z^{\ssup{M,l}}\1\{|\zeta_{z,l}|=K\}\Big\}\\
&\leq \e^{\eps_{M,W,L}^{\ssup\Lcal}|W|}\exp\Big\{\frac 1K\sum_{z\in W\cap 2\mathfrak r \Z^d}\sum_{l\in [L]}q_z^{\ssup{M,l}}|\zeta_{z,l}|\Big\}\\
&\leq \e^{\eps_{M,W,L}^{\ssup\Lcal}|W|}\e^{\frac {|\zeta|}K},
\end{aligned}
$$
where we put $\eps_{M,W,L}^{\ssup\Lcal}=\frac 1{|W|}[\nu^{\ssup{W,{\rm par}}}(\Ccal_{>L}^{\ssup{\circlearrowleft}})+\nu^{\ssup {\rm par}}(W\times [\Ccal_{\leq L}^{\ssup\circlearrowleft}\setminus \Ccal_{\leq L}^{\ssup{M,\circlearrowleft}}])]$.

Hence, the second term on the right-hand side of \eqref{entropyofimage} can be estimated as follows.
$$
\begin{aligned}
\int&\d \xi^{\ssup\Lcal} \,\log \Big(\frac{\d ( {\tt Q}^{\ssup\Lcal} \circ(\Pi^{\ssup\Lcal})^{-1})}{\d  {\tt Q}^{\ssup\Lcal} }\circ\Pi^{\ssup\Lcal}\Big)
=\int\d \xi^{\ssup\Lcal} \,\log f(\Pi^{\ssup\Lcal}(\omega)) \leq |W|\eps_{M,W,L}^{\ssup\Lcal}+\frac 1K\int\d \xi^{\ssup\Lcal}\,|\zeta|\\
&\leq |W|\eps_{M,W,L}^{\ssup\Lcal}+\frac 1K\langle \xi,\mathfrak N^{\ssup\ell}_W\rangle.
\end{aligned}
$$
We leave it to the reader to prove that $\lim_{L\to\infty}\lim_{M\to\infty}\limsup_{W\uparrow \R^d}\eps_{M,W,L}^{\ssup\Lcal}=0$.

Now we turn to estimating the third term on the right-hand side of \eqref{entropyofimage}. We define $\Pi^{\ssup\Scal}\colon \Scal_W\to A_{W;\mathfrak r,M,T}^{\ssup\Scal}\cap \Scal_W^{\ssup{M,\vartheta,S}}$ by describing in words the removal procedure as follows. From a given shred configuration $\varpi=\sum_{g\in\Gamma}\delta_g$, first all shreds are removed that do not lie in 
$$
\Ccal^{\ssup {M,\vartheta,S}}=\bigcup_{k\in\N}\big\{f\in \Ccal_k\colon f_i\in\Ccal^{\ssup M}, |f(i\beta)-f((i+S)\beta)|\leq \vartheta \sqrt S\ \forall i\big\}.
$$ 
Then, with a fixed enumeration $(z_1,z_2,\dots,z_{N})$ of $W\cap 2\mathfrak r\Z^d$, we successively for $i=1,\dots, N$ remove shreds from the configuration such that the particle number in $z_i+W_{\mathfrak r}$ is not larger than $T$. We write $\Scal_W^{\neg \ssup {M,\vartheta,S}}$ for the set of all configurations of $W$-shreds that do not lie in $\Ccal^{\ssup {M,\vartheta,S}}$, i.e., the set of $\sum_{g\in\Gamma}\delta_g$  with $g\in\Ccal\setminus \Ccal^{\ssup {M,\vartheta,S}}$ for all $g\in\Gamma$.

For simplicity, we write $\Pi$ instead of $\Pi^{\ssup\Scal}$. For a given $\varpi$ in the image of $\Pi$, the pre-image $\Pi^{-1}(\{\varpi\})$ is contained in the set of all $W$-shred configurations $\varpi+\varpi'+\varpi''$ with $\varpi'\in \Scal_W^{\neg \ssup {M,\vartheta,S}}$ and $\varpi''\in \Scal_W^{\ssup {M,\vartheta,S} }$ being either the empty configuration or satisfying $\widetilde {\mathfrak N}_{z+W_{\mathfrak r}}(\varpi''+\varpi)>T$ for some $z\in W\cap 2\mathfrak r\Z^d$. Denote $ \Scal^{\ssup {M,\vartheta,S}}_{z,\varpi}=\{\varpi''\in \Scal_W^{\ssup {M,\vartheta,S} }\colon \widetilde {\mathfrak N}_{z+W_{\mathfrak r}}(\varpi'')> T- \widetilde {\mathfrak N}_{z+W_{\mathfrak r}}(\varpi)\}$, then 
$$
\Pi^{-1}(\{\varpi\})\subset \{\varpi\}\cup\Big(\{\varpi\}+(\Scal_W^{\neg \ssup{M,\vartheta,S}}\setminus\{0\})+\bigcup_{z\in W\cap 2\mathfrak r\Z^d}\Scal^{\ssup {M,\vartheta,S}}_{z,\varpi}\Big).
$$
Then, writing $\partial\xi$ instead of $\partial\Pi_W^{\ssup\Scal}(\xi)$ and $\mu$ for $\partial\varpi$ and ${\tt K}$ instead of ${\tt K}_W$, we have, using the product structure of ${\tt K}$,
$$
\begin{aligned}
(\partial \xi\otimes {\tt K})\circ \Pi^{-1}(\d \varpi)
&\leq \partial\xi(\d\mu){\tt K}(\mu,\d\varpi)\Big[1+\\
&\qquad\int_{\Tcal_W} \frac{\xi(\d\mu+\d\mu')}{\partial\xi(\d\mu)}\1\{|\mu'|\geq 1\} \int_{(\Scal_W^{\neg \ssup {M,\vartheta,S}}\setminus\{0\})+\bigcup_{z\in W\cap 2\mathfrak r\Z^d}\Scal^{\ssup {M,\vartheta,S}}_{z,\varpi}  }{\tt K}(\mu',\d\varpi')\Big].
\end{aligned}
$$
Hence, the third term on the right-hand side of \eqref{entropyofimage} can be estimated as follows (using $\log(1+x)\leq x$):
\begin{equation}\label{RHSScal}
\begin{aligned}
\int_{\Scal_W}\d \xi^{\ssup\Scal}\,&\log\frac{\d [\partial\xi\otimes {\tt K}_W]\circ (\Pi^{ \ssup\Scal})^{-1}  }{\d [\partial\xi\otimes {\tt K}_W] }\circ \Pi^{\ssup\Scal}\\
& \leq \int_{\Scal_W} \xi(\d\varpi)\int_{\Tcal_W} \frac{\partial\xi(\d\mu+\d\mu')}{\partial\xi(\d\mu)}\1\{|\mu'|\geq 1\}
{\tt K}\Big(\mu',(\Scal_W^{\neg \ssup{M,\vartheta,S}}\setminus\{0\})+\bigcup_{z\in W\cap 2\mathfrak r\Z^d}\Scal^{\ssup {M,\vartheta,S}}_{z,\Pi(\varpi)}\Big).
\end{aligned}
\end{equation}
Since $\Scal_W^{\neg \ssup {M,\vartheta,S}}$  and $\bigcup_{z\in W\cap 2\mathfrak r\Z^d}\Scal^{\ssup {M,\vartheta,S}}_{z,\Pi(\varpi)}$ are disjoint sets of $W$-shreds, its elements can be written as the sum over configurations of these two sets. Depending on these two, $\mu'$ can be split into the sum of $\widetilde \mu=\sum_{i\in \widetilde I}\delta_{(x_i,l_i,y_i)}$ and $\mu'\setminus \widetilde \mu=\sum_{i\in I'}\delta_{(x_i,l_i,y_i)}$, and  the last term in the last display  decomposes into the according product  of ${\tt K}(\widetilde \mu,\Scal_W^{\neg \ssup {M,\vartheta,S}}\setminus\{0\})$ and ${\tt K}(\mu'\setminus \widetilde \mu,\bigcup_{z\in W\cap 2\mathfrak r\Z^d}\Scal^{\ssup {M,\vartheta,S}}_{z,\Pi(\varpi)})$.  We estimate the second term as follows.
$$
\begin{aligned}
{\tt K}\Big(\mu'\setminus \widetilde \mu,\bigcup_{z\in W\cap 2\mathfrak r\Z^d}\Scal^{\ssup {M,\vartheta,S}}_{{z,\Pi(\varpi)}}(\Pi(\varpi))\Big)
&\leq {\tt K}\Big(\mu'\setminus \widetilde \mu,\bigcup_{z\in W\cap 2\mathfrak r\Z^d}\{\varpi'\colon \widetilde{\mathfrak N}_{z+W_{\mathfrak r}}(\varpi')>T-\widetilde{\mathfrak N}_{z+W_{\mathfrak r}}(\Pi(\varpi)) \}\Big)\\
&\leq \frac 1T \sum_{z\in W\cap 2\mathfrak r\Z^d}\int_{\Scal_W}{\tt K}(\mu'\setminus \widetilde \mu,\d\varpi')\, \widetilde{\mathfrak N}_{z+W_{\mathfrak r}}(\varpi'+\Pi(\varpi))\\
&\leq \frac 1T \Big[\mathfrak N_W^{\ssup{\ell,\Scal}}(\varpi)+\sum_{i\in I'\setminus \widetilde I} l_i\Big],
\end{aligned}
$$
where in the last step we used that $\Pi(\varpi)\leq \varpi$ and that $W$ is the (disjoint, up to boundaries) union of the boxes $z+W_{\mathfrak r}$, $z\in W+2\mathfrak r\Z^d$.

For  estimating the other, we use that the probability of the set of single $W$-shreds of length $k$, namely $ \Ccal_k^{\ssup {M,\vartheta,S}}$ under ${\tt K}(\delta_{(x,l,y)},\cdot)$ for any $(x,l,y)$  has a probability $\geq (1-\eps_M)^{k}(1-\eps_{\vartheta,S})^{k/S}$ for some $\eps_M$  and $\eps_{\vartheta,S}$ that vanish if $M\to\infty$ respectively $\vartheta\to\infty$ (both do not depend on $R$). The reason is that all spreads of the legs and all the differences $g(iS\beta)-g((i-1)S\beta)$ are independent. Since these $W$-shreds are independent under ${\tt K}(\widetilde \mu,\cdot)$,  we can estimate ${\tt K}(\widetilde \mu,\Scal_W^{\neg \ssup {M,\vartheta,S}}\setminus\{0\})\leq 1-\prod_{i\in \widetilde I}(1-\eps_M)^{l_i}(1-\eps_{\vartheta,S})^{l_i/S}\leq C(\eps_M+\eps_{\vartheta,S})\sum_{i\in \widetilde I} l_i$. 

We estimate now the product against the maximum of these two upper bounds (also depending on whether $\widetilde\mu=0$ or $\mu'\setminus\widetilde \mu=0$) and the other one against one, and  for simplicity the entire term against the sum of the two upper bounds. Substituting this above, we see that the right-hand side of \eqref{RHSScal} is not larger than
$$
\begin{aligned}
\int_{\Scal_W}\xi(\d \varpi)&\int_{\Tcal_W}\frac{\partial\xi(\d\mu+\d\mu')}{\partial\xi(\d\mu)}\,\Big[C(\eps_M+\eps_{\vartheta,S})\sum_{i\in \widetilde I} l_i+\frac 1T \mathfrak N_W^{\ssup{\ell,\Scal}}(\varpi)+\frac 1T \sum_{i\in I'}\ l_i\Big]\leq \eps_{M,\vartheta,S,T}^{\ssup{\Scal}}\langle \xi,\mathfrak N_W^{\ssup{\ell,\Scal}}\rangle,
\end{aligned}
$$
with an obvious choice of $\eps_{M,\vartheta,S,T}^{\ssup{\Scal}}$ such that $\lim_{M,\vartheta,T\to\infty} \eps_{M,\vartheta,S,T}^{\ssup{\Scal}}=0$ for any $S$.
\end{proof}

\subsection{Ergodic approximation}\label{sec-ergappr}

In the main result of this section, Lemma~\ref{lem-ergappr}, we prove a technical, but crucial approximation that will be used in several important other proofs. The main message is that an arbitrary shift-invariant loop-interlacement distribution can be replaced by an ergodic measure that has only loops and interlacements with additional nice properties, without changing the crucial properties (particle densities in loops and in interlacements, interaction energy, entropy) much. More precisely, in the crucial variational formula $\chi(\rho_1,\rho_2)$ in \eqref{chidefneu}, we show that the infimum can be restricted, with only a small error, to the set of ergodic random configurations such that their projection on $W=[-R,R]^d$ is concentrated on the event $A_{W;\mathfrak r,M,L,K,T}$ defined in \eqref{ALcaldef}, \eqref{AScaldef} and \eqref{Adef} and on $\Scal_W^{\ssup{M,\vartheta,S}}$ (defined  in \eqref{SMvarthetaSdef}) if $M,L,K,T,\vartheta$ are picked large enough, uniformly in $S\in \N$. Here we will use Lemma~\ref{lem-EntProjImag} to handle the entropy part of the variational formula.

Lemma~\ref{lem-ergappr} is crucial for applying several preliminary results that rely on such special properties of the random configurations, since these imply for example that the error term in the large-deviations lower bound in Lemma~\ref{lem-lowerboundlogpint} can be controlled and that the interaction energy per unit volume is bounded. More explicitly, Lemma~\ref{lem-ergappr} will be used in the proof of the lower bound of Theorem~\ref{thm-freeenergy} in Section~\ref{sec-finishlowerbound} (it is what we need in order to go on from  \eqref{hatZNlimitlowbound3})
and in the proof of Theorem~\ref{thm-specrelent} in Section~\ref{sec-limitingentropy} and was already used in the proof of Lemma~\ref{lem-propertieschi} in Section~\ref{sec-lemmaproof}.

We follow a similar ansatz as in the proof of Lemma~\ref{lem-noshreds}. Indeed, for a given loop/shred configuration in a given large box, we do not change anything in the configuration in the box, but we rewire all the ends of the shreds (the loop shreds and the interlacement shreds) in such a way that they are now connected with certain regularly placed sites at the boundary of a slightly larger box. Doing this independently in all these slightly larger subboxes (which decompose $\R^d$), they connect up with the new shreds in the neighbouring subbox. This leads to a global configuration whose crucial three quantities (entropy, interaction energy and particle densities) are hardly changed and which has much better independence properties, in particular, is ergodic.

\begin{lemma}[Ergodic approximation]\label{lem-ergappr}
Fix  $\rho_1,\rho_2\in(0,\infty)$ and fix $P\in \Mcal_1^{\ssup{\rm s}}(\Lcal\times\Scal)$ with finite $ \h^{\ssup{\Lcal,\Scal}}(P)$ and finite $\langle F_U,P\rangle$ and $\langle P,{\mathfrak N}_U^{\ssup {\ell,\Lcal}}\rangle=\rho_1, \langle P,{\mathfrak N}_U^{\ssup {\ell,\Scal}}\rangle=\rho_2$. Furthermore, fix $\eps\in(0,1)$. 

Then, for any sufficiently large $\vartheta, M,L,K,T,R$ (in this order), there is an ergodic $P'\in\Mcal_1^{\ssup{\rm s}}(\Lcal \times\Scal)$ such that $\Pi_{W_{R+M}}(P')(A_{{W_{R+M}};\mathfrak r,M,L,K,T})=1=\Pi_{W_{R+M}}^{\ssup\Scal}(P')(S_{W_{R+M}}^{\ssup{M,\vartheta,S}}),\langle P',\mathfrak N_U^{\ssup{\ell, \Lcal}}\rangle \in\Bcal_\eps(\rho_1), \langle P',\mathfrak N_U^{\ssup{\ell, \Scal}}\rangle \in\Bcal_\eps(\rho_2)$ and $\frac 1{|{W_{R+M}}|}\langle F_{{W_{R+M}},{W_{R+M}}}, \Pi_{W_{R+M}}( P')\rangle\leq \eps +\langle F_U,P\rangle$ and $J_{W_{R+M}}(\Pi_{W_{R+M}}(P'))\leq \eps +J_{W_R}(\Pi_{W_R}(P))$.
\end{lemma}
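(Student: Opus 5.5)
The plan follows the rewiring construction from the proof of Lemma~\ref{lem-noshreds}, now keeping the shreds instead of closing them into one loop. The first step is to truncate $P$. Pick $\vartheta,M,L,K,T$ (in this order) and put $\xi_R=\Pi_{W_R}(P)\circ\Pi^{-1}$, where $\Pi=\Pi_{W_R;\mathfrak r,M,L,K,T,\vartheta,S}$ is the projection of Lemma~\ref{lem-EntProjImag}. That lemma gives
\[
J_{W_R}(\xi_R)\leq J_{W_R}(\Pi_{W_R}(P))+\eps^{\ssup\Lcal}_{M,W_R,L}+\Big(\frac 1K+\eps^{\ssup\Scal}_{M,\vartheta,S,T}\Big)(\rho_1+\rho_2),
\]
and the error can be made $<\eps/3$. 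Since $\Pi$ only removes loops and shreds, $F_{W,W}$ can only decrease, and by \eqref{splittingF} and shift-invariance $\frac1{|W|}\langle F_{W,W},\xi_R\rangle\leq\langle F_U,P\rangle$. The particle densities change by at most $\eps/3$, because removed particles have small expectation by monotone convergence, using $\langle P,\mathfrak N_U^{\ssup\ell}\rangle<\infty$ and $M,L,K,T,\vartheta\to\infty$.

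Next comes the rewiring. Let $\widetilde W=W_{R+M}$ and $W'=\widetilde W\setminus W$. Fix a regular grid $G$ of sites on $\partial\widetilde W$, with spacing of order one and symmetric under reflections across the faces. Given a configuration under $\xi_R$ with boundary triples $\mu=\sum_i\delta_{(x_i,l_i,y_i)}$, we attach to each shred end $y_i\in W'$ a Brownian piece with particles in $W'$ that exits to a grid site of $G$ assigned by a deterministic rule. Likewise we prepend to each initial site $x_i$ a piece coming from a grid site. Shreds that leave $W$ and re-enter within $W'$ are joined directly, as in Lemma~\ref{lem-noshreds}. This produces $\xi_R^{\ssup M}\in\Mcal_1(\Lcal_{\widetilde W}\times\Scal_{\widetilde W})$ with $\Pi_{\widetilde W\to W}(\xi_R^{\ssup M})=\xi_R$. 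Its $\widetilde W$-shreds end at grid points.

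We then tile $\R^d$ by the boxes $z+\widetilde W$, $z\in2(R+M)\Z^d$. The boundary shreds of neighbouring boxes must match at common grid points. To arrange this, the matching rule chooses the pairing of outgoing and incoming ends at each face point, possibly by reflecting configurations. Any excess of unmatched ends is closed into loops inside $W'$ as in Lemma~\ref{lem-noshreds}. Independent copies placed in the boxes then connect up into loops and bi-infinite interlacements. Averaging over shifts $x\in\widetilde W$ gives $P'$, which is stationary and ergodic by \cite[Theorem 14.12]{G88}. The support conditions $A_{\widetilde W;\mathfrak r,M,L,K,T}$ and $\Scal^{\ssup{M,\vartheta,S}}$ hold in $W$ by construction. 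In $W'$ they hold once the appended pieces are restricted to spreads $<M$ and increments $\leq\vartheta\sqrt S$ and the particle count per $\mathfrak r$-box is bounded. This is possible because the number of shreds is $O(R^{d-1})$ by Corollary~\ref{cor-shrednumberesti}, and only $\leq T$ particles sit near each boundary point.

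Densities and energy are controlled as in \eqref{Goal1a}--\eqref{Goal2a}. The added particles and interactions live in $W'$, which has volume $O(MR^{d-1})$, with bounded local density, so they are $o(|W|)$ by Lemma~\ref{lem-Interactesti}. The main obstacle is the entropy bound $J_{\widetilde W}(\Pi_{\widetilde W}(P'))\leq J_{W_R}(\xi_R)+\eps$. First I would reduce from $P'$ to the product measure via convexity and \eqref{JWupperboundx} with $m=3$, exactly as in \eqref{J(P)_J(widetildeP)}. Here the boundary term $\langle\psi,\mathfrak p\rangle$ is $O(R^{d-1})$ by Lemma~\ref{lem-lowerboundlogpint}(2), since $\psi$ is concentrated on bounded configurations. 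Then I decompose the entropy as in \eqref{entropycalc}, and the extra terms have three sources:
\begin{itemize}
\item the $\log$-densities of the appended bridges, estimated via Lemma~\ref{lem-explengthbrige};
\item the change of reference kernel in $W'$, giving a factor $C_M^{|\mu|}$;
\item the combinatorics of the matching rule.
\end{itemize}
Each is bounded by $C_{M,T}\,\E|\mu|=o(|W|)$ by Corollary~\ref{cor-shrednumberesti}. The delicate point is the combinatorial term. I would handle it by making the matching a deterministic function of $\mu$, so that it contributes no entropy.
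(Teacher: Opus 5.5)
Your truncation step and the bookkeeping for energy and entropy follow the paper's route. The gluing step, however, has a genuine gap.

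\textbf{The gap: independent copies do not glue.} In your rewiring, the pattern of grid points on $\partial\widetilde W$ at which a box carries an outgoing end, an incoming end, or nothing is a random function of $\mu$. Two \emph{independent} copies in neighbouring boxes $z+\widetilde W$ and $z'+\widetilde W$ will therefore generically disagree on the common face. So the claim that independent copies ``connect up'' is false for the construction as described. The repairs you suggest do not work.
\begin{itemize}
\item A pairing that depends on the neighbour's configuration, or closing the excess ends into loops inside $W'$, destroys the i.i.d.\ block structure. You then lose the two things that structure is used for afterwards: ergodicity via \cite[Theorem 14.12]{G88}, and the factorisation $J_{3\widetilde W}(\Pi_{3\widetilde W}(\widetilde P))=J_{\widetilde W}(\xi_R^{\ssup M})$ in your reduction via \eqref{J(P)_J(widetildeP)}.
\item Reflecting copies correlates the boxes. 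A reflected periodic tiling of one random configuration is not ergodic.
\item Joining shreds directly or closing unmatched ends into loops, as in Lemma~\ref{lem-noshreds}, also moves interlacement mass into the loop part. The $O(R^{d-1})$ shreds carry about $\rho_2|W|$ particles. Closing a non-vanishing fraction of them can therefore shift a non-vanishing part of $\rho_2$ into $\rho_1$, and $\langle P',\mathfrak N_U^{\ssup{\ell,\Scal}}\rangle\in\Bcal_\eps(\rho_2)$ fails.
\end{itemize}

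\textbf{The missing idea: a deterministic outer boundary.} The paper makes the outer boundary configuration of every box deterministic, as follows.
\begin{itemize}
\item Fix a deterministic family of channels $(X_a,Y_a)_{a\in\Phi}$ near $\partial\widetilde W$, with $|\Phi|\asymp R^{d-1}$, such that the exit site $Y_a$ of a box coincides with the entry site $X_a$ of its neighbour.
\item Take $|\Phi|$ at least the almost-sure bound on the number $|I|$ of $W$-shreds. That bound comes from the truncation to $A_{W;\Theta}$ (at most $T$ particles per $\mathfrak r$-box, spreads $<M$). It does not come from Corollary~\ref{cor-shrednumberesti}, which only controls expectations.
\item Assign the $W$-shreds injectively to channels, $i\mapsto a_i$. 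Choose this assignment local, so that each appended piece has bounded cost; Lemma~\ref{lem-explengthbrige} gives $O(1)$ only when $|x-y|=O(1)$.
\item Connect $X_{a_i}\to x_i$ and $y_i\to Y_{a_i}$ by Brownian $W'$-pieces. Fill every unused channel with a Brownian $W'$-crossing from $X_a$ to $Y_a$.
\end{itemize}
Then every channel is occupied in every realisation, and independent copies concatenate automatically into interlacements. No loops are created, and the matching-entropy term you worried about disappears.

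\textbf{A further omission in the density bookkeeping.} Every $W$-shred, including those cut from loops of $P$, becomes part of an interlacement. Besides the particles added in $W'$, you therefore need Lemma~\ref{lem-loopshredsnegl} to see that the loop density changes only by $o(1)$.
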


\begin{proof}
Our proof is in the spirit of the one of Lemma~\ref{lem-noshreds} in Section~\ref{sec-noshreds}. Again we abbreviate
$$
W=W_R=[-R,R]^d\qquad \mbox{and}\qquad \widetilde W=W_{R+M}=[-R-M,R+M]^d\qquad\mbox{and put}\qquad W'=\widetilde W\setminus W.
$$
As usual, $\Pi_{\widetilde W}$ is the well-known projection on $\widetilde W$. Denote by $\widetilde \Pi_{\widetilde W}=\Pi_{\widetilde W;\mathfrak r,M,L,K,T}$ the projection onto $A_{\widetilde W;\mathfrak r,M,L,K,T}$ introduced in Lemma~\ref{lem-EntProjImag} under the name $\Pi$ (with $\widetilde W$ instead of $W$).  We pick $M, K,L,\vartheta,T$ so large that the particle densities of the $\widetilde\Pi$-projection of $P$ are close enough to the ones of $P$ and the error terms in Lemma~\ref{lem-EntProjImag} are small enough, more precisely,
$|\langle \mathfrak N_{U}^{\ssup{\ell,\Lcal}}\circ\widetilde\Pi_{U},\Pi_{ U}(P) \rangle-\rho_1|\leq \eps$ and $|\langle \mathfrak N_{U}^{\ssup{\ell,\Scal}}\circ\widetilde\Pi_{U},\Pi_{ U}(P) \rangle-\rho_2|\leq \eps$ and that $\lim_{R'\to\infty}\eps^{\ssup\Lcal}_{M,[-R',R']^d,L}+(\frac 1K+\eps_{M,\vartheta,S,T}^{\ssup\Scal})(\rho_1+\rho_2+1)\leq \eps$.

Then, for any sufficiently large  $R$ we will construct an ergodic $\widetilde P\in \Mcal_1^{\ssup{\rm s}}(\Lcal\times \Scal)$ such that 
\begin{eqnarray}
\frac1{|\widetilde W|}\langle F_{\widetilde W,\widetilde W}\circ \widetilde\Pi_{\widetilde W}, \Pi_{\widetilde W}(\widetilde P)\rangle- \frac 1{|W|}\langle F_{W,W}\circ\widetilde\Pi_W,\Pi_W(P)\rangle&\leq& \eps,\label{Goal1}\\
\frac 1{|\widetilde W|}\langle [\mathfrak N_{\widetilde W}^{\ssup{\ell,\Lcal}}+ \mathfrak N_{\widetilde W}^{\ssup{\ell,\Scal}}]\circ\widetilde\Pi_{\widetilde W},\Pi_{\widetilde W}(\widetilde P) \rangle- \frac 1{| W|}\langle [\mathfrak N_{W}^{\ssup{\ell,\Lcal}}+ \mathfrak N_{W}^{\ssup{\ell,\Scal}}]\circ\widetilde\Pi_{W},\Pi_{ W}(P) \rangle|&\leq& \eps,\label{Goal2}\\
J_{\widetilde W}(\Pi_{\widetilde W}(\widetilde P))-J_{W}(\Pi_W(P)\circ\widetilde \Pi_{W}^{-1})&\leq &\eps.\label{Goal3}
\end{eqnarray}

This will end the proof of this lemma with $P'=\widetilde P\circ \widetilde \Pi_{\widetilde W}^{-1}$, since $\frac 1{|W|}\langle F_{W,W}\circ\widetilde\Pi_W,\Pi_W(P)\rangle\leq \langle F_U,P\rangle$ and, if $R$ is large enough,  $J_{\widetilde W}(\Pi_{\widetilde W}(\widetilde P)\circ\widetilde \Pi_{\widetilde W}^{-1})\leq \eps + J_{\widetilde W}(\Pi_{\widetilde W}(\widetilde P))$ (see Lemma~\ref{lem-EntProjImag} and our choice of $M,K,L;T$ above), and $J_{W}(\Pi_W(P)\circ \widetilde \Pi_{W}^{-1})\leq J_W(\Pi_W(P))+\eps$ again by Lemma~\ref{lem-EntProjImag}.

Let us now construct $\widetilde P$. This will be partially similar to the proof of Lemma~\ref{lem-noshreds}. Again, we extend the $W$-projection of $P$ to a random configuration in $\widetilde W$, but here we do it in such a way that all  $W$-shreds are extended to the boundary of $\widetilde W$ in such a way that their composition in all the boxes $z+\widetilde W$ with $z\in 2(R+M)\Z^d$ concatenates to  interlacements; this time we are not creating new loops. We do this for all shreds, also the loop shreds, since we will later show that the expected number of particles in these loops is negligible in the limit $W\uparrow \R^d$; hence it will not change much in the expected particle numbers if we turn all loops that have particles outside $W$ into interlacements.

Decompose $\R^d$ regularly into boxes $\widetilde W_z=z+\widetilde W$ with $z\in 2(R+M)\Z^d$ and assume that $R$ is much larger than $M$.  Put $\xi_R=\Pi_{W}(P)\circ\widetilde\Pi_{W}^{-1}$
 and $\psi_R=\partial\Pi_{W}^{\ssup\Scal}(\xi_R)$. We will construct a loop/shred configuration $\xi_R^{\ssup M}\in\Mcal_1(\Lcal_{\widetilde W}\times\Scal_{\widetilde W})$ that extends $\xi_R$ by appending independent Brownian $W'$-shreds  in such a way that no new  loop (or hardly any) is created in $\widetilde W$ and all $W$-shreds are connected with $\partial \widetilde W$ in a certain regular way. We will do this independently over $z\in 2(R+M)\Z^d$, such that all these added shreds in neighbouring subboxes  automatically connect up such that they are concatenated to interlacements in $\R^d$. 
 
For this, pick some deterministic  index set $\Phi$ with $|\Phi|\asymp |W|/R\asymp R^{d-1}$ and a collection of points $(X_a)_{a\in \Phi} $ in $W_{R+M}^\circ\setminus W_{R+M-1}$ that are more than one away from the corners of $W_{R+M}$, i.e., from $(R+M)\{-1,1\}^d$. Then each $X_a$ is closest to precisely one of the $2d$ neighbouring subboxes $z+W_{R+M}$ with $z\in 2(R+M)\{-1,1\}^d$ (i.e., there is precisely one $z\in 2(R+M)\{-1,1\}^d$ such that $X_a\in \Bcal_1(\widetilde W\cap(z+ \widetilde W))$, where $\Bcal_1$ denotes the 1-neighbourhood). Assume that, for any $z\in 2(R+M)\{-1,1\}^d$, the number of $a$ such that $X_a$ is closest to $z+W$ is the same. Put $Y_a^{\ssup z}=X_a$ for these $z$. 
Furthermore, put $X_a^{\ssup z}= z+X_a$ for $z\in 2(R+M)\Z^d$, and define $Y_a^{\ssup {z'} }$ as above for any neighbour $z'$ of $z$ and $X_a^{\ssup z}$. We conceive, for any $z\in 2(R+M)Z^d$, the pair $(X_a^{\ssup z},Y_a^{\ssup z})\in (z+\widetilde W,(z+\widetilde W)^{\rm c})$ as the entry-exit sites of a $(z+\widetilde W)$-shred. They lie perfectly opposite to each other close to the boundary of $z+\widetilde W$. We put $X_a^{\ssup 0}=X_a$ and $Y_a=Y_a^{\ssup 0}$.

Now let $\mu=\sum_{i\in I}\delta_{(x_i,l_i,y_i)}\in\Tcal_W$ be given such that $y_i\in \widetilde W$ for all $i\in I$. We are considering $\mu$ under $\partial\Pi_W^{\Ssup\Scal}(\xi_R)$, i.e., we can see $\mu$ as the configuration of all the starting/terminal sites and lengths of all the $W$-shreds in the configuration that is sampled under $\xi_R$. Note that $\xi_R$ has, by definition as a random configuration in the event $A_{W;\mathfrak r,M, L,K; T}$, with probability one, only a bounded number of particles in any subbox of radius $\mathfrak r$ around sites in $2\mathfrak r \Z^d$ and hence not more than $\asymp R^{d-1}$ particles close to $\partial W$. Therefore, we may assume that $|I |\leq O(R^{d-1})$.

We pick an injective map $I\ni i\mapsto a_i\in \Phi$ and connect $X_{a_i}$ with $x_i$ and $y_i$ with $Y_{a_i}$ by means of an independent Brownian $W'$-shred, for any $i\in I$. For all other $a$'s, i.e., for $a\in \Phi\setminus I_\Phi$ with $I_\Phi=\{a_i\colon i\in I\}$, we connect $X_a$ with $Y_a$ with an additional independent Brownian $W'$-shred. The distribution of  the resulting configuration is called $\xi_R^{\ssup M}$, i.e., the extension of $\xi_R$ to a loop/shred configuration in $\widetilde W$ without loops in $W'$.

Having constructed $\xi_R^{\ssup M}$ in this way, we take independent measures  $\xi_R^{\ssup {M,z}}\in\Mcal_1(\Lcal_{\widetilde W_{z}}\times\Scal_{\widetilde W_{z}})$ with $z\in 2(R+M) \Z^d$ such that their shift $\theta_z(\xi_R^{\ssup {M,z}})$ has the same distribution as $\xi_R^{\ssup M}$. Then we put 
\begin{eqnarray}
\widetilde P_{R,M}&=&\bigotimes_{z\in 2(R+M) \Z^d}\xi_R^{\ssup {M,z}}\in \Mcal_1(\Lcal_W\times\Scal_W),\\
P_{R,M}&=&\frac 1{|\widetilde W|}\int_{\widetilde W}\d x\, \theta_x(\widetilde P_{R,M})\in \Mcal_1^{\ssup {\rm s}}(\Lcal_{\widetilde W}\times\Scal_{\widetilde W}),
\end{eqnarray}
the latter which is obviously stationary. The proof of \cite[Theorem 14.12]{G88} applies and shows that $P_{R,M}$ is ergodic. We will show that $\widetilde P=P_{R,M}$ satisfies \eqref{Goal1}--\eqref{Goal3} for $R$ large enough, which ends the proof of the lemma.

Note that $P_{R,M}$ has its interlacements only from our construction from the $|\Phi|$-many connections of $X_a^{\ssup z}$ with $Y_a^{\ssup z}$ (partially via $x_i$ and $y_i$), which perfectly connect each other in the sites $Y_a^{\ssup z}=X_a^{\ssup {z'}}$, if $z+\widetilde W$ and $z'+\widetilde W$ are neighbouring subboxes. All the loops of $P_{R,M}$ are the $(z+W)$-loops for some $z\in 2(R+M)\Z^d$.

The proofs of \eqref{Goal1} and \eqref{Goal2} are more or less identical to the proofs of \eqref{Goal1a} and \eqref{Goal2a} in the proof of Lemma~\ref{lem-noshreds}; we leave the details to the reader; let us only mention that here also the following  Lemma~\ref{lem-loopshredsnegl} is used, which says that our decision to turn all $W$-loops that have particles also in $W'$ into interlacements does not noticeably change the particle densities: 
For shift-invariant loop-distributions, the expected number of particles in loops that have particles in $W$, but not all their particles, is negligible:

\begin{lemma}\label{lem-loopshredsnegl} For any $P\in\Mcal_1^{\ssup{\rm s}}(\Lcal)$ with $\langle P,\mathfrak N_U^{\ssup{\ell}}\rangle\in(0,\infty)$, we have
$$
\lim_{R\to\infty}\frac 1{|W_R|}\langle\Pi_{W_R}^{\ssup\Scal}(P),\mathfrak N_{W_R}^{\ssup\ell}\rangle=0.
$$
\end{lemma}  
 
\begin{proof} Note that $\Pi_{W_R}^{\ssup\Scal}(P)=\Pi_{W_R}(P)-\Pi_{W_R}^{\ssup\Lcal}(P)$. By shift-invariance,  for $R\in\N$, 
$$
\frac 1{|W_R|}\langle\Pi_{W_R}(P),\mathfrak N_{W_R}^{\ssup\ell}\rangle=\frac 1{|W_R|}\sum_{z\in W_R\cap \Z^d}\langle\Pi_{W_R}(P),\mathfrak N_{z+U}^{\ssup\ell}\rangle=\langle P,\mathfrak N_{U}^{\ssup\ell}\rangle.
$$
Furthermore, the normalized integral with respect to the second  is  asymptotically not smaller than $\langle P,\mathfrak N_{U}^{\ssup\ell}\rangle$. Indeed, denoting by $\mathfrak N_{\widetilde W}^{\ssup{\ell,\leq M}}$ the number of particles in all the loops $f$ that start in $\widetilde W\Subset \R^d$ and have spread $\max_{t\in[0,\beta\ell(f)]}|f(t)-f(0)|\leq M$, we see that
$$
\frac 1{|W_R|}\langle\Pi_{W_R}^{\ssup\Lcal}(P),\mathfrak N_{W_R}^{\ssup\ell}\rangle
\geq \frac 1{|W_R|}\sum_{z\in W_{R-M}\cap \Z^d}\langle\Pi_{W_R}(P),\mathfrak N_{z+U}^{\ssup{\ell,\leq M}}\rangle
=\frac {|W_{R-M}|}{|W_R|} \langle P,\mathfrak N_{U}^{\ssup{\ell,\leq M}}\rangle \to \langle P,\mathfrak N_{U}^{\ssup{\ell,\leq M}}\rangle 
$$
as $R\to\infty$, and the last term converges to $\langle P,\mathfrak N_{U}^{\ssup\ell}\rangle$ as $M\to\infty$, according to the monotonous convergence theorem.
\end{proof}  

\medskip

\textit {Proof of \eqref{Goal3}:} Analogously to the argument around \eqref{J(P)_J(widetildeP)}, we can argue also here that it suffices to show that $J_{\widetilde W}(\xi_R^{\ssup M})\leq J_W(\xi_R)+\eps$. Indeed, the estimate in  \eqref{J(P)_J(widetildeP)} is analogously valid here as well, with the difference that the second error term on the right-hand side of \eqref{JWupperboundx} is not trivial as in \eqref{J(P)_J(widetildeP)} (since $\psi=0$ there), but needs an argument, since $\psi=\partial \Pi_{3\widetilde W}^{\ssup\Scal}(\widetilde P_{R,M})$ is non-trivial: For large $R$ we have
\begin{equation}\label{J(P)_J(widetildeP)neu}
\begin{aligned}
J_{\widetilde W}(\Pi_{\widetilde W}(P_{R,M}))
&\leq J_{3 \widetilde W}(\Pi_{3 \widetilde W}(\widetilde P_{R,M}))+\frac\eps2-\langle \psi,\mathfrak p_{3 \widetilde W,M}\rangle.
\end{aligned}
\end{equation}
In order to show that the last term is negligible, we point out that all the $3 \widetilde W$-shreds $f_i'$ of $\Pi_{3 \widetilde W}(\widetilde P_{R,M})$ satisfy the condition $|f_i'(j S\beta)-f_i'((j-1)S\beta)/\leq \vartheta\sqrt S$  either almost surely for those $j$ such that $f_i'([(j-1)S\beta,jS\beta])$ lies in $z+W$ for some $z\in  2(R+M)\Z^d$ (since they come from $\xi_R$ or an independent copy of it, which  is concentrated on $A_{W_R;\Theta}^{\ssup\Scal}$) or in expectation, since these pieces are independent Brownian bridges by construction (see the argument in Remark~\ref{rem-mathfrakp_esti_for_BM}). Therefore, making $S$ large enough, we may assume that $J_{\widetilde W}(\Pi_{\widetilde W}(P_{R,M}))\leq J_{3 \widetilde W}(\Pi_{3 \widetilde W}(\widetilde P_{R,M}))+\eps$.

In order to see that $ J_{3 \widetilde W}(\Pi_{3 \widetilde W}(\widetilde P_{R,M}))=J_{\widetilde W}(\Pi_{\widetilde W}(\widetilde P_{R,M}))=J_{\widetilde W}(\xi_R^{\ssup M})$, observe that the configurations in the $3^d$ subboxes $z+\widetilde W$ with $z\in 2(R+M)\{-1,0,1\}^d$ are independent copies from each other and that therefore the density of $\Pi_{3\widetilde W}(P)(\omega,\mu,\cdot)$ with respect to ${\tt K}_{3\widetilde W}(\mu,\cdot)$,  conditional on the boundary-shred configuration $\mu$, decomposes into a product of the corresponding $(z+\widetilde W)$-versions (since $\Pi_{3\widetilde W}(P)$ decomposes into i.i.d.~parts by definition, and ${\tt K}_{3\widetilde W}$ decomposes by the Markov property), and therefore the entropy of $\Pi_{3\widetilde W}(P)$ decomposes into the sum over these entropies. Because of the prefactor $\frac1{ |\widetilde W|}$ in the definition of $J_{\widetilde W}$, we obtain that the $J_{3\widetilde W}$-value equals the $J_{\widetilde W}$-value. Hence, for proving \eqref{Goal3}, we only need to show that $J_{\widetilde W}(\xi_R^{\ssup M})\leq J_W(\xi_R)+\eps$.

The distribution of $\xi_R ^{\ssup M}$ can be written, using the Markov property, as follows. Let $(\omega,\varpi)\in\Lcal_W\times \Scal_W$ with $\varpi=\sum_{i\in I}\delta_{f_i}$ and $\mu=\partial \Pi_W^{\ssup\Scal}(\varpi)=\sum_{i\in I}\delta_{(x_i,l_i,y_i)}\in\Tcal_W$. Recall that $|I|\leq |\Phi|$ and that we did not add any $\widetilde W$-loops.  We can write, using the Markov property and the independence of all the Brownian $W'$-shreds,
\begin{equation}\label{xiRMident}
\begin{aligned}
\xi_R^{\ssup M}\big(\d(\omega',\varpi')\big)
&=\xi_R\big(\d (\omega,\varpi)\big)\,\e^{-\sum_{k\in\N} [q_{W',k}+ q_{W',W,k}]}\\
&\quad\otimes\bigotimes_{i\in I}\Big[p_{X_{a_i},x_i}^{\ssup{W'}}(\d m_i) q_{X_{a_i},x_i}^{\ssup{m_i,W'}}(\d g_i)\, p_{y_i,Y_{a_i}}^{\ssup{ W'}}(\d m_i') q_{y_i,Y_{a_i }}^{\ssup{m_i', W'}}(\d g_i')\Big]\\
&\quad\otimes \bigotimes_{a\in \Phi\setminus I_\Phi}\Big[p^{\ssup{W'}}_{X_a,Y_a}(\d n_a)q^{\ssup{n_a,W'}}_{X_a,Y_a}(\d g_a') \Big],
\end {aligned}
\end{equation}
where we remind on the definition of the $p$- and the $q$-terms in \eqref{pxyneudef} and \eqref{qneudef}. We used the notation $\omega'=\omega$ (seen as an element of $\Lcal_{\widetilde  W}$ with only $W$-loops) and $\varpi'=\sum_ {i\in I}\delta_{g_i'\diamond f_i\diamond g_i}+\sum_{a\in \Phi\setminus I_\Phi}\delta_{g_a'}$ and $\mu'=\sum_{i\in I}\delta_{(X_{a_i},L_{a_i},Y_{a_i})} +\sum_{a\in \Phi\setminus I_\Phi}\delta_{(X_a,n_a,Y_a)}$ with $L_{a_i}=m_i+\ell(f_i)+m_i'$.

Analogously, we write the distribution of the reference measure, using the above notation. Recall from the proof of Lemma~\ref{lem-noshreds} that the reference measure $\Pi^{\ssup\Lcal}_{\widetilde W}({\tt Q})$ is the convolution of $\Pi^{\ssup\Lcal}_{W}({\tt Q})$, $\Pi^{\ssup\Lcal}_{W'}({\tt Q})$ and $\Pi_{W,W'}^{\ssup\Lcal}({\tt Q})$ (recall that the latter measure is the restriction of ${\tt Q}$ to the set of loops that have all particles in $W\cup W'$ and have at least one in $W$ and one in $W'$). Hence, the loop part is similar to \eqref{Qdecomposition} without the $n$-loop, and we have
\begin{equation}\label{referencemeasure}
\begin{aligned}
\Pi_{\widetilde W}^{\ssup\Lcal}({\tt Q})&\otimes [\partial\Pi_{\widetilde W}^{\ssup\Scal}(\xi_R^{\ssup M})\otimes {\tt K}_{\widetilde W}]\big(\d(\omega',\varpi')\big)
=\Pi_{ W}^{\ssup\Lcal}({\tt Q})(\d \omega)\,\e^{-\sum_{k\in\N} [q_{W',k}+ q_{W',W,k}]}\,\e^{-\sum_{k\in\N} q_{W,W',k}}\\
&\qquad\otimes \partial\Pi_{W}^{\ssup\Scal}(\xi_R)(\d\mu)\bigotimes_{i\in I}\Big[p_{X_{a_i},x_i}^{\ssup{ W'}}(\d m_i) q_{X_{a_i},x_i}^{\ssup{m_i, W'}}(\d g_i) p_{y_i,Y_{a_i}}^{\ssup{ W'}}(\d m_i') q_{y_i,Y_{a_i }}^{\ssup{m_i',W'}}(\d g_i')\Big]\\
&\quad \otimes \bigotimes_{a\in \Phi\setminus I_\Phi}\Big[p^{\ssup{W'}}_{X_a,Y_a}(\d n_a)q^{\ssup{n_a, W'}}_{X_a,Y_a}(\d g_a') \Big]\otimes {\tt K}_W(\mu,\d \varpi).
\end{aligned}
\end{equation}
Summarizing, 
$$
\begin{aligned}
\frac{\d\xi_R^{\ssup M}}{\d \Pi_{\widetilde W}^{\ssup\Lcal}({\tt Q})\otimes [\partial\Pi_{\widetilde W}^{\ssup\Scal}(\xi_R^{\ssup M})\otimes {\tt K}_{\widetilde W}]}(\omega',\varpi')
=\frac{\d\xi_R}{\d \Pi_{W}^{\ssup\Lcal}({\tt Q})\otimes [\partial\Pi_{W}^{\ssup\Scal}(\xi_R)\otimes {\tt K}_{ W}]}(\omega,\varpi)\e^{\sum_{k\in\N}q_{W,W',k}}.
\end{aligned}
$$
Hence, we see that
$$
J_{\widetilde W}(\xi_R^{\ssup M})
=J_W(\xi_R)+\frac 1{|W|}\sum_{k\in\N}q_{W,W',k},
$$
and as in the proof of Lemma~\ref{lem-noshreds}, one sees that the last term is $\leq\eps$ for $R$ large enough. This ends the proof of \eqref{Goal3} and hence the proof of this lemma.
\end{proof}

\section{Proof of Theorem~\ref{thm-specrelent}: specific relative entropy density}\label{sec-limitingentropy}

\noindent In this section we establish a new kind of {\it specific relative entropy per volume} of ergodic loop/interlacement configurations, namely with respect to the product of $\Pi_W^{\ssup\Lcal}({\tt Q})$ and the product of the boundary-configuration and the kernel ${\tt K}_W$ defined in \eqref{kernelK}. In other words, we prove Theorem~\ref{thm-specrelent}. In particular, we identify the limit of the entropy term $J_W(\xi)$ defined in \eqref{JWdeffirst} (see also \eqref{JWdef}) appearing in the variational formula in \eqref{hatZNlimit} and \eqref{hatZNlimitlowbound3} in the limit  $R\to\infty$, i.e., in the limit $W=W_R=[-R,R]^d\uparrow  \R^d$. Recall that, in $d\geq 3$,  ${\tt K}_W$ is a regular version of the distribution of $\Pi_W^{\ssup\Scal}({\tt R})$ given $\partial\Pi_W^{\ssup\Scal}({\tt R})$ for the Brownian $\beta$-spacing interlacement PPP, ${\tt R}$, of Section~\ref{sec-InterlacePPP}, see Lemma~\ref{lem-KWident}, but we are not going to use this fact.

\begin{remark}
{The usual starting point of such a proof is to prove the superadditivity of the map $W\mapsto H_{\Lcal_{W}\times \Scal_W}(\Pi_{W}(P)\mid\Pi_{W}^{\ssup\Lcal}(\LPP)\otimes [\partial\Pi_W^{\ssup\Scal}(P)\otimes {\tt K}_W])$ on the set of all boxes $W\Subset \R^d$, but we did not succeed in doing this, as it concerns the interlacement part. Instead, we will be relying on Proposition~\ref{lem-uppboundJ_W}, which needs ergodicity, combined with Lemma~\ref{lem-ergappr}, which approximates arbitrary stationary $P$'s by ergodic ones that have several useful additional properties.
}
\hfill$\Diamond$
\end{remark}

\begin{proof}[Proof of Theorem~\ref{thm-specrelent}] 
Let $P$ be in $\Mcal_1^{\ssup{\rm s}}(\Lcal\times\Scal)$ with finite expected particle number in the unit box $U$. 

{\it (1) We first prove that the limit in \eqref{jointentropydef} exists and satisfies \eqref{hbound}.} Pick an $R\in(0,\infty)$ and pick any sequence $(R_n)_{n\in\N}$ tending to $\infty$. We would like to apply Proposition~\ref{lem-uppboundJ_W}(2) and (3) to get a lower bound for $\liminf_{n\to\infty}J_{W_{R_n }}(\Pi_{W_{R_n}}(P))$ that is close to $J_{W_R}(\Pi_{W_R}(P))$, but we can do this only for $W_R$-ergodic $P$'s that are concentrated on $\Lcal\times\Scal^{\ssup{M,\vartheta,S}}$ (see \eqref{SMvarthetaSdef}) for some $M, \vartheta, S$. Hence, we first need to replace $\Pi_{W_{R_n}}(P)$ by some measure like that, using Lemma~\ref{lem-ergappr}. Indeed, with parameters $\mathfrak r, M, L, K,T, \vartheta,S$, we apply Lemma\ \ref{lem-ergappr} (see also the proof of Lemma~\ref{lem-EntProjImag}) to see that, for any given small $\eps\in(0,1)$ and any $n$ there is an ergodic $P_n\in\Mcal_1^{\ssup{\rm s}}(\Lcal^{\ssup M}\times\Scal^{\ssup {M,\vartheta,S}})$ such that $\Pi_{W_{R_n}}^{\ssup\Scal}(P_n)$ is concentrated on $A^{\ssup{\Scal}}_{W_{R_n};M,\mathfrak r,T}\cap \Scal_{W_{R_n}}^{\ssup{\vartheta,S}}$ and satisfies $|\langle P_n-P,\mathfrak N^{\ssup{\Scal,\ell}}_{W_ {R_n}}\rangle|\leq \eps |W_{R_n}|$ such that
$$
J_{W_{R_n }}(\Pi_{W_{R_n}}(P))
\geq J_{W_{R_n}}(\Pi_{W_{R_n}}(P_n))-\eps,
$$
if the parameters $M,L,K,T,\vartheta$ are picked large enough. Now we can apply Proposition~\ref{lem-uppboundJ_W}(2). Indeed, we write $R_n=m_n R^{\ssup{m_n}}$ with $m_n\in\N$ and $m_n \to \infty$ and  $R^{\ssup{m_n}}\downarrow R$ as $n\to\infty$ and get
$$
\begin{aligned}
J_{W_{R_n }}(\Pi_{W_{R_n}}(P_n))
&\geq  J_{W_{R^{\ssup{m_n}}}}(\Pi_{W_{R^{\ssup{m_n}}}}(P_n))-\frac{C_M}{(m_n R^{\ssup{m_n}})^{d/2}}-\Big[\frac {C_{M,\mathfrak r,T}}{m_n R^{\ssup{m_n}}}+\frac{C\vartheta^2}{S}\Big]\langle P_n,\mathfrak N_U^{\ssup{\Scal,\ell}}\rangle.
\end{aligned}
$$
We can pick $S=\vartheta^3$. Now we combine the last two displays and make $n\to\infty$ and use that $|\langle P_n-P,\mathfrak N^{\ssup{\Scal,\ell}}_{W_ {R_n}}\rangle|\leq \eps |W_{R_n}|$ and that both $P$ and $P_n$ are shift-invariant to get that 
\begin{equation}\label{hexist2}
\begin{aligned}
\liminf_{n\to\infty} J_{W_{R_n }}(\Pi_{W_{R_n}}(P))
&\geq \liminf_{n\to\infty}  J_{W_{R^{\ssup{m_n}}}}(\Pi_{W_{R^{\ssup{m_n}}}}(P_n))-\frac{C}{\vartheta}\big(\eps +\langle P,\mathfrak N_U^{\ssup{\Scal,\ell}}\rangle\big)-\eps.
\end{aligned}
\end{equation}
We can make $M,K,L,T,\vartheta\to\infty$ to  obtain
\begin{equation}\label{liminfgeq}
\begin{aligned}
\liminf_{n\to\infty} J_{W_{R_n }}(\Pi_{W_{R_n}}(P))
&\geq \liminf_{M,K,L,T,\vartheta\to\infty}\liminf_{n\to\infty}  J_{W_{R^{\ssup{m_n}}}}(\Pi_{W_{R^{\ssup{m_n}}}}(P_n))
-\eps\\
&\geq J_{W_R}(\Pi_{W_R}(P))-\eps,
\end{aligned}
\end{equation}
where we used the following argument for the $J$-term. Indeed, we want to use Proposition~\ref{lem-uppboundJ_W}(3). Note that, from the construction of $P_n$ for $W_{R_n}$ in the proof of Lemma~\ref{lem-ergappr}, we see that $\Pi_{W_{R_n}}(P_n)=\Pi_{W_{R_n}}(P)\circ \widetilde \Pi_{W_{{R_n}}}^{-1}$, where $\widetilde \Pi_{W_{R_n}}$ is the projection onto $A_{W_{R_n};M,L,K,T}$ introduced in Lemma~\ref{lem-EntProjImag}. Therefore, it is easy to see that $P_n$ weakly converges (in our usual sense that all the projections $\Pi_{W_{R'}}(P_n)$ weakly converge, $R'\in\N$) towards $P$ if all the parameters $n,M,K,L,T,\vartheta$ grow to $\infty$.  Furthermore, $\Pi_{W_{R^{\ssup{m_n}}}}(P_n)=\Pi_{W_{R^{\ssup{m_n}}}}(P)\circ \widetilde \Pi_{W_{R^{\ssup{m_n}}}}^{-1}$. In order to see that, in this limit inferior the limit $P_n\Longrightarrow P$ and the limit as $R^{\ssup{m_n}}\downarrow R$ according to Proposition~\ref{lem-uppboundJ_W}(3) can be combined such that the last step of \eqref{liminfgeq} appears, use first \eqref{entropyofimages} for $f= \widetilde \Pi_{W_{R^{\ssup{m_n}}}}$ and then prove that the density $\d\nu_n\circ \widetilde \Pi_{W_{R^{\ssup{m_n}}}}^{-1}/\d \nu_n$ tends to one in the limit as $n,M,K,L,T,\vartheta\to \infty$, where $\nu_n={\tt Q}_{W_{R^{\ssup{m_n}}} }\otimes[\partial \Pi_{W_{R^{\ssup{m_n}}} }(P)\otimes {\tt K}_{W_{R^{\ssup{m_n}}}}]$ is the reference measure. We leave the proof of the latter to the reader.

Now \eqref{liminfgeq} implies that the limit inferior is not smaller than the limit superior, i.e., the limit in \eqref{jointentropydef} exists. Furthermore, from \eqref{liminfgeq} we also see that \eqref{hbound} holds.

{\it (2) Now we show that $P\mapsto\h^{\ssup{\Lcal,\Scal}}(P)$ is lower  semicontinuous.} Recall that we are using on $\Mcal_1( \Lcal\times\Shreds )$ the weak topology induced by all the $\Pi_W$, i.e., $P_n$ converges towards $P$ if and only if $\Pi_W(P_n)$ weakly converges towards $\Pi_W(P)$ for any $W\Subset\R^d$. We write $\Lcal_W\times \Scal_W$ as $[\Lcal_W\times \Tcal_W]\times \Scal_W$ and write $(\Pi_W(P))_{\Lcal_W\times \Tcal_W\to\Scal_W}(\omega,\mu,\cdot)$ for the corresponding kernel (by disintegration). Also the map $P\mapsto (\Pi_W(P))_{\Lcal_W\times \Tcal_W\to\Scal_W}(\omega,\mu,\cdot)$ is continuous for any $\omega\in \Lcal_W$ and $\mu\in\Tcal_W$.  It follows from the characterisation of the entropy in \eqref{entropyformula} and the fact that limits of lower semi-continuous functions are lower semi-continuous that relative entropies are lower semi-continuous in the first argument. With the help of writing $\partial\xi= \partial\Pi^{\ssup\Scal}_W(P)$ and $\xi_\mu(\d (\omega,\varpi))=(\Pi_W(P))_{\Tcal_W\to\Lcal_W\times\Scal_W}(\mu,\d(\omega,\varpi))$ and
\begin{equation}\label{decompScal-entropy}
\begin{aligned}
\Hcal_W(P)=|W|J_W(\Pi_W(P))&=\int_{\Tcal_W}\partial\xi(\d\mu)\, H_{\Lcal_W\times\Scal_W}\big(\xi_\mu\mid\Pi_W^{\ssup\Lcal}( {\tt Q})\otimes {\tt K}_W(\mu,\cdot)\big) 
\end{aligned}
\end{equation}
and Fatou's lemma, the assertion follows.

{\it (3) Next, we show the convexity of the map $P\mapsto J_W(\Pi_W(P))$.} This 
follows from the convexity of the map $(\mu,\nu)\mapsto H(\mu\mid \nu)$ for any relative entropy $H$, see  \cite[Proposition 15.5(d)]{G88}. Since limit of convex functions are convex, the convexity of $\h^{\ssup{\Lcal,\Scal}}(\cdot)$ on  $\Mcal_1^{\ssup{\rm s}}(\Lcal\times\Scal)$ follows. The proof of concavity given in the proof of \cite[Proposition 15.14]{G88} applies also here, using \eqref{decompScal-entropy}. 
Indeed, if one replaces $P$ by $sP_1+(1-s)P_2$ for $s\in(0,1)$ and $P_1,P_2\in\Mcal_1^{\ssup{\rm s}}(\Lcal\times\Scal)$, then $\Hcal_W(sP_1+(1-s)P_2)$ can be lower estimated (using the monotonicity of the logarithm) against $s \Hcal_W(P_1)+(1-s) \Hcal_W(P_2)+s\log s+(1-s)\log (1-s)$, and dividing by $|W|$ and letting $|W|\to \infty$ implies the concavity of  $\h^{\ssup{\Lcal,\Scal}}(\cdot)$ on the set of all ergodic $P\in \Mcal_1^{\ssup{\rm s}}(\Lcal\times\Scal)$.

{\it (4) Now we prove the compactness of the restricted level set of $\h^{\ssup{\Lcal,\Scal}}(\cdot)$, given in \eqref{levelset}.} Assume that $c\in\R$ and a sequence of compact sets $K_R\subset \Tcal_{W_R}$ with $R\in\N$ are given (we take the sequence $(R_N)_{N\in\N}$ as $\N$ without loss of generality).
Because of lower semicontinuity of $\h^{\ssup{\Lcal,\Scal}}(\cdot)$ and of $P\mapsto \langle P,\mathfrak N_U^{\ssup\ell}\rangle$, the set in \eqref{levelset} is closed, hence we only need to prove its tightness. Pick a sequence $(P_n)_{n\in\N}$ in $\Mcal_1(\Lcal\times \Shreds)$ in that set. Using \eqref{hbound}, we see that, for any $R\in\N$, the projections $\Pi_{W_R}(P_n)$ lie for any $n$ in the restricted level set 
\begin{equation}\label{restrictedlevelset}
 \big\{\xi\in\Mcal_1(\Lcal_{W_R}\times\Shreds_{W_R})\colon \partial \Pi_{W_R}^{\ssup\Scal}(\xi)\in K_R(L), J_{W_R}(\xi)\leq c'\big\},
\end{equation} 
for some $c'\in\R$. According to Lemma~\ref{lem-compactnessLoop}(1), this set is compact. Therefore, $(\Pi_{W_R}(P_n))_{n\in\N}$ is tight. This finishes the proof, because of the following argument: We can assume that $(\Pi_{W_R}(P_n))_{n\in\N}$ converges towards some $\xi^{\ssup{W_R}}\in\Mcal_1( \Lcal_{W_R}\times\Shreds_{W_R})$ as $n\to \infty$. Using a standard diagonal argument, we can assume that the $n$-subsequence works for all the $R\in\N$. Since $(\Pi_{W_R}(P_n))_{R\in\N}$ is consistent for any $n\in\N$, also $(\xi^{\ssup{W_R}})_{R\in\N}$ is consistent. Now Proposition~\ref{Prop-construction_of_P} yields the existence of a $P\in\Mcal_1(\Lcal\times\Shreds)$ such that $\Pi_{W_R}(P)=\xi^{\ssup{W_R}}$ for any $R\in\N$. It is easy to see that $P_n$ converges (along the chosen subsequence) towards $P$. 
\end{proof}

Let us close this section with making  some comments and remarks. First, let us state the following useful general inequality (which was already used in the proof of Theorem~\ref{thm-specrelent}). 

For a probability measure $\nu$ on the product of two Polish spaces  $\Xcal_1$ and $\Xcal_2$ with projections $\Pi_i\colon \Xcal_1\times\Xcal_2\to\Xcal_i$, we write $\nu_{1\to 2}$ for the Markov kernel from $\Xcal_1$ into $\Mcal_1(\Xcal_2)$ that is defined  by the disintegration formula
\begin{equation}\label{disintegration}
\nu\big(\d(x_1,x_2)\big)=\Pi_1(\nu)(\d x_1)\nu_{1\to 2 }^{\ssup{x_1}}(\d x_2).
\end{equation}
In other words, $\nu_{1\to 2}$ is a regular version of the conditional distribution of $X_2$ given $X_1$ if $(X_1,X_2)$ is a random vector with distribution $\nu$. Let us also recall the standard measure-theoretic notation $\mu\otimes K(\d(x_1,x_2))=\mu(\d x_1)\,K(x_1,\d x_2)$ and $\mu K(\d  x_2)=\int_{\Xcal_1}\mu(\d x_1)\, K(x_1,\d x_2)$ for $\mu\in\Mcal_1(\Xcal_1)$ and a kernel $K$ from  $\Xcal_1$ into $\Mcal_1(\Xcal_2)$. By $H_1, H_2$ and $H$ we denote the relative entropies on the spaces $\Mcal_1(\Xcal_1), \Mcal_1(\Xcal_2)$, and $\Mcal_1(\Xcal_1\times\Xcal_2)$, respectively.

\begin{lemma}[Entropy on products]\label{lem-entropyproducts}
For any two Polish spaces  $\Xcal_1$ and $\Xcal_2$ with projections $\Pi_i\colon \Xcal_1\times\Xcal_2\to\Xcal_i$,
\begin{equation}\label{entropyesti}
H(\nu\mid\rho)\geq H_1(\Pi_1(\nu)\mid\Pi_1(\rho))+H_2(\Pi_1(\nu)\nu_{1\to 2}\mid \Pi_1( \nu)\rho_{1 \to 2}),\qquad \nu,\rho\in\Mcal_1(\Xcal_1\times \Xcal_2).
\end{equation}
\end{lemma}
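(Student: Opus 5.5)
The plan is to establish \eqref{entropyesti} via the chain rule for relative entropy combined with convexity of the entropy in its second argument. First I will dispose of the trivial case. If $\nu\not\ll\rho$, then the left-hand side is $+\infty$ and there is nothing to prove. Assume therefore $\nu\ll\rho$. Since $\Xcal_1,\Xcal_2$ are Polish, the disintegrations \eqref{disintegration} exist for both $\nu$ and $\rho$. The density then factorises as
$$
\frac{\d\nu}{\d\rho}(x_1,x_2)=\frac{\d\Pi_1(\nu)}{\d\Pi_1(\rho)}(x_1)\,\frac{\d\nu_{1\to2}^{\ssup{x_1}}}{\d\rho_{1\to2}^{\ssup{x_1}}}(x_2),
$$
for $\Pi_1(\nu)$-a.e.~$x_1$. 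Taking logarithms and integrating against $\nu$ gives the standard chain rule
$$
H(\nu\mid\rho)=H_1(\Pi_1(\nu)\mid\Pi_1(\rho))+\int_{\Xcal_1}\Pi_1(\nu)(\d x_1)\,H_2\big(\nu_{1\to2}^{\ssup{x_1}}\mid\rho_{1\to2}^{\ssup{x_1}}\big).
$$
The only care needed here is the integrability of the negative part, which is handled by the usual argument that $x\log x\geq x-1$. Alternatively, one can cite \cite[Ch.~15]{G88} or the Donsker--Varadhan chain rule.

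Second, I will bound the averaged conditional entropy from below by the entropy of the averages. The map $(\mu,\kappa)\mapsto H_2(\mu\mid\kappa)$ is jointly convex, and it is also lower semicontinuous, being the supremum in \eqref{entropyformula} of affine continuous functionals $\langle f,\mu\rangle-\log\langle \e^f,\kappa\rangle$; only convexity of $-\log$ is used there. For each fixed bounded $f$, I apply Jensen's inequality to the concave map $\kappa\mapsto\log\langle\e^f,\kappa\rangle$ under the probability measure $\Pi_1(\nu)(\d x_1)$. This yields
$$
\int\Pi_1(\nu)(\d x_1)\Big(\langle f,\nu^{\ssup{x_1}}_{1\to2}\rangle-\log\langle\e^f,\rho^{\ssup{x_1}}_{1\to2}\rangle\Big)\geq \langle f,\Pi_1(\nu)\nu_{1\to2}\rangle-\log\langle \e^f,\Pi_1(\nu)\rho_{1\to2}\rangle .
$$
The left-hand side is at most $\int\Pi_1(\nu)(\d x_1)\,H_2(\nu^{\ssup{x_1}}_{1\to2}\mid\rho^{\ssup{x_1}}_{1\to2})$. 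Taking the supremum over $f$ on the right then gives exactly $H_2(\Pi_1(\nu)\nu_{1\to2}\mid\Pi_1(\nu)\rho_{1\to2})$. Combined with the chain rule, this proves \eqref{entropyesti}.

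The only delicate point is measurability. The maps $x_1\mapsto\langle f,\nu^{\ssup{x_1}}_{1\to2}\rangle$ and $x_1\mapsto\langle \e^f,\rho^{\ssup{x_1}}_{1\to2}\rangle$ must be measurable, and their integrals must be the integrals of $f$ and $\e^f$ against the mixed measures. Both facts follow directly from the definition of kernels and Fubini's theorem. Since I avoid taking the pointwise supremum inside the integral, no measurable-selection argument is needed. I expect no real obstacle beyond this bookkeeping; the main thing to get right is the order of the supremum and the integral in the second step.
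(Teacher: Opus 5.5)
Your argument is correct and is essentially the paper's proof: the chain rule $H(\nu\mid\rho)=H_1(\Pi_1(\nu)\mid\Pi_1(\rho))+\int\Pi_1(\nu)(\d x_1)\,H_2(\nu_{1\to2}^{\ssup{x_1}}\mid\rho_{1\to2}^{\ssup{x_1}})$ followed by a Jensen step, which the paper simply attributes to joint convexity of $H_2$ via \cite[Proposition 15.5(d)]{G88}. Your way of doing that step is a clean, rigorous justification of the continuum-mixture inequality: fix a bounded $f$ in \eqref{entropyformula}, use scalar Jensen for $\log$, and only then take the supremum. One side remark is imprecise, though never used: $\langle f,\mu\rangle-\log\langle\e^f,\kappa\rangle$ is convex rather than affine in $\kappa$, and it is continuous only for continuous $f$.
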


\begin{proof}We may assume that $\nu$ has a density with respect to $\rho$. Then 
\begin{equation}\label{Hdecomp}
\begin{aligned}
H(\nu\mid \rho)&=\int_{\Xcal_1}\Pi_1(\nu)(\d  x_1)\int_{\Xcal_2}\nu_{1\to 2}^{\ssup{x_1}}(\d x_2)\,\log\Big(\frac {\d(\Pi_1( \nu))}{ \d(\Pi_1(\rho))}(x_1)\frac{\d(\nu_{1\to 2}^{\ssup{x_1}})}{\d(\rho_{1\to 2}^{\ssup{x_1}})}(x_2)\Big)\\
&=H_1(\Pi_1(\nu)\mid \Pi_1(\rho))+\int_{\Xcal_1}\Pi_1(\nu)(\d x_1)\,H_2\big(\nu_{1\to 2}^{\ssup{x_1}}\mid \rho_{1\to 2}^{\ssup{x_1}}\big)\\
&\geq H_1(\Pi_1(\nu)\mid \Pi_1(\rho))+H_2\big(\Pi_1(\nu)\nu_{1\to 2}\mid \Pi_1(\nu)\rho_{1\to 2}),
\end{aligned}
\end{equation}
where in the last step we used Jensen's inequality for the convex function $(\nu,\rho)\mapsto H_2(\nu\mid\rho)$; see \cite[Proposition 15.5(d)]{G88}.
\end{proof}

\begin{remark}[$\h\geq \h^{\ssup\Lcal}$]\label{rem-hcomparison}
An application of \eqref{entropyesti} shows that $\h\geq \h^{\ssup\Lcal}$, where $\h$ was defined in \eqref{entropydensitydef}, and we define $\h^{\ssup\Lcal}(P)$ as $\h^{\ssup{\Lcal,\Scal}}(P)$ for any $P\in\Mcal_1^{\ssup{\rm s}}(\Lcal)$, where $P$ is considered as a loop-interlacement configuration with empty interlacement part. Indeed, for any $W\Subset \R^d$, observe that $\Pi_W(\Lcal)$ is the direct sum of $\Lcal_W=\Pi_W^{\ssup\Lcal}(\Lcal)$ and the set of all simple point measures that start in $W$, but have not all their particles in $W$. Using \eqref{entropyesti}, we see that 
$$
H_{\Pi_W(\Lcal)}\big(\Pi_W(P)\,\big|\, \Pi_W(\LPP)\big)
\geq H_{\Lcal_W}\big(\Pi_W^{\ssup\Lcal}(P)\,\big|\,\Pi_W^{\ssup\Lcal}(\LPP)\big),
$$
where we dropped on the right-hand side another entropy term.
\hfill$\Diamond$
\end{remark}

\section{Finish of the proof of Theorem~\ref{thm-freeenergy}: making $R\to\infty$}\label{sec-finishproof}

\noindent After all the preparations, we finish the proof of  Theorem~\ref{thm-freeenergy} in this section: the proof of the upper bound is finished in Section~\ref{sec-finishuppbound}, the one of the lower bound in Section~\ref{sec-finishlowerbound}.

\subsection{Upper bound}\label{sec-finishuppbound}
With the material from Sections~\ref{sec-limitingentropy} and \ref{sec-LDPupperbound}, we now finish the proof of the upper bound in Theorem~\ref{thm-freeenergy}, based on \eqref{hatZNlimit}.  Recall that $W=W_R=[-R,R]^d$ and that $\chi$ was defined in \eqref{chidefneu} and the set $\Mcal_1^{\ssup k}(\Lcal_W\times \Scal_W)$ in \eqref{Mvcalkdef}. Starting from \eqref{hatZNlimit}, it is sufficient to prove the following.
\begin{equation}\label{upperbound 5}
\begin{aligned}
&\liminf_{R\to\infty} \inf\Big\{\frac1{|W_R|}\langle \xi,F_{W_R,W_R}\rangle+ J_{W_R}(\xi)\colon \xi\in\bigcap_{k\in\N} \Mcal_1^{\ssup k}(\Lcal_{W_R}\times \Scal_{W_R}),
\\ 
&\quad\Pi_{W_R}^{\ssup\Lcal}(\xi)\in K_{R}^{\ssup\Lcal}(L),\Pi_{W_R}^{\ssup\Scal}(\xi)\in K_{R}^{  \ssup\Scal}(L), \langle \xi,\smfrac 1{|{W_R}|}{\mathfrak N}_{W_R}^{\ssup{\ell,\Lcal}}\rangle= \rho_1, \langle \xi,\smfrac 1{|{W_R}|}{\mathfrak N}_{W_R}^{\ssup{\ell, \Scal}}\rangle= \rho_2\Big\}\\
&\geq \inf\Big\{\langle P,F_U\rangle +\h^{\ssup{\Lcal,\Scal}}(P)\colon
P\in\Mcal^{\ssup{\rm s}}_1(\Lcal\times\Scal),\langle P,{\mathfrak N}_U^{\ssup {\ell,\Lcal}}\rangle\in\overline \Bcal_{2\delta}(\rho_1), \langle P,{\mathfrak N}_U^{\ssup {\ell,\Scal}}\rangle\in\overline \Bcal_{2\delta}(\rho_2)\Big\}, 
\end{aligned}
\end{equation}
where we recall the definition of $J_W(\xi)$ from \eqref{JWdef}. With the help of the continuity properties of $\chi$ from Lemma~\ref{lem-propertieschi}, the limit as $\delta\downarrow 0$ of the right-hand side is easily seen to coincide with $\chi(\rho_1,\rho_2)$, and the proof of \eqref{freeenergyident} is then finished. 

Recall the two sets $K_R^{\ssup\Lcal}(L)$ and $K_R^{\ssup\Scal}(L)$ from Lemmas~ \ref{lem-compactnessLoop} and \ref{lem-exponentiallytight}, respectively, and note that $L$ does not depend on $R$ and is picked large enough such that \eqref{hatZNlimit} is true.

Now we turn to the proof of \eqref{upperbound 5}. For the ease of notation, we now write $W$ instead of $W_R$.  Assume that $\xi^{\ssup {W}}$ is an approximate minimiser in the  variational formula on the left-hand side of \eqref{upperbound 5}, that is, it is amenable to that formula and approaches the infimum up to some quantity that vanishes as $R\to\infty$. Put
\begin{equation}
\Hcal=\liminf_{R\to\infty } J_{W}(\xi^{\ssup{W}})\qquad \mbox{and}\qquad \Fcal=\liminf_{R\to\infty }\frac1{|W|}\langle \xi^{\ssup{W}},F_{W,W}\rangle.
\end{equation}
Without loss of generality, $\Hcal$ and $\Fcal$ are finite. Let $\eps\in(0,1)$ be given. We need to construct a $P\in\Mcal^{\ssup{\rm s}}_1(\Lcal\times\Scal)$ that is amenable for the formula on the right-hand side of \eqref{upperbound 5} (i.e., satisfies $\langle P,{\mathfrak N}_U^{\ssup {\ell,\Lcal}}\rangle\in\overline \Bcal_{2\delta}(\rho_1),$ and $\langle P,{\mathfrak N}_U^{\ssup {\ell,\Scal}}\rangle\in\overline \Bcal_{2\delta}(\rho_2)$) and such that $\h^{\ssup{\Lcal,\Scal}}(P)\leq \Hcal+\eps$ and $\langle P,F_U\rangle\leq  \Fcal+\eps$, which will finish the proof. Let $M$ be so large that the constant $c_M$ introduced in Lemma~\ref{lem-LowerBound} satisfies $\log c_M\geq -\eps/8$. Let $R$ be so large that $C_M R^{-d/2}\leq \eps/8$, where $C_M$  is from \eqref{cRMvanishes}. Furthermore, assume that $R$ is also so large that 
\begin{equation}\label{RchoiceJF}
J_{W}(\xi^{\ssup {W}})\leq \Hcal+\eps/8\qquad\mbox{and}\qquad
\frac1{|W|}\langle \xi^{\ssup{W}},F_{W,W}\rangle\leq \Fcal+\eps/8.
\end{equation} 

In our first step, we are going to replace  $\xi^{\ssup {W}}$ by loop-shred distributions in larger and larger boxes in such a way that the entropy and energy do not really increase and the expected particle densities in loops and in shreds does not really change. (In a later step, we will use Proposition~\ref{Prop-construction_of_P} to further extend this measure to an element of $\Mcal_1(\Lcal\times\Scal)$.)  We want to use Proposition~\ref{lem-uppboundJ_W}(4) for that and Lemma~\ref{lem-lowerboundlogpint} for estimating the error term, but need first to replace $\xi^{\ssup {W}}$ by an approximation that guarantees that we are allowed to apply Proposition~\ref{lem-uppboundJ_W}(4) and that  the error term  is small. For this, we need to employ Lemma~\ref{lem-EntProjImag} with fixed parameters $\mathfrak r\in(1,\infty)$ and $S\in\N$ and large parameters $R,M,K,L, T,\vartheta$. Put $ \Theta=(\mathfrak r, M, L,K,T, \vartheta,S)$. Pick the parameters in $\Theta$ so large that the error term in Lemma~\ref{lem-EntProjImag} is $\leq \eps/4$ and pick  $\xi^{\ssup{W,\Theta,0}}\in\Mcal_1(\Lcal_{W}\times \Scal_{W})$ (namely, $\xi^{\ssup{W,\Theta,0}}=\xi^{\ssup {W}}\circ \Pi^{-1}$ with the projection $\Pi$ of Lemma~\ref{lem-EntProjImag}) that is concentrated on $A_{W;\Theta}=A_{W;\mathfrak r,M,K,L,T}\cap(\Lcal_{W}\times\Scal_{W}^{\ssup{M,\vartheta,S}})$ such that 
\begin{equation}\label{Thetagross}
J_{W;M}(\xi^{\ssup{W,\Theta,0}})\leq J_{W}(\xi^{\ssup{W}})+\frac \eps 4\qquad\mbox{and}\qquad \frac 1{ |W|}\langle F_{W,W},\xi^{\ssup{W,\Theta,0}}-\xi^{\ssup{W}}\rangle\leq \frac\eps 4 ,
\end{equation}
and $|\langle \mathfrak N_{W}^{\ssup{\Lcal,\ell}},\xi^{\ssup{W,\Theta,0}}-\xi^{\ssup{W}}\rangle|\leq \frac\eps 4 |W|$
and $|\langle \mathfrak N_{W}^{\ssup{\Scal,\ell}},\xi^{\ssup{W,\Theta,0}}-\xi^{\ssup{W}}\rangle|\leq \frac\eps 4 |W|$ and $\Pi_{W}^{\ssup{\Lcal}}(\xi^{\ssup{W,\Theta,0}})\in K_{3R}^{\ssup\Lcal}(L+\frac 14)$ and $\Pi_{W}^{\ssup{\Scal}}(\xi^{\ssup{W,\Theta,0}})\in K_{3R}^{\ssup\Scal}(L+\frac 14)$. (The fact that all these integrals w.r.t.~$\xi^{\ssup{W,\Theta,0}}$ can be made as close to the ones w.r.t.~$\xi^{\ssup{W}}$ follows from monotonicity of $\Pi$ in the cutting parameters $M, K,L, \vartheta$.)

Now we apply Proposition~\ref{lem-uppboundJ_W}(4) to $\xi^{\ssup{W,\Theta,0}}$ successively for $k=1$ (producing $\xi^{\ssup{W,\Theta,1}}$, then for $k=1 $ and $W$ replaced by $3 W$ (producing $\xi^{\ssup{W,\Theta,2}}$ and so on. We pick  the parameters $R, S$ so large that the error term in Proposition~\ref{lem-uppboundJ_W}(4) (using Lemma~\ref{lem-lowerboundlogpint} for the last term) is $\leq \frac \eps 8 3^{-k}$ for the $k$-th application of Proposition~\ref{lem-uppboundJ_W}(4). In this way, we obtain a sequence of $ \xi^{\ssup{W, \Theta,k}}\in \widetilde \Mcal_1^{\ssup k}\subset \Mcal_1(\Lcal_{3^kW}\times \Scal_{3^kW})$ such that the approximation error in the infimum is $\leq \eps/8$; more precisely,
\begin{equation}\label{xikdef}
J_{3^k W;M}(\xi^{\ssup{W,\Theta,k}})\leq \frac\eps 4+J_{W;M}(\xi^{\ssup{W,\Theta,0}})\qquad\mbox{and}\qquad \Pi_{3^k W \to 3^{k-1} W}(\xi^{\ssup{W,\Theta,k}})=\xi^{\ssup{W,\Theta,k-1}},
\end{equation}
and every $\xi^{\ssup{W,\Theta,k}}$ is $3^k W$-shift invariant. In particular, the family $(\xi^{\ssup{W,\Theta,k}})_{k\in \N}$ is consistent, and every projection of any $\xi^{\ssup{W,\Theta,k}}$ on a shift $z+W$ with $z\in 2R\Z^d$ is (after shifting back to the origin)  equal to $\xi^{\ssup{W,\Theta,0}}$.

Note that $\xi^{\ssup{W,\Theta,k}}$ is concentrated on $A_{3^k W;\Theta}$, since its projections on any discrete shift of $W$ in $3^k W$ is equal (after shifting to the origin) to $\xi^{\ssup{W,\Theta,0}}$. Therefore all shreds in it have all the properties of the shreds in the configurations in $A_{W;\Theta}$ (namely a bounded leg spread, a bound on the extension of all subparts, and an upper bound for the particle number in each of the boxes $W_{\mathfrak r}+z$). Therefore, the expected interaction $\frac 1{|3^k W|}\langle F_{3^k W,3^k W},\xi^{\ssup{W,\Theta,k}}\rangle$ is essentially equal to $\frac 1{|W|}\langle F_{W,W},\xi^{\ssup{W,\Theta,0}}\rangle$, up to the interaction between any two neighbouring boxes of radius $R$, which can be upper bounded by some constant (depending only on $M,L,K,T,\mathfrak r$ and $d, \beta, v$) times $1/R$, since it is a surface-order term. By making $R$ large enough, this error term  is below $\eps/8$. Therefore,
\begin{equation}\label{Frecursiveuppbound}
\frac 1{|3^k W|}\langle F_{3^kW,3^kW},\xi^{\ssup{W,\Theta,k}}\rangle\leq \frac \eps 4+
\frac 1{| W|}\langle F_{W,W},\xi^{\ssup{W,\Theta,0}}\rangle,\qquad k\in\N.
\end{equation}
By the same argument, but without error terms, the normalized expected particle numbers for loops and shreds, $\frac 1{|W|}\langle \mathfrak N_{W}^{\ssup{\ell,\Lcal}},\xi^{\ssup{W,\Theta,0}}\rangle$ and $\frac 1{|W|}\langle \mathfrak N_{W}^{\ssup{\ell,\Scal}},\xi^{\ssup{W,\Theta,0}}\rangle$, do not change when going from $W$ to $3^kW$ and from $\xi^{\ssup{W,\Theta,0}}$ to $\xi^{\ssup{W,\Theta,k}}$. In particular, $\frac 1{|3^k W|}\langle \mathfrak N_{3^k W}^{\ssup{\ell,\Lcal}},\xi^{\ssup{W,\Theta,k}}\rangle\in \overline B_{2\delta}(\rho_1)$ and $\frac 1{|3^k W|}\langle \mathfrak N_{3^k W}^{\ssup{\ell,\Scal}},\xi^{\ssup{W,\Theta,k}}\rangle\in \overline B_{2\delta}(\rho_2)$. 

Now we use Proposition~\ref{Prop-construction_of_P} to obtain from the consistent family $(\xi^{\ssup{W,\Theta,k}})_{k\in\N}$ some $\widetilde P\in\Mcal_1(\Lcal\times\Scal)$ that extends all the $\xi^{\ssup{W,\Theta,k}}$s. By the $W$-shift invariance of all the $\xi^{\ssup{W,\Theta,k}}$s, $P$ is invariant under any shift by a vector $\in 2R\Z^d$. Now we replace $\widetilde P$ by $P=\frac 1{|W|}\int_W\d x\, \widetilde P\circ \theta_x^{-1}$, which is shift-invariant, i.e., lies in $\Mcal_1^{\ssup{\rm s}}(\Lcal\times\Scal)$.  We are going to show that $\h^{\ssup{\Lcal,\Scal}}(P)\leq 2\eps+ J_{W}(\xi^{\ssup  {W}})$ and $\langle F_{U},P\rangle\leq 2\eps +\frac 1{|W|}\langle F_{W,W}, \xi^{\ssup  {W}}\rangle$ and that both the loop and the interlacement particle densities  of $P$ are close to $\rho_1$, respectively to $\rho_2$. This will imply \eqref{upperbound 5} and hence end the proof of the upper bound in Theorem~\ref{thm-freeenergy}.

It is relatively easy to see that the loop-particle density and the interlacement-particle densities lie in $\Bcal_{3\delta}(\rho_1)$, respectively in $\Bcal_{3\delta}(\rho_2)$. Indeed, use that $\frac 1{|3^k W|}\langle \mathfrak N_{3^k W}^{\ssup{\ell,\Lcal}},\xi^{\ssup{W,\Theta,k}}\rangle\sim \frac 1{|3^k W|}\langle \mathfrak N_{3^k W}^{\ssup{\ell,\Lcal}},P\rangle \to \langle \mathfrak N_U^{\ssup{\ell,\Lcal}},P\rangle$ as $k\to\infty$ since $\langle \mathfrak N_{3^k W}^{\ssup{\ell,\Lcal}},P\rangle$ lies between $\langle \mathfrak N_{3^k W -W}^{\ssup{\ell,\Lcal}},P\rangle$ and $\langle \mathfrak N_{3^k W+W}^{\ssup{\ell,\Lcal}},P\rangle$ and since, according to Lemma~\ref{lem-loopshredsnegl}, the expected number of particles in box-shreds of loops is negligible with respect to the box volume. A similar argument applies to the expected particle density in interlacements; we leave the details to the reader.

Now we show that $J_{3^k W;M}(\Pi_{3^k W}(P))\leq \Hcal+2\eps$ for any $k\in\N$, which implies, via Theorem~\ref{thm-specrelent}, that $\h^{\ssup{\Lcal,\Scal}}(P)\leq \Hcal+2\eps$. Indeed, using convexity of $J_{3^k W;M}$ and \eqref{JWupperboundx} in Proposition~\ref{lem-uppboundJ_W}(1) with $m=3$, we see that, for any $k\in\N$,
$$
\begin{aligned}
J_{3^k W;M}(\Pi_{3^k W}(P))
&\leq\frac 1{ |W|}\int_{W}\d x\, J_{3^k W;M}(\theta_x(\Pi_{x+3^k W}(\widetilde P))\\
&\leq \eps+J_{3^{k+1} W}(\Pi_{3^{k+1} W;M}(\widetilde P))=\eps+ J_{3^{k+1} W;M}(\xi^{\ssup {W,\Theta,k+1}})\\
&\leq  \eps+ \frac \eps 4+ J_{W;M}(\xi^{\ssup{W,\Theta,0}})
\leq \frac {3\eps}2 +J_W(\xi ^{\ssup W})\leq 2\eps +\Hcal,
\end{aligned}
$$
where we used also \eqref{xikdef} and \eqref{Thetagross} and one of the requirements for $R$.

It is only left to show that $\langle F_U,P\rangle\leq 2\eps +\Fcal$, where we recall the definition of $F_U$ from \eqref{FUdef}. We first point out that, according to the spatial ergodic theorem,
\begin{equation}\label{ergodictheorem}
\begin{aligned}
\langle F_U,P\rangle 
&= \lim_{k\to\infty}\frac 1{|3^k W|} \Big\langle \sum_{z\in\Z^d\colon z+U\subset 3^k W} F_{z+U},P\Big\rangle,
\end{aligned}
\end{equation}
since $F_U$ is bounded and has bounded reach almost surely on the support of $P$. This is seen as follows.
Recall that  every $\xi^{\ssup{W,\Theta,k}}$ is concentrated on the set $A_{3^{k} W;\Theta}$ of configurations; these have a globally bounded interaction between any box $z+U$ and its complement $(z+U)^{\rm c}$, where \lq bounded\rq\ is meant in terms of spatial distance, number of interacting loops/shreds  and in amount of interaction. More precisely, we may pick some $C\in(0,\infty)$ (only depending on the parameters in $\Theta$ and on the length of the support of the interaction potential $v$), such that $F_{z+U}(\omega,\varpi)$ is equal to the total interaction  between $z+U$ and $z+ W_C$ only, almost surely with respect to $\xi^{\ssup{W,\Theta,k}}$. And if $z+U\subset 3^k W$, then the integration of $F_{z+U}$ with respect to $P$ can be replaced by the one with respect to $\xi^{\ssup{W,\Theta,k+1}}$.

Recall from \eqref{FWWdef} that $F_{W,W}(\omega,\varpi)=\frac 12 F_{W,W}^{\ssup{\Lcal\Lcal}}(\omega)+F_{W,W}^{\ssup{\Lcal\Scal}}(\omega,\varpi)+\frac 12 F_{W,W}^{\ssup{\Scal\Scal}}(\varpi)$, whose definitions we recall from \eqref{FLL}--\eqref{FSS}. Therefore, almost surely with respect to $P$, for any $x\in W$ and any $k\in\N$, we get from splitting the interaction between $3^k W$ and $3^k W$ into the interactions between $z+U$ and $ z+ U^{\rm c}$ and sum on $z$,
$$
\begin{aligned}
F_{ 3^k W, 3^k W}
&\geq \sum_{z\in\Z^d\colon z+U\subset 3^k W}\Big[\frac 12 F_{z+U,z+U }+F_{z+U,W\cap(z+ U^{\rm c})} \Big]\\
&\geq \sum_{z\in\Z^d\colon z+U\subset W_{3^k-C}}\Big[\frac 12 F_{z+U,z+U }+F_{z+U,z+(W_C\setminus U)} \Big]\\
&=\sum_{z\in\Z^d\colon z+U\subset W_{3^k-C}}\Big( \smfrac 12 F_{ z+U, z+U}^{\ssup{\Lcal\Lcal}}+ F_{ z+U, z+U}^{\ssup{\Lcal\Scal}}+ \smfrac 12 F_{ z+U, z+U}^{\ssup{\Scal\Scal}} \\
& \qquad+F^{\ssup{\Lcal\Lcal}}_{ z+U, z+(W_C\setminus U)}+F^{\ssup{\Lcal\Scal}}_{ z+U, z+(W_C\setminus U)}+F^{\ssup{\Lcal\Scal}}_{ z+(W_C\setminus U), z+ U}+F^{\ssup{\Scal\Scal}}_{ z+U, z+(W_C\setminus U)}\Big)\\
&= \sum_{z\in\Z^d\colon z+U\subset W_{3^k R-C}}F_{ z+U},
\end{aligned}
$$
recalling the definition of $F_U$ from \eqref{FUdef}. Certainly, the sum of $F_{z+U}$ on $z\in\Z^d$ with $z+U\subset 3^kW$ but $z+U\not\subset W_{3^k R-C}$ is negligible in \eqref{ergodictheorem}, and 
$$
\frac1{|3^k W|}\Big|\Big\langle F_{3^k W,3^k W}-\frac 1{|W|}\int_W\d x \, F_{x+3^k W,x+3^k W},\xi^{\ssup{W,\Theta,k+1}}\Big\rangle\Big|\to0,\qquad k \to\infty.
$$
Therefore, 
$$
\begin{aligned}
\langle F_U,P\rangle 
&\leq \lim_{k\to\infty}\frac 1{|3^k W|}\big\langle F_{ 3^k W, 3^k W},P\big\rangle
 =\lim_{k\to\infty}\frac 1{|3^k W|}\Big\langle \int_W\d x\, F_{x+3^k W,x+3^k W},\xi^{\ssup{W,\Theta,k+1}}\Big\rangle\\
& = \lim_{k\to\infty}\frac 1{|3^k W|}\Big\langle  F_{3^k W,3^k W},\xi^{\ssup{W,\Theta,k+1}}\Big\rangle.
\end{aligned}
$$
Now using subsequently  \eqref{Frecursiveuppbound}, \eqref{Thetagross} and \eqref{RchoiceJF}, we see that $\langle F_U,P\rangle \leq \Fcal+2\eps$, which finishes the proof.

\subsection{Lower bound}\label{sec-finishlowerbound}

For finishing the proof of the lower bound in  Theorem~\ref{thm-freeenergy}, we go back to \eqref{hatZNlimitlowbound3}. Applying Lemma~\ref{lem-ergappr} and \eqref{hbound}, we see that, given any small $\eps\in(0,1)$, we can make the parameters $\vartheta, M,L,K,T$ and then $R$ (in this order) so large that the variational formula on the right-hand side of  \eqref{hatZNlimitlowbound3} is not larger than $\eps+\chi(\rho_1,\rho_2)$, also using the continuity of $\chi(\rho_1,\rho_2)$ in $(\rho_2,\rho_2)$; see Lemma~\ref{lem-propertieschi}. This gives, for any $\delta,\delta'\in(0,1)$,
\begin{equation}\label{hatZNlimitlowbound4}
\begin{aligned}
\liminf_{N\to\infty}\frac 1{|\L_N|}\log \widehat Z_{N,R,\delta}^{\ssup{\rm bc}}(\L_N,\rho_1,\rho_2)
&\geq-\frac {C_{M,L,K,T}}R-\frac {C_M}{R^{d/2}}-\Big[\frac {C_{M,\vartheta,S}}R+\frac{C\vartheta^2}S\Big](\rho_2+\delta')
\\
&\quad-\eps-\chi(\rho_1,\rho_2).
\end{aligned}
\end{equation}
We make now first $S\to\infty$ and then $R\to\infty$, which makes the first three terms disappear.  We have finished the proof of the lower bound in  Theorem~\ref{thm-freeenergy}.

\appendix

\section{The Brownian interlacement PPP with spacing $\beta$}\label{sec-InterlacePPP}

\noindent In this appendix we introduce, for $d\geq 3$, an interesting Poisson point process ${\tt R}= {\tt R}^{\ssup{u,\beta}}$ of random interlacements that has some connection to the specific relative entropy $\h^{\ssup{\Lcal,\Scal}}$ defined in \eqref{jointentropydef}, or, more directly, some connection to the kernel $ {\tt K}_W$ defined in \eqref{kernelK}. We survey its construction and some of its properties. Processes of this type were recently recognized as being instrumental in the description of the long loops in the Bose gas. In the present paper, such process does not appear in  our description of the free energy, since our entropy $\h^{\ssup {\Lcal,\Scal}}$ can be entirely characterised in terms of the kernel ${\tt K}_W$ defined in \eqref{kernelK}, but it admits enlightening interpretations. Because of Lemma~\ref{lem-KWident} below, $\h^{\ssup{\Lcal,\Scal}}$  can be seen as a kind of specific relative entropy with respect to the reference PPP ${\tt Q}\otimes {\tt R}$ of loops and interlacements, but note the crucial appearance of the boundary-particle distribution $\partial\Pi_W^{\ssup{\Scal}}(P)$ in \eqref{JWdeffirst}. Nevertheless, the process ${\tt R}$ is at the heart of the limiting description of the Bose gas and its condensate, so we decided to spend some words on it, even though it is not involved in our analysis of the interacting Bose gas.

Here is an ad-hoc definition of this process:
Let $d\geq 3$ and fix $\beta\in(0,\infty)$. The \emph{Brownian interlacement Poisson point process with particle spacing $\beta$}, or for short \emph{Brownian interlacement process}, ${\tt R}\in \Mcal_1^\ssup{{\rm s}}(\Shreds)$, is the distribution of the random interlacement point process $\varpi^{\ssup{\rm B}}=\sum_{g\in\Gamma_{\rm B}}\delta_g$, which  consists of independent two-sided Brownian motions $g\in\Ccal_\infty$ with parameter set $\R$, identified as equal modulo time shift by  $\beta$. The only free parameter of this process is the expected total amount of  particles in the unit box $U=[-\frac 12, \frac 12]^d$, which we call $u\in(0,\infty)$. We sometimes write ${\tt R}^{\ssup{u,\beta}}$ or ${\tt R}^{\ssup u}$; if we take $u=1$, then we drop this super-index.

The first construction of such a process was carried out in the seminal paper \cite{Sz10} for simple random walk on $\Z^d$; later the Brownian version was constructed in \cite{Sz13}. The version that we need in this paper is for Gaussian random walks instead, with interpolation by independent Brownian bridges. The Gaussian interlacement PPP has been already constructed in \cite{AFY21}, but without interpolation, and without stating the distributional properties that we will need; hence we are giving a short account on the construction and on the properties that we need. Alternatively, we could also refer to the Brownian version constructed in \cite{Sz13} and furnish each interlacement $g\colon\R\to\R^d$ independently with a determination of where $\beta\Z$ is in the domain $\R$, which leads presumably to the same process that we are going to construct  now, but we prefer to derive some distributional properties along our construction.

We are following the construction of the interlacement PPP for simple random walk on $\Z^d$  given in \cite[Section 1]{Sz10} and translate it to the Gaussian random walk in $\R^d$. Once we have constructed this discrete-time process, we scale it up in time by a factor of $\beta$ and  interpolate it between the times in $\beta \Z$ with independent Brownian bridges with time-interval $[0,\beta]$. See \cite{LP18} for general references to the PPP theory.

Here is a slightly more explicit definition of the Gaussian interlacement PPP: It is the PPP with a $\sigma$-finite intensity measure $\Upsilon$ on the set of trajectories $\Z\to\R^d$ that is uniquely determined by the following property: For $W\Subset\R^d$, the restriction of $\Upsilon$ to the set of trajectories that visit $W$ is the (not normalised) measure that starts with a site $\in W$ at time 0 that is \lq distributed\rq\ according to the equilibrium measure of the Gaussian random walk on $W$, and has two independent trajectories: one into the negative time axis, conditioned on never entering $W$ again, and one in the positive time axis, being just a Gaussian random walk in $\Z^d$. (By transience, it visits $W$ only finitely often.) This $W$-dependent measure turns out to be consistent in $W$ such that the described property fixes it uniquely.

Let us come to the details. By $\P_x$ we denote the probability measure under which a Gaussian random walk $(X_n)_{n\in\N_0}$ (i.e., with steps that are independent and Gaussian distributed with variance $2\beta$) starts from $x\in \R^d$. Since we are in $d\geq 3$, this is a  random variable with values in the set 
$$
\Tcal_+=  \Big\{{\mathbf x}=(x_n)_{n\in\N_0}\in (\R^d)^{\N_0}\colon \lim_{n\to \infty}x_n=\infty\Big\}.
$$
By $\Tcal=\{{\mathbf x}=(x_n)_{n\in \Z}\in(\R^d)^\Z\colon \lim_{n\to\pm\infty} x_n\}$ we denote the set of two-sided trajectories that visit any compact set only finitely often, and by $\Tcal^*$ its quotient space with respect to the identification of functions if they are time-shifts of each other, i.e., $\Tcal^*=\{\{\theta_n f\colon n \in \Z\}\colon f\in \Tcal\}$, where $\theta_n$ is the time-shift by $n$. By $\pi^*\colon \Tcal\to\Tcal^*$ we denote the canonical projection. By $\Tcal_W=\{(x_n)_{n\in\Z}\in \Tcal\colon x_n\in W\mbox{ for some }n\in\Z\}$ we denote the set of trajectories that visit $W$, and by $\Tcal_W^*$ its image under $\pi^*$.

As we now need some little theory of capacities and equilibrium measures for the Gaussian random walk, we restrict to special compact subsets, namely boxes. For any box $W\Subset\R^d$, we denote by $\e_W$  the density of the equilibrium measure of the Gaussian random walk, i.e.,
\begin{equation}
\e_W(x)=\1_W(x)\P_x(\widetilde H_W=\infty),\qquad x\in \R^d,
\end{equation}
where $\widetilde H_W=\inf\{n\in\N\colon X_n\in W\}$ is the entry time into $W$. It is clear that $\e_W(x)>0$ in the box $W$, but $=0$ outside the box. 
By $\P_x^{\ssup W}$ we denote the conditional measure $\P_x(\cdot\mid \widetilde H_W<\infty)$ given that the path visits $W$. Now we introduce the path measure $Q_W$ on $\Tcal$ as follows:
\begin{equation}\label{QWdef}
Q_W\big((X_{-n})_{n\in\N_0}\in A, X_0\in \d x, (X_{n})_{n\in\N_0}\in B\big)=\P_x^{\ssup W}(A)\e_W(x)\P_x(B)\,\d x ,\qquad x\in \R^d, A,B\subset \Tcal_+\mbox{ mb.}
\end{equation}
We also need the last time $L_W=\sup\{n\in \N_0\colon X_n\in W\}$ of a visit to $W$ and the set of finite-length trajectories
\begin{equation}
\Tcal_{W\to W}=\{{\mathbf x}=(x_n)_{n\in\{0,\dots,N\}}\in (\R^d)^{N+1}\colon N\in\N, x_0\in W, x_N\in W\}
\end{equation}
that start and terminate in $W$.

We can now formulate and prove the existence and uniqueness of the measure $\Upsilon$ that we will take as the intensity measure of our PPP.

\begin{lemma}[Existence, uniqueness and properties of $\Upsilon$]\label{lem-Upsilon}
\begin{enumerate}
\item There is a unique $\sigma$-finite measure $\Upsilon$ on $\Tcal^*$ such that
\begin{equation}
\1_{\Tcal_W^*}\,\Upsilon = \pi^*\circ Q_W,\qquad W\subset\R^d\mbox{ a box}.
\end{equation}

\item $\Upsilon$ is invariant under time-reversal and spatial shifts.

\item For any $W\Subset \R^d$ and any ${\mathbf x}=(x_n)_{n\in\{0,\dots,N\}}\in\Tcal_{W\to W}$,
\begin{equation}\label{QWdistprop}
\begin{aligned}
Q_W\big((X_{-n})_{n\in\N_0}\in A, &(X_n)_{n\in\{0,\dots,L_W\}}\in \d {\mathbf x},(X_{L_W+n})_{n\in\N_0}\in B\big)\\
&=\P_{x_0}^{\ssup W}(A)\e_W(x_0)\P_{x_0}((X_n)_{n\in\{0,\dots,N\}}\in \d{\mathbf x}) \P_{x_N}^{\ssup W}(B),\qquad  A,B\subset \Tcal_+\mbox{ mb.}
\end{aligned}
\end{equation}
\end{enumerate}
\end{lemma}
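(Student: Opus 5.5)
We will follow Sznitman's construction for simple random walk \cite[Section 1]{Sz10}, replacing the discrete Green function and equilibrium measure by those of the Gaussian random walk. The first step is part (3), since it is the tool for the consistency in (1). Under $Q_W$ the forward path is a Gaussian random walk from $x_0$; we decompose it at the last visit time $L_W$, which is finite by transience in $d\geq3$. We sum over $N\in\N_0$ and apply the Markov property at time $N$. The event $\{L_W=N\}$ intersected with the path up to $N$ gives the density $\P_{x_0}((X_n)_{n\le N}\in\d{\mathbf x})$ times $\P_{x_N}(\widetilde H_W=\infty, (X_{n})\in B)$. We then rewrite $\P_{x_N}(\widetilde H_W=\infty,\cdot)$ as $\e_W(x_N)$ times the law of the walk conditioned never to return to $W$. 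Together with the backward part $\P^{\ssup W}_{x_0}(A)\e_W(x_0)$, this gives a density that is symmetric under reversing the finite piece ${\mathbf x}$. Here we use that the Gaussian kernel is symmetric, so that $\P_{x_0}(X_{[0,N]}\in\d{\mathbf x})\,\d x_0$ is reversible with respect to Lebesgue measure. We also need to match the notation $\P^{\ssup W}$ (walk conditioned to avoid $W$) with the paper's normalisation; we would read $\P^{\ssup W}_{x}$ in \eqref{QWdef} as the law conditioned on $\widetilde H_W=\infty$.

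For (1), take boxes $W\subset W'$ and show $\1_{\Tcal_W}\,\pi^*\circ Q_{W'}=\pi^*\circ Q_W$. Under $Q_{W'}$, restricted to trajectories hitting $W$, we shift time to the first entrance $H_W$ into $W$. We must check that the shifted law equals $Q_W$. For this we use the last-exit/first-entrance decomposition of the equilibrium measure (sweeping identity): for $y\in W$, $\int_{W'}\e_{W'}(x)\,\P_x(H_W<\infty, X_{H_W}\in\d y)\,\d x=\e_W(y)\,\d y$. This is proved via the Green function $g$ of the Gaussian walk, using $\P_x(H_W<\infty)=\int g(x,y)\e_W(y)\,\d y$ and the symmetry of $g$. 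We then apply the strong Markov property at $H_W$ for the forward path. For the backward path, we combine the reversed piece between the $W'$-entrance and the $W$-entrance with the $W'$-avoiding past; by reversibility this becomes a path that avoids $W$. We expect this to be the main obstacle: making the backward path identity rigorous for continuous state space, including the measurability of the quotient map $\pi^*$.

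Given consistency, define $\Upsilon$ on $\Tcal^*$ by exhausting with boxes $W_n\uparrow\R^d$. Set $\Upsilon=\sum_n \pi^*\circ Q_{W_n}(\,\cdot\,\cap(\Tcal^*_{W_n}\setminus\Tcal^*_{W_{n-1}}))$. Consistency gives $\1_{\Tcal^*_W}\Upsilon=\pi^*\circ Q_W$ for every box $W$. The measure is $\sigma$-finite since $Q_W$ has finite mass $\int\e_W<\infty$. Uniqueness holds because the sets $\Tcal^*_{W_n}$ exhaust $\Tcal^*$, since every trajectory visits some box.

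For (2), shift invariance follows from $Q_{W+z}=\theta_z\circ Q_W$, which holds by translation invariance of the Gaussian walk, combined with uniqueness. Time-reversal invariance follows from the symmetric formula \eqref{QWdistprop}: reversing the trajectory exchanges the roles of $A$ and $B$ and reverses ${\mathbf x}$, and both of these leave the density unchanged.
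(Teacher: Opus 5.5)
Your proposal is correct and follows the route the paper indicates, namely Sznitman's proof of \cite[Theorem 1.1]{Sz10}. That route proves consistency $\1_{\Tcal_W^*}\,\pi^*\circ Q_{W'}=\pi^*\circ Q_W$ for boxes $W\subset W'$ via the first-entrance decomposition, the strong Markov property and reversibility of the Gaussian kernel, and then exhausts $\Tcal^*$ by boxes; your explicit sum over $\Tcal^*_{W_n}\setminus\Tcal^*_{W_{n-1}}$ does the job of the paper's appeal to Carath\'eodory.

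Your reading of $\P^{\ssup W}_x$ as conditioning on $\{\widetilde H_W=\infty\}$ is the intended one. Your computation of (3) rightly produces the factor $\e_W(x_N)$ in front of $\P^{\ssup W}_{x_N}(B)$. The displayed formula \eqref{QWdistprop} omits this factor, but it is exactly what makes the density symmetric under reversal, and hence what your proof of time-reversal invariance in (2) needs.
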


\begin{proof} We only give some few hints, as the proof  is entirely analogous to the one of \cite[Theorem 1.1]{Sz10} (which is lengthy and technical). We leave the details to the reader.

The uniqueness in (1) follows from the fact that $\Tcal^*$ is exhausted by the sets $\Tcal_{[-R,R]^d}^*$ with $R\in \N$. The proof of the existence consists of showing that the measure $\Upsilon_W=\pi^*\circ Q_W$ is consistent in $W$ in the sense that, for any $W\subset \widetilde W\Subset \R^d$, we have $\1_{\Tcal_W^*}\,\Upsilon_{\widetilde W}=\Upsilon_{W}$. Then Carath\'eodory's extension theorem yields (1). The proof of this is entirely analogous to the proof of the analogous assertion in \cite[Theorem 1.1]{Sz10}, with some obvious changes that come only from the continuous nature of $\R^d$: the fact that the equilibrium measure has a Lebesgue density and that trajectories run in $\R^d$ rather than in $\Z^d$. The proof goes via a decomposition into paths that start and end in $\widetilde W$ and run within $W$ in between, and it uses the Markov property of the random walk.
\end{proof}

Assertion (1) says that $\Upsilon$ can be described by the explicit formula \eqref{QWdef} on the set of trajectories that visit $W$, and Assertion (3) says that every path under $Q_W$ can be decomposed in time in three pieces: one piece of a Gaussian random until the first time of a visit to $W$ (having the equilibrium measure as the  \lq distribution\rq\ at this time), one piece until the last visit to $W$, and the last piece, which is a Gaussian random walk conditioned on never visiting $W$ anymore. These pieces are independent, given the two locations at the first and the last time of a visit to $W$. 

By $\Leb$ we denote the Lebesgue measure on $[0,\infty)$. Now we use the $\sigma$-finite measure $\Upsilon\otimes \Leb$ on $\Tcal^+\times [0,\infty)$ as the intensity measure of a PPP $\varpi=\sum_i\delta_{(X_i^*,v_i)}$, where we are going to use the $v$-component for controlling the spatial density of the paths. More precisely, for any $v\in(0,\infty)$, we can call the point process $\sum_{i\colon v_i\leq v}\delta_{X_i^*}$ the {\em Gaussian interlacement process with density parameter} $v$.

\begin{remark}[Construction of the Gaussian interlacement PPP.]\label{rem-PPPconstruction} 
From Assertion (3) in Lemma~\ref{lem-Upsilon}, using the standard construction of PPPs from the intensity measure, the following construction of the PPP  $\sum_{i\colon v_i\leq v}\delta_{X_i^*}$ emerges. In simple words, one can construct the PPP on the set $\Tcal_W^*$ of paths that visit the box $W$ by first sampling a Poisson-distributed variable $N$ with parameter $v {\rm cap}(W)= v\int_W \e_W(x)\,\d x$ and then $N$ random sites $X_0^{\ssup 1},\dots,X_0^{\ssup N}\in W$ with distribution $\e_W(x)\,\d x/{\rm cap}(W)$. Then one samples independently over $i=1,\dots,N$ a Gaussian random walk $(X_{-n}^{\ssup i})_{n\in\N_0}$ conditioned on never visiting $W$, and another, independent, Gaussian random walk $(X_{n}^{\ssup i})_{n\in\N_0}$ (which may return to $W$ never or several times, but eventually vanishes at $\infty$). Finally, we \lq forget\rq\ the precise time at which $X^{\ssup i}$ first enters $W$, i.e., we put $X_i^*=\pi^*(X^{\ssup i})$.
\hfill$\Diamond$
\end{remark}

Now we interpolate any Gaussian random walk to a Brownian motion by inserting independent Brownian bridges and scaling them up in time with a factor of $\beta$. For this, we will be using the concept of $K$-marking with a kernel $K$, which is well-known in PPP-theory. Introduce the map
\begin{equation}\label{randomwalkinterpol}
\Tcal_+ \ni{\mathbf x}=(x_n)_{n\in\N_0}\mapsto B^{\ssup {\mathbf x}}=(B^{\ssup {\mathbf x}}_t)_{t\in[0,\infty)}\in \Ccal,
\qquad B^{\ssup {\mathbf x}}_t =
B^{\ssup{x_n,x_{n+1}}}_{t-\beta n}\quad\mbox{if }t\in [\beta n,\beta (n+1)], n\in\N_0,
\end{equation}
and $(B^{\ssup{x_n,x_{n+1}}})_{n\in\N_0} $ is a family of independent Brownian bridges with time-interval $[0,\beta]$ from $x_n$ to $x_{n+1}$. Then, if $X$ is a Gaussian random walk with variance $2\beta$ of the steps, $B^{\ssup X}$ is a Brownian motion with generator $\Delta$. We can describe this interpolation operation in terms of distributions via the kernel $K$ from $\Tcal_+$ to $ \Ccal$ given by
\begin{equation}
K({\mathbf x},\d f)=\bigotimes_{n\in\N_0} \mu_{x_n,x_{n+1}}^{\ssup \beta}\big(\d (\theta_{\beta n} (f|_{[\beta n, \beta (n+1)]}))\big),\qquad {\mathbf x}=(x_n)_{n\in\N_0} \in\Tcal_+, f\in\Ccal,
\end{equation}
where we recall from \eqref{nnBBM} the canonical Brownian bridge measures. We use the same symbol $K$ for the variants of this kernel on the sets $\Tcal$ and $\Tcal^*$. 

From $\Upsilon$, we now proceed with the $K$-marking $\varpi^{\ssup K }=\sum_i\delta_{(X_i^*,B_i^*,v_i)}$, where the Brownian motions $B_i^*$ derive from the random walks $X_i^*$ via  \eqref{randomwalkinterpol}, independently over $i$. Its intensity measure is $\Upsilon\otimes K\otimes \Leb$. Observe that $X_i^*$ is a deterministic function of $B_i^*$ and that the Brownian motions are uniquely determined only up to time-shifts in $\beta\Z$, in contrast to the Brownian interlacement process constructed in \cite{Sz13}. 

Finally, the measure ${\tt R}^{\ssup {u,\beta}}$  that we announced is now defined as the distribution of the process $\varpi^{\ssup{K,u}}=\sum_ {i\colon u_i \leq v_u}\delta_{B_i^*}$, where $v_u\in [0,\infty)$ is determined by the requirement that $u=\E(\sum_{i\colon u_i \leq v_u}\sum_{n\in\Z}\1_{B_i^*(n\beta)\in U})=1$, where $U=[-\frac 12,\frac 12]^d$ is the unit box. It is only technical to prove that the distribution of the process $\varpi^{\ssup {K,u}}$ is invariant under time-reversal and spatial shifts, since the Brownian bridge measures have the relevant properties for that. That is, the distribution ${\tt R}^{\ssup {u,\beta}}$ of $\varpi^{\ssup {K,u}}$ lies in $\Mcal_1^{\ssup{\rm s}}( \Scal)$. This finishes the introduction of the Brownian $\beta$-spaced interlacement  process.

\begin{remark}[Construction of the Brownian $\beta$-spaced interlacement PPP.]\label{rem-PPPbetaconstruction}
Referring to the construction of Remark~\ref{rem-PPPconstruction}, the $B_i^*$ may be constructed from the $X^{\ssup i}$ by interpolating them by Brownian bridges as in \eqref{randomwalkinterpol}, and $N$ independent standard Brownian motions $B^{\ssup i}=(B^{\ssup i}_t)_{t\in[0,\infty)}$ with generator $\Delta$ arise, starting from $B^{\ssup i}_0=X_0^{\ssup i}$. Analogously, and using time-reversal properties of the Brownian bridge measures, we interpolate also the negative-time parts and obtain Brownian motions conditioned on never hitting $W$ at any time $ \in \beta\N$, independent of the positive-time parts, given the starting sites $B^{\ssup i}_0$. Finally, we \lq forget\rq\ the integer-part of $\frac 1\beta $ times the time, i.e., we put $B_i^*=\pi^*(B^{\ssup i})=\{\theta_{\beta n}(B^{\ssup i})\colon n\in\Z\}$.
\hfill$\Diamond$
\end{remark}

Now we finally give the connection between the interlacement process ${\tt R}$ and our analysis of the interacting Bose gas. Recall the kernel ${\tt K}_W$ from \eqref{kernelK} and recall from \eqref{JWdeffirst} how crucial it is in this connection.

\begin{lemma}[Identification of ${\tt K}_W$ in terms of ${\tt R}$]\label{lem-KWident} The kernel ${\tt K }_W(\mu,\cdot)$ defined in \eqref{kernelK} is equal to the kernel $\Pi_W(\SPP)_{\Tcal_W\to  \Scal_W}(\mu,\cdot)$. 
\end{lemma}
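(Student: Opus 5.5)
The identity will follow from the PPP construction in Remarks~\ref{rem-PPPconstruction} and \ref{rem-PPPbetaconstruction}, combined with the strong Markov property of the Gaussian random walk at the successive entry and exit times into $W$. I take a box $\widetilde W\supset W$, large enough to contain the relevant sites, and restrict $\SPP$ to interlacements hitting $\widetilde W$. By Lemma~\ref{lem-Upsilon}(1) this restriction is a PPP with intensity $\pi^*\circ Q_{\widetilde W}$, interpolated by Brownian bridges through the kernel $K$. Since $W\subset\widetilde W$, every $W$-shred of $\varpi^{\ssup{K,u}}$ belongs to a trajectory counted in this restricted process. It therefore suffices to compute the conditional law of the point process of $W$-shreds given their boundary triples $(x_i,l_i,y_i)$ under this explicit process.

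First step: fix one trajectory under $Q_{\widetilde W}$. Let $\tau^{\ssup1}<\sigma^{\ssup1}<\tau^{\ssup2}<\dots$ be the successive entry and exit times into $W$ at times in $\Z$. Applying Lemma~\ref{lem-Upsilon}(3) and then the strong Markov property of the forward random walk (the backward piece is conditioned never to hit $\widetilde W$, hence never $W$), I get a factorization of the joint law. Given the entry sites, the lengths $\sigma^{\ssup j}-\tau^{\ssup j}$ and the exit sites, the random-walk pieces inside $W$ are independent. Each has the law of the walk started at $x$, conditioned to stay in $W$ at steps $1,\dots,l-1$ and to land at $y$ at step $l$. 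This is exactly the argument of Lemma~\ref{lem-distboundshred}(1), transplanted from a loop to a bi-infinite path. The pieces outside $W$ only enter through the boundary triples and the law of $\mu$. Interpolating with independent Brownian bridges via $K$ then turns each piece into a shred with law $q^{\ssup{W,l}}_{x,y}$. This is where I check that Brownian bridges concatenated over the conditioned walk reproduce $\P_x(B\in\cdot\mid B_{\beta},\dots,B_{(l-1)\beta}\in W, B_{l\beta}=y)$, which holds by the Markov property of Brownian motion at times in $\beta\N$.

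Second step: superpose over the Poisson collection. Conditionally on the number of trajectories and their entry/exit data, the trajectories are independent. So the shred configuration, given $\mu=\partial\Pi_W^{\ssup\Scal}(\varpi)$, is the product $\bigotimes_i q^{\ssup{W,l_i}}_{x_i,y_i}$. Here I must make sure that conditioning on the unlabelled point measure $\mu$, rather than on the labelled data, does not change anything. This holds because the product kernel is symmetric in the labels and the triples are a.s.\ pairwise distinct. The result is ${\tt K}_W(\mu,\cdot)$, a regular version of $\Pi_W(\SPP)_{\Tcal_W\to\Scal_W}(\mu,\cdot)$. Uniqueness of regular conditional distributions gives equality $\partial\Pi_W^{\ssup\Scal}(\SPP)$-a.s.

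The main obstacle is the first step's rigorous use of the strong Markov property on the $\sigma$-finite, time-shift-quotiented space $\Tcal^*$. One has to fix a representative, for instance the one with time $0$ at the first entry into $\widetilde W$ as in \eqref{QWdef}. One must also handle the two independent halves, the conditioned backward walk and the free forward walk, glued at $X_0$, and check that the equilibrium density factor $\e_{\widetilde W}(x_0)$ and the conditioning never to return cancel in the conditional law of interior pieces. I would do this by writing the joint density of all entry/exit data and interior pieces explicitly as a product of transition densities, and reading off the conditional density.
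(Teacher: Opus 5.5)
Your proposal is correct and follows essentially the same route as the paper: the PPP construction of Remarks~\ref{rem-PPPconstruction} and \ref{rem-PPPbetaconstruction}, then the strong Markov property at the successive entry and exit times, which makes the pieces conditionally independent with laws $q^{\ssup{W,l}}_{x,y}$, then superposition over the independent Poisson trajectories. The differences are cosmetic. The paper works directly with $W$, with time $0$ at the first entry into $W$, so your auxiliary box $\widetilde W$ is not needed. It also handles the unlabelled conditioning by writing $\{\partial\Pi_W^{\ssup\Scal}(\varpi)=\mu\}$ as a disjoint union over bijections $h$ between (trajectory, visit) pairs and atoms of $\mu$, which is equivalent to your symmetry argument.
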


\begin{proof}
 We refer to the construction of the Brownian $\beta$-spaced interlacement process $\varpi$ from Remarks~\ref{rem-PPPconstruction} and \ref{rem-PPPbetaconstruction}. The event $\{\partial\Pi_W^{\ssup\Scal}(\varpi)=\mu\}$ on that we condition depends only on the processes $(B^{\ssup i}_t)_{t\in[0,\infty)}$, i.e., the parts from the time of the first visit to $W$ at a time $\in \beta\Z$ on. Introduce the subsequent entry and exit times (up to the factor $\beta$) $0=\tau^{\ssup i}_1<\sigma_1^{\ssup i}<\tau_2^{\ssup i}<\sigma_2^{\ssup i}<\dots< \tau_{m_i} ^{\ssup i}<\sigma_{m_i}^{ \ssup i}<\infty$ of $B^{\ssup i}$, and put $\mu=\sum_{j\in I}\delta_{(x_j,l_j,y_j)}$, then the event $\{\partial\Pi_W^{\ssup\Scal}(\varpi)=\mu\}$ is equal to the disjoint union over all bijections $h\colon J=\{(i,k)\in [N]\times \N\colon k\leq m_i\}\to I$ of the event
$$
\bigcap_{(i,k)\in J}\Big\{B^{\ssup i}_{\tau^{\ssup i}_k \beta}=x_{h(i,k)}, \sigma_k^{\ssup i}-\tau_k^{\ssup i}=l_{h(i,k)}, B^{\ssup i}_{\sigma^{\ssup i}_k \beta}=y_{h(i,k)}\Big\}.
$$
On this event, $\Pi_W^{\ssup\Scal}(\varpi)$ is the empirical measure of the paths $(B^ {\ssup i}_t)_{t\in [\beta \tau^{\ssup i}_k, \beta \sigma^{\ssup i}_k]}$, properly time-shifted, with $(i,k)\in J$. According to the strong Markov property at all these stopping times, these path pieces are conditionally independent, and their distribution is given by $q_{x_{h(i,k)},y_{h(i,k)}}^{\ssup {l_{h(i,k)},W}}$. Hence, their joint distribution is the product measure of these, which is ${\tt K}_W(\mu,\cdot)$. This finishes the proof.
\end{proof}

\bigskip

\noindent{\bf Acknowledgement.} The research was supported by the DFG-SPP 2265 {\em Random Geometric Systems}, Project P04 {\em The Statistical Mechanics of the Interlacement Poisson Point Process}. Many inspiring discussions with Stefan Adams (University of Warwick), Alexander Drewitz (University of Cologne) and Alexander Zass (Weierstrass Institute Berlin) are gratefully acknowledged, as well as some fruitful exchange with Guillaume Bellot (University of Lille) and Quirin Vogel (Universität Klagenfurt).


\begin{thebibliography}{WWW98}

\bibitem[A08]{A07}
{\sc S.~Adams},
\textit{Large deviations for empirical measures in cycles of integer partitions and their relation to systems of Bosons},  Analysis and Stochastics of Growth Processes, LMS, Oxford University Press, 148-172, (2008).

\medskip

\bibitem[ACK11]{ACK10}
{\sc S.~Adams, A.~Collevecchio} and  {\sc W.~K\"onig},
\newblock A variational formula for the free energy of an interacting many-particle system,
\newblock {\it Ann.~Probab.} {\bf 39:2}, 683--728 (2011).

\medskip



\bibitem[ADPZ11]{ADPZ11}
\newblock{ \sc S.~Adams, N.~Dirr, M.~Peletier} and {\sc J.~Zimmer},
\newblock  From a Large-Deviations Principle to the Wasserstein Gradient Flow: A New Micro-Macro Passage,
\newblock {\it Commun.~Math.~Phys.} {\bf 307:3}, 791-815 (2011), DOI 10.1007/s00220-011-1328-4.

\medskip







\bibitem[AFY21]{AFY21}
\newblock {\sc I.~Armendáriz, P.A.~Ferrari}, and  {\sc S.~ Yuhjtman},
\newblock Gaussian random permutation and the boson point process,
\newblock {\it Commun. Math. Phys.} {\bf 387:3}, 1515-1547 (2021).

\medskip


\bibitem[AKP21]{AKP19}
{\sc L.~Andreis, W.~König}, and {\sc R.I.A.~Patterson},
\newblock  A large-deviations principle for all the cluster sizes of a sparse Erd\H{o}s-R\'enyi graph,
\newblock {\em Random Structures Algorithms} {\bf 59:4}, 522-553 (2021). 

\medskip

\bibitem[AKLP23]{AKLP24}
{\sc L.~Andreis, W.~König, H.~Langhammer}, and {\sc R.I.A.~Patterson},
\newblock  A large-deviations principle for all the components in a sparse inhomogeneous random graph,
\newblock {\em Probab.~Theory Relat.~Fields} {\bf 186}, 521-620 (2023). 

\medskip



\bibitem[B07]{Bogachev2}
\newblock {\sc V.I.~Bogachev},
\newblock {\em Measure Theory}, Vol.~II,
\newblock Springer-Verlag, Berlin, Heidelberg (2007).

\medskip

\bibitem[BDM24]{BDM24}
\newblock {\sc G.~Bellot, D.~Dereudre} and {\sc M.~Ma\"ida},
\newblock DLR equations for the superstable Bose gas at any temperature and activity,
\newblock preprint arXiv:2410.10225,  {\tt https://arxiv.org/abs/2410.10225} (2024).

\medskip


\bibitem[BKM24]{BKM24}
\newblock {\sc E.~Bolthausen, W. König}, and {\sc C.~Mukherjee},
\newblock Self-repellent Brownian bridges in an interacting Bose gas,
\newblock preprint, ArXiv:2405.08753, {\tt https://arxiv.org/abs/2405.08753} (2024).

\medskip

\bibitem[BKV24]{BKV24}
\newblock {\sc T.~Bai, W.~König,} and {\sc Q.~Vogel},
\newblock Proof of off-diagonal long-range order in a mean-field trapped Bose gas via the Feynman--Kac formula
\newblock {\it Elec.~J.~Prob.} {\bf 30}, 1-26, (2025) {\tt DOI: 10.1214/25-EJP1370}.

\medskip



\bibitem[BR97]{BR97} 
\newblock {\sc O.~Bratteli} and {\sc D.~W.~Robinson},
\newblock {\it Operator Algebras and Quantum Statistical Mechanics II},
\newblock 2nd ed., Springer-Verlag (1997).

\medskip


\medskip



 
 
 
 \bibitem[CJK23]{CJK23}
\newblock {\sc O.~Collin, B.~Jahnel} and {\sc W.~König},
\newblock A micro-macro variational formula for the free energy of a many-body system with unbounded marks,
\newblock {\it Elec. J. Prob.} {\bf  28},  1-58 (2023).

\medskip

\bibitem[CN23]{CN23}
\newblock {\sc A.~Chiarini} and {\sc M.~Nitzschner},
\newblock  Lower bounds for bulk deviations for the simple random walk on $\Z^d$, $d\geq 3$,
\newblock {\it Ann.~Inst.~Henri Poincaré, Probab.~Stat.}, { to appear}, {\tt file:///C:/Users/koeni/Downloads/AIHP2408-005R1A0.pdf} (2025).

\medskip

\bibitem[DV24]{DV24}
\newblock {\sc M.~Dickson} and {\sc Q.~Vogel},
\newblock Formation of infinite loops for an interacting bosonic
loop soup,
\newblock  {\it Electron. J. Probab.} {\bf 29}, article no. 24, 1–39.
ISSN: 1083-6489 {\tt https://doi.org/10.1214/24-EJP1085} (2024).


\medskip

\bibitem[DVJ03]{DVJ03}
\newblock {\sc D.J.~Daley} and {\sc D.~Vere-Jones},
\newblock {\it  An Introduction to the Theory of Point Processes: Volume I: Elementary Theory and Methods,} Second Edition,
\newblock Springer (2003).

\medskip

\bibitem[DVJ08]{DVJ08}
\newblock {\sc D.J.~Daley} and {\sc D.~Vere-Jones},
\newblock {\it  An Introduction to the Theory of Point Processes: Volume II: General Theory and Structure,} Second Edition,
\newblock Springer (2008).

\medskip
 
\bibitem[DZ10]{DZ98}
{\sc A.~Dembo} and {\sc O.~Zeitouni},
{\it Large Deviations Techniques and Applications\/}, 
corrected printing of the 2nd edition, Springer,  Berlin Heidelberg (2010).

\medskip



\bibitem[Fe53]{F53} 
\newblock {\sc R.P.~Feynman},
\newblock Atomic theory of the $ \lambda $ transition in Helium,
\newblock {\it Phys. Rev.} {\bf 91}, 1291--1301 (1953).

\medskip





\bibitem[FKSS20a]{FKSS20}
\newblock {\sc J.~Fröhlich, A.~Knowles, B.~Schlein,} and {\sc V.~Sohinger,}
\newblock  A path-integral analysis of interacting Bose gases and loop gases. 
\newblock {\em J. Stat. Phys.} {\bf 180:1-6}, 810–831 (2020).
 
\medskip

\bibitem[G11]{G88}
\newblock {\sc H.-O.~Georgii},
\newblock {\it Gibbs Measures and Phase Transitions}, Second Edition,
\newblock De Gruyter Studies in Mathematics 9 (2011).

\medskip

\bibitem[GZ93]{GZ93}
\newblock {\sc H.-O.~Georgii} and {\sc H.~Zessin},
\newblock Large deviations and the maximum entropy principle for marked point random fields,
\newblock {\it Prob.~Theory Relat. Fields} {\bf 96}, 177--204 (1993).

\medskip

\bibitem[G94]{G94}
\newblock {\sc H.-O.~Georgii,}
\newblock Large deviations and the equivalence of ensembles for Gibbsian particle systems with superstable interaction,
\newblock {\it Prob.~Theory Relat. Fields} {\bf 99}, 171--195 (1994).

\medskip

\bibitem[G70]{G70}
\newblock {\sc J.~Ginibre,}
\newblock {\it Some Applications of Functional Integration in Statistical Mechanics, and Field Theory\/}, 
\newblock C. de Witt and R. Storaeds, Gordon and Breach, New York   (1970).

\medskip



\bibitem[KVZ25]{KVZ23}
\newblock {\sc W.\ König, Q.\ Vogel} and {\sc A. Zass},
\newblock Off-diagonal long-range order for the free Bose gas via the Feynman--Kac formula,
\newblock {\it Ann. Appl. Probab.}, to appear,  preprint arXiv:2312.07481,  {\tt https://arxiv.org/abs/2312.07481} (2023).

\medskip

\bibitem[LP18]{LP18}
\newblock {\sc G.~Last} and {\sc M.~Penrose,}
\newblock {\em Lectures on the Poisson Process.}
\newblock Cambridge University Press, Cambridge (2018).

\medskip 



\bibitem[LW04]{LW04}
\newblock {\sc G.F. Lawler} and {\sc W. Werner,}
\newblock  The Brownian loop soup, {\it Probab. Theory Related
Fields} {\bf 128:4}, 565-588 (2004).

\medskip

\bibitem[NPZ13]{NPZ13}
\newblock {\sc B.~Nehring, S.~Poghosyan} and {\sc H.~Zessin},
\newblock On the construction of point processes in statistical mechanics,
\newblock {\it J. Math. Phys.} {\bf 54} (6), 063302 (2013).

\medskip

\bibitem[PRV13]{PRV13}
\newblock {\sc M.A. Peletier, D.R.M. Renger,} and {\sc M. Veneroni}, \newblock Variational formulation of the Fokker--Planck equation with decay: A particle approach.
\newblock {\em Communications in Contemporary Mathematics} {\bf 15(05)}, 1350017 (2013).

\medskip

\bibitem[PS01]{PS01}
\newblock {\sc C.~Pethick} and {\sc H.~Smith},
\newblock {\em  Bose–Einstein Condensation of Dilute Gases}. Cambridge University Press, Cambridge (2001).

\medskip

\bibitem[PS03]{PS03}
\newblock {\sc L.~Pitaevskii} and {\sc S.~Stringari},
\newblock {\em Bose–Einstein Condensation}. Oxford Science Publications, Oxford (2003).

\medskip

\bibitem[QT23]{QT23}
\newblock {\sc A.~Quitmann} and {\sc L.~Taggi},
\newblock Macroscopic loops in the Bose gas, Spin $O(N)$ and related models,
\newblock {\it Commun.~Math.~Phys.} { \bf 400}, 2081–2136 (2023).

\medskip

\bibitem[Raf09]{R09}
\newblock {\sc M.~Rafler},
\newblock {\it Gaussian Loop- and Polya Processes: A Point Process Approach},
\newblock PhD Thesis, University of Potsdam (2009).

\medskip

\bibitem[Rue69]{Rue69} 
\newblock {\sc D.~Ruelle},
\newblock {\it Statistical Mechanics: Rigorous Results},
\newblock W.A.~Benjamin, Inc., (1969).



\medskip

\bibitem[S\"u93]{S93} 
\newblock {\sc A.~S\"ut\H o},
\newblock Percolation transition in the Bose gas,
\newblock {\it J. Phys. A: Math. Gen.} {\bf 26}, 4689--4710 (1993).


\medskip

\bibitem[S\"u02]{S02} {\sc A.~S\"ut\H o},
\newblock Percolation transition in the Bose gas: II,
\newblock {\it J. Phys. A: Math. Gen.} {\bf 35}, 6995--7002 (2002).

\medskip





\bibitem[Sz10]{Sz10}
\newblock {\sc A.-S.~Sznitman},
\newblock Vacant set of random interlacements and percolation,
\newblock  {\it Ann. Math.} {\bf 171}, 2039--2087 (2010).

\medskip

\bibitem[Sz13]{Sz13}
\newblock {\sc A.-S.~Sznitman},
\newblock On scaling limits and Brownian interlacements,
\newblock {\it Bull. Braz. Math. Soc., New Series} {\bf 44(4)}, 555--592, (2013); special issue of the Bulletin of the Brazilian Mathematical Society--IMPA 60 years.

\medskip

\bibitem[Sz23]{Sz23}
\newblock {\sc A.-S.~Sznitman},
\newblock  On bulk deviations for the local behavior of random interlacements,
\newblock {\em Ann. Scient. Éc. Norm. Sup.}, série 4, t. 56, 801-858 (2023).

\medskip





\bibitem[U06]{U06}
\newblock {\sc D. Ueltschi}, 
\newblock Relation between Feynman cycles and off-diagonal long-range order, 
\newblock {\em Phys. Rev. Lett.} {\bf 97}, 170601 (2006).


\medskip

\bibitem[V23]{V23}
\newblock {\sc Q.~Vogel},
\newblock Emergence of interlacements from the finite volume Bose soup,
\newblock {\em Stoch. Proc. Appl.} {\bf 166} Dezember 2023, 104227 {\tt https://doi.org/10.1016/j.spa.2023.104227} (2023).

\medskip




\bibitem[Z22]{Z22}
\newblock {\sc A.~Zass},
\newblock Gibbs Point Processes on Path Space: Existence, Cluster Expansion and Uniqueness,
\newblock {\it Markov Process. Rel. Fields} {\bf 28} (3), 329--364 (2022).


\end{thebibliography}
\end{document}